\newtheorem{theorem}{Theorem}[section]
\newtheorem{lemma}[theorem]{Lemma}
\newtheorem{claim}[theorem]{Claim}
\theoremstyle{definition}
\newtheorem{definition}[theorem]{Definition}
\newtheorem{remark}[theorem]{Remark}
\newtheorem{example}[theorem]{Example}
\numberwithin{equation}{section} 
\def\A{\mathcal{A}}
\def\C{\mathfrak{C}}
\def\N{\mathbb{N}}
\def\D{\mathcal{D}}
\def\B{\mathscr{B}}
\def\bB{\mathbb{B}}
\def\L{\mathscr{L}}
\def\Ln{\mathscr{L}^n}
\def\M{\mathcal{M}}
\def\S{\mathcal{S}}
\def\T{\mathcal{T}}
\def\V{\mathcal{V}}
\def\F{\mathcal{F}}
\def\G{\mathcal{G}}
\def\H{\mathcal{H}}
\def\K{\mathcal{K}}
\def\Z{\mathbb{Z}}
\def\R{\mathbb{R}}
\def\Rn{\mathbb{R}^n}
\def\diam{\operatorname{diam}} 
\def\supp{\operatorname{supp}}
\def\BMO{\operatorname{BMO}}
\def\VMO{\operatorname{VMO}}
\def\CMO{\operatorname{CMO}}
\def\loc{\operatorname{loc}}
\def\osc{\operatorname{osc}}
\def\Re{\operatorname{Re}}
\def\b{\mathbf{b}}
\def\w{\omega}
\def\f{\mathfrak{f}}
\def\m{\mathfrak{m}}
\DeclareMathOperator*{\esssup}{ess\,sup}
\DeclareMathOperator*{\essinf}{ess\,inf}
\renewcommand{\emptyset}{\text{\textup{\O}}}
\newcommand{\tinyemptyset}{\text{\tiny \textup{\O}}}
\begin{document}

\author[M. Cao]{Mingming Cao}
\address{Mingming Cao\\
Instituto de Ciencias Matem\'aticas CSIC-UAM-UC3M-UCM\\
Con\-se\-jo Superior de Investigaciones Cient{\'\i}ficas\\
C/ Nicol\'as Cabrera, 13-15\\
E-28049 Ma\-drid, Spain} \email{mingming.cao@icmat.es}

\author[G. Iba\~{n}ez-Firnkorn]{Gonzalo Iba\~{n}ez-Firnkorn}
\address{Gonzalo~Iba\~{n}ez-Firnkorn\\ 
INMABB, Universidad Nacional del Sur (UNS)-CONICET, Bah\'{i}a Blanca, Argentina. 
Departamento de Matem\'{a}tica, Universidad Nacional del Sur (UNS), Bah\'{i}a Blanca, Argentina} 
\email{gonzalo.ibanez@uns.edu.ar}

\author[I.P. Rivera-R\'{i}os]{Israel P. Rivera-R\'{i}os}
\address{Israel P. Rivera-R\'{i}os\\ 
Departamento de An\'{a}lisis Matem\'{a}tico, Estad\'{i}stica e Investigaci\'{o}n Operativa
y Matem\'{a}tica Aplicada. Facultad de Ciencias. Universidad de M\'{a}laga
\\ M\'{a}laga, Spain. Departamento de Matem\'{a}tica. Universidad Nacional del Sur \\ 
Bah\'{i}a Blanca, Argentina}
\email{israelpriverarios@uma.es}

\author[Q. Xue]{Qingying Xue}
\address{Qingying Xue\\ 
School of Mathematical Sciences\\
Beijing Normal University\\
Beijing 100875\\
People's Republic of China}\email{qyxue@bnu.edu.cn}

\author[K. Yabuta]{K\^{o}z\^{o} Yabuta}
\address{K\^{o}z\^{o} Yabuta\\
Research Center for Mathematics and Data Science\\
Kwansei Gakuin University\\
Gakuen 2-1, Sanda 669-1337\\
Japan}\email{kyabuta3@kwansei.ac.jp}


\date{\today}

\subjclass[2010]{42B20, 42B25, 42B35}


\keywords{Bounded oscillation operators, 
measure spaces, 
Rubio de Francia extrapolation, 
sharp weighted norm inequalities, 
weighted compactness, 
spaces of homogeneous type}

\begin{abstract}
In recent years, dyadic analysis has attracted a lot of attention due to the $A_2$ conjecture. It has been well understood that in the Euclidean setting, Calder\'{o}n-Zygmund operators can be pointwise controlled by a finite number of dyadic operators with a very simple structure, which leads to some significant weak and strong type inequalities. Similar results hold for Hardy-Littlewood maximal operators and Littlewood-Paley square operators. These owe to good dyadic structure of Euclidean spaces. Therefore, it is natural to wonder whether we could work in general measure spaces and find a universal framework to include these operators. In this paper, we develop a comprehensive weighted theory for a class of Banach-valued multilinear bounded oscillation operators on measure spaces, which merges multilinear Calder\'{o}n-Zygmund operators with a quantity of operators beyond the multilinear Calder\'{o}n-Zygmund theory. We prove that such multilinear operators and corresponding commutators are locally pointwise dominated by two sparse dyadic operators, respectively. We also establish three kinds of typical estimates: local exponential decay estimates, mixed weak type estimates, and sharp weighted norm inequalities. Beyond that, based on Rubio de Francia extrapolation for abstract multilinear compact operators, we obtain weighted compactness for commutators of specific multilinear operators on spaces of homogeneous type. A compact extrapolation allows us to get weighted estimates in the full range of exponents, while weighted interpolation for multilinear compact operators is crucial to the compact extrapolation. These are due to a weighted Fr\'{e}chet-Kolmogorov theorem in the quasi-Banach range, which gives a characterization of relative compactness of subsets in weighted Lebesgue spaces. As applications, we illustrate multilinear bounded oscillation operators with examples including multilinear Hardy-Littlewood maximal operators on measure spaces, multilinear $\omega$-Calder\'{o}n-Zygmund operators on spaces of homogeneous type, multilinear Littlewood-Paley square operators, multilinear Fourier integral operators, higher order Calder\'{o}n commutators, maximally modulated multilinear singular integrals, and $q$-variation of $\omega$-Calder\'{o}n-Zygmund operators. 
\end{abstract}

\title[Multilinear bounded oscillation operators]
{A class of multilinear bounded oscillation operators on measure spaces and applications}  

\maketitle

\tableofcontents

\section{Introduction}
\subsection{Motivation}
The purpose of this paper is to develop a comprehensive weighted theory for a class of multilinear bounded oscillation operators on measure spaces (cf. Definition \ref{def:MBO}), which covers multilinear Calder\'{o}n-Zygmund operators and a number of operators beyond the multilinear Calder\'{o}n-Zygmund theory. The main reason why we study it comes from three aspects: 
{\bf (1)} Dyadic analysis has rapidly developed recently. This owes to the $A_2$ conjecture for general Calder\'{o}n-Zygmund operators solved by Hyt\"{o}nen \cite{Hyt}. Since then, it has drawn much attention to controlling operators by simple dyadic operators, which improves numerous explored results and helps to investigate some unexplored fields. This is the case for many operators such as Bochner-Riesz multipliers \cite{BBL}, singular non-integral operators \cite{BFP}, rough operators \cite{CCDO}, the discrete cubic Hilbert transform \cite{CKL}, and oscillatory integrals \cite{LS}. 
{\bf (2)} Some operators beyond the multilinear Calder\'{o}n-Zygmund theory developed in \cite{GT1, GT2, LOPTT} enjoy the same properties as Calder\'{o}n-Zygmund operators, for example, multilinear singular integrals with non-smooth kernels \cite{DGY}, multilinear Fourier multipliers \cite{HWXY, LiS}, multilinear pseudo-differential operators \cite{CXY}, and multilinear square operators \cite{ChXY, SXY, SiXY, XPY, XY}. 
{\bf (3)} The oscillation of an operator determines its behavior including pointwise bounds and weighted norm inequalities. A classical tool to measure oscillation is the sharp maximal function of Fefferman and Stein
\begin{align}
M^{\#}_{\lambda} f(x) 
:= \sup_{Q \ni x} \inf_{c \in \R} \bigg(\fint_Q |f(x)-c|^{\lambda} \, dx \bigg)^{\frac{1}{\lambda}}, \quad 0<\lambda<1, 
\end{align}
where the supremum is taken over all cubes $Q$ in $\Rn$ containing $x$. A typical estimate \cite[Theorem 3.2]{LOPTT} for multilinear Calder\'{o}n-Zygmund operators $T$ is that 
\begin{align}\label{MTM}
M_{\lambda}^{\#}(T(\vec{f}))(x) 
\lesssim \mathcal{M}(\vec{f})(x), \quad x \in \Rn, \quad 0<\lambda<1/m, 
\end{align}
where $\mathcal{M}$ is the multilinear Hardy-Littlewood maximal operator. This inequality encodes a common property for operators in \cite{CXY, ChXY, LiS, SXY, SiXY, XPY, XY}. Beyond that, Lerner \cite{Ler10} introduced the method of local mean oscillation to refine estimates as  \eqref{MTM}, where the local mean oscillation of $f$ on a cube $Q$ is defined by
\begin{align}
\omega_{\lambda}(f; Q) 
:= \inf_{c \in \R} ((f-c) \mathbf{1}_Q)^*(\lambda |Q|), \quad 0<\lambda <1. 
\end{align}
Lerner's formula may be thought of as a variation of the Calder\'{o}n-Zygmund decomposition of $f-m_f(Q_0)$, replacing the mean by the median. But unlike the Calder\'{o}n-Zygmund decomposition done at one scale, one has to estimate the local mean oscillation of $f$ at all scales. What's more, carrying out this approach requires the good dyadic structure of underlying spaces.  
See \cite{BH, CMP12, DLP, Ler10, Ler11, LN} for applications of local mean oscillation. Along this direction, the authors in \cite{LO} considered the oscillation of an operator $T$ 
\begin{align}
\mathcal{M}^{\#}_{T, \alpha} f(x) 
:= \sup_{Q \ni x} \sup_{x', \, x'' \in Q} 
|T(f \mathbf{1}_{\Rn \setminus \alpha Q})(x') - T(f \mathbf{1}_{\Rn \setminus \alpha Q})(x'')|, 
\end{align}
and determined nearly minimal assumptions on $T$ for which it admits a sparse domination.

The main contributions of this article are the following. 
\begin{itemize}
\item Our general framework unifies the study of multilinear Hardy-Littlewood maximal operators, multilinear Calder\'{o}n-Zygmund operators, multilinear Littlewood-Paley square operators, and operators beyond the multilinear Calder\'{o}n-Zygmund theory. The latter contain multilinear Fourier integral operators, higher order Calder\'{o}n commutators, and maximally modulated multilinear singular integrals on measure spaces. Interestingly, it incorporates multilinear Hardy-Littlewood maximal operators into singular integrals.

\item Our formulation is given by Banach-valued multilinear operators on measure spaces. It allows us to work in spaces with non-doubling measures and investigate vector-valued operators, for example, multilinear Littlewood-Paley square operators (cf. Section \ref{sec:LP}), maximally modulated multilinear singular integrals (cf. Section \ref{sec:mod}), and $q$-variation of $\omega$-Calder\'{o}n-Zygmund operators (cf. Section \ref{sec:var}).

\item The assumption \eqref{list:T-reg} is given by a product-type bound instead of a multilinear one so that we can include  multilinear operators with non-smooth kernels, for example, higher order Calder\'{o}n commutators (cf. Section \ref{sec:Calderon}), which are not multilinear Calder\'{o}n-Zygmund operators.

\item The first main result, Theorem \ref{thm:sparse}, gives a pointwise sparse domination for multilinear bounded oscillation operators and their commutators. The greatest difficulty in the proof is that unlike the Euclidean case, measure spaces do not own any dyadic structure. Instead we use stopping time argument in \cite{Kar} to construct generation balls on which pointwise inequalities hold. It also leads that sparse domination is local and two sparse families are needed, which is quite different from known results in \cite{CXY, CY, Ler16, LOR}. Although Theorem \ref{thm:sparse} is local, we can conclude desired inequalities using a limiting argument and uniform bounds for dyadic operators.

\item The second main result, Theorem \ref{thm:local}, establishes local exponential decay estimates for multilinear bounded oscillation operators and local sub-exponential decay estimates for commutators. They accurately reflect the extent that an operator is locally controlled by certain maximal operator, which improves the corresponding good-lambda inequalities. The approach of local mean oscillations in \cite{CXY, OPR} is invalid because it needs the good dyadic structure of underlying spaces. Our proof is based on extrapolation method and local Coifman-Fefferman inequalities (cf. Lemma \ref{lem:Ub}).

\item Theorem \ref{thm:weak} presents a mixed weak type inequality, which parallels to Sawyer's conjecture for Calder\'{o}n-Zygmund operators \cite{LOP, LOPi} and for pseudo-differential operators \cite{CXY}, and improves the classical endpoint weighted inequality (cf. Section \ref{sec:app}). To prove it, we utilize endpoint extrapolation techniques from \cite{CMP} and Coifman-Fefferman inequalities (cf. Lemma \ref{lem:TM}). Additionally, the latter needs an $A_{\infty}$ extrapolation on general measure spaces (cf. \cite[Theorem 3.34]{CMM}).

\item Theorems \ref{thm:T}--\ref{thm:T-Besi} obtain quantitative and sharp weighted norm inequalities for multilinear bounded oscillation operators and their commutators. Back to Euclidean spaces or spaces of homogeneous type, without using extrapolation, we recover the sharp estimates in \cite[Corollary 4.4]{Nie} and extend \cite[Theorem 1.4]{LMS} with Banach exponents to the quasi-Banach range. As seen in \cite{BMMST}, Rubio de Francia extrapolation provides an effective way to directly obtain weighted inequalities for commutators from weighted estimates for operators, but it requires sharp reverse H\"{o}lder and John-Neirenberg inequalities, which are not necessarily true in the current scenario. Instead of that we establish quantitative weighted estimates for multilinear dyadic operators (cf. Section \ref{sec:sharp}).

\item By means of weighted estimates above, we further achieve weighted compactness for commutators of specific multilinear operators on spaces of homogeneous type. We first establish Rubio de Francia extrapolation for abstract multilinear compact operators (cf. Theorems \ref{thm:Ap}), which allows one to extrapolate compactness of $T$ from just one space to the full range of weighted spaces, whenever $m$-linear or $m$-sublinear operators $T$ are bounded on weighted Lebesgue spaces. In terms of commutators of $m$-linear or $m$-linearizable operators $T$, we obtain a similar compact multilinear extrapolation (cf. Theorem \ref{thm:Tb}), which asserts that to get weighted compactness of commutators $[T, \b]_{\alpha}$ in full range, it suffices to prove weighted bounds for $T$ on one space and unweighted compactness of $[T, \b]_{\alpha}$ on another (or the same) space. Theorems \ref{thm:Ap}--\ref{thm:Tb} are vastly significant and generalized since they are successfully given for $m$-sublinear operators in the full range of $p$ and $\vec{p}=(p_1, \ldots, p_m)$, which enables us to extend special estimates in the Banach space setting to the quasi-Banach range, while Theorem \ref{thm:Tb} reduces weighted compactness to the unweighted case.  For example, in the bilinear case, \cite{BO, BT, BuC, Hu17} only obtained compactness for $\frac1p=\frac{1}{p_1} + \frac{1}{p_2}$ with $1<p, p_1, p_2 < \infty$, and \cite{BDMT} proved weighted compactness from $L^{p_1}(\Rn, w_1) \times L^{p_2}(\Rn, w_2)$ to $L^p(\Rn, w)$ for $1<p,p_1,p_2<\infty$ and $(w_1, w_2) \in A_p \times A_p \subsetneq A_{p_1, p_2}$. Certainly, these results are incomplete since the restriction on weights or exponents are unnatural. Our extrapolation Theorems \ref{thm:Ap}--\ref{thm:Tb} will overcome these problems.

\item To prove Theorem \ref{thm:Ap}, we present weighted interpolation for multilinear compact operators (cf. Theorem \ref{thm:WMIP-4}) on spaces of homogeneous type, which improves known bilinear compact interpolation in Banach spaces (for example, \cite{CFM, FM}). Furthermore, the target space $L^{p_i}(\Sigma, w_i)$ are viewed as an interpolation space of another two different weighted spaces $L^{r_i}(\Sigma, u_i)$ and $L^{s_i}(\Sigma, v_i)$ such that $\vec{u}=(u_1, \ldots, u_m) \in A_{\vec{r}}$ and $\vec{v}=(v_1, \ldots, v_m) \in A_{\vec{s}}$, where $\vec{v}$ is the weight appearing in hypotheses so that $T$ is compact from $L^{s_1}(\Sigma, v_1) \times \cdots \times L^{s_m}(\Sigma, v_m)$ to $L^s(\Sigma, v)$. By extrapolation for multilinear Muckenhoupt weights (cf. Theorem \ref{thm:extraAp}), $T$ is bounded from $L^{r_1}(\Sigma, u_1) \times \cdots \times L^{r_m}(\Sigma, u_m)$ to $L^r(\Sigma, u)$. Then, the desired compactness from $L^{p_1}(\Sigma, w_1) \times \cdots \times L^{p_m}(\Sigma, w_m)$ to $L^p(\Sigma, w)$ will follow from these two results and multilinear compact interpolation Theorem \ref{thm:WMIP-4}.  Having shown Theorem \ref{thm:Ap}, the proof of Theorem \ref{thm:Tb} relies on extrapolation from operators to commutators (cf. Theorem \ref{thm:TTb}).

\item In terms of weighted compact interpolation, Theorem \ref{thm:WMIP-4} just requires $w_0, v_0 \in A_{\infty}$. For its proof, we build a criterion for relative compactness in $L^p(\Sigma, w)$ with $p \in (0, \infty)$ by means of a weighted Fr\'{e}chet-Kolmogorov theorem (cf. Theorem \ref{thm:FKhs-2}). To verify the criterion above, we utilize weighted multilinear interpolation theorems in both Lebesgue spaces (cf. \cite[Theorem 3.1]{COY}) and mixed-norm Lebesgue spaces (cf. \cite[Theorem 3.5]{COY}). In addition, when studying compactness, in contrary to Theorem \ref{thm:Tb} suitable to $m$-linear and $m$-linearizable operators, Theorem \ref{thm:FKhs-2} provides a direct way and can be applied to arbitrary operators or commutators (cf. Section \ref{sec:mod}).

\item In spaces of homogeneous type $(\Sigma, \rho, \mu)$, compactness becomes more subtle than that in the Euclidean space. For example, one can not ensure that every closed and bounded set in $(\Sigma, \rho, \mu)$ is compact, which holds if and only if in $(\Sigma, \rho, \mu)$ is complete. Another obstacle is the lack of the continuity of $x \mapsto \mu(B(x, r))$. These two problems lead that we can not obtain the equi-continuity of $\{f_{B(x, r)}\}_{f \in \K}$ on the closure of a metric ball, which is crucial to establish a criterion for relative compactness.  

\item For applications, as announced above, we present many remarkable examples in Section \ref{sec:app}, where plenty of known results are improved and extended. In Sections \ref{sec:CZO}--\ref{sec:var}, we establish both weighted bounded and weighted compactness in the natural and full range. Our theory can be applied to singular integrals on uniformly rectifiable domain, for example, the harmonic double layer potential (cf. Example \ref{ex:UR}),  which is a key tool to investigate the solvability of PDE on unbounded domains and the regularity of various domains (cf. \cite{MMMMM}). Moreover, considering multilinear Littlewood-Paley operators and square operators associated with Fourier multipliers, we use a minimal regularity assumption, the $L^r$-H\"{o}rmander condition, so that our results cover and refine all estimates for square operators in \cite{CY, ChXY, SXY, SiXY, XPY, XY}. Finally, having presented multilinear bounded oscillation operators, we can use them to define maximally modulated multilinear singular integrals. It greatly  enlarges the theory of maximally modulated Calder\'{o}n-Zygmund operators in the linear case \cite{GMS}. A celebrated example in this direction is the (lacunary/polynomial) Carleson operator. 

\end{itemize}

\subsection{The hypotheses}  
To set the stage and formulate our main theorems, we first recall the concept of ball-basis for measurable spaces introduced in \cite{Kar}, which gives the basic geometric properties and is a foundation for our analysis.

\begin{definition}\label{def:basis}
Let $(\Sigma, \mu)$ be a measure space. A family of measurable sets $\B$ in $\Sigma$ is said to be {\tt a ball-basis} if it satisfies the following conditions:
\begin{list}{\rm (\theenumi)}{\usecounter{enumi}\leftmargin=1.2cm \labelwidth=1cm \itemsep=0.2cm \topsep=.2cm \renewcommand{\theenumi}{B\arabic{enumi}}}

\item\label{list:B1} $\B$ is a basis, namely, $0<\mu(B)<\infty$ for any $B \in \B$.

\item\label{list:B2} For all points $x, y \in \Sigma$, there exists some $B \in \B$ containing both $x$ and $y$. 

\item\label{list:B3} For any $\varepsilon>0$ and a measurable set $E$, there exists a (finite or infinite) sequence $\{B_k\} \subset \B$ such that $\mu(E \Delta \bigcup_k B_k) < \varepsilon$. 

\item\label{list:B4} For any $B \in \B$, there exists $B^* \in \B$ such that $\mu(B^*) \le \C_0 \mu(B)$, 
\begin{align*}
\bigcup_{B' \in \B: B' \cap B \neq \emptyset \atop \mu(B') \le 2\mu(B)} B' \subset B^*, 
\quad\text{ and }\quad 
A \subset B \Longrightarrow A^* \subset B^*, 
\end{align*}
where $\C_0 \ge 1$ is a universal constant.
\end{list}
\end{definition}

\begin{example}\label{example}
It is not hard to verify that each of the following is a ball-basis. 
\begin{itemize} 
\item Let $\B$ be the collection of all balls/cubes in $\Rn$. Then $\B$ is a ball-basis of $(\Rn, \Ln)$, where here and elsewhere $\Ln$ stands for the Lebesgue measure in $\Rn$. Indeed, it suffices to take $\C_0=\kappa^n$ with $\kappa \ge 1+2^{1+\frac1n}$, and $B^*=\kappa B$ for each $B \in \B$.

\item If $\B$ is a dyadic lattice in $\Rn$, then $\B$ is a ball-basis of $(\Rn, dx)$. A dyadic lattice $\mathscr{D}$ in $\Rn$ is any collection of cubes satisfying 

\begin{itemize}
\item for each $Q \in \mathscr{D}$, $\D(Q) \subset \mathscr{D}$;

\item for all $Q', Q'' \in \mathscr{D}$, there exists $Q \in \mathscr{D}$ such that $Q', Q'' \in  \D(Q)$;

\item for every compact set $K \subset \Rn$, there exists a cube $Q \in \mathscr{D}$ containing $K$.
\end{itemize} 
Here $\D(Q)$ denotes the collection of all dyadic descendants of $Q$. 

\item Let $\B$ be the family of metric balls in spaces of homogeneous type $(\Sigma, \rho, \mu)$. If $\B$ satisfies the density condition, then it is a ball-basis of $(\Sigma, \rho, \mu)$ (cf. Section \ref{sec:CZO}). 

\item Let $(\Sigma, \mu)$ be a measure space with a martingale basis $\B$. The latter means   
\begin{itemize}
\item $\B:=\bigcup_{j \in \mathbb{Z}} \B_j$ generates the $\sigma$-algebra of all measurable sets. 
\item Each $\B_j$ is a finite or countable partition of $\Sigma$.
\item For each $j$ and $B \in \B_j$, the set $B$ is the union of sets from $\B_{j+1}$.
\item For any $x, y \in \Sigma$, there is a set $B \in \B$ such that $x, y \in B$.
\end{itemize}
Given $B \in \B$, let $\mathscr{F}(B) \in \B$ be the minimal element such that $B \subsetneq \mathscr{F}(B)$. Define $\mathscr{F}^1 := \mathscr{F}$, $\mathscr{F}^{k+1} := \mathscr{F}(\mathscr{F}^k)$ for each $k \ge 1$, and 
\begin{align*}
B^*=\begin{cases}
B &\text{ if } \mu(\mathscr{F}(B)) > 2 \mu(B), 
\\
\mathscr{F}^k(B) & \text{ if } \mu(\mathscr{F}^k(B)) \leq 2 \mu(B) < \mu(\mathscr{F}^{k+1}(B)). 
\end{cases}
\end{align*}
Then it is not hard to verify that $\B$ is a ball-basis of $(\Sigma, \mu)$ with the constant $\C_0=2$. In this scenario,  $\mu$ can be a non-doubling measure.
\end{itemize} 

However, the following collections are not ball-bases. 
\begin{itemize}
\item The collection of all standard dyadic cubes in $\Rn$ does not form a ball-basis of $(\Rn, \Ln)$ since it does not satisfy the property \eqref{list:B2}. 

\item The family of all rectangles in $\Rn$ with sides parallel to the coordinate axes is not a ball-basis of $(\Rn, \Ln)$. Indeed, considering the case $n=2$, one can pick $B=[0, 1) \times [0, 1)$ and 
\begin{align*}
B_k :=[0, 2^{k+1}) \times [0, 2^{-k}), \quad k=0, 1, \ldots.
\end{align*} 
Then $B_k \cap B \neq \emptyset$ and $|B_k|=2|B|$, $k=0, 1, \ldots,$ but there is no rectangle containing the union of all $B_k$.

\item Let $\B$ be the collection of Zygmund rectangles in $(\R^3, \L^3)$ whose sides are parallel to the coordinate axes and have lengths $s$, $t$ and $st$ with $s, t > 0$. Then $\B$ is not a ball-basis of $(\Rn, \L^3)$. The reason is similar to the above since it suffices to choose $B=[0, 1)^3$ and 
\begin{align*}
B_k :=[0, 2^{k+\frac12}) \times [0, 2^{-k}) \times [0, 2^{\frac12}), \quad k=0, 1, \ldots.
\end{align*} 

\end{itemize} 
\end{example}

To introduce multilinear bounded oscillation operators, we need to fix some notation. Denote $\vec{f} = (f_1, \ldots, f_m)$ and $\vec{f} \mathbf{1}_E = (f_1 \mathbf{1}_E, \ldots, f_m \mathbf{1}_E)$ for any measurable set $E$. Given $r \in [1, \infty)$ and $B \in \B$, we set 
\begin{align*}
\lfloor f \rfloor_{B, r} := \sup_{B' \in \B: B' \supset B} \langle f \rangle_{B', r}, 
\quad\text{ where }\quad 
\langle f \rangle_{B, r} := \bigg(\fint_B |f|^r \, d\mu \bigg)^{\frac1r}.  
\end{align*} 
When $r=1$, we always denote $\lfloor f \rfloor_B := \lfloor f \rfloor_{B, r}$ and $\langle f \rangle_B :=\langle f \rangle_{B, r} $. 

\begin{definition}
Let $\mathscr{M}(\Sigma, \mu)$ be the set of all real-valued measurable functions on measure space $(\Sigma, \mu)$, and let $\mathscr{S}(\Sigma, \mu)$ be an appropriate linear subspace of $\mathscr{M}(\Sigma, \mu)$. Given an operator $T: \mathscr{S}(\Sigma, \mu) \times \cdots \times \mathscr{S}(\Sigma, \mu) \to \mathscr{M}(\Sigma, \mu)$, we say that $T$ is {\tt $m$-linear} if for all $f_i, g_i \in \mathscr{S}(\Sigma, \mu)$, $i=1, \ldots, m$, and for all $\lambda \in \R$,  
\begin{align*}
&T(f_1, \ldots, \lambda f_i, \ldots, f_m)
=\lambda \, T(f_1, \ldots, f_i, \ldots, f_m), 
\\
&T(\ldots, f_i+ g_i, \ldots) 
= T(\ldots, f_i, \ldots)  + T(\ldots, g_i, \ldots).  
\end{align*}
Similarly, $T$ is called an {\tt $m$-sublinear} operator if 
\begin{align*}
&|T(f_1, \ldots, \lambda f_i, \ldots, f_m)| 
=|\lambda| |T(f_1, \ldots, f_i, \ldots, f_m)|, 
\\
&|T(\ldots, f_i+ g_i, \ldots)| 
\le |T(\ldots, f_i, \ldots)|  + |T(\ldots, g_i, \ldots)|. 
\end{align*}
An $m$-sublinear operator $T$ is called {\tt $m$-linearizable} if there exist a Banach space $\bB$ and a $\bB$-valued $m$-linear operator $\mathcal{T}$ such that $T(\vec{f}) (x) = \|\T(\vec{f}) (x)\|_{\bB}$. 
\end{definition}

\begin{definition}\label{def:MBO}
Let $(\Sigma, \mu)$ be a measure space with a ball-basis $\B$. Given a Banach space $\bB$, we say that an operator $T$ is {\tt a $\bB$-valued multilinear bounded oscillation operator} with respect to $\B$ and $r \in [1, \infty)$ if there exist constants $\C_1(T), \C_2(T) \in (0, \infty)$ and a family of $\bB$-valued $m$-linear operators $\T$ with $T(\vec{f})(x) = \|\T(\vec{f})(x)\|_{\bB}$ for every $x \in \Sigma$, or real-valued $m$-linear or $m$-sublinear operators $\T:=\{T\}$ in the case $\bB=\R$, such that for all $\vec{f} \in L^r(\Sigma, \mu)^m$,  

\begin{list}{\rm (\theenumi)}{\usecounter{enumi}\leftmargin=1.2cm \labelwidth=1cm \itemsep=0.2cm \topsep=.2cm \renewcommand{\theenumi}{T\arabic{enumi}}}

\item\label{list:T-size} For every $B_0 \in \B$ with $B_0^* \subsetneq \Sigma$, there exists $B \in \B$ with $B \supsetneq B_0$ so that 
\begin{align*}
\sup_{x \in B_0} \|\T(\vec{f} {\bf 1}_{B^*})(x) - \T(\vec{f} {\bf 1}_{B_0^*})(x)\|_{\bB} 
\le \C_1(T) \prod_{i=1}^m \langle f_i \rangle_{B^*, r}. 
\end{align*}

\item\label{list:T-reg} For every $B \in \B$,  
\begin{align*}
\sup_{x, x' \in B} \big\| \big(\T(\vec{f}) - \T(\vec{f} \mathbf{1}_{B^*}) \big)(x) 
- &\big(\T(\vec{f}) - \T(\vec{f} \mathbf{1}_{B^*}) \big)(x') \big\|_{\bB} 
\le \C_2(T) \prod_{i=1}^m \lfloor f_i \rfloor_{B, r}. 
\end{align*}
\end{list}
When $T$ is a real-valued operator, that is, $\bB=\R$, we will drop $\bB$-valued and $\|\cdot\|_{\bB}$.
\end{definition}

We should mention that the one-dimensional bounded oscillation operators were introduced by Karagulyan \cite{Kar}, but we address some inaccuracies there and improve his formula to the Banach-valued case so that one can work for more kinds of operators. This abstract formalism was also used to establish good-lambda inequalities for bounded oscillation operators in \cite{Kar21}. 

Let $(\Sigma, \mu)$ be a measure space with a basis $\B$. Given $1 \le r < \infty$, we define multilinear Hardy-Littlewood maximal operators as 
\begin{align}\label{def:Mr}
\mathcal{M}_{\B, r}(\vec{f})(x) 
&:= \sup_{B \in \B: x \in B} \prod_{i=1}^m \langle f_i \rangle_{B, r}, 
\\
\M_{\B, r}^{\otimes}(\vec{f})(x) 
&:= \prod_{i=1}^m M_{\B, r}f_i(x), 
\end{align}
if $x \in \Sigma_\B:=\bigcup_{B \in \B} B$, and $\mathcal{M}_{\B, r}(\vec{f})(x) = \mathcal{M}_{\B, r}^{\otimes}(\vec{f})(x) = 0$ otherwise. When $r=1$, we simply write $\M_{\B} := \M_{\B, r}$. In the case $m=1$, we denote $M_{\B, r} := \M_{\B, r}$. 

A collection $\S \subset \B$ is said to be {\tt $\eta$-sparse}, $\eta \in (0, 1)$, if for any $B \in \S$ there is a subset $E_B \subset B$ such that $\mu(E_B) \ge \eta\mu(B)$ and the family $\{E_B: B \in \S\}$ is pairwise disjoint. Given a sparse family $\S$, $r_1, \ldots, r_m \in [1, \infty)$, and two disjoint subsets $\tau_1, \tau_2 \subset \{1, \ldots, m\}$, we define multilinear operators and commutator as 
\begin{align*}
\A_{\S, \vec{r}} (\vec{f})(x)
&:= \sum_{B \in \S} \prod_{i=1}^m \langle f_i \rangle_{B, r_i} \mathbf{1}_{B}(x), 
\\
\A_{\S, \vec{r}}^{\b, \tau_1, \tau_2}(\vec{f})(x) 
&:= \sum_{B \in \S} \lambda_{B, \vec{r}}^{\b, \tau_1, \tau_2}(\vec{f})(x) \, \mathbf{1}_{B}(x),
\end{align*}
where 
\begin{align*}
\lambda_{B, \vec{r}}^{\b, \tau_1, \tau_2}(\vec{f})(x) 
&:= \prod_{i \in \tau_1} |b_i(x) - b_{i, B}| \langle f_i \rangle_{B, r_i}
\\
&\qquad\quad\times \prod_{j \in \tau_2} \langle (b_j - b_{j, B}) f_j \rangle_{B, r_j} 
\times \prod_{k \not\in \tau_1 \cup \tau_2} \langle f_k \rangle_{B, r_k}.
\end{align*}
If $\vec{r}=(r, \ldots, r)$, we write $\A_{\S, r} := \A_{\S, \vec{r}}$ and $\A_{\S, r}^{\b, \tau_1, \tau_2} := \A_{\S, \vec{r}}^{\b, \tau_1, \tau_2}$. When $\vec{r}=(1, \ldots, 1)$ we will drop the subscript $\vec{r}$.

Let us next define the general commutators. Let $T$ be an operator from $\mathscr{X}_1  \times \cdots \times \mathscr{X}_m$ into $\mathscr{Y}$, where $\mathscr{X}_1, \ldots, \mathscr{X}_m$ are some normed spaces and and $\mathscr{Y}$ is a quasi-normed space. Given $\vec{f} := (f_1, \ldots, f_m) \in \mathscr{X}_1 \times \cdots \times \mathscr{X}_m$, $\b=(b_1, \ldots, b_m)$ of measurable functions, and $k \in \N$, we define, whenever it makes sense, the $k$-th order commutator of $T$ in the $i$-th entry of $T$ as 
\begin{align*}
[T, \b]_{k e_i} (\vec{f})(x) 
:= T(f_1,\ldots, (b_i(x)-b_i)^k f_i, \ldots, f_m)(x), 
\end{align*}
where $e_i$ is the basis of $\Rn$ with the $i$-th component being $1$ and other components being $0$. Then, for a multi-index $\alpha = (\alpha_1, \ldots, \alpha_m) \in \N^m$, we denote 
\begin{align*}
[T, \b]_{\alpha}:= [\cdots[[T, \b]_{\alpha_1 e_1}, \b]_{\alpha_2 e_2} \cdots, \b]_{\alpha_m e_m}.
\end{align*}
When $\b=(b, \ldots, b)$, we write $[T, b]_{\alpha} := [T, \b]_{\alpha}$. In particular, if $T$ is an $m$-linear operator with a kernel representation of the form 
\begin{equation*}
T(\vec{f})(x) 
:= \int_{\Sigma^m} K(x, \vec{y}) f_1(y_1) \cdots f_m(y_m) \, d\mu(\vec{y}), 
\end{equation*}
where $d\mu(\vec{y}) := d\mu(y_1) \cdots d\mu(y_m)$, then one can write $[T, \b]_{\alpha}$ as 
\begin{equation*}
[T, \b]_{\alpha}(\vec{f})(x) 
:= \int_{\Sigma^m} \prod_{i=1}^m (b_i(x)-b_i(y_i))^{\alpha_i} K(x, \vec{y}) \prod_{j=1}^m f_j(y_j) \, d\mu(\vec{y}). 
\end{equation*}
Additionally, if $T$ is $m$-linearizable, that is, $T(\vec{f}) (x) = \|\T(\vec{f}) (x)\|_{\bB}$, then for all $\alpha \in \N^m$, 
\begin{align*}
[T, \b]_{\alpha}(\vec{f})(x) 
= \|[\T, \b]_{\alpha}(\vec{f})(x) \|_{\bB}. 
\end{align*}

\subsection{Main results} 
Now we are ready to state the main theorems of this article. The first one is a pointwise sparse domination as follows. 

\begin{theorem}\label{thm:sparse}
Let $(\Sigma, \mu)$ be a measure space with a ball-basis $\B$. Let $\b=(b_1, \ldots, b_m)$ of measurable functions and $\alpha \in \{0, 1\}^m$. If $T$ is a $\bB$-valued multilinear bounded oscillation operator with respect to  $\B$ and $r \in [1, \infty)$ such that $T$ is bounded from $L^r(\Sigma, \mu) \times \cdots \times L^r(\Sigma, \mu)$ to $L^{\frac{r}{m}, \infty}(\Sigma, \mu)$,  then for any $B \in \B$ and $f_i \in L^r(\Sigma, \mu)$ with $\|f_i\|_{L^r(B, \mu)}^r \ge \frac12 \|f_i\|_{L^r(\Sigma, \mu)}^r$, $i=1, \ldots, m$, there exist two $\frac{1}{2 \C_0^3}$-sparse families $\S_1, \S_2 \subset \B$ such that for a.e. $x \in B$,
\begin{align}
\label{Tsparse} |T(\vec{f})(x)| 
& \lesssim \C(T) \, \big[\A_{\S_1, r}(\vec{f})(x) + \A_{\S_2, r}(\vec{f})(x) \big], 
\\
\label{Tbsparse} |[T, \b]_{\alpha}(\vec{f})(x)| 
&\lesssim \C(T) \sum_{\tau_1 \uplus \tau_2=\tau} \big[\A_{\S_1, r}^{\b, \tau_1, \tau_2}(\vec{f})(x) 
+ \A_{\S_2, r}^{\b, \tau_1, \tau_2}(\vec{f})(x) \big],  
\end{align}
where $\tau = \tau(\alpha) :=\{i: \alpha_i \neq 0\}$ and $\C(T) := \C_1(T) + \C_2(T) + \|T\|_{L^r(\mu) \times \cdots \times L^r(\mu) \to L^{\frac{r}{m}, \infty}(\mu)}$.
\end{theorem}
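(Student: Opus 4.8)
The plan is to prove both pointwise sparse bounds simultaneously by a single stopping-time construction adapted to the ball-basis $\B$, following the scheme of Karagulyan \cite{Kar} but working locally inside the fixed ball $B$ where $\|f_i\|_{L^r(B,\mu)}^r \ge \tfrac12\|f_i\|_{L^r(\Sigma,\mu)}^r$. The heart of the matter is a \emph{single-ball recursion}: starting from a ball $Q \in \B$ (initially $Q = B$, or rather an enlargement $B^*$), I want to produce finitely or countably many pairwise disjoint children $\{Q_j\} \subset \B$, each contained in $Q$ with $\sum_j \mu(Q_j) \le \tfrac12 \mu(Q)$ (this gives sparseness with constant $\tfrac1{2\C_0^3}$ after accounting for the dilations $B \mapsto B^*$), such that off $\bigcup_j Q_j$ one has the pointwise estimate
\begin{align*}
\|\T(\vec f \mathbf 1_{Q^*})(x)\|_{\bB} \lesssim \C(T)\prod_{i=1}^m \langle f_i\rangle_{Q^*, r} + \sum_j \|\T(\vec f \mathbf 1_{Q_j^*})(x)\|_{\bB}\mathbf 1_{Q_j}(x)
\end{align*}
for a.e.\ $x \in Q$. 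Iterating this down the resulting tree of stopping balls and summing the telescoping geometric series yields \eqref{Tsparse}; the appearance of \emph{two} sparse families $\S_1, \S_2$ rather than one is forced because the recursion naturally splits into the balls arising from the ``size'' term driven by \eqref{list:T-size} and those arising from the iteration of the ``regularity/oscillation'' term driven by \eqref{list:T-reg} — the two halting mechanisms live at different scales.

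To carry out the single-ball step I would fix the stopping threshold at a large multiple $\Lambda$ of $\C(T)\prod_i\langle f_i\rangle_{Q^*,r}$ and let $\{Q_j\}$ be the maximal balls in $\B$ (maximal with respect to inclusion among those contained in $Q$) on which either the weak-type maximal function of $T(\vec f\mathbf 1_{Q^*})$ exceeds $\Lambda$, or the product of local averages $\prod_i \langle f_i\rangle_{Q_j^*,r}$ exceeds $\Lambda$. Using the assumed boundedness $L^r\times\cdots\times L^r \to L^{r/m,\infty}$ together with the weak-type bound for $\M_{\B,r}$ and a Vitali-type covering argument valid in a ball-basis (property \eqref{list:B4} with the dilation $B\mapsto B^*$), choosing $\Lambda$ large makes $\sum_j\mu(Q_j)\le\tfrac12\mu(Q)$. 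On $Q\setminus\bigcup_j Q_j$ one then writes $\T(\vec f\mathbf 1_{Q^*})(x) = [\T(\vec f\mathbf 1_{Q^*})(x) - \T(\vec f\mathbf 1_{B(x)^*})(x)] + \T(\vec f\mathbf 1_{B(x)^*})(x)$ for a suitable ball $B(x)$ produced by \eqref{list:T-size}, controls the difference by $\C_1(T)\prod_i\langle f_i\rangle_{B(x)^*,r} \lesssim \Lambda$ (since $x$ is outside the stopping balls) and the remaining local term by a Lebesgue-point argument together with \eqref{list:T-reg}, which exactly produces the nested term $\sum_j\|\T(\vec f\mathbf 1_{Q_j^*})(x)\|_{\bB}\mathbf 1_{Q_j}$. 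The main obstacle, as the introduction already flags, is that $(\Sigma,\mu)$ has \emph{no} dyadic lattice: the stopping balls $Q_j$ need not be nested, maximal balls need not exist outright, and the dilations $B\mapsto B^*$ can only be controlled through \eqref{list:B4}; handling the resulting overlaps and the a.e.\ pointwise passage to the limit (so that the tree exhausts $B$ up to a null set) is where most of the technical work lies.

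For the commutator bound \eqref{Tbsparse} with $\alpha \in \{0,1\}^m$, I would run the \emph{same} stopping-time tree $\S_1, \S_2$ and exploit the algebraic identity, on each stopping ball $Q$ and for $i \in \tau$,
\begin{align*}
(b_i(x)-b_i(y_i)) = (b_i(x) - b_{i,Q}) - (b_i(y_i) - b_{i,Q}),
\end{align*}
expanding the product $\prod_{i\in\tau}(b_i(x)-b_i(y_i))$ into $2^{|\tau|}$ terms indexed by the partitions $\tau = \tau_1\uplus\tau_2$, where $\tau_1$ collects the indices for which the factor $(b_i(x)-b_{i,Q})$ is pulled outside the average and $\tau_2$ the indices for which $(b_i(y_i)-b_{i,Q})f_i$ stays inside. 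Applying the scalar sparse domination \eqref{Tsparse} to the operator $\T$ acting on the modified tuple (with the $f_j$, $j\in\tau_2$, replaced by $(b_j-b_{j,Q})f_j$) at each node, and summing over the tree and over the $2^{|\tau|}$ partitions, gives precisely $\sum_{\tau_1\uplus\tau_2=\tau}[\A_{\S_1,r}^{\b,\tau_1,\tau_2}(\vec f) + \A_{\S_2,r}^{\b,\tau_1,\tau_2}(\vec f)]$ with the correct constant $\C(T)$. I expect this part to be essentially bookkeeping once the scalar case is in hand; the only subtlety is that the averaging constants $b_{i,Q}$ must be chosen consistently along the tree so that the telescoping still closes, but since we are already committed to the same family of stopping balls this is automatic.
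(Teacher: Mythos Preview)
Your broad outline is right and matches the paper: a stopping-time tree, a pointwise telescoping along ancestor chains, and the standard expansion $b_i(x)-b_i(y_i)=(b_i(x)-b_{i,B})-(b_i(y_i)-b_{i,B})$ for the commutator. The commutator reduction you describe is essentially what the paper does in Lemma~\ref{lem:TFA} and \eqref{Tbg}--\eqref{TBb}.

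However, two points are genuine gaps. First, your explanation for the two sparse families is incorrect. They do \emph{not} come from two separate stopping mechanisms (size versus oscillation). In the paper the stopping is done with a \emph{single} threshold on the auxiliary operator $\Gamma(\vec f)=\max\{|T(\vec f)|,\,T_*(\vec f),\,\C(T)\M_{\B,r}^{\otimes}(\vec f)\}$, producing one tree $\G$. The split into $\S_1,\S_2$ comes afterwards and is purely combinatorial: one assigns to each ball a ``level'' $r(B)=[\tfrac12\log_{\C_0}\mu(B)]$, selects a pairwise-disjoint subfamily at each level, and then separates by the \emph{parity} of $r(B)$. The reason is exactly the obstacle you flag---generation balls from different ancestors can overlap---so a single family cannot be sparse; the parity split guarantees that within each $\S_i$ overlapping balls differ in level by at least $2$, which is what Lemma~\ref{lem:1/2-sparse} exploits.

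Second, your single-ball step as written cannot run: you ask for ``maximal balls contained in $Q$'' and for children $Q_j\subset Q$, but as you yourself note, neither maximality nor containment is available in a ball-basis. The paper replaces this entirely. The bad set $F=\{\Gamma(\vec f\mathbf 1_{A^{(3)}})>\Lambda\}$ is first covered by \emph{arbitrary} balls (Lemma~\ref{lem:BE}\eqref{list-5}), and then each such ball is grown along a controlled chain $B_0\subsetneq B_1\subsetneq\cdots$ built from condition~\eqref{list:T-size} via Lemma~\ref{lem:DBB}\eqref{DBB-3}, stopped the first time $B_{k+1}^*\not\subset F$; this produces the children $G\in\F(A)$ together with auxiliary ``linking balls'' $\widetilde G\supset G^{(2)}$ satisfying $\Delta(G^{(2)},\widetilde G)\lesssim\C(T)$ (Lemma~\ref{lem:GA}). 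These linking balls, not a direct application of \eqref{list:T-size}, are what make the telescoping in Lemma~\ref{lem:TFA} close: the three terms $\mathscr J_1,\mathscr J_2,\mathscr J_3$ there use respectively \eqref{list:T-reg}, the maximal truncation $T_*$ (which is why $\Gamma$ must include $T_*$), and the $\Delta$-bound on the linking ball. Your sketch omits both $T_*$ and the linking-ball machinery, and without them the difference $\T(\vec f\mathbf 1_{Q^*})-\T(\vec f\mathbf 1_{Q_j^*})$ cannot be controlled on $Q_j$.
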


The second result is the local decay estimates for bounded oscillation operators and commutators. 

\begin{theorem}\label{thm:local}
Let $(\Sigma, \mu)$ be a measure space with a ball-basis $\B$. If $T$ is a $\bB$-valued multilinear bounded oscillation operator with respect to  $\B$ and $r \in [1, \infty)$ such that $T$ is bounded from $L^r(\Sigma, \mu) \times \cdots \times L^r(\Sigma, \mu)$ to $L^{\frac{r}{m}, \infty}(\Sigma, \mu)$, then there exists $\gamma>0$ such that for all $B \in \B$ and $f_i \in L^{\infty}_c(\Sigma, \mu)$ with $\supp(f_i) \subset B$, $1 \leq i \leq m$, 
\begin{align}
\mu\big(\big\{x \in B:  |T(\vec{f})(x)| > t \, \mathcal{M}_{\B, r} (\vec{f})(x)  \big\}\big) 
& \lesssim e^{- \gamma t}  \mu(B),  
\end{align}
for all $t>0$. If in addition $A_{\infty, \B}$ satisfies the sharp reverse H\"{o}lder property, then for any $\alpha \in \{0, 1\}^m$, 
\begin{align}
\mu \big(\big\{x \in B:  |[T, \b]_{\alpha}(\vec{f})(x)| > t \, \mathcal{M}_{\B, r}(\vec{f^*})(x)  \big\}\big)
\lesssim e^{-(\frac{\gamma t}{\|\b\|_{\tau}})^{\frac{1}{|\tau|+1}}} \mu(B),  
\end{align}
for all $t>0$, where $\tau=\{i: \alpha_i \neq 0\}$, $\|\b\|_{\tau} = \prod_{i \in \tau} \|b_i\|_{\osc_{\exp L}}$, and 
$f_i^*=M_{\B}^{\lfloor r \rfloor}(|f_i|^r)^{\frac1r}$, $i=1, \ldots, m$. Here and elsewhere, $\lfloor r \rfloor$ denotes the minimal integer no less than $r$. 
\end{theorem}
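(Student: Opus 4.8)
\textbf{Proof plan for Theorem \ref{thm:local}.}

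The plan is to derive both exponential decay estimates from the pointwise sparse domination in Theorem \ref{thm:sparse} combined with local Coifman--Fefferman type inequalities and an extrapolation argument, following the scheme indicated in the introduction. First I would observe that since $f_i \in L^\infty_c(\Sigma,\mu)$ with $\supp(f_i) \subset B$, the hypothesis $\|f_i\|_{L^r(B,\mu)}^r \ge \frac12\|f_i\|_{L^r(\Sigma,\mu)}^r$ holds trivially (with equality), so Theorem \ref{thm:sparse} applies and gives, for a.e. $x \in B$,
\begin{align*}
|T(\vec f)(x)| \lesssim \C(T)\,[\A_{\S_1,r}(\vec f)(x) + \A_{\S_2,r}(\vec f)(x)]
\end{align*}
for two $\frac{1}{2\C_0^3}$-sparse families $\S_1,\S_2 \subset \B$. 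Thus it suffices to prove the decay estimate for each sparse operator $\A_{\S,r}$ separately (splitting the level $t$ into $t/2$) on the ball $B$. The key analytic input is a local Coifman--Fefferman inequality of the form
\begin{align*}
\int_B \A_{\S,r}(\vec f)^q\, d\mu \lesssim C(q)\int_B \mathcal{M}_{\B,r}(\vec f)^q\, d\mu, \quad 0 < q < \infty,
\end{align*}
with controlled dependence of the constant $C(q)$ as $q \to \infty$; this is precisely the content of Lemma \ref{lem:Ub} (the local Coifman--Fefferman inequalities referenced in the introduction). Granting such bounds, I would run the standard Chebyshev-plus-optimization argument: writing $\Omega_t = \{x \in B : |T(\vec f)(x)| > t\,\mathcal{M}_{\B,r}(\vec f)(x)\}$, one has for every $q$,
\begin{align*}
\mu(\Omega_t) \le \frac{1}{(t/2)^q}\int_B \Big(\frac{\A_{\S,r}(\vec f)}{\mathcal{M}_{\B,r}(\vec f)}\Big)^q d\mu \lesssim \frac{C(q)}{t^q}\,\mu(B),
\end{align*}
and if $C(q)^{1/q} \lesssim q$ (or $\lesssim \Gamma(q+1)^{1/q}$), optimizing over $q$ — roughly $q \sim t$ — yields $\mu(\Omega_t) \lesssim e^{-\gamma t}\mu(B)$. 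A subtlety to handle carefully is that $\mathcal{M}_{\B,r}(\vec f)$ may vanish or be small on parts of $B$; one restricts to the region where it is positive (outside a null set, by the support condition) and uses that $\A_{\S,r}(\vec f)$ is supported where the relevant averages are nonzero, so the quotient is well-defined where it matters.

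For the commutator estimate I would proceed analogously but now using the sparse bound \eqref{Tbsparse}, which expresses $|[T,\b]_\alpha(\vec f)|$ as a finite sum over partitions $\tau_1 \uplus \tau_2 = \tau$ of the operators $\A_{\S_j,r}^{\b,\tau_1,\tau_2}(\vec f)$. Here the extra ingredient is the John--Nirenberg / sharp reverse Hölder property of $A_{\infty,\B}$ (assumed in the hypothesis), which lets one estimate the local $L^q$ norm of $\A_{\S,r}^{\b,\tau_1,\tau_2}(\vec f)$ against $\|\b\|_\tau^{|\tau|}$ times $\mathcal{M}_{\B,r}(\vec f^*)$ with a constant growing like $q^{|\tau|+1}$ rather than $q$ — the extra powers coming from the $|\tau|$ factors $|b_i - b_{i,B}|$ and $\langle(b_j-b_{j,B})f_j\rangle_{B,r_j}$, each of which contributes a factor $\|b_i\|_{\osc_{\exp L}}$ and a factor growing linearly in $q$ via the exponential integrability of $\BMO$ functions. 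This is where the replacement of $f_i$ by $f_i^* = M_\B^{\lfloor r\rfloor}(|f_i|^r)^{1/r}$ enters: one absorbs the averages $\langle(b_j - b_{j,B})f_j\rangle_{B,r_j}$ using Hölder in the exponential/logarithmic Orlicz duality $\exp L^{1/|\tau|}$ versus $L\log^{|\tau|} L$, and the $L\log^{|\tau|}L$ average of $|f_j|^r$ is dominated pointwise by (a power of) the iterated maximal function $M_\B^{\lfloor r\rfloor}$, i.e. by $(f_j^*)^r$. Feeding the resulting bound $\int_B (\A_{\S,r}^{\b,\tau_1,\tau_2}(\vec f)/\mathcal{M}_{\B,r}(\vec f^*))^q d\mu \lesssim (C\|\b\|_\tau q^{|\tau|+1})^q \mu(B)$ into Chebyshev and optimizing in $q$ — now the optimal $q$ is of order $(t/\|\b\|_\tau)^{1/(|\tau|+1)}$ — produces the stated sub-exponential decay $e^{-(\gamma t/\|\b\|_\tau)^{1/(|\tau|+1)}}\mu(B)$.

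The main obstacle I anticipate is establishing the local Coifman--Fefferman inequalities with the sharp growth of the constant in $q$, i.e. Lemma \ref{lem:Ub}, in the abstract measure-space setting without any dyadic structure: the classical Euclidean proof uses either the Carleson embedding theorem for sparse families or a good-$\lambda$ inequality between $\A_{\S,r}$ and $\mathcal{M}_{\B,r}$, and both must be reworked using only the ball-basis axioms \eqref{list:B1}--\eqref{list:B4} and the stopping-time generation-ball construction. In particular, tracking the constant's dependence on $q$ through the Carleson/embedding step — and through the Orlicz Hölder step in the commutator case — to get exactly $C(q)^{1/q} \sim q$ (resp. $\sim q^{|\tau|+1}$) is the delicate quantitative point; the localization to a fixed $B \in \B$ (rather than all of $\Sigma$) should actually help, since sparse families can be taken to consist of balls contained in a bounded region, but one must be careful that $B^* \subsetneq \Sigma$ is not needed here and that the extrapolation/reverse-Hölder constants for $A_{\infty,\B}$ are uniform. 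Once Lemma \ref{lem:Ub} is in hand, the optimization arguments are routine.
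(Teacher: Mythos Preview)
Your high-level strategy---sparse domination from Theorem~\ref{thm:sparse}, a local Coifman--Fefferman inequality, Chebyshev, and optimization in an auxiliary exponent $q$---matches the paper's, and your identification of the role of $f_i^*$ (via the $L(\log L)^r$--$\exp L^{1/r}$ Orlicz duality and the pointwise bound $\|\,|f|^r\|_{L(\log L)^r,B'}\lesssim\fint_{B'}M_\B^{\lfloor r\rfloor}(|f|^r)\,d\mu$) and of the sharp reverse H\"older hypothesis is correct. However, you have misread the content of Lemma~\ref{lem:Ub}: it is \emph{not} an $L^q$ inequality $\int_B \A_{\S,r}(\vec f)^q\,d\mu\lesssim C(q)\int_B \M_{\B,r}(\vec f)^q\,d\mu$ with tracked growth $C(q)^{1/q}\sim q$, but an $L^1$ \emph{weighted} estimate
\[
\|[T,\b]_\alpha(\vec f)\|_{L^1(B_0,w)} \lesssim \|\b\|_\tau\,[w]_{A_{p,\B}}^{|\tau|+1}\,\|\widetilde{\M}_{\B,L(\log L)^r}(\vec f)\|_{L^1(B_0^{(3)},w)}
\]
valid for every $w\in A_{p,\B}$. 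The paper converts this into the needed $L^q$ bound on the quotient by duality and a Rubio de Francia construction: write $\mathcal I(t)^{1/q}\le t^{-1}\|[T,\b]_\alpha(\vec f)/\M_{\B,r}(\vec f^*)\|_{L^q(B,\mu)}$, dualize against $h\in L^{q'}(B,\mu)$ with $\|h\|_{q'}=1$, replace $h$ by $\mathcal Rh:=\sum_{k\ge0}(2\|M_\B\|_{L^{q'}\to L^{q'}})^{-k}M_\B^k h\in A_{1,\B}$, and apply Lemma~\ref{lem:Ub} with the weight $w=\mathcal Rh\cdot\M_{\B,r}(\vec f^*)^{1-p}$ (the second factor lies in $A_{1,\B}$ by the multilinear Coifman--Rochberg lemma, Lemma~\ref{lem:CR}). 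All of the $q$-dependence enters through $[\mathcal Rh]_{A_{1,\B}}\lesssim\|M_\B\|_{L^{q'}\to L^{q'}}\sim q$ as $q'\to1^+$, giving $[w]_{A_{p,\B}}^{|\tau|+1}\lesssim q^{|\tau|+1}$; the optimization in $q$ then proceeds exactly as you describe.

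The paper's detour through weighted $L^1$ buys robustness: Lemma~\ref{lem:Ub} uses only $\eta$-sparseness (via $w(B')\lesssim[w]_{A_{p,\B}}w(E_{B'})$ for the disjoint pieces $E_{B'}$) together with Orlicz--H\"older and Lemma~\ref{lem:LlogL}; no nesting or tree structure of $\S$ is required. Your direct $L^q$ route, by contrast, amounts to proving an exponential tail for the overlap function $N(x)=\sum_{B'\in\S}\mathbf 1_{B'}(x)$, which is standard for nested Carleson families but does not follow from $\eta$-sparseness alone in an abstract ball-basis; you would have to go back to the specific generation-ball tree $\G$ built in the proof of Theorem~\ref{thm:sparse} to recover it, and the argument would be more involved than the one-line weighted estimate in Lemma~\ref{lem:Ub}.
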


In the endpoint case, we establish the weighted mixed weak type inequality as follows. See Section \ref{sec:Mucken} for some insight about Muckenhoupt weights. 

\begin{theorem}\label{thm:weak}
Let $(\Sigma, \mu)$ be a measure space with a ball-basis $\B$ so that $A_{1, \B} \subset \bigcup_{s>1} RH_{s, \B}$. Let $T$ be a $\bB$-valued multilinear bounded oscillation operator with respect to  $\B$ and $r \in [1, \infty)$ such that $T$ is bounded from $L^r(\Sigma, \mu) \times \cdots \times L^r(\Sigma, \mu)$ to $L^{\frac{r}{m}, \infty}(\Sigma, \mu)$. If $w \in A_{1, \B}$ and $v^{\frac{r}{m}} \in A_{\infty, \B}$, 
then 
\begin{align}
\bigg\|\frac{T(\vec{f})}{v}\bigg\|_{L^{\frac{r}{m},\infty}(\Sigma, \, w v^{\frac{r}{m}})}
&\lesssim \bigg\|\frac{\mathcal{M}_{\B, r}(\vec{f})}{v}\bigg\|_{L^{\frac{r}{m},\infty}(\Sigma, \, w v^{\frac{r}{m}})}.
\end{align}
\end{theorem}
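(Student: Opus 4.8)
The plan is to deduce Theorem~\ref{thm:weak} from the pointwise sparse domination of Theorem~\ref{thm:sparse} together with an endpoint weighted estimate for the sparse operators $\A_{\S,r}$, obtained by Rubio de Francia-type endpoint extrapolation on the measure space $(\Sigma,\mu)$. First I would reduce to the case $f_i\in L^\infty_c(\Sigma,\mu)$ (or truncations thereof), so that the localized hypotheses of Theorem~\ref{thm:sparse} apply: for a fixed ball $B$ one may assume, after splitting each $f_i$ into its restriction to $B$ and its complement and using the standard decomposition trick, that $\|f_i\|_{L^r(B,\mu)}^r\ge\frac12\|f_i\|_{L^r(\Sigma,\mu)}^r$, and then invoke \eqref{Tsparse} to get, a.e.\ on $B$,
\begin{align*}
|T(\vec f)(x)|\lesssim \C(T)\,\big[\A_{\S_1,r}(\vec f)(x)+\A_{\S_2,r}(\vec f)(x)\big].
\end{align*}
Since $L^{\frac rm,\infty}(\Sigma,wv^{\frac rm})$ is a quasi-Banach space with quasi-triangle inequality and the right-hand side is a sum of two terms, it suffices to bound $\|v^{-1}\A_{\S,r}(\vec f)\|_{L^{\frac rm,\infty}(\Sigma,wv^{\frac rm})}$ for a single sparse family $\S$ by $\|v^{-1}\mathcal M_{\B,r}(\vec f)\|_{L^{\frac rm,\infty}(\Sigma,wv^{\frac rm})}$, and then pass from a fixed ball $B$ to all of $\Sigma$ by a limiting/monotone-convergence argument using the uniform-in-$B$ constants (exactly as the excerpt advertises for the other theorems).

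The heart of the matter is therefore the mixed weak-type bound for the sparse form $\A_{\S,r}$. Here the scheme is the one from \cite{CMP} (endpoint extrapolation) adapted to $(\Sigma,\mu,\B)$: introduce the weight $W:=wv^{\frac rm}$, and exploit that $w\in A_{1,\B}$ and $v^{\frac rm}\in A_{\infty,\B}$ with the reverse-Hölder reservoir hypothesis $A_{1,\B}\subset\bigcup_{s>1}RH_{s,\B}$. One writes, for $\lambda>0$,
\begin{align*}
W\Big(\Big\{x:\ \tfrac{\A_{\S,r}(\vec f)(x)}{v(x)}>\lambda\Big\}\Big)
=W\Big(\Big\{x:\ \A_{\S,r}(\vec f)(x)>\lambda v(x)\Big\}\Big),
\end{align*}
and the standard device is to build, via a Rubio de Francia iteration algorithm associated to $\mathcal M_{\B}$ (or $\mathcal M_{\B,r}$) on $L^s(\Sigma,\mu)$ for the reverse-Hölder exponent $s$, an auxiliary weight $R$ with $R\approx$ the relevant maximal average, $R\in A_{1,\B}$ with controlled constant, and $Rw^{1-p}$ or a similar product still Muckenhoupt; then one reduces the mixed estimate to an $A_1$–$A_\infty$ Coifman–Fefferman inequality for $\A_{\S,r}$ against $\mathcal M_{\B,r}$. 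That Coifman–Fefferman step is precisely Lemma~\ref{lem:TM} quoted in the introduction, whose proof in turn rests on the $A_\infty$ extrapolation on general measure spaces (\cite[Theorem 3.34]{CMM}); I would cite these as given. The multilinearity enters only through the product structure $\prod_i\langle f_i\rangle_{B,r_i}$, which interacts cleanly with $\mathcal M_{\B,r}(\vec f)$, so the multilinear case follows the linear template with $\frac rm$ in place of $1$ as the target exponent and no genuinely new difficulty beyond bookkeeping.

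The main obstacle I anticipate is the absence of any dyadic/doubling structure: on $(\Sigma,\mu)$ with a mere ball-basis $\B$ one cannot use the classical Euclidean covering and Calderón–Zygmund machinery that underlies the usual proofs of the mixed weak-type bound. The resolution is to run every estimate through the $\B$-adapted sparse family (delivered by Theorem~\ref{thm:sparse}) and the $\B$-adapted maximal operators, and to substitute the Euclidean reverse-Hölder inequality by the \emph{hypothesis} $A_{1,\B}\subset\bigcup_{s>1}RH_{s,\B}$ and the $A_\infty$ extrapolation on measure spaces; the Rubio de Francia algorithm works verbatim once one has $L^s$-boundedness of $M_{\B}$, which holds for $s>1$ since $A_{1,\B}\subset A_{s,\B}$. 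A second, more technical point is the quasi-Banach nature of $L^{\frac rm,\infty}$ when $r<m$: the quasi-triangle constant must be tracked through the splitting into $\S_1,\S_2$ and through the local-to-global limiting argument, but since there are only finitely many (two) sparse pieces and the constants are uniform in $B$, this is harmless. With these substitutions in place, the argument is a routine — if lengthy — adaptation of the endpoint extrapolation proof, and the final inequality of the theorem follows.
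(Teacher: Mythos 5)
Your overall strategy matches the paper's: dominate via the sparse operators (Theorem~\ref{thm:sparse}), invoke a Coifman--Fefferman inequality of the type in Lemma~\ref{lem:TM} (proved by $A_\infty$ extrapolation from \cite[Theorem~3.34]{CMM}), and close with an endpoint Rubio de Francia iteration in the spirit of \cite{CMP}. The paper actually skips the extra ``mixed weak type for $\A_{\S,r}$'' layer: it applies Lemma~\ref{lem:TM} directly to $T$ (since the sparse step is already packaged inside that lemma) and then runs the Lorentz-space duality argument. That difference is organizational, not substantive.

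Where your write-up goes soft is in the Rubio de Francia step, and I want to flag the precise imprecisions because they are exactly the places where the argument is not routine. First, the iteration algorithm in the paper is built not from $M_\B$ but from the \emph{twisted} operator $S f := M_\B(fw)/w$; the fixed-point property $S(\mathcal Rh)\le 2K\,\mathcal Rh$ then yields $\mathcal Rh\cdot w\in A_{1,\B}$, not $\mathcal Rh\in A_{1,\B}$ as you state. Second, the combined weight that must be controlled is not ``$Rw^{1-p}$'' but $\mathcal Rh\cdot w\,v^{\frac{r}{ms'}}$, and what one needs is its membership in $A_{\infty,\B}$ (so that Lemma~\ref{lem:TM} applies); establishing this requires factoring $v^{r/m}=v_1 v_2^{1-q_0}$ with $v_1,v_2\in A_{1,\B}$ and invoking Lemma~\ref{lem:A1}\eqref{list:A12}, and \emph{this} is where the hypothesis $A_{1,\B}\subset\bigcup_{s>1}RH_{s,\B}$ is used — not, as you suggest, in producing the $L^s(\mu)$-boundedness of $M_\B$. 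That boundedness is unconditional for a ball-basis (Lemma~\ref{lem:M}, interpolation between the weak $(1,1)$ bound and $L^\infty$) and has nothing to do with the inclusion $A_{1,\B}\subset A_{s,\B}$. Third, the passage from the $L^1(w)$ Coifman--Fefferman bound to the weak $L^{r/m}$ mixed estimate goes through the identity $\|f^q\|_{L^{p,\infty}}=\|f\|_{L^{pq,\infty}}^q$, the duality of $L^{s,\infty}$ with $L^{s',1}$, and a Marcinkiewicz interpolation for $S$ in Lorentz spaces (\cite[Proposition~A.1]{CMP}); none of this appears in your sketch. The outline is right and the references are the correct ones, but the ``routine adaptation'' claim understates where the work actually is, and a reader following your description of the iterand would build the wrong auxiliary weight.
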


In order to present the quantitative weighted inequalities, given $\vec{w} \in A_{\vec{p}/\vec{r}}$ with $1 \le r_i < p_i<\infty$, $i=1, \ldots, m$, and a subset $\tau \subset \{1, \ldots, m\}$, we define 
\begin{align*}
\mathcal{N}_1(\vec{r}, \vec{p}, \vec{w}) := 
\begin{cases}
\prod_{i=1}^m \|M_{\B, \sigma_i}\|_{L^{\frac{p_i}{r_i}}(\Sigma, \sigma_i)}^{\frac{1}{r_i}} & \text{if } p \leq 1,  
\\
\|M_{\B, w}\|_{L^{p'}(\Sigma, w)}\prod_{i=1}^m \|M_{\B, \sigma_i}\|_{L^{\frac{p_i}{r_i}}(\Sigma, \sigma_i)}^{\frac{1}{r_i}}   & \text{if } p>1,
\end{cases}
\end{align*}
and 
\begin{align*}
\mathcal{N}_2^{\tau}(\vec{r}, \vec{p}, \vec{w}) := 
\begin{cases}
\prod_{j \in \tau} \|M_{\B, \sigma_j}\|_{L^{\frac{p_j}{r_j s_j}}(\Sigma, \sigma_i)}^{\frac{1}{r_j s_j}} & \text{if } p \leq 1,  
\\
\|M_{\B, w}\|_{L^{\frac{p'}{s}}(\Sigma, w)}^{\frac1s}  
\prod_{j \in \tau} \|M_{\B, \sigma_j}\|_{L^{\frac{p_j}{r_j s_j}}(\Sigma, \sigma_i)}^{\frac{1}{r_j s_j}}  & \text{if } p>1,
\end{cases}
\end{align*}
where $\frac1p=\sum_{i=1}^m \frac{1}{p_i}$, $w=\prod_{i=1}^m w_i^{\frac{p}{p_i}}$, and $\sigma_i=w_i^{\frac{r_i}{r_i-p_i}}$, $i=1, \ldots, m$. In the definition of $\mathcal{N}_2^{\tau}(\vec{r}, \vec{p}, \vec{w})$, it naturally requires $1<s<p'$ and $1<s_j<p_j/r_j$. If $\vec{r}=(r, \ldots, r)$, we simply write $\mathcal{N}_1(r, \vec{p}, \vec{w}) := \mathcal{N}_1(\vec{r}, \vec{p}, \vec{w})$ and $\mathcal{N}_2^{\tau}(r, \vec{p}, \vec{w}) := \mathcal{N}_2^{\tau}(\vec{r}, \vec{p}, \vec{w})$.

\begin{theorem}\label{thm:T}
Let $(\Sigma, \mu)$ be a measure space with a ball-basis $\B$. If $T$ is a $\bB$-valued multilinear bounded oscillation operator with respect to  $\B$ and $r \in [1, \infty)$ such that $T$ is bounded from $L^r(\Sigma, \mu) \times \cdots \times L^r(\Sigma, \mu)$ to $L^{\frac{r}{m}, \infty}(\Sigma, \mu)$, then for all $\vec{p}=(p_1, \ldots, p_m)$ with $r<p_1, \ldots, p_m<\infty$ and for all $\vec{w} \in A_{\vec{p}/r, \B}$, 
\begin{align}\label{eq:Npw}
\|T&\|_{L^{p_1}(\Sigma, w_1) \times \cdots \times L^{p_m}(\Sigma, w_m) \to L^p(\Sigma, w)} 
\lesssim \C(T) \, \mathcal{N}_1(r, \vec{p}, \vec{w}) 
[\vec{w}]_{A_{\vec{p}/r}}^{\max\limits_{1 \leq i \leq m} \{p, (\frac{p_i}{r})' \}}, 
\end{align}
where $\frac1{p}=\sum_{i=1}^m \frac1{p_i}$, $w=\prod_{i=1}^m w_i^{\frac{p}{p_i}}$, and $\C(T) := \C_1(T) + \C_2(T) + \|T\|_{L^r(\mu) \times \cdots \times L^r(\mu) \to L^{\frac{r}{m}, \infty}(\mu)}$. 

If in addition $A_{\infty, \B}$ satisfies the sharp reverse H\"{o}lder property, then for the same exponents $\vec{p}$ and weights  $\vec{w}$, and for all $\alpha \in \{0, 1\}^m$, 
\begin{multline}
\|[T, \b]_{\alpha}\|_{L^{p_1}(\Sigma, w_1) \times \cdots \times L^{p_m}(\Sigma, w_m) \rightarrow L^p(\Sigma, w)} 
\\ 
\lesssim \C(T) \, \mathcal{N}_1(r, \vec{p}, \vec{w})  
\sum_{\tau_2 \subset \tau} \mathcal{N}_2^{\tau_2}(r, \vec{p}, \vec{w}) \|\b\|_{\tau} 
[\vec{w}]_{A_{\vec{p}/r}}^{(|\tau| +1) \max\limits_{1 \leq i \leq m} \{p, (\frac{p_i}{r})' \}}, 
\end{multline}
where $\tau=\tau(\alpha) := \{i: \alpha_i \neq 0\}$ and $\|\b\|_{\tau} = \prod_{i \in \tau} \|b_i\|_{\osc_{\exp L}}$. 
\end{theorem}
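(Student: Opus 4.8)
The plan is to deduce Theorem~\ref{thm:T} from the pointwise sparse domination in Theorem~\ref{thm:sparse} together with quantitative weighted bounds for the sparse operators $\A_{\S,r}$ and $\A_{\S,r}^{\b,\tau_1,\tau_2}$. The first step is reductions: since the $L^{p_1}\times\cdots\times L^{p_m}\to L^p$ norm is determined by its action on functions, and since the sparse estimate in Theorem~\ref{thm:sparse} is only local (it needs $\|f_i\|_{L^r(B,\mu)}^r\ge\frac12\|f_i\|_{L^r(\Sigma,\mu)}^r$ and produces families depending on $B$), I would run the standard localization-and-limiting argument flagged in the introduction. Namely, fix $\vec f\in\prod L^{p_i}(\Sigma,w_i)$; it suffices to bound $\|T(\vec f)\mathbf 1_{B}\|_{L^p(B,w)}$ uniformly over an exhausting sequence of balls $B\uparrow\Sigma$ (using \eqref{list:B3}), after first replacing $f_i$ by $f_i\mathbf 1_{B}$ so that the mass condition is met; one checks that $\mathcal M_{\B,r}$ and hence the sparse operators behave well under this truncation, so the uniform bound passes to the limit by monotone convergence. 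The upshot is that it is enough to prove the asserted weighted inequalities for $\A_{\S,r}$ and for $\A_{\S,r}^{\b,\tau_1,\tau_2}$ with constants uniform over $\frac{1}{2\C_0^3}$-sparse families $\S\subset\B$.

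The second step is the quantitative weighted theory for the sparse forms, carried out in the ``Sharp weighted estimates'' section the introduction refers to. For $\A_{\S,r}$, I would rescale by $r$ (writing $g_i=|f_i|^r$, $\tilde p_i=p_i/r$, $\tilde w=\vec w$) to reduce to the classical $r=1$ multilinear sparse operator, then run the standard two-weight testing / parallel-corona argument adapted to a ball-basis: split into the regions where the maximal function of each $\sigma_i=w_i^{1/(1-p_i/r)}$ dominates, apply a generalized Hölder inequality and the $A_{\vec p/r}$ condition to pull out $[\vec w]_{A_{\vec p/r}}$, and control the remaining averages by the (weighted) maximal operators $M_{\B,\sigma_i}$ and, when $p>1$, by a duality argument invoking $M_{\B,w}$ on $L^{p'}(w)$. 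Tracking the exponent of $[\vec w]_{A_{\vec p/r}}$ through this argument produces exactly $\max_i\{p,(p_i/r)'\}$, and the operator-norm factors of the maximal operators assemble into $\mathcal N_1(r,\vec p,\vec w)$. For the commutator form $\A_{\S,r}^{\b,\tau_1,\tau_2}$, the new ingredient is handling the factors $|b_i-b_{i,B}|$ and $\langle(b_j-b_{j,B})f_j\rangle_{B,r_j}$; here I would use the $\osc_{\exp L}$ norm together with a John--Nirenberg-type inequality valid on the ball-basis (this is where the hypothesis that $A_{\infty,\B}$ satisfies the sharp reverse Hölder property enters, via Lemma-type results cited in the excerpt) to estimate $\langle\exp(|b_j-b_{j,B}|/\|b_j\|_{\osc_{\exp L}})\rangle_B\lesssim 1$, and then absorb the exponential into an extra $L^{s_j}$/$L^{s}$ integrability gain, producing the factor $\mathcal N_2^{\tau_2}(r,\vec p,\vec w)\,\|\b\|_\tau$ and multiplying the $[\vec w]_{A_{\vec p/r}}$-exponent by $(|\tau|+1)$ (one power per commuting entry plus the base operator). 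Summing over $\tau_2\subset\tau$ and over $\tau_1\uplus\tau_2=\tau$ as in \eqref{Tbsparse} gives the stated bound.

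The third step is simply to combine: plug \eqref{Tsparse} and \eqref{Tbsparse} into the weighted estimates from Step~2, note that $\C(T)$ is exactly the constant appearing in Theorem~\ref{thm:sparse}, and invoke the localization from Step~1 to remove the restriction $B\subsetneq\Sigma$. One should also double-check the edge case $p\le 1$ versus $p>1$ matches the two branches in the definitions of $\mathcal N_1$ and $\mathcal N_2^\tau$ — the dividing line is precisely whether the duality step (which costs $\|M_{\B,w}\|_{L^{p'}(w)}$) is needed, and in the quasi-Banach range $p\le1$ one instead uses the $\ell^p$-superadditivity trick on the disjoint sets $E_B$ underlying the sparse family, which is why no $M_{\B,w}$ factor appears there.

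I expect the main obstacle to be the sharp tracking of the $[\vec w]_{A_{\vec p/r}}$ exponent in the commutator estimate in the quasi-Banach regime $p\le 1$, together with making the exponential/John--Nirenberg step work on a general ball-basis rather than on dyadic cubes: the usual self-improvement arguments for $\osc_{\exp L}$ rely on iterating a Calderón--Zygmund stopping-time decomposition, and on a ball-basis one must instead use the stopping-time construction of generation balls from \cite{Kar} and the sharp reverse Hölder hypothesis, being careful that all constants depend only on $\C_0$ and the sparseness parameter. The localization/limiting argument of Step~1, while routine in spirit, also requires a little care because $\Sigma$ need not be $\sigma$-finite in a way compatible with $\B$, so one must use \eqref{list:B3} to build the exhaustion and verify the mass-normalization truncation does not lose the weighted norm.
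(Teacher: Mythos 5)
Your proposal follows essentially the same route as the paper: Theorem~\ref{thm:T} is deduced by combining the pointwise sparse domination of Theorem~\ref{thm:sparse} with quantitative weighted bounds for $\A_{\S, r}$ and $\A_{\S, r}^{\b, \tau_1, \tau_2}$ that are uniform over sparse families (the paper's Lemmas~\ref{lem:vvq} and~\ref{lem:Sparse}), handling $p>1$ by duality against $M_{\B, w}$ on $L^{p'}(\Sigma, w)$ and $p\le 1$ by direct $\ell^p$-summation over the disjoint sets $E_B$, exactly as you describe. One small correction: the sharp reverse H\"{o}lder hypothesis is not needed for a John--Nirenberg step on $\b$ --- the bound $\|b_j-b_{j,B}\|_{\exp L, B}\le \|b_j\|_{\osc_{\exp L}}$ is definitional --- but rather to control the Luxemburg norms $\|gw\|_{L(\log L)^{|\tau_1|}, B}$ and $\||f_j|^r\sigma_j\|_{L(\log L)^r, B}$ of the \emph{weights} (Lemma~\ref{lem:LlogL}), which is what generates the factors $M_{\B, w}(|g|^s)^{1/s}$ and $M_{\B, \sigma_j}(|f_j|^{rs_j})^{1/(rs_j)}$ and hence $\mathcal{N}_2^{\tau_2}(r,\vec p,\vec w)$.
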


We would like to improve Theorem \ref{thm:T}  in order to provide sharp weighted estimates. Observe that in absence of any further assumption on the measure space $(\Sigma, \mu)$, it is hard to obtain quantitative bounds of the weighted maximal functions appearing in $\mathcal{N}(r, \vec{p}, \vec{w})$ in \eqref{eq:Npw}. To achieve this target, we consider a geometric condition. We say that a basis $\B$ satisfies {\tt the Besicovitch condition} with a constant $N_0 \in \N_+$ if for any collection $\B' \subset \B$ one can find a subscollection $\B'' \subset \B'$ such that 
\begin{align*}
\bigcup_{B \in \B'} B = \bigcup_{B \in \B''} B 
\quad\text{ and }\quad 
\sum_{B \in \B''} \mathbf{1}_B(x) \le N_0, \quad x \in \Sigma. 
\end{align*}

\begin{theorem}\label{thm:T-Besi}
Let $(\Sigma, \mu)$ be a measure space with a ball-basis $\B$ satisfying the Besicovitch condition. If $T$ is a $\bB$-valued multilinear bounded oscillation operator with respect to $\B$ and $r \in [1, \infty)$ such that $T$ is bounded from $L^r(\Sigma, \mu) \times \cdots \times L^r(\Sigma, \mu)$ to $L^{\frac{r}{m}, \infty}(\Sigma, \mu)$, then for all $\vec{p}=(p_1, \ldots, p_m)$ with $r<p_1, \ldots, p_m<\infty$ and for all $\vec{w} \in A_{\vec{p}/r, \B}$, 
\begin{align}\label{T-Besi-sharp}
\|T\|_{L^{p_1}(\Sigma, w_1) \times \cdots \times L^{p_m}(\Sigma, w_m) \to L^p(\Sigma, w)} 
\lesssim \C(T) \, [\vec{w}]_{A_{\vec{p}/r, \B}}^{\max\limits_{1 \le i \le m}\{p, (\frac{p_i}{r})'\}}, 
\end{align}
where $\frac1{p}=\sum_{i=1}^m \frac1{p_i}$, $w=\prod_{i=1}^m w_i^{\frac{p}{p_i}}$, and $\C(T) := \C_1(T) + \C_2(T) + \|T\|_{L^r(\mu) \times \cdots \times L^r(\mu) \to L^{\frac{r}{m}, \infty}(\mu)}$. 

If in addition $A_{\infty, \B}$ satisfies the sharp reverse H\"{o}lder property, then for the same exponents $\vec{p}$ and weights  $\vec{w}$, and for all $\alpha \in \{0, 1\}^m$, 
\begin{align}\label{Tb-Besi-sharp}
&\|[T, \b]_{\alpha}\|_{L^{p_1}(\Sigma, w_1) \times \cdots \times L^{p_m}(\Sigma, w_m) \rightarrow L^p(\Sigma, w)} 
\lesssim \C(T)  \|\b\|_{\tau} 
[\vec{w}]_{A_{\vec{p}/r}}^{(|\tau| +1) \max\limits_{1 \leq i \leq m} \{p, (\frac{p_i}{r})' \}}, 
\end{align}
where $\tau=\tau(\alpha)=\{i: \alpha_i \neq 0\}$ and $\|\b\|_{\tau} = \prod_{i \in \tau} \|b_i\|_{\osc_{\exp L}}$. 
\end{theorem}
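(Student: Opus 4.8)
\textbf{Proof plan for Theorem \ref{thm:T-Besi}.}

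The plan is to deduce the two sharp estimates from the pointwise sparse domination in Theorem \ref{thm:sparse} combined with \emph{sharp} quantitative weighted bounds for the sparse operators $\A_{\S, r}$ and $\A_{\S, r}^{\b, \tau_1, \tau_2}$, exploiting the Besicovitch condition to control the weighted maximal operators that appeared in $\mathcal{N}_1$ and $\mathcal{N}_2^{\tau}$ in Theorem \ref{thm:T}. First I would recall from the sparse estimates of Section \ref{sec:sharp} (the quantitative weighted bounds for multilinear dyadic operators alluded to in the introduction) that for every $\frac{1}{2\C_0^3}$-sparse family $\S \subset \B$,
\begin{align*}
\|\A_{\S, r}\|_{L^{p_1}(w_1) \times \cdots \times L^{p_m}(w_m) \to L^p(w)}
\lesssim \mathcal{N}_1(r, \vec{p}, \vec{w}) \, [\vec{w}]_{A_{\vec{p}/r}}^{\max_{1 \le i \le m}\{p,(\frac{p_i}{r})'\}},
\end{align*}
and similarly for $\A_{\S, r}^{\b, \tau_1, \tau_2}$ with the extra factor $\mathcal{N}_2^{\tau_2}(r,\vec{p},\vec{w})\,\|\b\|_\tau$ and the power raised to $(|\tau|+1)\max_i\{p,(\frac{p_i}{r})'\}$. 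Summing over $\S_1, \S_2$ and over $\tau_1 \uplus \tau_2 = \tau$ and absorbing the finitely many terms into the implicit constant, this is exactly Theorem \ref{thm:T}. So the only thing left is to show that, under the Besicovitch condition, the maximal-operator norms hidden in $\mathcal{N}_1$ and $\mathcal{N}_2^\tau$ are bounded by an absolute constant (depending only on $N_0$ and the exponents), so that they may be dropped.

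The key point is a Buckley-type sharp bound: if $\B$ satisfies the Besicovitch condition with constant $N_0$, then for any $u \in A_{q, \B}$ and $1<q<\infty$ the weighted maximal operator $M_{\B, u}$ (the centered/uncentered maximal operator with respect to the measure $u\,d\mu$, or equivalently $M_\B(fu)/u$ up to $A_\infty$ factors) is bounded on $L^q(u)$ with norm controlled by a constant times $[u]_{A_{q,\B}}^{1/(q-1)}$; but crucially, $M_{\B, \sigma}$ acting on $L^{p_i/r_i}(\Sigma, \sigma_i)$ is the maximal operator \emph{with respect to} the measure $\sigma_i \, d\mu$, which is \emph{always} bounded on $L^{p_i/r_i}(\sigma_i\,d\mu)$ with a constant depending only on $p_i/r_i$ and $N_0$ — this is precisely the content of the Besicovitch covering property, which gives a weak $(1,1)$ bound for the maximal operator relative to any measure, hence $L^q$ boundedness for $q>1$ by Marcinkiewicz interpolation, with no dependence on the measure. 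The same applies to $M_{\B, w}$ on $L^{p'}(\Sigma, w)$ when $p>1$ and to $M_{\B, w}$ on $L^{p'/s}(\Sigma, w)$ in $\mathcal{N}_2^\tau$. Thus $\mathcal{N}_1(r,\vec{p},\vec{w}) \lesssim_{N_0, \vec{p}, r} 1$ and $\mathcal{N}_2^\tau(r,\vec{p},\vec{w}) \lesssim_{N_0, \vec{p}, r} 1$, uniformly in $\vec{w}$. Substituting into Theorem \ref{thm:T} and relabelling constants yields \eqref{T-Besi-sharp} and \eqref{Tb-Besi-sharp}; the second inequality additionally uses the hypothesis that $A_{\infty, \B}$ has the sharp reverse Hölder property, which is exactly what Theorem \ref{thm:T} requires for the commutator bound, together with the elementary fact $[\vec{w}]_{A_{\vec p/r,\B}} = [\vec{w}]_{A_{\vec p/r}}$ (the two notations coincide).

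I expect the main obstacle to be the measure-theoretic justification that the Besicovitch condition genuinely yields a weak-$(1,1)$ bound for the maximal operator with respect to an \emph{arbitrary} Borel measure $\nu = \sigma_i \, d\mu$ on $(\Sigma, \mu)$ — i.e.\ that one may run the standard Besicovitch covering argument with $\nu$ in place of $\mu$. This needs a slightly careful statement: one covers the level set $\{M_{\B, \nu} f > \lambda\}$ by balls $B$ with $\nu(B)^{-1}\int_B |f|\,d\nu > \lambda$, applies the Besicovitch selection to extract a subfamily with overlap $\le N_0$ covering the same union, and sums $\nu(B) \le \lambda^{-1}\int_B|f|\,d\nu$, the overlap bound turning $\sum_B \int_B |f|\,d\nu$ into $N_0 \|f\|_{L^1(\nu)}$. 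Since this argument uses only the set-theoretic overlap property and not any doubling or regularity of $\nu$, it goes through; but one should check that the version of the maximal operator used in the definition of $M_{\B, \nu}$ (sup over $B \in \B$ containing $x$, via $\lfloor\,\cdot\,\rfloor$) is the one to which the covering applies, and that passing to the relevant subcollection of $\B'$ is legitimate under \eqref{list:B3}. Once that lemma is in place — it is likely already isolated in the section on weighted estimates — the rest is bookkeeping.
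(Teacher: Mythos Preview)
Your proposal is correct and follows essentially the same route as the paper: reduce to the sparse bounds of Theorem~\ref{thm:T} (equivalently, Lemma~\ref{lem:Sparse}), then observe that under the Besicovitch condition the weighted maximal operators $M_{\B,\sigma_i}$ and $M_{\B,w}$ appearing in $\mathcal{N}_1$ and $\mathcal{N}_2^{\tau}$ are bounded on the relevant $L^q(\sigma_i)$, $L^q(w)$ spaces with constants depending only on $N_0$ and the exponents. The paper isolates exactly this fact as Lemma~\ref{lem:M-Besi} (citing \cite[Theorem~6.3]{Kar}), proved via the weak-$(1,1)$ Besicovitch covering argument plus interpolation that you sketch, and then packages the resulting sparse estimate as Lemma~\ref{lem:Sparse-Besi}.
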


The weighted bound in \eqref{T-Besi-sharp} is {\tt sharp} for the multilinear $\omega$-Calder\'{o}n-Zygmund operators in  Euclidean case or in spaces of homogeneous type (cf. Section \ref{sec:CZO}). Additionally, it was shown by the first and last authors \cite{CY} that the estimates \eqref{T-Besi-sharp} and \eqref{Tb-Besi-sharp} are optimal for multilinear Littlewood-Paley operators with kernels satisfying the $L^r$-H\"{o}rmander condition.

On the other hand, relying on Theorem \ref{thm:T}, we are able to establish the weighted compactness for some specific multilinear operators on spaces of homogeneous type (cf. Sections \ref{sec:CZO}--\ref{sec:mod}). In the multilinear setting, it is significant to get results in the quasi-Banach range. To attain the aim, we present the Rubio de Francia extrapolation for multilinear compact operators as follows.  
\begin{theorem}\label{thm:Ap}
Let $(\Sigma, \rho, \mu)$ be a complete space of homogeneous type with a metrically continuous measure $\mu$. Assume that $T$ is an $m$-linear or $m$-linearizable operator satisfying 
\begin{list}{\rm (\theenumi)}{\usecounter{enumi}\leftmargin=1.2cm \labelwidth=1cm \itemsep=0.2cm \topsep=.2cm \renewcommand{\theenumi}{\roman{enumi}}}

\item\label{list-1} there exists $\vec{r}=(r_1, \ldots, r_m)$ with $1 < r_1, \ldots, r_m <\infty$ such that for all $\vec{u}=(u_1, \ldots, u_m) \in A_{\vec{r}, \B_{\rho}}$, 
\begin{align}\label{eq:Ap-1}
T \text{ is bounded from $L^{r_1}(\Sigma, u_1) \times \cdots \times L^{r_m}(\Sigma, u_m)$ to $L^r(\Sigma, u)$}, 
\end{align}
where $\frac1r=\sum_{i=1}^m \frac{1}{r_i}$ and $u=\prod_{i=1}^m u_i^{\frac{r}{r_i}}$. 

\item\label{list-2} there exist $\vec{s}=(s_1, \ldots, s_m)$ with $1 < s_1, \ldots, s_m <\infty$ and $\vec{v}=(v_1, \ldots, v_m) \in A_{\vec{s}, \B_{\rho}}$ such that 
\begin{align}\label{eq:Ap-2}
T \text{ is compact from $L^{s_1}(\Sigma, v_1) \times \cdots \times L^{s_m}(\Sigma, v_m)$ to $L^s(\Sigma, v)$},   
\end{align} 
where $\frac1s=\sum_{i=1}^m \frac{1}{s_i}$ and $v=\prod_{i=1}^m v_i^{\frac{s}{s_i}}$. 
\end{list} 
Then for all $\vec{p}=(p_1, \dots, p_m)$ with $1 < p_1, \ldots, p_m <\infty$ and for all $\vec{w}=(w_1, \ldots, w_m) \in A_{\vec{p}, \B_{\rho}}$, 
\begin{equation}
T \text{ is compact from $L^{p_1}(\Sigma, w_1) \times \cdots \times L^{p_m}(\Sigma, w_m)$ to $L^p(\Sigma, w)$,}
\end{equation} 
where $\frac1p=\sum_{i=1}^m \frac{1}{p_i}$ and $w=\prod_{i=1}^m w_i^{\frac{p}{p_i}}$.  
\end{theorem}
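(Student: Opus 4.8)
The plan is to deduce Theorem~\ref{thm:Ap} by a Rubio de Francia-type interpolation argument, using the multilinear compact interpolation theorem (Theorem~\ref{thm:WMIP-4}) as the engine and the multilinear Muckenhoupt extrapolation (Theorem~\ref{thm:extraAp}) to produce the boundedness endpoint. Fix $\vec p=(p_1,\dots,p_m)$ with $1<p_i<\infty$ and $\vec w\in A_{\vec p,\B_\rho}$. We want to realize $L^{p_i}(\Sigma,w_i)$ as a (real or complex) interpolation space between two weighted Lebesgue spaces, one of which is the space $L^{s_i}(\Sigma,v_i)$ appearing in hypothesis \eqref{list-2}, so that compactness on that space can be propagated. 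Concretely, first I would choose $\vec r=(r_1,\dots,r_m)$ as in hypothesis \eqref{list-1}, and then for each $i$ find $\theta\in(0,1)$ and a pair $(u_i,r_i)$, $(v_i,s_i)$ with $\vec u\in A_{\vec r,\B_\rho}$, $\vec v$ the fixed weight from \eqref{list-2}, such that
\begin{equation*}
\frac1{p_i}=\frac{1-\theta}{r_i}+\frac{\theta}{s_i},
\qquad
w_i^{1/p_i}=u_i^{(1-\theta)/r_i}\,v_i^{\theta/s_i},
\end{equation*}
with the same $\theta$ working simultaneously for all $i$ (this forces $\frac1p=\frac{1-\theta}r+\frac\theta s$ and $w^{1/p}=u^{(1-\theta)/r}v^{\theta/s}$ on the product side as well). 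The construction of such an admissible $\vec u\in A_{\vec r,\B_\rho}$ is the standard off-diagonal trick: solve for $u_i$ from the displayed identity, so $u_i=(w_i^{1/p_i}v_i^{-\theta/s_i})^{r_i/(1-\theta)}$, and then verify $\vec u\in A_{\vec r,\B_\rho}$. This last verification is the delicate part of the bookkeeping — one needs the $A_{\vec p}$ characterization together with openness/self-improvement of multilinear Muckenhoupt classes on spaces of homogeneous type, and a judicious choice of $\theta$ close to $0$ (so that $\vec u$ stays near $\vec w\in A_{\vec p}$, which sits in $A_{\vec r}$ after adjusting exponents) to make all the reverse-H\"older exponents cooperate.

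With such a splitting in hand, the proof proceeds in three steps. Step one: by hypothesis \eqref{list-1}, $T$ is bounded $L^{r_1}(\Sigma,u_1)\times\cdots\times L^{r_m}(\Sigma,u_m)\to L^r(\Sigma,u)$; in fact, invoking the multilinear extrapolation Theorem~\ref{thm:extraAp} starting from \eqref{eq:Ap-1}, boundedness holds for \emph{all} admissible tuples, so we are free in the choice of $(\vec r,\vec u)$ above. Step two: by hypothesis \eqref{list-2}, $T$ is compact $L^{s_1}(\Sigma,v_1)\times\cdots\times L^{s_m}(\Sigma,v_m)\to L^s(\Sigma,v)$. Step three: apply Theorem~\ref{thm:WMIP-4} with $w_0=u$, $v_0=v$ (both in $A_\infty$, as required there) to interpolate between the bounded operator of Step one and the compact operator of Step two, obtaining compactness of $T$ from $L^{p_1}(\Sigma,w_1)\times\cdots\times L^{p_m}(\Sigma,w_m)\to L^p(\Sigma,w)$. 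This yields exactly the conclusion. A minor technical point to address is that Theorem~\ref{thm:WMIP-4} requires $T$ to be defined and multilinear (or $m$-linearizable) on a common dense domain; this is automatic since $T$ is assumed $m$-linear or $m$-linearizable throughout, and simple functions are dense in all the weighted spaces in play on a space of homogeneous type.

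The main obstacle I anticipate is precisely the weight-splitting feasibility in the first paragraph: given an arbitrary target $\vec w\in A_{\vec p,\B_\rho}$ and the \emph{fixed} weight $\vec v$ from hypothesis \eqref{list-2}, one must produce a single interpolation parameter $\theta$ and an auxiliary weight $\vec u\in A_{\vec r,\B_\rho}$ (for some admissible $\vec r$, which Step one allows us to pick freely) making the exponent and weight identities hold across all $m$ entries at once. The resolution is to exploit two degrees of freedom — the choice of $\vec r$ (via the already-extrapolated boundedness) and the choice of $\theta$ — and to push $\theta\to 0^+$, so that $\vec u$ is a small perturbation of $\vec w$ rescaled to the exponents $\vec r$; the open-ended nature of $A_{\vec p,\B_\rho}$ on spaces of homogeneous type then guarantees $\vec u$ lands in $A_{\vec r,\B_\rho}$. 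Everything else — invoking the two hypotheses, quoting Theorems~\ref{thm:extraAp} and \ref{thm:WMIP-4} — is then a direct assembly. One should also double-check the quasi-Banach bookkeeping when $p<1$ (which can happen for $m\ge 2$): Theorem~\ref{thm:WMIP-4} is stated for $p\in(0,\infty)$, so no separate argument is needed, but the identity $\frac1p=\frac{1-\theta}r+\frac\theta s$ must be tracked carefully since $r,s$ may themselves be less than $1$.
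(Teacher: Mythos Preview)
Your proposal is correct and follows essentially the same route as the paper. The paper packages the weight-splitting step you identify as ``the delicate part'' into a standalone lemma (Lemma~\ref{lem:int-Lp}): given $\vec w\in A_{\vec p,\B_\rho}$ and $\vec v\in A_{\vec s,\B_\rho}$, there exist $\theta\in(0,1)$, $\vec r$ with $1<r_i<\infty$, and $\vec u\in A_{\vec r,\B_\rho}$ realizing the interpolation identities; its proof (following \cite[Lemma~4.1]{COY}) rests on the reverse H\"older inequality, exactly the openness mechanism you describe. After that, the paper's proof is the three-step assembly you outline: extrapolate \eqref{eq:Ap-1} via Theorem~\ref{thm:extraAp} to obtain boundedness at $(\vec r,\vec u)$, invoke hypothesis~\eqref{list-2} for compactness at $(\vec s,\vec v)$, note $u,v\in A_{\infty,\B_\rho}$ by Lemma~\ref{lem:ApAp}, and apply Theorem~\ref{thm:WMIP-4}.
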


We also prove the following extrapolation theorem in order to obtain weighted compactness for commutators. 
\begin{theorem}\label{thm:Tb}
Let $(\Sigma, \rho, \mu)$ be a complete space of homogeneous type with a metrically continuous measure $\mu$. Let $\alpha \in \N^m$ be a multi-index and $\b=(b_1,\ldots, b_m) \in \BMO_{\B_{\rho}}^m$. Assume that $T$ is an $m$-linear or $m$-linearizable operator satisfying 
 
\begin{list}{\rm (\theenumi)}{\usecounter{enumi}\leftmargin=1cm \labelwidth=1cm \itemsep=0.2cm \topsep=.2cm \renewcommand{\theenumi}{\roman{enumi}}}

\item\label{list:Tb1} there exists $\vec{r}=(r_1, \ldots, r_m)$ with $1 < r_1, \ldots, r_m <\infty$ such that for all $\vec{u}=(u_1, \ldots, u_m) \in A_{\vec{r}, \B_{\rho}}$, 
\begin{align}\label{eq:bT-1}
T \text{ is bounded from $L^{r_1}(\Sigma, u_1) \times \cdots \times L^{r_m}(\Sigma, u_m)$ to $L^r(\Sigma, u)$}, 
\end{align}
where $\frac1r=\sum_{i=1}^m \frac{1}{r_i}$ and $u=\prod_{i=1}^m u_i^{\frac{r}{r_i}}$. 

\item\label{list:Tb2} there exists $\vec{s}=(s_1, \ldots, s_m)$ with $1 < s_1, \ldots, s_m <\infty$ such that 
\begin{align}\label{eq:bT-2}
[T, \b]_{\alpha} \text{ is compact from $L^{s_1}(\Sigma, \mu) \times \cdots \times L^{s_m}(\Sigma, \mu)$ to $L^s(\Sigma, \mu)$}, 
\end{align} 
where $\frac1s =\sum_{i=1}^m \frac{1}{s_i}$. 
\end{list} 
Then for all $\vec{p}=(p_1, \dots, p_m)$ with $1 < p_1, \ldots, p_m <\infty$ and for all $\vec{w}=(w_1, \ldots, w_m) \in A_{\vec{p}, \B_{\rho}}$, 
\begin{align}
[T, \b]_{\alpha} \text{ is compact from $L^{p_1}(\Sigma, w_1) \times \cdots \times L^{p_m}(\Sigma, w_m)$ to $L^p(\Sigma, w)$} 
\end{align}
where $\frac1p=\sum_{i=1}^m \frac{1}{p_i}$ and $w=\prod_{i=1}^m w_i^{\frac{p}{p_i}}$.  
\end{theorem}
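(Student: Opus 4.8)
The plan is to derive Theorem~\ref{thm:Tb} by combining the compact extrapolation for $m$-linear operators (Theorem~\ref{thm:Ap}) with the extrapolation from operators to commutators (Theorem~\ref{thm:TTb}). The key observation is that, for the fixed symbol vector $\b$ and the fixed multi-index $\alpha$, the map $\vec{f}\mapsto[T,\b]_\alpha(\vec{f})$ is itself an $m$-linear (respectively $m$-linearizable) operator in the entries $f_1,\ldots,f_m$: the factors $(b_i(x)-b_i(\cdot))^{\alpha_i}$ entering the kernel representation act as fixed multipliers, so linearity (or the triangle inequality) in each $f_i$ is preserved, and when $T$ is $m$-linearizable with $T(\vec{f})(x)=\|\T(\vec{f})(x)\|_{\bB}$ one has $[T,\b]_\alpha(\vec{f})(x)=\|[\T,\b]_\alpha(\vec{f})(x)\|_{\bB}$ with $[\T,\b]_\alpha$ a $\bB$-valued $m$-linear operator. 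Thus $[T,\b]_\alpha$ is an admissible operator for Theorem~\ref{thm:Ap}, and the whole argument reduces to checking the two hypotheses of that theorem for it.

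First I would upgrade the boundedness hypothesis \eqref{eq:bT-1} to \emph{weighted} boundedness of the commutator. From \eqref{eq:bT-1}, the multilinear Rubio de Francia extrapolation for Muckenhoupt weights (Theorem~\ref{thm:extraAp}) gives that $T$ is bounded from $L^{q_1}(\Sigma,u_1)\times\cdots\times L^{q_m}(\Sigma,u_m)$ to $L^{q}(\Sigma,u)$ for every $\vec{q}=(q_1,\ldots,q_m)$ with $1<q_i<\infty$ and every $\vec{u}\in A_{\vec{q},\B_{\rho}}$, with norm controlled by a power of $[\vec{u}]_{A_{\vec{q}}}$. Then I would invoke Theorem~\ref{thm:TTb}, whose bounded part rests on the conjugation trick: since $\b\in\BMO_{\B_{\rho}}^m$, the exponentially modulated operators
\begin{equation*}
T_{\vec{z}}(\vec{f})(x):=e^{\sum_{j=1}^{m}z_j b_j(x)}\,T\!\bigl(e^{-z_1 b_1}f_1,\ldots,e^{-z_m b_m}f_m\bigr)(x)
\end{equation*}
are uniformly bounded from $L^{p_1}(\Sigma,w_1)\times\cdots\times L^{p_m}(\Sigma,w_m)$ to $L^{p}(\Sigma,w)$ for $\vec{z}=(z_1,\ldots,z_m)$ in a small enough polydisc and every $\vec{w}\in A_{\vec{p},\B_{\rho}}$; here one uses that multiplying the weights by $e^{\varepsilon b_j}$ with $\varepsilon$ small keeps $\vec{w}$ in $A_{\vec{p}}$ with controlled constant, a consequence of the John--Nirenberg and reverse H\"{o}lder properties on spaces of homogeneous type. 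Expanding $T_{\vec{z}}(\vec{f})=\sum_{\beta\in\N^m}\frac{\vec{z}^{\,\beta}}{\beta!}\,[T,\b]_\beta(\vec{f})$ and extracting the coefficient of $\vec{z}^{\,\alpha}$ by the iterated Cauchy integral formula over the torus $\{|z_1|=\cdots=|z_m|=\varepsilon\}$ shows that $[T,\b]_\alpha$ is bounded from $L^{p_1}(\Sigma,w_1)\times\cdots\times L^{p_m}(\Sigma,w_m)$ to $L^{p}(\Sigma,w)$ for all $1<p_i<\infty$ and all $\vec{w}\in A_{\vec{p},\B_{\rho}}$; in particular, for any fixed tuple $\vec{q}$ the operator $[T,\b]_\alpha$ is bounded on all of $A_{\vec{q},\B_{\rho}}$, which is condition \eqref{eq:Ap-1} of Theorem~\ref{thm:Ap} for $[T,\b]_\alpha$.

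The remaining condition \eqref{eq:Ap-2} of Theorem~\ref{thm:Ap} for $[T,\b]_\alpha$ is exactly hypothesis \eqref{eq:bT-2} with weight vector $\vec{v}=(1,\ldots,1)$, which lies in $A_{\vec{s},\B_{\rho}}$ because the constant weight $1$ belongs to every $A_{s_i}$ class on a space of homogeneous type. Applying Theorem~\ref{thm:Ap} to the $m$-linear (or $m$-linearizable) operator $[T,\b]_\alpha$ then gives that it is compact from $L^{p_1}(\Sigma,w_1)\times\cdots\times L^{p_m}(\Sigma,w_m)$ to $L^{p}(\Sigma,w)$ for all $\vec{p}$ with $1<p_i<\infty$ and all $\vec{w}\in A_{\vec{p},\B_{\rho}}$, which is the assertion. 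I expect the main obstacle to lie in the weighted boundedness step, i.e.\ in Theorem~\ref{thm:TTb}: the conjugation argument needs both the quantitative (power-of-$[\vec{w}]_{A_{\vec{p}}}$) multilinear extrapolation for $T$ and the perturbation stability of the $A_{\vec{p}}$ classes with controlled constants, and both ingredients genuinely rely on the space-of-homogeneous-type structure, which is why Theorem~\ref{thm:Tb}, unlike Theorems~\ref{thm:sparse}--\ref{thm:T}, is not stated over an arbitrary ball-basis. Once the commutator is bounded on all weighted spaces the rest is bookkeeping: recognizing $[T,\b]_\alpha$ as an $m$-linear operator and feeding it into Theorem~\ref{thm:Ap}, whose own proof carries the deeper analytic content through the weighted compact interpolation Theorem~\ref{thm:WMIP-4} and the weighted Fr\'{e}chet--Kolmogorov criterion Theorem~\ref{thm:FKhs-2}.
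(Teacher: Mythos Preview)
Your proposal is correct and follows essentially the same route as the paper: establish weighted boundedness of $[T,\b]_\alpha$ for all $\vec{p}$ and all $\vec{w}\in A_{\vec{p},\B_\rho}$ via Theorem~\ref{thm:TTb} (verifying hypothesis~\eqref{list-1} of Theorem~\ref{thm:Ap}), read hypothesis~\eqref{eq:bT-2} as hypothesis~\eqref{list-2} of Theorem~\ref{thm:Ap} with $\vec{v}=(1,\ldots,1)\in A_{\vec{s},\B_\rho}$, and then apply Theorem~\ref{thm:Ap} to the $m$-linear (or $m$-linearizable) operator $[T,\b]_\alpha$. The paper additionally inserts a short interpolation step (Theorem~\ref{thm:WMIP-4} between the unweighted boundedness of $[T,\b]_\alpha$ and \eqref{eq:bT-2}) to upgrade unweighted compactness from the single $\vec{s}$ in \eqref{eq:bT-2} to all $\vec{s}$ before invoking Theorem~\ref{thm:Ap}; since Theorem~\ref{thm:Ap} only requires compactness at one $\vec{s}$ and one $\vec{v}$, your streamlined version omitting this step is equally valid.
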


A key point to show Theorems \ref{thm:Ap}--\ref{thm:Tb} is the following weighted interpolation for compact operators with quasi-Banach exponents. 

\begin{theorem}\label{thm:WMIP-4}
Let $(\Sigma, \rho, \mu)$ be a complete space of homogeneous type with a metrically continuous measure $\mu$. Let $0< p_0, q_0<\infty$, $1 \leq p_i, q_i \leq \infty$, $i=1,\ldots,m$, and let $w_0, v_0 \in A_{\infty, \B_{\rho}}$ and $w_i, v_i$ be weights on $(\Sigma, \rho, \mu)$. If an $m$-linear or $m$-linearizable operator $T$ satisfies  
\begin{align}
\label{eq:WMIP-41} &T \text{ is bounded from $L^{p_1}(\Sigma, w_1) \times \cdots \times L^{p_m}(\Sigma, w_m)$ to $L^{p_0}(\Sigma, w_0)$}, 
\\
\label{eq:WMIP-42} &T \text{ is compact from $L^{q_1}(\Sigma, v_1) \times \cdots \times L^{q_m}(\Sigma, v_m)$ to $L^{q_0}(\Sigma, v_0)$}, 
\end{align}
then $T$ can be extended to a compact operator from $L^{s_1}(\Sigma, u_1) \times \cdots \times L^{s_m}(\Sigma, u_m)$ to $L^{s_0}(\Sigma, u_0)$ for all exponents satisfying $0<\theta<1$,
\begin{equation}\label{eq:exp}
\begin{aligned}
\frac{1}{s_i}=\frac{1-\theta}{p_i}+\frac{\theta}{q_i},  
\quad\text{and}\quad u_i^{\frac{1}{s_i}}=w_i^{\frac{1-\theta}{p_i}} v_i^{\frac{\theta}{q_i}},\quad i=0,1, \ldots, m.
\end{aligned}
\end{equation}
\end{theorem}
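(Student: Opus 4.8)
The plan is to reduce Theorem \ref{thm:WMIP-4} to the unweighted complex interpolation theorem for compact operators (Calderón's interpolation theorem together with Cwikel's/Cobos-Kühn refinements) by absorbing the weights into the functions, and then to use the weighted multilinear interpolation already recorded in \cite{COY} to identify the correct interpolated weighted spaces. First I would treat the \emph{linear} reformulation: since $T$ is $m$-linearizable, it suffices to prove the statement for the $\bB$-valued $m$-linear operator $\T$, and by freezing all but one argument the question becomes, at least formally, a statement about interpolation of the family of linear operators obtained from $\T$. However, the genuinely multilinear nature must be kept, so the cleaner route is to invoke a multilinear version of Calderón's complex interpolation method: define the analytic family of operators $T_z$ acting on the compatible couples $(L^{p_i}(w_i), L^{q_i}(v_i))$ for each $i$ and on the target couple $(L^{p_0}(w_0), L^{q_0}(v_0))$, using the standard isometries $L^{p}(w)\cong L^{p}$ via $f\mapsto f w^{1/p}$. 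The key structural input is that for $w_0,v_0\in A_{\infty,\B_\rho}$ one still has a Calderón-product / complex-interpolation identification $[L^{p_0}(w_0),L^{q_0}(v_0)]_\theta = L^{s_0}(u_0)$ with $u_0^{1/s_0}=w_0^{(1-\theta)/p_0}v_0^{\theta/q_0}$ in the quasi-Banach range $0<p_0,q_0<\infty$; this is exactly what the weighted interpolation results \cite[Theorems 3.1 and 3.5]{COY} supply (including the mixed-norm auxiliary spaces needed when some $p_i=\infty$).

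The key steps, in order, are as follows. \textbf{Step 1.} Normalize: replace $T$ by the $\bB$-valued $m$-linear $\T$ and, via the weight isometries, reduce \eqref{eq:WMIP-41}--\eqref{eq:WMIP-42} to boundedness of $\T$ from a product of unweighted (or $A_\infty$-weighted) Lebesgue spaces into another, where the target couple is a Calderón couple by \cite{COY}. \textbf{Step 2.} Invoke the \emph{bilinear/multilinear complex interpolation theorem for compact operators} in the quasi-Banach setting: if a multilinear operator is bounded on one pair of endpoints of complex interpolation scales and compact on the other, then it is compact on all intermediate scales. Since a fully general quasi-Banach multilinear compact interpolation theorem may not be citable off the shelf, I would instead realize compactness of $T$ on the intermediate space through the weighted Fréchet-Kolmogorov criterion: by Theorem \ref{thm:FKhs-2}, a subset of $L^{s_0}(\Sigma,u_0)$ is relatively compact iff it is bounded, uniformly small at infinity, and equi-continuous under local averaging; one checks each of these three properties for $T(\mathcal{B})$, where $\mathcal{B}$ is the unit ball of $L^{s_1}(u_1)\times\cdots\times L^{s_m}(u_m)$, by interpolating the corresponding quantitative estimates — boundedness from \eqref{eq:WMIP-41} and the quantitative smallness/equi-continuity that compactness in \eqref{eq:WMIP-42} provides (again via Theorem \ref{thm:FKhs-2} applied on the compactness endpoint) — using the weighted multilinear interpolation inequalities of \cite[Theorems 3.1, 3.5]{COY}. \textbf{Step 3.} Assemble: the three interpolated estimates verify the hypotheses of the weighted Fréchet-Kolmogorov criterion on $L^{s_0}(\Sigma,u_0)$, so $T$ is compact there; density of, say, $L^\infty_c$ in the relevant spaces handles the extension.

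The main obstacle is \textbf{Step 2}: the quasi-Banach range $0<p_0,q_0<\infty$ together with the genuinely multilinear structure takes us outside the scope of classical compact interpolation (Calderón, Persson, Cobos-Kühn-Schonbek), which is formulated for Banach couples and linear operators. The resolution I would pursue — and what the paper signals by citing Theorem \ref{thm:FKhs-2} — is to bypass operator interpolation entirely and interpolate the three \emph{quantitative} Fréchet-Kolmogorov quantities directly, for which one needs (a) the compactness hypothesis \eqref{eq:WMIP-42} repackaged as uniform quantitative control (finitely many $\varepsilon$-nets, or the modulus-of-continuity form of Theorem \ref{thm:FKhs-2}), and (b) the mixed-norm weighted multilinear interpolation of \cite{COY} to handle the case where some intermediate indices or the local-averaging operator force us out of plain $L^p$. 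A secondary technical point is the passage from metric balls $B(x,r)$ to the ball-basis $\B_\rho$ and the lack of continuity of $x\mapsto\mu(B(x,r))$ in a general space of homogeneous type; completeness and metric continuity of $\mu$ are assumed precisely so that Theorem \ref{thm:FKhs-2} and the equi-continuity of local averages are available, and those hypotheses must be threaded carefully through the interpolation argument.
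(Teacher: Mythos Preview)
Your proposal is correct and follows essentially the same route as the paper: unpack compactness at the $\vec q$-endpoint via the weighted Fr\'echet--Kolmogorov criterion (Theorem~\ref{thm:FKhs-2}) into three quantitative conditions, pair each with the corresponding boundedness estimate at the $\vec p$-endpoint (the equicontinuity one using $A_\infty$ and the boundedness of $M_{\B_\rho}$ to control the averaging quantity), interpolate each pair with \cite[Theorems~3.1 and~3.5]{COY}, and then reassemble via Theorem~\ref{thm:FKhs-2}. The detour through operator-theoretic compact interpolation (Calder\'on, Cwikel, Cobos--K\"uhn) in your Step~1 is indeed a dead end in this quasi-Banach multilinear setting, and the paper skips it entirely; your pivot in Step~2 to interpolating the Fr\'echet--Kolmogorov quantities directly is exactly the argument given.
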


To demonstrate Theorem \ref{thm:WMIP-4}, we establish weighted Fr\'{e}chet-Kolmogorov theorems below, which give characterizations of relative compactness of subsets in weighted Lebesgue spaces. 

\begin{theorem}\label{thm:FKhs-1}
Let $(\Sigma, \rho, \mu)$ be a complete space of homogeneous type with a metrically continuous measure $\mu$. Let $x_0 \in \Sigma$, $p \in (1, \infty)$, and $w \in A_{p, \B_{\rho}}$. Then a subset $\K \subset L^p(\Sigma, w)$ is relatively compact in $L^p(\Sigma, w)$ if and only if the following are satisfied: 
\begin{list}{\rm (\theenumi)}{\usecounter{enumi}\leftmargin=1cm \labelwidth=1cm \itemsep=0.1cm \topsep=.2cm \renewcommand{\theenumi}{\alph{enumi}}}

\item\label{list:FK1} $\sup\limits_{f \in \K} \|f\|_{L^p(\Sigma, w)}<\infty$, 

\item\label{list:FK2} $\lim\limits_{A \to \infty}
\sup\limits_{f \in \K} \|f \mathbf{1}_{\Sigma \setminus B(x_0, A)}\|_{L^p(\Sigma, w)}=0$, 

\item\label{list:FK3} $\lim\limits_{r \to 0} \sup\limits_{f \in \K}\|f-f_{B(\cdot,r)}\|_{L^p(\Sigma, w)}=0$.  

\end{list}
\end{theorem}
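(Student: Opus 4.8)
The plan is to prove the weighted Fréchet–Kolmogorov characterization in $L^p(\Sigma, w)$ for $w \in A_{p, \B_\rho}$ by following the classical scheme but handling the weight via the $A_p$ machinery. First I would establish necessity of (\ref{list:FK1})--(\ref{list:FK3}). Condition (\ref{list:FK1}) is immediate since relatively compact sets are bounded. For (\ref{list:FK2}) and (\ref{list:FK3}), the standard trick is to use a finite $\varepsilon$-net $\{g_1, \ldots, g_N\}$ for $\K$: every $f \in \K$ is within $\varepsilon$ of some $g_j$, so it suffices to verify the two limit conditions for each fixed $g_j$ and then pass to the uniform statement over the finite net. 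For a single $g \in L^p(\Sigma, w)$, the tail condition (\ref{list:FK2}) follows from dominated convergence, while (\ref{list:FK3}) requires knowing that the averaging operators $g \mapsto g_{B(\cdot, r)}$ converge to the identity in $L^p(\Sigma, w)$ as $r \to 0$; I would obtain this by density of, say, Lipschitz functions with bounded support in $L^p(\Sigma, w)$ (valid because $w \in A_{p,\B_\rho}$), together with the uniform $L^p(w)$-boundedness of these averaging operators, which is controlled by the boundedness of the Hardy--Littlewood maximal operator $M_{\B_\rho}$ on $L^p(\Sigma, w)$ — a consequence of $w \in A_{p, \B_\rho}$.

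For sufficiency, I would argue that (\ref{list:FK1})--(\ref{list:FK3}) force $\K$ to be totally bounded in $L^p(\Sigma, w)$. Fix $\varepsilon > 0$. By (\ref{list:FK2}), pick $A$ large so that the tails outside $B(x_0, A)$ contribute less than $\varepsilon$ uniformly in $\K$; by (\ref{list:FK3}), pick $r$ small so that $\|f - f_{B(\cdot, r)}\|_{L^p(\Sigma, w)} < \varepsilon$ uniformly. It then remains to show that the family $\{(f_{B(\cdot, r)}) \mathbf{1}_{B(x_0, A)} : f \in \K\}$ is totally bounded in $L^p(\Sigma, w)$. The natural route is to cover the (bounded) set $B(x_0, A)$ by finitely many small balls $B(y_1, \delta), \ldots, B(y_M, \delta)$, observe that on each such ball the function $x \mapsto f_{B(x, r)}$ is close — in a quantitative, weight-insensitive way on bounded sets — to its value at the center, and thereby reduce the family to a bounded subset of a finite-dimensional space (spanned by indicators of the covering balls, or a refinement thereof), where total boundedness is automatic because the coefficients $f_{B(y_k, r)}$ are uniformly bounded by (\ref{list:FK1}) and the doubling property.

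The main obstacle, flagged in the introduction, is exactly the step where one wants the map $x \mapsto f_{B(x, r)}$ to be (equi-)continuous or at least well-controlled on the closure of $B(x_0, A)$: in a general space of homogeneous type the function $x \mapsto \mu(B(x, r))$ need not be continuous, and closed bounded sets need not be compact. This is where completeness of $(\Sigma, \rho, \mu)$ and metric continuity of $\mu$ enter crucially. I would use completeness to guarantee that $\overline{B(x_0, A)}$ is compact (or at least totally bounded), so a finite $\delta$-cover exists; and I would use metric continuity of $\mu$ — i.e. $\mu(B(x, r) \triangle B(y, r)) \to 0$ as $\rho(x, y) \to 0$, uniformly for $x, y$ in a bounded set — to ensure that $|f_{B(x,r)} - f_{B(y,r)}|$ is small when $\rho(x,y) < \delta$, with a bound depending only on $r$, $\delta$, and $\|f\|_{L^p(w)}$ via Hölder's inequality and the $A_p$ condition (which lets one pass between $\mu$-measure and $w$-measure on the relevant scale).

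Once these two geometric inputs are in place, the finite-dimensional reduction goes through: the family $\{f_{B(\cdot,r)} \mathbf{1}_{B(x_0,A)}\}_{f \in \K}$ is approximated within $\varepsilon$ (in $L^p(\Sigma, w)$) by step functions adapted to the $\delta$-cover, whose coefficients range over a bounded subset of $\R^M$, hence a compact one; combining the three approximations — tail, averaging, and finite-dimensional — yields a finite $3\varepsilon$-net for $\K$, so $\K$ is totally bounded, and since $L^p(\Sigma, w)$ is complete, $\overline{\K}$ is compact. I expect the bookkeeping of constants (making all the "uniform in $f \in \K$" claims genuinely uniform, and the $A_p$-weighted Hölder estimates comparing $w(E)$ with $\mu(E)$ for $E$ a small ball) to be the only real work; the structural skeleton is the classical one.
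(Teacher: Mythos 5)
Your proposal is correct and follows essentially the same structural scheme as the paper's proof: necessity via a finite $\varepsilon$-net together with $L^p(w)$-boundedness of $M_{\B_\rho}$, and sufficiency via the tail cutoff, the averaging approximation, and a finite-dimensional reduction on $\overline{B(x_0,A)}$, with completeness yielding compactness of closed balls and metric continuity of $\mu$ yielding the needed uniform control of $x\mapsto f_{B(x,r)}$. The only cosmetic differences are that the paper proves condition (c) in the necessity direction via the Lebesgue differentiation theorem plus dominated convergence rather than density of Lipschitz functions, and in the sufficiency direction invokes the Ascoli--Arzel\`a theorem on $\mathscr{C}(B(x_0,A))$ rather than reproducing its step-function proof inline; both of your substitutions are equivalent and would close the argument.
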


We extend Theorem \ref{thm:FKhs-1} to the case $0<p \le 1$ as follows.  
\begin{theorem}\label{thm:FKhs-2}
Let $(\Sigma, \rho, \mu)$ be a complete space of homogeneous type with a metrically continuous measure $\mu$. Let $x_0 \in \Sigma$, $0<p<\infty$, and $w \in A_{p_0, \B_{\rho}}$ for some $p_0 \in (1, \infty)$. Then a subset $\K \subseteq L^p(\Sigma, w)$ is relatively compact if and only if the following are satisfied:
\begin{list}{\rm (\theenumi)}{\usecounter{enumi}\leftmargin=1cm \labelwidth=1cm \itemsep=0.1cm \topsep=.2cm \renewcommand{\theenumi}{\roman{enumi}}}

\item\label{list:FKhs-1} $\sup\limits_{f \in \K} \|f\|_{L^p(\Sigma, w)} < \infty$, 

\item\label{list:FKhs-2} $\lim\limits_{A \to \infty} \sup\limits_{f \in \K}\|f \mathbf{1}_{\Sigma \setminus B(x_0, A)}\|_{L^p(\Sigma, w)}=0$, 

\item\label{list:FKhs-3} ${\displaystyle \lim\limits_{r \to 0} \sup\limits_{f \in \K} 
\int_{\Sigma} \bigg(\fint_{B(x, r)} |f(x)-f(y)|^{\frac{p}{p_0}} d\mu(y) \bigg)^{p_0}w(x)d\mu(x)=0}$. 
\end{list}
\end{theorem}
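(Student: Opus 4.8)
The plan is to prove Theorem~\ref{thm:FKhs-2} by reducing the quasi-Banach case $0<p\le 1$ (and more generally arbitrary $0<p<\infty$ with $w\in A_{p_0}$) to the Banach-range theorem already established in Theorem~\ref{thm:FKhs-1}. The guiding idea is a \emph{change of exponent and weight}: if $w\in A_{p_0,\B_\rho}$ for some $p_0\in(1,\infty)$, then one looks at the map $f\mapsto |f|^{p/p_0}$, which sends $L^p(\Sigma,w)$ isometrically (up to the power) into $L^{p_0}(\Sigma,w)$; more precisely $\|f\|_{L^p(\Sigma,w)}^{p/p_0}=\big\||f|^{p/p_0}\big\|_{L^{p_0}(\Sigma,w)}$. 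Since $p_0>1$ and $w\in A_{p_0,\B_\rho}$, Theorem~\ref{thm:FKhs-1} is available in the target space $L^{p_0}(\Sigma,w)$. So the bulk of the argument is to show that $\K$ is relatively compact in $L^p(\Sigma,w)$ if and only if $\K^{\sharp}:=\{|f|^{p/p_0}: f\in\K\}$ is relatively compact in $L^{p_0}(\Sigma,w)$, and then to translate the three conditions of Theorem~\ref{thm:FKhs-1} applied to $\K^\sharp$ into conditions \eqref{list:FKhs-1}--\eqref{list:FKhs-3} on $\K$.

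First I would record the elementary pointwise inequalities for the power map. Writing $\beta=p/p_0\in(0,1]$, one has $\big||a|^\beta-|b|^\beta\big|\le |a-b|^\beta$ for all $a,b\in\R$ (subadditivity of $t\mapsto t^\beta$), and conversely a reverse-type control $|a-b|^\beta \le C_\beta\big(|a|^{\beta}+|b|^\beta\big)$ together with the fact that, on sets where neither $|a|$ nor $|b|$ degenerates, $|a-b|$ and $\big||a|^\beta-|b|^\beta\big|$ are comparable. The clean statement I would use is: the nonlinear map $\Phi:f\mapsto \operatorname{sgn}(f)|f|^\beta$ is a homeomorphism from $L^p(\Sigma,w)$ onto $L^{p_0}(\Sigma,w)$ (this is standard; continuity in both directions follows from $\||a|^\beta-|b|^\beta\|\le \||a-b|\|^{\min(\beta,1)}$-type estimates plus dominated convergence, or one may cite the Mazur map). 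A homeomorphism carries relatively compact sets to relatively compact sets, so $\K$ is relatively compact in $L^p(\Sigma,w)$ iff $\Phi(\K)$ is relatively compact in $L^{p_0}(\Sigma,w)$.

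Next I would apply Theorem~\ref{thm:FKhs-1} to $\Phi(\K)\subset L^{p_0}(\Sigma,w)$ and unwind its three conditions. Condition \eqref{list:FK1} for $\Phi(\K)$ reads $\sup_{f\in\K}\big\||f|^\beta\big\|_{L^{p_0}(\Sigma,w)}=\sup_{f\in\K}\|f\|_{L^p(\Sigma,w)}^{\beta}<\infty$, which is exactly \eqref{list:FKhs-1}. Condition \eqref{list:FK2} for $\Phi(\K)$ says $\lim_{A\to\infty}\sup_{f\in\K}\big\||f|^\beta\mathbf 1_{\Sigma\setminus B(x_0,A)}\big\|_{L^{p_0}(\Sigma,w)}=0$, i.e.\ $\lim_{A\to\infty}\sup_{f\in\K}\|f\mathbf 1_{\Sigma\setminus B(x_0,A)}\|_{L^p(\Sigma,w)}^{\beta}=0$, which is \eqref{list:FKhs-2}. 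The only delicate translation is \eqref{list:FK3}: for the function $g=|f|^\beta$ it reads $\lim_{r\to0}\sup_{f\in\K}\|\,|f|^\beta - (|f|^\beta)_{B(\cdot,r)}\,\|_{L^{p_0}(\Sigma,w)}=0$, whereas \eqref{list:FKhs-3} is phrased with the \emph{oscillation} $\big(\fint_{B(x,r)}|f(x)-f(y)|^{\beta}\,d\mu(y)\big)^{p_0}$ inside the integral against $w\,d\mu$. So I would establish a two-sided comparison, uniformly in small $r$,
\begin{align*}
\Big\|\,|f|^\beta-(|f|^\beta)_{B(\cdot,r)}\,\Big\|_{L^{p_0}(\Sigma,w)}^{p_0}
\ \approx\ \int_\Sigma \bigg(\fint_{B(x,r)}\big||f(x)|^\beta-|f(y)|^\beta\big|\,d\mu(y)\bigg)^{p_0} w(x)\,d\mu(x),
\end{align*}
where the left-to-right bound is Jensen's inequality $|g(x)-g_{B(x,r)}|\le \fint_{B(x,r)}|g(x)-g(y)|\,d\mu(y)$, and the reverse bound uses that $\mu$ is doubling so that $\fint_{B(x,r)}|g(x)-g(y)|\,d\mu(y)\lesssim \fint_{B(x,r)}|g(x)-g_{B(x,r)}|+|g_{B(x,r)}-g(y)|\,d\mu(y)$, then a second application of the doubling/averaging to absorb the $g_{B(x,r)}-g(y)$ piece into a comparable average of $|g(x)-g_{B(x,r)}|$ over a slightly larger ball — here I would invoke the $A_{p_0,\B_\rho}$ condition on $w$ (equivalently, boundedness of the Hardy--Littlewood maximal operator on $L^{p_0}(\Sigma,w)$) to control the enlarged averages, exactly as in the proof of Theorem~\ref{thm:FKhs-1}. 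Finally, the inner average $\fint_{B(x,r)}\big||f(x)|^\beta-|f(y)|^\beta\big|\,d\mu(y)$ is comparable, via $\big||a|^\beta-|b|^\beta\big|\le|a-b|^\beta$ and a matching lower bound on the bulk of each ball, to $\big(\fint_{B(x,r)}|f(x)-f(y)|^{p/p_0}\,d\mu(y)\big)$; combining the displays yields precisely condition \eqref{list:FKhs-3}. Both directions of the equivalence then follow, completing the proof.

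The main obstacle I anticipate is the last comparison: controlling $\big\||f|^\beta-(|f|^\beta)_{B(\cdot,r)}\big\|_{L^{p_0}(\Sigma,w)}$ from below by the genuine oscillation integral appearing in \eqref{list:FKhs-3}. The upper direction is a routine Jensen estimate, but the lower direction requires that the weighted $L^{p_0}$ norm of the ``centered oscillation'' not be much smaller than that of the ``deviation from the average'', and this is exactly where one needs the doubling property of $\mu$, the density condition on $\B_\rho$, and the $A_{p_0}$ condition on $w$ to handle the enlargement of balls and the maximal-function estimates — the same technical heart as in Theorem~\ref{thm:FKhs-1}, so in practice I would extract that comparison as a lemma there and reuse it verbatim. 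A secondary, more bookkeeping-type subtlety is that for general $0<p<\infty$ (not only $p\le1$) one should take $p_0$ large enough that $w\in A_{p_0,\B_\rho}$ and $p/p_0\le 1$; the hypothesis $w\in A_{p_0,\B_\rho}$ for \emph{some} $p_0\in(1,\infty)$ together with $A_{p_0}\subset A_{q}$ for $q\ge p_0$ lets us always arrange $p\le p_0$, so no generality is lost.
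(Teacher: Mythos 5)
Your reduction to Theorem \ref{thm:FKhs-1} via the power map is the right general idea and matches the paper's strategy for the sufficiency half, but the necessity half of your argument has a genuine gap. The problem is the claimed two-sided comparison between $\fint_{B(x,r)}\big||f(x)|^\beta-|f(y)|^\beta\big|\,d\mu(y)$ and $\fint_{B(x,r)}|f(x)-f(y)|^{\beta}\,d\mu(y)$ with $\beta=p/p_0\le 1$: only one direction holds, namely $\big||a|^\beta-|b|^\beta\big|\le |a-b|^\beta$. The reverse fails badly because $t\mapsto t^\beta$ flattens at infinity; for $a=N$, $b=N+1$ one has $|a-b|^\beta=1$ while $|a^\beta-b^\beta|\approx \beta N^{\beta-1}\to 0$. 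The paper's elementary inequality $|s-t|^\beta\le \frac1\beta\big(\frac{s+t}{|s-t|}\big)^{1-\beta}|s^\beta-t^\beta|$ makes the loss explicit, and the correction factor is unbounded, so no doubling or $A_{p_0}$ machinery recovers a uniform "matching lower bound on the bulk of each ball." Consequently, knowing that $\Phi(\K)$ satisfies condition (c) of Theorem \ref{thm:FKhs-1} does not let you deduce condition (iii) for $\K$. The paper avoids this entirely: it proves (iii) \emph{directly} from relative compactness, taking a finite $\varepsilon$-net $\{f_j\}$, approximating each $f_j$ by a compactly supported H\"older function $g_j$, splitting $\fint|f(x)-f(y)|^{p/p_0}\,d\mu(y)$ into three pieces accordingly, and controlling the error pieces by the $L^{p_0}(\Sigma,w)$-boundedness of $M_{\B_{\rho}}$ and the middle piece by uniform continuity of $g_j$.

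Your sufficiency direction is essentially what the paper does for $p<p_0$: (i)--(iii) transfer to $\K^a=\{|f|^{p/p_0}\colon f\in\K\}$ via the easy power inequality and Jensen, Theorem \ref{thm:FKhs-1} gives relative compactness of $\K^a$ in $L^{p_0}(\Sigma,w)$, and one transfers back to $\K$ (the paper does this by hand with a Cauchy-subsequence argument, splitting on the set where $|f_i-f_j|\le\varepsilon(f_i+f_j)$, which is in effect the continuity of the inverse power map on bounded sets that you invoke). Two further cautions. First, your closing remark that one may "always arrange $p\le p_0$" by enlarging $p_0$ is not mere bookkeeping: condition (iii) is stated with the specific $p_0$ from the hypothesis (in both the inner exponent $p/p_0$ and the outer power $p_0$), so changing $p_0$ changes the statement; the paper instead treats $p\ge p_0$ separately, using Jensen to pass from (iii) to condition (c) of Theorem \ref{thm:FKhs-1} applied with exponent $p$ and weight $w\in A_{p,\B_{\rho}}$. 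Second, once the necessity is proved directly, the comparison between $\|g-g_{B(\cdot,r)}\|_{L^{p_0}(\Sigma,w)}$ and the centered-oscillation integral is only ever needed in the Jensen direction, so that anticipated obstacle disappears with the correct architecture.
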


In Sections \ref{sec:CZO}--\ref{sec:mod}, we will show the powerfulness of Theorems \ref{thm:Tb}--\ref{thm:FKhs-2}, which can be used to establish weighted compactness for commutators of $m$-linear or $m$-linearizable operators.

\subsection{Historical background} 
The local exponential decay estimates concern the following quantity: 
\begin{align}\label{def:local}
\psi_t(\mathbf{T}, \mathbf{M}) 
:= \sup_{Q} \sup_{\substack{f \in L_c^{\infty}(\Rn) \\ \supp(f) \subset Q}} 
|Q|^{-1} |\{x \in Q: |\mathbf{T}f(x)| > t\, \mathbf{M}f(x)\}|,
\end{align} 
where $\mathbf{T}$ is a singular operator and $\mathbf{M}$ is an appropriate maximal operator. In general, for $\psi_t(\mathbf{T}, \mathbf{M})$, one would like to obtain an exponential/sub-exponential decay with respect with $t$. Local decay estimates accurately reflect the extent that an operator is locally controlled by certain maximal operator, which improves the corresponding good-lambda inequality:
\begin{align}\label{eq:good}
|\{x \in Q: \mathbf{T}f(x) > 2\lambda, \mathbf{M}f(x) \leq \gamma \lambda \}| 
\lesssim e^{-c/\gamma} |Q|.
\end{align} 
We mention that Buckley \cite{B93} used \eqref{eq:good} to pursue the sharp dependence on weights norm for the maximal singular integrals. 

This kind of estimates in \eqref{def:local} first appeared in the work of Karagulyan \cite{Kar02} and was further studied by Ortiz-Caraballo, P\'{e}rez, and Rela \cite{OPR} using a different approach. This problem originated in the control of singular integral operators by certain maximal operators. For example, the Calder\'{o}n-Zygmund operator is controlled by the Hardy-Littlewood maximal operator \cite{CF}; the fractional integral is controlled by the fractional maximal operator \cite{MW}; the Littlewood-Paley square function is controlled by a non-tangential maximal operator \cite{GW}; and the maximal operator is controlled by the sharp maximal operator \cite{FS}. Note that these controls are just norm inequalities as follows:
\begin{equation}\label{eq:Lp}
\|\mathbf{T} f\|_{L^p(\Rn, \, w)} \lesssim \|\mathbf{M}f\|_{L^p(\Rn, \, w)}
\end{equation}
for any $w \in A_{\infty}$ and $0<p<\infty$. Unfortunately, the inequality \eqref{eq:Lp} can not provide us enough information to measure the size of $\mathbf{T}$ and $\mathbf{M}$. At this point, it will bring advantages of local decay estimates \eqref{def:local} into play.

The mixed weak type estimate for an operator $T$ means the following inequality: 
\begin{align}\label{eq:mwsaw}
\bigg\|\frac{T(fv)}{v}\bigg\|_{L^{1,\infty}(\Rn, \, uv)} 
\lesssim \|f\|_{L^1(\Rn, \, uv)},   
\end{align}
where $u$ and $v$ are weights. The estimate \eqref{eq:mwsaw} contains the usual endpoint weighted inequality if one takes $u \in A_1$ and $v \equiv 1$. Such estimates originated in the work of Muckenhoupt and Wheeden \cite{MW}, where the estimate \eqref{eq:mwsaw} with $v^{-1} \in A_1$ and $uv \equiv 1$ was established for the Hardy-Littlewood maximal operator $M$ and the Hilbert transform $H$ on the real line $\R$. Shortly afterward it was improved by Sawyer \cite{Saw} to the case $u, v \in A_1$ but only for $M$ on $\R$. Simultaneously, Sawyer conjectured that \eqref{eq:mwsaw} is true for both $H$ and $M$ in $\Rn$. An affirmative answer to both conjectures was given by Cruz-Uribe, Martell, and P\'{e}rez \cite{CMP} using extrapolation arguments.  Recently, by means of some delicate techniques, Li, Ombrosi, and P\'{e}rez \cite{LOP} extended Sawyer's conjecture for $M$ to the setting of $u \in A_1$ and $v \in A_{\infty}$. Additionally, some multilinear extensions of \eqref{eq:mwsaw} were given in \cite{LOPi} for the multilinear Hardy-Littlewood maximal operators
\begin{align}\label{HLS}
\bigg\|\frac{\mathcal{M}(\vec{f})}{v}\bigg\|_{L^{\frac1m, \infty}(\Rn, \, wv^{\frac1m})} 
\lesssim \prod_{i=1}^m \|f_i\|_{L^1(\Rn, \, w_i)},   
\end{align}
where $\vec{w} \in A_{\vec{1}}$ and $wv^{\frac1m} \in A_{\infty}$ or $\vec{w} \in A_1 \times \cdots \times A_1$ and $v \in A_{\infty}$, in \cite{CXY} for the multilinear pseudo-differential operators, and in \cite{PR} for the multilinear maximal operators in Lorentz spaces.

The sharp weighted norm inequality for an operator $T$ gives an estimate of the form 
\begin{align}\label{Tsharp}
\|T\|_{L^p(\Rn, w) \to L^p(\Rn, w)} 
\leq C_{n, p, T} \, [w]_{A_p}^{\alpha_p(T)}, 
\end{align}
for all $p \in (1, \infty)$ and $w \in A_p$, where the positive constant $C_{n, p, T}$ depends only on $n$, $p$, and $T$, and the exponent $\alpha_p(T)$ is optimal such that \eqref{Tsharp} holds. This kind of estimates gives the exact rate of growth of the weights norm. The first result  was established by Buckley \cite{B93} for the Hardy-Littlewood maximal operator $M$, who proved that \eqref{Tsharp} holds with the best possible exponent $\alpha_p(M) = \frac{1}{p-1}$. The estimate \eqref{Tsharp} for singular integrals attracted a lot of attention due to certain important applications to PDE. For example, obtaining the sharp weighted estimate for the Ahlfors-Beurling operator $B$ with $\alpha_2(B)=1$, Petermichl and Volberg \cite{PV} first settled a long-standing regularity problem for the solution of Beltrami equation on the plane in the critical case. This invites the question that whether \eqref{Tsharp} with $\alpha_2(T)=1$ holds for general Calder\'{o}n-Zygmund operators $T$, which is known as the $A_2$ conjecture. Together with extrapolation with sharp bounds \cite{DGPP}, it immediately implies \eqref{Tsharp} with $\alpha_p(T)=\max\{1, \frac{1}{p-1}\}$.

Hyt\"{o}nen \cite{Hyt} solved the $A_2$ conjecture by proving that an arbitrary Calder\'{o}n-Zygmund operator can be   represented as an average of dyadic shifts over random dyadic systems. Subsequently, Lacey \cite{Lac17} and Lerner \cite{Ler16} independently established a sparse domination for Calder\'{o}n-Zygmund operators to present a shorter proof of the $A_2$ conjecture. Since then, many significant publications came to enrich the literature in this area. For example, pointwise sparse dominations were shown for commutators of Calder\'{o}n-Zygmund operators \cite{LOR},  for bounded oscillation operators \cite{Kar}, and for an arbitrary family of functions \cite{LLO}, the multilinear Calder\'{o}n-Zygmund operators \cite{DHL}, and the multilinear pseudo-differential operators \cite{CXY}.

\subsection{Structure of the paper} 
The rest of the paper is organized as follows. In Section \ref{sec:app}, we present some examples of multilinear bounded oscillation operators, and then include a number of applications, which illustrate the utility of our main results. Section \ref{sec:pre} contains some preliminaries including the geometry of measure spaces and the properties of Muckenhoupt weights on measure spaces. Section \ref{sec:BO} is devoted to establishing fundamental estimates for multilinear bounded oscillation operators, which will be used below. After that, in Section \ref{sec:sparse}, we obtain a pointwise sparse domination for multilinear bounded oscillation operators. Using extrapolation techniques, we prove the local exponential decay estimates and mixed weak type estimates in Sections \ref{sec:local} and \ref{sec:weak}, respectively. In Section \ref{sec:sharp}, we obtain  sharp weighted norm inequalities for multilinear sparse operators and then for multilinear bounded oscillation operators and their commutators. Finally, in Section \ref{sec:compact}, we present the proof of extrapolation Theorems \ref{thm:Ap}--\ref{thm:Tb}, which is based on weighted interpolation for multilinear compact operators (Theorem \ref{thm:WMIP-4}), while showing the latter needs weighted Fr\'{e}chet-Kolmogorov Theorems \ref{thm:FKhs-1}--\ref{thm:FKhs-2}.

\section{Applications}\label{sec:app}
In this section, we present applications of results obtained in the preceding section. We will see that the multilinear bounded oscillation operators formulated in Definition \ref{def:MBO} contain multilinear Hardy-Littlewood maximal operators on measure spaces,  multilinear $\omega$-Calder\'{o}n-Zygmund operators on spaces of homogeneous type, multilinear Littlewood-Paley square operators, multilinear Fourier integral operators, higher order Calder\'{o}n commutators, maximally modulated multilinear singular integrals, and $q$-variation of $\omega$-Calder\'{o}n-Zygmund operators. Furthermore, we will establish weighted boundedness and compactness for them.

\subsection{Multilinear Hardy-Littlewood maximal operators}\label{sec:maximal}
Let us recall the definition of $\mathcal{M}_{\B, r}$ in \eqref{def:Mr}.

\begin{theorem}\label{thm:MBO}
Let $1 \le r < \infty$. If $(\Sigma, \mu)$ be a measure space with a ball-basis $\B$, then $\mathcal{M}_{\B, r}$ is a multilinear bounded oscillation operator with respect to $\B$ and $r$. 
\end{theorem}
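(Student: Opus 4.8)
The plan is to verify directly that $T=\mathcal{M}_{\B,r}$, viewed as a real-valued $m$-sublinear operator (so $\bB=\R$ and $\T:=\{T\}$), satisfies the two defining conditions \eqref{list:T-size} and \eqref{list:T-reg} of Definition \ref{def:MBO}. The key observation underlying both estimates is a monotonicity/localization principle: for a fixed $B\in\B$ and $x\in B$, the supremum defining $\mathcal{M}_{\B,r}(\vec f)(x)$ ranges over balls $B'\ni x$, and any such $B'$ either is contained in (a fixed dilate of) $B$, or contains a point of $B$ and hence, by property \eqref{list:B4}, is comparable to $B^*$ after passing to $B'\cup B\subset (B')^*$ or $B^*$. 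I would combine this with the elementary bound $\langle f_i\mathbf 1_E\rangle_{B',r}\le\langle f_i\rangle_{B',r}\le\lfloor f_i\rfloor_{B,r}$ whenever $B'\supset B$, and the averaging inequality $\langle f_i\rangle_{B',r}^r\le\frac{\mu(B^*)}{\mu(B')}\langle f_i\rangle_{B^*,r}^r\le\C_0\langle f_i\rangle_{B^*,r}^r$ when $B'\subset B^*$.

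For \eqref{list:T-size}: given $B_0\in\B$ with $B_0^*\subsetneq\Sigma$, take $B=B_0^*\in\B$ (so $B\supsetneq B_0$ after noting $B_0\subset B_0^*$; if equality can occur one enlarges once more using \eqref{list:B4}), and $B^*=(B_0^*)^*$. Then for $x\in B_0$ I must control $|\mathcal{M}_{\B,r}(\vec f\mathbf 1_{B^*})(x)-\mathcal{M}_{\B,r}(\vec f\mathbf 1_{B_0^*})(x)|$. Since $B_0^*\subset B^*$, the second term is dominated by the first, so it suffices to bound $\mathcal{M}_{\B,r}(\vec f\mathbf 1_{B^*})(x)$ by $\prod_i\langle f_i\rangle_{B^*,r}$ up to a constant depending only on $\C_0$. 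For any $B'\ni x$ with $x\in B_0$: if $\mu(B')\le 2\mu(B_0)$ then $B'\subset B_0^*=B$ by \eqref{list:B4}, so $\langle f_i\mathbf 1_{B^*}\rangle_{B',r}=\langle f_i\rangle_{B',r}\lesssim_{\C_0}\langle f_i\rangle_{B^*,r}$ using the averaging inequality over $B'\subset B\subset B^*$; if $\mu(B')>2\mu(B_0)$, one instead uses that $B'$ together with $B_0$ sits inside a ball comparable to $B^*$, or simply that $B'\cup B_0$ has measure $\lesssim\mu((B')^*)$ and $B^*$-mass is controlled, so again the averages are dominated by $\langle f_i\rangle_{B^*,r}$. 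Taking the supremum gives the claim with $\C_1(T)$ depending only on $\C_0$, $m$.

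For \eqref{list:T-reg}: fix $B\in\B$ and $x,x'\in B$. Write $g_i:=f_i\mathbf 1_{\Sigma\setminus B^*}$. By $m$-sublinearity and the elementary inequality $|\,|a_1\cdots a_m|-|a_1'\cdots a_m'|\,|$ telescoped entrywise, the quantity $\big|(\mathcal M_{\B,r}(\vec f)-\mathcal M_{\B,r}(\vec f\mathbf 1_{B^*}))(x)-(\text{same at }x')\big|$ is controlled by a sum of terms in which some factors are $\mathcal M$-averages of $g_i$'s and the rest are averages of $f_k$'s, evaluated at $x$ minus $x'$. The point is that $M_{\B,r}g_i$ is essentially constant on $B$: for $x\in B$, any ball $B'\ni x$ with $B'\cap(\Sigma\setminus B^*)\neq\emptyset$ must satisfy $\mu(B')>2\mu(B)$ (else $B'\subset B^*$ by \eqref{list:B4} and $g_i$ vanishes on it), hence $B'\cup B\subset B'^*$ with $\mu(B'^*)\le\C_0\mu(B')$, so $\langle g_i\rangle_{B',r}\lesssim_{\C_0}\langle f_i\rangle_{B'\cup B,r}\le\lfloor f_i\rfloor_{B,r}$; taking suprema, $M_{\B,r}g_i(x)\le C\,\lfloor f_i\rfloor_{B,r}$ uniformly for $x\in B$, and likewise the full average $\mathcal M_{\B,r}$ restricted to such $B'$ is $\lesssim\prod_i\lfloor f_i\rfloor_{B,r}$. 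Plugging these bounds into every term of the telescoped sum yields the desired estimate $\lesssim_{\C_0,m}\prod_{i=1}^m\lfloor f_i\rfloor_{B,r}$, which is \eqref{list:T-reg} with $\C_2(T)$ depending only on $\C_0$ and $m$.

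The main obstacle I anticipate is not any deep estimate but the careful bookkeeping in \eqref{list:T-reg} when one works with the genuinely multilinear average $\mathcal M_{\B,r}$ (as opposed to the tensor product $\mathcal M_{\B,r}^{\otimes}$): one cannot simply split the supremum factor-by-factor, so the telescoping/sublinearity argument must be arranged so that in each resulting term the "oscillating" factors (those involving $g_i$) are shown to be uniformly small on $B$ while the remaining factors are bounded by $\lfloor f_k\rfloor_{B,r}$ — and one must check this produces a sum of finitely many terms each bounded by $\prod_i\lfloor f_i\rfloor_{B,r}$. A secondary technical point is the borderline case in \eqref{list:T-size} where $B_0^*$ might not strictly contain $B_0$ or might equal $\Sigma$; this is handled by one extra application of \eqref{list:B4} and the hypothesis $B_0^*\subsetneq\Sigma$.
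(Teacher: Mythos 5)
Your verification of condition \eqref{list:T-reg} is essentially sound and amounts to the same dichotomy the paper uses (for a competitor ball $B'$ meeting $B$, either $\mu(B')\le 2\mu(B)$ and $B'\subset B^*$ so the truncation is invisible, or $\mu(B')>2\mu(B)$ and $B\subset (B')^*$ so every average $\langle f_j\rangle_{B',r}$ is controlled by $\C_0^{1/r}\lfloor f_j\rfloor_{B,r}$); the paper packages this more cleanly by picking a near-maximizing ball $A$ for $\mathcal{M}_{\B,r}(\vec f)(x)$ and bounding the single-point difference $|\mathcal{M}_{\B,r}(\vec f)(x)-\mathcal{M}_{\B,r}(\vec f\mathbf 1_{B^*})(x)|$ uniformly on $B$, which avoids your telescoping bookkeeping, but either route closes.

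The genuine gap is in \eqref{list:T-size}: the condition demands a ball $B$ with $B\supsetneq B_0$ \emph{strictly}, and your choice $B=B_0^*$ does not deliver this. It is entirely possible that $B_0^*=B_0$ (the paper's own martingale-basis example produces $B^*=B$ whenever $\mu(\mathscr{F}(B))>2\mu(B)$), and your proposed fix of ``enlarging once more using \eqref{list:B4}'' is vacuous there, since $B_0^*=B_0$ forces $B_0^{**}=B_0$ and so on. The strictness is not cosmetic: it is exactly what drives the measure growth $\mu(\widetilde B)\ge 2\mu(B)$ and the exhaustion $\Sigma=\bigcup_k B_k$ in Lemma \ref{lem:DBB}, on which the sparse domination rests. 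Note also that your argument never invokes the hypothesis $B_0^*\subsetneq\Sigma$, which is the signal that something is missing. The paper's construction fixes precisely this: it forms $\B':=\{B\in\B:\,B\cap B_0\neq\emptyset,\ \mu(B)>\mu(B_0)\}$ (nonempty because $B_0^*\subsetneq\Sigma$, via \eqref{list:B2} and \eqref{list:B4}), selects $B_1\in\B'$ with $\mu(B_1)$ within a factor $2$ of $\inf_{\B'}\mu$, and sets $B:=B_1^*$. This yields $B_0\subsetneq B$ (since $\mu(B_1)>\mu(B_0)$ and $B_0\subset B_1^*$) and, crucially, $\mu(B^*)\le\C_0^2\mu(B_1)\le 2\C_0^2\,\mu(B_2)$ for \emph{every} competitor $B_2$ with $\mu(B_2)>\mu(B_0)$, which is what makes the average over $B_2$ comparable to the average over $B^*$. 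Your quantitative estimates in the two cases would go through once $B$ is chosen this way, so the repair is to replace your choice of $B$ by this near-minimizer construction.
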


\begin{proof}
Fix $B_0 \in \B$, $x \in B_0$, and set 
\begin{align*}
b := \inf_{B \in \B'} \mu(B),
\quad\text{where } 
\B' := \{B \in \B: B \cap B_0 \neq \emptyset, \mu(B)>\mu(B_0)\}.
\end{align*}
There exists a ball $B_1 \in \B'$ such that $b \leq \mu(B_1) <2b$. Picking $B := B_1^*$, we use the property \eqref{list:B4} to see that 
\begin{align}\label{BBB-1}
B_0 \subsetneq B_1^*=B.
\end{align}
Observe that for any function $f_i\in L^r(\Sigma, \mu)$ and for any $0<\varepsilon< \prod_{i=1}^m \langle f_i \rangle_{B^*, r}$, there exists $B_2 \in \B$ containing $x$ so that 
\begin{align}\label{Maxdelta}
\mathcal{M}_{\B, r}(\vec{f}\mathbf{1}_{B^*})(x) 
\leq \varepsilon + \prod_{i=1}^m \langle f_i  \mathbf{1}_{B^*} \rangle_{B_2, r}. 
\end{align}
If $\mu(B_2) \leq \mu(B_0)$, then the property \eqref{list:B4} and \eqref{BBB-1} imply 
\begin{align}\label{BBB-2}
B_2 \subset B_0^* \subset B^*. 
\end{align}
Hence, 
\begin{equation*}
\mathcal{M}_{\B, r}(\vec{f}\mathbf{1}_{B_0^*})(x)
\ge \prod_{i=1}^m \langle f_i \mathbf{1}_{B_0^*} \rangle_{B_2, r}
=\prod_{i=1}^m \langle f_i \rangle_{B_2, r}, 
\end{equation*}
which along with \eqref{Maxdelta} and \eqref{BBB-2} immediately implies 
\begin{align*}
&|\mathcal{M}_{\B, r}(\vec{f}\mathbf{1}_{B^*})(x) - \mathcal{M}_{\B, r}(\vec{f}\mathbf{1}_{B_0^*})(x)| 
\\
&=\mathcal{M}_{\B, r}(\vec{f}\mathbf{1}_{B^*})(x) - \mathcal{M}_{\B, r}(\vec{f}\mathbf{1}_{B_0^*})(x)
\\
&\leq \varepsilon + \prod_{i=1}^m \langle f_i  \mathbf{1}_{B^*} \rangle_{B_2, r}
- \prod_{i=1}^m \langle f_i \rangle_{B_2, r}
= \varepsilon
\leq \prod_{i=1}^m \langle f_i \rangle_{B^*, r}. 
\end{align*}
If $\mu(B_2) > \mu(B_0)$, then by definition, $B_2 \in \B'$. Thus, $\mu(B_2) \ge b$ and 
\begin{align}\label{BBB-3}
\mu(B^*) 
=\mu(B_1^{**})
\le \C_0^2 \mu(B_1) 
\le 2 \C_0^2 b 
\le 2\C_0^2 \mu(B_2). 
\end{align}
Consequently, we conclude from \eqref{Maxdelta} and \eqref{BBB-3} that 
\begin{align*}
&|\mathcal{M}_{\B, r}(\vec{f}\mathbf{1}_{B^*})(x) - \mathcal{M}_{\B, r}(\vec{f}\mathbf{1}_{B_0^*})(x)| 
\\
&\leq \mathcal{M}_{\B, r}(\vec{f}\mathbf{1}_{B^*})(x) 
\le \varepsilon + \prod_{i=1}^m \langle f_i  \mathbf{1}_{B^*} \rangle_{B_2, r}
\\ 
&\le \prod_{i=1}^m \langle f_i \rangle_{B^*, r} 
+ \prod_{i=1}^m \bigg(\frac{\mu(B^*)}{\mu(B_2)} \fint_{B^*} |f_i|^r \, d\mu \bigg)^{\frac1r}
\lesssim \prod_{i=1}^m \langle f_i \rangle_{B^*, r}. 
\end{align*}
This justifies the condition \eqref{list:T-size}.

To proceed, fix $B \in \B$, $x, x' \in B$, and nonzero functions $f_i \in L^r(\Sigma, \mu)$, $i=1, \ldots, m$. By definition, 
\begin{align}\label{M>M2}
\mathcal{M}_{\B, r}(\vec{f})(x) 
\leq \prod_{i=1}^m \langle f_i \rangle_{A, r} + \prod_{i=1}^m \lfloor f_i \rfloor_{B, r}, 
\end{align}
for some $A \in \B$ containing $x$. If $\mu(A) \leq \mu(B)$, then the property \eqref{list:B4} gives $A\subset B^*$, hence,  
\begin{align}\label{M>M3}
\mathcal{M}_{\B, r}(\vec{f}{\bf 1}_{B^*})(x) 
\ge \prod_{i=1}^m \langle f_i \rangle_{A, r}. 
\end{align}
Gathering \eqref{M>M2} and \eqref{M>M3}, we obtain 
\begin{align*}
&\big|\mathcal{M}_{\B, r}(\vec{f})(x) - \mathcal{M}_{\B, r}(\vec{f}{\bf 1}_{B^*})(x) \big|
\\
&=\mathcal{M}_{\B, r}(\vec{f})(x) - \mathcal{M}_{\B, r}(\vec{f}{\bf 1}_{B^*})(x) 
\leq \prod_{i=1}^m \lfloor f_i \rfloor_{B, r}.
\end{align*}
If $\mu(A)>\mu(B)$, then $B \subset A^*$ and 
\begin{align}\label{M>M4}
\prod_{i=1}^m \langle f_i \rangle_{A, r} 
\lesssim \prod_{i=1}^m \langle f_i \rangle_{A^*, r}
\le \prod_{i=1}^m \lfloor f_i \rfloor_{B, r}. 
\end{align}
We then invoke \eqref{M>M2} and \eqref{M>M4} to deduce 
\begin{align*}
\big|\mathcal{M}_{\B, r}(\vec{f})(x) - \mathcal{M}_{\B, r}(\vec{f}{\bf 1}_{B^*})(x)\big|
\leq \mathcal{M}_{\B, r}(\vec{f})(x)
\\
\leq \prod_{i=1}^m \langle f_i \rangle_{A, r} + \prod_{i=1}^m \lfloor f_i \rfloor_{B, r} 
\lesssim \prod_{i=1}^m \lfloor f_i \rfloor_{B, r}. 
\end{align*}
Hence, we have proved that for any $x \in B$, 
\begin{align*}
\big|\mathcal{M}_{\B, r}(\vec{f})(x) - \mathcal{M}_{\B, r}(\vec{f}{\bf 1}_{B^*})(x)\big|
\lesssim \prod_{i=1}^m \lfloor f_i \rfloor_{B, r}, 
\end{align*}
which immediately implies 
\begin{align*}
\big| \big(\mathcal{M}_{\B, r}(\vec{f}) - \mathcal{M}_{\B, r}(\vec{f}{\bf 1}_{B^*}) \big)(x) 
- \big(\mathcal{M}_{\B, r}(\vec{f}) - &\mathcal{M}_{\B, r}(\vec{f}{\bf 1}_{B^*}) \big)(x') \big|
\lesssim \prod_{i=1}^m \lfloor f_i \rfloor_{B, r}. 
\end{align*}
This shows the condition \eqref{list:T-reg}. Therefore, $\mathcal{M}_{\B, r}$ is a multilinear bounded oscillation operator with respect to $\mathfrak{B}$ and the exponent $r$. 
\end{proof}

\begin{lemma}\label{lem:M}
Let $(\Sigma, \mu)$ be a measure space with a ball-basis $\B$. Then for any $r \ge 1$, 
\begin{align}
\label{Mr-1} & \|M_{\B, r}\|_{L^r(\Sigma, \mu) \to L^{r, \infty}(\Sigma, \mu)} \le \C_0^{\frac1r}, 
\\
\label{Mr-2} & \|M_{\B, r}\|_{L^p(\Sigma, \mu) \to L^p(\Sigma, \mu)} \le \C_0^{\frac1p},  \quad r<p \le \infty, 
\\
\label{Mr-33} & \|\M^{\otimes}_{\B, r}\|_{L^r(\Sigma, \mu) \times \cdots \times L^r(\Sigma, \mu) \to L^{\frac{r}{m}, \infty}(\Sigma, \mu)} 
\le (\C_0 \, m)^{\frac{m}{r}}, 
\\ 
\label{Mr-3} & \|\M_{\B, r}\|_{L^r(\Sigma, \mu) \times \cdots \times L^r(\Sigma, \mu) \to L^{\frac{r}{m}, \infty}(\Sigma, \mu)} 
\le (\C_0 \, m)^{\frac{m}{r}}. 
\end{align}
\end{lemma}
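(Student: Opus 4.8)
The plan is to prove the four bounds in Lemma \ref{lem:M} by the standard Vitali-type covering argument adapted to a ball-basis, exploiting the covering property \eqref{list:B4} of Definition \ref{def:basis}. First I would prove \eqref{Mr-1}. Fix $\lambda>0$ and set $E_\lambda := \{x \in \Sigma : M_{\B,r}f(x) > \lambda\}$. For each $x \in E_\lambda$ there is a ball $B_x \in \B$ containing $x$ with $\langle f \rangle_{B_x, r} > \lambda$, i.e. $\int_{B_x} |f|^r \, d\mu > \lambda^r \mu(B_x)$. By property \eqref{list:B3} it suffices to estimate $\mu$ of a countable union of such balls; passing to an arbitrary finite subfamily $\{B_1, \dots, B_N\}$, I would run the standard greedy selection: order them so $\mu(B_1) \ge \mu(B_2) \ge \cdots$, keep $B_1$, and inductively keep the next ball that is disjoint from all previously kept ones. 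If $\{B_{k_1}, \dots, B_{k_j}\}$ are the kept balls, every discarded $B_i$ meets some kept $B_{k_\ell}$ with $\mu(B_i) \le \mu(B_{k_\ell})$, hence (up to the factor $2$ in \eqref{list:B4}, which is harmless since one can insert it or simply note $\mu(B_i) \le 2\mu(B_{k_\ell})$ is implied) $B_i \subset B_{k_\ell}^*$. Thus $\bigcup_{i=1}^N B_i \subset \bigcup_\ell B_{k_\ell}^*$ and the kept balls are pairwise disjoint, so
\begin{align*}
\mu\Big(\bigcup_{i=1}^N B_i\Big) \le \sum_\ell \mu(B_{k_\ell}^*) \le \C_0 \sum_\ell \mu(B_{k_\ell}) \le \frac{\C_0}{\lambda^r} \sum_\ell \int_{B_{k_\ell}} |f|^r \, d\mu \le \frac{\C_0}{\lambda^r} \|f\|_{L^r(\Sigma,\mu)}^r.
\end{align*}
Taking the supremum over finite subfamilies and using \eqref{list:B3} gives $\mu(E_\lambda) \le \C_0 \lambda^{-r} \|f\|_{L^r}^r$, which is exactly \eqref{Mr-1}.

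Next, \eqref{Mr-2}: the case $p=\infty$ is immediate since $\langle f \rangle_{B,r} \le \|f\|_{L^\infty}$ for every $B$, giving operator norm $\le 1 \le \C_0^{1/p}$ interpreted in the limit; for $r < p < \infty$ I would interpolate. Observe $M_{\B,r}f = (M_{\B,1}(|f|^r))^{1/r}$, so from \eqref{Mr-1} (applied with exponent $1$ to $|f|^r$, which reads $\|M_{\B,1}\|_{L^1 \to L^{1,\infty}} \le \C_0$) together with the trivial $L^\infty \to L^\infty$ bound $\|M_{\B,1}\|_{L^\infty \to L^\infty} \le 1$, the Marcinkiewicz interpolation theorem yields $\|M_{\B,1}\|_{L^{p/r} \to L^{p/r}} \le \C_0^{r/p}$ for $p/r > 1$; raising to the power $1/r$ gives $\|M_{\B,r}\|_{L^p \to L^p} \le \C_0^{1/p}$. (To be careful about the interpolation constant being exactly $\C_0^{1/p}$ rather than merely comparable, one can instead give a direct layer-cake proof: bound the distribution function of $M_{\B,r}f$ at level $\lambda$ by applying the weak $(r,r)$ estimate \eqref{Mr-1} to the truncation $f\mathbf{1}_{\{|f| > \lambda/2\}}$ and absorbing the contribution of $f\mathbf{1}_{\{|f| \le \lambda/2\}}$, then integrating $\lambda^{p-1}$ against it — this is the classical argument and produces the clean constant.)

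For \eqref{Mr-3}, the multilinear weak bound, I would use Hölder's inequality to linearize. Fix $\lambda > 0$; if $\prod_{i=1}^m \langle f_i \rangle_{B,r} > \lambda$ for some $B \ni x$, then $\langle f_i \rangle_{B,r} > \lambda^{1/m} (\prod_j \|f_j\|_{L^r} / \|f_i\|_{L^r}^m)^{?}$ — more cleanly, normalize so $\|f_i\|_{L^r} = 1$ for all $i$; then $\prod_i \langle f_i \rangle_{B,r} > \lambda$ forces $\langle f_i \rangle_{B,r} > \lambda^{1/m}$ for at least one index $i$, hence $\{x : \M_{\B,r}(\vec f)(x) > \lambda\} \subset \bigcup_{i=1}^m \{x : M_{\B,r}f_i(x) > \lambda^{1/m}\}$. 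Applying \eqref{Mr-1} to each term,
\begin{align*}
\mu\big(\{\M_{\B,r}(\vec f) > \lambda\}\big) \le \sum_{i=1}^m \mu\big(\{M_{\B,r}f_i > \lambda^{1/m}\}\big) \le \sum_{i=1}^m \frac{\C_0}{\lambda^{r/m}} \|f_i\|_{L^r}^r = \frac{m\,\C_0}{\lambda^{r/m}},
\end{align*}
i.e. $\lambda \, \mu(\{\M_{\B,r}(\vec f) > \lambda\})^{m/r} \le (m\C_0)^{m/r}$ under the normalization, which by homogeneity is precisely \eqref{Mr-3}. The estimate \eqref{Mr-33} for the tensor-product operator $\M^{\otimes}_{\B,r} = \prod_i M_{\B,r}f_i$ follows from the same reduction, since $\prod_i M_{\B,r}f_i(x) > \lambda$ likewise forces $M_{\B,r}f_i(x) > \lambda^{1/m}$ for some $i$ — in fact it follows from \eqref{Mr-3} directly because $\M_{\B,r}(\vec f) \le \M^{\otimes}_{\B,r}(\vec f)$ pointwise and conversely the same union bound applies verbatim. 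I expect no serious obstacle here; the only point requiring care is making sure the covering constant in the Vitali argument comes out as exactly $\C_0$ (and not $\C_0$ times an extra factor from the "$\mu(B') \le 2\mu(B)$" threshold in \eqref{list:B4}), which is handled by the standard trick of comparing $\mu(B_i)$ with $\mu(B_{k_\ell})$ before invoking \eqref{list:B4}, and in keeping the Marcinkiewicz/layer-cake constant sharp, for which the direct distributional argument is cleaner than black-box interpolation.
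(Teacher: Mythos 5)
Your overall plan lines up with the paper for \eqref{Mr-1} and \eqref{Mr-2}, and diverges in an interesting but equally correct way for the multilinear bounds. A few remarks.

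For \eqref{Mr-1}, the paper does not reprove the weak $(r,r)$ bound; it simply invokes Karagulyan's Theorem~4.1. Your greedy Vitali selection is essentially the content of that theorem (and of the paper's Lemma~\ref{lem:BE}\eqref{list-1}), but your passage from ``finite subfamilies'' to all of $E_\lambda$ via property \eqref{list:B3} is looser than what is actually needed in an abstract measure space: the clean way is to take an arbitrary \emph{bounded} subset $E \subset E_\lambda$, cover it by balls on which $\langle f \rangle_{B,r} > \lambda$, apply Lemma~\ref{lem:BE}\eqref{list-1} to extract a pairwise disjoint subfamily $\{G\}$ with $E \subset \bigcup G^*$, sum $\mu(G^*) \le \C_0 \mu(G) \le \C_0 \lambda^{-r}\int_G |f|^r\,d\mu$, and then exhaust $E_\lambda$ by bounded subsets using Lemma~\ref{lem:BG}. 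The constant $\C_0$ drops out exactly as you say.

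For \eqref{Mr-2} you take the same route as the paper: interpolate the weak $(r,r)$ bound against $\|M_{\B,r}\|_{L^\infty \to L^\infty} \le 1$. Your worry about the interpolation constant is well placed; the paper writes ``interpolation'' with no further justification and the stated constant $\C_0^{1/p}$ is at best up to an implicit factor depending on $p/r$, even with the careful layer-cake splitting (which still carries a $(p/(p-r))^{1/p}$ factor). Since the paper itself only appeals to interpolation, you are not deviating from it here.

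For \eqref{Mr-33} and \eqref{Mr-3} your route is genuinely different from the paper's. You normalize $\|f_i\|_{L^r}=1$ and use the inclusion $\{\M_{\B,r}(\vec f)>\lambda\} \subset \bigcup_i \{M_{\B,r}f_i > \lambda^{1/m}\}$ together with the scalar weak bound, giving $(m\C_0)^{m/r}$ by a union bound. The paper instead applies the weak-type H\"older inequality \eqref{Hol-weak} to $\prod_i M_{\B,r}f_i$ and then the scalar weak bound, which produces the identical constant $(m\C_0)^{m/r}$. Both are standard and correct; the union bound is more elementary and bypasses the constant-tracking in the weak H\"older inequality, while the weak H\"older proof makes the reduction to \eqref{Mr-1} a one-line norm computation. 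Either presentation would be acceptable.
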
 

\begin{proof}
The inequality \eqref{Mr-1} was shown in \cite[Theorem 4.1]{Kar}. Note that 
\begin{align}\label{Minfty}
\|M_{\B, r}\|_{L^{\infty}(\Sigma, \mu) \to L^{\infty}(\Sigma, \mu)} \le 1. 
\end{align}
Then interpolation between \eqref{Mr-1} and \eqref{Minfty} gives \eqref{Mr-2} as desired. To show \eqref{Mr-33} and  \eqref{Mr-3}, let us recall H\"{o}lder's inequality for weak spaces in \cite[p. 16]{Gra-1}: 
\begin{align}\label{Hol-weak}
\|f_1 \cdots f_m\|_{L^{p, \infty}(\Sigma, \mu)} 
\le p^{-\frac1p} \prod_{i=1}^m p_i^{\frac{1}{p_i}} \|f_i\|_{L^{p_i, \infty}(\Sigma, \mu)}, 
\end{align}
for all $\frac1p=\sum_{i=1}^m \frac{1}{p_i}$ with $0<p_1, \ldots, p_m<\infty$. Thus, it follows from \eqref{Hol-weak} and \eqref{Mr-1} that 
\begin{align*}
\|\M_{\B, r}(\vec{f})\|_{L^{\frac{r}{m}, \infty}(\Sigma, \mu)} 
\le \|\M^{\otimes}_{\B, r}(\vec{f})\|_{L^{\frac{r}{m}, \infty}(\Sigma, \mu)} 
= \bigg\|\prod_{i=1}^m M_{\B, r} f_i \bigg\|_{L^{\frac{r}{m}, \infty}(\Sigma, \mu)} 
\\
\le \Big(\frac{r}{m}\Big)^{-\frac{m}{r}} \prod_{i=1}^m r^{\frac1r} \|M_{\B, r}f_i\|_{L^{r, \infty}(\Sigma, \mu)}
\le m^{\frac{m}{r}} \prod_{i=1}^m \C_0^{\frac1r} \|f_i\|_{L^r(\Sigma, \mu)}. 
\end{align*}
The proof is complete. 
\end{proof}

Invoking Theorem \ref{thm:MBO}, Lemma \ref{lem:M}, and Theorems \ref{thm:T}--\ref{thm:T-Besi} applied to $\mathcal{M}_{\B, r}$, we obtain the quantitative weighted norm inequalities as follows. 

\begin{theorem}\label{Mapp}
If  $(\Sigma, \mu)$ be a measure space with a ball-basis $\B$. Let $1 \le r <\infty$, $\vec{p}=(p_1, \ldots, p_m)$ with $r<p_1, \ldots, p_m < \infty$, and $\vec{w} \in A_{\vec{p}/r, \B}$. Then 
\begin{equation}
\|\mathcal{M}_{\B, r}\|_{L^{p_1}(\Sigma, w_1) \times \cdots \times L^{p_m}(\Sigma, w_m) \to L^p(\Sigma, w)} 
\lesssim \mathcal{N}_1(r, \vec{p}, \vec{w}) [\vec{w}]_{A_{\vec{p}, \B}}^{\max\limits_{1 \le i \le m}\{p, (\frac{p_i}{r})'\}}, 
\end{equation}
where $\frac1{p}=\sum_{i=1}^m \frac1{p_i}$ and $w=\prod_{i=1}^m w_i^{\frac{p}{p_i}}$. If in addition $\B$ satisfies the Besicovitch condition, then  
\begin{equation}
\|\mathcal{M}_{\B, r}\|_{L^{p_1}(\Sigma, w_1) \times \cdots \times L^{p_m}(\Sigma, w_m) \to L^p(\Sigma, w)} 
\lesssim [\vec{w}]_{A_{\vec{p}, \B}}^{\max\limits_{1 \le i \le m}\{p, (\frac{p_i}{r})'\}}. 
\end{equation}
\end{theorem}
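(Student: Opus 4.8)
The plan is to deduce Theorem \ref{Mapp} directly from the general machinery already set up, namely Theorem \ref{thm:MBO} together with Lemma \ref{lem:M} and Theorems \ref{thm:T}--\ref{thm:T-Besi}. The essential content is verification of hypotheses: by Theorem \ref{thm:MBO}, $\mathcal{M}_{\B, r}$ is a multilinear bounded oscillation operator with respect to $\B$ and $r$, so there exist constants $\C_1(\mathcal{M}_{\B,r})$ and $\C_2(\mathcal{M}_{\B,r})$; inspecting the proof of Theorem \ref{thm:MBO} shows that one may take both of these to be universal constants depending only on $\C_0$ (and on $m$), hence $\C(\mathcal{M}_{\B,r}) \lesssim 1$. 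Moreover, Lemma \ref{lem:M}, specifically \eqref{Mr-3}, supplies the needed endpoint bound
\[
\|\mathcal{M}_{\B, r}\|_{L^r(\Sigma, \mu) \times \cdots \times L^r(\Sigma, \mu) \to L^{\frac{r}{m}, \infty}(\Sigma, \mu)} \le (\C_0 m)^{\frac{m}{r}} \lesssim 1.
\]
Thus $\mathcal{M}_{\B,r}$ satisfies all the hypotheses of Theorem \ref{thm:T}, with the implicit constant $\C(T)$ in \eqref{eq:Npw} being absorbed into the $\lesssim$.

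Applying Theorem \ref{thm:T} to $T = \mathcal{M}_{\B, r}$ immediately yields
\[
\|\mathcal{M}_{\B, r}\|_{L^{p_1}(\Sigma, w_1) \times \cdots \times L^{p_m}(\Sigma, w_m) \to L^p(\Sigma, w)} \lesssim \mathcal{N}_1(r, \vec{p}, \vec{w}) \, [\vec{w}]_{A_{\vec{p}/r, \B}}^{\max\limits_{1 \le i \le m}\{p, (\frac{p_i}{r})'\}}
\]
for all $\vec{p}$ with $r < p_1, \ldots, p_m < \infty$ and all $\vec{w} \in A_{\vec{p}/r, \B}$, which is the first claimed inequality (noting $[\vec{w}]_{A_{\vec{p}, \B}}$ and $[\vec{w}]_{A_{\vec{p}/r, \B}}$ denote the same quantity in the notation of the paper). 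For the second inequality, under the additional Besicovitch condition on $\B$, one invokes Theorem \ref{thm:T-Besi} in place of Theorem \ref{thm:T}; its estimate \eqref{T-Besi-sharp}, applied to $T = \mathcal{M}_{\B, r}$ with $\C(T) \lesssim 1$, gives exactly
\[
\|\mathcal{M}_{\B, r}\|_{L^{p_1}(\Sigma, w_1) \times \cdots \times L^{p_m}(\Sigma, w_m) \to L^p(\Sigma, w)} \lesssim [\vec{w}]_{A_{\vec{p}/r, \B}}^{\max\limits_{1 \le i \le m}\{p, (\frac{p_i}{r})'\}},
\]
as desired. In both cases $\frac1p = \sum_{i=1}^m \frac1{p_i}$ and $w = \prod_{i=1}^m w_i^{p/p_i}$ by definition.

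The only point requiring genuine care — and what I expect to be the main (minor) obstacle — is confirming that the structural constants $\C_1, \C_2$ produced in the proof of Theorem \ref{thm:MBO} are indeed universal, i.e. independent of the functions $\vec{f}$, the ball $B$, and the weights, so that $\C(\mathcal{M}_{\B, r})$ can be treated as an absolute constant and hidden in $\lesssim$. A glance at that proof confirms this: the estimates used there reduce to the ball-basis axioms \eqref{list:B4}, with bounds like $\mu(B^*) \le \C_0^2 \mu(B_1) \le 2\C_0^2 \mu(B_2)$, so $\C_1, \C_2$ depend only on $\C_0$ and $m$. Everything else is a verbatim substitution into the already-proved theorems, so no further argument is needed.
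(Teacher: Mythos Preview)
Your proposal is correct and matches the paper's approach exactly: the paper states Theorem \ref{Mapp} as an immediate consequence of Theorem \ref{thm:MBO}, Lemma \ref{lem:M}, and Theorems \ref{thm:T}--\ref{thm:T-Besi} applied to $\mathcal{M}_{\B,r}$, with no additional argument given. Your remark that the structural constants $\C_1(\mathcal{M}_{\B,r}), \C_2(\mathcal{M}_{\B,r})$ depend only on $\C_0$ and $m$ is precisely the verification needed, and your observation about the weight-class subscript (the statement writes $A_{\vec{p},\B}$ where $A_{\vec{p}/r,\B}$ is intended) correctly flags what is just a typographical slip in the displayed estimate.
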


Note that Theorem \ref{Mapp} does not give the optimal weighted estimate as in Euclidean spaces \cite[Theorem 1.2]{LMS}. In the current setting with $r=1$, the sharpness occurs only when $p \leq \max\{p'_1, \ldots, p'_m\}$.

\subsection{Multilinear $\omega$-Calder\'{o}n-Zygmund operators}\label{sec:CZO}
A {\tt metric} on a set $\Sigma$ is a function $\rho: \Sigma \times  \Sigma \to [0, \infty)$ satisfying the following conditions: 

\begin{enumerate}
\item $\rho(x, y)=\rho(y, x) \ge 0$ for every $x,y\in \Sigma$;

\item $\rho(x,y)=0$ if and only if $x=y$; 

\item $\rho(x,y) \le \rho(x,z) + \rho(z,y)$ for every $x, y, z \in \Sigma$. 
\end{enumerate} 

A set $\Sigma$ endowed with a metric $\rho$ is said to be a metric space $(\Sigma, \rho)$. We then define the ball $B(x, r)$ in $(\Sigma, \rho)$ with center $x$ and radius $r$ as  
\[
B(x,r) := \{y\in \Sigma: \rho(x,y)<r\}, \quad x \in \Sigma, \quad r>0.
\]
Note that the metric defines a topology for which balls form a base, and balls are open sets in this topology. Given a ball $B \subset \Sigma$ we shall denote by $c(B)$ and $r(B)$ respectively its center and its radius. Given $\lambda>0$, we define $\lambda B=B(c(B), \lambda r(B))$. 

 Let $\B_{\rho}$ be the family of all balls in the metric space $(\Sigma, \rho)$. We define also an enlarged family of balls $\B^*_{\rho}$ as follows: $\B^*_{\rho} := \B_{\rho}$ if $\mu(\Sigma)=\infty$, $\B^*_{\rho} := \B_{\rho} \cup\{\Sigma\}$ otherwise. 

We say that $(\Sigma, \rho, \mu)$ is {\tt a space of homogeneous type} if a nonnegative Borel measure $\mu$ on the metric space $(\Sigma, \rho)$ is {\tt doubling}: 
\begin{align}\label{Cu}
0 < \mu(2B) \le C_{\mu} \, \mu(B) \quad\text{for every } B \in \B_{\rho}.
\end{align}
We always let $C_{\mu}$ be the smallest constant for which \eqref{Cu} holds, then the number $D_{\mu} := \log_2 C_{\mu}$ is called the doubling order of $\mu$. By \eqref{Cu}, we have 
\begin{align}\label{Du}
\mu(\lambda B) \le (2\lambda)^{D_{\mu}} \mu(B), \quad\forall \lambda>1, \, B \in \B_{\rho}.
\end{align}
A nonnegative Borel measure $\mu$ on the metric space $(\Sigma, \rho)$ is said to satisfy the {\tt reverse doubling condition} if there exist $D'_{\mu} \in (0, \infty)$ and $C'_{\mu} \in (0, 1]$ such that, for all $x \in \Sigma$, $0 < r < 2\diam(\Sigma)$ and $1 \le \lambda < 2\diam(\Sigma)/r$, 
\begin{align}\label{RD}
\mu(B(x, \lambda r)) \ge C'_{\mu} \, \lambda^{D'_{\mu}}\, \mu(B(x, r)).
\end{align}
The measure $\mu$ is said to be {\tt metrically continuous} if for all $x \in \Sigma$ and $r>0$, 
\begin{align*}
\lim_{y \to x} \mu(B(x, r) \Delta B(y, r)) = 0,
\end{align*}
where $A \Delta B$ stands for the symmetric difference, i.e. $A \Delta B =(A \setminus B) \cup (B \setminus A)$. It was shown in \cite{GG} that given $x \in \Sigma$, the continuity of $r \to \mu(B(x, r))$ implies $\mu(\partial B(x, r))=0$ for each $r>0$, which further gives that $\lim_{y \to x} \mu(B(x, r) \Delta B(y, r)) = 0$. 

For any $\alpha \in (0, 1]$, let $\mathscr{C}^{\alpha}(\Sigma, \mu)$ be the set of all functions $f : \Sigma \to \mathbb{C}$ such that 
\begin{align*}
\|f\|_{\mathscr{C}^{\alpha}(\Sigma, \mu)} 
:= \|f\|_{L^{\infty}(\Sigma, \mu)} + \sup_{x , y \in \Sigma \atop x \neq y} \frac{|f(x) - f(y)|}{\rho(x, y)^{\alpha}}. 
\end{align*}
Define the space 
\begin{align}\label{def:Cba}
\mathscr{C}^{\alpha}_b(\Sigma, \mu) 
:= \{f \in \mathscr{C}^{\alpha}(\Sigma, \mu) : f \text{ has bounded support}\}, 
\end{align}
with the norm $\|\cdot\|_{\mathscr{C}^{\alpha}(\Sigma, \mu)}$. Note that $\mathscr{C}^{\beta}_b(\Sigma, \mu) \subset \mathscr{C}^{\alpha}_b(\Sigma, \mu) \subset L^{\infty}(\Sigma, \mu) \subset \BMO_{\B_{\rho}}$ for any $0< \alpha < \beta \le 1$, and $\mathscr{C}_b^{\alpha}(\Sigma, \mu)$ is dense in $L^p(\Sigma, \mu)$ for any $\alpha \in (0, 1]$ and $p \in [1, \infty)$ (cf. \cite[Corollary 2.11]{HMY}). Recall the definition of $\BMO_{\B}$ in \eqref{def:BMO}. Define $\CMO_{\B_{\rho}}$ as the $\BMO_{\B_{\rho}}$-closure of $\bigcup_{0 < \alpha \le 1} \mathscr{C}_b^{\alpha}(\Sigma, \mu)$.

Let $\omega: [0, \infty) \to [0, \infty)$ be a modulus of continuity, which means that $\omega$ is increasing, subadditive, and $\omega(0)=0$. We say that a modulus of continuity $\omega$ satisfies {\tt the Dini condition} (or, $\omega \in \text{Dini}$) if it verifies 
\begin{align*}
\|\omega\|_{{\rm Dini}} 
:= \int_{0}^{1}\omega(t) \frac{dt}{t}<\infty.
\end{align*}

\begin{definition}\label{def:wCZO}
Let $(\Sigma, \rho, \mu)$ be a space of homogeneous type and $\omega$ be a modulus of continuity. We say that a function $K:\Sigma^{m+1} \setminus \{x=y_1=\cdots=y_m\} \to \mathbb{C}$ is {\tt an $m$-linear $\omega$-Calder\'{o}n-Zygmund kernel}, if there exists a constant $C_K>0$ such that 
\begin{align}
\label{eq:size} |K(x, \vec{y})| 
&\le \frac{C_K}{\big(\sum_{i=1}^m \mu(B(x, \rho(x, y_i)))\big)^m},
\\
\label{eq:smooth-1} |K(x, \vec{y}) - K(x', \vec{y})| 
&\le \frac{C_K \, \w \big(\frac{\rho(x, x')}{\max\limits_{1 \le i \le m} \rho(x, y_i)}\big)}{\big(\sum_{i=1}^m \mu(B(x, \rho(x, y_i)))\big)^m},
\end{align}
whenever $\rho(x, x') \le \frac12 \max\limits_{1 \le i \le m} \rho(x, y_i)$, and for each $i=1, \ldots, m$, 
\begin{align}\label{eq:smooth-2} 
|K(x, \vec{y}) - K(x, \vec{y}')| 
\le \frac{C_K \, \w \big(\frac{\rho(y_i, y'_i)}{\max\limits_{1 \le i \le m} \rho(x, y_i)}\big)}{\big(\sum_{i=1}^m \mu(B(x, \rho(x, y_i)))\big)^m},
\end{align}
where $\vec{y}'=(y_1, \ldots, y'_i, \ldots, y_m)$, whenever $\rho(y_i, y'_i) \le \frac12 \max\limits_{1 \le i \le m} \rho(x, y_i)$. When $\w(t)=t^{\delta}$ with $\delta \in (0, 1]$, $K$ is called an $m$-linear standard Calder\'{o}n-Zygmund kernel.

An $m$-linear operator $T: \mathscr{C}^{\alpha}_b(\Sigma, \mu) \times \cdots \times \mathscr{C}^{\alpha}_b(\Sigma, \mu) \to \mathscr{C}^{\alpha}_b(\Sigma, \mu)'$ is called {\tt an $\omega$-Calder\'{o}n-Zygmund operator} if there exists an $m$-linear $\omega$-Calder\'{o}n-Zygmund kernel $K$ such that 
\begin{align*}
T(\vec{f})(x) =\int_{\Sigma^m} K(x, \vec{y}) f_1(y_1)\cdots f_m(y_m) \, d\mu(\vec{y}),  
\end{align*}
whenever $x \not\in \bigcap_{i=1}^m \supp(f_i)$ and $\vec{f}=(f_1,\ldots,f_m) \in \mathscr{C}^{\alpha}_b(\Sigma, \mu) \times \cdots \times \mathscr{C}^{\alpha}_b(\Sigma, \mu)$, and 
\begin{align}\label{T:assumption}
\text{$T$ is bounded from $L^{q_1}(\Sigma, \mu) \times \cdots \times L^{q_m}(\Sigma, \mu)$ to $L^q(\Sigma, \mu)$} 
\end{align}
for some $\frac1q=\sum_{i=1}^m \frac{1}{q_i}$ with $1<q_1, \ldots, q_m < \infty$. Here and elsewhere, given $\vec{y}=(y_1, \ldots, y_m)$, we simply denote $d\mu(\vec{y}) := d\mu(y_1) \cdots d\mu(y_m)$. 
\end{definition}

The main result of this subsection can be formulated as follows. 
\begin{theorem}\label{thm:CZOBO}
Let $(\Sigma, \rho, \mu)$ be a space of homogeneous type such that $\B_{\rho}$ satisfies the density condition. If $T$ is an $m$-linear $\omega$-Calder\'{o}n-Zygmund operator with $\omega \in \mathrm{Dini}$, then it is a multilinear bounded oscillation operator with respect to the ball-basis $\B^*_{\rho}$ and the exponent $r=1$, with constants $\C_1(T) \lesssim C_K$ and $\C_2(T) \lesssim \|\omega\|_{{\rm Dini}}$. 
\end{theorem}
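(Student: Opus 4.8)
The plan is to verify the two structural conditions \eqref{list:T-size} and \eqref{list:T-reg} of Definition~\ref{def:MBO} directly from the size bound \eqref{eq:size} and the regularity bounds \eqref{eq:smooth-1}--\eqref{eq:smooth-2}. I would take $\T:=T$ itself (viewed as $\bB$-valued with $\bB=\mathbb{C}$, so that the ``size function'' of Definition~\ref{def:MBO} is $|T|$), the exponent $r=1$, and the ball-basis $\B^*_{\rho}$ equipped with the starring $B^*:=\kappa B$ for a fixed dilation factor $\kappa$ depending only on $C_\mu$; since one is free to enlarge $\kappa$ (and hence $\C_0$) without losing \eqref{list:B4}, I would fix $\kappa\ge 5$, which is convenient below. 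Because $T$ is a priori defined only on $m$-tuples of H\"older test functions $\mathscr{C}^{\alpha}_b(\Sigma,\mu)$, the first step is to note that for $\vec f\in L^1(\Sigma,\mu)^m$ the differences in \eqref{list:T-size}--\eqref{list:T-reg} are to be read as absolutely convergent kernel integrals over $(B^*)^m\setminus(B_0^*)^m$ and $\Sigma^m\setminus(B^*)^m$ respectively: after expanding by $m$-linearity into $2^m-1$ pieces, on each piece at least one variable ranges outside a ball about the evaluation point(s), so the kernel representation applies there and \eqref{eq:size} forces convergence.

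For \eqref{list:T-size}: given $B_0$ with $B_0^*\subsetneq\Sigma$, I would take $B:=\lambda_1 B_0$ for a fixed $\lambda_1>1$, so that $B\supsetneq B_0$, $B_0^*\subset B^*$, and $\mu(B^*)\lesssim\mu(B_0)$ by \eqref{Du}. For $x\in B_0$ and $\vec y\in(B^*)^m\setminus(B_0^*)^m$ some $y_i$ lies outside $B_0^*=\kappa B_0$, whence $\rho(x,y_i)>(\kappa-1)r(B_0)$ and, by \eqref{eq:size} together with doubling, $|K(x,\vec y)|\lesssim C_K\,\mu(B_0)^{-m}$ there; integrating $\prod_i|f_i|$ over $(B^*)^m$ and using $\mu(B^*)\lesssim\mu(B_0)$ gives $\sup_{x\in B_0}|T(\vec f\mathbf{1}_{B^*})(x)-T(\vec f\mathbf{1}_{B_0^*})(x)|\lesssim C_K\prod_i\langle f_i\rangle_{B^*}$, i.e. \eqref{list:T-size} with $\C_1(T)\lesssim C_K$.

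For \eqref{list:T-reg}: with $B\in\B^*_{\rho}$ (we may assume $B^*\subsetneq\Sigma$, otherwise the left side vanishes) and $x,x'\in B$, the quantity to estimate equals $\int_{\Sigma^m\setminus(B^*)^m}\bigl(K(x,\vec y)-K(x',\vec y)\bigr)\prod_i f_i(y_i)\,d\mu(\vec y)$. I would slice $\Sigma^m\setminus(B^*)^m$ into the dyadic shells $D_k$ on which $\max_i\rho(x,y_i)\sim 2^k r(B^*)$, $k\ge 0$. On $D_k$ one has $\rho(x,x')\lesssim r(B)\le\tfrac12\max_i\rho(x,y_i)$ — this is exactly where $\kappa\ge 5$ enters — so \eqref{eq:smooth-1} applies and yields $|K(x,\vec y)-K(x',\vec y)|\lesssim\omega(2^{-k})\,C_K\,\mu(2^k B^*)^{-m}$; moreover $\vec y\in(2^{k+O(1)}B^*)^m$ with $2^{k+O(1)}B^*\supset B$, so integrating against $\prod_i|f_i|$ and using doubling together with $\langle f_i\rangle_{2^{k+O(1)}B^*}\le\lfloor f_i\rfloor_B$ bounds the $D_k$-integral by $C_K\,\omega(2^{-k})\prod_i\lfloor f_i\rfloor_B$. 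Summing over $k\ge 0$ and invoking $\sum_{k\ge0}\omega(2^{-k})\asymp\|\omega\|_{\mathrm{Dini}}<\infty$ then finishes \eqref{list:T-reg} with $\C_2(T)\lesssim C_K\|\omega\|_{\mathrm{Dini}}$ (hence $\lesssim\|\omega\|_{\mathrm{Dini}}$ after normalizing $C_K$).

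The main obstacle will be \eqref{list:T-reg}: one must run the annular decomposition centered at the evaluation points while $B^*$ is a ball about its own center (so a few elementary inclusions with the doubling constant are needed to pass between $B(x,2^k r(B^*))$ and $2^k B^*$), control that doubling constant uniformly across all shells, check on every shell the geometric inequality $\rho(x,x')\le\tfrac12\max_i\rho(x,y_i)$ required to invoke \eqref{eq:smooth-1} (which forces $\kappa$ not to be too small), and observe that the resulting series is summed precisely by the Dini integrability of $\omega$. A secondary nuisance is justifying that the $\mathscr{C}^{\alpha}_b$-defined operator, applied to general (non-compactly supported) $L^1$ data, is meaningful in \eqref{list:T-size}--\eqref{list:T-reg} through the kernel representation on the pieces that avoid the evaluation point.
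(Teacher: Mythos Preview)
Your overall strategy---verify \eqref{list:T-size} from the size estimate~\eqref{eq:size} and \eqref{list:T-reg} from the smoothness estimate~\eqref{eq:smooth-1} via an annular decomposition summed by the Dini condition---is the paper's as well. The gap is in the metric assumptions you impose on the hull operation. You propose to take $B^*:=\kappa B$ with a fixed $\kappa\ge 5$ and, for \eqref{list:T-size}, $B:=\lambda_1 B_0$ with a fixed $\lambda_1>1$. Neither is justified in a general space of homogeneous type. A clean obstruction: let $\Sigma=\{0,1,M\}$ carry the Euclidean metric and counting measure. This is doubling with $C_\mu\le 3$ independent of $M$, yet for $B=B(1,2)=\{0,1\}$ and $B'=B(1,M)=\Sigma$ one has $\mu(B')=3\le 4=2\mu(B)$ and $B'\cap B\neq\emptyset$, while $B'\subset\kappa B=B(1,2\kappa)$ forces $\kappa>(M-1)/2$; thus no fixed $\kappa$ yields a valid hull for~\eqref{list:B4}. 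The same space shows a fixed dilation need not strictly enlarge a ball: $B_0=B(0,2)=\{0,1\}$ satisfies $\lambda_1 B_0=B_0$ for every $\lambda_1<M/2$, so your choice of $B$ in \eqref{list:T-size} can fail the requirement $B\supsetneq B_0$.

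The paper instead uses Karagulyan's abstract hull, for which $B^*=B(x_0,R_0)$ with $R_0\ge 2r_0$ depending on the individual ball (and possibly $R_0=\infty$, whence $\Sigma\in\B^*_\rho$). For \eqref{list:T-size} it takes $B:=B(x_0,2R)$ with $R:=\sup\{r\ge R_0:B(x_0,r)=B(x_0,R_0)\}$; this $R$ is finite because $B_0^*\subsetneq\Sigma$, one gets $B\supsetneq B_0^*\supset B_0$, and doubling still gives $\mu(B^*)\lesssim\mu(B_0)$. For \eqref{list:T-reg} the paper compares an arbitrary $x\in B$ with the \emph{center} $x_0$ rather than with a second free point: then $\rho(x,x_0)<r_0\le R_0/2\le\tfrac12\max_i\rho(x_0,y_i)$ on each shell $(2^{k+1}B^*)^m\setminus(2^kB^*)^m$, so the hypothesis of \eqref{eq:smooth-1} is met using only the guaranteed $R_0\ge 2r_0$; the estimate for arbitrary $x,x'\in B$ follows by triangulating through $x_0$. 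Your shell computation is otherwise sound, but once fixed dilations are abandoned the ``$\kappa\ge 5$'' device must be replaced by this compare-to-center trick.
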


\begin{proof}
We first note that $\B^*_{\rho}$ is a ball-basis and satisfies the doubling property, which were shown in \cite[Theorem 7.1]{Kar}.  Besides, for any $B=B(x_0, r_0) \in \B_{\rho}$, 
\begin{align}\label{Bhull}
\text{$B^*=B(x_0, R_0)$ with $2r_0 \le R_0 \le \infty$.} 
\end{align}
Let $T$ be an $m$-linear $\omega$-Calder\'{o}n-Zygmund operator with $\omega \in \mathrm{Dini}$.  Next, let us prove that $T$ verifies the conditions \eqref{list:T-size} and  \eqref{list:T-reg}. Take an arbitrary ball $B_0=B(x_0, r_0) \in \B_{\rho}$ with $B_0^{*} \subsetneq \Sigma$. By \eqref{Bhull}, we see that $B_0^*=B(x_0, R_0)$ with $R_0 \ge 2r_0$. Set 
\begin{align*}
B := B(x_0, 2R), 
\quad\text{where}\quad 
R := \sup\{r \ge R_0:\, B(x_0, r)=B(x_0, R_0)\}. 
\end{align*}
Since $B_0^*=B(x_0, R_0) \subsetneq \Sigma$, we have $R<\infty$ and 
\begin{align}\label{BB}
B_0^*=B(x_0, R_0)=B(x_0, R) \subsetneq B(x_0, 2R)=B.
\end{align}
Then it follows from the size condition \eqref{eq:size} that for any $x \in B_0$ and $\vec{y} \in (B^*)^m \setminus (B_0^*)^m$, 
\begin{align}\label{KB}
|K(x, \vec{y})| 
\lesssim \frac{C_K}{\max\limits_{1\le i \le m} \mu(B(x, \rho(x, y_i)))^m} 
\lesssim \frac{C_K}{\mu(B_0)^m}. 
\end{align}
Then, taking into account \eqref{BB} and \eqref{KB} we have that for any $x\in B_0$, 
\begin{align*}
|T(\vec{f}\mathbf{1}_{B^*})(x) - T(\vec{f}\mathbf{1}_{B_0^*})(x)| 
\le \int_{(B^*)^m \setminus (B_0^*)^m} |K(x, \vec{y})| \prod_{i=1}^m |f_i(y_i)| \, d\mu(\vec{y}) 
\\
\lesssim C_K \prod_{i=1}^m \frac{1}{\mu(B_0)} \int_{B^{*}}|f_i(y_i)| \, d\mu(y_i) 
\lesssim C_K \prod_{i=1}^m \fint_{B^{*}}|f_i(y_i)| \, d\mu(y_i), 
\end{align*}
where we have used the doubling property of $\mu$. This shows the condition \eqref{list:T-size} holds with $\C_1(T) \lesssim C_K$, 

By the smoothness condition \eqref{eq:smooth-1}, we have for any $B=B(x_0, r_0) \in \B_{\rho}$ with $B\neq \Sigma$ and for any $x \in B$, 
\begin{align*}
\big| &\big(T(\vec{f}) - T(\vec{f}\mathbf{1}_{B^*}) \big)(x) 
- \big(T(\vec{f}) - T(\vec{f}\mathbf{1}_{B^*}) \big)(x_0) \big|
\\ 
& =\bigg|\int_{\Sigma^m \setminus (B^{*})^m} 
(K(x, \vec{y}) - K(x_0, \vec{y}) ) \prod_{i=1}^m f_i(y_i) \, d\mu(\vec{y}) \bigg|
\\ 
& \leq\sum_{k=0}^{\infty} \int_{(2^{k+1}B^*)^m \setminus (2^k B^*)^m} 
|K(x, \vec{y}) - K(x_0, \vec{y})| \prod_{i=1}^m |f_i(y_i)| \, d\mu(\vec{y}) 
\\
& \lesssim \sum_{k=0}^{\infty} \int_{(2^{k+1}B^*)^m \setminus (2^k B^*)^m} 
\frac{\w \big(\frac{\rho(x, x_0)}{\max\limits_{1 \le i \le m} \rho(x_0, y_i)}\big) 
\prod_{i=1}^m |f_i(y_i)| \, d\mu(y_i)}{\big(\sum_{i=1}^m \mu(B(x_0, \rho(x_0, y_i)))\big)^m} 
\\
& \lesssim\sum_{k=0}^{\infty} \omega(2^{-k-1}) 
\prod_{i=1}^m \fint_{2^{k+1}B^*} |f_i| \, d\mu 
\\
&\leq \sum_{k=1}^{\infty} \omega(2^{-k}) \prod_{i=1}^m \lfloor f_i \rfloor_{B^*}
\lesssim \|\omega\|_{{\rm Dini}} \prod_{i=1}^m \lfloor f_i \rfloor_{B^*}, 
\end{align*}
where we have used $\rho(x, x_0) < r_0$, $\max\limits_{1 \le i \le m} \rho(x_0, y_i) \ge 2^k R_0$ for all $\vec{y} \in (2^{k+1}B^*)^m \setminus (2^k B^*)^m$, and 
\[
\sum_{k=1}^{\infty} \w(2^{-k}) 
\simeq \int_0^1 \w(t) \frac{dt}{t}
= \|\omega\|_{{\rm Dini}} <\infty.
\]  
This implies the condition \eqref{list:T-reg} holds with $\C_2(T) \lesssim \|\omega\|_{{\rm Dini}}$. 
\end{proof}

By \cite[Theorem 3.3]{GLMY}, \eqref{T:assumption} implies that 
\begin{align}\label{Tweak11}
\text{$T$ is bounded from $L^1(\Sigma, \mu) \times \cdots \times L^1(\Sigma, \mu)$ to $L^{\frac1m, \infty}(\Sigma, \mu)$}.
\end{align}
Although one can use Theorem \ref{thm:T} to obtain quantitative weighted norm inequalities, that result is not sharp. To get the optimal weighted bounds, we utilize Theorem \ref{thm:sparse} and Lemma \ref{lem:sparse-AS} below.

\begin{lemma}\label{lem:sparse-AS}
Let $(\Sigma, \rho, \mu)$ be a space of homogeneous type such that $\B_{\rho}$ satisfies the density condition. Then for all $\vec{p}=(p_1, \dots, p_m)$ with $1<p_1, \ldots, p_m<\infty$ and for all $\vec{w}\in A_{\vec{p}, \B_{\rho}}$, 
\begin{equation*}
\sup_{\S \subset \B_{\rho}: \text{sparse}} 
\|\A_{\S}\|_{L^{p_1}(\Sigma, w_1) \times \cdots \times L^{p_m}(\Sigma, w_m) \rightarrow L^p(\Sigma, w)} 
\lesssim [\vec{w}]_{A_{\vec{p}, \B_{\rho}}}^{\max\{p, p'_1, \ldots, p'_m\}}, 
\end{equation*}
where $\frac{1}{p}=\sum_{i=1}^{m}\frac{1}{p_i}$ and $w=\prod_{i=1}^m w_i^{\frac{p}{p_i}}$. 
\end{lemma}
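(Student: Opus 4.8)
The plan is to reduce the uniform bound over all sparse families $\S\subset\B_\rho$ to a bound for a \emph{single dyadic} sparse operator, and then to run the classical two-weight testing / Carleson-embedding argument in the dyadic world, where the auxiliary weighted maximal operators are bounded with constants independent of the weight. Since $(\Sigma,\rho,\mu)$ is a space of homogeneous type, it carries finitely many dyadic systems $\mathscr D^{(1)},\dots,\mathscr D^{(N)}$ such that every ball $B\in\B_\rho$ is contained in some cube $Q\in\mathscr D^{(t)}$ with $\mu(Q)\le C\mu(B)$, where $N$ and $C$ depend only on $(\Sigma,\rho,\mu)$. Assigning to each $B\in\S$ one such cube $Q_B$ and splitting $\S=\bigcup_{t=1}^N\S_t$ according to which system $Q_B$ lies in, one has $\mathbf 1_B\le\mathbf 1_{Q_B}$ and $\langle f_i\rangle_{B}\le C\langle f_i\rangle_{Q_B}$, hence
\[
\A_\S(\vec f)\ \lesssim\ \sum_{t=1}^N\ \sum_{B\in\S_t}\prod_{i=1}^m\langle f_i\rangle_{Q_B}\,\mathbf 1_{Q_B}.
\]
Because the sets $E_B\subset B\subset Q_B$ are pairwise disjoint with $\mu(E_B)\ge\eta\mu(B)\ge(\eta/C)\mu(Q_B)$, at most $C/\eta$ distinct $B$'s share a given value of $Q_B$, so the de-duplicated family $\mathcal Q_t:=\{Q_B:B\in\S_t\}$ is an $(\eta/C)$-sparse dyadic family in $\mathscr D^{(t)}$ and $\A_{\S_t}(\vec f)\lesssim\A_{\mathcal Q_t}(\vec f)$. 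It therefore suffices to prove $\|\A_{\mathcal Q}(\vec f)\|_{L^p(\Sigma,w)}\lesssim[\vec w]_{A_{\vec p,\B_\rho}}^{\max\{p,p_1',\dots,p_m'\}}\prod_i\|f_i\|_{L^{p_i}(\Sigma,w_i)}$ for a single dyadic sparse family $\mathcal Q$, with implicit constants allowed to depend on $\eta$, $C$ and $N$, and then sum over $t$.

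For the dyadic estimate I would distinguish $p>1$ and $p\le1$. When $p>1$, dualize: with $h\ge0$, $\|h\|_{L^{p'}(w)}\le1$, expand $\int\A_{\mathcal Q}(\vec f)\,h\,w\,d\mu=\sum_{Q\in\mathcal Q}\big(\prod_i\langle f_i\rangle_Q\big)\langle hw\rangle_Q\,\mu(Q)$ and put $\sigma_i=w_i^{1-p_i'}$. Writing $\langle f_i\rangle_Q=\langle\sigma_i\rangle_Q\langle f_i\sigma_i^{-1}\rangle_Q^{\sigma_i}$ and $\langle hw\rangle_Q=\langle w\rangle_Q\langle h\rangle_Q^{w}$, the $A_{\vec p,\B_\rho}$ condition lets one extract a power of $[\vec w]_{A_{\vec p,\B_\rho}}$ from the scalar factor $\langle w\rangle_Q\prod_i\langle\sigma_i\rangle_Q^{p/p_i'}$, after which the remaining cube-sum is estimated by Hölder across $Q\in\mathcal Q$ and the weighted Carleson embedding theorem for each of the measures $w\,d\mu$ and $\sigma_i\,d\mu$; the crucial point is that the dyadic weighted maximal operators $M^{w}_{\mathscr D}$, $M^{\sigma_i}_{\mathscr D}$ are bounded on the relevant $L^q$-spaces with constants depending only on the exponent, so the $\mathcal N_1$-type factors of Theorem~\ref{thm:T} collapse to universal constants. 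Tracking the exponents through this bookkeeping yields exactly $\max\{p,p_1',\dots,p_m'\}$, in agreement with the Besicovitch case of Theorem~\ref{thm:T-Besi}. When $p\le1$ (which forces $m\ge2$), $L^p$-duality is unavailable; instead I would use the subadditivity $\big(\sum_Q a_Q\big)^p\le\sum_Q a_Q^p$ to linearize $\A_{\mathcal Q}(\vec f)^p$, then combine the $A_{\vec p,\B_\rho}$ condition with the sparseness of $\mathcal Q$ and a Carleson-embedding argument (equivalently, the $L^q(\sigma_i)$-boundedness of the dyadic weighted maximal operators) to close the estimate, or alternatively first prove the endpoint weak-type bound by a Calderón--Zygmund level-set decomposition and then interpolate, as developed in Section~\ref{sec:sharp}.

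I expect the main obstacle to be obtaining the \emph{sharp} exponent $\max\{p,p_1',\dots,p_m'\}$ uniformly over the whole quasi-Banach range: the natural duality argument is available only for $p>1$, and for $p\le1$ one must simultaneously linearize the sparse operator, keep careful track of which copy of the weighted maximal function is being estimated and at which Lebesgue exponent, and check that the de-duplication and sparseness preservation in the dyadic transference do not degrade the power of $[\vec w]_{A_{\vec p,\B_\rho}}$. Everything else — the existence of the adjacent dyadic systems, the weight-free boundedness of dyadic weighted maximal operators, and the multilinear Carleson embedding — is routine once the transference is in place, and the quantitative estimates for multilinear dyadic operators from Section~\ref{sec:sharp} supply exactly the bookkeeping needed.
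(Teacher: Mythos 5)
Your proposal is correct and follows essentially the same route as the paper: transfer each sparse family to the finitely many adjacent dyadic systems of Hytönen--Kairema (checking sparseness is preserved up to the doubling constant), and then invoke the sharp dyadic sparse bound, which the paper obtains from Lemma \ref{lem:Sparse} combined with the universal boundedness of dyadic weighted maximal operators for Besicovitch bases (Lemmas \ref{lem:M-Besi} and \ref{lem:Sparse-Besi}), using duality for $p>1$ and $p$-subadditivity for $p\le 1$ exactly as you outline.
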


\begin{proof}
 By \cite[Lemma 4.12]{HK}, for each $B \in \B_{\rho}$ there exist some $k \in \{1, \ldots, K_0\}$ and a dyadic cube $Q_B \in \B_k$ such that 
\begin{align}\label{BQB-1}
B \subset Q_B \subset \lambda_0 B \quad\text{ for some uniform constant } \lambda_0 \ge 1, 
\end{align}
which together with \eqref{Du} gives 
\begin{align}\label{BQB-2}
\mu(B) \le \mu(Q_B) \le (2\lambda_0)^{D_{\mu}} \mu(B).
\end{align}
Given a dyadic system $\B_k$, we set $\S_k := \{Q_B \in \B_k: B \in \S\}$, $k=1, \ldots, K_0$. By definition, \eqref{BQB-1}, and \eqref{BQB-2}, we see that for each $B \in \S$ there exists $E_B \subset B \subset Q_B$ such that $\{E_B\}_{B \in \S}$ is a disjoint family and 
\[
\mu(E_B) \ge \mu(B) \ge \frac{\eta}{(2\lambda_0)^{D_{\mu}}} \mu(Q_B), 
\]
which means that 
\begin{equation}\label{Skk}
\text{$\S_k$ is a sparse family, $k=1, \ldots, K_0$. }
\end{equation}

On the other hand, it follows from \eqref{BQB-1} and \eqref{BQB-2} that 
\begin{align}\label{AShomo-1}
\A_{\S}(\vec{f})
\lesssim \sum_{k=1}^{K_0} \sum_{B \in \S: Q_B \in \B_k} 
\prod_{i=1}^m \langle f_i \rangle_{Q_B} \mathbf{1}_{Q_B} 
=: \sum_{k=1}^{K_0} \A_{\S_k}(\vec{f}).
\end{align}
By the construction of dyadic cubes in \cite[Theorem 2.2]{HK}, it is clear that each $\mathfrak{B}_k$ satisfies the Besicovitch condition with the constant $N_0=1$. From this, \eqref{Skk}, and Lemma \ref{lem:Sparse-Besi} below, we conclude that  
\begin{align}\label{AShomo-2}
\sup_{1 \le k \le K_0} \|\A_{\S_k}(\vec{f})\|_{L^p(\Sigma, w)} 
\lesssim [\vec{w}]_{A_{\vec{p}, \B_{\rho}}}^{\max\{p, p'_1, \ldots, p'_m\} }\prod_{i=1}^m \|f_i\|_{L^{p_i}(\Sigma, w_i)}. 
\end{align}
Therefore, the desired estimate is a consequence of \eqref{AShomo-1} and \eqref{AShomo-2}. 
\end{proof}

Let us recall the sharp reverse H\"{o}lder inequality from \cite{HPR}. 
\begin{lemma}\label{lem:RH} 
Let $(\Sigma, \rho, \mu)$ be a space of homogeneous type. For every $w \in A''_{\infty, \B_{\rho}}$,   
\begin{align*}
\bigg(\fint_B w^{r_w} d\mu \bigg)^{\frac{1}{r_w}} 
\le 2 \cdot 8^{D_{\mu}} \fint_{2B} w \, d\mu, \quad\forall B \in \B_{\rho}, 
\end{align*}
where $r_w := 1 + \frac{1}{c_{\mu} [w]_{A''_{\infty, \B_{\rho}}}}$.
\end{lemma}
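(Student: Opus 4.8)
The plan is to derive this from the sharp reverse H\"{o}lder property for Fujii--Wilson $A_\infty$ weights on spaces of homogeneous type, that is, to reproduce the argument of \cite{HPR}. It has two stages: a \emph{dyadic} stage carried out on a Hyt\"{o}nen--Kairema dyadic system $\B_k$ of \cite{HK} (the same systems used in the proof of Lemma \ref{lem:sparse-AS}), and a \emph{transference} stage passing from dyadic cubes back to metric balls.

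In the dyadic stage I would fix a cube $Q\in\B_k$, normalize $\fint_Q w\,d\mu=1$ by homogeneity, and prove $\fint_Q w^{1+\varepsilon}\,d\mu\le 2$ with $\varepsilon:=r_w-1$. First I would run the stopping-time construction inside $Q$, precisely the one producing the sparse family in the proof of Lemma \ref{lem:sparse-AS}, so that the dyadic averages of $w$ that double generate a sparse collection $\mathcal{S}\subset\B_k$ whose disjoint ``own parts'' $E_{Q'}$, with $\mu(E_{Q'})\ge\tfrac{1}{2}\mu(Q')$, partition $Q$ up to a null set and satisfy $w(x)\le 2\langle w\rangle_{\pi(x)}$ for a.e.\ $x$, where $\pi(x)$ is the minimal cube of $\mathcal{S}$ containing $x$. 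Next I would invoke the Fujii--Wilson description of $A''_{\infty,\B_{\rho}}$, localized to $Q$: testing the defining inequality against the stopping cubes shows that $\{w(Q')\}_{Q'\in\mathcal{S}}$ is a Carleson sequence, $\sum_{Q'\in\mathcal{S},\,Q'\subset Q_0}w(Q')\le\Lambda\,w(Q_0)$ for all $Q_0\in\mathcal{S}$, with $\Lambda$ a fixed multiple of $[w]_{A''_{\infty,\B_{\rho}}}$ depending only on $C_\mu$. The core of the proof is then a quantitative comparison of $\fint_Q w^{1+\varepsilon}$ with the weighted principal-cube sum $\tfrac{1}{\mu(Q)}\sum_{Q'\in\mathcal{S}}\langle w\rangle_{Q'}^{\,\varepsilon}\,w(E_{Q'})$: using $w\le 2\langle w\rangle_{\pi(\cdot)}$ together with a layer-cake splitting in the level of $\langle w\rangle_{Q'}$ and the Carleson packing, one is reduced to summing a geometric-type series that converges, with total at most $2$, exactly when $\varepsilon$ is taken to be a small fixed multiple of $\Lambda^{-1}$; this is precisely why $r_w-1=1/(c_\mu[w]_{A''_{\infty,\B_{\rho}}})$, with the doubling order entering the constant $c_\mu$.

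For the transference stage I would use the comparability of metric balls with Hyt\"{o}nen--Kairema cubes recalled in the proof of Lemma \ref{lem:sparse-AS}: each $B\in\B_{\rho}$ lies in a cube $Q_B\in\B_k$ with $B\subset Q_B\subset\lambda_0 B$ and $\mu(B)\le\mu(Q_B)\le(2\lambda_0)^{D_\mu}\mu(B)$. Applying the dyadic reverse H\"{o}lder inequality on $Q_B$ and comparing $\langle w\rangle_{Q_B}$ with $\fint_{2B}w\,d\mu$ by means of the doubling inequality \eqref{Du} (used a bounded number of times) produces the stated bound with the explicit constant $2\cdot 8^{D_\mu}$ and the enlargement to $2B$; the exponent $r_w$ is unchanged since $[w]_{A''_{\infty,\B_{\rho}}}$ controls the dyadic $A_\infty$ constants on each $\B_k$. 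The hard part is the dyadic-stage packing estimate: one must balance the mass of the high level sets of the sparse maximal function of $w$ against the Fujii--Wilson bound carefully enough to end with an \emph{absolute} constant rather than one deteriorating in $[w]_{A''_{\infty,\B_{\rho}}}$, which is exactly what forces the calibration $r_w-1\sim[w]_{A''_{\infty,\B_{\rho}}}^{-1}$; a secondary, purely bookkeeping difficulty is tracking all doubling-dependent factors through the dyadic-to-ball passage.
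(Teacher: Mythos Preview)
The paper does not give its own proof of this lemma: it merely recalls the statement and cites \cite{HPR}. Your proposal is exactly to reproduce the argument of \cite{HPR}, so it aligns with the paper's treatment; the two-stage outline (dyadic Calder\'{o}n--Zygmund stopping with the Fujii--Wilson Carleson packing, followed by transference via Hyt\"{o}nen--Kairema cubes and doubling) is indeed the content of that reference.
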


In the current setting, $A_{1, \B_{\rho}} \subset \bigcup_{s>1} RH_{s, \B_{\rho}}$. 
Now considering Theorem \ref{thm:CZOBO}, \eqref{Tweak11}, and Lemma \ref{lem:sparse-AS}, we use Theorems \ref{thm:local}--\ref{thm:weak} and Theorem \ref{thm:sparse} to conclude the following result.

\begin{theorem}\label{thm:CZOlocal}
Let $(\Sigma, \rho, \mu)$ be a space of homogeneous type such that $\B_{\rho}$ satisfies the density condition. Let $T$ be an $m$-linear $\omega$-Calder\'{o}n-Zygmund operator with $\omega \in \mathrm{Dini}$. Then the following hold: 
\begin{list}{\rm (\theenumi)}{\usecounter{enumi}\leftmargin=1.2cm \labelwidth=1cm \itemsep=0.2cm \topsep=.2cm \renewcommand{\theenumi}{\alph{enumi}}}

\item There exists $\gamma>0$ such that for all $B \in \B_{\rho}$ and $f_i \in L^{\infty}_c(\Sigma, \mu)$ with $\supp(f_i) \subset B$, $1 \leq i \leq m$, 
\begin{align*}
\mu\big(\big\{x \in B:  |T(\vec{f})(x)| > t \, \M_{\B_{\rho}}(\vec{f})(x)  \big\}\big) 
& \lesssim e^{- \gamma t}  \mu(B), \quad t>0.
\end{align*}

\item Let $\vec{w}=(w_1, \ldots, w_m)$ and $w=\prod_{i=1}^m w_i^{\frac1m}$. If $\vec{w} \in A_{\vec{1}, \B_{\rho}}$ and $w v^{\frac1m} \in A_{\infty, \B_{\rho}}$, or $w_1,\ldots, w_m \in A_{1, \B_{\rho}}$ and $v \in A_{\infty, \B_{\rho}}$, then 
\begin{align*}
\bigg\|\frac{T(\vec{f})}{v}\bigg\|_{L^{\frac1m,\infty}(\Sigma, \, w v^{\frac1m})}
&\lesssim \bigg\|\frac{\M_{\B_{\rho}}(\vec{f})}{v}\bigg\|_{L^{\frac1m,\infty}(\Sigma, \, w v^{\frac1m})}.
\end{align*}

\item For all $\vec{p}=(p_1, \ldots, p_m)$ with $1<p_1, \ldots, p_m <\infty$ and for all $\vec{w} \in A_{\vec{p}, \B_{\rho}}$, 
\begin{align*}
\|T\|_{L^{p_1}(\Sigma, w_1) \times \cdots \times L^{p_m}(\Sigma, w_m) \to L^p(\Sigma, w)} 
\lesssim [\vec{w}]_{A_{\vec{p}}}^{\max\{p, p'_1, \ldots, p'_m\}}, 
\end{align*}
where $\frac1p=\sum_{i=1}^m \frac{1}{p_i}$ and $w=\prod_{i=1}^m w_i^{\frac{p}{p_i}}$. 
\end{list}
\end{theorem}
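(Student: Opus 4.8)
The plan is to read Theorem~\ref{thm:CZOlocal} as a transparent specialization of the abstract machinery already assembled: by Theorem~\ref{thm:CZOBO} an $m$-linear $\omega$-Calder\'{o}n-Zygmund operator $T$ with $\omega\in\mathrm{Dini}$ is a multilinear bounded oscillation operator with respect to the ball-basis $\B^*_{\rho}$ and the exponent $r=1$, with $\C_1(T)\lesssim C_K$ and $\C_2(T)\lesssim\|\omega\|_{\mathrm{Dini}}$; moreover by \cite[Theorem 3.3]{GLMY} the hypothesis \eqref{T:assumption} upgrades to the weak endpoint bound \eqref{Tweak11}, so $T$ is bounded from $L^1(\Sigma,\mu)\times\cdots\times L^1(\Sigma,\mu)$ to $L^{1/m,\infty}(\Sigma,\mu)$. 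Thus $T$ satisfies every structural hypothesis demanded by Theorems~\ref{thm:local}, \ref{thm:weak}, \ref{thm:T-Besi}, and \ref{thm:sparse} with $r=1$, $\B=\B^*_{\rho}$. It remains only to (i) discharge the auxiliary geometric/weight hypotheses that distinguish the abstract statements (the sharp reverse H\"{o}lder property of $A_{\infty,\B_{\rho}}$, the inclusion $A_{1,\B_{\rho}}\subset\bigcup_{s>1}RH_{s,\B_{\rho}}$, and a quantitative control of weighted maximal operators), and (ii) quote each abstract theorem in turn.

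For part (a), I would simply invoke the first display of Theorem~\ref{thm:local} applied to $T=T$, $r=1$, $\B=\B^*_{\rho}$: since $\M_{\B_{\rho}}=\M_{\B_{\rho},1}$ and $\B^*_{\rho}$ agrees with $\B_{\rho}$ on bounded balls, the exponential decay $\mu(\{x\in B:|T(\vec f)(x)|>t\,\M_{\B_{\rho}}(\vec f)(x)\})\lesssim e^{-\gamma t}\mu(B)$ follows verbatim. For part (b), the point is that Lemma~\ref{lem:RH} (the sharp reverse H\"{o}lder inequality of \cite{HPR}) yields $A_{1,\B_{\rho}}\subset\bigcup_{s>1}RH_{s,\B_{\rho}}$ in a space of homogeneous type, which is exactly the standing hypothesis $A_{1,\B}\subset\bigcup_{s>1}RH_{s,\B}$ of Theorem~\ref{thm:weak}; applying that theorem with $v^{1/m}\in A_{\infty,\B_{\rho}}$ gives the mixed weak type bound $\|T(\vec f)/v\|_{L^{1/m,\infty}(\Sigma,wv^{1/m})}\lesssim\|\M_{\B_{\rho}}(\vec f)/v\|_{L^{1/m,\infty}(\Sigma,wv^{1/m})}$. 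The alternative hypothesis $w_1,\dots,w_m\in A_{1,\B_{\rho}}$, $v\in A_{\infty,\B_{\rho}}$ is reduced to the first one because a product of $A_1$ weights lies in $A_{\vec 1}$ and then $wv^{1/m}\in A_\infty$, the standard multilinear weight reduction on spaces of homogeneous type; one only needs to note this passage is legitimate in $\B_{\rho}$, which follows from the properties of Muckenhoupt weights recorded in Section~\ref{sec:pre}.

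For part (c), the sharp weighted bound, the route is through the sparse domination Theorem~\ref{thm:sparse} rather than Theorem~\ref{thm:T}, precisely because (as the text notes) Theorem~\ref{thm:T} leaves uncontrolled weighted maximal factors $\mathcal N_1(r,\vec p,\vec w)$. Theorem~\ref{thm:sparse} with $r=1$ gives, for suitably normalized $\vec f$, a pointwise bound $|T(\vec f)(x)|\lesssim \C(T)[\A_{\S_1}(\vec f)(x)+\A_{\S_2}(\vec f)(x)]$ on any ball $B$ for two sparse families $\S_1,\S_2\subset\B^*_{\rho}$; a limiting/localization argument (sketched in the seventh bullet of the introduction) upgrades this to a global norm inequality. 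One then invokes Lemma~\ref{lem:sparse-AS}, which bounds $\|\A_{\S}\|_{L^{p_1}(w_1)\times\cdots\times L^{p_m}(w_m)\to L^p(w)}\lesssim[\vec w]_{A_{\vec p,\B_{\rho}}}^{\max\{p,p_1',\dots,p_m'\}}$ uniformly over sparse $\S\subset\B_{\rho}$; since $\B^*_{\rho}$ differs from $\B_{\rho}$ only by the single set $\Sigma$ when $\mu(\Sigma)<\infty$, the contribution of that set is harmless and the same estimate holds for sparse families in $\B^*_{\rho}$. Combining these gives $\|T\|_{L^{p_1}(w_1)\times\cdots\times L^{p_m}(w_m)\to L^p(w)}\lesssim[\vec w]_{A_{\vec p}}^{\max\{p,p_1',\dots,p_m'\}}$.

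The main obstacle is the bookkeeping around $\B^*_{\rho}$ versus $\B_{\rho}$ and the verification of the weight-theoretic inputs: one must check that the ball-basis for which the abstract theorems are stated is indeed $\B^*_{\rho}$ (as in Theorem~\ref{thm:CZOBO}), that $\B^*_{\rho}$ satisfies the doubling property and the sharp reverse H\"{o}lder property needed for the commutator-free statements here, and that all Muckenhoupt classes $A_{\vec p,\B_{\rho}}$, $A_{1,\B_{\rho}}$, $A_{\infty,\B_{\rho}}$ behave as in the Euclidean case so that the sparse-operator estimate of Lemma~\ref{lem:sparse-AS} and the maximal-operator bounds of Lemma~\ref{lem:M} transfer. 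None of these is deep, but assembling them cleanly—particularly the reduction of the second weight hypothesis in (b) to the first, and the limiting argument that globalizes the local sparse domination—is where the actual work lies; the three conclusions themselves are then immediate corollaries of Theorems~\ref{thm:local}, \ref{thm:weak}, and \ref{thm:sparse} combined with Lemmas~\ref{lem:RH} and \ref{lem:sparse-AS}.
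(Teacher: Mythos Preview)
Your plan matches the paper's own argument almost exactly: the paper's entire justification is the one sentence preceding the theorem, which invokes Theorem~\ref{thm:CZOBO}, \eqref{Tweak11}, Lemma~\ref{lem:sparse-AS}, Theorems~\ref{thm:local}--\ref{thm:weak}, and Theorem~\ref{thm:sparse}---precisely the ingredients you identify, and your observation that part~(c) should go through Theorem~\ref{thm:sparse} together with Lemma~\ref{lem:sparse-AS} (rather than Theorem~\ref{thm:T}, to avoid the uncontrolled factor $\mathcal N_1$) is exactly what the paper does.

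One small correction in part~(b): your reduction of the second hypothesis ($w_i\in A_{1,\B_\rho}$, $v\in A_{\infty,\B_\rho}$) to the first ($\vec w\in A_{\vec 1,\B_\rho}$, $wv^{1/m}\in A_{\infty,\B_\rho}$) is not the useful direction, since neither case coincides verbatim with the hypothesis of Theorem~\ref{thm:weak}, which with $r=1$ asks for $w\in A_{1,\B}$ and $v^{1/m}\in A_{\infty,\B}$. The second case \emph{does} reduce to Theorem~\ref{thm:weak} directly: $w_i\in A_1$ gives $w=\prod_i w_i^{1/m}\in A_1$ by Lemma~\ref{lem:Appp}\eqref{Appp-4}, and $v\in A_p$ gives $v^{1/m}\in A_{(p-1)/m+1}\subset A_\infty$ by Jensen. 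The first case, however, only yields $w\in A_1$ (via Lemma~\ref{lem:ApAp}) together with $wv^{1/m}\in A_\infty$, which does not imply $v^{1/m}\in A_\infty$; to cover it one must rerun the proof of Theorem~\ref{thm:weak} with $wv^{1/m}$ playing the role previously played by $v^{1/m}$. The paper is equally silent on this detail and simply cites Theorem~\ref{thm:weak}, so you are in good company, but be aware that the passage is not quite as automatic as stated.
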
 

Next, we use Theorem \ref{thm:CZOlocal} to establish weighted compactness for commutators of m-linear $\omega$-Calder\'{o}n-Zygmund operators. 

\begin{theorem}\label{thm:CZOcom}
Let $(\Sigma, \rho, \mu)$ be a space of homogeneous type such that $\B_{\rho}$ satisfies the density condition. Assume that $\mu$ satisfies the reverse doubling condition. Let $T$ be an m-linear $\omega$-Calder\'{o}n-Zygmund operator with $\omega \in \text{Dini}$. Then for any $b \in \CMO_{\B_{\rho}}$ and for each $j=1,\ldots,m$, $[T, b]_{e_j}$ is compact from $L^{p_1}(\Sigma, w_1) \times \cdots \times L^{p_m}(\Sigma, w_m)$ to $L^p(\Sigma, w)$ for all $\vec{p}=(p_1, \ldots, p_m)$ with $1<p_1,\ldots,p_m<\infty$, for all $\vec{w} \in A_{\vec{p}, \B_{\rho}}$, where $\frac1p=\sum_{i=1}^m \frac{1}{p_i}$ and $w=\prod_{i=1}^m w_i^{\frac{p}{p_i}}$.
\end{theorem}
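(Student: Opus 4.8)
\emph{Proof proposal.}
The plan is to reduce everything to a single \emph{unweighted} compactness statement and then invoke the compact extrapolation Theorem~\ref{thm:Tb}. By Theorem~\ref{thm:CZOBO} together with \eqref{Tweak11}, $T$ is a multilinear bounded oscillation operator with respect to $\B_\rho^*$ and $r=1$, so Theorem~\ref{thm:CZOlocal}(c) (equivalently, Theorem~\ref{thm:sparse} combined with Lemma~\ref{lem:sparse-AS}) shows that $T$ is bounded from $L^{r_1}(\Sigma,u_1)\times\cdots\times L^{r_m}(\Sigma,u_m)$ to $L^r(\Sigma,u)$ for every $\vec r=(r_1,\dots,r_m)$ with $1<r_i<\infty$ and every $\vec u\in A_{\vec r,\B_\rho}$; this is exactly hypothesis~\eqref{list:Tb1} of Theorem~\ref{thm:Tb} with $\b=(b,\dots,b)$ and $\alpha=e_j$. (The density and reverse-doubling hypotheses place us in the ambient setting required by Theorems~\ref{thm:Tb} and~\ref{thm:FKhs-1}.) Hence it suffices to exhibit one tuple $\vec s=(s_1,\dots,s_m)$ with $1<s_i<\infty$, chosen so that $s>1$ where $\tfrac1s=\sum_i\tfrac1{s_i}$, for which $[T,b]_{e_j}$ is compact from $L^{s_1}(\Sigma,\mu)\times\cdots\times L^{s_m}(\Sigma,\mu)$ to $L^s(\Sigma,\mu)$; then Theorem~\ref{thm:Tb} upgrades this to the asserted weighted compactness.

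\emph{Reduction to H\"older symbols.} Theorem~\ref{thm:T} applied to the commutator (in the unweighted case $\vec w=\vec1$, using Lemma~\ref{lem:RH} and that $\osc_{\exp L}\simeq\BMO$ on a doubling space) gives $\|[T,c]_{e_j}\|_{L^{s_1}\times\cdots\times L^{s_m}\to L^s}\lesssim\|c\|_{\BMO_{\B_\rho}}$ for all $c\in\BMO_{\B_\rho}$. Since $\CMO_{\B_\rho}$ is the $\BMO_{\B_\rho}$-closure of $\bigcup_{0<\alpha\le1}\mathscr C_b^\alpha(\Sigma,\mu)$, choose $b_k\in\mathscr C_b^{\alpha_k}(\Sigma,\mu)$ with $\|b-b_k\|_{\BMO_{\B_\rho}}\to0$; applying the bound to $c=b-b_k$ yields $[T,b_k]_{e_j}\to[T,b]_{e_j}$ in operator norm. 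As the set of compact $m$-linear operators into the Banach space $L^s(\Sigma,\mu)$ is closed under operator-norm limits, it is enough to treat $b\in\mathscr C_b^\alpha(\Sigma,\mu)$ for a fixed $\alpha\in(0,1]$, say with $\supp b\subset B_0:=B(z_0,\varrho_0)$.

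\emph{Unweighted compactness for $b\in\mathscr C_b^\alpha$.} Fix $x_0\in\Sigma$ and check that $\mathcal K:=\{[T,b]_{e_j}(\vec f):\ \|f_i\|_{L^{s_i}(\Sigma,\mu)}\le1\}$ is relatively compact in $L^s(\Sigma,\mu)$ via the Fr\'echet--Kolmogorov Theorem~\ref{thm:FKhs-1} (with weight $w\equiv1\in A_{s,\B_\rho}$). Condition~\eqref{list:FK1} is the boundedness recalled above. For~\eqref{list:FK2}, write $[T,b]_{e_j}(\vec f)(x)=b(x)T(\vec f)(x)-T(f_1,\dots,bf_j,\dots,f_m)(x)$: when $\rho(x,z_0)$ is large $b(x)=0$, while $bf_j$ is supported in $B_0$, so the size estimate~\eqref{eq:size} with doubling and reverse doubling gives a pointwise bound by $\int_{B_0}|K(x,\vec y)|\prod_i|f_i(y_i)|\,d\mu(\vec y)$ that decays in $\rho(x,z_0)$ quickly enough to force $\|[T,b]_{e_j}(\vec f)\mathbf 1_{\Sigma\setminus B(x_0,A)}\|_{L^s}\to0$ uniformly in $\vec f$. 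The substance is condition~\eqref{list:FK3}. For $\rho(x,x')<\delta$ I would split the integral defining $[T,b]_{e_j}(\vec f)(x)-[T,b]_{e_j}(\vec f)(x')$ into the near region $\{\max_i\rho(x,y_i)\le c_0\delta\}$, where one uses~\eqref{eq:size} together with the H\"older gain $|b(x)-b(y_j)|\lesssim\|b\|_{\mathscr C^\alpha}\rho(x,y_j)^\alpha$ (which renders the commutator kernel absolutely integrable near the diagonal and produces a factor $\delta^\alpha$ times a truncated average of $\prod_i|f_i|$ dominated by $\M_{\B_\rho}(\vec f)(x)$), and the far region, where one decomposes
\begin{align*}
(b(x)-b(y_j))K(x,\vec y)-(b(x')-b(y_j))K(x',\vec y)=(b(x)-b(x'))K(x,\vec y)+(b(x')-b(y_j))\big(K(x,\vec y)-K(x',\vec y)\big),
\end{align*}
handling the first summand by $|b(x)-b(x')|\lesssim\|b\|_{\mathscr C^\alpha}\delta^\alpha$ (here $\supp b\subset B_0$ confines $x$ to a bounded set) and the second by the smoothness estimate~\eqref{eq:smooth-1}, i.e. $\lesssim\|b\|_{L^\infty}\,\omega\big(\rho(x,x')/\max_i\rho(x,y_i)\big)|K(x,\vec y)|$, summing the annular pieces using $\omega\in\mathrm{Dini}$. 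This yields a bound of the shape $C\big(\delta^\alpha+\sum_{k\ge\log_2(1/\delta)}\omega(2^{-k})\big)\M_{\B_\rho}(\vec f)(x)$ plus an error term supported in a bounded set and tending to zero; since $\M_{\B_\rho}$ maps $L^{s_1}\times\cdots\times L^{s_m}$ into $L^s$, condition~\eqref{list:FK3} follows. (Equivalently, one may prove~\eqref{list:FK3} by approximating $T$ by the operator $T^{(\epsilon,N)}$ whose kernel is $K$ truncated to $\{\epsilon\le\max_i\rho(x,y_i)\le N\}$ and mollified: the Dini condition forces $\|[T-T^{(\epsilon,N)},b]_{e_j}\|\to0$ as $\epsilon\to0$, $N\to\infty$, because the extra factor $\rho(x,y_j)^\alpha$ makes the difference kernel absolutely integrable, while $[T^{(\epsilon,N)},b]_{e_j}$ has a bounded, compactly supported, continuous kernel and hence trivially satisfies~\eqref{list:FK3}.)

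\emph{Main obstacle.} The crux is condition~\eqref{list:FK3}: establishing the equicontinuity of $[T,b]_{e_j}(\vec f)$ \emph{uniformly} over $\vec f$ in the unit ball and in the \emph{integrated} form of the Fr\'echet--Kolmogorov theorem, rather than pointwise on the closure of a ball---recall that in a space of homogeneous type $x\mapsto\mu(B(x,\delta))$ need not be continuous, which is precisely why one must use the integrated criterion of Theorem~\ref{thm:FKhs-1}. Carrying the H\"older gain $\rho(x,x')^\alpha$, the Dini tail $\sum_k\omega(2^{-k})$, and the localisation from $\supp b$ through the near/far decomposition while keeping all constants independent of $\vec f$ is where the real work lies; conditions~\eqref{list:FK1}--\eqref{list:FK2} are comparatively routine given the commutator bound and the bounded support of $b$.
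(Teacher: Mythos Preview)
Your overall plan---reduce via Theorem~\ref{thm:Tb} to a single unweighted space with $s>1$, approximate $b$ by H\"older symbols using the commutator $\BMO$ bound, then verify Theorem~\ref{thm:FKhs-1}---is exactly what the paper does. One side remark on \eqref{list:FK2}: only the variable $y_j$ is confined to $\supp b$, so the integral is over $(\supp b)\times\Sigma^{m-1}$ rather than $B_0^m$, and the uniform decay genuinely needs the annular estimate with reverse doubling carried out in \eqref{fu-1}--\eqref{fu-3}. The substantive gap is in \eqref{list:FK3}.

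Your single-scale split at $c_0\delta$ (where $\rho(x,x')<\delta$) does not make the far-region pieces small. For the smoothness piece, on the annulus $\max_i\rho(x,y_i)\sim 2^kc_0\delta$ one has $\omega\big(\rho(x,x')/\max_i\rho(x,y_i)\big)\le\omega(2^{-k}/c_0)$, so summing over $k\ge0$ gives $\|\omega\|_{\mathrm{Dini}}\,\M_{\B_\rho}(\vec f)(x)$, a constant \emph{independent of $\delta$}; the tail $\sum_{k\ge\log_2(1/\delta)}\omega(2^{-k})$ you wrote down does not arise from this decomposition. For the piece $(b(x)-b(x'))\int_{\text{far}}K(x,\vec y)\prod f_i$, the integral is a \emph{truncated} singular integral, controlled pointwise only by the maximal truncation $T_*(\vec f)(x)$, not by $\M_{\B_\rho}(\vec f)(x)$; the $L^s$-boundedness of $T_*$ (from \cite[Theorem~4.16]{GLMY}) must be invoked separately. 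The paper fixes both issues by decoupling the two scales: split at an auxiliary $\eta$ independent of the translation distance $r$ (see \eqref{Tebe}), so that the smoothness piece yields $\int_0^{2r/\eta}\omega(t)\,\frac{dt}{t}$ (small once $r\ll\eta$), the near-diagonal pieces yield $\eta^\alpha\M_{\B_\rho}(\vec f)$, and the first far piece yields $r^\alpha T_*(\vec f)$; one then sends $r\to0$ and afterwards $\eta\to0$ (cf.\ \eqref{Tbxx}--\eqref{Tmax}). Your parenthetical smoothing alternative has the analogous defect at the outer cutoff $N$: since only $y_j$ is tied to $\supp b$, the commutator tail over $\{\max_i\rho(x,y_i)>N\}$ is not uniformly small in operator norm.
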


\begin{proof}
Let $\omega \in \text{Dini}$ and $T$ be an m-linear $\omega$-Calder\'{o}n-Zygmund operator. By Theorem \ref{thm:CZOlocal}, 
\begin{align}\label{Tbdd-1}
T \text{ is bounded from $L^{p_1}(\Sigma, w_1) \times \cdots \times L^{p_m}(\Sigma, w_m)$ to $L^p(\Sigma, w)$}, 
\end{align}
for all $\vec{p}=(p_1, \ldots, p_m)$ with $1<p_1, \ldots, p_m<\infty$, and for all $\vec{w} \in A_{\vec{p}, \B_{\rho}}$, where $\frac1p=\sum_{i=1}^m \frac{1}{p_i}$ and $w=\prod_{i=1}^m w_i^{\frac{p}{p_i}}$. Then using Theorem \ref{thm:TTb} and \eqref{Tbdd-1}, we have for all $\vec{p}=(p_1, \ldots, p_m)$ with $1<p_1, \ldots, p_m<\infty$, for all $b \in \BMO_{\B_{\rho}}$, and for each $j=1, \ldots, m$, 
\begin{align}\label{eq:wTBMO}
\|[T, b]_{e_j}\|_{L^p(\Sigma, \mu)} 
\lesssim \|b\|_{\BMO_{\B_{\rho}}} \prod_{i=1}^m \|f_i\|_{L^{p_i}(\Sigma, \mu)}.  
\end{align}
In view of Theorem \ref{thm:Tb} and \eqref{Tbdd-1}, it suffices to demonstrate that 
\begin{align}\label{eq:wTbdd}
[T, b]_{e_j} \text{ is compact from $L^{p_1}(\Sigma, \mu) \times \cdots \times L^{p_m}(\Sigma, \mu)$ to $L^p(\Sigma, \mu)$}, 
\end{align}
for all (or for some) $\frac1p=\sum_{i=1}^m \frac{1}{p_i} < 1$ with $1<p_1,\ldots,p_m<\infty$. 

It remains to prove \eqref{eq:wTbdd}. For convenience, we only present the proof in the case $m=2$ and $j=1$. Fix $b \in \CMO_{\B_{\rho}}$ and $\frac1p= \frac{1}{p_1} + \frac{1}{p_2} < 1$ with $1<p_1, p_2<\infty$. Considering Theorem \ref{thm:FKhs-1}, \eqref{eq:wTBMO}, and the fact that $\mathscr{C}_b^{\alpha}(\Sigma, \mu)$ is dense in $\CMO_{\B_{\rho}}$ for any $0< \alpha \le 1$, we may assume that $b \in \mathscr{C}_b^{\alpha}(\Sigma, \mu)$ with $\supp(b) \subset B(x_0, A_0)$ for some $A_0>0$. Define the maximal truncated bilinear $\omega$-Calder\'{o}n-Zygmund operator by 
\begin{align*}
T_*(\vec{f})(x) 
:=\sup_{\eta>0} \bigg|\int_{\sum_{i=1}^2 \rho(x, y_i)>\eta} 
K(x,\vec{y}) \prod_{i=1}^2 f_i(y_i) \, d\mu(\vec{y}) \bigg|. 
\end{align*}
By \eqref{eq:wTBMO}, one has 
\begin{align}\label{CZOFK-1}
\sup_{\|f_i\|_{L^{p_i}(\Sigma, \mu)} \le 1 \atop i=1, 2} 
\|[T, b]_{e_1}(\vec{f})\|_{L^p(\Sigma, \mu)} 
\lesssim 1. 
\end{align}
Let $A>2 A_0$. We may assume that $\diam(\Sigma)=\infty$, otherwise, $\Sigma \setminus B(x_0, A) = \emptyset$ for sufficiently large $A>0$. Fix $x \in \Sigma \setminus B(x_0, A)$. Then for all $y_1 \in \supp b$, we have $\rho(x, x_0) \ge A > 2 A_0 \ge 2 \rho(y_1, x_0)$, and 
\begin{align}\label{rxy}
\rho(x, y_1) 
\ge \rho(x, x_0) - \rho(y_1, x_0) 
\ge \rho(x, x_0)/2
=: r_0. 
\end{align}
Note that the doubling property of $\mu$ implies the for any $r>0$, 
\begin{align}\label{Bzz}
\mu(B(z, r \rho(z, z_0))) 
\simeq \mu(B(z_0, r \rho(z, z_0))), \quad z, z_0 \in \Sigma. 
\end{align}
We claim that 
\begin{align}
\label{fu-1} &\int_{\supp b} \frac{|f_1(y_1)|}{\mu(B(x, \rho(x, y_1)))} d\mu(y_1) 
\lesssim \|f_1\|_{L^{p_1}(\Sigma, \mu)} \bigg(\frac{\mu(B(x_0, A_0))}{\mu(B(x_0, r_0))^{p'_1}} \bigg)^{\frac{1}{p'_1}}, 
\\
\label{fu-2} &\int_{\Sigma} \frac{|f_2(y_2)|}{\sum_{i=1}^2 \mu(B(x, \rho(x, y_i)))} d\mu(y_2) 
\lesssim \|f_2\|_{L^{p_2}(\Sigma, \mu)} \mu(B(x_0, r_0))^{-\frac{1}{p_2}},  
\\
\label{fu-3} &\int_{\Sigma \setminus B(x_0, A)} \frac{d\mu(x)}{\mu(B(x_0, \rho(x, x_0)))^{\gamma+1}} 
\lesssim \mu(B(x_0, A))^{-\gamma}, \quad \gamma>0. 
\end{align}
Indeed, \eqref{fu-1} is a direct consequence of H\"{o}lder's inequality and \eqref{Bzz}. To get \eqref{fu-2}, we utilize \eqref{rxy} and \eqref{Bzz} to arrive at 
\begin{align}\label{bba}
\mu(B(x, \rho(x, y_1))) 
\ge \mu(B(x, r_0)) 
\ge c_0 \mu(B(x_0, r_0)) 
=: a_0. 
\end{align}
By \eqref{Bzz}, 
\begin{align}\label{HH-1}
H_0 &:= \int_{B(x, r_0)} \frac{d\mu(y_2)}{(a_0 + \mu(B(x, \rho(x, y_2))))^{p'_2}} 
\nonumber \\
&\lesssim \frac{\mu(B(x, r_0))}{\mu(B(x_0, r_0))^{p'_2}} 
\lesssim \frac{1}{\mu(B(x_0, r_0))^{p'_2-1}} 
\end{align}
and by the reverse doubling condition \eqref{RD}, 
\begin{align}\label{HH-2}
H_j &:= \int_{B(x, 2^j r_0) \setminus B(x, 2^{j-1} r_0)} 
\frac{d\mu(y_2)}{(a_0 + \mu(B(x, \rho(x, y_2))))^{p'_2}} 
\nonumber \\
&\lesssim \frac{\mu(B(x, 2^j r_0))}{\mu(B(x, 2^{j-1} r_0))^{p'_2}}
\lesssim \frac{1}{\mu(B(x, 2^j r_0))^{p'_2-1}}
\nonumber \\
&\lesssim \frac{2^{-j (p'_2-1) D'_{\mu}}}{\mu(B(x, r_0))^{p'_2-1}}
\simeq \frac{2^{-j (p'_2-1) D'_{\mu}}}{\mu(B(x_0, r_0))^{p'_2-1}}. 
\end{align}
Hence, the estimates \eqref{bba}--\eqref{HH-2} along with H\"{o}lder's inequality give 
\begin{align*}
&\int_{\Sigma} \frac{|f_2(y_2)|}{\sum_{i=1}^2 \mu(B(x, \rho(x, y_i)))} d\mu(y_2) 
\\
&\le \|f_2\|_{L^{p_2}(\Sigma, \mu)}
\bigg(\int_{\Sigma} \frac{d\mu(y_2)}{(a_0 + \mu(B(x, \rho(x, y_2))))^{p'_2}}  \bigg)^{\frac{1}{p'_2}}
\\
&\le \|f_2\|_{L^{p_2}(\Sigma, \mu)} 
\Big(H_0 + \sum_{j=1}^{\infty} H_j  \Big)^{\frac{1}{p'_2}}
\lesssim \|f_2\|_{L^{p_2}(\Sigma, \mu)} \mu(B(x_0, r_0))^{-\frac{1}{p_2}}. 
\end{align*}
This shows \eqref{fu-2}. Analogously, one can get \eqref{fu-3}. 

Now take $\gamma := p(1+\frac{1}{p_2})-1=p/p'_1>0$. We use size condition \eqref{eq:size} and \eqref{fu-1}--\eqref{fu-3} to obtain 
\begin{align*}
&\|[T, b]_{e_1}(\vec{f}) \mathbf{1}_{\Sigma \setminus B(x_0, A)} \|_{L^p(\Sigma, \mu)} 
\\
&\lesssim \bigg(\int_{\Sigma \setminus B(x_0, A)} 
\frac{\mu(B(x_0, A_0))^{\frac{p}{p'_1}} d\mu(x)}{\mu(B(x_0, \rho(x, x_0)))^{\gamma+1}}  \bigg)^{\frac1p} 
\prod_{i=1}^2 \|f_i\|_{L^{p_i}(\Sigma, \mu)} 
\\
&\lesssim \frac{\mu(B(x_0, A_0))^{\frac{1}{p'_1}}}{\mu(B(x_0, A))^{\gamma/p}} 
\prod_{i=1}^2 \|f_i\|_{L^{p_i}(\Sigma, \mu)} 
\\
&\lesssim \frac{\mu(B(x_0, A_0))^{\frac{1}{p'_1}-\frac{\gamma}{p}}}{(A/A_0)^{\gamma D'_{\mu}/p}} 
\prod_{i=1}^2 \|f_i\|_{L^{p_i}(\Sigma, \mu)}, 
\end{align*}
where \eqref{RD} is used in the last step and the implicit constants are independent of $A$. This proves 
\begin{align}\label{CZOFK-2}
\lim_{A \to \infty} \sup_{\|f_i\|_{L^{p_i}(\Sigma, \mu)} \le 1 \atop i=1, 2} 
\|[T, b]_{e_1}(\vec{f}) \mathbf{1}_{\Sigma \setminus B(x_0, A)} \|_{L^p(\Sigma, \mu)} 
=0. 
\end{align}

Let $\varepsilon\in (0,1)$. Given $\eta>0$ chosen later, $r \in (0, \frac{\eta}{2})$, and $x, x' \in \Sigma$ with $\rho(x, x') <r$, we write  
\begin{align}\label{Tebe}
&[T,b]_{e_1}(\vec{f})(x) - [T,b]_{e_1}(\vec{f})(x')
=: \mathscr{I}_1 + \mathscr{I}_2 + \mathscr{I}_3 + \mathscr{I}_4, 
\end{align}
where
\begin{align*}
\mathscr{I}_1 &: =(b(x)-b(x')) \int_{\sum_{i=1}^2 \rho(x, y_i)> \eta} 
K(x,\vec{y}) \prod_{i=1}^2 f_i(y_i) \, d\mu(\vec{y}), 
\\ 
\mathscr{I}_2 & := \int_{\sum_{i=1}^2 \rho(x, y_i)> \eta} 
(K(x, \vec{y}) - K(x', \vec{y})) (b(x') - b(y_1)) \prod_{i=1}^2 f_i(y_i) \, d\mu(\vec{y}),  
\\ 
\mathscr{I}_3 & := \int_{\sum_{i=1}^2 \rho(x, y_i) \le \eta} 
K(x, \vec{y}) (b(x) - b(y_1)) \prod_{i=1}^2 f_i(y_i) \, d\mu(\vec{y}), 
\\ 
\mathscr{I}_4 & := \int_{\sum_{i=1}^2 \rho(x, y_i) \le \eta} 
K(x', \vec{y}) (b(y_1) - b(x')) \prod_{i=1}^2 f_i(y_i) \, d\mu(\vec{y}). 
\end{align*}
By definition, one has 
\begin{align}\label{Tebe-1}
\mathscr{I}_1 
\lesssim \rho(x, x')^{\alpha} \|b\|_{\mathscr{C}_b^{\alpha}(\Sigma, \mu)} T_*(\vec{f})(x) 
\lesssim \eta^{\alpha} T_*(\vec{f})(x).
\end{align}
The condition $\omega \in \mathrm{Dini}$ implies that there exists $t_0=t_0(\varepsilon) \in (0, 1)$ small enough such that 
\begin{align}\label{wtt}
\int_{0}^{t_0} \omega(t)\frac{dt}{t} < \varepsilon.
\end{align}
Since $\mu$ is doubling, the smoothness condition \eqref{eq:smooth-1} gives that 
\begin{align}\label{Tebe-2}
\mathscr{I}_2 
&\lesssim \int_{\max\limits_{i=1, 2} \rho(x, y_i)>\eta/2}
\frac{\prod_{i=1}^2 |f_i(y_i)|}{(\sum_{i=1}^2 \mu(B(x, \rho(x, y_i))))^2} 
\w\bigg(\frac{r}{\max\limits_{i=1, 2} \rho(x, y_i)}\bigg)d\mu(\vec{y}) 
\nonumber \\
&\lesssim \sum_{k=0}^{\infty} \omega\bigg(\frac{2r}{2^k \eta}\bigg) 
\int_{2^{k-1} \eta < \max\limits_{i=1, 2} \rho(x, y_i) \le 2^k \eta} 
\prod_{i=1}^2 \frac{|f_i(y_i)|}{\mu(B(x, 2^{k-1} \eta))} d\mu(\vec{y})
\nonumber \\
&\lesssim \sum^{\infty}_{k=0}\omega \bigg(\frac{2r}{2^k \eta}\bigg) 
\prod_{i=1}^m \fint_{B(x, 2^k \eta)} |f_i| \, d\mu
\lesssim \int_{0}^{2r/\eta} \w(t) \, \frac{dt}{t} \, \M_{\B_{\rho}}(\vec{f})(x). 
\end{align}
To control $\mathscr{I}_3$, we use the size condition \eqref{eq:size} and the doubling property of $\mu$ to arrive at  
\begin{align}\label{Tebe-3}
\mathscr{I}_3 
& \lesssim \|b\|_{\mathscr{C}_b^{\alpha}(\Sigma, \mu)} \int_{\sum_{i=1}^2 \rho(x, y_i) \le \eta} 
\frac{\rho(x, y_1)^{\alpha} \prod_{i=1}^2 |f_i(y_i)|}{(\sum_{i=1}^2 \mu(B(x, \rho(x, y_i))))^2} d\mu(\vec{y}) 
\nonumber \\ 
&\lesssim \sum_{k=0}^{\infty} \int_{2^{-k-1} \eta \leq \sum_{i=1}^2 \rho(x, y_i) < 2^{-k}\eta} 
\frac{\rho(x, y_1)^{\alpha} \prod_{i=1}^2 |f_i(y_i)|}{(\sum_{i=1}^2 \mu(B(x, \rho(x, y_i))))^2} d\mu(\vec{y}) 
\nonumber \\ 
&\lesssim \sum_{k=0}^{\infty} (2^{-k}\eta)^{\alpha} \prod_{i=1}^2 \frac{1}{\mu(B(x, 2^{-k-1} \eta))} 
\int_{B(x,2^{-k}\eta)}|f_i| \, d\mu 
\nonumber \\ 
&\lesssim \eta^{\alpha} \M_{\B_{\rho}}(\vec{f})(x).
\end{align}
Since $\sum_{i=1}^2 \rho(x, y_i) \le \eta$ implies $\sum_{i=1}^2 \rho(x', y_j) \le 2 \eta$, the same argument as for $\mathscr{I}_3$ leads 
\begin{align}\label{Tebe-4}
\mathscr{I}_4 
\lesssim \eta^{\alpha} \M_{\B_{\rho}}(\vec{f})(x'). 
\end{align}

Now choose $0< \delta < \min\{\varepsilon^{\frac{1}{\alpha}}, \frac{\eta t_0}{2} \}$. Then, for any $0<r<\delta$, there hold $0<r<\frac{\eta}{2}$ and $2r/\eta < t_0$. Collecting the estimates \eqref{Tebe}--\eqref{Tebe-4}, we deduce that for any $0<r<\delta$, 
\begin{align}\label{Tbxx}
\big|&[T, b]_{e_1}(\vec{f})(x) - [T, b]_{e_1}(\vec{f})(x') \big|
\nonumber\\
&\lesssim \varepsilon 
\big[T_*(\vec{f})(x) + \M_{\B_{\rho}}(\vec{f})(x) + \M_{\B_{\rho}}(\vec{f})(x') \big]. 
\end{align}
It follows from Theorem \ref{Mapp} and \cite[Theorem 4.16]{GLMY} that 
\begin{align}\label{Tmax}
\text{$\M_{\B_{\rho}}$ and $T_*$ are bounded from $L^{s_1}(\Sigma, \mu) \times L^{s_2}(\Sigma, \mu)$ to $L^s(\Sigma, \mu)$}, 
\end{align} 
for all $\frac1s = \frac{1}{s_1} + \frac{1}{s_2}$ with $1<s_1, s_2<\infty$. Then \eqref{Tbxx}--\eqref{Tmax} yield  
\begin{align*}
& \|[T, b]_{e_j}(\vec{f}) - ([T, b]_{e_j}(\vec{f}))_{B(\cdot, r)}\|_{L^p(\Sigma, \mu)}
\\
&\le \bigg[\int_{\Sigma} \bigg(\fint_{B(x, r)} |[T,b]_{e_1}(\vec{f})(x) 
- [T,b]_{e_1}(\vec{f})(x')| \, d\mu(x') \bigg)^p d\mu(x)  \bigg]^{\frac1p}
\\
&\lesssim \varepsilon
\big[\|T_*(\vec{f})\|_{L^p(\Sigma, \mu)} 
+ \|\M_{\B_{\rho}}(\vec{f})\|_{L^p(\Sigma, \mu)} 
+ \big\|M_{\B_{\rho}} \big(\M_{\B_{\rho}}(\vec{f}) \big) \big\|_{L^p(\Sigma, \mu)} \big] 
\\
&\lesssim \varepsilon \prod_{i=1}^2 \|f_i\|_{L^{p_i}(\Sigma, \mu)},  
\end{align*}
which shows 
\begin{align}\label{CZOFK-3}
\lim_{r \to 0} \sup_{\|f_i\|_{L^{p_i}(\Sigma, \mu)} \le 1 \atop i=1, 2} 
\|[T, b]_{e_j}(\vec{f}) - ([T, b]_{e_j}(\vec{f}))_{B(\cdot, r)}\|_{L^p(\Sigma, \mu)}
=0. 
\end{align}
Thus, \eqref{eq:wTbdd} is a consequence of \eqref{CZOFK-1}, \eqref{CZOFK-2}, \eqref{CZOFK-3}, and Theorem \ref{thm:FKhs-1}. 
\end{proof} 

Let us next present two examples of $\omega$-Calder\'{o}n-Zygmund operators. 

\begin{example}
Given $k \in \N_+$ and $A \in \mathscr{C}^k(\Rn)$, we consider the singular integral operators
\begin{align*}
T_{A, k} f(x) := {\rm p.v.} \int_{\Rn} \frac{1}{|x-y|^n} \frac{R_k(A; x, y)}{|x-y|^k} f(y) \, dy, 
\end{align*}
where 
\[
R_k(A; x, y) := A(x) - \sum_{|\alpha|<k} \frac{A_{\alpha}(y)}{\alpha!} (x-y)^{\alpha}, \quad 
A_{\alpha}(x) := \partial^{\alpha}_x A(x). 
\]
This kind of operators was introduced by Baj$\check{\rm s}$anski and Coifman \cite{BC} to generalize Calder\'{o}n commutators. 
 
Assume that $A_{\alpha} \in L^{\infty}(\Rn)$ for each $|\alpha|=k$. If we denote the kernel of $T_{A, k}$ by   
\begin{align*}
K_{A, k}(x, y)  := \frac{1}{|x-y|^n} \frac{R_k(A; x, y)}{|x-y|^k}, 
\end{align*}
then it follows from \cite[p. 1671]{DL} that 
\begin{align}\label{KAK-1}
\text{$K_{A, k}$ is a standard Calder\'{o}n-Zygmund kernel.} 
\end{align}
Moreover, if $k$ is odd, then by \cite[Theorem 5.5]{DL},  
\begin{align}\label{KAK-2}
\text{$T_{A, k}$ is bounded from $L^1(\Rn)$ to $L^{1, \infty}(\Rn)$.}
\end{align}
Note that \eqref{KAK-2} verifies \eqref{Tweak11}. Thus, in view of \eqref{KAK-1}, Theorems \ref{thm:CZOlocal}--\ref{thm:CZOcom} can be applied to the operator $T_{A, k}$ with $k$ being odd.  
\end{example}

\begin{example}\label{ex:UR}
Let $\Omega \subset \Rn$ be a uniformly rectifiable domain. Let $\sigma=\mathcal{H}^{n-1}|_{\partial \Omega}$ and $\nu$ denote respectively the surface measure and the geometric measure theoretic outward unit normal to $\Omega$. For these geometric concepts, the reader is referred to \cite{CMM, MMMMM}. We consider the principal-value singular integral operator  $T$ (whenever it exists): 
\begin{align}\label{eq:layer-1}
Tf(x) &:= \lim_{\varepsilon \to 0^{+}} \int_{\substack{y \in \partial \Omega \\ |x-y|>\varepsilon}} 
\langle x-y, \nu(y) \rangle K(x-y) f(y) \, d\sigma(y), 
\end{align}
for any $x \in \partial \Omega$, where $K \in \mathscr{C}^N(\Rn \backslash \{0\})$ is a complex-valued function which is even and positive homogeneous of degree $-n$, and $N=N(n) \in \N$ is large enough.

Let $\w_{n-1}$ denote the area of the unit sphere in $\Rn$. Taking $K(x) := \w_{n-1}^{-1}\,|x|^{-n}$, we see that 
$K \in \mathscr{C}^N(\Rn \backslash \{0\})$ is even and homogeneous of degree $-n$ for any $N \in \N$, and that the operator in \eqref{eq:layer-1} coincides with the harmonic double layer potential: 
\begin{align}\label{eq:layer-double} 
\mathcal{K}_{\Delta}f(x) 
:= \lim_{\varepsilon \to 0^{+}} \frac{1}{\w_{n-1}} 
\int_{\substack{y \in \partial \Omega \\ |x-y|>\varepsilon}} 
\frac{\langle x-y, \nu(y) \rangle}{|x-y|^n} f(y) \, d\sigma(y). 
\end{align}
A particular case is the Riesz transform: 
\begin{align}\label{eq:Rj-f}
\mathcal{R}_j f(x) := \lim_{\varepsilon \to 0^{+}} \frac{1}{\w_{n-1}} 
\int_{\substack{y \in \partial \Omega \\ |x-y|>\varepsilon}} 
\frac{x_j-y_j}{|x-y|^n} f(y) \, d\sigma(y), \quad j=1, \ldots, n.  
\end{align}

Observe that $\nu=(\nu_1, \ldots, \nu_n)$ with $|\nu(x)|=1$ at $\H^{n-1}$-a.e. $x \in \partial_{*}\Omega$. Then we rewrite 
\begin{align}\label{eq:layer-3}
Tf(x) =\sum_{i=1}^n T_j f(x), 
\end{align}
where 
\begin{align*}
T_j f(x) := \lim_{\varepsilon \to 0^{+}} 
\int_{\substack{y \in \partial \Omega \\ |x-y|>\varepsilon}} 
(x_j - y_j) K(x-y) (\nu_j f)(y) \, d\sigma(y). 
\end{align*}
It is easy to check that $K_j(x) := x_j K(x) \in \mathscr{C}^N(\Rn \backslash \{0\})$ is a complex-valued function which is odd and positive homogeneous of degree $1-n$, which implies  
\begin{align}\label{eq:layer-4}
\text{$K_j$ is a standard Calder\'{o}n-Zygmund kernel.} 
\end{align}
It was shown in \cite[Proposition 3.19]{HMT} that 
\begin{align}\label{eq:layer-5}
\text{$T_j$ is bounded from $L^1(\partial \Omega, \sigma)$ to $L^{1, \infty}(\partial \Omega, \sigma)$.} 
\end{align}
Additionally, by definition, 
\begin{align}\label{eq:layer-6}
(\partial \Omega, \sigma) \text{ is a space of homogeneous type}. 
\end{align}
As a consequence of \eqref{eq:layer-3}--\eqref{eq:layer-6}, Theorems \ref{thm:CZOlocal}--\ref{thm:CZOcom} hold for the operator $T$ in \eqref{eq:layer-1}.   
\end{example}

\subsection{Multilinear Littlewood-Paley square operators}\label{sec:LP} 
Throughout this subsection, let $\lambda>2m$ and $\Gamma(x) := \{(y, t) \in \R^{n+1}_+: |y-x| < \eta t\}$ for some fixed $\eta \in (0, \infty)$. We define three kinds of {\tt multilinear Littlewood-Paley square operators}: 
\begin{align*}
\mathbf{S}^{(1)}(\vec{f})(x) 
& := \bigg(\int_{0}^{\infty} |T_t(\vec{f})(x)|^2 \frac{dt}{t} \bigg)^{\frac12},
\\
\mathbf{S}^{(2)}(\vec{f})(x) 
& := \bigg(\iint_{\Gamma(x)} \big|T_t(\vec{f})(z) \big|^2 \frac{dz dt}{t^{n+1}} \bigg)^{\frac12},
\\ 
\mathbf{S}^{(3)}(\vec{f})(x)
& := \bigg( \iint_{\R^{n+1}_+} \Big( \frac{t}{t+|x-z|} \Big)^{n\lambda}
\big|T_t(\vec{f})(z) \big|^2 \frac{dz dt}{t^{n+1}} \bigg)^{\frac12}, 
\end{align*} 
where $T_t$ is an $m$-linear operator from $\S(\Rn) \times \cdots \times \S(\Rn)$ to the set of measurable functions on $\Rn$, with the kernel $K_t: \R^{nm} \setminus \{x=y_1=\cdots=y_m\} \to \mathbb{C}$ satisfying
\begin{equation*}
T_t(\vec{f})(x) := \int_{\R^{nm}} K_t (x, \vec{y}) \prod_{i=1}^m f_i (y_i) \, d\vec{y}, \qquad  
x \notin \bigcap_{i=1}^m \supp f_i. 
\end{equation*}

For simplicity, we introduce two Banach spaces by
\begin{align*}
\bB_1 
&:=\bigg\{F: \R_+ \to \R: \|F\|_{\bB_1} 
:= \bigg(\int_0^{\infty} |F(t)|^2 \, \frac{dt}{t} \bigg)^{\frac12} < \infty \bigg\}, 
\\
\bB_2 
& :=\bigg\{G:\R^{n+1}_+ \to \R : \|G\|_{\bB_2} 
:= \bigg(\int_{\R^{n+1}_+} |G(z, t)|^2 \, \frac{dzdt}{t^{n+1}} \bigg)^{\frac12} < \infty \bigg\}. 
\end{align*}
Set $\bB_3 := \bB_2$. Given the function $K_t$ and $(z, t) \in \R^{n+1}_+$, we denote 
\begin{align*}
K_{z, t}^{(2)}(x, \vec{y}) 
& := \mathbf{1}_{\Gamma(0)}(z) K_t(x-z, \vec{y}), 
\\
K_{z, t}^{(3)}(x, \vec{y}) 
& := \bigg(\frac{t}{t+|z|}\bigg)^{n\lambda/2} K_t(x-z, \vec{y}), 
\\
T_{z, t}^{(k)}(\vec{f})(x) 
&:= \int_{\R^{nm}} K_{z, t}^{(k)}(x, \vec{y}) \prod_{i=1}^m f_i(y_i) \, d\vec{y}, \quad k=2, 3. 
\end{align*}
For $ k=2, 3$, writing 
\begin{align}
\K^{(1)}(x, \vec{y}) &:= \big\{K_t(x, \vec{y}) \big\}_{t>0}, 
\\  
\label{def:KK} \K^{(k)}(x, \vec{y}) &:= \big\{K_{z, t}^{(k)}(x, \vec{y}) \big\}_{(z, t) \in \R^{n+1}_+}, 
\\ 
\T^{(1)}(\vec{f})(x) &:= \big\{T_t(\vec{f})(x) \big\}_{t>0}, 
\\
\label{def:TT} \T^{(k)}(\vec{f})(x) &:= \big\{T_{z, t}^{(k)}(\vec{f})(x) \big\}_{(z, t) \in \R^{n+1}_+}, 
\end{align}
we see that for each $k=1, 2, 3$, 
\begin{equation*}
\mathbf{S}^{(k)}(\vec{f})(x)  
= \|\T^{(k)} (\vec{f})(x) \|_{\bB_k}
\quad\text{ and }\quad 
\text{$\K^{(k)}$ is the kernel of $\T^{(k)}$}. 
\end{equation*}

In this subsection we work with $\B$ being the collection of all cubes in $\Rn$ with sides parallel to the coordinate axes. As in Example \ref{example}, we see that $\B$ is a ball-basis of $(\Rn, \Ln)$ with $Q^*=5Q$ for any cube $Q \in \B$. We will simply write $\mathcal{M}_r$, $A_{\vec{p}}$, and $\BMO$ instead of $\mathcal{M}_{\B, r}$, $A_{\vec{p}, \B}$, and $\BMO_{\B}$, respectively. 

\begin{definition}
Given $r \in [1, \infty)$ and $k \in \{1, 2, 3\}$, we say $\K^{(k)}$ satisfies the $\bB_k$-valued multilinear $L^r$-H\"{o}rmander condition, written $\K^{(k)} \in \mathbb{H}_r^{(k)}$, if 
\begin{align}
\label{Lr-size}& \sup_{Q \in \B, x \in \frac12 Q} |Q|^m 
\bigg(\fint_{R_1(Q)} \|\K^{(k)}(x,\vec{y})\|_{\bB_k}^{r'} d\vec{y} \bigg)^{\frac{1}{r'}} 
< \infty, 
\\ 
\label{Lr-smooth} \sup_{Q \in \B \atop x, x' \in \frac12 Q} & \sum_{j=1}^{\infty} |2^j Q|^m 
\bigg(\fint_{R_j(Q)} \|\K^{(k)}(x, \vec{y}) - \K^{(k)}(x', \vec{y}) \|_{\bB_k}^{r'} d\vec{y} \bigg)^{\frac{1}{r'}} < \infty, 
\end{align}
where $R_j(Q) := (2^j Q)^m \backslash (2^{j-1} Q)^m$ and $\fint_{R_j(Q)} := \frac{1}{|2^j Q|^m} \int_{R_j(Q)}$, $j=1, \ldots, m$. When $r=1$, the corresponding integral is understood as $\esssup_{\vec{y} \in R_j(Q)}$. 
\end{definition}

Observe that \eqref{Lr-smooth} coincides with the definition in \cite{CY}. Although we require the condition \eqref{Lr-size}, it can be easily checked for plenty of operators in practice, which will be discussed later.

\begin{theorem}\label{thm:Sk}
Let $r \in [1, \infty)$ and $k \in \{1, 2, 3\}$. Let $\mathbf{S}^{(k)}$ be the multilinear Littlewood-Paley square operators with the kernel $\K^{(k)}$ satisfying the $\bB_k$-valued multilinear $L^r$-H\"{o}rmander condition. Then $\mathbf{S}^{(k)}$ is a $\bB_k$-valued multilinear bounded oscillation operator with respect to $\B$ and the exponent $r$.
\end{theorem}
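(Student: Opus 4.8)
The plan is to take $\T := \T^{(k)}$ from \eqref{def:TT}, which by construction is a $\bB_k$-valued $m$-linear operator with kernel $\K^{(k)}$ and satisfies $\mathbf{S}^{(k)}(\vec{f})(x) = \|\T^{(k)}(\vec{f})(x)\|_{\bB_k}$; it then remains only to check the two structural conditions \eqref{list:T-size} and \eqref{list:T-reg} of Definition \ref{def:MBO}, relative to the ball-basis $\B$ of all cubes (for which $Q^* = 5Q$) and the exponent $r$, with constants $\C_1(\mathbf{S}^{(k)})$ and $\C_2(\mathbf{S}^{(k)})$ bounded by the left-hand sides of \eqref{Lr-size} and \eqref{Lr-smooth}. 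In both checks I would read the differences $\T(\vec{f}\mathbf{1}_{B^*}) - \T(\vec{f}\mathbf{1}_{B_0^*})$ and $\T(\vec{f}) - \T(\vec{f}\mathbf{1}_{B^*})$, via $m$-linearity and the tensor identity $\prod_i (f_i\mathbf{1}_E)(y_i) = \big(\prod_i f_i(y_i)\big)\mathbf{1}_{E^m}(\vec{y})$, as finite sums of terms $\T(g_1, \dots, g_m)$ in which at least one $g_i$ is supported away from the evaluation point, so that the $\bB_k$-valued kernel representation applies and the integral converges absolutely over the relevant dyadic annuli thanks to the $L^{r'}$-integrability built into \eqref{Lr-size}--\eqref{Lr-smooth}.

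For \eqref{list:T-size}: given $B_0 = Q_0 \in \B$ (the requirement $B_0^* \subsetneq \Rn$ being automatic), I would take $B := 3Q_0 \supsetneq B_0$, so that $B_0^* = 5Q_0$ and $B^* = 15Q_0$, and observe that for $x \in B_0$,
\begin{equation*}
\T(\vec{f}\mathbf{1}_{B^*})(x) - \T(\vec{f}\mathbf{1}_{B_0^*})(x) = \int_{(15Q_0)^m \setminus (5Q_0)^m} \K^{(k)}(x, \vec{y}) \prod_{i=1}^m f_i(y_i)\, d\vec{y}
\end{equation*}
as a $\bB_k$-valued integral, whose domain is contained in $R_3(Q_0) \cup R_4(Q_0)$ (there $\vec{y} \notin (4Q_0)^m$ and $\vec{y} \in (16Q_0)^m$). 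On each $R_j(Q_0)$ I would bound the $\bB_k$-norm by Minkowski's integral inequality and then apply H\"older's inequality in $\vec{y} \in (\Rn)^m$ (with exponents $r'$ and $r$, followed by Fubini) to get the factor $|2^j Q_0|^m \big(\fint_{R_j(Q_0)} \|\K^{(k)}(x, \vec{y})\|_{\bB_k}^{r'}\, d\vec{y}\big)^{1/r'}$ times $\prod_i \langle f_i \mathbf{1}_{B^*}\rangle_{2^j Q_0, r}$. The first factor is controlled by the left-hand side of \eqref{Lr-size} applied to the cube $2^{j-1}Q_0$ (legitimate since $x \in Q_0 \subset \tfrac12(2^{j-1}Q_0)$ for $j \ge 2$ and $R_1(2^{j-1}Q_0) = R_j(Q_0)$), while $\langle f_i \mathbf{1}_{B^*}\rangle_{2^j Q_0, r} \lesssim \langle f_i \rangle_{B^*, r}$ since $2^j Q_0$ is comparable to $B^*$ for $j \in \{3, 4\}$; summing the two values of $j$ yields \eqref{list:T-size}.

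For \eqref{list:T-reg}: given $B = Q \in \B$ and $x, x' \in Q$, I would write
\begin{equation*}
\big(\T(\vec{f}) - \T(\vec{f}\mathbf{1}_{B^*})\big)(x) - \big(\T(\vec{f}) - \T(\vec{f}\mathbf{1}_{B^*})\big)(x') = \int_{(\Rn)^m \setminus (5Q)^m} \big(\K^{(k)}(x, \vec{y}) - \K^{(k)}(x', \vec{y})\big) \prod_{i=1}^m f_i(y_i)\, d\vec{y},
\end{equation*}
note that $(\Rn)^m \setminus (5Q)^m \subseteq (\Rn)^m \setminus (4Q)^m = \bigcup_{j \ge 3} R_j(Q)$, estimate the $\bB_k$-norm by Minkowski's integral inequality and H\"older on each $R_j(Q)$ exactly as above, and use $\langle f_i \rangle_{2^j Q, r} \le \lfloor f_i \rfloor_{Q, r}$ to bound the left-hand side of \eqref{list:T-reg} by $\prod_i \lfloor f_i \rfloor_{Q, r}$ times $\sum_{j \ge 3} |2^j Q|^m \big(\fint_{R_j(Q)} \|\K^{(k)}(x, \vec{y}) - \K^{(k)}(x', \vec{y})\|_{\bB_k}^{r'}\, d\vec{y}\big)^{1/r'}$. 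Finally I would dominate this last sum, uniformly in $Q$ and in $x, x' \in Q$, by the left-hand side of \eqref{Lr-smooth} applied to the cube $2Q$ (so that $\tfrac12(2Q) = Q \ni x, x'$ and $R_j(2Q) = R_{j+1}(Q)$, whence the sum over $j \ge 3$ is a sub-sum of the one in \eqref{Lr-smooth}), which is finite since $\K^{(k)} \in \mathbb{H}_r^{(k)}$; this gives \eqref{list:T-reg}, completing the verification.

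The calculations are routine given the Hörmander-type hypotheses, and the only genuine points of care — where the argument could go wrong — are: (i) making sense of the differences $\T(\vec{f}) - \T(\vec{f}\mathbf{1}_{B^*})$ and $\T(\vec{f}\mathbf{1}_{B^*}) - \T(\vec{f}\mathbf{1}_{B_0^*})$ as $\bB_k$-valued functions, given that $\mathbf{S}^{(k)}(\vec{f})$ itself may be $+\infty$, by reducing to tensor terms whose kernels are supported off the evaluation point, where both absolute convergence on each annulus and the use of Minkowski's integral inequality are justified by the $L^{r'}$-integrability in \eqref{Lr-size}--\eqref{Lr-smooth}; and (ii) the dilation bookkeeping, i.e. verifying that the integration regions are covered by annuli $R_j$ at the correct (bounded, in the case of \eqref{list:T-size}) range of scales and that the half-cube normalizations in \eqref{Lr-size}--\eqref{Lr-smooth} are respected after passing to the shifted cubes $2^{j-1}Q_0$ and $2Q$. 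Everything is uniform in $k \in \{1, 2, 3\}$, since the argument uses only $\mathbf{S}^{(k)} = \|\T^{(k)}\|_{\bB_k}$ and $\K^{(k)} \in \mathbb{H}_r^{(k)}$.
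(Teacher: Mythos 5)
Your proof is correct and follows essentially the same route as the paper: write the two differences as $\bB_k$-valued kernel integrals over regions off the diagonal, apply Minkowski's and H\"older's inequalities on dyadic annuli, and invoke \eqref{Lr-size} and \eqref{Lr-smooth} respectively. The only differences are cosmetic --- the paper takes $B=2Q_0$ (so the region $(Q^*)^m\setminus(Q_0^*)^m$ is exactly $R_1(5Q_0)$ and a single application of \eqref{Lr-size} suffices) where you take $B=3Q_0$ and cover by two annuli, and your bookkeeping of which dilated cube the H\"ormander condition is applied to (so that $x,x'\in\tfrac12 Q$) is in fact slightly more careful than the paper's.
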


\begin{proof}
Let $Q_0 \in \B$ and $x \in Q_0$. Picking $Q=2Q_0$, we see that 
\begin{align}\label{xyy}
\sum_{i=1}^m |x-y_i| \simeq \ell(Q^*) \quad\text{ for all }\quad \vec{y} \in (Q^*)^m \setminus (Q_0^*)^m. 
\end{align}
Then, in light of \eqref{Lr-size} and \eqref{xyy}, we use Minkowski's and H\"{o}lder's inequalities to deduce that 
\begin{align*}
\big\| &\T^{(k)}(\vec{f} \mathbf{1}_{Q^*})(x) - \T^{(k)}(\vec{f} \mathbf{1}_{Q_0^*})(x)\big\|_{\bB_k} 
\\
&=\bigg\|\int_{(Q^*)^m \setminus (Q_0^*)^m} \K^{(k)}(x, \vec{y}) \prod_{i=1}^m f_i(y_i) \, d\vec{y} \bigg\|_{\bB_k}
\\
&\le \int_{(Q^*)^m \setminus (Q_0^*)^m} \|\K^{(k)}(x, \vec{y}) \|_{\bB_k} \prod_{i=1}^m |f_i(y_i)| \, d\vec{y} 
\\
&\le \bigg(\int_{(Q^*)^m \setminus (Q_0^*)^m} \|\K^{(k)}(x, \vec{y})\|_{\bB_k}^{r'}  d\vec{y} \bigg)^{\frac{1}{r'}} 
\bigg(\int_{(Q^*)^m} \prod_{i=1}^m |f_i(y_i)|^r \, d\vec{y}  \bigg)^{\frac1r} 
\\
&\lesssim \prod_{i=1}^m \bigg(\fint_{Q^*} |f_i(y_i)|^r \, dy_i\bigg)^{\frac1r}, 
\end{align*}
which shows the condition \eqref{list:T-size}. 

To proceed, fix $Q \in \B$ and $x, x' \in Q$. It follows from Minkowski's inequality and the condition \eqref{Lr-smooth} that
\begin{align*}
\big\| &\big(\T^{(k)}(\vec{f}) - \T^{(k)}(\vec{f} \mathbf{1}_{Q^*}) \big)(x) 
- \big(\T^{(k)}(\vec{f}) - \T^{(k)}(\vec{f} \mathbf{1}_{Q^*}) \big)(x') \big\|_{\bB_k} 
\\
&= \bigg\| \int_{(\Rn)^m \setminus (5Q)^m}
(\K^{(k)}(x, \vec{y}) - \K^{(k)}(x', \vec{y})) \prod_{i=1}^m f_i(y_i) \, d\vec{y} \bigg\|_{\bB_k} 
\\ 
& \leq \sum_{j=1}^{\infty} \int_{R_j(Q)}
\|\K^{(k)}(x, \vec{y}) - \K^{(k)}(x', \vec{y}) \|_{\bB_k} \prod_{i=1}^m |f_i(y_i)| \, d\vec{y}
\\ 
& \leq \sum_{j=0}^{\infty} |2^j Q|^{\frac{m}{r}}
\bigg(\int_{R_j(Q)} \|\K^{(k)}(x, \vec{y}) - \K^{(k)}(x', \vec{y}) \|_{\bB_k}^{r'} d\vec{y} \bigg)^{\frac{1}{r'}}
\\ 
&\qquad\quad \times
\bigg(\frac{1}{|2^j Q|^{m}} \int_{(2^j Q)^m} \prod_{i=1}^m |f_i(y_i)|^r \, d\vec{y} \bigg)^{\frac{1}{r}}
\\ 
& \lesssim \sup_{j \ge 0} \prod_{i=1}^m \bigg(\fint_{2^j Q} |f_i(y_i)|^r dy_i \bigg)^{\frac1r}
\le \prod_{i=1}^m \lfloor f_i \rfloor_{Q, r}. 
\end{align*}
This shows the condition \eqref{list:T-reg}. 
\end{proof}

Note that in the current scenario, by \eqref{OSC}, $\osc_{\exp L}=\BMO$ with equivalent norms, and \eqref{cube} gives that $A_{\infty}$ satisfies the sharp reverse H\"{o}lder property. Then the following conclusion is a consequence of Theorems \ref{thm:local}--\ref{thm:T-Besi}, \ref{thm:Sk} and \eqref{HLS}. 

\begin{theorem}\label{thm:SSS}
Let $1 \le r <\infty$ and $k \in \{1, 2, 3\}$. Let $\mathbf{S}^{(k)}$ be the multilinear Littlewood-Paley square operators with the kernel $\K^{(k)}$ satisfying the $\bB_k$-valued multilinear $L^r$-H\"{o}rmander condition. Assume that $\mathbf{S}^{(k)}$ is bounded from $L^r(\Rn) \times \cdots \times L^r(\Rn)$ to $L^{\frac{r}{m}, \infty}(\Rn)$. Then the following hold: 
\begin{list}{\rm (\theenumi)}{\usecounter{enumi}\leftmargin=1.2cm \labelwidth=1cm \itemsep=0.2cm \topsep=.2cm \renewcommand{\theenumi}{\alph{enumi}}}

\item There exists $\gamma>0$ such that for all cubes $Q \subset \Rn$ and $f_i \in L^{\infty}_c(\Rn)$ with $\supp(f_i) \subset Q$, $1 \leq i \leq m$, 
\begin{align*}
\big|\big\{x \in Q:  |\mathbf{S}^{(k)}(\vec{f})(x)| > t \, \M_r(\vec{f})(x)  \big\}\big| 
& \lesssim \, e^{- \alpha t}  |Q|, \quad t>0, 
\end{align*}
and for any $\alpha \in \{0, 1\}^m$, 
\begin{align*}
\big|\big\{x \in Q:  |[\mathbf{S}^{(k)}, \b]_{\alpha}(\vec{f})(x)| > t \, \M_r(\vec{f^*})(x)  \big\}\big|
\lesssim e^{-(\frac{\gamma t}{\|\b\|_{\tau}})^{\frac{1}{|\tau|+1}}} |Q|,  
\end{align*}
for all $t>0$, where $\tau=\{i: \alpha_i \neq 0\}$, $\|\b\|_{\tau} = \prod_{i \in \tau} \|b_i\|_{\BMO}$, and $f_i^*=M^{\lfloor r \rfloor}(|f_i|^r)^{\frac1r}$, $i=1, \ldots, m$.

\item Let $\vec{w}=(w_1, \ldots, w_m)$ and $w=\prod_{i=1}^m w_i^{\frac1m}$. If $\vec{w} \in A_{\vec{1}}$ and $w v^{\frac{r}{m}} \in A_{\infty}$, or $\vec{w} \in A_1 \times \cdots \times A_1$ and $v^r \in A_{\infty}$, then 
\begin{align*}
\bigg\|\frac{\mathbf{S}^{(k)}(\vec{f})}{v}\bigg\|_{L^{\frac{r}{m},\infty}(\Rn, \, w v^{\frac{r}{m}})}
&\lesssim \prod_{i=1}^m \|f_i\|_{L^r(\Rn, \, w_i)}.
\end{align*}

\item For all $\vec{p}=(p_1, \ldots, p_m)$ with $r<p_1, \ldots, p_m <\infty$, for all $\vec{w} \in A_{\vec{p}/r}$, and for any $\alpha \in \{0, 1\}^m$, 
\begin{align*}
\|\mathbf{S}^{(k)}\|_{L^{p_1}(\Rn, w_1) \times \cdots \times L^{p_m}(\Rn, w_m) \to L^p(\Rn, w)} 
&\lesssim [\vec{w}]_{A_{\vec{p}}}^{\max\limits_{1 \le i \le m}\{p, (\frac{p_i}{r})'\}}, 
\end{align*}
and 
\begin{align*}
\|[\mathbf{S}^{(k)}, \b]_{\alpha}&\|_{L^{p_1}(\Rn, w_1) \times \cdots \times L^{p_m}(\Rn, w_m) \to L^p(\Rn, w)} 
\lesssim \|\b\|_{\tau} [\vec{w}]_{A_{\vec{p}}}^{(|\tau|+1)\max\limits_{1 \le i \le m}\{p, (\frac{p_i}{r})'\}}, 
\end{align*}
where $\frac1p=\sum_{i=1}^m \frac{1}{p_i}$, $w=\prod_{i=1}^m w_i^{\frac{p}{p_i}}$, $\tau=\{i: \alpha_i \neq 0\}$, and $\|\b\|_{\tau} = \prod_{i \in \tau} \|b_i\|_{\BMO}$. 
\end{list}
\end{theorem}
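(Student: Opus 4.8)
The plan is to obtain all three parts directly from the abstract results of the previous section, with Theorem~\ref{thm:Sk} supplying the structural input and a short list of standard facts about $(\Rn,\Ln)$ with $\B$ the collection of axis-parallel cubes furnishing the geometric input. Concretely, I would first record: $\B$ is a ball-basis with $Q^{*}=5Q$; $A_{\infty,\B}$ enjoys the sharp reverse H\"older property and $A_{1,\B}\subset\bigcup_{s>1}RH_{s,\B}$, by \eqref{cube}; $\osc_{\exp L}=\BMO$ with comparable norms, by \eqref{OSC}; and $\B$ satisfies the Besicovitch condition (the Besicovitch covering theorem in $\Rn$). By Theorem~\ref{thm:Sk}, $\mathbf{S}^{(k)}$ is a $\bB_{k}$-valued multilinear bounded oscillation operator with respect to $\B$ and $r$, so $\C_{1}(\mathbf{S}^{(k)})$ and $\C_{2}(\mathbf{S}^{(k)})$ are finite; combined with the standing assumption that $\mathbf{S}^{(k)}$ maps $L^{r}(\Rn)^{m}$ to $L^{r/m,\infty}(\Rn)$, this makes $\C(\mathbf{S}^{(k)})$ finite, so every hypothesis of Theorems~\ref{thm:local}, \ref{thm:weak}, \ref{thm:T} and \ref{thm:T-Besi} is in force.

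Part (a) is then a verbatim application of Theorem~\ref{thm:local} to $T=\mathbf{S}^{(k)}$: the exponential decay estimate is immediate (here $\mathcal{M}_{\B,r}=\mathcal{M}_{r}$ and $\mu=\Ln$), while the sub-exponential decay for $[\mathbf{S}^{(k)},\b]_{\alpha}$ uses the sharp reverse H\"older property just recorded, after rewriting $\|b_{i}\|_{\osc_{\exp L}}$ as $\|b_{i}\|_{\BMO}$ via \eqref{OSC} and identifying $M_{\B}^{\lfloor r\rfloor}$ with $M^{\lfloor r\rfloor}$, so that $f_{i}^{*}=M^{\lfloor r\rfloor}(|f_{i}|^{r})^{1/r}$. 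Part (c) is likewise obtained from Theorem~\ref{thm:T-Besi} (applicable thanks to the Besicovitch condition, and, for the commutator bound, the sharp reverse H\"older property), once more turning the $\osc_{\exp L}$ constants into $\BMO$ constants; this yields precisely the two inequalities in (c), whose exponents are known to be optimal for these square operators by \cite{CY}.

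The one part that needs an extra argument is (b). I would first invoke Theorem~\ref{thm:weak}, which (by $A_{1,\B}\subset\bigcup_{s>1}RH_{s,\B}$) gives
\[
\bigg\|\frac{\mathbf{S}^{(k)}(\vec f)}{v}\bigg\|_{L^{r/m,\infty}(\Rn,\,wv^{r/m})}
\lesssim\bigg\|\frac{\mathcal{M}_{r}(\vec f)}{v}\bigg\|_{L^{r/m,\infty}(\Rn,\,wv^{r/m})},
\]
and then bound the right-hand side by the mixed weak type inequality \eqref{HLS} for the multilinear maximal operator. The mechanism is the homogeneity identity $\mathcal{M}_{r}(\vec f)=\mathcal{M}(|f_{1}|^{r},\dots,|f_{m}|^{r})^{1/r}$: writing $g_{i}=|f_{i}|^{r}$ and $\widetilde v=v^{r}$, a change of variables in the distribution function gives
\[
\bigg\|\frac{\mathcal{M}_{r}(\vec f)}{v}\bigg\|_{L^{r/m,\infty}(wv^{r/m})}
=\bigg\|\frac{\mathcal{M}(\vec g)}{\widetilde v}\bigg\|_{L^{1/m,\infty}(w\,\widetilde v^{\,1/m})}^{1/r},
\]
and the two weight hypotheses in the statement --- $\vec w\in A_{\vec 1}$ with $wv^{r/m}\in A_{\infty}$, or $\vec w\in A_{1}\times\cdots\times A_{1}$ with $v^{r}\in A_{\infty}$ --- are exactly the two cases in which \eqref{HLS} applies to $\mathcal{M}(\vec g)$ with weight $\widetilde v$, so that $\|\mathcal{M}(\vec g)/\widetilde v\|_{L^{1/m,\infty}(w\widetilde v^{1/m})}\lesssim\prod_{i}\|g_{i}\|_{L^{1}(w_{i})}=\prod_{i}\|f_{i}\|_{L^{r}(w_{i})}^{r}$; taking $1/r$ powers finishes (b). I expect the only mildly delicate points to be checking that each of these two cases also supplies the hypotheses $w\in A_{1,\B}$ and $v^{r/m}\in A_{\infty,\B}$ required to run Theorem~\ref{thm:weak} --- which follows from the standard structure of the classes $A_{\vec 1}$, $A_{1}\times\cdots\times A_{1}$ and $A_{\infty}$ (cf. Section~\ref{sec:Mucken} and \cite{LOPi,LOP}) --- together with the routine verification of the Besicovitch condition used in (c); neither constitutes a genuine obstruction, and the proof is essentially an assembly of the tools already at hand.
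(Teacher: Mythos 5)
Your proposal is correct and follows essentially the same route as the paper, which likewise derives all three parts by combining Theorem \ref{thm:Sk} with Theorems \ref{thm:local}--\ref{thm:T-Besi}, the identifications \eqref{OSC} and \eqref{cube}, and the mixed weak type inequality \eqref{HLS}. Your treatment of part (b) --- chaining Theorem \ref{thm:weak} with \eqref{HLS} via the rescaling $\mathcal{M}_r(\vec f)=\mathcal{M}(|f_1|^r,\dots,|f_m|^r)^{1/r}$ --- is in fact more explicit than the paper's two-sentence argument.
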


Next, let us see how Theorem \ref{thm:SSS} recovers many known results. 

\begin{definition}
Let $\{K_t(x,\vec{y})\}_{t>0}$ be a collection of locally integrable functions defined on $(\mathbb{R}^n)^{m+1}$ off the diagonal $x = y_1= \dots = y_m$. Define $\K^{(k)}$ as in \eqref{def:KK}, $k=1, 2, 3$.
\begin{enumerate}
\item We say that $\K^{(1)}$ satisfies the multilinear Littlewood-Paley condition, written $\K^{(1)} \in \mathbb{K}_1$, if for all $t>0$ and for some $\gamma, \delta>0$,
\begin{align*}
|K_t(x,\vec{y})| &
\lesssim \frac{t^{\delta}}{(t+\sum_{j=1}^{m}|x-y_{j}|)^{mn+\delta}},
\\ 
|K_t(x,\vec{y}) - K_t(x',\vec{y})|
& \lesssim \frac{t^{\delta} |x-x'|^{\gamma}}{(t+\sum_{j=1}^{m}|x-y_{j}|)^{mn+\delta+\gamma}},
\end{align*}
whenever
$|x-x'| \leq \frac12 \max\limits_{1\leq j \leq m}|x-y_{j}|$, and for all $1 \leq i \leq m$, 
\begin{equation*}
|K_t(x,\vec{y}) - K_t(x, \vec{y}')|
\lesssim \frac{t^{\delta} |y_i-y_i'|^{\gamma}}{(t+\sum_{j=1}^{m}|x-y_{j}|)^{mn+\delta+\gamma}},
\end{equation*}
where $\vec{y}'=(y_1, \ldots, y'_i, \ldots, y_m)$, whenever $|y_i-y_i'| \leq \frac12 \max\limits_{1 \le j \le m} |x-y_j|$.

\item We say $\K^{(k)}$ satisfies the integral condition of Calder\'{o}n-Zygmund type, written $\K^{(k)} \in \mathbb{K}^{(k)}_2$, if for some  $\gamma>0$, 
\begin{align*}
\|\K^{(k)}(x,\vec{y})\|_{\bB_k} 
& \lesssim \frac{1}{(\sum_{j=1}^{m}|x-y_{j}|)^{mn}},
\\ 
\|\K^{(k)}(x,\vec{y}) - \K^{(k)}(x',\vec{y})\|_{\bB_k}
& \lesssim \frac{|x-x'|^{\gamma}}{(\sum_{j=1}^{m}|x-y_{j}|)^{mn+\gamma}},
\end{align*}
whenever $|x-x'| \leq \frac12 \max\limits_{1 \leq j \leq m} \{|x-y_j|\}$, and for all $1 \leq i \leq m$, 
\begin{align*}
\|\K^{(k)}(x,\vec{y}) - \K^{(k)}(x, \vec{y}')\|_{\bB_k}
\lesssim \frac{|y_i-y_i'|^{\gamma}}{(\sum_{j=1}^{m} |x-y_{j}|)^{mn+\gamma}}, 
\end{align*}
where $\vec{y}'=(y_1, \ldots, y'_i, \ldots, y_m)$, whenever $|y_i-y_i'| \leq \frac12 \max\limits_{1 \le j \le m}|x-y_j|$.
\end{enumerate} 
\end{definition}

The class $\mathbb{K}_1$ contains the classical multilinear Littlewood-Paley kernels, which were studied by many authors (cf. \cite{CHIRSY, ChXY, SXY, XPY}). Afterwards it was improved by Xue and Yan \cite{XY} to conditions $\mathbb{K}_2^{(1)}$ and $\mathbb{K}_2^{(3)}$, while $\mathbb{K}_2^{(2)}$ is new in this paper.

\begin{lemma}\label{lem:KK}
For any $1 \leq r_1 < r_2 < \infty$, 
\begin{align*}
\K^{(1)} \in \mathbb{K}_1 \, \Longrightarrow \, \K^{(k)} \in \mathbb{K}_2^{(k)},  
\quad\text{ and }\quad 
\mathbb{K}_2^{(k)} \subset \mathbb{H}_{r_1}^{(k)} \subset \mathbb{H}_{r_2}^{(k)}, 
\quad k=1, 2, 3.
\end{align*}
\end{lemma}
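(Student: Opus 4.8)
The plan is to split the claim into three independent parts and treat them in turn: (I) the monotonicity $\mathbb{H}_{r_1}^{(k)} \subset \mathbb{H}_{r_2}^{(k)}$ for $1 \le r_1 < r_2 < \infty$; (II) the inclusion $\mathbb{K}_2^{(k)} \subset \mathbb{H}_1^{(k)}$, which combined with (I) (used with $1$ in place of $r_1$) yields $\mathbb{K}_2^{(k)} \subset \mathbb{H}_{r_1}^{(k)}$; and (III) the implication $\K^{(1)} \in \mathbb{K}_1 \Rightarrow \K^{(k)} \in \mathbb{K}_2^{(k)}$ for $k = 1, 2, 3$. For (I) I would simply invoke the monotonicity of $L^p$-averages on a probability space: since $r \mapsto r'$ is decreasing on $[1,\infty]$ we have $r_2' < r_1' \le \infty$, so for each cube $Q$ and $j \ge 1$, H\"{o}lder's inequality on $(R_j(Q), |2^jQ|^{-m} d\vec{y})$ gives $\big(\fint_{R_j(Q)} \|\K^{(k)}(x,\vec{y})\|_{\bB_k}^{r_2'} d\vec{y}\big)^{1/r_2'} \le \big(\fint_{R_j(Q)} \|\K^{(k)}(x,\vec{y})\|_{\bB_k}^{r_1'} d\vec{y}\big)^{1/r_1'}$, and likewise the $r_1 = 1$ case reads $L^{r_1'} \le L^\infty = \esssup$. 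Hence each summand in the defining sums of $\mathbb{H}_{r_2}^{(k)}$ (resp. $\mathbb{H}_{r_1}^{(k)}$) is dominated by the corresponding summand in those of $\mathbb{H}_{r_1}^{(k)}$ (resp. $\mathbb{H}_1^{(k)}$), which is what is needed.

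For (II), I would fix a cube $Q$ and $x, x' \in \tfrac12 Q$ and use elementary annular geometry: for $\vec{y} \in R_j(Q) = (2^jQ)^m \setminus (2^{j-1}Q)^m$ one has $\sum_{i=1}^m |x-y_i| \simeq 2^j \ell(Q)$, and $|x-x'| \le \tfrac12 \max_{1\le i\le m}|x-y_i|$ once $j \ge j_0$ for a purely dimensional $j_0$. The size estimate of $\mathbb{K}_2^{(k)}$ then gives $\esssup_{R_j(Q)} \|\K^{(k)}(x,\vec{y})\|_{\bB_k} \lesssim (2^j\ell(Q))^{-mn} \simeq |2^jQ|^{-m}$, so $|2^jQ|^m \esssup_{R_j(Q)}\|\K^{(k)}(x,\vec{y})\|_{\bB_k} \lesssim 1$; taking $j = 1$ gives the size part of $\mathbb{H}_1^{(k)}$. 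For the smoothness part I would split $\sum_{j \ge 1}$: the finitely many indices $j \le j_0$ are controlled by the same size bound, while for $j > j_0$ the H\"{o}lder-in-$x$ estimate of $\mathbb{K}_2^{(k)}$ yields $|2^jQ|^m \esssup_{R_j(Q)}\|\K^{(k)}(x,\vec{y}) - \K^{(k)}(x',\vec{y})\|_{\bB_k} \lesssim 2^{jmn}\ell(Q)^{mn} \cdot \ell(Q)^{\gamma}(2^j\ell(Q))^{-mn-\gamma} = 2^{-j\gamma}$, which sums to a finite quantity since $\gamma > 0$.

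For (III), writing $s := \sum_{i=1}^m |x-y_i|$, I would start with $k=1$: from $\K^{(1)} \in \mathbb{K}_1$, $\|\K^{(1)}(x,\vec{y})\|_{\bB_1}^2 = \int_0^\infty |K_t(x,\vec{y})|^2 \tfrac{dt}{t} \lesssim \int_0^\infty t^{2\delta-1}(t+s)^{-2mn-2\delta} dt$, and the substitution $t = su$ turns the right side into $s^{-2mn}\int_0^\infty u^{2\delta-1}(1+u)^{-2mn-2\delta} du$ with the remaining integral finite, so $\|\K^{(1)}(x,\vec{y})\|_{\bB_1} \lesssim s^{-mn}$; the two H\"{o}lder bounds of $\mathbb{K}_2^{(1)}$ come out of the identical computation with $2mn$ replaced by $2mn+2\gamma$ and the extra factor $|x-x'|^{\gamma}$ (resp. $|y_i-y_i'|^{\gamma}$) pulled out. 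For $k=3$ I would first integrate the auxiliary variable $z$: decompose $\R^{n+1}_+$ into the dyadic slabs where $|z| \simeq 2^\ell t$, on which $(\tfrac{t}{t+|z|})^{n\lambda} \simeq 2^{-\ell n\lambda}$, use $\sum_{i=1}^m|(x-z)-y_i| \gtrsim \max\{s - m|z|, 0\}$ together with $t + \sum_i|(x-z)-y_i| \gtrsim t$, perform the $t$-integral as above, and sum the geometric series in $\ell$, which converges exactly because $\lambda > 2m$ (so $n\lambda - 2mn > 0$). The case $k=2$ is the easier one where the cone $\Gamma(0)$ restricts to $|z| < \eta t$, making the weight bounded; here the $z$-integral is immediate and the same one-variable estimates apply. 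In each case the output is precisely the three pointwise bounds defining $\mathbb{K}_2^{(k)}$.

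I expect the main obstacle to be part (III) for $k = 2, 3$: one must integrate out the translation variable $z$ against the $g_\lambda^*$-type weight $(t/(t+|z|))^{n\lambda}$ while keeping track of the competition between the three scales $s$, $|z|$, and $t$, and it is exactly there that the standing hypothesis $\lambda > 2m$ is consumed to make the dyadic series summable. Parts (I) and (II) are routine monotonicity and annular-geometry arguments, and the $k=1$ case of (III) is a one-variable Beta-integral computation.
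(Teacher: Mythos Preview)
Your proposal is correct and aligned with the paper's approach. In fact the paper defers almost everything to references: it cites \cite[Proposition~6.4]{CY} for the monotonicity $\mathbb{H}_{r_1}^{(k)}\subset\mathbb{H}_{r_2}^{(k)}$ (your part (I)), declares $\mathbb{K}_2^{(k)}\subset\mathbb{H}_1^{(k)}$ ``trivial'' (your part (II)), and cites \cite[Theorems~1.4 and 1.7]{XY} for $\K^{(1)}\in\mathbb{K}_1\Rightarrow\K^{(k)}\in\mathbb{K}_2^{(k)}$ when $k=1,3$; the only case written out is $k=2$, via the change of variable $z\mapsto x-z$ and the cone observation $t_0:=\sum_j|x-y_j|\lesssim t+\sum_j|z-y_j|$ on $\Gamma(x)$ followed by splitting the $t$-integral at $t_0$ --- exactly the content of your ``$|z|<\eta t$ makes the $z$-integral immediate'' remark. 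So you are supplying more detail than the paper, not less.

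One small bookkeeping caveat on your $k=3$ outline: the threshold $\lambda>2m$ does not literally arise as the summability condition of the geometric series in $\ell$ (that series already converges once $\lambda>1$). If you carry out the dyadic-in-$|z|/t$ decomposition and then the $t$-integral, the sharp condition $n\lambda-2mn>0$ instead appears as the integrability at $0$ of $\int_0^{s} t^{\,n\lambda-2mn-1}\,dt$, coming from the contribution of the large-$|z|$ slabs ($|z|\gtrsim s$) after bounding $\int_{\Rn}(t+\sum_i|x-z-y_i|)^{-2mn-2\delta}\,dz\lesssim t^{\,n-2mn-2\delta}$. This is a labeling issue, not a gap; your approach works under $\lambda>2m$ as stated.
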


\begin{proof} 
The implication $\K^{(1)} \in \mathbb{K}_1 \Longrightarrow \K^{(k)} \in \mathbb{K}_2^{(k)}$, $k=1, 3$, was proved in \cite[Theorems 1.4 and 1.7]{XY}, while $\mathbb{H}_{r_1}^{(k)} \subsetneq \mathbb{H}_{r_2}^{(k)}$ was given in \cite[Proposition 6.4]{CY}, and $\mathbb{K}_2^{(k)} \subset \mathbb{H}_1^{(k)}$ is trivial. Hence, it suffices to prove 
$\K^{(1)} \in \mathbb{K}_1 \Longrightarrow \K^{(2)} \in \mathbb{K}_2^{(2)}$. 

Let $\K^{(1)}=\{K_t\}_{t>0} \in \mathbb{K}_1$. Note that for all $z \in \Gamma(x)$, 
\[
t_0 := \sum_{j=1}^m |x-y_j| 
\le \sum_{j=1}^m (|x-z| + |z-y_j|)
\lesssim t + \sum_{j=1}^m |z-y_j|. 
\]
Then, by the regularity condition for $\K^{(1)}$, 
\begin{align*}
\|&\K^{(2)}(x,\vec{y}) - \K^{(2)}(x',\vec{y})\|_{\bB_k}^2 
\\
&=\int_{\Gamma(0)} |K_t(x'-z, \vec{y}) - K_t(x-z, \vec{y})|^2 \frac{dzdt}{t^{n+1}}
\\
&= \int_{\Gamma(x)} |K_t(x'-x + z, \vec{y}) - K_t(z, \vec{y})|^2 \frac{dzdt}{t^{n+1}}
\\
&\lesssim \int_{\Gamma(x)} \frac{t^{2\delta} |x-x'|^{2\gamma}}{(t+\sum_{j=1}^m |z-y_j|)^{2(mn+\delta+\gamma)}} \frac{dzdt}{t^{n+1}}
\\
&\lesssim \int_0^{t_0} \int_{B(x, \eta t)} \frac{t^{2\delta} |x-x'|^{2\gamma}}{t_0^{2(mn+\delta+\gamma)}} \frac{dz dt}{t^{n+1}} 
+ \int_{t_0}^{\infty} \int_{B(x, \eta t)} \frac{|x-x'|^{2\gamma}}{t^{2(mn + \gamma)}} \frac{dz dt}{t^{n+1}} 
\\
&\lesssim \frac{|x-x'|^{2\gamma}}{t_0^{2(mn+\gamma)}}
=\frac{|x-x'|^{2\gamma}}{(\sum_{j=1}^m |x-y_j|)^{2(mn+\gamma)}}. 
\end{align*}
Much as we did above, it is easy to show the size estimate and the regularity estimate in variables $y_i$ for $\K^{(2)}$. 
\end{proof} 

In light of Lemma \ref{lem:KK}, Theorem \ref{thm:SSS} covers the results in \cite{CXY, ChXY, SXY, XPY}. Let us present  another example. 

\begin{example}
Define the bilinear square Fourier multiplier operator as 
\begin{align*}
\mathbf{S}^{\m}(f_1, f_2) (x) 
:= \bigg(\int_0^{\infty} \bigg|\int_{\R^{2n}} e^{2\pi i x \cdot (\xi_1 + \xi_2)} 
\m(t \vec{\xi}) \widehat{f}_1(\xi_1) \widehat{f}_2(\xi_2) \, d\xi_1 d\xi_2 \bigg|^2 \frac{dt}{t} \bigg)^{\frac12}, 
\end{align*}
where $\m \in L^{\infty}(\R^{2n})$ satisfies 
\begin{align*}
|\partial^{\alpha} \m(\xi_1, \xi_2)| 
\lesssim \frac{(|\xi_1| + |\xi_2|)^{-|\alpha|+\varepsilon_1}}{(1 + |\xi_1| + |\xi_2|)^{\varepsilon_1+\varepsilon_2}}
\end{align*}
for all $|\alpha| \le s$ for some $s \in \N_+$, and for some $\varepsilon_1, \varepsilon_2 >0$. 

Setting
\begin{align*}
&K_t^{\m}(x, y_1, y_2) 
:= \frac{1}{t^{2n}} \check{\m} \bigg(\frac{x-y_1}{t}, \frac{x-y_2}{t} \bigg), 
\\
&T_t^{\m}(f_1, f_2)(x) 
:= \int_{\R^{2n}} K_t^{\m}(x, y_1, y_2) f_1(y_1) f_2(y_2) \, dy_1 dy_2, 
\\
&\K^{\m} :=\{K_t^{\m}\}_{t>0}, \quad\text{ and }\quad 
\T^{\m} := \{T_t^{\m}\}_{t>0}, 
\end{align*}
we rewrite 
\begin{align*}
\mathbf{S}^{\m}(f_1, f_2)(x) = \|\T^{\m}(f_1, f_2)(x)\|_{\bB_1}. 
\end{align*}
Additionally, it follows from \cite[Propositions 3.1--3.2]{SiXY} that 
\begin{align*}
\text{$\K^{\m}$ satisfies the $\bB_1$-valued bilinear $L^r$-H\"{o}rmander condition}, 
\end{align*}
whenever $s \in [n+1, 2n]$ and $2n/s<r \le 2$. Assume in addition that 
\begin{align*}
\text{$\mathbf{S}^{\m}$ is bounded from $L^r(\Rn) \times L^r(\Rn)$ to $L^{\frac{r}{2}, \infty}(\Rn)$}.
\end{align*}
See \cite[Section 2]{SiXY} for the reasonability of this assumption. Consequently, Theorem \ref{thm:SSS} holds for $\mathbf{S}^{\m}$. 
\end{example}

Beyond that, we obtain weighted compactness for commutators of $\mathbf{S}^{(k)}$ as follows. 

\begin{theorem}\label{thm:LP-com}
Let $k \in \{1, 2, 3\}$ and $\mathbf{S}^{(k)}$ be the multilinear Littlewood-Paley square operators with the kernel $\K^{(k)} \in \mathbb{K}_2^{(k)}$. Assume that $\mathbf{S}^{(k)}$ is bounded from $L^{q_1}(\Rn) \times \cdots \times L^{q_m}(\Rn)$ to $L^q(\Rn)$ for some $\frac1q = \sum_{i=1}^m \frac{1}{q_i}$ with $1<q_1, \ldots, q_m <\infty$. If $b \in \CMO(\Rn)$, then for each $j=1,\ldots,m$, $[\mathbf{S}^{(k)}, b]_{e_j}$ is compact from $L^{p_1}(\Rn, w_1) \times \cdots \times L^{p_m}(\Rn, w_m)$ to $L^p(\Rn, w)$ for all $\vec{p}=(p_1, \ldots, p_m)$ with $1<p_1, \ldots, p_m < \infty$, and for all $\vec{w} \in A_{\vec{p}}$, where $\frac1p = \sum_{i=1}^m \frac{1}{p_i}$ and $w=\prod_{i=1}^m w_i^{\frac{p}{p_i}}$.
\end{theorem}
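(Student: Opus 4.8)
The plan is to deduce Theorem \ref{thm:LP-com} as an application of the compact extrapolation Theorem \ref{thm:Tb}. Recall that $(\Rn, |\cdot|, dx)$ is a complete space of homogeneous type with a metrically continuous measure, and that the ball-basis $\B$ of all cubes (or, equivalently up to dilations, all Euclidean balls) satisfies the density condition, so the hypotheses on the underlying space in Theorem \ref{thm:Tb} are met; moreover $\BMO_{\B_\rho}=\BMO$ and $\CMO_{\B_\rho}=\CMO$ here. So it suffices to verify the two inputs (i) and (ii) of Theorem \ref{thm:Tb} for $T=\mathbf{S}^{(k)}$ and $\alpha=e_j$. I would first observe that since $\K^{(k)}\in\mathbb{K}_2^{(k)}\subset\mathbb{H}_1^{(k)}$ (Lemma \ref{lem:KK}), Theorem \ref{thm:Sk} tells us $\mathbf{S}^{(k)}$ is a $\bB_k$-valued multilinear bounded oscillation operator with respect to $\B$ and $r=1$; combined with the assumed boundedness $L^{q_1}\times\cdots\times L^{q_m}\to L^q$ for one tuple with $1<q_i<\infty$, the Marcinkiewicz-type interpolation/extrapolation machinery packaged in the paper (specifically one may use Theorem \ref{thm:T} or, going back to the weak endpoint via \eqref{HLS}-type reasoning, the $A_\infty$ extrapolation) yields that $\mathbf{S}^{(k)}$ is bounded from $L^1\times\cdots\times L^1\to L^{\frac1m,\infty}$, and hence from $L^{r_1}(u_1)\times\cdots\times L^{r_m}(u_m)\to L^r(u)$ for all $\vec u\in A_{\vec r}$, $1<r_i<\infty$ — this gives hypothesis (i) of Theorem \ref{thm:Tb}.

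For hypothesis (ii) I would establish, for $b\in\CMO$, the unweighted compactness
\[
[\mathbf{S}^{(k)},b]_{e_j}\colon L^{s_1}(\Rn)\times\cdots\times L^{s_m}(\Rn)\to L^s(\Rn)
\]
for some (equivalently, by the already-known unweighted boundedness of the commutator, for all) $\frac1s=\sum_i\frac1{s_i}$ with $1<s_i<\infty$. This is proved exactly as in the proof of Theorem \ref{thm:CZOcom}: by the density of $\mathscr{C}_b^\alpha$ in $\CMO$ together with the unweighted bound $\|[\mathbf{S}^{(k)},b]_{e_j}\|\lesssim\|b\|_{\BMO}\prod_i\|f_i\|_{L^{s_i}}$ (which follows from Theorem \ref{thm:T} applied to the bounded oscillation operator $\mathbf{S}^{(k)}$ with the commutator bound, in the unweighted case), one reduces to $b\in\mathscr{C}_b^\alpha$ with compact support $\supp b\subset B(x_0,A_0)$, and then verifies the three Fréchet–Kolmogorov conditions of Theorem \ref{thm:FKhs-1} (uniform boundedness, uniform tail decay, and uniform $L^s$-equicontinuity $\lim_{r\to0}\sup_f\|g-g_{B(\cdot,r)}\|_{L^s}=0$). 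The tail estimate uses the size condition $\|\K^{(k)}(x,\vec y)\|_{\bB_k}\lesssim(\sum_j|x-y_j|)^{-mn}$ and the compact support of $b$ exactly as in \eqref{CZOFK-2}; the equicontinuity uses the regularity condition on $\K^{(k)}$, a maximal truncated variant $\mathbf{S}^{(k)}_*$, and the $L^r$-Hörmander smoothness to split into near and far parts as in \eqref{Tebe}--\eqref{Tbxx}, with $\mathbf{S}^{(k)}$ and $\mathbf{S}^{(k)}_*$ bounded on the relevant Lebesgue spaces.

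With (i) and (ii) verified, Theorem \ref{thm:Tb} immediately gives that $[\mathbf{S}^{(k)},b]_{e_j}$ is compact from $L^{p_1}(w_1)\times\cdots\times L^{p_m}(w_m)\to L^p(w)$ for all $1<p_i<\infty$ and all $\vec w\in A_{\vec p}$, which is the assertion of Theorem \ref{thm:LP-com}. The main obstacle I anticipate is the unweighted compactness step (ii): unlike the $\omega$-Calderón–Zygmund case in Theorem \ref{thm:CZOcom} where the pointwise kernel smoothness is directly available, here one only has the integrated $L^r$-Hörmander-type condition on $\K^{(k)}$, so the near-diagonal term $\mathscr{I}_2$ in the equicontinuity estimate must be controlled through a dyadic annular decomposition with Hölder's inequality against the $L^{r'}$-averaged kernel differences (using $\mathbb{K}_2^{(k)}$ which gives a genuine modulus $|x-x'|^\gamma$), and one must make sure the resulting bound is by a Dini-type integral $\int_0^{2r/\eta}\omega(t)\frac{dt}{t}$ times $\mathcal{M}(\vec f)$ — for the polynomial-type modulus $\omega(t)=t^\gamma$ coming from $\mathbb{K}_2^{(k)}$ this is automatic, so the argument goes through, but the bookkeeping with the Banach-space norm $\|\cdot\|_{\bB_k}$ inside all truncations requires care.
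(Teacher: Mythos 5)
Your proposal is correct and follows essentially the same route as the paper: deduce the weighted bounds for $\mathbf{S}^{(k)}$ from Theorem \ref{thm:SSS} (after passing to the weak $L^1\times\cdots\times L^1\to L^{\frac1m,\infty}$ endpoint), reduce via Theorem \ref{thm:Tb} to unweighted compactness of $[\mathbf{S}^{(k)},b]_{e_j}$ for one quasi-exponent tuple, and verify the Fr\'echet--Kolmogorov conditions of Theorem \ref{thm:FKhs-1} for $b\in\mathscr{C}_c^{\infty}$ using the size condition for the tail and the pointwise H\"older modulus of $\mathbb{K}_2^{(k)}$ (together with a maximal truncation $T_*^{(k)}$) for the equicontinuity. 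Your final worry is moot, since the theorem assumes $\K^{(k)}\in\mathbb{K}_2^{(k)}$ rather than only an $L^r$-H\"ormander condition, exactly as you note.
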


\begin{proof}
It was shown in \cite{XY} that the boundedness of $\mathbf{S}^{(k)}$ from $L^{q_1}(\Rn) \times \cdots \times L^{q_m}(\Rn)$ to $L^q(\Rn)$ implies that $\mathbf{S}^{(k)}$ is bounded from $L^1(\Rn) \times \cdots \times L^1(\Rn)$ to $L^{\frac{1}{m}, \infty}(\Rn)$. Let $b \in \CMO(\Rn)$. Then it follows from Theorem \ref{thm:SSS} and Lemma \ref{lem:KK} that 
\begin{align}\label{Sk-1}
\|\mathbf{S}^{(k)}\|_{L^{p_1}(\Rn, w_1) \times \cdots \times L^{p_m}(\Rn, w_m) \to L^p(\Rn, w)} 
&\lesssim [\vec{w}]_{A_{\vec{p}}}^{\max\limits_{1 \le i \le m}\{p, p'_i\}}, 
\end{align}
and 
\begin{align}\label{Sk-2}
\|[\mathbf{S}^{(k)}, &b]_{e_j}\|_{L^{p_1}(\Rn, w_1) \times \cdots \times L^{p_m}(\Rn, w_m) \to L^p(\Rn, w)} 
\lesssim \|b\|_{\BMO} [\vec{w}]_{A_{\vec{p}}}^{2\max\limits_{1 \le i \le m}\{p, p'_i\}}, 
\end{align}
for all $\vec{p}=(p_1, \ldots, p_m)$ with $1<p_1,\ldots,p_m<\infty$, and for all $\vec{w} \in A_{\vec{p}}$, where $\frac1p =\sum_{i=1}^m \frac{1}{p_i}$ and $w=\prod_{i=1}^m w_i^{\frac{p}{p_i}}$. In view of Theorem \ref{thm:Tb} and \eqref{Sk-1}, it suffices to show 
\begin{align}\label{Sk-3}
[\mathbf{S}^{(k)}, b]_{e_j} \text{ is compact from $L^{p_1}(\Rn) \times \cdots \times L^{p_m}(\Rn)$ to $L^p(\Rn)$}, 
\end{align}
for all (or for some) $\frac1p = \sum_{i=1}^m \frac{1}{p_i} < 1$ with $1<p_1, \ldots, p_m<\infty$. 

By definition, it is easy to verify that for any $0< \alpha < \beta <1$, 
\begin{align}\label{CcCa}
\mathscr{C}_c^{\infty}(\Rn) 
\subset \mathscr{C}_b^1(\Rn)
&\subset \mathscr{C}_b^{\beta}(\Rn)
\subset \mathscr{C}_b^{\alpha}(\Rn)
\nonumber \\
&\subset \mathscr{C}_c(\Rn) 
\subset L^{\infty}(\Rn) 
\subset \BMO(\Rn). 
\end{align}
It was shown in \cite[eq. (3.16)]{CY22} that the $\BMO(\Rn)$-closure of $\mathscr{C}_c^{\infty}(\Rn)$ equals the $\BMO(\Rn)$-closure of $\mathscr{C}_c(\Rn)$. Recall that $\CMO(\Rn)$ is the $\BMO(\Rn)$-closure of $\bigcup\limits_{0< \alpha \le 1} \mathscr{C}_b^{\alpha}(\Rn)$. Then, this together with \eqref{CcCa} implies that for any $0 < \alpha \le 1$, 
\begin{align}\label{CCC}
\CMO(\Rn)
=\overline{\mathscr{C}_b^{\alpha}(\Rn)}^{\BMO} 
=\overline{\mathscr{C}_c^{\infty}(\Rn)}^{\BMO} 
=\overline{\mathscr{C}_c(\Rn)}^{\BMO}.
\end{align}

Fix $\frac1p = \sum_{i=1}^m \frac{1}{p_i} < 1$ with $1<p_1, \ldots, p_m<\infty$. Applying Theorem \ref{thm:FKhs-1}, \eqref{Sk-2}, and \eqref{CCC}, we are reduced to showing that for any $b\in \mathscr{C}_c^{\infty }(\Rn)$ and $\varepsilon \in (0, 1)$, the following hold:
\begin{itemize}
\item there exists $A=A(\varepsilon)>0$ independent of $\vec{f}$ such that 
\begin{align}\label{SFK-1}
\big\|[\mathbf{S}^{(k)}, b]_{e_j}(\vec{f})\mathbf{1}_{\{|x|>A\}} \big\|_{L^p(\Rn)} 
\lesssim \varepsilon \prod_{i=1}^m \|f_i\|_{L^{p_i}(\Rn)}. 
\end{align}

\item there exists $\delta=\delta(\varepsilon)$ independent of $\vec{f}$ such that for all $0<r<\delta$, 
\begin{align}\label{SFK-2}
\big\|[\mathbf{S}^{(k)}, b]_{e_j}(\vec{f}) 
- \big([\mathbf{S}^{(k)}, b]_{e_j}(\vec{f}) \big)_{B(\cdot, r)} \big\|_{L^p(\Rn)} 
\lesssim \varepsilon \prod_{i=1}^m \|f_i\|_{L^{p_i}(\Rn)}. 
\end{align}
\end{itemize} 

We only focus on the case $j=1$. Let $\varepsilon \in (0,1)$ and $b \in \mathscr{C}_c^{\infty}(\Rn)$ with $\supp b \subset B(0, A_0)$ for some $A_0 \ge 1$. Then for any $|x|>A \ge 2A_0$, the size condition of $\K^{(k)}$ and \eqref{kf-3} below give 
\begin{align}\label{Skbe}
&[\mathbf{S}^{(k)}, b]_{e_1}(\vec{f})(x) 
=\|[\T^{(k)}, b]_{e_1}(\vec{f})(x)\|_{\bB_k} 
\nonumber \\
&=\bigg\| \int_{\R^{nm}} (b(x) - b(y_1)) \K^{(k)}(x, \vec{y}) \prod_{i=1}^m f_i(y_i) \, d\vec{y}\bigg\|_{\bB_k}
\nonumber \\
&\le \int_{\R^{nm}} |b(x) - b(y_1)| \|\K^{(k)}(x, \vec{y})\|_{\bB_k} \prod_{i=1}^m |f_i(y_i)| \, d\vec{y}
\nonumber \\
&\lesssim \|b\|_{L^{\infty}(\Rn)} \int_{\R^{nm}} \frac{\prod_{i=1}^m |f_i(y_i)|}{(\sum_{i=1}^m |x-y_i|)^{mn}} d\vec{y}
\nonumber \\
&\lesssim |x|^{-n} \prod_{i=1}^m \|f_i\|_{L^{p_i}(\Rn)}, 
\end{align}
which along with $A > \max\{2A_0, \varepsilon^{-p'/n}\}$ implies \eqref{SFK-1} as desired. 

Let $\eta>0$ be chosen later and $0<r<\eta$. Fix $x \in \Rn$ and $x' \in B(x, r)$. We split
\begin{align*}
&[\T^{(k)}, b]_{e_1}(\vec{f})(x) - [\T^{(k)}, b]_{e_1}(\vec{f})(x') 
\\ \nonumber
&=(b(x)-b(x')) \int_{\sum_{i=1}^m |x-y_i|>\eta} \K^{(k)}(x,\vec{y}) \prod_{i=1}^m f_i (y_i) \, d\vec{y}
\\ \nonumber
&\quad+ \int_{\sum_{i=1}^m |x-y_i|>\eta} (b(x') - b(y_1)) 
(\K^{(k)}(x,\vec{y}) - \K^{(k)}(x',\vec{y}))  \prod_{i=1}^m f_i (y_i) \, d\vec{y}
\\ \nonumber
&\quad+ \int_{\sum_{i=1}^m |x-y_i|\le \eta} (b(x)- b(y_1)) \K^{(k)}(x,\vec{y}) \prod_{i=1}^m f_i (y_i) \, d\vec{y} 
\\ \nonumber
&\quad+ \int_{\sum_{i=1}^m |x-y_i| \le \eta} (b(y_1) - b(x')) \K^{(k)}(x', \vec{y}) \prod_{i=1}^m f_i (y_i) \, d\vec{y}. 
\end{align*}
Then by Minkowski's inequality, 
\begin{align}\label{Tkb}
\big\|[\T^{(k)}, b]_{e_1}(\vec{f})(x) - [\T^{(k)}, b]_{e_1}(\vec{f})(x')\big\|_{\bB_k}  
\lesssim \mathscr{J}_1 + \mathscr{J}_2 + \mathscr{J}_3 + \mathscr{J}_4, 
\end{align}
where 
\begin{align*}
\mathscr{J}_1 & = r \, T_*^{(k)}(\vec{f})(x) 
:= r \, \sup_{\eta > 0} \bigg\|\int_{\sum_{i=1}^m |x-y_i|>\eta} 
\K^{(k)}(x,\vec{y}) \prod_{i=1}^m f_i(y_i) \, d\vec{y}\bigg\|_{\bB_k}, 
\\
\mathscr{J}_2 & := \int_{\sum_{i=1}^m |x-y_i|>\eta} 
\|K(x,\vec{y}) - K(x',\vec{y})\|_{\bB_k} \prod_{i=1}^m |f_i(y_i)| \, d\vec{y}, 
\\
\mathscr{J}_3 & := \int_{\sum_{i=1}^m |x-y_i|\le \eta} 
|x - y_1| \|K(x,\vec{y})\|_{\bB_k} \prod_{i=1}^m |f_i(y_i)| \, d\vec{y}, 
\\
\mathscr{J}_4 & := \int_{\sum_{i=1}^m |x-y_i| \le \eta} 
|x' - y_1| \|K(x', \vec{y})\|_{\bB_k} \prod_{i=1}^m |f_i(y_i)| \, d\vec{y}. 
\end{align*}
Note that 
\begin{align}\label{Tkb-1}
\text{$T_*^{(k)}$ is bounded from $L^{r_1}(\Rn) \times \cdots \times L^{r_m}(\Rn)$ to $L^r(\Rn)$}, 
\end{align}
for all $\frac1r=\sum_{i=1}^m \frac{1}{r_i}$ with $1<r_1,\ldots,r_m<\infty$. For $\mathscr{J}_2$, the smoothness condition of $\K^{(k)}$ and \eqref{kf-1} give that 
\begin{align}\label{Tkb-2}
\mathscr{J}_2 
\lesssim r^{\gamma} \int_{\sum_{i=1}^m |x-y_i|>\eta} 
\frac{\prod_{i=1}^m |f_i(y_i)|}{(\sum_{i=1}^m |x-y_i|)^{mn+\gamma}} \, d\vec{y}
\lesssim (r/\eta)^{\gamma} \M(\vec{f})(x). 
\end{align}
To control $\mathscr{J}_3$, we use the size condition and \eqref{kf-2}: 
\begin{align}\label{Tkb-3}
\mathscr{J}_3 & \lesssim \int_{\sum_{i=1}^m |x-y_i|<\eta} 
\frac{\prod_{i=1}^m |f_i(y_i)|}{(\sum_{i=1}^m |x-y_i|)^{mn-1}} d\vec{y}
\lesssim \eta \, \M(\vec{f})(x).
\end{align}
Since $\sum_{i=1}^m |x-y_i| \le \eta$ implies $\sum_{i=1}^m |x'-y_i| \le \eta+mr$, the same argument as $\mathscr{J}_3$ leads 
\begin{align}\label{Tkb-4}
\mathscr{J}_4 \lesssim (\eta+mr) \M(\vec{f})(x') 
\lesssim \eta \, \M(\vec{f})(x'). 
\end{align}

Observing that 
\begin{align*}
&\big|[\mathbf{S}^{(k)}, b]_{e_j}(\vec{f})(x) 
- \big([\mathbf{S}^{(k)}, b]_{e_j}(\vec{f}) \big)_{B(x, r)} \big| 
\\
&=\big| \big\|[\T^{(k)}, b]_{e_1}(\vec{f})\big\|_{\bB_k} 
- \big( \big\|[\T^{(k)}, b]_{e_1}(\vec{f})\big\|_{\bB_k}\big)_{B(\cdot, r)} \big| 
\\
&\le \fint_{B(x, r)} \big\| [\T^{(k)}, b]_{e_1}(\vec{f})(x) 
- [\T^{(k)}, b]_{e_1}(\vec{f})(x') \big\|_{\bB_k} \, dx', 
\end{align*}
and 
\begin{align}\label{tauM}
\bigg\|\fint_{B(\cdot, r)} \M(\vec{f})(x') \, dx' \bigg\|_{L^p(\Rn)} 
\le \|M(\M(\vec{f}))\|_{L^p(\Rn)} 
\lesssim \|\M(\vec{f})\|_{L^p(\Rn)}, 
\end{align}
we invoke Theorem \ref{Mapp} and \eqref{Tkb}--\eqref{tauM} to deduce 
\begin{align*}
&\big\|[\mathbf{S}^{(k)}, b]_{e_j}(\vec{f}) 
- \big([\mathbf{S}^{(k)}, b]_{e_j}(\vec{f}) \big)_{B(\cdot, r)} \big\|_{L^p(\Rn)} 
\\
&\lesssim [\eta + (r/\eta)^{\gamma}] 
\big(\|T_*^{(k)}(\vec{f})\|_{L^p(\Rn)} + \|\M(\vec{f})\|_{L^p(\Rn)} \big) 
\\
&\lesssim \varepsilon \prod_{i=1}^m \|f_i\|_{L^{p_i}(\Rn)}, 
\end{align*}
provided that $\eta :=\varepsilon$ and $0< r< \delta :=\varepsilon^{1+1/\gamma}$. This shows \eqref{SFK-2} and completes the proof.  
\end{proof} 

\begin{lemma}\label{lem:kf}
Let $\eta, \gamma>0$ and $f_i \in L^{p_i}(\Rn)$, $i=1, \ldots, m$. Then, 
\begin{align}
\label{kf-1}
&\int_{\sum_{i=1}^m |x-y_i| > \eta} \frac{\prod_{i=1}^m |f_i(y_i)|}{(\sum_{i=1}^m |x-y_i|)^{mn+\gamma}} d\vec{y}
\lesssim \eta^{-\gamma} \M(\vec{f})(x), 
\\
\label{kf-2}
&\int_{\sum_{i=1}^m |x-y_i| \le \eta} \frac{\prod_{i=1}^m |f_i(y_i)|}{(\sum_{i=1}^m |x-y_i|)^{mn-\gamma}} d\vec{y}
\lesssim \eta^{\gamma} \, \M(\vec{f})(x).  
\end{align}
If we assume in addition that $\supp(f_1) \subset B(0, A)$ for some $A \ge 1$, then for all $|x|>2A$, 
\begin{align}\label{kf-3}
\int_{\R^{nm}} \frac{\prod_{i=1}^m |f_i(y_i)|}{(\sum_{i=1}^m |x-y_i|)^{mn}} d\vec{y}
\lesssim |x|^{-n} A^{\frac{n}{p'_1}} \prod_{i=1}^m \|f_i\|_{L^{p_i}(\Rn)}. 
\end{align}
\end{lemma}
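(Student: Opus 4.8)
The plan is to treat all three estimates by the same mechanism: dyadically decompose the domain of integration according to the size of $t := \sum_{i=1}^m |x-y_i|$, and observe that on each dyadic shell $S_j := \{2^{j-1}\sigma < t \le 2^j \sigma\}$ (for a scale $\sigma$ chosen as $\eta$ or $|x|$ below) one has $|x-y_i| \le 2^j\sigma$ for every $i$, hence $y_i \in B(x,2^j\sigma)$. Consequently
\[
\int_{S_j} \prod_{i=1}^m |f_i(y_i)| \, d\vec{y}
\le \prod_{i=1}^m \int_{B(x,2^j\sigma)} |f_i|
\lesssim (2^j\sigma)^{mn} \prod_{i=1}^m \fint_{B(x,2^j\sigma)} |f_i|
\lesssim (2^j\sigma)^{mn}\, \M(\vec{f})(x),
\]
where in the last step we enlarge each ball $B(x,2^j\sigma)$ to a cube of comparable measure containing $x$ and use the definition of $\M$ (the multilinear maximal operator over cubes). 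On $S_j$ the remaining power of $t$ is comparable to a power of $2^j\sigma$, so all that is left is to sum a geometric series in $j$.

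For \eqref{kf-1} take $\sigma=\eta$; the region $\{t>\eta\}$ is the union of the $S_j$ with $j\ge1$, and on $S_j$ we have $t^{-(mn+\gamma)}\lesssim (2^j\eta)^{-(mn+\gamma)}$, so $S_j$ contributes $\lesssim (2^j\eta)^{mn-(mn+\gamma)}\M(\vec f)(x)=(2^j\eta)^{-\gamma}\M(\vec f)(x)$; since $\gamma>0$, $\sum_{j\ge1}(2^j\eta)^{-\gamma}\simeq\eta^{-\gamma}$. For \eqref{kf-2} take again $\sigma=\eta$; now $\{t\le\eta\}$ is the union of the $S_j$ with $j\le0$, and regardless of the sign of $mn-\gamma$ one has $t^{-(mn-\gamma)}=t^{\gamma-mn}\lesssim (2^j\eta)^{\gamma-mn}$ on $S_j$ (use $t\le2^j\eta$ if $\gamma-mn\ge0$, and $t>2^{j-1}\eta$ if $\gamma-mn<0$), so $S_j$ contributes $\lesssim (2^j\eta)^{mn+\gamma-mn}\M(\vec f)(x)=(2^j\eta)^{\gamma}\M(\vec f)(x)$, and $\sum_{j\le0}(2^j\eta)^{\gamma}\simeq\eta^{\gamma}$.

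For \eqref{kf-3}, observe first that if $y_1\in\supp(f_1)\subset B(0,A)$ and $|x|>2A$ then $|x-y_1|\ge|x|-A>|x|/2$, hence $t\ge|x-y_1|>|x|/2$, so only the shells $S_j$ with $j\ge0$ (scale $\sigma=|x|$) meet the domain. On $S_j$ we have $y_1\in B(0,A)$ and $y_i\in B(x,2^j|x|)$ for $i\ge2$, so H\"{o}lder's inequality gives $\int_{B(0,A)}|f_1|\lesssim A^{n/p_1'}\|f_1\|_{L^{p_1}(\Rn)}$ and $\int_{B(x,2^j|x|)}|f_i|\lesssim (2^j|x|)^{n/p_i'}\|f_i\|_{L^{p_i}(\Rn)}$ for $i\ge2$, while $t^{-mn}\lesssim(2^j|x|)^{-mn}$; multiplying, $S_j$ contributes at most $A^{n/p_1'}(2^j|x|)^{-mn+\sum_{i=2}^m n/p_i'}\prod_{i=1}^m\|f_i\|_{L^{p_i}(\Rn)}$. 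Since $\sum_{i=2}^m n/p_i'=n(m-1)-n(1/p_2+\cdots+1/p_m)$, the exponent of $2^j|x|$ equals $-n(1+1/p_2+\cdots+1/p_m)<0$; summing over $j\ge0$ yields $\lesssim A^{n/p_1'}|x|^{-n(1+1/p_2+\cdots+1/p_m)}\prod_i\|f_i\|_{L^{p_i}(\Rn)}$, and since $|x|>2A\ge2>1$ we may replace $|x|^{-n(1+1/p_2+\cdots+1/p_m)}$ by $|x|^{-n}$, which is \eqref{kf-3}.

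These computations are essentially routine; the only points that need a moment of care are the uniform shell bound in \eqref{kf-2} when $mn-\gamma$ may be $\le0$ (handled by choosing which endpoint of $S_j$ to use according to the sign of $\gamma-mn$), and the exponent bookkeeping in \eqref{kf-3} — namely, checking that the power of $2^j|x|$ is strictly negative so the geometric series converges, and then exploiting $|x|\ge1$ to trade the surplus decay $|x|^{-n(1/p_2+\cdots+1/p_m)}$ for the clean factor $|x|^{-n}$ stated in the lemma.
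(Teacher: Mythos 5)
Your proof is correct. For \eqref{kf-1} and \eqref{kf-2} you use exactly the argument the paper intends (it only says "split the region into annular subregions" and omits the details); your shell-by-shell bound $\int_{S_j}\prod_i|f_i|\lesssim (2^j\sigma)^{mn}\M(\vec f)(x)$ and the case distinction on the sign of $\gamma-mn$ are the right way to fill that in.

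For \eqref{kf-3} you take a genuinely different route from the paper. The paper first uses the AM--GM inequality $(a_1\cdots a_m)^{1/m}\le \frac1m\sum_i a_i$ together with the observation that, for $y_1\in\supp(f_1)$ and $|x|>2A$, one has $\sum_i|x-y_i|\ge\frac12\bigl(1+\sum_i|x-y_i|\bigr)$; this factorizes the kernel as $\bigl(\sum_i|x-y_i|\bigr)^{-mn}\lesssim\prod_i(1+|x-y_i|)^{-n}$, after which a single global H\"older application in each variable separately yields the bound (the factor $A^{n/p_1'}|x|^{-n}$ coming from the $y_1$-integral over $B(0,A)$, and the remaining integrals being finite constants since $p_i'>1$). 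You instead keep the kernel intact, decompose into dyadic shells at scale $|x|$, apply H\"older on each shell, and sum a geometric series whose ratio you verify is strictly less than one. Both arguments are elementary and of comparable length; the paper's factorization avoids any summation and isolates the decay in $|x|$ immediately, while your shell argument is more uniform with parts \eqref{kf-1}--\eqref{kf-2} and in fact produces the slightly stronger decay $|x|^{-n(1+\sum_{i\ge2}1/p_i)}$, which you correctly relax to $|x|^{-n}$ using $|x|>1$. Your exponent bookkeeping and the convergence check are accurate, so there is nothing to fix.
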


\begin{proof}
The inequalities \eqref{kf-1} and \eqref{kf-2} can be obtained by splitting the region into annular subregions. To get \eqref{kf-3}, we note that 
\begin{align}\label{aa-1}
(a_1 \cdots a_m)^{\frac1m} 
\le (a_1 + \cdots + a_m)/m 
\quad\text{ for all } a_1, \ldots, a_m \ge 0.
\end{align} 
Observe that for all $|x|>2A$ and $|y_1| \le A$, $|x-y_1| \ge |x| - |y_1| \ge \max\{1, |x|/2\}$ and 
\begin{align}\label{aa-2}
\sum_{i=1}^m |x-y_i| 
\ge \frac12 |x-y_1| + \frac12 \sum_{i=1}^m |x-y_i| 
\ge \frac12 \Big(1+ \sum_{i=1}^m |x-y_i|\Big). 
\end{align}
Hence, we use \eqref{aa-1}--\eqref{aa-2} and H\"{o}lder's inequality to deduce that 
\begin{align*}
&\int_{\R^{nm}} \frac{\prod_{i=1}^m |f_i(y_i)|}{(\sum_{i=1}^m |x-y_i|)^{mn}} d\vec{y}
\lesssim \int_{\R^{nm}} \prod_{i=1}^m \frac{|f_i(y_i)|}{(1 + |x-y_i|)^n} d\vec{y}
\\
&\lesssim \prod_{i=1}^m \|f_i\|_{L^{p_i}(\Rn)} 
\bigg(\int_{\Rn}  \frac{\mathbf{1}_{B(0, A)}(y_1) \, dy_i}{(1 + |x-y_i|)^{np'_i}} \bigg)^{\frac{1}{p'_i}}
\lesssim |x|^{-n} A^{\frac{n}{p'_1}} \prod_{i=1}^m \|f_i\|_{L^{p_i}(\Rn)}. 
\end{align*}
This shows \eqref{kf-3}. 
\end{proof}

\subsection{Multilinear Fourier integral operators} 
Given a function $a$ on $\Rn \times \R^{nm}$, the $m$-linear Fourier integral operator $T_a$ is defined by
\begin{align*}
T_a(\vec{f})(x) := \int_{\R^{nm}} a(x, \vec{\xi})
e^{2\pi i x \cdot (\xi_1+\cdots+\xi_m)} \widehat{f}_1(\xi_1) \cdots \widehat{f}_m(\xi_m) \, d\vec{\xi},
\end{align*}
for all $f_i \in \S(\Rn)$, $i=1,\ldots,m$, where $\widehat{f}$ is the Fourier transform of $f$. If $a(x, \vec{\xi}) \equiv \m(\vec{\xi})$ for all $x \in \Rn$, we write $T_a=T_{\m}$.

Given $\tau \in \R$ and $\rho, \delta \in [0, 1]$, we say $a \in \mathcal{S}_{\rho,\delta}^{\tau}(\Rn \times \R^{nm})$ if for each triple of multi-indices $\alpha$ and $\beta_1,\ldots,\beta_m$ there exists a constant $C_{\alpha, \vec{\beta}}$ such that
\begin{align*}
\big| \partial_{x}^{\alpha} \partial_{\xi_1}^{\beta_1} \cdots
\partial_{\xi_m}^{\beta_m}  a(x,\vec{\xi}) \big|
\leq C_{\alpha, \vec{\beta}} (1+ |\vec{\xi}|)^{\tau - \rho \sum_{j=1}^m |\beta_j| + \delta|\alpha|}, 
\qquad x \in \Rn.
\end{align*}
Let $\Phi \in \S(\R^{nm})$ satisfy $\supp(\Phi) \subset \{\vec{\xi} \in \R^{nm}: \frac12 \le |\vec{\xi}| \le 2\}$ and $\sum_{j \in \Z} \Phi(2^{-j}\vec{\xi})=1$ for each $\vec{\xi} \in \R^{nm} \setminus\{0\}$. Denote 
\begin{align}\label{def:mj}
\m_j(\vec{\xi}) :=\Phi(\vec{\xi}) \m(2^{-j} \vec{\xi}), \quad j \in \Z.
\end{align} 
Given $s \in \R$, set 
\begin{align*}
\mathcal{H}^s(\R^{nm}) 
:= \big\{\m \in L^{\infty}(\R^{nm}): 
\|\m\|_{\mathcal{H}^s(\R^{nm})} := \sup_{j \in \Z} \|\m_j\|_{H^s(\R^{nm})}<\infty\big\}, 
\end{align*}
where the Sobolev norm is given by $\|f\|_{H^s(\R^{nm})} := \|(I-\Delta)^{s/2}f\|_{L^2(\R^{nm})}$. 

Throughout this subsection we let $\Sigma=\Rn$, $\mu=\Ln$, and $\B$ be the collection of all cubes in $\Rn$ with sides parallel to the coordinate axes. Then $\B$ is a ball-basis of $(\Rn, \Ln)$ with $Q^*=5Q$ for any cube $Q \in \B$. We will drop the subscript $\B$ in all notation.

\begin{theorem}\label{thm:Ta}
Let $\rho, \delta \in [0, 1]$, $\tau < -mn(1-\rho)$, and $a \in \mathcal{S}^{\tau}_{\rho, \delta}(\Rn \times \R^{nm})$. Then $T_a$ is a multilinear bounded oscillation operator with respect to $\B$ and the exponent $r=1$.
\end{theorem}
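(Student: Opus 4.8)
The plan is to verify the two defining conditions \eqref{list:T-size} and \eqref{list:T-reg} of Definition \ref{def:MBO} for the operator $T_a$ with $r=1$, working in $(\Rn,\Ln)$ with the cube ball-basis $\B$ (so $Q^*=5Q$). The essential input is a good kernel representation: for $a\in\mathcal{S}^{\tau}_{\rho,\delta}$ with $\tau<-mn(1-\rho)$, the operator $T_a$ has a kernel $K_a(x,\vec y)$ off the diagonal $x=y_1=\cdots=y_m$ satisfying suitable decay and regularity estimates. First I would dyadically decompose the symbol, writing $a=\sum_{j}a_j$ via a Littlewood-Paley partition in the $\vec\xi$ variable (as in \eqref{def:mj}), and correspondingly $K_a=\sum_j K_j$, where $K_j$ is the kernel of $T_{a_j}$. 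Standard non-stationary phase / integration-by-parts arguments give, for each multi-index $N$, a bound of the form $|K_j(x,\vec y)|\lesssim 2^{j(mn+\tau)}2^{j\delta|\cdot|}(1+2^{j}\sum_i|x-y_i|)^{-|N|}$ (with the $\delta$-loss absorbed since $\delta<1$ or handled by the condition $\tau<-mn(1-\rho)$), and summing in $j$ yields a pointwise size bound $|K_a(x,\vec y)|\lesssim(\sum_{i=1}^m|x-y_i|)^{-mn}$ together with a Hölder-type regularity estimate $|K_a(x,\vec y)-K_a(x',\vec y)|\lesssim \big(|x-x'|/\max_i|x-y_i|\big)^{\varepsilon}(\sum_i|x-y_i|)^{-mn}$ for some $\varepsilon>0$ when $|x-x'|\le\frac12\max_i|x-y_i|$, with an analogous estimate in each $y_i$ variable. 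These are exactly the estimates established for this symbol class in the references the paper relies on, so I would cite them rather than re-derive them.

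Granting this kernel representation, the verification of \eqref{list:T-size} and \eqref{list:T-reg} is essentially identical to the argument in the proof of Theorem \ref{thm:CZOBO} specialized to $\Rn$ with Euclidean balls (now cubes). For \eqref{list:T-size}: given $Q_0\in\B$, take $Q=2Q_0$, so that $Q_0^*=5Q_0\subsetneq 5Q=Q^*$ and $\sum_i|x-y_i|\simeq\ell(Q^*)$ for $x\in Q_0$ and $\vec y\in(Q^*)^m\setminus(Q_0^*)^m$; the size bound on $K_a$ then gives
\begin{align*}
\big|T_a(\vec f\mathbf 1_{Q^*})(x)-T_a(\vec f\mathbf 1_{Q_0^*})(x)\big|
\le\int_{(Q^*)^m\setminus(Q_0^*)^m}|K_a(x,\vec y)|\prod_{i=1}^m|f_i(y_i)|\,d\vec y
\lesssim\prod_{i=1}^m\fint_{Q^*}|f_i|\,d\Ln,
\end{align*}
which is \eqref{list:T-size} with $\C_1(T_a)\lesssim 1$. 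For \eqref{list:T-reg}: fix $Q\in\B$, $x,x'\in Q$; then $T_a(\vec f)-T_a(\vec f\mathbf 1_{Q^*})=T_a(\vec f\mathbf 1_{\Rn\setminus 5Q})$ can be written as an integral against $K_a(x,\vec y)$ over $(\Rn)^m\setminus(5Q)^m$, and splitting this region into the annuli $(2^{j+1}\cdot 5Q)^m\setminus(2^j\cdot 5Q)^m$ and applying the regularity estimate in $x$ gives
\begin{align*}
\big|\big(T_a(\vec f)-T_a(\vec f\mathbf 1_{Q^*})\big)(x)-\big(T_a(\vec f)-T_a(\vec f\mathbf 1_{Q^*})\big)(x')\big|
\lesssim\sum_{j=0}^{\infty}2^{-j\varepsilon}\prod_{i=1}^m\fint_{2^{j+1}\cdot 5Q}|f_i|\,d\Ln
\lesssim\prod_{i=1}^m\lfloor f_i\rfloor_Q,
\end{align*}
since $\sum_j2^{-j\varepsilon}<\infty$; this is \eqref{list:T-reg} with $\C_2(T_a)\lesssim 1$.

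The main obstacle is establishing the kernel estimates for the symbol class $\mathcal{S}^{\tau}_{\rho,\delta}$ with the precise condition $\tau<-mn(1-\rho)$, which is what makes the $j$-sum of the dyadic kernel pieces converge and what controls the $\delta|\alpha|$ growth; the borderline nature of this exponent (and the role of $\rho$ in the regularity exponent $\varepsilon$) requires care, particularly in the forbidden case $\delta=1$ or $\rho=0$ where extra almost-orthogonality or a more delicate summation in $j$ may be needed. Once those estimates are in hand, the rest of the proof is a routine repetition of the annular-decomposition scheme already used for $\omega$-Calderón–Zygmund operators and Littlewood–Paley operators in the preceding subsections.
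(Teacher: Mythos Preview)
Your plan is essentially the same as the paper's: verify \eqref{list:T-size} from the kernel size bound and \eqref{list:T-reg} from a smoothness/oscillation estimate, exactly as in the $\omega$-Calder\'on--Zygmund case. For \eqref{list:T-size} the paper is slightly more direct than your dyadic route: it just observes that
\[
|\vec z|^{mn}\,|K_a(x,\vec z)|
\lesssim \int_{\R^{mn}} (1+|\vec\xi|)^{\tau-\rho\,mn}\,d\vec\xi < \infty
\]
precisely because $\tau<-mn(1-\rho)$, and this size estimate alone gives \eqref{list:T-size} with $Q=2Q_0$. For \eqref{list:T-reg} the paper does not give an argument either---it cites \cite[(4.2)]{CXY}.

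Two small corrections to your sketch. First, the dyadic kernel bound should read
\[
|K_j(x,\vec y)|\lesssim 2^{j(mn+\tau)}\bigl(1+2^{j\rho}\textstyle\sum_i|x-y_i|\bigr)^{-N},
\]
with $2^{j\rho}$ rather than $2^j$; the $\rho$ is what encodes the loss in $\xi$-derivatives and is essential when $\rho<1$. Second, your intermediate H\"older claim
$|K_a(x,\vec y)-K_a(x',\vec y)|\lesssim(|x-x'|/R)^\varepsilon R^{-mn}$
is correct for every $\rho\in(0,1]$ (interpolate between $|K_a|\lesssim R^{-s'}$ for $s'>(\tau+mn)/\rho$ and $|\nabla_x\mathbf K_a|\lesssim R^{-s}$ for $s>(\tau+1+mn)/\rho$; a short computation shows one can match exponents for any $\varepsilon<\gamma/(1-\rho)$, $\gamma:=-\tau-mn(1-\rho)$). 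But it genuinely \emph{fails} when $\rho=0$ and $\tau\in(-mn-1,-mn)$: there is no usable gradient bound at all. In that regime the remedy is not almost-orthogonality but something much simpler---the size bound already gives $|K_a(x,\vec z)|\lesssim(1+|\vec z|)^{-N}$ for every $N$, and that alone makes the annular sum converge and yields \eqref{list:T-reg} directly. With these adjustments your argument goes through.
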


\begin{proof} 
Set
\begin{equation*}
K_a(x,\vec y) 
:= \int_{\R^{nm}} a(x,\vec\xi)\, e^{2\pi i\vec{y} \cdot \vec{\xi}} \, d\vec\xi.
\end{equation*}
Then, $K_a(x, x-\vec{y})$ is the kernel of $T_a$, and for any multi-index $\alpha$, 
\begin{equation*}
\vec{y}^\alpha K_a(x,\vec y)
=C \int_{\R^{nn}} \partial^\alpha_{\vec\xi} a(x,\vec\xi)\,
e^{2\pi i\vec{y}\cdot \vec{\xi}} \, d\vec{\xi}.
\end{equation*}
From this and the condition $\tau<-mn(1-\rho)$, we obtain 
\begin{equation*}
|\vec y|^{mn}|K_a(x, \vec{y})| 
\lesssim \int_{\R^{mn}}(1+|\vec{\xi}|)^{\tau - \rho mn} \, d\vec{\xi} < \infty,
\end{equation*}
which in turn implies  
\begin{equation}\label{Ksig}
|K_a(x, x-\vec{y})|
\lesssim \frac{1}{(\sum_{i=1}^m |x-y_i|)^{mn}}.
\end{equation}
Hence, for any $Q_0 \in \B$ and $x \in Q_0$, we take $Q=2Q_0$ and use \eqref{Ksig} to deduce 
\begin{align}\label{Tsig}
\big|&T_a(\vec{f} \mathbf{1}_{Q^*})(x) - T_a(\vec{f} \mathbf{1}_{Q_0^*})(x)\big|
\nonumber \\
&=\bigg|\int_{(Q^*)^m \setminus (Q_0^*)^m} K(x, x -\vec{y}) \prod_{i=1}^m f_i(y_i) \, d\vec{y}\bigg|
\nonumber \\
&\lesssim \int_{(Q^*)^m \setminus (Q_0^*)^m} \frac{\prod_{i=1}^m |f_i(y_i)|}{(\sum_{i=1}^m |x-y_i|)^{mn}} \, d\vec{y} 
\lesssim \prod_{i=1}^m \fint_{Q^*} |f_i(y_i)| \, dy_i. 
\end{align}
That is, $T_a$ satisfies the condition \eqref{list:T-size}. Checking the proof of \cite[(4.2)]{CXY}, one can see that the condition \eqref{list:T-reg} is justified for $T_a$. 
\end{proof}

\begin{theorem}\label{thm:Tm}
Let $s \in (mn/2, mn]$, $\m \in \mathcal{H}^s(\R^{nm})$, and $mn/s < r <\min\{2, mn\}$. Then $T_{\m}$ is a multilinear bounded oscillation operator with respect to $\B$ and the exponent $r$.
\end{theorem}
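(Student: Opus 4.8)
The plan is to verify that $T_{\m}$ satisfies conditions \eqref{list:T-size} and \eqref{list:T-reg} of Definition~\ref{def:MBO} with $\bB=\R$, $\T:=\{T_{\m}\}$, and $\B$ the basis of all cubes (so $Q^{*}=5Q$). Since $T_{\m}$ has the convolution-type kernel representation $T_{\m}(\vec{f})(x)=\int_{\R^{nm}}K(x-y_1,\dots,x-y_m)\prod_{i=1}^{m}f_i(y_i)\,d\vec{y}$ with $K:=\check{\m}$ (inverse Fourier transform on $\R^{nm}$), the argument will mirror the proof of Theorem~\ref{thm:Sk}: reduce \eqref{list:T-size}--\eqref{list:T-reg} to $L^{r'}$-averaged size and H\"{o}rmander-type smoothness estimates for $K$ on dyadic annuli, and then derive those kernel estimates from the hypothesis $\m\in\mathcal{H}^{s}(\R^{nm})$.

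The first (and main) step is the kernel estimate: using the blocks from \eqref{def:mj}, write $\m=\sum_{j\in\Z}\m^{(j)}$ with $\m^{(j)}(\vec{\xi}):=\Phi(2^{-j}\vec{\xi})\m(\vec{\xi})=\m_j(2^{-j}\vec{\xi})$, so $K=\sum_{j}K^{(j)}$ with $K^{(j)}(\vec{u})=2^{jnm}\check{\m_j}(2^{j}\vec{u})$. The claim is that there exist $C<\infty$ and $\varepsilon>0$, depending only on $n,m,r,s$, such that for all $R>0$,
\begin{equation*}
\Big(\int_{R\le|\vec{u}|<2R}|K(\vec{u})|^{r'}\,d\vec{u}\Big)^{1/r'}\le C\,\|\m\|_{\mathcal{H}^{s}}\,R^{-mn/r},
\end{equation*}
\begin{equation*}
\Big(\int_{R\le|\vec{u}|<2R}|K(\vec{u}-\vec{h})-K(\vec{u})|^{r'}\,d\vec{u}\Big)^{1/r'}\le C\,\|\m\|_{\mathcal{H}^{s}}\,(|\vec{h}|/R)^{\varepsilon}\,R^{-mn/r},\qquad |\vec{h}|\le R/2.
\end{equation*}
For each block, since $r\le 2$, Hausdorff--Young gives $\|K^{(j)}\|_{L^{r'}}\le\|\m^{(j)}\|_{L^{r}}$, and rescaling together with the fact that $\m_j$ lives in the fixed annulus $\{1/2\le|\vec{\xi}|\le 2\}$ (hence $\|\m_j\|_{L^{r}}\lesssim\|\m_j\|_{L^{2}}\le\|\m_j\|_{H^{s}}$) yields $\|K^{(j)}\|_{L^{r'}}\lesssim 2^{jnm/r}\|\m\|_{\mathcal{H}^{s}}$. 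For decay away from scale $2^{-j}$, the identity $\partial^{\gamma}_{\vec{\xi}}\m^{(j)}=2^{-j|\gamma|}(\partial^{\gamma}\m_j)(2^{-j}\cdot)$ with Hausdorff--Young gives $\||\vec{u}|^{N}K^{(j)}\|_{L^{r'}}\lesssim 2^{-jN}2^{jnm/r}\|\m\|_{\mathcal{H}^{s}}$ for every real $0\le N\le s$ (interpolating for non-integer $N$). Summing: on $|\vec{u}|\simeq R$ use the plain $L^{r'}$ bound for $2^{j}\le 1/R$ and the moment bound with a fixed $N\in(mn/r,s]$ for $2^{j}>1/R$ — this choice is possible exactly because $mn/s<r$ — and both partial sums are geometric, summing to $\lesssim R^{-mn/r}$. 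The smoothness bound follows the same way after inserting $|e^{-2\pi i\vec{h}\cdot\vec{\xi}}-1|\lesssim\min\{1,(|\vec{h}||\vec{\xi}|)^{\theta}\}$ for small $\theta>0$, which supplies the gain $(|\vec{h}|\,2^{j})^{\theta}$ on the blocks with $2^{-j}\gtrsim|\vec{h}|$ and combines with the crude bound on the rest. This is the scalar analogue of the kernel estimates for multilinear Fourier-multiplier square operators (cf.\ \cite{SiXY, CY}).

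Granting the kernel estimates, \eqref{list:T-size} and \eqref{list:T-reg} follow as in Theorem~\ref{thm:Sk}. For \eqref{list:T-size}, given $Q_0\in\B$ take $Q:=2Q_0$; for $x\in Q_0$ and $\vec{y}\in(Q^{*})^{m}\setminus(Q_0^{*})^{m}$ one has $\sum_i|x-y_i|\simeq\ell(Q^{*})$, so H\"{o}lder with exponents $r',r$ and the size estimate at scale $R\simeq\ell(Q^{*})$ give $|T_{\m}(\vec{f}\mathbf{1}_{Q^{*}})(x)-T_{\m}(\vec{f}\mathbf{1}_{Q_0^{*}})(x)|\lesssim\prod_{i=1}^{m}\langle f_i\rangle_{Q^{*},r}$. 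For \eqref{list:T-reg}, fix $Q\in\B$, $x,x'\in Q$, split $(\Rn)^{m}\setminus(5Q)^{m}=\bigcup_{j\ge1}R_j(Q)$ into dyadic annuli, apply H\"{o}lder and the smoothness estimate on each piece with $|\vec{h}|=\sqrt{m}\,|x-x'|\lesssim\ell(Q)$, sum the geometric series $\sum_{j\ge1}2^{-j\varepsilon}$, and use $\langle f_i\rangle_{2^{j}Q,r}\le\lfloor f_i\rfloor_{Q,r}$. Both conditions hold with absolute constants, so $T_{\m}$ is a multilinear bounded oscillation operator with respect to $\B$ and $r$. The hard part is the kernel estimate of the second paragraph: converting the single scale-invariant norm $\|\m\|_{\mathcal{H}^{s}}$ into both decay and a quantitative ($\varepsilon>0$) smoothness estimate in the $L^{r'}$-averaged sense, which is precisely where the constraints $r\le 2$ (Hausdorff--Young), $mn/s<r$ (a summable moment exponent exists), and $r<mn$ (so $r'>1$ when $s=mn$) are used; once it is available, the rest is the routine computation from Theorem~\ref{thm:Sk}.
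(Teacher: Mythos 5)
Your proof is correct and follows essentially the same strategy as the paper's: a Littlewood--Paley decomposition of $\m$ into frequency blocks, $L^{r'}$ kernel estimates on dyadic annuli obtained from Hausdorff--Young together with Sobolev moment bounds, and a geometric summation over scales split at $2^{-j}\sim\ell(Q)$. You make the per-block estimates self-contained (the paper delegates that computation to \cite[Lemma 3.3]{Tom} and cites \cite[(2.2)]{LS} for \eqref{list:T-reg}) and handle the low-frequency tail with the plain $N=0$ bound rather than the paper's $N=1$ moment bound, a slight simplification; otherwise the roles of $r\le 2$ (Hausdorff--Young) and $mn/s<r$ (summable high-frequency moments) are identical to the paper's.
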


\begin{proof} 
Following the proof of \cite[(2.2)]{LiS}, we see that $T_{\m}$ satisfies the condition \eqref{list:T-reg}. To check the condition \eqref{list:T-size}, we fix $Q_0 \in \B$ and $x \in Q_0$, then take $Q=2Q_0$. Defined $\m_j$ as in \eqref{def:mj} for each $j \in \Z$. Then 
\begin{align}\label{Tmm}
&|T_{\m}(\vec{f} \mathbf{1}_{Q^*})(x) - T_{\m}(\vec{f} \mathbf{1}_{Q_0^*})(x)| 
\nonumber \\
&\le \sum_{j \in \Z} |T_{\m_j}(\vec{f} \mathbf{1}_{Q^*})(x) - T_{\m_j}(\vec{f} \mathbf{1}_{Q_0^*})(x)| 
\nonumber \\
&\le \sum_{j \in \Z} \int_{(Q^*)^m \setminus (Q_0^*)^m} |\check{m}_j(x-\vec{y})| \prod_{i=1}^m |f_i(y_i)| \, d\vec{y}
\nonumber \\
&\le \sum_{j \in \Z} \bigg(\int_{(Q^*)^m \setminus (Q_0^*)^m} |\check{m}_j(x-\vec{y})|^{r'} d\vec{y} \bigg)^{\frac{1}{r'}}  \bigg(\int_{(Q^*)^m} \prod_{i=1}^m |f_i(y_i)|^r \, d\vec{y} \bigg)^{\frac1r}. 
\end{align}
Setting $\widetilde{Q} := x -Q^*$ and $\widetilde{Q}_0 := x -Q_0^*$, we have 
\begin{align}\label{mmj}
I_j 
&:= \bigg(\int_{(Q^*)^m \setminus (Q_0^*)^m} 
|\check{m}_j(x-\vec{y})|^{r'} d\vec{y} \bigg)^{\frac{1}{r'}}
\nonumber \\
&= \bigg(\int_{\widetilde{Q}^m \setminus \widetilde{Q}_0^m} 
|\check{m}_j(\vec{y})|^{r'} d\vec{y} \bigg)^{\frac{1}{r'}}
\nonumber \\
&\le \bigg(\int_{c_1\ell(Q) \le |\vec{y}| \le c_2 \ell(Q)} 
|\check{m}_j(\vec{y})|^{r'} d\vec{y} \bigg)^{\frac{1}{r'}}
\nonumber \\
&\lesssim 2^{j(mn/r - s)} \ell(Q)^{-s} \bigg(\int_{\R^{nm}} (1+|\vec{y}|^2)^{sr'/2} 
|2^{-jmn} \check{m}_j(2^{-j}\vec{y})|^{r'} d\vec{y} \bigg)^{\frac{1}{r'}}
\nonumber \\
&\lesssim 2^{j(mn/r - s)} \ell(Q)^{-s} \|\m_j\|_{H^s(\R^{nm})},  
\end{align}
where we have used \cite[Lemma 3.3]{Tom} for $r'>2$ in the last step. Choose $j_0 \in \Z$ satisfying $2^{-j_0} \le \ell(Q) < 2^{-j_0+1}$. We then use \eqref{mmj} and $mn/r<s$ to obtain 
\begin{align}\label{mmj-1}
\sum_{j \ge j_0} I_j 
&\lesssim \sup_{j \in \Z} \|\m_j\|_{H^s(\R^{nm})} \ell(Q)^{-s} \sum_{j \ge j_0} 2^{j(mn/r-s)} 
\nonumber \\
&\lesssim \|\m\|_{\mathcal{H}^s(\R^{nm})} \ell(Q)^{-mn/r}. 
\end{align}
On the other hand, \eqref{mmj} applied to $s=1$ yields  
\begin{align}\label{mmj-2}
\sum_{j < j_0} I_j 
&\lesssim \sum_{j < j_0} \ell(Q)^{-1} 2^{j(mn/r-1)} \|\m_j\|_{H^1(\R^{nm})}
\nonumber \\
&\lesssim \sum_{j < j_0} \ell(Q)^{-1} 2^{j(mn/r-1)} \|\m_j\|_{H^s(\R^{nm})}
\nonumber \\
&\lesssim  \sup_{j \in \Z} \|\m_j\|_{H^s(\R^{nm})} \ell(Q)^{-1} \sum_{j < j_0} 2^{j(mn/r-1)} 
\nonumber \\
&\lesssim \|\m\|_{\mathcal{H}^s(\R^{nm})} \ell(Q)^{-mn/r}, 
\end{align}
provided $mn>r$. Summing \eqref{Tmm}--\eqref{mmj-2} up, we conclude that 
\begin{align*}
|T_{\m}&(\vec{f} \mathbf{1}_{Q^*})(x) - T_{\m}(\vec{f} \mathbf{1}_{Q_0^*})(x)| 
\lesssim \|\m\|_{\mathcal{H}^s(\R^{nm})} \prod_{i=1}^m \langle f_i \rangle_{Q^*, r}, 
\end{align*}
which shows the condition \eqref{list:T-size} holds.
\end{proof}

Note that in the current setting, $A_1 \subset A_{\infty} = \bigcup_{s>1} RH_s$. Considering Theorems \ref{thm:Ta}--\ref{thm:Tm} and \eqref{HLS}, we apply Theorems \ref{thm:local}--\ref{thm:T-Besi} to get the following results.  

\begin{theorem}\label{thm:FIOlocal}
Let $\rho, \delta \in [0, 1]$, $\tau < -mn(1-\rho)$, and $a \in \mathcal{S}^{\tau}_{\rho, \delta}(\Rn \times \R^{nm})$. Let $s \in (mn/2, mn]$ and $\m \in \mathcal{H}^s(\R^{nm})$. For each $\mathbb{T}=T_a$ with $r=1$ or $\mathbb{T}=T_{\m}$ with $mn/s < r <\min\{2, mn\}$, the following hold: 
\begin{list}{\rm (\theenumi)}{\usecounter{enumi}\leftmargin=1.2cm \labelwidth=1cm \itemsep=0.2cm \topsep=.2cm \renewcommand{\theenumi}{\alph{enumi}}}

\item There exists $\gamma>0$ such that for all $Q \in \B$ and $f_i \in L^{\infty}_c(\Rn)$ with $\supp(f_i) \subset Q$, $1 \leq i \leq m$, 
\begin{align*}
\big|\big\{x \in Q:  |\mathbb{T}(\vec{f})(x)| > t \, \M_r(\vec{f})(x)  \big\}\big| 
& \lesssim \, e^{- \gamma t}  |Q|, \quad t>0, 
\end{align*}
and for any $\alpha \in \{0, 1\}^m$, 
\begin{align*}
\big|\big\{x \in Q:  |[\mathbb{T}, \b]_{\alpha}(\vec{f})(x)| > t \, \mathcal{M}_r(\vec{f^*})(x)  \big\}\big|
\lesssim e^{-(\frac{\gamma t}{\|\b\|_{\tau}})^{\frac{1}{|\tau|+1}}} |Q|, 
\end{align*}
for all $t>0$, where $\tau=\{i: \alpha_i \neq 0\}$, $\|\b\|_{\tau} = \prod_{i \in \tau} \|b_i\|_{\BMO}$, and $f_i^*=M^{\lfloor r \rfloor}(|f_i|^r)^{\frac1r}$, $i=1, \ldots, m$.

\item Let $\vec{w}=(w_1, \ldots, w_m)$ and $w=\prod_{i=1}^m w_i^{\frac1m}$. If $\vec{w} \in A_{\vec{1}}$ and $w v^{\frac{r}{m}} \in A_{\infty}$, or $\vec{w} \in A_1 \times \cdots \times A_1$ and $v^r \in A_{\infty}$, then 
\begin{align*}
\bigg\|\frac{\mathbb{T}(\vec{f})}{v}\bigg\|_{L^{\frac{r}{m},\infty}(\Rn, \, w v^{\frac{r}{m}})}
&\lesssim \prod_{i=1}^m \|f_i\|_{L^r(\Rn, \, w_i)}.
\end{align*}

\item\label{list:TTb} For all $\vec{p}=(p_1, \ldots, p_m)$ with $r<p_1, \ldots, p_m <\infty$, for all $\vec{w} \in A_{\vec{p}/r}$, and for any $\alpha \in \{0, 1\}^m$, 
\begin{align*}
\|\mathbb{T}\|_{L^{p_1}(\Rn, w_1) \times \cdots \times L^{p_m}(\Rn, w_m) \to L^p(\Rn, w)} 
&\lesssim [\vec{w}]_{A_{\vec{p}}}^{\max\limits_{1 \le i \le m}\{p, (\frac{p_i}{r})'\}}, 
\\ 
\|[\mathbb{T}, \b]_{\alpha}\|_{L^{p_1}(\Rn, w_1) \times \cdots \times L^{p_m}(\Rn, w_m) \to L^p(\Rn, w)} 
&\lesssim \|\b\|_{\tau} [\vec{w}]_{A_{\vec{p}}}^{(|\tau|+1)\max\limits_{1 \le i \le m}\{p, (\frac{p_i}{r})'\}}, 
\end{align*}
where $\frac1p=\sum_{i=1}^m \frac{1}{p_i}$, $w=\prod_{i=1}^m w_i^{\frac{p}{p_i}}$, $\tau=\{i: \alpha_i \neq 0\}$, and $\|\b\|_{\tau} = \prod_{i \in \tau} \|b_i\|_{\BMO}$. 
\end{list}
\end{theorem}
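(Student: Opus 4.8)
The plan is to deduce Theorem~\ref{thm:FIOlocal} as a corollary of the quantitative theory of Sections~\ref{sec:local}--\ref{sec:sharp}, specialized to $\Sigma=\Rn$, $\mu=\Ln$, and $\B=$ the cubes with sides parallel to the axes (so $Q^*=5Q$, by Example~\ref{example}). First I would record the two structural inputs. On the one hand, Theorems~\ref{thm:Ta} and \ref{thm:Tm} say that $\mathbb{T}=T_a$ is a real-valued multilinear bounded oscillation operator with respect to $\B$ and $r=1$, and that $\mathbb{T}=T_{\m}$ is one with respect to $\B$ and any $r\in(mn/s,\min\{2,mn\})$; in both cases the constants $\C_1(\mathbb{T}),\C_2(\mathbb{T})$ are finite and depend only on $a$ (resp.\ on $\m$ and $r$), so that $\C(\mathbb{T})\lesssim 1$. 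On the other hand, one needs the weak endpoint bound $\mathbb{T}\colon L^r(\Rn)\times\cdots\times L^r(\Rn)\to L^{r/m,\infty}(\Rn)$: for $T_a$ with $r=1$ this follows from the kernel estimate \eqref{Ksig}, the regularity condition \eqref{list:T-reg}, and the multilinear Calder\'on--Zygmund theory applied to the known strong bounds for $T_a$ (cf.\ \cite{CXY}); for $T_{\m}$ with $r>mn/s$ it follows from the strong type bound in \cite{LS} (see also \cite{Tom}), which sits above the critical line and hence already contains the weak type statement. Finally, on $(\Rn,dx)$ with $\B$ the cubes the geometric hypotheses appearing in Sections~\ref{sec:local}--\ref{sec:sharp} are all classical: $A_{\infty,\B}$ satisfies the sharp reverse H\"older property and $\osc_{\exp L}=\BMO$ with equivalent norms (John--Nirenberg; see \eqref{OSC}, \eqref{cube}), $A_{1,\B}\subset\bigcup_{s>1}RH_{s,\B}$, and $\B$ satisfies the Besicovitch condition by the Besicovitch covering lemma.

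Granting these inputs, part~(a) is exactly Theorem~\ref{thm:local} transcribed to this setting: its hypothesis on $A_{\infty,\B}$ holds, $\M_{\B,r}=\M_r$, $M_{\B}=M$, and $\|b_i\|_{\osc_{\exp L}}=\|b_i\|_{\BMO}$. Part~(c) is Theorem~\ref{thm:T-Besi}: the Besicovitch condition gives the sharp exponent $\max_{1\le i\le m}\{p,(p_i/r)'\}$ for $\mathbb{T}$, the sharp reverse H\"older property gives the corresponding bound for $[\mathbb{T},\b]_{\alpha}$, and $\C(\mathbb{T})\lesssim 1$ absorbs the structural constant. For part~(b) I would first apply Theorem~\ref{thm:weak} to get $\|\mathbb{T}(\vec f)/v\|_{L^{r/m,\infty}(wv^{r/m})}\lesssim\|\M_{\B,r}(\vec f)/v\|_{L^{r/m,\infty}(wv^{r/m})}$, and then bound the right-hand side by $\prod_{i=1}^m\|f_i\|_{L^r(\Rn,w_i)}$ using the mixed weak type estimate \eqref{HLS} for $\M_{\B}$ after the rescaling $t\mapsto t^{r}$: writing $g_i=|f_i|^r$, using $\M_{\B,r}(\vec f)=\M_{\B}(g_1,\dots,g_m)^{1/r}$ and the homogeneity $\|h^{1/r}/v\|_{L^{r/m,\infty}(W)}=\|h/v^{r}\|_{L^{1/m,\infty}(W)}^{1/r}$, the estimate \eqref{HLS} applied with $v$ replaced by $v^{r}$ yields the claim, since its hypotheses become precisely $\vec w\in A_{\vec1}$ with $wv^{r/m}\in A_{\infty}$, or $w_i\in A_1$ with $v^{r}\in A_{\infty}$. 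In the first of these regimes, where $w=\prod w_i^{1/m}$ need not lie in $A_{1,\B}$ and Theorem~\ref{thm:weak} does not directly apply, I would instead route through the pointwise sparse domination of Theorem~\ref{thm:sparse}, replacing $\mathbb{T}$ by the sparse operators $\A_{\S_1,r},\A_{\S_2,r}$ and invoking the mixed weak type inequality for multilinear sparse operators, which again reduces to \eqref{HLS}.

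The genuinely substantial work --- verifying the conditions \eqref{list:T-size} and \eqref{list:T-reg} for $T_a$ and $T_{\m}$, which for $T_{\m}$ rests on the Sobolev/H\"older splitting used in the proof of Theorem~\ref{thm:Tm} --- is already done, and the quantitative engine is Theorems~\ref{thm:local}--\ref{thm:T-Besi}; so what remains is assembly. The one point that requires care is the weak endpoint input $\mathbb{T}\colon L^r\times\cdots\times L^r\to L^{r/m,\infty}$, which must be pinned down from the literature and, for $T_a$, extracted from the Calder\'on--Zygmund structure encoded in \eqref{Ksig}; and the second point is the bookkeeping in part~(b) when passing from Theorem~\ref{thm:weak} to the stated form through the two weight regimes of \eqref{HLS} (and, in the $A_{\vec1}$ regime, through Theorem~\ref{thm:sparse}). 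Everything else --- the identification $\osc_{\exp L}=\BMO$, the sharp reverse H\"older property, $A_{1,\B}\subset\bigcup_{s>1}RH_{s,\B}$, and the Besicovitch condition --- is standard on $(\Rn,dx)$ with cubes.
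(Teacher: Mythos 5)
Your proposal is correct and follows essentially the same route as the paper: the paper deduces Theorem \ref{thm:FIOlocal} directly from Theorems \ref{thm:Ta}--\ref{thm:Tm} (the bounded oscillation structure of $T_a$ and $T_{\m}$), the standard facts about cubes in $(\Rn,dx)$ (sharp reverse H\"older, $\osc_{\exp L}=\BMO$, Besicovitch), and Theorems \ref{thm:local}--\ref{thm:T-Besi} together with \eqref{HLS} for part (b). The only superfluous step is your detour through Theorem \ref{thm:sparse} in the $A_{\vec 1}$ regime of part (b): by Lemma \ref{lem:ApAp} with $\vec p=\vec 1$ one has $p=1/m$ and hence $w=\prod_i w_i^{1/m}\in A_{mp}=A_1$, so Theorem \ref{thm:weak} applies directly in both weight regimes.
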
 

Based on Theorem \ref{thm:FIOlocal}, we establish weighted compactness for commutators of $T_a$ and $T_{\m}$ as follows. 
\begin{theorem}\label{thm:FIO}
Let $\rho, \delta \in [0, 1]$, $\tau < -mn(1-\rho)$, and $a \in \mathcal{S}^{\tau}_{\rho, \delta}(\Rn \times \R^{nm})$. Let $s \in (mn/2, mn]$, $\m \in \mathcal{H}^s(\R^{nm})$, and $b \in \CMO(\Rn)$. Then for each $\mathbb{T}=T_a$ with $r=1$ or $\mathbb{T}=T_{\m}$ with $mn/s < r <\min\{2, mn\}$, and for each $k=1,\ldots,m$, $[\mathbb{T}, b]_{e_k}$ is compact from $L^{p_1}(\Rn, w_1) \times \cdots \times L^{p_m}(\Rn, w_m)$ to $L^p(\Rn, w)$ for all $\vec{p}=(p_1, \ldots, p_m)$ with $r<p_1, \ldots, p_m < \infty$, and for all $\vec{w} \in A_{\vec{p}/r}$, where $\frac1p = \sum_{i=1}^m \frac{1}{p_i}$ and $w=\prod_{i=1}^m w_i^{\frac{p}{p_i}}$.
\end{theorem}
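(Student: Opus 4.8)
The plan is to follow the pattern of the proofs of Theorems~\ref{thm:CZOcom} and~\ref{thm:LP-com}. By Theorems~\ref{thm:Ta}--\ref{thm:Tm}, $\mathbb{T}$ is a multilinear bounded oscillation operator with respect to $\B$ and the exponent $r$, and by \eqref{HLS} together with the sparse domination Theorem~\ref{thm:sparse} it is bounded from $L^r(\Rn)\times\cdots\times L^r(\Rn)$ to $L^{\frac{r}{m},\infty}(\Rn)$; hence Theorem~\ref{thm:FIOlocal}(c) supplies the weighted boundedness hypothesis needed for the compact extrapolation. It therefore suffices, by the compact multilinear extrapolation Theorem~\ref{thm:Tb} (for $T_a$, where $r=1$, in the stated form; for $T_{\m}$ in its natural $r$-rescaled counterpart, the extrapolation and interpolation machinery being run with the Hörmander exponent $r$), to prove that for some exponents $s_1,\ldots,s_m\in(r,\infty)$ with $\frac1s=\sum_{i=1}^m\frac1{s_i}<1$, the commutator $[\mathbb{T},b]_{e_k}$ is compact from $L^{s_1}(\Rn)\times\cdots\times L^{s_m}(\Rn)$ to $L^s(\Rn)$.

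\textbf{Reduction on $b$ and the Fréchet--Kolmogorov criterion.} By Theorem~\ref{thm:TTb} (extrapolation from operators to commutators) one has $\|[\mathbb{T},b]_{e_k}\|_{L^{s_1}(\Rn)\times\cdots\times L^{s_m}(\Rn)\to L^s(\Rn)}\lesssim\|b\|_{\BMO}$, so in view of \eqref{CCC} we may assume $b\in\mathscr{C}_c^{\infty}(\Rn)$ with $\supp b\subset B(0,A_0)$ for some $A_0\ge1$. Applying the weighted Fréchet--Kolmogorov Theorem~\ref{thm:FKhs-1} with the trivial weight, the desired compactness reduces to verifying, with suprema taken over $\|f_i\|_{L^{s_i}(\Rn)}\le1$: (a) $\sup\|[\mathbb{T},b]_{e_k}(\vec{f})\|_{L^s(\Rn)}<\infty$, which is immediate; (b) $\lim_{A\to\infty}\sup\|[\mathbb{T},b]_{e_k}(\vec{f})\mathbf{1}_{\{|x|>A\}}\|_{L^s(\Rn)}=0$; and (c) $\lim_{r\to0}\sup\big\|[\mathbb{T},b]_{e_k}(\vec{f})-\big([\mathbb{T},b]_{e_k}(\vec{f})\big)_{B(\cdot,r)}\big\|_{L^s(\Rn)}=0$.

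\textbf{Verification of (b) and (c).} For (b), since $b$ vanishes outside $B(0,A_0)$, for $|x|>2A_0$ we have $[\mathbb{T},b]_{e_k}(\vec{f})(x)=\mathbb{T}\big((b(x)-b(\cdot))f_k,\ldots\big)(x)$ with $b(x)=0$, so using the kernel size estimate \eqref{Ksig} for $T_a$ (and, for $T_{\m}$, the $L^{r'}$-averaged size part of the multilinear $L^r$-Hörmander condition applied dyadically as in Section~\ref{sec:LP}) together with \eqref{kf-3} of Lemma~\ref{lem:kf} yields decay $\lesssim|x|^{-n}A_0^{n/s_k'}\prod_i\|f_i\|_{L^{s_i}(\Rn)}$, hence (b). For (c), fix $\eta>0$ to be chosen, let $0<r<\eta$, and for $x'\in B(x,r)$ split $[\mathbb{T},b]_{e_k}(\vec f)(x)-[\mathbb{T},b]_{e_k}(\vec f)(x')=\mathscr{I}_1+\mathscr{I}_2+\mathscr{I}_3+\mathscr{I}_4$ exactly as in \eqref{Tebe}: $\mathscr{I}_1=(b(x)-b(x'))\,\mathbb{T}_{\eta}(\vec f)(x)$ is controlled by $r^{\alpha}\|b\|_{\mathscr{C}_b^{\alpha}}\,\mathbb{T}_{*}(\vec f)(x)$ with $\mathbb{T}_{*}$ the maximal truncated operator; the smooth part $\mathscr{I}_2$ is controlled by kernel regularity (pointwise for $T_a$ following \cite{CXY}, averaged for $T_{\m}$ following \cite{LS}) and \eqref{kf-1} by $(r/\eta)^{\gamma}\mathcal{M}_r(\vec f)(x)$; and the near-diagonal pieces $\mathscr{I}_3,\mathscr{I}_4$ are controlled by the kernel size and \eqref{kf-2} by $\eta\,\mathcal{M}_r(\vec f)(x)+\eta\,\mathcal{M}_r(\vec f)(x')$. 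Averaging over $B(x,r)$, integrating in $x$, and invoking the $L^s$-boundedness of $\mathcal{M}_r$, of $M$, and of $\mathbb{T}_*$ gives $\big\|[\mathbb{T},b]_{e_k}(\vec f)-(\cdots)_{B(\cdot,r)}\big\|_{L^s(\Rn)}\lesssim\big[\eta+(r/\eta)^{\gamma}\big]\prod_i\|f_i\|_{L^{s_i}(\Rn)}$; choosing $\eta=\varepsilon$ and $r<\varepsilon^{1+1/\gamma}$ yields (c). Then (a)--(c) and Theorem~\ref{thm:FKhs-1} give the required unweighted compactness.

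\textbf{Main obstacle.} The substantive points are the kernel regularity estimate entering $\mathscr{I}_2$ and the $L^s$-boundedness of the maximal truncated operators $T_{a,*}$ and $T_{\m,*}$. For $T_a$ the pointwise bounds on $K_a$ and $\nabla_xK_a$ (obtained by differentiating under the integral and using $\tau<-mn(1-\rho)$, as in \cite{CXY}) make this routine, and $T_{a,*}$ is bounded by a Cotlar-type comparison with $\mathbb{T}$ and $\mathcal{M}_r$. For $T_{\m}$ only the $L^{r'}$-averaged Hörmander condition is available, so $\mathscr{I}_2$, the size estimates in (b), and the boundedness of $T_{\m,*}$ must be run in the duality/averaged form of Section~\ref{sec:LP}; the cleanest way to dispose of $T_{\m,*}$ is to observe that the truncations $\mathbb{T}_{\eta}$ are multilinear bounded oscillation operators of exponent $r$ with constants uniform in $\eta$, so $\mathbb{T}_*=\sup_{\eta>0}|\mathbb{T}_{\eta}|$ is again an $m$-sublinear bounded oscillation operator (conditions \eqref{list:T-size}--\eqref{list:T-reg} pass to the supremum, as in the treatment of $\mathcal{M}_{\B,r}$), whence Theorem~\ref{thm:FIOlocal}(c) applies to it. Everything else is a direct adaptation of the arguments already carried out for $\omega$-Calderón--Zygmund operators and Littlewood--Paley square operators.
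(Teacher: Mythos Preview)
Your overall plan---reduce to unweighted compactness via the Fr\'echet--Kolmogorov criterion, then apply the compact extrapolation Theorem~\ref{thm:Tb}---matches the paper's. The gap is in the equicontinuity term $\mathscr{I}_2$, which requires a quantitative kernel smoothness estimate that the stated hypotheses do not supply.

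For $T_a$: you claim pointwise bounds on $\nabla_x K_a$ under $\tau<-mn(1-\rho)$, but differentiating the phase produces the symbol $\xi_k a\in\mathcal{S}^{\tau+1}_{\rho,\delta}$, and the bound $|\nabla_xK_a(x,x-\vec y)|\lesssim|x-\vec y|^{-(mn+1)}$ then requires $\tau+1<\rho(mn+1)-mn$, i.e.\ $\tau<-(mn+1)(1-\rho)$, which is strictly stronger than the hypothesis when $\rho<1$. Without this, no factor $(r/\eta)^\gamma$ appears in $\mathscr{I}_2$ and condition~(c) fails. The paper circumvents this by a Littlewood--Paley decomposition $T_a=\sum_{j\ge0}T_{a_j}$ with $a_j(x,\vec\xi)=a(x,\vec\xi)\phi_j(\vec\xi)$: each $a_j$ has compact frequency support, so $K_{a_j}$ obeys \eqref{eq:phi} for every $s\ge0$ (with $j$-dependent constants), making the Fr\'echet--Kolmogorov verification routine for each fixed $j$; compactness of $[T_a,b]_{e_k}$ then follows from the operator-norm decay $\|T_{a_j}\|\lesssim 2^{j[\tau+mn(1-\rho)]}$ and \cite[Lemma~2.11]{COY}.

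For $T_{\m}$: your proposal to run $\mathscr{I}_2$ ``in the averaged form of Section~\ref{sec:LP}'' is not supported by the available estimates. The $L^r$-H\"ormander condition \eqref{Lr-smooth} bounds the kernel oscillation uniformly over $x,x'\in\tfrac12Q$ without tracking the dependence on $|x-x'|$, so again no $(r/\eta)^\gamma$ smallness is produced; note also that the compactness proof in Theorem~\ref{thm:LP-com} assumes the \emph{pointwise} class $\mathbb{K}_2^{(k)}$, not merely $\mathbb{H}_r^{(k)}$. The paper sidesteps the issue for $T_{\m}$ by quoting (an $m$-linear adaptation of) \cite[Theorem~1.1]{Hu17} for the unweighted compactness and then extrapolating.
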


\begin{proof}
Let us first prove the weighted compactness of $T_{\m}$ with $mn/s < r <\min\{2, mn\}$. Indeed, modifying the proof of \cite[Theorem~1.1]{Hu17} to the $m$-linear case, we get that 
\begin{align}\label{Tmm-1}
[T_{\m}, b]_{e_k} \text{ is compact from $L^{p_1}(\Rn) \times \cdots \times L^{p_m}(\Rn)$ to $L^p(\Rn)$}, 
\end{align}
for all $\frac1p = \sum_{i=1}^m \frac{1}{p_i}<1$ with $r_i<p_i<\infty$, $i=1, \ldots, m$, where $1\le r_1,\ldots,r_m<2$ so that $\frac{s}{n}=\sum_{i=1}^m \frac{1}{r_i}$. Recall that $mn/s < r< mn$. Now pick $r < p_i <\infty$, $i=1, \ldots, m$, so that $\frac1p =\sum_{i=1}^m \frac{1}{p_i}<1$. Then $mn/s =: r_i < p_i <\infty$, $1 \le r_i < 2$, and $\sum_{i=1}^m \frac{1}{r_i}= \frac{s}{n}$, which together with \eqref{Tmm-1} gives 
\begin{align}\label{Tmm-2}
[T_{\m}, b]_{e_k} \text{ is compact from $L^{p_1}(\Rn) \times \cdots \times L^{p_m}(\Rn)$ to $L^p(\Rn)$}. 
\end{align}
In light of Theorem \ref{thm:Tb}, the weighted compactness of $T_{\m}$ follows from Theorem \ref{thm:FIOlocal} and \eqref{Tmm-2}. 

Next, we treat $T_a$. By Theorems \ref{thm:Tb} and \ref{thm:FIOlocal}, to obtain the weighted compactness of $T_a$, it suffices to demonstrate that  
\begin{align}\label{Tmm-3}
[T_a, b]_{e_k} \text{ is compact from $L^{p_1}(\Rn) \times \cdots \times L^{p_m}(\Rn)$ to $L^p(\Rn)$},  
\end{align}
for all (or for some) $\frac1p=\sum_{i=1}^m \frac{1}{p_i}$ with $1<p_1, \ldots, p_m<\infty$. Let $\phi_0$ be a nonnegative, radial, $\mathscr{C}_c^{\infty}(\R^{nm})$ function with compact support such that $\phi_0(\vec\xi) = 1$ for $|\vec{\xi}| \leq  1$ and $\phi_0(\vec{\xi}) = 0$ for $|\vec{\xi}|  \geq 2$. Define $\phi(\vec{\xi}) := \phi_0(\vec{\xi}) - \phi_0(2\vec{\xi})$ and $\phi_j(\vec{\xi}) := \phi(2^{-j} \vec{\xi})$, $j \ge 1$. Then, $\sum_{j=0}^{\infty} \phi_j(\vec{\xi}) = 1$ for any $\vec{\xi} \in \R^{nm}$. Setting $a_j(x,\vec{\xi}) := a(x,\vec{\xi}) \phi_j(\vec{\xi})$ for each $j=0, 1, \ldots$, we have 
\begin{align}\label{Taj-1}
T_a = \sum_{j=0}^{\infty} T_{a_j}, 
\end{align}
and by \cite[Proposition 3.1]{CXY}, 
\begin{align}\label{Taj-2}
\|T_{a_j}\|_{L^{p_1}(\Rn) \times \cdots \times L^{p_m}(\Rn) \to L^p(\Rn)} 
\lesssim 2^{j[\tau + mn(1-\rho)]}, \quad j \ge 0.  
\end{align}
In view of \eqref{Taj-1}, \eqref{Taj-2}, and \cite[Lemma 2.11]{COY}, \eqref{Tmm-3} is reduced to showing that for each $j \ge 0$, 
\begin{align}\label{Taj-3}
[T_{a_j}, b]_{e_k} \text{ is compact from $L^{p_1}(\Rn) \times \cdots \times L^{p_m}(\Rn)$ to $L^p(\Rn)$}. 
\end{align}

To proceed, in light of Theorem \ref{thm:FIOlocal} part \eqref{list:TTb} and \eqref{CCC}, we may assume that $b \in \mathscr{C}_c^{\infty}(\Rn)$ with $\supp(b) \subset B(0, A_0)$ for some $A_0 \ge 1$. We will only focus on the case $k=1$. It follows from \cite[Lemma 3.4]{CXY} that for any $s \geq 0$ and $j \geq 0$, 
\begin{equation}\label{eq:phi}
\sup_{x,y_1,\dots,y_m \in \Rn}
(|\vec y|)^{s}
\bigg| \int_{\R^{mn}} a(x,\vec{\xi}) \phi_j(\vec{\xi}) e^{2\pi i \vec{y} \cdot \vec{\xi}} \, d\vec{\xi} \bigg|
\lesssim 2^{j(\tau + mn- \rho s)},
\end{equation}
where the implicit constant is independent of $j$. Writing  
\begin{align*}
K_a(x, \vec{y}) := \mathbf{K}_a(x, x- \vec{y}) 
\quad\text{ and }\quad 
\mathbf{K}_a(x, \vec{y}) := \int_{\R^{nm}} a(x, \vec{\xi}) e^{2\pi i \vec{y} \cdot \vec{\xi}} \, d\vec{\xi}, 
\end{align*}
we see that $K_a$ is the kernel of $T_a$. Then, \eqref{kf-1}, \eqref{kf-2}, and \eqref{eq:phi} lead 
\begin{align*}
|[T_{a_j}, b]_{e_1}(\vec{f})(x)|
&=\bigg|\int_{\R^{nm}} (b(x)- b(y_1)) K_{a_j}(x, \vec{y}) \prod_{i=1}^m f_i(y_i) \, d\vec{y} \bigg|
\\
&\lesssim \int_{\sum_{i=1}^m |x-y_i| < 1} \frac{\prod_{i=1}^m |f_i(y_i)|}{|x-\vec{y}|^{mn-\gamma_0}} \, d\vec{y}
\\
&\qquad\quad+ \int_{\sum_{i=1}^m |x-y_i| \ge 1} \frac{\prod_{i=1}^m |f_i(y_i)|}{|x-\vec{y}|^{mn+\gamma_0}} \, d\vec{y}
\\ 
&\lesssim \M(\vec{f})(x), 
\end{align*}
where $\gamma_0 \in (0, 1)$ is an auxiliary parameter. This in turn implies 
\begin{align}\label{TajFK-1}
&\sup_{\|f_i\|_{L^{p_i}(\Rn)} \le 1 \atop i=1, \ldots, m} 
\|[T_{a_j}, b]_{e_1}(\vec{f})\|_{L^p(\Rn)} 
\nonumber \\
&\qquad\lesssim \|\M\|_{L^{p_1}(\Rn) \times \cdots \times L^{p_m}(\Rn) \to L^p(\Rn)} 
\lesssim 1. 
\end{align}
Let $A > 2A_0$ and $\gamma_1>0$. Then for any $|x|>A$, we utilize \eqref{kf-3} and \eqref{eq:phi} applied to $s=mn+\gamma_1$ to obtain 
\begin{align*}
|[T_{a_j}, b]_{e_1}(\vec{f})(x)|
&\lesssim \int_{B(0, A_0) \times \R^{n(m-1)}} \frac{\prod_{i=1}^m |f_i(y_i)|}{|x-\vec{y}|^{mn+\gamma_1}} \, d\vec{y}
\\
&\lesssim A^{-\gamma_1} \int_{B(0, A_0) \times \R^{n(m-1)}} 
\frac{\prod_{i=1}^m |f_i(y_i)|}{(\sum_{i=1}^m |x-y_i|)^{mn}} \, d\vec{y}
\\
&\lesssim A^{-\gamma_1} |x|^{-n} \prod_{i=1}^m \|f_i\|_{L^{p_i}(\Rn)}, 
\end{align*}
which gives 
\begin{align*}
\big\|[T_{a_j}, b]_{e_1}(\vec{f}) \mathbf{1}_{\{|x|>A\}}\big\|_{L^p(\Rn)} 
\lesssim A^{-[\gamma_1 + n(1-\frac1p)]} \prod_{i=1}^m \|f_i\|_{L^{p_i}(\Rn)}. 
\end{align*}
Hence, choosing $\gamma_1>\max\{0, -n(1-\frac1p)\}$, we arrive at 
\begin{align}\label{TajFK-2}
\lim_{A \to \infty} \sup_{\|f_i\|_{L^{p_i}(\Rn)} \le 1 \atop i=1, \ldots, m} 
\big\|[T_{a_j}, b]_{e_1}(\vec{f}) \mathbf{1}_{\{|x|>A\}}\big\|_{L^p(\Rn)} 
=0. 
\end{align}

Let $\eta>0$ be chosen later and $0<r<\frac{\eta}{2n}$. Fix $x \in \Rn$ and $x' \in B(x, r)$. We split
\begin{align}\label{TaI}
[T_{a_j}, b]_{e_1}(\vec{f})(x) - [T_{a_j}, b]_{e_1}(\vec{f})(x') 
= \mathscr{P}_1 + \mathscr{P}_2 + \mathscr{P}_3 + \mathscr{P}_4, 
\end{align}
where 
\begin{align*}
\mathscr{P}_1 &:= (b(x)-b(x')) \int_{\sum_{i=1}^m |x-y_i|>\eta} 
K_{a_j}(x,\vec{y}) \prod_{i=1}^m f_i (y_i) \, d\vec{y}, 
\\ 
\mathscr{P}_2 &:= \int_{\sum_{i=1}^m |x-y_i|>\eta} (b(x') - b(y_1)) 
(K_{a_j}(x,\vec{y}) - K_{a_j}(x',\vec{y})) \prod_{i=1}^m f_i (y_i) \, d\vec{y}, 
\\ 
\mathscr{P}_3 &:= \int_{\sum_{i=1}^m |x-y_i|\le \eta} 
(b(x)- b(y_1)) K_{a_j}(x,\vec{y}) \prod_{i=1}^m f_i (y_i) \, d\vec{y}, 
\\ 
\mathscr{P}_4 &:= \int_{\sum_{i=1}^m |x-y_i| \le \eta} 
(b(y_1) - b(x')) K_{a_j}(x', \vec{y}) \prod_{i=1}^m f_i (y_i) \, d\vec{y}.
\end{align*}
Since $b \in L^{\infty}(\Rn)$, the estimates \eqref{eq:phi} and \eqref{kf-1} imply 
\begin{align}\label{TaI-1}
|\mathscr{P}_1| 
\lesssim \int_{\sum_{i=1}^m |x-y_i|>\eta} 
\frac{\prod_{i=1}^m |f_i(y_i)|}{|x-\vec{y}|^{mn+1}} \, d\vec{y}
\lesssim \eta^{-1} \M(\vec{f})(x). 
\end{align}
By \eqref{eq:phi} and \eqref{kf-2}, 
\begin{align}\label{TaI-3}
|\mathscr{P}_3| 
\lesssim \int_{\sum_{i=1}^m |x-y_i| \le \eta} 
\frac{\prod_{i=1}^m |f_i(y_i)|}{|x-\vec{y}|^{mn-1}} \, d\vec{y}
\lesssim \eta \, \M(\vec{f})(x). 
\end{align}
Analogously, 
\begin{align}\label{TaI-4}
|\mathscr{P}_4| 
\lesssim \int_{\sum_{i=1}^m |x-y_i| \le \eta+mr} 
\frac{\prod_{i=1}^m |f_i(y_i)|}{|x'-\vec{y}|^{mn-1}} \, d\vec{y}
\lesssim (\eta + mr) \M(\vec{f})(x'). 
\end{align}
Together with the mean value theorem, \eqref{eq:phi} applied to $\partial_k a$ yields 
\begin{align}\label{Kaj-1}
&|\mathbf{K}_{a_j}(x, x- \vec{y}) - \mathbf{K}_{a_j}(x', x- \vec{y})|
\nonumber \\
&=\bigg|\int_{\R^{nm}} (a_j(x, \vec{\xi}) - a_j(x', \vec{\xi})) e^{2\pi i (x-\vec{y}) \cdot \vec{\xi}} \, d\vec{\xi}\bigg| 
\nonumber \\ 
&=\bigg|\int_{\R^{nm}} \int_0^1 (x-x') \cdot \nabla_x a(x(t), \vec{\xi}) \phi_j(\vec{\xi}) e^{2\pi i (x-\vec{y}) \cdot \vec{\xi}} \, dt \, d\vec{\xi}\bigg| 
\nonumber \\ 
&\le \sum_{k=1}^n |x_k-x'_k|  \int_0^1 \bigg|\int_{\R^{nm}} \partial_{x_k} a(x(t), \vec{\xi}) \phi_j(\vec{\xi}) e^{2\pi i (x-\vec{y}) \cdot \vec{\xi}} \, d\vec{\xi}\bigg| \, dt
\nonumber \\ 
&\lesssim \frac{r}{|x-\vec{y}|^{mn+1}} 
\simeq \frac{r}{(\sum_{i=1}^m |x-y_i|)^{mn+1}}, 
\end{align}
where $x(t) := (1-t) x +t x'$. Note that 
\begin{align*}
|x-x(t)| \le t |x-x'|< r \le \frac{\eta}{2\sqrt{n}} < \frac{1}{2\sqrt{n}} \sum_{i=1}^m |x-y_i|, 
\end{align*}
and 
\begin{align*}
|x(t)-\vec{y}| 
&\ge |x-\vec{y}| - |x-x(t)|
\\
&\ge \frac{1}{\sqrt{n}} \sum_{i=1}^m |x-y_i| - |x-x(t)| 
\ge \frac{1}{2\sqrt{n}} \sum_{i=1}^m |x-y_i|. 
\end{align*}
Since $\xi_k a(x, \vec{\xi}) \in \mathcal{S}_{\rho, \delta}^{\tau+1}$, we use the mean value theorem and \eqref{eq:phi} applied to $\xi_k a(x, \vec{\xi})$ to deduce 
\begin{align}\label{Kaj-2}
|&\mathbf{K}_{a_j}(x', x- \vec{y}) - \mathbf{K}_{a_j}(x', x'- \vec{y})| 
\nonumber \\ 
&=\bigg|\int_{\R^{nm}} a_j(x', \vec{\xi}) \big(e^{2\pi i (x-\vec{y}) \cdot \vec{\xi}} - e^{2\pi i (x'-\vec{y}) \cdot \vec{\xi}}\big) \, d\vec{\xi}\bigg| 
\nonumber \\ 
&=\bigg|\int_{\R^{nm}} \int_0^1 2\pi i \sum_{k=1}^n (x - x') \xi_k a(x', \vec{\xi}) \phi_j(\vec{\xi}) e^{2\pi i (x(t)-\vec{y}) \cdot \vec{\xi}} dt \, d\vec{\xi}\bigg| 
\nonumber \\ 
&\le 2\pi \sum_{k=1}^n |x - x'| \int_0^1 \bigg|\int_{\R^{nm}} \xi_k a(x', \vec{\xi}) \phi_j(\vec{\xi}) e^{2\pi i (x(t)-\vec{y}) \cdot \vec{\xi}} \, d\vec{\xi}\bigg| \, dt
\nonumber \\ 
&\lesssim \frac{r}{|x(t)-\vec{y}|^{mn+1}} 
\lesssim \frac{r}{(\sum_{i=1}^m |x-y_i|)^{mn+1}},  
\end{align}
where $x(t) := (1-t) x +t x'$. Since 
\begin{multline*}
K_{a_j}(x,\vec{y}) - K_{a_j}(x',\vec{y})
= \mathbf{K}_{a_j}(x, x- \vec{y}) - \mathbf{K}_{a_j}(x', x'- \vec{y}) 
\\ 
= \mathbf{K}_{a_j}(x, x- \vec{y}) - \mathbf{K}_{a_j}(x', x- \vec{y}) 
+ \mathbf{K}_{a_j}(x', x- \vec{y}) - \mathbf{K}_{a_j}(x', x'- \vec{y}), 
\end{multline*}
we invoke \eqref{Kaj-1}--\eqref{Kaj-2} and \eqref{kf-1} to arrive at 
\begin{align}\label{TaI-2}
|\mathscr{P}_2| 
\lesssim \int_{\sum_{i=1}^m |x-y_i|>\eta} 
\frac{\prod_{i=1}^m |f_i(y_i)|}{(\sum_{i=1}^m |x-y_i|)^{mn+1}} \, d\vec{y}
\lesssim \eta^{-1} \M(\vec{f})(x).  
\end{align}
Now gathering \eqref{TaI-1}--\eqref{TaI-4} and \eqref{TaI-2}, we change variables to obtain
\begin{align*}
&\bigg[\int_{\Rn} \bigg(\fint_{B(x, r)} |T_{a_j}, b]_{e_1}(\vec{f})(x) 
- [T_{a_j}, b]_{e_1}(\vec{f})(x')|^{\frac{p}{p_0}} dx' \bigg)^{p_0} dx \bigg]^{\frac1p}
\\ 
&\lesssim \bigg[\int_{\Rn} \bigg(\fint_{B(x, r)} |\M(\vec{f})(x) + \M(\vec{f})(x')|^{\frac{p}{p_0}} dx' \bigg)^{p_0} dx \bigg]^{\frac1p}
\\ 
&\le \bigg[\int_{\Rn} \fint_{B(x, r)} |\M(\vec{f})(x) + \M(\vec{f})(x')|^p dx' \, dx \bigg]^{\frac1p}
\\
&\lesssim \eta \|\M(\vec{f})\|_{L^p(\Rn)} 
\lesssim \varepsilon \prod_{i=1}^m \|f_i\|_{L^{p_i}(\Rn)}, 
\end{align*}
provided $\eta=\varepsilon$ and $0<r<\delta=\frac{\eta}{2n}=\frac{\varepsilon}{2n}$. This means 
\begin{align}\label{TajFK-3}
\lim_{r \to 0} \sup_{\|f_i\|_{L^{p_i}(\Rn)} \le 1 \atop i=1, \ldots, m} 
\bigg[\int_{\Rn} \bigg(&\fint_{B(x, r)} |[T_{a_j}, b]_{e_1}(\vec{f})(x) 
\nonumber \\
&- [T_{a_j}, b]_{e_1}(\vec{f})(x')|^{\frac{p}{p_0}} dx' \bigg)^{p_0} dx \bigg]^{\frac1p}
=0. 
\end{align}
As a consequence, \eqref{Tmm-3} follows from Theorem \ref{thm:FKhs-2}, \eqref{TajFK-1}, \eqref{TajFK-2}, and \eqref{TajFK-3}. 
\end{proof} 

\subsection{Higher order Calder\'{o}n commutators}\label{sec:Calderon}
In this subsection, we will consider higher order Calder\'{o}n commutators. Let $A_1,\ldots, A_m$ be functions defined on $\R$ such that $a_j=A'_j$, $j=1, \ldots, m-1$. We define 
\begin{align*}
\mathcal{C}_m(a_1, \ldots, a_{m-1}, f)(x) 
:= \mathrm{p.v. } \int_{\R} \frac{\prod_{j=1}^{m-1} (A_j(x)-A_j(y))}{(x-y)^{m-1}} f(y) \, dy. 
\end{align*}
Using the strategy in \cite[p. 2106]{DGY}, we rewrite $\mathcal{C}_m$ as the multilinear singular integral operator:  
\begin{align*}
\mathcal{C}_m(a_1, \ldots, a_{m-1}, f)(x) 
= \int_{\R^m} K(x, \vec{y})  \prod_{j=1}^{m-1} a_j(y_j) f(y_m) \, d\vec{y}, 
\end{align*}
where 
\begin{align}\label{eq:KA-2} 
K(x, \vec{y}) 
&:= \frac{(-1)^{(m-1)e(y_m - x)}}{(x-y_m)^m} 
\prod_{j=1}^{m-1} {\bf 1}_{(x \land y_m, x \lor y_m)}(y_j). 
\end{align}
Here, $e(x)={\bf 1}_{(0, \infty)}(x)$, $x \land y=\min\{x, y\}$, and $x \lor y=\max\{x, y\}$. It follows from \cite{HZ} that 
\begin{equation}\label{eq:CCk-1}
|K(x, \vec{y})| \lesssim \frac{1}{(\sum_{j=1}^m |x-y_j|)^m}, 
\end{equation}
and 
\begin{align}\label{eq:CCk-2}
|K(x,\vec{y}) - K(x',\vec{y})| \lesssim \frac{|x-x'|}{(\sum_{j=1}^{m}|x-y_j|)^{m+1}},
\end{align}
whenever $|x-x'| \leq \frac{1}{8} \min\limits_{1\le j \le m} |x-y_j|$. 

To generalize $\mathcal{C}_m$, we define the multilinear singular integral 
\begin{align}\label{def:C}
\mathscr{C}(\vec{f})(x) := \int_{\R^m} K(x, \vec{y})  \prod_{j=1}^m f_j(y_j) \, d\vec{y}, 
\end{align}
where the kernel $K$ is given in \eqref{eq:KA-2}. It was shown in \cite[Corollary 4.2]{DGY} that 
\begin{align}\label{Car-endpoint}
\text{$\mathscr{C}$ is bounded from $L^1(\R) \times \cdots \times L^1(\R)$ to $L^{\frac1m, \infty}(\R)$.}
\end{align}

In this subsection we work in Euclidean space $(\R, \L)$, and let $\B$ be the collection of all intervals in $\R$. As before, we will drop the subscript $\B$ in all notation.

\begin{theorem}\label{thm:C}
The operator $\mathscr{C}$ is a multilinear bounded oscillation operator with respect to the basis $\B$ and the exponent $r=1$.  
\end{theorem}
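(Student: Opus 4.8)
The plan is to verify the two defining conditions \eqref{list:T-size} and \eqref{list:T-reg} of Definition \ref{def:MBO} directly from the kernel bounds \eqref{eq:CCk-1} and \eqref{eq:CCk-2}, following exactly the template established for $\omega$-Calder\'{o}n-Zygmund operators in Theorem \ref{thm:CZOBO} (here with $\omega(t)=t$, which trivially satisfies the Dini condition). Since $\B$ is the collection of all intervals in $\R$, it is a ball-basis with $B^*=\kappa B$ for a fixed dilation constant $\kappa$ (as in Example \ref{example}), and we take $\bB=\R$ with $\T:=\{\mathscr{C}\}$ the single $m$-linear operator $\mathscr{C}$ itself, so all $\|\cdot\|_{\bB}$ reduce to absolute values. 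Throughout, $r=1$.

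For \eqref{list:T-size}, fix an interval $B_0$ with $B_0^*\subsetneq\R$ and choose $B\supsetneq B_0$ large enough that $B_0^*\subsetneq B^*$ (concretely one may take $B$ to be a suitable dilate of $B_0$ comparable in the way $B$ is chosen in the proof of Theorem \ref{thm:CZOBO}). For $x\in B_0$ and $\vec y\in(B^*)^m\setminus(B_0^*)^m$, at least one coordinate $y_i$ lies outside $B_0^*$, so $\sum_{j=1}^m|x-y_j|\gtrsim \ell(B_0)$; plugging into \eqref{eq:CCk-1} gives $|K(x,\vec y)|\lesssim \ell(B_0)^{-m}\simeq |B^*|^{-m}$. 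Hence
\[
|\mathscr{C}(\vec f\mathbf 1_{B^*})(x)-\mathscr{C}(\vec f\mathbf 1_{B_0^*})(x)|
\le \int_{(B^*)^m\setminus(B_0^*)^m}|K(x,\vec y)|\prod_{i=1}^m|f_i(y_i)|\,d\vec y
\lesssim \prod_{i=1}^m\fint_{B^*}|f_i|\,d\mu,
\]
which is \eqref{list:T-size} with $\C_1(\mathscr{C})\lesssim 1$ (the implicit constant absorbing the kernel constant in \eqref{eq:CCk-1}).

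For \eqref{list:T-reg}, fix an interval $B$ and $x,x'\in B$. Then
\[
\big(\mathscr{C}(\vec f)-\mathscr{C}(\vec f\mathbf 1_{B^*})\big)(x)-\big(\mathscr{C}(\vec f)-\mathscr{C}(\vec f\mathbf 1_{B^*})\big)(x')
=\int_{\R^m\setminus(B^*)^m}(K(x,\vec y)-K(x',\vec y))\prod_{i=1}^m f_i(y_i)\,d\vec y,
\]
and we decompose the region $\R^m\setminus(B^*)^m$ into the dyadic annuli $(2^{k+1}B^*)^m\setminus(2^kB^*)^m$, $k\ge 0$. On each annulus $\max_j|x-y_j|\gtrsim 2^k\ell(B)\gg\ell(B)\ge|x-x'|$, so \eqref{eq:CCk-2} applies and yields $|K(x,\vec y)-K(x',\vec y)|\lesssim \ell(B)\big(2^k\ell(B)\big)^{-m-1}\simeq 2^{-k}|2^kB^*|^{-m}$. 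Integrating against $\prod_i|f_i(y_i)|$ over the $k$-th annulus produces $2^{-k}\prod_i\fint_{2^{k+1}B^*}|f_i|\,d\mu\le 2^{-k}\prod_i\lfloor f_i\rfloor_B$, and summing the geometric series $\sum_{k\ge0}2^{-k}<\infty$ gives \eqref{list:T-reg} with $\C_2(\mathscr{C})\lesssim 1$. This being essentially a transcription of the Dini computation in Theorem \ref{thm:CZOBO} with $\omega(t)=t$, the argument is routine; the only point requiring a little care is the geometric bookkeeping making $B_0^*\subsetneq B^*$ while keeping $|B^*|\simeq|B_0^*|$ so that the averages over $B^*$ in \eqref{list:T-size} are comparable to those over $B_0^*$, and verifying that the $\frac18$-separation hypothesis in \eqref{eq:CCk-2} is met on every annulus — both of which are handled exactly as in the proof of Theorem \ref{thm:CZOBO}.
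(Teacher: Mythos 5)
Your verification of \eqref{list:T-size} is fine and matches the paper's (which simply invokes the size bound \eqref{eq:CCk-1} as in \eqref{Tsig}). But your verification of \eqref{list:T-reg} has a genuine gap: on the dyadic annulus $(2^{k+1}B^*)^m\setminus(2^kB^*)^m$ you only control $\max_{j}|x-y_j|\gtrsim 2^k\ell(B)$, whereas the smoothness estimate \eqref{eq:CCk-2} for the Calder\'{o}n-commutator kernel is only valid under the much stronger hypothesis $|x-x'|\le\frac18\min_{1\le j\le m}|x-y_j|$ — note the \emph{minimum}, not the maximum. A point $\vec y$ in your $k$-th annulus may have one coordinate $y_i$ arbitrarily close to $x$ (even equal to $x$) while another coordinate sits in the outer shell, and for such $\vec y$ the regularity bound simply does not apply. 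This is not a bookkeeping issue "handled exactly as in Theorem \ref{thm:CZOBO}": there the kernel satisfies \eqref{eq:smooth-1} under the condition $\rho(x,x')\le\frac12\max_j\rho(x,y_j)$, which the annular decomposition does verify, and the whole reason the paper singles out $\mathscr{C}$ (see the introduction's remark that \eqref{list:T-reg} is a product-type bound precisely to accommodate non-smooth kernels like this one) is that $\mathscr{C}$ is \emph{not} a multilinear Calder\'{o}n-Zygmund operator.

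The paper's proof circumvents this by decomposing $\R^m\setminus(Q^*)^m$ not into annuli but into products $E_1\times\cdots\times E_m$ with each $E_i\in\{\R\setminus Q^*,\,Q^*\}$ and at least one $E_i=\R\setminus Q^*$ (with $Q^*=18Q$). In the case $E_1=\cdots=E_m=\R\setminus Q^*$ every coordinate is far, so $|x-x'|\le\ell(Q)<\frac18\min_j|x-y_j|$ and \eqref{eq:CCk-2} does apply; the resulting integral is summed using the AM--GM inequality to distribute the decay $(\sum_j|x-y_j|)^{-(m+1)}$ among the coordinates. In the mixed cases, where some $y_i\in Q^*$, the smoothness estimate is abandoned entirely: one bounds $|K(x,\vec y)-K(x',\vec y)|$ by the size condition \eqref{eq:CCk-1} alone and recovers summability from the fact that the far coordinates force $\sum_j|z-y_j|\gtrsim 2^{k_*}\ell(Q^*)$ with $k_*$ the largest annular index among them, using the elementary inequality $k_i-k_*m/i_0\le -k_i/(m-1)$. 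You would need to supply this case analysis (or an equivalent argument) for your proof to be complete.
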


\begin{proof} 
In the current scenario, we set $Q^*=18Q$ for each interval $Q \subset \R$. Considering \eqref{eq:CCk-1}, one can follow much as in \eqref{Tsig} to obtain that $\mathscr{C}$ satisfies the condition \eqref{list:T-size}.

Let $Q \in \B$ and $x, x' \in Q$. By definition, we have 
\begin{align*}
\big| \big(&\mathscr{C}(\vec{f}) - \mathscr{C}(\vec{f} \mathbf{1}_{Q^*}) \big)(x) 
- \big(\mathscr{C}(\vec{f}) - \mathscr{C}(\vec{f} \mathbf{1}_{Q^*}) \big)(x') \big| 
\\
&= \bigg|\int_{\R^m \setminus (Q^*)^m} (K(x, \vec{y}) - K(x', \vec{y})) \prod_{i=1}^m f_i(y_i) \, d\vec{y} \bigg|
\\ 
&\le \sum_{E_1, \ldots, E_m} \int_{E_1 \times \cdots \times E_m} 
|K(x, \vec{y}) - K(x', \vec{y})| \prod_{i=1}^m |f_i(y_i)| \, d\vec{y}
\\
&=: \sum_{E_1, \ldots, E_m} \mathcal{I}_{E_1, \ldots, E_m}, 
\end{align*}
where the summation is taken over all $E_1 \times \cdots \times E_m \in \{\R \setminus Q^*, Q^*\}^m$ with some $E_i=\R  \setminus Q^*$. If $E_1=\cdots=E_m=\R \setminus Q^*$, then $|x-x'| \le \ell(Q) < \frac18 \min\limits_{1 \le j \le m} |x-y_j|$ for all $\vec{y} \in (\R \setminus Q^*)^m$, which together with \eqref{eq:CCk-2} and that $\frac1k \sum_{i=1}^k |a_i| \ge \prod_{i=1}^k |a_i|^{\frac1k}$ gives 
\begin{align*}
&\mathcal{I}_{E_1, \ldots, E_m} 
\lesssim \int_{(\R \setminus Q^*)^m}  
\frac{|x-x'|}{(\sum_{j=1}^{m}|x-y_j|)^{m+1}} \prod_{i=1}^m |f_i(y_i)| \, d\vec{y}
\\
&\lesssim \ell(Q) \sum_{k_1, \ldots, k_m \ge 1} \int_{2^{k_1+1} Q^* \setminus 2^{k_1} Q^*}  
\cdots \int_{2^{k_m+1} Q^* \setminus 2^{k_m} Q^*} 
\prod_{i=1}^m \frac{|f_i(y_i)| \, dy_i}{|x-y_i|^{1+1/m}} 
\\
&\lesssim \prod_{i=1}^m \sum_{k_i \ge 1} 2^{-k_i/m} \fint_{2^{k_i+1} Q^*} |f_i(y_i)| \, dy_i
\lesssim \prod_{i=1}^m \lfloor f_i \rfloor_{Q^*}. 
\end{align*}
To deal with the general case, we may assume that $E_1=\cdots=E_{i_0}=\R \setminus Q^*$ and $E_{i_0+1}=\cdots=E_m=Q^*$ for some $1 \le i_0 \le m-1$. Note that for all $z \in Q$ and $y_i \in 2^{k_i+1} Q^* \setminus 2^{k_i} Q^*$, $i=1, \ldots, i_0$, 
\[
\sum_{j=1}^m |z-y_j| 
\gtrsim \max\{2^{k_1}, \ldots, 2^{k_{i_0}}\} \ell(Q^*) 
=: 2^{k_*} \ell(Q^*). 
\]
Then it follows from \eqref{eq:CCk-1} that  
\begin{align*}
&\mathcal{I}_{E_1, \ldots, E_m} 
\lesssim \sum_{k_1, \ldots, k_{i_0} \ge 1} \int_{2^{k_1+1} Q^* \setminus 2^{k_1} Q^*}  
\cdots \int_{2^{k_{i_0}+1} Q^* \setminus 2^{k_{i_0}} Q^*} 
\\
&\qquad\qquad\qquad\times \int_{(Q^*)^{m-i_0}} \prod_{i=1}^m \frac{|f_i(y_i)|}{2^{k_*} \ell(Q^*)} \, d\vec{y}
\\
&\lesssim \prod_{i=1}^{i_0} \sum_{k_i \ge 1} 2^{k_i - k_*m/i_0} \fint_{2^{k_i+1} Q^*} |f_i| \, dy_i
\times \prod_{i=i_0+1}^m \fint_{Q^*} |f_i| \, dy_i 
\\
&\lesssim \prod_{i=1}^m \lfloor f_i \rfloor_{Q^*},  
\end{align*}
where we have used that $k_i - k_*m/i_0 \le k_i - k_im/(m-1) = -\frac{k_i}{m-1}$. Collecting the estimates above, we show the condition \eqref{list:T-reg}. 
\end{proof}

Note that in the current setting, $A_1 \subset A_{\infty} = \bigcup_{s>1} RH_s$. With Theorem \ref{thm:C}, \eqref{Car-endpoint}, and \eqref{HLS} in hand, Theorems \ref{thm:local}--\ref{thm:T-Besi} imply the following result.  

\begin{theorem}\label{thm:Clocal}
Let $\mathscr{C}$ be the operator in \eqref{def:C} with the kernel $K$ given by \eqref{eq:KA-2}. Then the following hold: 
\begin{list}{\rm (\theenumi)}{\usecounter{enumi}\leftmargin=1.2cm \labelwidth=1cm \itemsep=0.2cm \topsep=.2cm \renewcommand{\theenumi}{\alph{enumi}}}

\item\label{list:C1} There exists $\gamma>0$ such that for all $Q \in \B$ and $f_i \in L^{\infty}_c(\R)$ with $\supp(f_i) \subset Q$, $1 \leq i \leq m$, 
\begin{align*}
\big|\big\{x \in Q:  |\mathscr{C}(\vec{f})(x)| > t \, \M(\vec{f})(x)  \big\}\big| 
& \lesssim \, e^{- \gamma t}  |Q|, \quad t>0, 
\end{align*}
and for any $\alpha \in \{0, 1\}^m$, 
\begin{align*}
\big|\big\{x \in Q:  |[\mathscr{C}, \b]_{\alpha}(\vec{f})(x)| > t \, \mathcal{M}(\vec{f^*})(x)  \big\}\big|
\lesssim e^{-(\frac{\gamma t}{\|\b\|_{\tau}})^{\frac{1}{|\tau|+1}}} |Q|,  
\end{align*}
for all $t>0$, where $\tau=\{i: \alpha_i \neq 0\}$, $\|\b\|_{\tau} = \prod_{i \in \tau} \|b_i\|_{\BMO}$, and $f_i^*=Mf_i$, $i=1, \ldots, m$.

\item\label{list:C2} Let $\vec{w}=(w_1, \ldots, w_m)$ and $w=\prod_{i=1}^m w_i^{\frac1m}$. If $\vec{w} \in A_{\vec{1}}$ and $w v^{\frac1m} \in A_{\infty}$, or $\vec{w} \in A_1 \times \cdots \times A_1$ and $v \in A_{\infty}$, then 
\begin{align*}
\bigg\|\frac{\mathscr{C}(\vec{f})}{v}\bigg\|_{L^{\frac1m,\infty}(\R, \, w v^{\frac1m})}
&\lesssim  \prod_{i=1}^m \|f_i\|_{L^1(\R, \, w_i)}. 
\end{align*}

\item\label{list:C3} For all $\vec{p}=(p_1, \ldots, p_m)$ with $1<p_1, \ldots, p_m <\infty$, for all $\vec{w} \in A_{\vec{p}}$, and for any $\alpha \in \{0, 1\}^m$, 
\begin{align*}
\|\mathscr{C}\|_{L^{p_1}(\R, w_1) \times \cdots \times L^{p_m}(\R, w_m) \to L^p(\R, w)} 
&\lesssim [\vec{w}]_{A_{\vec{p}}}^{\max\limits_{1 \le i \le m}\{p, p'_i\}}, 
\\
\|[\mathscr{C}, \b]_{\alpha}\|_{L^{p_1}(\R, w_1) \times \cdots \times L^{p_m}(\R, w_m) \to L^p(\R, w)} 
&\lesssim \|\b\|_{\tau} [\vec{w}]_{A_{\vec{p}}}^{(|\tau|+1)\max\limits_{1 \le i \le m}\{p, p'_i\}}, 
\end{align*}
where $\frac1p=\sum_{i=1}^m \frac{1}{p_i}$, $w=\prod_{i=1}^m w_i^{\frac{p}{p_i}}$, $\tau=\{i: \alpha_i \neq 0\}$, and $\|\b\|_{\tau} = \prod_{i \in \tau} \|b_i\|_{\BMO}$. 
\end{list}
\end{theorem}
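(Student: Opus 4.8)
## Proof plan for Theorem \ref{thm:Clocal}

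The plan is to verify that $\mathscr{C}$ meets the hypotheses of the abstract Theorems \ref{thm:local}, \ref{thm:weak} and \ref{thm:T-Besi} and then read off parts \eqref{list:C1}--\eqref{list:C3}. All the genuine work is already done: by Theorem \ref{thm:C}, $\mathscr{C}$ is a multilinear bounded oscillation operator with respect to the basis $\B$ of all intervals in $\R$ and the exponent $r=1$, and \eqref{Car-endpoint} furnishes the endpoint bound $\mathscr{C}\colon L^1(\R)\times\cdots\times L^1(\R)\to L^{\frac1m,\infty}(\R)$. Thus the two standing assumptions shared by the three abstract theorems hold with $\bB=\R$ and $r=1$. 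Since $r=1$ we have $\lfloor r\rfloor=1$, so $\M_{\B,1}=\M$, the auxiliary functions of Theorem \ref{thm:local} collapse to $f_i^*=M_\B(|f_i|)=Mf_i$, and $(\frac{p_i}{r})'=p_i'$ in Theorem \ref{thm:T-Besi}.

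First I would record the Euclidean side conditions needed to feed the abstract theorems. In $(\R,\L)$ with $\B$ the family of intervals, exactly as in Section \ref{sec:LP} one has $\osc_{\exp L}=\BMO$ with equivalent norms, $A_{\infty,\B}$ satisfies the sharp reverse H\"older property, and $A_{1,\B}\subset\bigcup_{s>1}RH_{s,\B}$; moreover $\B$ satisfies the Besicovitch condition, by the one-dimensional Besicovitch covering lemma (one may take $N_0=2$). With these facts available, part \eqref{list:C1} is immediate from Theorem \ref{thm:local} applied to $T=\mathscr{C}$: its first conclusion is the exponential decay for $\mathscr{C}$, and, using the sharp reverse H\"older property, its second conclusion is the sub-exponential decay for $[\mathscr{C},\b]_\alpha$, with $\|\b\|_\tau=\prod_{i\in\tau}\|b_i\|_{\osc_{\exp L}}=\prod_{i\in\tau}\|b_i\|_{\BMO}$.

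For part \eqref{list:C2} I would invoke Theorem \ref{thm:weak}, whose structural hypothesis $A_{1,\B}\subset\bigcup_{s>1}RH_{s,\B}$ has just been checked, to get
\[
\bigg\|\frac{\mathscr{C}(\vec f)}{v}\bigg\|_{L^{\frac1m,\infty}(\R,\,wv^{\frac1m})}\lesssim\bigg\|\frac{\M(\vec f)}{v}\bigg\|_{L^{\frac1m,\infty}(\R,\,wv^{\frac1m})},
\]
and then chain this with the mixed weak type bound \eqref{HLS} for the multilinear maximal operator $\M$. The two weight regimes in \eqref{list:C2}---$\vec w\in A_{\vec 1}$ with $wv^{\frac1m}\in A_\infty$, and $w_1,\dots,w_m\in A_1$ with $v\in A_\infty$---are precisely the two cases of \eqref{HLS}, and the matching with the hypotheses of Theorem \ref{thm:weak} uses only the elementary facts that $w=\prod_{i=1}^m w_i^{1/m}\in A_{1,\B}$ when $\vec w\in A_{\vec 1}$ and that $A_{\infty,\B}$ is stable under raising to powers in $(0,1]$; the reduction is identical to the one used for the Calder\'on--Zygmund case in Theorem \ref{thm:CZOlocal}(b). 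Finally, part \eqref{list:C3} follows from Theorem \ref{thm:T-Besi}: since $\B$ satisfies the Besicovitch condition, the extrapolation-free sharp estimates \eqref{T-Besi-sharp} and \eqref{Tb-Besi-sharp} apply with $r=1$ (and, for the commutator, with $A_{\infty,\B}$ having the sharp reverse H\"older property), yielding $[\vec w]_{A_{\vec p}}^{\max_{1\le i\le m}\{p,p_i'\}}$ for $\mathscr{C}$ and $\|\b\|_\tau[\vec w]_{A_{\vec p}}^{(|\tau|+1)\max_{1\le i\le m}\{p,p_i'\}}$ for $[\mathscr{C},\b]_\alpha$.

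There is no genuine analytic obstacle beyond what is already contained in Theorem \ref{thm:C}, whose proof rests on the kernel bounds \eqref{eq:CCk-1}--\eqref{eq:CCk-2}. The only point demanding any care is the weight bookkeeping in part \eqref{list:C2}---aligning the two alternative assumptions on $(\vec w,v)$ both with the single hypothesis ``$w\in A_{1,\B}$ and $v^{r/m}\in A_{\infty,\B}$'' of Theorem \ref{thm:weak} and with the two cases of \eqref{HLS}---and even this is routine once one recalls the standard properties of the classes $A_{1,\B}$ and $A_{\infty,\B}$; everything else is a direct citation of the general machinery.
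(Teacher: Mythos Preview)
Your proposal is correct and follows essentially the same route as the paper: the paper simply notes that in $(\R,\L)$ one has $A_1\subset A_\infty=\bigcup_{s>1}RH_s$, then cites Theorem \ref{thm:C}, \eqref{Car-endpoint}, and \eqref{HLS} and reads off the conclusions from Theorems \ref{thm:local}--\ref{thm:T-Besi}. Your write-up is more explicit about the Euclidean side conditions (sharp reverse H\"older, Besicovitch, $\osc_{\exp L}=\BMO$) and about the weight bookkeeping in part \eqref{list:C2}, but the argument is the same.
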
 

Note that parts \eqref{list:C2} and \eqref{list:C3} improve \cite[Corollary 4.2]{DGY}, while part \eqref{list:C1} is totally novel.  
Let us next investigate weighted compactness for commutators of $\mathscr{C}$. The following extends the result in \cite[Theorem 1.1]{BuC} to the quasi-Banach range while we present an alternative proof.

\begin{theorem}\label{thm:CC}
Let $\mathscr{C}$ be the operator in \eqref{def:C} with the kernel $K$ given by \eqref{eq:KA-2}. Then for any $b \in \CMO(\R)$ and for each $j=1, \ldots, m$, $[\mathscr{C}, b]_{e_j}$ is compact from $L^{p_1}(\R, w_1) \times \cdots \times L^{p_m}(\R, w_m)$ to $L^p(\R, w)$ for all $\vec{p}=(p_1, \ldots, p_m)$ with $1<p_1, \ldots, p_m < \infty$, and for all $\vec{w} \in A_{\vec{p}}$, where $\frac1p = \sum_{i=1}^m \frac{1}{p_i}$ and $w=\prod_{i=1}^m w_i^{\frac{p}{p_i}}$.
\end{theorem}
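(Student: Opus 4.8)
The plan is to mimic the proof of Theorem~\ref{thm:CZOcom} (the $\omega$--Calder\'{o}n--Zygmund case), adapting it to the one--dimensional higher order Calder\'{o}n commutator $\mathscr{C}$, whose kernel $K$ in \eqref{eq:KA-2} satisfies the size bound \eqref{eq:CCk-1} and the smoothness bound \eqref{eq:CCk-2}. The essential point is that these are exactly the size/regularity hypotheses used in the $\omega$--CZ argument with $\omega(t)=t$, so the machinery of Theorems~\ref{thm:Tb} and \ref{thm:FKhs-1} applies verbatim. Concretely, invoking Theorem~\ref{thm:Clocal}\eqref{list:C3} we have that $\mathscr{C}$ is bounded from $L^{r_1}(\R,u_1)\times\cdots\times L^{r_m}(\R,u_m)$ to $L^r(\R,u)$ for every $\vec{r}$ with $1<r_1,\dots,r_m<\infty$ and every $\vec{u}\in A_{\vec{r}}$, and that $[\mathscr{C},b]_{e_j}$ is bounded on unweighted $L^p$ with
\begin{align*}
\|[\mathscr{C},b]_{e_j}\|_{L^{p_1}(\R)\times\cdots\times L^{p_m}(\R)\to L^p(\R)}\lesssim \|b\|_{\BMO},
\end{align*}
for all $\frac1p=\sum_{i=1}^m\frac1{p_i}$ with $1<p_1,\dots,p_m<\infty$. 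By Theorem~\ref{thm:Tb}, it therefore suffices to verify the hypothesis \eqref{eq:bT-2}, i.e.\ that $[\mathscr{C},b]_{e_j}$ is \emph{compact} from $L^{s_1}(\R)\times\cdots\times L^{s_m}(\R)$ to $L^s(\R)$ for some (equivalently, for all) $\frac1s=\sum_{i=1}^m\frac1{s_i}<1$ with $1<s_1,\dots,s_m<\infty$.

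To establish this unweighted compactness I would use the Fr\'{e}chet--Kolmogorov criterion Theorem~\ref{thm:FKhs-1} (or \ref{thm:FKhs-2} in the quasi-Banach range $s<1$), exactly as in the proof of Theorem~\ref{thm:CZOcom}. First, by density of $\bigcup_{0<\alpha\le1}\mathscr{C}_b^{\alpha}(\R)$ in $\CMO(\R)$ together with the unweighted bound \eqref{eq:wTBMO}-type estimate above, we may assume $b\in\mathscr{C}_b^{\alpha}(\R)$ with $\supp(b)\subset[-A_0,A_0]$. Then there are three conditions to check: uniform boundedness (immediate from the $L^p$ bound), the tail estimate $\lim_{A\to\infty}\sup_{\|f_i\|\le1}\|[\mathscr{C},b]_{e_j}(\vec f)\mathbf1_{\{|x|>A\}}\|_{L^s}=0$, and the equicontinuity estimate $\lim_{r\to0}\sup_{\|f_i\|\le1}\|[\mathscr{C},b]_{e_j}(\vec f)-([\mathscr{C},b]_{e_j}(\vec f))_{B(\cdot,r)}\|_{L^s}=0$. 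The tail estimate follows from the size bound \eqref{eq:CCk-1}: for $|x|$ large and $y_j\in\supp b$ one has $|x-y_j|\gtrsim|x|$, so $|K(x,\vec y)|\lesssim|x|^{-m}$ on the relevant region, and an application of H\"older's inequality in the remaining variables gives a decaying power of $|x|$ — this is the one-dimensional analogue of the computation \eqref{fu-1}--\eqref{fu-3}, only simpler since here $\mu$ is Lebesgue measure and reverse doubling is automatic. For the equicontinuity estimate I would split $[\mathscr{C},b]_{e_j}(\vec f)(x)-[\mathscr{C},b]_{e_j}(\vec f)(x')$ into four pieces $\mathscr{I}_1,\dots,\mathscr{I}_4$ exactly as in \eqref{Tebe}: the term $\mathscr{I}_1$ controlled by $\rho(x,x')^{\alpha}$ times the maximal truncated operator $\mathscr{C}_*$; the term $\mathscr{I}_2$ controlled via the smoothness bound \eqref{eq:CCk-2} by $(r/\eta)$ times $\mathcal M(\vec f)(x)$; and $\mathscr{I}_3,\mathscr{I}_4$ controlled via the size bound by $\eta^{\alpha}\mathcal M(\vec f)$. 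Boundedness of $\mathcal M$ and of $\mathscr{C}_*$ on products of $L^{p_i}(\R)$ — the latter from an $L^1$-to-$L^{1/m,\infty}$ maximal truncation estimate for $\mathscr{C}$, which follows from \eqref{Car-endpoint} by standard Cotlar-type arguments — then yields the claim after choosing $\eta=\varepsilon$ and $r<\delta=\delta(\varepsilon)$.

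Having verified \eqref{eq:bT-2}, Theorem~\ref{thm:Tb} applied with the boundedness input \eqref{eq:bT-1} (from Theorem~\ref{thm:Clocal}\eqref{list:C3}) immediately delivers: for every $\vec p=(p_1,\dots,p_m)$ with $1<p_1,\dots,p_m<\infty$ and every $\vec w\in A_{\vec p}$, the commutator $[\mathscr{C},b]_{e_j}$ is compact from $L^{p_1}(\R,w_1)\times\cdots\times L^{p_m}(\R,w_m)$ to $L^p(\R,w)$, which is exactly the assertion of Theorem~\ref{thm:CC}.

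The main obstacle I anticipate is the maximal truncated operator $\mathscr{C}_*$. Unlike in the smooth $\omega$--CZ setting, the kernel $K$ in \eqref{eq:KA-2} is only piecewise defined with indicator functions, so one must be a little careful to check that truncations behave well and that $\mathscr{C}_*$ satisfies the required product-type bounds; the cleanest route is probably to note that $K$ still obeys \eqref{eq:CCk-1}--\eqref{eq:CCk-2} and to invoke a Cotlar inequality of the form $\mathscr{C}_*(\vec f)\lesssim \mathcal M(\mathscr{C}(\vec f))+\mathcal M(\vec f)$ valid for multilinear singular integrals with such kernels, as used e.g.\ in \cite{DGY}. A secondary, more routine, point is keeping track of the quasi-Banach range $s<1$: there one should use Theorem~\ref{thm:FKhs-2} rather than \ref{thm:FKhs-1}, so the third condition becomes the $L^{s/p_0}$-averaged oscillation integral, but the estimates above (being pointwise in $\vec f$) survive this change after raising everything to the appropriate power and applying the boundedness of $\mathcal M$ composed with itself, just as in \eqref{CZOFK-3}.
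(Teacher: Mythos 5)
Your overall architecture matches the paper's: reduce to unweighted compactness via Theorem \ref{thm:Tb} using the weighted bounds from Theorem \ref{thm:Clocal}, then verify the Fr\'echet--Kolmogorov conditions for $[\mathscr{C},b]_{e_1}$ with $b\in\mathscr{C}_b^{\alpha}(\R)$ of compact support; the uniform bound, the tail estimate via \eqref{eq:CCk-1} and \eqref{kf-3}, and the appeal to the maximal truncation $\mathscr{C}^*$ (whose $L^{p_1}\times\cdots\times L^{p_m}\to L^p$ bound the paper simply quotes from \cite{DGGLY} rather than rederiving by a Cotlar argument) are all as in the paper.

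There is, however, one genuine gap in your treatment of the regularity term. You propose to bound $\mathscr{I}_2$ ``exactly as in \eqref{Tebe}'' by applying the smoothness estimate \eqref{eq:CCk-2} over the whole region $\{\sum_{i}|x-y_i|>\eta\}$. But \eqref{eq:CCk-2} for the Calder\'on-commutator kernel is only valid when $|x-x'|\le\frac18\min_{1\le j\le m}|x-y_j|$ --- a \emph{minimum}, not the \emph{maximum} appearing in the $\omega$-CZ smoothness condition \eqref{eq:smooth-1}. On $\{\sum_i|x-y_i|>\eta\}$ one only controls $\max_j|x-y_j|$ from below; some $|x-y_j|$ can be arbitrarily small, and there \eqref{eq:CCk-2} simply does not apply. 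The paper repairs this by splitting the term into $\mathscr{Q}_{2,1}$, supported where $|x-y_i|>\eta/m$ for every $i$ (where \eqref{eq:CCk-2} is legitimate and yields the factor $r\eta^{-1}$), and the terms $\mathscr{Q}_{2,2,j}$ supported where $|x-y_j|\le\eta/m$ but $\sum_i|x-y_i|>\eta$; the latter are estimated using only the size bound \eqref{eq:CCk-1} together with the H\"older continuity of $b$, which contributes a factor $(\sum_i|x-y_i|)^{\alpha}$ and produces an extra $\eta^{\alpha}\prod_i Mf_i(x)$ that your final bound must also carry. Without this extra decomposition the step fails as written; with it, the rest of your argument (choice of $\eta$ and $\delta$ in terms of $\varepsilon$, boundedness of $\mathcal{M}$ and $\mathscr{C}^*$) goes through.
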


\begin{proof}
It follows from Theorem \ref{thm:Clocal} that 
\begin{align}
\label{Car-1}
\|\mathscr{C}\|_{L^{p_1}(\R, w_1) \times \cdots \times L^{p_m}(\R, w_m) \to L^p(\R, w)} 
&\lesssim [\vec{w}]_{A_{\vec{p}}}^{\max\limits_{1 \le i \le m}\{p, p'_i\}}, 
\\
\label{Car-2}
\|[\mathscr{C}, b]_{e_j}\|_{L^{p_1}(\R, w_1) \times \cdots \times L^{p_m}(\R, w_m) \to L^p(\R, w)} 
&\lesssim \|b\|_{\BMO} [\vec{w}]_{A_{\vec{p}}}^{2\max\limits_{1 \le i \le m}\{p, p'_i\}}, 
\end{align}
for all $\frac1p=\sum_{i=1}^m \frac{1}{p_i}$ and $\vec{w} \in A_{\vec{p}}$, where $\frac1p=\sum_{i=1}^m \frac{1}{p_i}$, $w=\prod_{i=1}^m w_i^{\frac{p}{p_i}}$. Thus, by Theorem \ref{thm:Tb}, the matter is reduced to showing 
\begin{align*}
[\mathscr{C}, b]_{e_j} \text{ is compact from $L^{p_1}(\R) \times \cdots \times L^{p_m}(\R)$ to $L^p(\R)$}, 
\end{align*}
for all (or for some) $\frac1p = \sum_{i=1}^m \frac{1}{p_i}<1$ with $1<p_1,\ldots,p_m<\infty$. Fix $\frac1p = \sum_{i=1}^m \frac{1}{p_i}<1$ with $1<p_1,\ldots,p_m<\infty$. By Theorem \ref{thm:FKhs-1}, \eqref{Car-2}, and \eqref{CCC}, it is enough to show that for any $\varepsilon\in (0,1)$ and $b \in \mathscr{C}_b^{\alpha}(\R)$ with $\supp(b) \subset B(0, A_0)$ for some $0<\alpha < 1 \le A_0 < \infty$, 
\begin{itemize}
\item there exists $A=A(\varepsilon)>0$ independent of $\vec{f}$ such that 
\begin{align}\label{eq:CC-3}
\big\|[\mathscr{C}, b]_{e_1}](\vec{f})\mathbf{1}_{\{|x|>A\}} \big\|_{L^p(\R)} 
\lesssim \varepsilon \prod_{i=1}^m \|f_i\|_{L^{p_i}(\R)}. 
\end{align}

\item there exists $\delta=\delta(\varepsilon)>0$ independent of $\vec{f}$ such that for all $r \in (0, \delta)$, 
\begin{align}\label{eq:CC-4}
\big\|[\mathscr{C}, b]_{e_1}(\vec{f}) - ([\mathscr{C}, b]_{e_1}](\vec{f}) )_{B(\cdot, r)} \big\|_{L^p(\R)} 
\lesssim \varepsilon \prod_{i=1}^m \|f_i\|_{L^{p_i}(\R)}. 
\end{align}
\end{itemize}

Let $A>2A_0$ and $|x|>A$. Using the size condition \eqref{eq:CCk-1} and \eqref{kf-3}, we deduce that 
\begin{align}\label{eq:CC-5}
|[\mathscr{C}, b]_{e_1}(\vec{f})(x)| 
&\lesssim \int_{B(0, A_0) \times \R^{m-1}} \frac{\prod_{i=1}^m |f_i(y_i)|}{(\sum_{i=1}^m |x-y_i|)^m} d\vec{y} 
\nonumber \\
&\lesssim |x|^{-1} \prod_{i=1}^m \|f_i\|_{L^{p_i}(\Rn)}. 
\end{align}
Taking $A > \max\{2A_0, \varepsilon^{-p'}\}$, we see that \eqref{eq:CC-5} implies \eqref{eq:CC-3}. 

In order to demonstrate \eqref{eq:CC-4}, we set $\eta>0$ chosen later and $0<r<\frac{\eta}{8m}$. Let $x \in \R$ and $x' \in B(x, r)$. Then, we write  
\begin{align}\label{eq:CJJJ}
[\mathscr{C}(\vec{f}), b]_{e_1}(x) - [\mathscr{C}(\vec{f}), b]_{e_1}(x')
= \mathscr{Q}_1 + \mathscr{Q}_2 + \mathscr{Q}_3 + \mathscr{Q}_4, 
\end{align}
where 
\begin{align*}
\mathscr{Q}_1 &:= (b(x)-b(x')) \int_{\sum_{i=1}^m |x-y_i|>\eta} 
K(x,\vec{y}) \prod_{i=1}^m f_i(y_i) \, d\vec{y}, 
\\
\mathscr{Q}_2 &:= \int_{\sum_{i=1}^m |x-y_i|>\eta} (b(x')-b(y_1)) 
(K(x,\vec{y}) - K(x',\vec{y})) \prod_{i=1}^m f_i(y_i) \, d\vec{y}, 
\\
\mathscr{Q}_3 &:= \int_{\sum_{i=1}^m |x-y_i|\le \eta} 
(b(x) - b(y_1)) K(x,\vec{y}) \prod_{i=1}^m f_i(y_i)\, d\vec{y}, 
\\
\mathscr{Q}_4 &:= \int_{\sum_{i=1}^m |x-y_i| \le \eta} 
(b(y_1) - b(x')) K(x', \vec{y}) \prod_{i=1}^m f_i(y_i) \, d\vec{y}.
\end{align*}
Denote 
\[
\mathscr{C}^*(\vec{f})(x) 
:= \sup_{\eta>0} \bigg|\int_{\sum_{i=1}^m |x-y_i|>\eta} 
K(x,\vec{y}) \prod_{i=1}^m f_i(y_i) \, d\vec{y} \bigg|. 
\]
Note that 
\begin{align}\label{CJ-1}
|\mathscr{Q}_1| 
\lesssim |x-x'|^{\alpha} \|b\|_{\mathscr{C}^{\alpha}(\R)} \mathscr{C}^*(\vec{f})(x)
\lesssim \eta^{\alpha} \mathscr{C}^*(\vec{f})(x), 
\end{align}
and by \cite[Theorem 4.3]{DGGLY}, 
\begin{align}\label{Cmax}
\|\mathscr{C}^*\|_{L^{p_1}(\R) \times \cdots \times L^{p_m}(\R) \to L^p(\R)}  
\lesssim 1. 
\end{align} 
Much as \eqref{Tkb-3} and \eqref{Tkb-4}, we have 
\begin{align}\label{CJ-3}
|\mathscr{Q}_3| \lesssim \eta^{\alpha} \, \mathcal{M}(\vec{f})(x) 
\quad\text{ and }\quad 
|\mathscr{Q}_4| \lesssim \eta^{\alpha} \, \mathcal{M}(\vec{f})(x'). 
\end{align}

To analyze $\mathscr{Q}_2$, we write 
\begin{align*}
\mathscr{Q}_{2,1} &:= \int_{\forall i: |x-y_i|>\frac{\eta}{m}} 
|b(x')-b(y_1)| |K(x, \vec{y}) - K(x', \vec{y})| \prod_{i=1}^m |f_i(y_i)| \, d\vec{y}, 
\\ 
\mathscr{Q}_{2,2, j} &:= \int_{|x-y_j| \le \frac{\eta}{m} \atop \sum_{i=1}^m |x-y_i|>\eta} 
|b(x')-b(y_1)| |K(x, \vec{y}) - K(x', \vec{y})| \prod_{i=1}^m |f_i(y_i)| \, d\vec{y}. 
\end{align*}
The smoothness condition \eqref{eq:CCk-2} and \eqref{kf-1} lead 
\begin{align}\label{CJ-21}
\mathscr{Q}_{2,1} 
\lesssim r \int_{\sum_{i=1}^m |x-y_i|>\eta} 
\frac{\prod_{i=1}^m |f_i(y_i)|}{(\sum_{i=1}^m |x-y_i|)^{m+1}} d\vec{y} 
\lesssim r \eta^{-1} \M(\vec{f})(x). 
\end{align} 
To control $\mathscr{Q}_{2,2,j}$, we observe that $\sum_{i=1}^m |x'-y_i| \simeq \sum_{i=1}^m |x-y_i|$ whenever $x' \in B(x, r)$ and $\sum_{i=1}^m |x-y_i| > \eta$. Then, the size condition \eqref{eq:CCk-1} implies 
\begin{align}\label{CJ-22j}
\mathscr{Q}_{2,2,j} 
&\lesssim  \|b\|_{\mathscr{C}^{\alpha}(\R)} \sum_{k=1}^{\infty} 
\int_{|x-y_j|<\eta \atop 2^{k-1} \eta < \sum_{i=1}^m |x-y_i| \le 2^k \eta} 
\frac{\prod_{i=1}^m |f_i(y_i)|}{(\sum_{i=1}^m |x-y_i|)^{m-\alpha}} d\vec{y} 
\nonumber \\
&\lesssim \eta^{\alpha} \sum_{k=1}^{\infty} 2^{-k(1-\alpha)}
\bigg(\fint_{B(x, \eta)} |f_j| \, dy_j \bigg)
\prod_{1 \le i \le m \atop i \neq j} \bigg(\fint_{B(x, 2^k \eta)} |f_i| \, dy_i \bigg)
\nonumber \\
&\lesssim \eta^{\alpha} \prod_{i=1}^m Mf_i(x). 
\end{align} 
It follows from \eqref{CJ-21} and \eqref{CJ-22j} that 
\begin{align}\label{CJ-2}
|\mathscr{Q}_2| 
\le \mathscr{Q}_{2,1} + \sum_{j=1}^m \mathscr{Q}_{2,2,j} 
\lesssim (\eta^{\alpha} + r\eta^{-1}) \prod_{i=1}^m Mf_i(x). 
\end{align}
Summing \eqref{eq:CJJJ}, \eqref{CJ-1}, \eqref{CJ-3}, \eqref{CJ-2} up, we have 
\begin{align*}
\big|[\mathscr{C}(\vec{f}), &b]_{e_1}(x) - [\mathscr{C}(\vec{f}), b]_{e_1}(x')\big|
\\
&\lesssim (\eta^{\alpha} + r\eta^{-1}) \Big[\mathscr{C}^*(\vec{f})(x) + \prod_{i=1}^m Mf_i(x) + \M(\vec{f})(x')\Big]. 
\end{align*}
Now taking $\eta := \varepsilon^{\frac{1}{\alpha}}$ and $\delta := \min\{\frac{\eta}{8m}, \eta^{1+\alpha}\} = \min\{\frac{\varepsilon^{\frac{1}{\alpha}}}{8m}, \varepsilon^{1+\frac{1}{\alpha}}\}$, we see that for any $r \in (0, \delta)$, $r\eta^{-1} \le \eta^{\alpha}= \varepsilon$. Hence, in light of \eqref{tauM} and \eqref{Cmax}, the preceding inequality indicates 
\begin{align*}
&\big\|[\mathscr{C}, b]_{e_1}(\vec{f}) - ([\mathscr{C}, b]_{e_1}](\vec{f}) )_{B(\cdot, r)} \big\|_{L^p(\R)} 
\\
&\le \bigg[\int_{\R} \bigg(\fint_{B(x, r)} \big|[\mathscr{C}, b]_{e_1}(\vec{f})(x) 
- [\mathscr{C}, b]_{e_1}](\vec{f})(x')\big| \, dx' \bigg)^p dx \bigg]^{\frac1p}
\\
&\lesssim \varepsilon \|\mathscr{C}^*(\vec{f})\|_{L^p(\R)} 
+ \varepsilon \bigg\|\prod_{i=1}^m Mf_i \bigg\|_{L^p(\R)} 
+ \varepsilon \bigg\|\fint_{B(\cdot, r)} \M(\vec{f})(x') \, dx' \bigg\|_{L^p(\R)}
\\
&\lesssim \varepsilon \|\mathscr{C}^*(\vec{f})\|_{L^p(\R)} 
+ \varepsilon \prod_{i=1}^m \|Mf_i\|_{L^{p_i}(\R)} 
+ \varepsilon \|\M(\vec{f})\|_{L^p(\R)}
\\
&\lesssim \varepsilon \prod_{i=1}^m \|f_i\|_{L^{p_i}(\R)}. 
\end{align*}
This shows \eqref{eq:CC-4} and completes the proof. 
\end{proof}

\subsection{Maximally modulated multilinear singular integrals}\label{sec:mod}
Given a family of multilinear bounded oscillation operators $\{T_{\alpha}\}_{\alpha \in \A}$ on a measure space $(\Sigma, \mu)$, we define the associated maximal operator 
\begin{align}\label{def:TA}
T^{\A}(\vec{f})(x) := \sup_{\alpha \in \A} |T_{\alpha}(\vec{f})(x)|. 
\end{align}
Define a Banach space $\bB$ by 
\begin{align*}
\bB :=\big\{F: \A \to \mathbb{C}: \|F\|_{\bB} := \sup_{\alpha \in \A} |F(\alpha)| < \infty\big\}. 
\end{align*}
Then by Definition \ref{def:MBO}, one can obtain the following result.  

\begin{theorem}\label{thm:TA}
Let $(\Sigma, \mu)$ be a measure space with a ball-basis $\B$. If $\{T_{\alpha}\}_{\alpha \in \A}$ is a family of  multilinear bounded oscillation operators satisfying 
\begin{equation}\label{CTCT}
 \C_1^{\A} := \sup_{\alpha \in \A} \C_1(T_{\alpha}) <\infty 
 \quad\text{and}\quad 
\C_2^{\A} := \sup_{\alpha \in \A} \C_2(T_{\alpha}) <\infty, 
\end{equation}
where $r \in [1, \infty)$ is some fixed exponent, then $T^{\A}$ is a $\bB$-valued multilinear bounded oscillation operator with respect to $\B$ and $r$, with constants $\C_1(T^{\A}) \le \C_1^{\A}$ and $\C_2(T^{\A}) \le \C_2^{\A}$. 
\end{theorem}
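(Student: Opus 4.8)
The plan is to verify conditions \eqref{list:T-size} and \eqref{list:T-reg} for $T^{\A}$ by transferring them from the family $\{T_\alpha\}_{\alpha \in \A}$, using that the $\bB$-norm here is the supremum over $\alpha$. First I would set up the $\bB$-valued $m$-linear operator $\T^{\A}$ naturally associated to $T^{\A}$: since each $T_\alpha$ is a multilinear bounded oscillation operator, it comes with a family $\T_\alpha = \{T_\alpha\}$ (real-valued, $m$-linear or $m$-sublinear) with $T_\alpha(\vec f)(x) = |T_\alpha(\vec f)(x)|$. Define $\T^{\A}(\vec f)(x) := \{T_\alpha(\vec f)(x)\}_{\alpha \in \A} \in \bB$, so that indeed $T^{\A}(\vec f)(x) = \|\T^{\A}(\vec f)(x)\|_{\bB} = \sup_{\alpha \in \A} |T_\alpha(\vec f)(x)|$. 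This $\T^{\A}$ is $m$-linear in each slot whenever every $\T_\alpha$ is $m$-linear; if the $\T_\alpha$ are only $m$-sublinear, one should note that the definition of $\bB$-valued multilinear bounded oscillation operator still only requires the two oscillation estimates together with $T^{\A}(\vec f) = \|\T^{\A}(\vec f)\|_{\bB}$, and the key point is simply that $\|\cdot\|_{\bB}$ is monotone and subadditive in the relevant componentwise sense, so I would phrase the verification so it does not rely on exact multilinearity beyond what is available.

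Next I would check \eqref{list:T-size}. Fix $B_0 \in \B$ with $B_0^* \subsetneq \Sigma$. For each $\alpha \in \A$ condition \eqref{list:T-size} for $T_\alpha$ provides a ball $B_\alpha \supsetneq B_0$ with the oscillation bound, but the $B_\alpha$ may depend on $\alpha$, so a priori we cannot get a single $B$ working for all $\alpha$ simultaneously. The resolution — and this is really the heart of the matter — is that the hypotheses \eqref{list:T-size} and \eqref{list:T-reg} must be shown to be insensitive to the choice of enlargement ball: one should observe that once $B \supsetneq B_0$ is chosen as in the proof of Theorem \ref{thm:MBO} (the canonical choice built from $B_0$ via \eqref{list:B4}, independent of the operator), every multilinear bounded oscillation operator satisfies \eqref{list:T-size} with that same $B$, because the estimate for an arbitrary admissible $B'$ can be compared to the one for the canonical $B$ using that $\langle f_i \rangle_{B',r}$ and $\langle f_i \rangle_{B,r}$ are comparable up to $\C_0$-powers on such enlargements. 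So I would first record a lemma (or invoke the structure of the proof of Theorem \ref{thm:sparse}/Section \ref{sec:BO}) that \eqref{list:T-size} holds for the uniform canonical ball $B = B(B_0)$, then for $T^{\A}$ take exactly that $B$ and bound
\begin{align*}
\sup_{x \in B_0} \|\T^{\A}(\vec f \mathbf 1_{B^*})(x) - \T^{\A}(\vec f \mathbf 1_{B_0^*})(x)\|_{\bB}
= \sup_{x \in B_0} \sup_{\alpha \in \A} |T_\alpha(\vec f \mathbf 1_{B^*})(x) - T_\alpha(\vec f \mathbf 1_{B_0^*})(x)|
\le \C_1^{\A} \prod_{i=1}^m \langle f_i \rangle_{B^*, r},
\end{align*}
which gives $\C_1(T^{\A}) \le \C_1^{\A}$. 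For \eqref{list:T-reg}, fix $B \in \B$ and $x, x' \in B$; since $\sup_\alpha$ is $1$-Lipschitz, $\big|\sup_\alpha a_\alpha - \sup_\alpha b_\alpha\big| \le \sup_\alpha |a_\alpha - b_\alpha|$, and applying this with $a_\alpha = (T_\alpha(\vec f) - T_\alpha(\vec f \mathbf 1_{B^*}))(x)$ and $b_\alpha$ the value at $x'$ (after noting $\|\T^{\A}(\vec f)(x) - \T^{\A}(\vec f \mathbf 1_{B^*})(x)\|_{\bB} = \sup_\alpha |(T_\alpha(\vec f) - T_\alpha(\vec f\mathbf 1_{B^*}))(x)|$ provided $T_\alpha(\vec f) - T_\alpha(\vec f\mathbf 1_{B^*})$ makes sense as a real-valued function, which it does since $\T_\alpha = \{T_\alpha\}$), one gets
\begin{align*}
\big\| \big(\T^{\A}(\vec f) - \T^{\A}(\vec f \mathbf 1_{B^*})\big)(x) - \big(\T^{\A}(\vec f) - \T^{\A}(\vec f \mathbf 1_{B^*})\big)(x') \big\|_{\bB}
\le \sup_{\alpha \in \A} \C_2(T_\alpha) \prod_{i=1}^m \lfloor f_i \rfloor_{B, r}
\le \C_2^{\A} \prod_{i=1}^m \lfloor f_i \rfloor_{B, r},
\end{align*}
yielding $\C_2(T^{\A}) \le \C_2^{\A}$.

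The main obstacle I anticipate is the bookkeeping around condition \eqref{list:T-size}: unlike \eqref{list:T-reg}, which is a genuinely ``for every $B$'' statement that passes trivially to suprema, \eqref{list:T-size} is existential in $B$, and a careless argument would produce an $\alpha$-dependent enlargement. I would handle this by making the enlargement ball canonical and operator-independent (as in Theorem \ref{thm:MBO}), which is the cleanest route; a secondary subtlety worth a sentence is confirming that $\T^{\A}(\vec f)(x) - \T^{\A}(\vec f \mathbf 1_{B^*})(x)$ should be read as the $\bB$-valued function $\{T_\alpha(\vec f)(x) - T_\alpha(\vec f \mathbf 1_{B^*})(x)\}_\alpha$, whose $\bB$-norm is $\sup_\alpha |T_\alpha(\vec f)(x) - T_\alpha(\vec f\mathbf 1_{B^*})(x)|$, so that the Lipschitz property of the supremum applies componentwise. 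Everything else is a routine application of the triangle/monotonicity properties of $\sup$, and the constants come out with exactly the claimed bounds $\C_1(T^{\A}) \le \C_1^{\A}$, $\C_2(T^{\A}) \le \C_2^{\A}$.
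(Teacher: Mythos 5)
The paper offers no proof of Theorem \ref{thm:TA} at all (it merely asserts the result follows ``by Definition \ref{def:MBO}''), so your write-up is the natural candidate for the missing argument, and most of it is sound: the verification of \eqref{list:T-reg} is correct and genuinely trivial, since for $\bB=\ell^\infty(\A)$ the $\bB$-norm of a componentwise difference \emph{is} $\sup_\alpha$ of the componentwise absolute differences, and each component is controlled by $\C_2(T_\alpha)\prod_i\lfloor f_i\rfloor_{B,r}\le \C_2^{\A}\prod_i\lfloor f_i\rfloor_{B,r}$ for the \emph{same} ball $B$, because \eqref{list:T-reg} is universally quantified in $B$.

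The genuine gap is exactly where you located it, but your proposed fix does not work as stated. Condition \eqref{list:T-size} is existential in $B$, so a priori each $T_\alpha$ comes with its own enlargement $B_\alpha\supsetneq B_0$. You claim that every bounded oscillation operator then satisfies \eqref{list:T-size} for a canonical, operator-independent $B$ ``because $\langle f_i\rangle_{B',r}$ and $\langle f_i\rangle_{B,r}$ are comparable up to $\C_0$-powers.'' Comparability of the averages on the right-hand side is irrelevant: to transfer \eqref{list:T-size} from $B_\alpha$ to $B$ you must control the \emph{operator} difference $\T_\alpha(\vec f\mathbf 1_{B^*})-\T_\alpha(\vec f\mathbf 1_{B_\alpha^*})$, i.e.\ a quantity of the form $\Delta(\cdot,\cdot)$ from Section \ref{sec:BO}, and by Lemma \ref{lem:ABC} that costs $\C_2(T_\alpha)+\|T_\alpha\|_{L^r\times\cdots\times L^r\to L^{r/m,\infty}}$ --- a weak-type norm that Theorem \ref{thm:TA} does not assume (it only enters later, in Theorem \ref{thm:TATA}). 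So as written, the uniformization step fails. The two honest repairs are: (a) read \eqref{list:T-size} as holding with a ball $B=B(B_0)$ chosen once and for all, independently of the operator --- true in every example the paper treats, and in the paper's actual application (Example \ref{exm:mod}, where $T_\alpha=T(\lambda_1^\alpha f_1,\ldots,\lambda_m^\alpha f_m)$ for a single $T$) the ball furnished by \eqref{list:T-size} for $T$ automatically serves all $\alpha$ simultaneously, at the cost of the factor $K_0^m$; or (b) add the uniform weak-type hypothesis and invoke Lemma \ref{lem:ABC} to change balls. Under either reading your displayed estimates then go through verbatim and yield $\C_1(T^{\A})\le\C_1^{\A}$ (up to the constant incurred in (b)) and $\C_2(T^{\A})\le\C_2^{\A}$.
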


Then invoking Theorem \ref{thm:TA} and Theorems \ref{thm:local}--\ref{thm:T}, we derive the following results. 

\begin{theorem}\label{thm:TATA}
Let $(\Sigma, \mu)$ be a measure space with a ball-basis $\B$, and let $\{T_{\alpha}\}_{\alpha \in \A}$ be a family of multilinear bounded oscillation operators satisfying \eqref{CTCT} and that $T^{\A}$ is bounded from $L^r(\Sigma, \mu) \times \cdots \times L^r(\Sigma, \mu)$ to $L^{\frac{r}{m}, \infty}(\Sigma, \mu)$. Then the following hold: 
\begin{list}{\rm (\theenumi)}{\usecounter{enumi}\leftmargin=1.2cm \labelwidth=1cm \itemsep=0.2cm \topsep=.2cm \renewcommand{\theenumi}{\alph{enumi}}}

\item There exists $\gamma>0$ such that for all $B \in \B$ and $f_i \in L^{\infty}_c(\Sigma, \mu)$ with $\supp(f_i) \subset B$, $1 \leq i \leq m$, 
\begin{align*}
\mu\big(\big\{x \in B:  |T^{\A}(\vec{f})(x)| > t \, \M_{\B, r}(\vec{f})(x)  \big\}\big) 
\lesssim e^{- \gamma t}  \mu(B), \quad t>0.
\end{align*}

\item Assume that $A_{1, \B} \subset \bigcup_{s>1} RH_{s, \B}$. If $w \in A_{1, \B}$ and $v^{\frac{r}{m}} \in A_{\infty, \B}$, then 
\begin{align*}
\bigg\|\frac{T^{\A}(\vec{f})}{v}\bigg\|_{L^{\frac{r}{m},\infty}(\Sigma, \, w v^{\frac{r}{m}})}
&\lesssim \bigg\|\frac{\M_{\B, r}(\vec{f})}{v}\bigg\|_{L^{\frac{r}{m},\infty}(\Sigma, \, w v^{\frac{r}{m}})}.
\end{align*}

\item For all $\vec{p}=(p_1, \ldots, p_m)$ with $r<p_1, \ldots, p_m<\infty$ and for all $\vec{w} \in A_{\vec{p}/r, \B}$, 
\begin{align*}
&\|T^{\A}\|_{L^{p_1}(\Sigma, w_1) \times \cdots \times L^{p_m}(\Sigma, w_m) \to L^p(\Sigma, w)} 
\lesssim \mathcal{N}_1(r, \vec{p}, \vec{w}) 
[\vec{w}]_{A_{\vec{p}/r, \B}}^{\max\limits_{1 \le i \le m}\{p, (\frac{p_i}{r})'\}}, 
\end{align*}
where $\frac1p=\sum_{i=1}^m \frac{1}{p_i}$ and $w=\prod_{i=1}^m w_i^{\frac{p}{p_i}}$. If in addition $\B$ satisfies the Besicovitch condition, then 
 \begin{align*}
 \|T^{\A}\|_{L^{p_1}(\Sigma, w_1) \times \cdots \times L^{p_m}(\Sigma, w_m) \to L^p(\Sigma, w)} 
 \lesssim [\vec{w}]_{A_{\vec{p}/r, \B}}^{\max\limits_{1 \le i \le m}\{p, (\frac{p_i}{r})'\}}. 
 \end{align*}
 \end{list} 
\end{theorem}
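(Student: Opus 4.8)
The plan is to reduce the entire statement to the $\bB$-valued theory already developed, via the maximal-operator construction. First I would invoke Theorem \ref{thm:TA}: under the uniformity hypothesis \eqref{CTCT}, the operator $T^{\A}$ defined in \eqref{def:TA} is itself a $\bB$-valued multilinear bounded oscillation operator with respect to the same ball-basis $\B$ and the same exponent $r$, where $\bB$ is the sup-type Banach space $\bB=\{F:\A\to\mathbb{C}:\|F\|_{\bB}=\sup_{\alpha\in\A}|F(\alpha)|<\infty\}$, and the structural constants obey $\C_1(T^{\A})\le\C_1^{\A}$ and $\C_2(T^{\A})\le\C_2^{\A}$. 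Together with the standing assumption that $T^{\A}$ is bounded from $L^r(\Sigma,\mu)\times\cdots\times L^r(\Sigma,\mu)$ to $L^{\frac{r}{m},\infty}(\Sigma,\mu)$, this verifies all the hypotheses required by Theorems \ref{thm:local}, \ref{thm:weak}, \ref{thm:T} and \ref{thm:T-Besi} with $T=T^{\A}$.

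Each of the three conclusions then follows by applying the corresponding earlier theorem directly to $T^{\A}$; since Theorem \ref{thm:TATA} concerns only $T^{\A}$ and not its commutators, only the first (commutator-free) assertion of each quoted theorem is needed. For part (a), Theorem \ref{thm:local} produces $\gamma>0$ with $\mu(\{x\in B:|T^{\A}(\vec{f})(x)|>t\,\M_{\B,r}(\vec{f})(x)\})\lesssim e^{-\gamma t}\mu(B)$ for every $B\in\B$ and every $\vec{f}$ with $f_i\in L^{\infty}_c(\Sigma,\mu)$ supported in $B$, which is exactly the claimed local exponential decay. For part (b), under the additional geometric hypothesis $A_{1,\B}\subset\bigcup_{s>1}RH_{s,\B}$, Theorem \ref{thm:weak} applied to $T^{\A}$ with $w\in A_{1,\B}$ and $v^{\frac{r}{m}}\in A_{\infty,\B}$ yields $\|T^{\A}(\vec{f})/v\|_{L^{\frac{r}{m},\infty}(\Sigma,\,wv^{\frac{r}{m}})}\lesssim\|\M_{\B,r}(\vec{f})/v\|_{L^{\frac{r}{m},\infty}(\Sigma,\,wv^{\frac{r}{m}})}$. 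For part (c), Theorem \ref{thm:T} gives, for all $\vec{p}$ with $r<p_1,\dots,p_m<\infty$ and all $\vec{w}\in A_{\vec{p}/r,\B}$, the bound with $\C(T^{\A})=\C_1(T^{\A})+\C_2(T^{\A})+\|T^{\A}\|_{L^r(\mu)\times\cdots\times L^r(\mu)\to L^{\frac{r}{m},\infty}(\mu)}\le\C_1^{\A}+\C_2^{\A}+\|T^{\A}\|_{L^r(\mu)\times\cdots\times L^r(\mu)\to L^{\frac{r}{m},\infty}(\mu)}$, a finite quantity that is absorbed into the implicit constant, giving the $\mathcal{N}_1(r,\vec{p},\vec{w})\,[\vec{w}]_{A_{\vec{p}/r,\B}}^{\max_i\{p,(p_i/r)'\}}$ estimate; when $\B$ additionally satisfies the Besicovitch condition, Theorem \ref{thm:T-Besi} removes the factor $\mathcal{N}_1(r,\vec{p},\vec{w})$ and leaves the sharp bound $[\vec{w}]_{A_{\vec{p}/r,\B}}^{\max_i\{p,(p_i/r)'\}}$.

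The only step with genuine content is the appeal to Theorem \ref{thm:TA}, i.e. checking that $T^{\A}$ inherits the conditions \eqref{list:T-size}--\eqref{list:T-reg} with constants controlled by \eqref{CTCT}; this is carried out there by lifting to the $\bB$-valued $m$-linear operator $\mathcal{T}$ whose $\alpha$-component is the (linearization of the) operator $T_\alpha$, for which both conditions are obtained by taking the supremum over $\alpha$ of the corresponding conditions for $T_\alpha$. Since that result is already established, the proof of Theorem \ref{thm:TATA} consists simply of chaining Theorem \ref{thm:TA} with Theorems \ref{thm:local}, \ref{thm:weak}, \ref{thm:T} and \ref{thm:T-Besi}, together with the elementary observation $\C(T^{\A})\le\C_1^{\A}+\C_2^{\A}+\|T^{\A}\|_{L^r(\mu)\times\cdots\times L^r(\mu)\to L^{\frac{r}{m},\infty}(\mu)}$. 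I do not anticipate any real obstacle beyond bookkeeping the hypotheses so that they match those of the quoted theorems.
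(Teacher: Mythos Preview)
Your proposal is correct and matches the paper's own argument essentially verbatim: the paper states just before Theorem \ref{thm:TATA} that ``invoking Theorem \ref{thm:TA} and Theorems \ref{thm:local}--\ref{thm:T}, we derive the following results,'' which is precisely the chain you describe (with Theorem \ref{thm:T-Besi} supplying the Besicovitch refinement in part (c)). There is nothing to add.
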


Let us next present some examples of operators $T^{\A}$ in \eqref{def:TA}. 

\begin{example}\label{exm:mod}
Let $T$ be a multilinear bounded oscillation operator which is bounded from $L^r(\Sigma, \mu) \times \cdots \times L^r(\Sigma, \mu)$ to $L^{\frac{r}{m}, \infty}(\Sigma, \mu) < \infty$, where $1 \le r <\infty$. Let 
\[
\Lambda := \{\vec{\lambda}^{\alpha} = (\lambda_1^{\alpha}, \ldots, \lambda_m^{\alpha})\}_{\alpha \in \A} 
\subset L^{\infty}(\Sigma, \mu)^m  
\]
be a family of measurable functions such that 
\begin{align}\label{KA}
K_0 := \max_{1 \le k \le m} \sup_{\alpha \in \A} \|\lambda^{\alpha}_k\|_{L^{\infty}(\Sigma, \mu)}<\infty.
\end{align} 
Define the maximal modulation of the operator $T$ by 
\begin{equation}\label{def:TG}
T^{\Lambda}(\vec{f})(x) 
:= \sup_{\vec{\lambda}_{\alpha} \in \Lambda} |T(\lambda_1^{\alpha}f_1, \ldots, \lambda_m^{\alpha}f_m)(x)|
=: \sup_{\alpha \in \A} |T_{\alpha}(\vec{f})(x)|. 
\end{equation}
It is not hard to check that 
\begin{align}\label{CAK}
\C_1^{\A} \leq K_0 \C_1(T) 
\quad\text{ and }\quad 
\C_2^{\A} \leq K_0 \C_2(T).
\end{align}

A particular case of \eqref{KA} is 
\[
\lambda_k^{\alpha}(x) 
:= e^{2\pi i \phi_k^{\alpha}(x)}, \quad k=1, \ldots, m, 
\] 
where $\{\phi_k^{\alpha}\}_{\alpha \in \A}$ is a family of measurable real-valued functions indexed by an arbitrary set $\A$. As we have shown in Section \ref{sec:CZO}, the multilinear $\omega$-Calder\'{o}n-Zygmund operators are multilinear bounded oscillation operators. In this case, we extend the maximal modulated singular integral in \cite{GMS} to the multilinear setting. Taking $T$ as the Hilbert transform and $\phi_{\alpha}=\alpha x$ for $\alpha \in \R$, we obtain the {\tt Carleson operator} \cite{HY}: 
\begin{align*}
\mathscr{C} f(x) 
:= \sup_{\alpha \in \R} \bigg| \mathrm{p.v. } \int_{\R} \frac{e^{2\pi i \alpha y}}{x-y} f(y) \, dy \bigg|. 
\end{align*}
It also recovers the {\tt lacunary Carleson operator}
\begin{align*}
\mathscr{C}_{\A} f(x) 
:= \sup_{j \in \N} \bigg| \mathrm{p.v. } \int_{\R} \frac{e^{2\pi i \alpha_j y}}{x-y} f(y) \, dy \bigg|, 
\end{align*}
where $\A=\{\alpha_j\}_{j \in \N}$ is a lacunary sequence of integers, that is, $\alpha_{j+1} \ge \theta \alpha_j$ for all $j \in \N$ and for some $\theta > 1$. 

The last example is the {\tt polynomial Carleson operator}: 
\begin{align*}
\mathscr{C}_d f(x) 
:= \sup_{\mathrm{deg}(P) \le d} \bigg| \mathrm{p.v. } \int_{\mathbb{T}} \frac{e^{iP(y)}}{x-y} f(y) \, dy \bigg|, 
\end{align*}
where the supremum is taken over all real-coefficient polynomials $P$ of degree at most $d \in \N_+$. 

By Theorem \ref{thm:TA} and \eqref{CAK}, Theorem \ref{thm:TATA} can be applied to the operators $T^{\Lambda}$ in \eqref{def:TG}. Additionally, $T$ can be taken as specific multilinear bounded oscillation operators discussed in Sections \ref{sec:CZO}--\ref{sec:Calderon}.
\end{example}

Let us end up with weighted compactness for maximally modulated multilinear Calder\'{o}n-Zygmund operators. 
\begin{theorem}\label{thm:CZOmod}
Let $T$ be an $m$-linear $\w$-Calder\'{o}n-Zygmund operators with $\w \in \mathrm{Dini}$. Define $T^{\Lambda}$ as in  \eqref{def:TG} with $\Lambda := \{\vec{\lambda}^{\alpha} = (\lambda_1^{\alpha}, \ldots, \lambda_m^{\alpha})\}_{\alpha \in \A} \subset L^{\infty}(\Rn)^m$ being a family of measurable functions such that 
\begin{align*}
K_0 := \max_{1 \le k \le m} \sup_{\alpha \in \A} \|\lambda^{\alpha}_k\|_{L^{\infty}(\Rn)}<\infty.
\end{align*}
Assume that $T^{\Lambda}$ is bounded from $L^r(\Sigma, \mu) \times \cdots \times L^r(\Sigma, \mu)$ to $L^{\frac{r}{m}, \infty}(\Sigma, \mu)$ for some $r \in [1, \infty)$. Then for any $b \in \CMO(\Rn)$ and each $j=1, \ldots, m$, $[T^{\Lambda}, b]_{e_j}$ is compact from $L^{p_1}(\Rn, w_1) \times \cdots \times L^{p_m}(\Rn, w_m)$ to $L^p(\Rn, w)$ for all $\vec{p}=(p_1, \ldots, p_m)$ with $r<p_1, \ldots, p_m < \infty$, and for all $\vec{w} \in A_{\vec{p}/r}$, where $\frac1p = \sum_{i=1}^m \frac{1}{p_i}$ and $w=\prod_{i=1}^m w_i^{\frac{p}{p_i}}$.
\end{theorem}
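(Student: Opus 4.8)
The plan is to follow the same architecture used for the multilinear $\omega$-Calder\'{o}n-Zygmund operators in Theorem \ref{thm:CZOcom} and for the maximally modulated singular integrals is nonexistent in the literature, so we combine the two. First I would apply Theorem \ref{thm:TA} together with \eqref{CAK} to conclude that $T^{\Lambda}$ is a $\bB$-valued multilinear bounded oscillation operator with respect to the relevant ball-basis and the exponent $r$, with $\C_1(T^{\Lambda}) \le K_0 \C_1(T)$ and $\C_2(T^{\Lambda}) \le K_0 \C_2(T)$; this is because $T$ is an $m$-linear $\omega$-Calder\'{o}n-Zygmund operator with $\omega \in \mathrm{Dini}$ and hence, by Theorem \ref{thm:CZOBO}, is itself a multilinear bounded oscillation operator. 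Combined with the standing hypothesis that $T^{\Lambda}$ is bounded from $L^r \times \cdots \times L^r$ to $L^{\frac{r}{m}, \infty}$, Theorem \ref{thm:TATA} gives the quantitative weighted bounds
\begin{align*}
\|T^{\Lambda}\|_{L^{p_1}(\Rn, w_1) \times \cdots \times L^{p_m}(\Rn, w_m) \to L^p(\Rn, w)}
&\lesssim [\vec{w}]_{A_{\vec{p}/r}}^{\max\limits_{1 \le i \le m}\{p, (\frac{p_i}{r})'\}}
\end{align*}
for all $\vec{p}$ with $r < p_1, \ldots, p_m < \infty$ and all $\vec{w} \in A_{\vec{p}/r}$.

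Next I would invoke Theorem \ref{thm:Tb}: the weighted compactness of $[T^{\Lambda}, b]_{e_j}$ in the full range of exponents and weights follows once we verify (i) the weighted boundedness of $T^{\Lambda}$ just obtained, and (ii) the unweighted compactness of $[T^{\Lambda}, b]_{e_j}$ from $L^{s_1}(\Rn) \times \cdots \times L^{s_m}(\Rn)$ to $L^s(\Rn)$ for at least one tuple $\vec{s}$ with $1 < s_1, \ldots, s_m < \infty$ and $\frac1s = \sum_{i=1}^m \frac{1}{s_i}$. (Strictly, the statement of Theorem \ref{thm:Tb} is phrased for a fixed multi-index $\alpha \in \N^m$; here $\alpha = e_j$.) Also, since $b \in \CMO(\Rn)$ and $\CMO(\Rn) \subset \BMO(\Rn)$, the commutator is initially well defined via the weighted bounds for $T^{\Lambda}$ and the standard argument, so the statement makes sense. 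Thus the whole problem reduces to establishing (ii), the unweighted qualitative compactness of the first-order commutator.

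For (ii) I would use the weighted Fr\'{e}chet-Kolmogorov criterion Theorem \ref{thm:FKhs-2} (or Theorem \ref{thm:FKhs-1} if we stay in the Banach range), exactly as in the proof of Theorem \ref{thm:CZOcom}. By density of $\mathscr{C}_b^{\alpha}(\Rn)$ in $\CMO(\Rn)$, combined with the operator-norm bound $\|[T^{\Lambda}, b]_{e_j}\|_{L^{p_1} \times \cdots \times L^{p_m} \to L^p} \lesssim \|b\|_{\BMO}$ (which follows from the weighted boundedness of $T^{\Lambda}$ and Theorem \ref{thm:TTb}), it suffices to treat $b \in \mathscr{C}_b^{\alpha}(\Rn)$ with $\supp(b) \subset B(0, A_0)$. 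One then checks the three conditions: (a) uniform $L^p$-boundedness of $[T^{\Lambda}, b]_{e_j}(\vec{f})$ over the unit ball, which is immediate; (b) uniform decay of $\|[T^{\Lambda}, b]_{e_j}(\vec{f}) \mathbf{1}_{\{|x| > A\}}\|_{L^p}$ as $A \to \infty$, which uses the size estimate \eqref{eq:size} for each $T_\alpha$ together with the support of $b$ and the off-diagonal integral estimates of Lemma \ref{lem:kf} (uniformly in $\alpha$, since $\sup_\alpha C_K(T_\alpha)$ is controlled and $|\lambda_i^\alpha| \le K_0$); and (c) uniform $L^p$-equicontinuity, i.e. $\|[T^{\Lambda}, b]_{e_j}(\vec{f}) - ([T^{\Lambda}, b]_{e_j}(\vec{f}))_{B(\cdot, r)}\|_{L^p} \to 0$ as $r \to 0$. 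For (c), fixing $x, x' \in B(x, r)$ and $\eta > 0$, I would perform the standard four-term splitting $\mathscr{I}_1 + \mathscr{I}_2 + \mathscr{I}_3 + \mathscr{I}_4$ as in \eqref{Tebe}; the crucial point is that the $\sup_\alpha$ interacts cleanly with this decomposition because the first term $\mathscr{I}_1 = (b(x) - b(x')) \sup_\alpha |\cdots|$ is controlled by the maximal truncated operator $(T^{\Lambda})_*(\vec f)(x)$, the near-diagonal terms $\mathscr{I}_3, \mathscr{I}_4$ are controlled by $\eta^\alpha \M(\vec f)$ uniformly in the modulation via the size estimate, and the far term $\mathscr{I}_2$ is controlled by $\int_0^{2r/\eta} \omega(t)\frac{dt}{t} \cdot \M(\vec f)(x)$ using the smoothness estimate \eqref{eq:smooth-1} and $\omega \in \mathrm{Dini}$. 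Here one needs boundedness of the maximal truncated modulated operator $(T^{\Lambda})_*$ on the relevant unweighted $L^q$ spaces, and boundedness of $\M$; the former requires a maximal-truncation analogue of the hypothesis on $T^{\Lambda}$.

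\textbf{Main obstacle.} The genuinely delicate point is the maximal truncation. In the $\omega$-Calder\'{o}n-Zygmund case one invokes \cite[Theorem 4.16]{GLMY} for $T_*$; here the operator carries an extra supremum over modulations $\alpha \in \A$, and one needs the maximal truncated version of $T^{\Lambda}$ to be bounded on $L^{s_1} \times \cdots \times L^{s_m} \to L^s$. Since $T$ is a multilinear $\omega$-Calder\'{o}n-Zygmund operator, its truncations differ from $T$ by an operator pointwise dominated by $\mathcal{M}_r(\vec f)$; taking modulations and then a sup preserves this domination because $|\lambda_i^\alpha| \le K_0$. Thus $(T^{\Lambda})_* (\vec f) \lesssim T^{\Lambda}(\vec f) + K_0^m \,\mathcal{M}(\vec f)$ pointwise (the standard Cotlar-type comparison, carried out uniformly in $\alpha$), and the desired unweighted $L^q$-bound for $(T^\Lambda)_*$ follows from the hypothesis on $T^\Lambda$, the endpoint bound of \eqref{HLS}, and interpolation. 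Once that reduction is in place, every remaining estimate is a routine repetition — uniform in $\alpha$ — of the calculations already carried out in the proof of Theorem \ref{thm:CZOcom}, and Theorem \ref{thm:FKhs-2} closes the argument.
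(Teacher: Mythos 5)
Your overall architecture is sound and, for the boundedness input, identical to the paper's: Theorem \ref{thm:TA} plus \eqref{CAK} makes $T^{\Lambda}$ a $\bB$-valued multilinear bounded oscillation operator, and Theorem \ref{thm:TATA} yields the weighted bounds \eqref{CZOmod-1}--\eqref{CZOmod-2}. Where you diverge is the compactness step: you route through Theorem \ref{thm:Tb} (weighted boundedness plus \emph{unweighted} compactness implies weighted compactness), whereas the paper deliberately bypasses Theorem \ref{thm:Tb} and verifies the three conditions of the weighted Fr\'{e}chet--Kolmogorov criterion (Theorem \ref{thm:FKhs-2}) directly in $L^p(\Rn, w)$ — this is exactly the ``direct way'' advertised in the introduction for Section \ref{sec:mod}. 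Your route is legitimate in principle, since $T^{\Lambda}$ is $m$-linearizable with $\bB=\ell^{\infty}(\A)$ and $[T^{\Lambda},b]_{e_j}(\vec f)=\sup_{\alpha}|[T,b]_{e_j}(\vec{\f}^{\alpha})|$ is the $\bB$-norm of the commutator of the linearization; it buys you a simpler tail estimate (unweighted decay at infinity via Lemma \ref{lem:kf} instead of the paper's lengthy $A_{\vec p}$-weighted annular computation in \eqref{Telm}--\eqref{Sjkk}), at the cost of an extra extrapolation layer.

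Two points need repair. First, a range mismatch: Theorem \ref{thm:Tb} concludes compactness only for $1<p_i<\infty$ and $\vec w\in A_{\vec p, \B_\rho}$, while the statement you are proving concerns $r<p_i<\infty$ and the strictly larger class $A_{\vec p/r}$ when $r>1$. As written, your argument proves the theorem only for $r=1$ (or for $A_{\vec p}$ weights); you must either prepend the paper's reduction ``by rescaling we may assume $r=1$'' or otherwise account for the $A_{\vec p/r}$ class. Second, your resolution of the ``main obstacle'' rests on the claim that the truncations of $T$ differ from $T$ by an operator pointwise dominated by $\mathcal{M}_r(\vec f)$, so that $(T^{\Lambda})_*(\vec f)\lesssim T^{\Lambda}(\vec f)+K_0^m\M(\vec f)$ pointwise. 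This is false for a general Dini--Calder\'{o}n--Zygmund operator: the near-diagonal piece $\int_{\sum_i|x-y_i|\le\eta}K(x,\vec y)\prod_i f_i(y_i)\,d\vec y$ is not an absolutely convergent integral controlled by $\M(\vec f)(x)$. The correct tool is the Cotlar inequality, which the paper records as \eqref{Cotlar}:
\begin{align*}
T_*^{\Lambda}(\vec f)(x) \lesssim K_0^m \M(\vec f)(x) + M_s\big(T^{\Lambda}(\vec f)\big)(x), \quad s>0,
\end{align*}
and which still yields the $L^q$ (indeed $L^q(w)$) bound for $(T^{\Lambda})_*$ needed in your term $\mathscr{I}_1$. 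With these two corrections your proof closes; the remaining estimates are, as you say, uniform-in-$\alpha$ repetitions of \eqref{Tebe}--\eqref{Tbxx}.
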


\begin{proof}
By a rescaling argument, we may assume that $r=1$. Let $b \in \CMO(\Rn)$, $\frac1p=\sum_{i=1}^m \frac{1}{p_i}$ with $1<p_1, \ldots, p_m < \infty$, and $\vec{w} \in A_{\vec{p}}$. We only focus on the case $j=1$. As argued in Example \ref{exm:mod}, we have  
\begin{align}\label{CZOmod-1}
\|T^{\Lambda}\|_{L^{p_1}(\Rn, w_1) \times \cdots \times L^{p_m}(\Rn, w_m) \to L^p(\Rn, w)} 
&\lesssim [\vec{w}]_{A_{\vec{p}}}^{\max\limits_{1 \le i \le m}\{p, p'_i \}}, 
\\
\label{CZOmod-2}
\|[T^{\Lambda}, b]_{e_1}\|_{L^{p_1}(\Rn, w_1) \times \cdots \times L^{p_m}(\Rn, w_m) \to L^p(\Rn, w)} 
&\lesssim \|b\|_{\BMO} [\vec{w}]_{A_{\vec{p}}}^{2\max\limits_{1 \le i \le m}\{p, p'_i\}}. 
\end{align}
As before, considering \eqref{CZOmod-2} and \eqref{CCC}, we may assume that $b \in \mathscr{C}_c^{\infty}(\Rn)$ with $\supp(b) \subset B(0, A_0)$ for some $A_0 \ge 1$. 

Given $\vec{f}=(f_1, \ldots, f_m)$ and $\vec{\lambda}^{\alpha} \in \Lambda$, we always denote 
\begin{align*}
\vec{\f}^{\alpha} := (\f_1^{\alpha}, \ldots, \f_m^{\alpha}) 
:= (\lambda_1^{\alpha}f_1, \ldots, \lambda_m^{\alpha}f_m). 
\end{align*}
Define 
\begin{align*}
T_*(\vec{f})(x) 
:=\sup_{\eta>0} |T_{\eta}(\vec{f})(x)| 
\quad\text{and}\quad 
T_*^{\Lambda}(\vec{f})(x) 
:=\sup_{\eta>0} |T_{\eta}^{\Lambda}(\vec{f})(x)|, 
\end{align*}
where 
\begin{align*}
T_{\eta}(\vec{f})(x) 
&:= \int_{\sum_{i=1}^m |x-y_i|>2\eta} K(x, \vec{y}) \prod_{i=1}^m f_i(y_i) \, d\vec{y}, 
\\
T_{\eta}^{\Lambda}(\vec{f})(x) 
&:= \sup_{\vec{\lambda}^{\alpha} \in \Lambda} \bigg|\int_{\sum_{i=1}^m |x-y_i|>2\eta} K(x, \vec{y}) 
\prod_{i=1}^m (\lambda_i^{\alpha} f_i)(y_i) \, d\vec{y} \bigg|.  
\end{align*}
By definition, 
\begin{align*}
[T^{\Lambda}, b]_{e_1}(\vec{f})(x) 
&=\sup_{\vec{\lambda}^{\alpha} \in \Lambda} 
\big|[T, b]_{e_1}(\vec{\f}^{\alpha})(x) \big|, 
\\
[T_{\eta}^{\Lambda}, b]_{e_1}(\vec{f})(x) 
&=\sup_{\vec{\lambda}^{\alpha} \in \Lambda} 
\big|[T_{\eta}, b]_{e_1}(\vec{\f}^{\alpha})(x) \big|. 
\end{align*}

It follows from \eqref{CZOmod-2} that 
\begin{align}\label{modFK-1}
\sup_{\|f_i\|_{L^{p_i}(\Rn, w_i)} \le 1 \atop i=1, \ldots, m} 
\big\|[T^{\Lambda}, b]_{e_1}](\vec{f})\big\|_{L^p(\Rn, w)} 
\lesssim 1. 
\end{align}
Let $A>2A_0$ and $|x|>A$. By definition and size condition \eqref{eq:size}, 
\begin{align}\label{Telm}
\big\|&[T^{\Lambda}, b]_{e_1}(\vec{f}) \mathbf{1}_{\{|x|>A\}}\big\|_{L^p(\Rn, w)}
\nonumber \\
&\lesssim \bigg[\sum_{k =0}^{\infty} \int_{2^k A <|x| \le 2^{k+1} A} \bigg(\sum_{j_2, \ldots, j_m=0}^{\infty} 
\mathscr{T}_{j_2, \ldots, j_m}(\vec{f})(x) \bigg)^p w(x) dx \bigg]^{\frac1p}, 
\end{align}
where 
\begin{align*}
\mathscr{T}_{j_2, \ldots, j_m}(\vec{f})(x)
&:= \int_{B(0, A_0) \times R_{j_2}^x \times \cdots \times R_{j_m}^x} 
\frac{\prod_{i=1}^m |f_i(y_i)|}{(\sum_{i=1}^m |x-y_i|)^{mn}} d\vec{y},  
\end{align*}
with $R_{j_i}^x := \{y_i \in \Rn: 2^{j_i-1} |x| \le |y_i| < 2^{j_i} |x|\}$ if $j_i \ge 1$, and $R_{j_i}^x:=\{y_i \in \Rn: |y_i| < |x|\}$ if $j_i=0$. Let us analyze $\mathscr{T}_{j_2, \ldots, j_m}$. Given $j_2, \ldots, j_m \ge 0$, denote $j_* :=\max\{j_2, \ldots, j_m\}$. Then for all $\vec{y} \in B(0, A_0) \times R_{j_2}^x \times \cdots \times R_{j_m}^x$, we have $\sum_{i=1}^m |x-y_i| \simeq 2^{j_*} |x|$ and 
\begin{align}\label{Sjf}
& \mathscr{T}_{j_2, \ldots, j_m}(\vec{f})(x)
\le (2^{j_*} |x|)^{-mn} \prod_{i=1}^m \|f_i\|_{L^{p_i}(\Rn, w_i)}  
\nonumber \\
&\qquad\quad\times 
\bigg(\int_{B(0, A_0)} w_1^{1-p'_1} dx\bigg)^{\frac{1}{p'_1}}
\bigg(\int_{B(0, 2^{j_i} |x|)} w_i^{1-p'_i} dx\bigg)^{\frac{1}{p'_i}}
\nonumber \\
&\lesssim (2^{k+j_*} A)^{-n\theta/p'_1} (2^{j_*} |x|)^{-mn} \prod_{i=1}^m \|f_i\|_{L^{p_i}(\Rn, w_i)} 
\nonumber \\
&\quad\times 
\bigg(\int_{B(0, 2^{k+j_*}A)} w_1^{1-p'_1} dx\bigg)^{\frac{1}{p'_1}} 
\bigg(\int_{B(0, 2^{j_i} |x|)} w_i^{1-p'_i} dx\bigg)^{\frac{1}{p'_i}}, 
\end{align}
where we have used that $w_1^{1-p'_1} \in A_{\infty}$ and the fact that for any $v \in A_{\infty}$, there exists $\theta>0$ so that $v(E)/v(B) \lesssim (|E|/|B|)^{\theta}$ for every ball $B \subset \Rn$ and every measurable set $E \subset B$ (cf. \cite[Theorem 7.3.3]{Gra-1}). Then \eqref{Sjf} implies 
\begin{align}\label{Sjkk}
&\mathscr{S}_{j_2, \ldots, j_m}
:=\bigg(\int_{2^k A< |x| \le 2^{k+1} A} 
\big|\mathscr{T}_{j_2, \ldots, j_m}(\vec{f})(x)\big|^p w(x)dx \bigg)^{\frac1p}
\nonumber\\
&\lesssim (2^{k+j_*} A)^{-n\theta/p'_1} 
\prod_{i=1}^m \|f_i\|_{L^{p_i}(\Rn, w_i)} 
\bigg(\fint_{B(0, 2^{k+j_*+1} A)} w \, dx \bigg)^{\frac1p} 
\nonumber\\
&\qquad\times \prod_{i=1}^m \bigg(\fint_{B(0, 2^{k+j_*+1}A)} w_i^{1-p'_i} dx\bigg)^{\frac{1}{p'_i}}
\nonumber\\
&\lesssim (2^{k+j_*} A)^{-n \theta/p'_1} [\vec{w}]_{A_{\vec{p}}} 
\prod_{i=1}^m \|f_i\|_{L^{p_i}(\Rn, w_i)}.  
\end{align}
Consequently, \eqref{Telm} and \eqref{Sjkk} give that when $p<1$, 
\begin{align*}
\big\|[T^{\Lambda}, b]_{e_1}(\vec{f}) \mathbf{1}_{\{|x|>A\}}\big\|_{L^p(\Rn, w)}
&\lesssim \bigg(\sum_{k, j_2, \ldots, j_m=0}^{\infty} |\mathscr{S}_{j_2, \ldots, j_m}|^p\bigg)^{\frac1p}
\\
&\lesssim A^{-n \theta/p'_1} \prod_{i=1}^m \|f_i\|_{L^{p_i}(\Rn, w_i)},  
\end{align*}
and when $p \ge 1$, 
\begin{align*}
\big\|[T^{\Lambda}, b]_{e_1}(\vec{f}) \mathbf{1}_{\{|x|>A\}}\big\|_{L^p(\Rn, w)}
&\lesssim \sum_{k, j_2, \ldots, j_m=0}^{\infty} \mathscr{S}_{j_2, \ldots, j_m}
\\
&\lesssim A^{-n \theta/p'_1} \prod_{i=1}^m \|f_i\|_{L^{p_i}(\Rn, w_i)}. 
\end{align*}
This immediately implies 
\begin{align}\label{modFK-2}
\lim_{A \to \infty} \sup_{\|f_i\|_{L^{p_i}(\Rn, w_i)} \le 1 \atop i=1, \ldots, m} 
\big\|[T^{\Lambda}, b]_{e_1}](\vec{f})\mathbf{1}_{\{|x|>A\}} \big\|_{L^p(\Rn, w)} 
=0. 
\end{align}

On the other hand, observe that 
\begin{align}\label{Cotlar}
T_*^{\Lambda}(\vec{f})(x) 
=\sup_{\vec{\lambda}^{\alpha} \in \Lambda} T_*(\vec{\f}^{\alpha})(x) 
\lesssim K_0^m \M(\vec{f})(x) + M_s(T^{\Lambda}(\vec{f}))(x),  
\end{align}
for any $s>0$, which is a sequence of the Cotlar inequality in \cite[Lemma 2.1]{Chen} adapted to our case: 
\begin{align*}
T_*(\vec{f})(x) 
\lesssim \M(\vec{f})(x) + M_s(T(\vec{f}))(x), \quad s>0. 
\end{align*}
By \eqref{Tbxx} and \eqref{Cotlar}, 
\begin{align}\label{TTM}
\big|[&T^{\Lambda}, b]_{e_1}(\vec{f})(x) 
- [T^{\Lambda}, b]_{e_1}](\vec{f})(x') \big|
\nonumber \\ 
&\le \sup_{\vec{\lambda}^{\alpha} \in \Lambda} 
\big|[T, b]_{e_1}(\vec{\f}^{\alpha})(x) - [T, b]_{e_1}](\vec{\f}^{\alpha})(x')\big| 
\nonumber \\ 
&\lesssim \varepsilon \sup_{\vec{\lambda}^{\alpha} \in \Lambda} 
\big[T_*(\vec{\f}^{\alpha})(x) + \M(\vec{\f}^{\alpha})(x) + \M(\vec{\f}^{\alpha})(x') \big] 
\nonumber \\ 
&\le \varepsilon K_0^m \M(\vec{f})(x) + \varepsilon K_0^m \M(\vec{f})(x') + \varepsilon M_s(T^{\Lambda}(\vec{f}))(x). 
\end{align}
Taking $0<s<1/m$ and $p/s<p_0<\infty$, and using $w \in A_{mp} \subset A_{p/s} \subset A_{p_0}$ and \eqref{CZOmod-1}, we have 
\begin{align}
\big\|M \big(|\M(\vec{f})|^{\frac{p}{p_0}} \big)\big\|_{L^{p_0}(\Rn, w)}^{\frac{p_0}{p}} 
\lesssim  \|\M(\vec{f})\|_{L^p(\Rn, w)} 
\lesssim \prod_{i=1}^m \|f_i\|_{L^{p_i}(\Rn, w_i)}, 
\end{align}
and 
\begin{align}\label{Meta}
\|M_s(T^{\Lambda}(\vec{f}))\|_{L^p(\Rn, w)}
&=\|M(|T^{\Lambda}(\vec{f}))|^s\|_{L^{\frac{p}{s}}(\Rn, w)}^{\frac{1}{s}}
\nonumber \\
&\lesssim \|T^{\Lambda}(\vec{f})\|_{L^p(\Rn, w)} 
\lesssim \prod_{i=1}^m \|f_i\|_{L^{p_i}(\Rn, w_i)}. 
\end{align}
Then, \eqref{TTM} and \eqref{Meta} yield that 
\begin{align*}
&\bigg[\int_{\Rn} \bigg(\fint_{B(x, r)} |[T^{\Lambda}, b]_{e_1}(\vec{f})(x) 
- [T^{\Lambda}, b]_{e_1}(\vec{f})(x')|^{\frac{p}{p_0}} dx' \bigg)^{p_0} w(x) \, dx \bigg]^{\frac1p}
\\ 
&\lesssim \varepsilon \|\M(\vec{f})\|_{L^p(\Rn, w)} 
+ \varepsilon \big\|M \big(|\M(\vec{f})|^{\frac{p}{p_0}} \big)\big\|_{L^{p_0}(\Rn, w)}^{\frac{p_0}{p}} 
\\
&\qquad\qquad+ \varepsilon \|M_s(T^{\Lambda}(\vec{f}))\|_{L^p(\Rn, w)} 
\\
&\lesssim \varepsilon \prod_{i=1}^m \|f_i\|_{L^{p_i}(\Rn, w_i)}, 
\end{align*}
which gives  
\begin{align}\label{modFK-3}
\lim_{r \to 0} \sup_{\|f_i\|_{L^{p_i}(\Rn, w_i)} \le 1 \atop i=1, \ldots, m} 
\bigg[\int_{\Rn} &\bigg(\fint_{B(x, r)} |[T^{\Lambda}, b]_{e_1}(\vec{f})(x) 
\nonumber \\
&- [T^{\Lambda}, b]_{e_1}(\vec{f})(x')|^{\frac{p}{p_0}} dx' \bigg)^{p_0} w(x) dx \bigg]^{\frac1p}
=0. 
\end{align}
Therefore, it follows from Theorem \ref{thm:FKhs-2}, \eqref{modFK-1}, \eqref{modFK-2}, and \eqref{modFK-3} that $[T^{\Lambda}, b]_{e_1}$ is compact from $L^{p_1}(\Rn, w_1) \times \cdots \times L^{p_m}(\Rn, w_m)$ to $L^p(\Rn, w)$. 
\end{proof}

\subsection{$q$-variation of $\w$-Calder\'{o}n-Zygmund operators}\label{sec:var}
Given $q>1$ and a family of real numbers $\mathbf{a} := \{a_{\eta}\}_{\eta>0}$, the $q$-variation norm of $\mathbf{a}$ is defined by
\begin{align*}
\|\mathbf{a}\|_{\bB_q}
=\sup_{\{\eta_j\} \searrow 0} \bigg(\sum_{j=0}^{\infty}
\big|a_{\eta_j} - a_{\eta_{j+1}} \big|^q \bigg)^{\frac1q},
\end{align*}
where the supremum runs over all positive sequences $\{\eta_j\}$ decreasing to zero. Note that $\|\mathbf{a}-c_0\|_{\bB_q} = \|\mathbf{a}\|_{\bB_q}$ for any constant $c_0$, that is, $\|\cdot\|_{\bB_q}$ with $q>1$ is a semi-norm. Denote 
\begin{align*}
\bB_q &:=\{\mathbf{a} :=\{a_{\eta}\}_{\eta>0}: \|\mathbf{a}\|_{\bB_q}<\infty\},  
\\
\widetilde{\bB}_q &:=\{[\mathbf{a}]: \|[\mathbf{a}]\|_{\widetilde{\bB}_q} := \|\mathbf{a}\|_{\bB_q} <\infty\},
\end{align*} 
where the equivalence class is given by $[\mathbf{a}] :=\{\mathbf{a}+c_0: c_0 \in \R\}$. For any $q>1$, $\bB_q$ is not a Banach space, but $\widetilde{\bB}_q$ is. In what follows, for convenience, we always use $\bB_q$ instead of $\widetilde{\bB}_q$.

Let $T$ be an $\w$-Calder\'{o}n-Zygmund operator on $\Rn$ (cf. Definition \ref{def:wCZO}), and let $K$ be the kernel of $T$. 
Given $\eta>0$, we define the hard truncation of $T$ by 
\begin{equation*}
T_{\eta}f(x) :=\int_{|x-y| >\eta} K(x, y) f(y) \, dy.
\end{equation*}
By means of the above notation, we denote
\begin{align*}
&\T f(x) := \big\{T_{\eta}f(x) \big\}_{\eta > 0}, \quad 
\T_b f(x) := \big\{[T_{\eta}, b]f(x) \big\}_{\eta > 0}, 
\\
&\text{and} \quad
\K(x, y) :=\big\{K_{\eta}(x, y) := K(x, y) \mathbf{1}_{\{|x-y|>\eta\}}\big\}_{\eta>0}.
\end{align*}
For $1 \le q < \infty$, the $q$-variation of $T$ and $[T, b]$ are defined as
\begin{align*}
\V_q(\T f)(x) :=\| \T f \|_{\bB_q}
\quad\text{and}\quad 
\V_q(\T_b f)(x) :=\| \T_b f(x) \|_{\bB_q}.
\end{align*}

We are going to work in the space $(\Rn, \Ln)$. Let $\B$ be the collection of all cubes in $\Rn$, and set $Q^*=5Q$ for any $Q \in \B$. As in Example \ref{example}, $\B$ is a ball-basis of $(\Rn, \Ln)$.

\begin{theorem}\label{thm:Var}
Let $q>2$ and $T$ be an $\w$-Calder\'{o}n-Zygmund operator with $\w \in \mathrm{Dini}$. Then $\V_q \circ \T$ is a $\bB_q$-valued bounded oscillation operator with respect to $\B$ and $r \in (1, q]$, with constants $\C_1(\V_q \circ \T) \lesssim C_K$ and $\C_2(\V_q \circ \T) \lesssim 1+\|\w\|_{\mathrm{Dini}}$.
\end{theorem}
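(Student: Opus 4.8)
The plan is to verify the two defining conditions \eqref{list:T-size} and \eqref{list:T-reg} of Definition \ref{def:MBO} for the $\bB_q$-valued $m$-linear (here $m=1$) operator $\T$ whose components are the hard truncations $\{T_\eta\}_{\eta>0}$, so that $\V_q \circ \T = \|\T(\cdot)\|_{\bB_q}$ realizes it as a bounded oscillation operator. Throughout I will use $Q^*=5Q$, and I will exploit the fact that for $a,c_0 \in \bB_q$ one has $\|a\|_{\bB_q} \le \|a-c_0\|_{\bB_q} + \|c_0\|_{\bB_q}$ and that constant sequences have zero $\bB_q$-seminorm, which lets me discard the "smooth part" of the kernel freely. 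The $q$-variation seminorm should be controlled by pointwise kernel bounds together with the classical mechanism: a single jump $T_{\eta_j}f(x) - T_{\eta_{j+1}}f(x)$ is an integral of $K(x,y)f(y)$ over the annulus $\eta_{j+1} < |x-y| \le \eta_j$, and one estimates the $\ell^q$ (even $\ell^1$, or $\ell^2$) norm of these jumps by the size condition \eqref{eq:size}, summing a geometric-type series; the restriction $q>2$ (versus $q>1$) is the standard variational threshold needed for the short-variation/long-variation decomposition and for the $L^2$ boundedness input that is implicitly assumed via the Dini hypothesis.

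For condition \eqref{list:T-size}: given $Q_0 \in \B$ with $Q_0^* \subsetneq \Rn$, I take $Q=2Q_0$, so $Q_0^* \subsetneq Q^*$ and $\sum |x - y| \simeq \ell(Q^*)$ for $x \in Q_0$ and $y \in Q^* \setminus Q_0^*$. Writing $\T(f\mathbf 1_{Q^*})(x) - \T(f\mathbf 1_{Q_0^*})(x)$ componentwise as the family $\big\{\int_{Q^*\setminus Q_0^*,\ |x-y|>\eta} K(x,y) f(y)\,dy\big\}_{\eta>0}$, I bound its $\bB_q$-seminorm: by the size estimate \eqref{eq:size}, each term (and each jump) is at most $C_K \ell(Q^*)^{-n} \int_{Q^*} |f|\,dy \lesssim C_K \langle f\rangle_{Q^*}$, and since the relevant truncation parameters range over a bounded set of scales comparable to $\ell(Q^*)$, the $q$-variation is dominated by a constant multiple of this same quantity. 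This gives $\C_1(\V_q \circ \T) \lesssim C_K$.

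For condition \eqref{list:T-reg}: fix $Q \in \B$, $x,x' \in Q$. I must estimate the $\bB_q$-seminorm of the element $(\T(f) - \T(f\mathbf 1_{Q^*}))(x) - (\T(f)-\T(f\mathbf 1_{Q^*}))(x')$, whose $\eta$-component is $\int_{\Rn \setminus Q^*,\ |x-y|>\eta}\!K(x,y)f(y)\,dy - \int_{\Rn\setminus Q^*,\ |x'-y|>\eta}\!K(x',y)f(y)\,dy$ (the truncation condition can be taken with respect to $x$ up to an error handled as in the size step, since $|x-x'|\le\ell(Q)$ is small compared to $|x-y|$ for $y\notin Q^*$). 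Decomposing into annuli $2^{j}Q^* \setminus 2^{j-1}Q^*$, on each annulus the smoothness condition \eqref{eq:smooth-1} contributes a factor $\w(\rho(x,x')/\max\rho(x,y_i)) \lesssim \w(2^{-j})$ times $C_K |2^jQ^*|^{-1}$; summing the resulting $q$-variation over annuli and using $\sum_j \w(2^{-j}) \simeq \|\w\|_{\mathrm{Dini}}$ yields a bound $\lesssim (1 + \|\w\|_{\mathrm{Dini}})\lfloor f\rfloor_{Q}$, where the extra "$1+$" absorbs the purely geometric (jump-counting) part of the variation that is not yet multiplied by a Dini-small factor. This gives $\C_2(\V_q\circ\T) \lesssim 1 + \|\w\|_{\mathrm{Dini}}$.

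\textbf{Main obstacle.} The delicate point is controlling the $q$-variation \emph{inside} each annulus — i.e., handling infinitely many truncation parameters $\eta$ that fall within a single dyadic shell — rather than just counting one jump per shell. This is exactly where $q>2$ enters: one invokes the short-variation estimate for the localized truncations (a Lépingle-type or square-function bound for the variation of the truncated kernel integrals against $\bB_q$ with $q>2$), which requires the $L^2$ (or $L^r$) theory of the variational operator for $\w$-Calderón–Zygmund kernels. I would lean on the already-established variational bounds for Dini-type Calderón–Zygmund operators (the same input that makes $\V_q \circ \T$ bounded on $L^2$), reducing the in-shell variation to the size and smoothness conditions via a standard argument; making this reduction clean, uniform in $Q$, and compatible with the product-form right-hand sides $\langle f\rangle_{Q^*}$ and $\lfloor f\rfloor_Q$ is the part requiring the most care.
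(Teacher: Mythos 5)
Your skeleton is right (verify \eqref{list:T-size} and \eqref{list:T-reg}, annular decomposition, size plus smoothness conditions), but there are two genuine gaps, one small and one substantive. The small one: in the size step you bound each individual jump by $C_K\langle f\rangle_{Q^*}$ and then assert the $q$-variation of infinitely many such jumps is comparable to one of them; that does not follow from a per-jump bound. The correct mechanism, which the paper uses, is that for fixed $x,y$ the jump $K_{\eta_j}(x,y)-K_{\eta_{j+1}}(x,y)=K(x,y)\mathbf{1}_{\{\eta_{j+1}<|x-y|\le\eta_j\}}$ is nonzero for at most one $j$, whence $\|\K(x,y)\|_{\bB_q}\le |K(x,y)|$ pointwise in the kernel, and Minkowski's inequality finishes.

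The substantive gap is your treatment of the truncation-indicator variation in \eqref{list:T-reg}. After subtracting the kernel-difference piece (your Dini term, which is fine because the indicator supports are disjoint in $j$, so $\ell^q\le\ell^1$), there remains on each annulus $2^kQ\setminus 2^{k-1}Q$ the term with components $\int K(x',y)\big(\mathbf{1}_{\{\eta_{j+1}<|x-y|\le\eta_j\}}-\mathbf{1}_{\{\eta_{j+1}<|x'-y|\le\eta_j\}}\big)f(y)\,dy$. You claim the constant ``$1+$'' absorbs this, but you never produce any decay in $k$, so your sum over annuli diverges; and your proposed remedy (a L\'epingle-type short-variation bound, or the $L^2$ theory of the variational operator) is neither needed nor what the paper does — \eqref{list:T-size} and \eqref{list:T-reg} are purely off-support kernel conditions and are verified elementarily. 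The paper's argument (following \cite{DLY, MTX}) is: the symmetric differences of the two annuli are contained in shells of width $|x-x'|\le\ell(Q)$ with bounded overlap in $j$; embedding $\ell^r\hookrightarrow\ell^q$ (this is where $r\le q$ enters) and applying H\"older with exponent $r$ over these thin shells yields
\begin{align*}
\mathscr{G}_k^2 \lesssim \ell(Q)^{\frac{1}{r'}}\bigg(\int_{\Rn}\frac{|f_k(y)|^r}{|x-y|^{r+n-1}}\,dy\bigg)^{\frac1r}\lesssim 2^{-k/r'}\langle f\rangle_{2^kQ,r},
\end{align*}
which is summable in $k$ precisely because $r>1$. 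This is also why the theorem asserts the property with respect to exponents $r\in(1,q]$ rather than $r=1$, and why the right-hand side of \eqref{list:T-reg} is $\lfloor f\rfloor_{Q,r}$ — a point your proposal never accounts for. (The threshold $q>2$ and the L\'epingle-type input only enter later, as the assumed $L^{q_0}$-boundedness of $\V_q\circ\T$ in Theorem \ref{thm:Var-local}, not in the verification of \eqref{list:T-size}–\eqref{list:T-reg}.)
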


\begin{proof}
Let $Q_0 \in \B$, $x \in Q_0$, and choose $Q:= 2Q_0$. Observe that 
\begin{align*}
\|\K(x, y)\|_{\bB_q}
&=\sup_{\{\eta_j\} \searrow 0} \bigg(\sum_{j=0}^{\infty}
\big|K_{\eta_j}(x, y) - K_{\eta_{j+1}}(x, y) \big|^q \bigg)^{\frac1q}
\\
&\le |K(x, y)| \sup_{\{\eta_j\} \searrow 0} \sum_{j=0}^{\infty} 
\mathbf{1}_{\{\eta_{j+1} < |x-y| \le \eta_j \}} 
\le |K(x, y)|, 
\end{align*}
which along with the size condition \eqref{eq:size} gives 
\begin{align*}
\|&\T(f \mathbf{1}_{Q^*})(x) - \T(f \mathbf{1}_{Q_0^*})(x)\|_{\bB_q}
=\bigg\|\int_{Q^* \setminus Q_0^*} \K(x, y) f(y) \, dy \bigg\|_{\bB_q}
\\ 
&\le \int_{Q^* \setminus Q_0^*} \|\K(x, y)\|_{\bB_q} |f(y)| \, dy 
\le C_K \int_{Q^* \setminus Q_0^*} \frac{|f(y)|}{|x-y|^n} \, dy 
\lesssim C_K \langle f \rangle_{Q^*}. 
\end{align*}
This shows the condition \eqref{list:T-size}. 

To demonstrate the condition \eqref{list:T-reg}, we let $Q \in \B$ and $x, x' \in Q$. 
\begin{align*}
\|&(\T f - \T(f \mathbf{1}_{Q^*}))(x) - (\T f - \T(f \mathbf{1}_{Q^*}))(x')\|_{\bB_q}
\\
&\le \sum_{k=1}^{\infty} \bigg\|\int_{2^k Q \setminus 2^{k-1} Q} (\K(x, y) - \K(x', y)) f(y) \, dy \bigg\|_{\bB_q}
=: \sum_{k=1}^{\infty} \mathscr{G}_k,  
\end{align*}
Set $f_k := f \mathbf{1}_{2^k Q \setminus 2^{k-1} Q}$. Note that 
\begin{align}\label{GGkk}
\mathscr{G}_k 
\le \mathscr{G}_k^1 + \mathscr{G}_k^2,   
\end{align}
where 
\begin{align*}
\mathscr{G}_k^1 
&:=\sup_{\{\eta_j\} \searrow 0} \bigg(\sum_{j=0}^{\infty} \bigg|\int_{\Rn}
\big(K(x, y) - K(x', y)) \phi_j^1(y) f_k(y) \, dy\bigg|^q \bigg)^{\frac1q}
\\
&\text{with } \phi_j^1(y) = \mathbf{1}_{\{\eta_{j+1}<|x-y| \le \eta_j\}}, 
\end{align*}
and 
\begin{align*}
\mathscr{G}_k^2 
&:=\sup_{\{\eta_j\} \searrow 0} \bigg(\sum_{j=0}^{\infty} \bigg|\int_{\Rn}
K(x', y) \phi_j^2(y) f_k(y) \, dy\bigg|^q \bigg)^{\frac1q} 
\\
&\text{with } \phi_j^2(y) = \mathbf{1}_{\{\eta_{j+1}<|x-y| \le \eta_j\}} - \mathbf{1}_{\{\eta_{j+1}<|x'-y| \le \eta_j\}}. 
\end{align*}
For the first term, it follows from the smoothness condition \eqref{eq:smooth-1} that 
\begin{align}\label{GGkk-1}
\mathscr{G}_k^1 
\le \int_{\Rn} \big|K(x, y) - K(x', y)| |f_k(y)| \, dy
\lesssim \w(2^{-k}) \langle f \rangle_{2^k Q}. 
\end{align}
Use the techniques in \cite{DLY, MTX}, one can get that 
\begin{align}\label{GGkk-2}
\mathscr{G}_k^2 
&\lesssim \ell(Q)^{\frac{1}{r'}} \bigg(\int_{\Rn} \frac{|f_k(y)|^r}{|x-y|^{r+n-1}} dy \bigg)^{\frac1r}
\nonumber \\ 
&\simeq \ell(Q)^{\frac{1}{r'}} \bigg(\frac{(2^k \ell(Q))^n}{(2^{k-1}\ell(Q))^{r+n-1}} \fint_{2^k Q} |f(y)|^r dy \bigg)^{\frac1r}
\nonumber \\
&\lesssim 2^{-k/r'} \langle f \rangle_{2^k Q, r}. 
\end{align}
Now collecting \eqref{GGkk}--\eqref{GGkk-2}, we have 
\begin{align*}
&\|(\T f - \T(f \mathbf{1}_{Q^*}))(x) - (\T f - \T(f \mathbf{1}_{Q^*}))(x')\|_{\bB_q}
\lesssim (1+\|\w\|_{\mathrm{Dini}}) \lfloor f \rfloor_{Q, r}, 
\end{align*}
which shows $\C_2(\V_q \circ \T) \lesssim 1 + \|\w\|_{\mathrm{Dini}}$. 
\end{proof}

If we assume that $\V_q \circ \T$ is bounded on $L^{q_0}(\Rn)$ for some $1 < q_0 < \infty$, then by \cite[Proposition 2.1]{DLY}, 
\begin{align}\label{Var-weak}
\text{$\V_q \circ \T$ is bounded from $L^1(\Rn)$ to $L^{1, \infty}(\Rn)$.} 
\end{align}
Hence, in view of Theorem \ref{thm:Var} and \eqref{Var-weak}, Theorems \ref{thm:local}--\ref{thm:T} imply the following results. 

\begin{theorem}\label{thm:Var-local}
Let $q>2$, $1<r \le q$, and $T$ be an $\w$-Calder\'{o}n-Zygmund operator with $\w \in \mathrm{Dini}$. Assume that $\V_q \circ \T$ is bounded on $L^{q_0}(\Rn)$ for some $q_0 \in (1, \infty)$. Then the following hold: 
\begin{list}{\rm (\theenumi)}{\usecounter{enumi}\leftmargin=1.2cm \labelwidth=1cm \itemsep=0.2cm \topsep=.2cm \renewcommand{\theenumi}{\alph{enumi}}}

\item There exists $\gamma>0$ such that for all $Q \in \B$ and $f_i \in L^{\infty}_c(\Rn)$ with $\supp(f) \subset Q$, 
\begin{align*}
\big|\big\{x \in Q:  |\V_q (\T f)(x)| > t \, M_r f(x)  \big\}\big| 
&\lesssim e^{- \gamma t}  |Q|, 
\\ 
\big|\big\{x \in Q:  |\V_q(\T_b f)(x)| > t \, M_r f^*(x)  \big\}\big|
&\lesssim e^{-(\frac{\gamma t}{\|b\|_{\BMO}})^{\frac12}} |Q|, 
\end{align*}
for all $t>0$, where $f^*=M^{\lfloor r \rfloor}(|f|^r)^{\frac1r}$.
\item If $w \in A_1$ and $v^r \in A_{\infty}$, then 
\begin{align*}
\bigg\|\frac{\V_q(\T f)}{v}\bigg\|_{L^{r, \infty}(\Rn, \, w v^r)}
\lesssim \|f\|_{L^r(\Rn, \, w)}. 
\end{align*}

\item For all $r<p<\infty$ and for all $w \in A_{p/r}$, 
\begin{align*}
\|\V_q \circ \T\|_{L^p(\Rn, w) \to L^p(\Rn, w)} 
&\lesssim [w]_{A_{p/r}}^{\max\{p, (\frac{p}{r})'\}}, 
\\
\|\V_q \circ \T_b\|_{L^p(\Rn, w) \to L^p(\Rn, w)} 
&\lesssim \|b\|_{\BMO} [w]_{A_{p/r}}^{2\max\{p, (\frac{p}{r})'\}}. 
\end{align*}
 \end{list} 
\end{theorem}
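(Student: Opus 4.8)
The plan is to recognize $\V_q \circ \T$ and its commutator as concrete instances of the abstract framework of Definition~\ref{def:MBO}, and then to read off parts~(a)--(c) directly from Theorems~\ref{thm:local}, \ref{thm:weak} and \ref{thm:T-Besi} in the linear case $m=1$. By Theorem~\ref{thm:Var}, the operator $\mathbb{T} := \V_q \circ \T$ is a $\bB_q$-valued bounded oscillation operator with respect to the ball-basis $\B$ of all cubes in $\Rn$ and any exponent $r \in (1,q]$, with $\C_1(\mathbb{T}) \lesssim C_K$ and $\C_2(\mathbb{T}) \lesssim 1 + \|\w\|_{\mathrm{Dini}}$. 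Moreover $\mathbb{T}$ is $1$-linearizable through the $\bB_q$-valued linear operator $\T$, i.e. $\mathbb{T}(f)(x) = \|\T f(x)\|_{\bB_q}$, so that the general commutator identity $[\mathbb{T}, b]_{e_1}(f)(x) = \|[\T, b]_{e_1}(f)(x)\|_{\bB_q}$ together with $[\T, b]_{e_1}(f)(x) = \{[T_\eta, b]f(x)\}_{\eta>0} = \T_b f(x)$ yields $[\mathbb{T}, b]_{e_1} = \V_q \circ \T_b$. It therefore suffices to verify the standing hypotheses of the main theorems for $\mathbb{T}$ and to record the relevant Euclidean structural facts.

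First I would establish that $\mathbb{T}$ is bounded from $L^r(\Rn)$ to $L^{r,\infty}(\Rn)$. This follows from the weak $(1,1)$ bound \eqref{Var-weak}, the assumed $L^{q_0}$-boundedness, and Marcinkiewicz interpolation when $r \le q_0$; for the remaining range one invokes the standard $L^p$-theory for $q$-variation operators of Calder\'{o}n--Zygmund type, whose $\bB_q$-valued kernel $\K$ satisfies the usual size and H\"{o}rmander estimates, which upgrades a single $L^{q_0}$ bound together with weak $(1,1)$ to boundedness on $L^p(\Rn)$ for every $1<p<\infty$. In the Euclidean setting with $\B$ the family of cubes I also record: $\B$ satisfies the Besicovitch condition (Besicovitch covering lemma); $A_{\infty, \B}$ has the sharp reverse H\"{o}lder property; $A_{1, \B} \subset \bigcup_{s>1} RH_{s, \B}$; and $\osc_{\exp L} = \BMO$ with equivalent norms.

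With these in hand, part~(a) is Theorem~\ref{thm:local} applied to $\mathbb{T}$ with $m=1$: the first display gives the exponential decay for $\V_q(\T f) = \mathbb{T}(f)$, and the second, valid since $A_{\infty,\B}$ has the sharp reverse H\"{o}lder property, gives the sub-exponential decay for $\V_q(\T_b f) = [\mathbb{T}, b]_{e_1}(f)$, where $|\tau|=1$ forces the exponent $\tfrac1{|\tau|+1} = \tfrac12$, $\M_{\B,r}(f^\ast) = M_r f^\ast$, and $\|\b\|_\tau = \|b\|_{\BMO}$. Part~(b) follows from Theorem~\ref{thm:weak} with $m=1$: for $w \in A_{1,\B}$ and $v^r \in A_{\infty,\B}$ it bounds $\|\V_q(\T f)/v\|_{L^{r,\infty}(wv^r)}$ by $\|M_r f/v\|_{L^{r,\infty}(wv^r)}$, and the latter is $\lesssim \|f\|_{L^r(\Rn, w)}$ by a rescaling ($g=|f|^r$, $V=v^r$) of the mixed weak type bound for the Hardy--Littlewood maximal operator established in Section~\ref{sec:weak} (cf. \eqref{HLS}). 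Part~(c) is Theorem~\ref{thm:T-Besi} with $m=1$: for $r<p<\infty$ and $w \in A_{p/r, \B}$ one obtains $\|\mathbb{T}\|_{L^p(w)\to L^p(w)} \lesssim \C(\mathbb{T})\,[w]_{A_{p/r}}^{\max\{p, (p/r)'\}}$, and again using the sharp reverse H\"{o}lder property the same theorem gives $\|[\mathbb{T}, b]_{e_1}\|_{L^p(w)\to L^p(w)} \lesssim \C(\mathbb{T})\,\|b\|_{\BMO}\,[w]_{A_{p/r}}^{2\max\{p, (p/r)'\}}$; the finite constant $\C(\mathbb{T}) \lesssim C_K + 1 + \|\w\|_{\mathrm{Dini}} + \|\mathbb{T}\|_{L^r \to L^{r,\infty}}$ is absorbed into the implicit constants.

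I do not expect a deep obstacle here: once Theorem~\ref{thm:Var} is available the weighted estimates are essentially a plug-in. The two points requiring genuine care are (i) the $L^r \to L^{r,\infty}$ bound for all admissible $r \le q$, which really uses that a $q$-variation operator of a Calder\'{o}n--Zygmund operator obeys the full $L^p$-theory from a single $L^{q_0}$ bound rather than just Marcinkiewicz interpolation, and (ii) checking that the semi-norm nature of $\bB_q$ --- the functional $\|\cdot\|_{\bB_q}$ only descending to a genuine norm on the quotient $\widetilde{\bB}_q$ --- does not interfere with Definition~\ref{def:MBO}, which it does not, since only $\|\cdot\|_{\bB_q}$ enters conditions~\eqref{list:T-size}--\eqref{list:T-reg}.
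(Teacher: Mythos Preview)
Your proposal is correct and follows the same route as the paper: invoke Theorem~\ref{thm:Var} and the weak $(1,1)$ bound~\eqref{Var-weak} to place $\V_q \circ \T$ in the framework of Definition~\ref{def:MBO}, then read off parts~(a)--(c) from Theorems~\ref{thm:local}, \ref{thm:weak}, and \ref{thm:T-Besi} (equivalently \ref{thm:T}, since in $\Rn$ with cubes the Besicovitch condition makes $\mathcal{N}_1$, $\mathcal{N}_2$ uniformly bounded). Your extra remarks on obtaining $L^r\to L^{r,\infty}$ for $r>q_0$ and on the semi-norm nature of $\bB_q$ are correct elaborations the paper leaves implicit; for part~(b) the paper similarly relies on \eqref{HLS} to pass from $\|M_r f/v\|_{L^{r,\infty}(wv^r)}$ to $\|f\|_{L^r(w)}$, exactly as you do via the substitution $g=|f|^r$, $V=v^r$.
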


\begin{theorem}\label{thm:Var-comp}
Let $q>2$, $1<r \le q$, and $T$ be an $\w$-Calder\'{o}n-Zygmund operator with $\w \in \mathrm{Dini}$. Assume that $\V_q \circ \T$ is bounded on $L^{q_0}(\Rn)$ for some $q_0 \in (1, \infty)$. If $b \in \CMO(\Rn)$, then $\V_q \circ \T_b$ is compact on $L^p(\Rn, w)$ for all $r<p < \infty$ and for all $w \in A_{p/r}$.
\end{theorem}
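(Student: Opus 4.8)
The plan is to follow the same scheme used for the commutators of multilinear $\omega$-Calder\'{o}n-Zygmund operators (Theorem \ref{thm:CZOcom}) and of the Littlewood-Paley commutators (Theorem \ref{thm:LP-com}), now in the linear $\bB_q$-valued setting. First I would record the quantitative weighted bounds already available: by Theorem \ref{thm:Var} the operator $\V_q \circ \T$ is a $\bB_q$-valued bounded oscillation operator with respect to $\B$ and exponent $r \in (1, q]$, and by \eqref{Var-weak} together with the assumed $L^{q_0}$-boundedness it maps $L^r(\Rn)$ into $L^{r/1,\infty}(\Rn)$ after a trivial rescaling (indeed it is of weak type $(1,1)$, which implies the weak type needed in Theorems \ref{thm:sparse}--\ref{thm:T}). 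Hence Theorem \ref{thm:Var-local} gives, for all $r<p<\infty$ and all $w \in A_{p/r}$,
\begin{align*}
\|\V_q \circ \T\|_{L^p(\Rn, w) \to L^p(\Rn, w)} &\lesssim [w]_{A_{p/r}}^{\max\{p,(\frac pr)'\}}, \\
\|\V_q \circ \T_b\|_{L^p(\Rn, w) \to L^p(\Rn, w)} &\lesssim \|b\|_{\BMO} [w]_{A_{p/r}}^{2\max\{p,(\frac pr)'\}}.
\end{align*}
Since in $(\Rn,\Ln)$ the hypotheses of Theorem \ref{thm:Tb} (complete space of homogeneous type, metrically continuous measure) hold, it then suffices to produce \emph{one} unweighted compactness statement: that $\V_q \circ \T_b$ is compact on $L^{p}(\Rn)$ for some $r<p<\infty$.

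To prove that unweighted compactness I would invoke the Fr\'echet--Kolmogorov criterion, Theorem \ref{thm:FKhs-1} (or \ref{thm:FKhs-2}), exactly as in the proof of Theorem \ref{thm:CZOcom}. By the $\BMO$-continuity estimate for $\V_q \circ \T_b$ above and the density of $\mathscr{C}_b^{\alpha}(\Rn)$ in $\CMO(\Rn)$ (equivalently \eqref{CCC}), one reduces to $b \in \mathscr{C}_c^{\alpha}(\Rn)$ with $\supp b \subset B(0,A_0)$. Then one checks the three conditions: (a) uniform boundedness, which is \eqref{Cmax}-type and already follows from the $L^p$ bound; (b) uniform decay at infinity, obtained from the size bound $\|\K(x,y)\|_{\bB_q}\le |K(x,y)|\lesssim |x-y|^{-n}$ together with $\supp b$ bounded and H\"older's inequality, giving a power-decay tail $\lesssim |x|^{-n}$ exactly as in \eqref{eq:CC-5}/\eqref{kf-3}; (c) the equi-continuity/translation condition, which is the crux. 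For (c) I would split $[T_\eta,b]f(x)-[T_\eta,b]f(x')$ into the four pieces $\mathscr{I}_1,\ldots,\mathscr{I}_4$ used in \eqref{Tebe}: the term carrying $b(x)-b(x')$ is controlled by $\rho(x,x')^\alpha$ times the maximal operator $\V_q^* := \sup_\eta \|\{T_{\eta'}f\}_{\eta'>\eta}\|_{\bB_q}$ (or simply the variation operator itself, which dominates the maximal truncation); the smooth-kernel term uses the Dini smoothness \eqref{eq:smooth-1} and yields $\int_0^{2r/\eta}\omega(t)\,\frac{dt}{t}$ times $Mf(x)$; the two near-diagonal terms use the size bound and the H\"older regularity of $b$ to produce $\eta^\alpha Mf$. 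An additional subtlety specific to the variation operator is that one must also absorb the "jump" term $\mathscr{G}_k^2$-type contribution coming from the moving characteristic functions $\mathbf{1}_{\{|x-y|>\eta\}}$ versus $\mathbf{1}_{\{|x'-y|>\eta\}}$; this is handled precisely by the estimate \eqref{GGkk-2} already proved inside Theorem \ref{thm:Var}, which shows it behaves like an extra $r$-Hörmander term and therefore contributes a harmless factor $2^{-k/r'}\langle f\rangle_{2^kQ,r}$. Choosing $\eta=\varepsilon^{1/\alpha}$ and $r<\delta=\delta(\varepsilon)$ small makes all pieces $\lesssim \varepsilon[\V_q^*(f)+Mf(x)+Mf(x')]$ pointwise; then using the $L^p$-boundedness of $\V_q \circ \T$, of $M$, and of $M\circ M$ one gets $\|\V_q(\T_b f) - (\V_q(\T_b f))_{B(\cdot,r)}\|_{L^p} \lesssim \varepsilon \|f\|_{L^p}$, which is condition (c).

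The main obstacle I anticipate is the boundedness of the maximal truncated variation operator $\V_q^*$, i.e. controlling $\sup_{\eta>0}\|\{T_{\eta'}f\}_{\eta'>\eta}\|_{\bB_q}$ on $L^p(\Rn)$: unlike the Calder\'on--Zygmund case where a clean Cotlar inequality is available (as exploited in \eqref{Cotlar}), here one needs the analogous Cotlar-type inequality for the $q$-variation, which is known (it is exactly what underlies \eqref{Var-weak} and the weighted bounds in \cite{DLY, MTX}), so I would cite those references rather than reprove it. The second delicate point is bookkeeping the two regularity mechanisms of the variation operator simultaneously — Dini smoothness of $K$ and the $L^r$-type control of the indicator-jump term — and making sure the parameter $r \in (1,q]$ is used consistently so that $\M_r$, not $\M$, appears wherever the jump term is estimated; this is routine given the estimates inside the proof of Theorem \ref{thm:Var} but requires care. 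Once (a), (b), (c) are verified, Theorem \ref{thm:FKhs-1} gives unweighted compactness on $L^p(\Rn)$ for the chosen $p$, and then Theorem \ref{thm:Tb} upgrades this to compactness of $\V_q \circ \T_b$ on $L^p(\Rn,w)$ for every $r<p<\infty$ and every $w\in A_{p/r}$, completing the proof.
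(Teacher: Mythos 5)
Your proposal is correct and its architecture coincides with the paper's: quantitative weighted bounds from Theorem \ref{thm:Var-local}, reduction of weighted compactness to a single unweighted compactness statement via Theorem \ref{thm:Tb} (legitimate since $\V_q\circ\T_b$ is linearizable), and verification of the three Fr\'echet--Kolmogorov conditions of Theorem \ref{thm:FKhs-1}, with essentially the same treatment of conditions (a) and (b). The one genuine divergence is in how the diagonal singularity is handled in condition (c). The paper first replaces $K$ by the smoothly truncated kernel $K^{\tau}(x,y)=K(x,y)[1-\varphi(\tau^{-1}|x-y|)]$, proves the operator-norm approximation \eqref{VV-4} using that $b$ may be taken Lipschitz, and then verifies (c) only for $\V_q\circ\T_b^{\tau}$, whose kernel vanishes near the diagonal; consequently the decomposition \eqref{VHH} has no near-diagonal pieces at all and every estimate lives at scales $\gtrsim\tau$. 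You instead keep the full operator and split the variation at an auxiliary scale $\eta$, controlling the near-diagonal contribution by its total ($\ell^1$) variation, which the size condition and the H\"older continuity of $b$ bound by $\eta^{\alpha}Mf$; this works, but you must remember that the ``near'' piece is itself a variation over all sub-scales of $\eta$ (so it has to be dominated by the $\ell^1$ norm of the dyadic contributions, not estimated for a single truncation radius), and that the jump term is then supported where $|x-y|\gtrsim\eta$, yielding a factor $(s/\eta)^{1/r'}M_r f$ exactly as in \eqref{GGkk-2}. In short, your route trades the approximation step \eqref{VV-4} and the kernel decomposition \eqref{KKK} for this extra bookkeeping; both are sound. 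One simplification: the maximal truncated variation $\V_q^*$ and a Cotlar-type inequality are not actually needed --- the term carrying $b(x)-b(x')$ is the variation of the family $\{T_{\eta'}f(x)\}_{\eta'\ge\eta}$, which is dominated pointwise by $\V_q(\T f)(x)$ itself, whose $L^p$ bound you already have from Theorem \ref{thm:Var-local}; this is precisely how the paper handles $\mathscr{H}_{2,1}$ in \eqref{VHH-21}.
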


\begin{proof}
Let $r<p<\infty$, $w \in A_{p/r}$, and $b \in \CMO(\Rn)$. Then by Theorem \ref{thm:Var-local}, 
\begin{align}
\label{VV-1}
\|\V_q \circ \T\|_{L^p(\Rn, w) \to L^p(\Rn, w)} 
& \lesssim [w]_{A_{p/r}}^{\max\{p, (\frac{p}{r})'\}}, 
\\
\label{VV-2}
\|\V_q \circ \T_b\|_{L^p(\Rn, w) \to L^p(\Rn, w)} 
& \lesssim \|b\|_{\BMO} [w]_{A_{p/r}}^{2\max\{p, (\frac{p}{r})'\}}. 
\end{align} 
In view of \eqref{VV-2}, we may assume that $b\in \mathscr{C}_c^{\infty }(\Rn)$ with $\supp b \subset B(0, A_0)$ for some $A_0 \ge 1$. Let $\varphi \in \mathscr{C}_c^{\infty}(\Rn)$ be a nonnegative and radial function satisfying $\mathbf{1}_{B(0, 2)} \le \varphi \le \mathbf{1}_{B(0, 4)}$. Given $\tau>0$, we denote 
\begin{align*}
K^{\tau}(x, y) &:= K(x, y) [1-\varphi(\tau^{-1} |x-y|)], 
\\ 
T_{\eta}^{\tau}f(x) &:= \int_{|x-y|>\eta} K^{\tau}(x, y) f(y) \, dy, 
\\ 
\T^{\tau} f(x) &:= \big\{T_{\eta}^{\tau}f(x) \big\}_{\eta > 0}, 
\\
\T_b^{\tau} f(x) &:= \big\{[T_{\eta}^{\tau}, b]f(x) \big\}_{\eta > 0}. 
\end{align*}
Note that $\supp(K^{\tau}(x, y)) \subset \{|x-y|>2\tau\}$ and 
\begin{align}\label{suppK}
K^{\tau}(x, y) = K(x, y) \text{ for all } |x-y|>4\tau. 
\end{align}
By definition, we may write  
\begin{equation}\label{KKK}
\begin{aligned}
&K^{\tau}= K^{\tau, 1} + K^{\tau, 2}, 
\\
&\text{where $K^{\tau, 1}$ an $\w$-Calder\'{o}n-Zygmund kernel} 
\\
&\text{and $K^{\tau, 2}$ is a standard Calder\'{o}n-Zygmund kernel}. 
\end{aligned}
\end{equation}
Hence, \eqref{VV-1} and \eqref{KKK} imply 
\begin{align}\label{VV-3}
\|\V_q (\T^{\tau} f)\|_{L^p(\Rn, w)}
\le \sum_{i=1}^2 \|\V_q (\T^{\tau, i} f)\|_{L^p(\Rn, w)} 
\lesssim \|f\|_{L^p(\Rn, w)}. 
\end{align}
On the other hand, for any $\tau>0$ and $x \in \Rn$, if we write $\Delta_K(y) := K(x, y) - K^{\tau}(x, y)$, then it follows from size condition \eqref{eq:size} and \eqref{kf-2} that 
\begin{align*}
\big|&\V_q(\T_b f)(x) - \V_q(\T_b^{\tau} f)(x)\big|
\\
&\le \sup_{\{\eta_j\} \searrow 0} \bigg(\sum_{j=0}^{\infty} \bigg|
\int_{\{\eta_{j+1}< |x-y| \le \eta_j\}} (b(x)-b(y)) \Delta_K(y) f(y) \, dy \bigg|^q \bigg)^{\frac1q}
\\
&\lesssim \|\nabla b\|_{L^{\infty}(\Rn)} \sup_{\{\eta_j\} \searrow 0} \sum_{j=0}^{\infty} 
\int_{|x-y|< 4 \tau \atop \eta_{j+1}< |x-y| \le \eta_j} \frac{|f(y)|}{|x-y|^{n-1}} \, dy 
\\
&\lesssim \tau \, Mf(x)
\le \tau \, M_r f(x), 
\end{align*}
which in turn gives 
\begin{align}\label{VV-4}
\lim_{\tau \to 0} \sup_{\|f\|_{L^p(\Rn, w)} \le 1} \big\|\V_q(\T_b f) - \V_q(\T_b^{\tau} f)\big\|_{L^p(\Rn, w)}
=0. 
\end{align}
This means that it suffices to show that given $\tau>0$, 
\begin{align}\label{VTU-1}
\text{$\V_q \circ \T_b^{\tau}$ is compact on $L^p(\Rn, w)$}.
\end{align}
Since $\V_q \circ \T_b^{\tau}$ is linearizable, in light of Theorem \ref{thm:Tb} and \eqref{VV-3}, \eqref{VTU-1} is reduced to showing that 
\begin{align}\label{VTU-2}
\text{$\V_q \circ \T_b^{\tau}$ is compact on $L^p(\Rn)$}. 
\end{align}

It remains to justify \eqref{VTU-2}. By \eqref{VV-2} and \eqref{VV-4}, 
\begin{align}\label{VFK-1}
\sup_{\|f\|_{L^p(\Rn)} \le 1} \|\V_q(\T_b^{\tau} f)\|_{L^p(\Rn)} 
\lesssim 1. 
\end{align}
Let $\varepsilon \in (0,1)$. Then for any $|x|>A \ge 2A_0$, the size condition \eqref{eq:size} and \eqref{kf-3} give 
\begin{align*}
\V_q(\T_b^{\tau} f)(x)
&\le \sup_{\{\eta_j\} \searrow 0} \sum_{j=0}^{\infty} 
\big|[T_{\eta_j}^{\tau}, b]f(x) - [T_{\eta_{j+1}}^{\tau}, b]f(x)\big| 
\nonumber \\
&\le \sup_{\{\eta_j\} \searrow 0} \sum_{j=0}^{\infty} 
\int_{\eta_{j+1}<|x-y| \le \eta_j \atop y \in B(0, A_0)} |b(x) - b(y)| |K^{\tau}(x, y)| |f(y)| \, dy 
\nonumber \\
&\lesssim \|b\|_{L^{\infty}(\Rn)} \int_{B(0, A_0)} \frac{|f(y)|}{|x-y|^n} dy
\lesssim |x|^{-n} A_0^{\frac{n}{p'}} \|f\|_{L^p(\Rn)}, 
\end{align*}
which along with $A > \max\{2A_0, \varepsilon^{-p'/n}\}$ implies 
\begin{align}\label{VFK-2}
\lim_{A \to \infty} \sup_{\|f\|_{L^p(\Rn)} \le 1}
\big\|\V_q(\T_b^{\tau} f) \mathbf{1}_{\{|x|>A\}} \big\|_{L^p(\Rn)} 
=0. 
\end{align}

To proceed, let $0<s<\tau$, and fix $x \in \Rn$ and $x' \in B(x, s)$. We split
\begin{align}\label{VHH}
|\V_q(\T_b^{\tau} f)(x) - \V_q(\T_b^{\tau} f)(x')| 
\le \mathscr{H}_1 + \mathscr{H}_2, 
\end{align}
where 
\begin{align*}
\mathscr{H}_1 
&:=\sup_{\{\eta_j\} \searrow 0} \bigg(\sum_{j=0}^{\infty} \bigg|\int_{\Rn}
(b(x')-b(y)) K^{\tau}(x', y) 
\\
&\qquad\qquad\qquad\quad\times \big(\mathbf{1}_{\{\eta_{j+1}<|x-y| \le \eta_j\}} - \mathbf{1}_{\{\eta_{j+1}<|x'-y| \le \eta_j\}} \big) 
f(y) \, dy\bigg|^q \bigg)^{\frac1q}, 
\\
\mathscr{H}_2 
&:=\sup_{\{\eta_j\} \searrow 0} \bigg(\sum_{j=0}^{\infty} \bigg|\int_{\Rn}
\big[(b(x)-b(y))K^{\tau}(x, y) - (b(x')-b(y)) K^{\tau}(x', y) \big] 
\\
&\qquad\qquad\qquad\qquad\times \mathbf{1}_{\{\eta_{j+1}<|x-y| \le \eta_j\}} f(y) \, dy\bigg|^q \bigg)^{\frac1q}.
\end{align*}
Much as in the first estimate in \eqref{GGkk-2}, it follows from \eqref{suppK} that  
\begin{align}\label{VHH-1}
\mathscr{H}_1 
&\lesssim s^{\frac{1}{r'}} \bigg(\int_{|y-x'|>2\tau} \frac{|(b(x')-b(y))f(y)|^r}{|x-y|^{r+n-1}} dy \bigg)^{\frac1r}
\nonumber \\ 
&\lesssim s^{\frac{1}{r'}} \bigg(\int_{|y-x|>\tau} \frac{|f(y)|^r}{|x-y|^{r+n-1}} dy \bigg)^{\frac1r}
\lesssim s^{\frac{1}{r'}} M_r f(x). 
\end{align}
For $\mathscr{H}_2$, by the fact that $\supp(K^{\tau}(x, y)) \cup \supp(K^{\tau}(x', y)) \subset \{|x-y|>\tau\}$, we have 
\begin{align}\label{VHH-2}
\mathscr{H}_2 
\le \mathscr{H}_{2, 1} + \mathscr{H}_{2, 2},   
\end{align}
where 
\begin{align*}
\mathscr{H}_{2,1} 
&:= |b(x) -b(x')| \sup_{\{\eta_j\} \searrow 0} \bigg(\sum_{j=0}^{\infty} \bigg|\int_{|x-y| >\tau \atop \eta_{j+1}<|x-y| \le \eta_j} 
K^{\tau}(x, y) f(y) \, dy\bigg|^q \bigg)^{\frac1q}, 
\\ 
\mathscr{H}_{2,2}
&:= \sup_{\{\eta_j\} \searrow 0} \sum_{j=0}^{\infty} \bigg|
\int_{|x-y|>\tau \atop \eta_{j+1}<|x-y| \le \eta_j} (b(x') - b(y)) \Delta_{K^\tau}(y) f(y) \, dy\bigg|. 
\end{align*}
Here, $\Delta_{K^\tau}(y) := K^{\tau}(x, y) - K^{\tau}(x', y)$. Observe that 
\begin{align}\label{VHH-21}
\mathscr{H}_{2,1} 
\lesssim s \, \V_q (\T^{\tau} f)(x), 
\end{align}
and 
\begin{align}\label{VHH-22}
\mathscr{H}_{2,2}
\lesssim \int_{|x-y|>\tau} \frac{|x-x'|^{\delta}}{|x-y|^{n+\delta}}  |f(y)| \, dy
\lesssim s^{\delta} Mf(x) 
\le s^{\delta} M_r f(x). 
\end{align}
Gathering \eqref{VHH}--\eqref{VHH-22}, we obtain 
\begin{align*}
|\V_q(\T_b^{\tau} f)(x) - \V_q(\T_b^{\tau} f)(x')| 
\lesssim s \, \V_q (\T^{\tau} f)(x) + (s^{\frac{1}{r'}} + s^{\delta}) M_rf(x), 
\end{align*}
which together with \eqref{VV-3} gives that 
\begin{align*}
&\big\|\V_q(\T_b^{\tau} f) - \big(\V_q(\T_b^{\tau} f) \big)_{B(\cdot, s)}\big\|_{L^p(\Rn)} 
\\
&\le \bigg\|\fint_{B(x, s)} |\V_q(\T_b^{\tau} f)(x) - \V_q(\T_b^{\tau} f)(x')| dx'\bigg\|_{L^p(\Rn)} 
\\ 
&\lesssim s \|\V_q (\T^{\tau} f)\|_{L^p(\Rn)}
+ (s^{\frac{1}{r'}} + s^{\delta}) \|M_r f\|_{L^p(\Rn)}
\lesssim \varepsilon \|f\|_{L^p(\Rn)},  
\end{align*}
provided $\delta_0 := \min\{\tau, \varepsilon^{r'}, \varepsilon^{\frac{1}{\delta}}\}$ and $0< s< \delta_0$. This shows 
\begin{align}\label{VFK-3}
\lim_{s \to 0} \sup_{\|f\|_{L^p(\Rn)} \le 1}
\big\|\V_q(\T_b f) - \big(\V_q(\T_b f)\big)_{B(\cdot, s)} \big\|_{L^p(\Rn)} 
=0. 
\end{align}
Therefore, \eqref{VTU-2} is a consequence of Theorem \ref{thm:FKhs-1}, \eqref{VFK-1}, \eqref{VFK-2}, and \eqref{VFK-3}. 
\end{proof} 

\section{Preliminaries}\label{sec:pre}
This section contains some preliminaries including the geometry of measure spaces and the properties of Muckenhoupt weights on measure spaces.

\subsection{Geometry of measure spaces}\label{sec:geomtry}
Let $(\Sigma, \mu)$ be a measure space. Assume that a basis $\B$ satisfies the property \eqref{list:B4}. It follows from the property \eqref{list:B4} that 
\begin{align}\label{ball} 
\text{if $A, B \in \B$ satisfy $A \cap B \neq \emptyset$ and $\mu(A) \le 2\mu(B)$, then $A \subset B^*$.} 
\end{align}
Given $B \in \B$, we denote 
\[
B^{(0)} := B, \quad 
B^{(1)} := B^*, \quad 
B^{(k+1)} := (B^{(k)})^*, \quad k \ge 1. 
\]
Then the property \eqref{list:B4} implies  
\begin{align}\label{BkBk}
\mu(B^{(k+1)}) \le \C_0 \, \mu(B^{(k)}) \quad\text{ and }\quad  
\mu(B^{(k)}) \le \C_0^k \mu(B), \quad k \ge 0. 
\end{align}

A set $E \subset \Sigma$ is called {\tt bounded} if there exists some $B \in \B$ so that $E \subset B$. We say that a measurable set $E$ is {\tt almost surely} a subset of a measurable set $F$ if $\mu(E \setminus F)=0$, in which case we write $E \subset F$ a.s. 
Let $\#A$ denote the cardinality of a finite set $A$. 

We collect the geometric properties of measure spaces given in \cite{Kar} as follows.
\begin{lemma}\label{lem:BG}
Let $(\Sigma, \mu)$ be a measure space with a ball-basis $\B$. If $B, G_k \in \B$, $k=1, 2, \ldots$, satisfy $G_k \cap B \neq \emptyset$ and $\lim_{k \to \infty} \mu(G_k) =r := \sup_{A \in \B} \mu(A)$, then $\Sigma \subset \bigcup_k G_k^*$. Moreover, for any ball $A \in \B$, we have $A \subset G_k$ for any $k \ge k_0$, where $k_0$ is some integer. 
\end{lemma}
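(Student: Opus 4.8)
\textbf{Proof proposal for Lemma \ref{lem:BG}.}

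The plan is to exploit property \eqref{list:B3} of the ball-basis to reduce the claim $\Sigma \subset \bigcup_k G_k^*$ to a statement about a single ball, and then use property \eqref{list:B4} together with the hypothesis $\mu(G_k) \to r$ to absorb that ball into some $G_k^*$. First I would fix an arbitrary point $x \in \Sigma$ and, using \eqref{list:B2} applied to a point of $B$ (which exists since $\mu(B)>0$) and $x$, find a ball $A \in \B$ containing both $x$ and some point of $B$; so $A \cap B \neq \emptyset$ and $\mu(A) < \infty$. By definition $\mu(A) \le r = \sup_{A' \in \B} \mu(A')$. Since $\mu(G_k) \to r$, for all $k$ large enough we have $\mu(A) \le 2\mu(G_k)$; this is where the convergence of $\mu(G_k)$ to the supremum is used in an essential way. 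Combined with $G_k \cap B \neq \emptyset$ we must now compare $A$ and $G_k$ directly — but a priori $A$ and $G_k$ need not intersect, only $A \cap B$ and $G_k \cap B$ are nonempty.

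To bridge that gap I would pass through $B$: since $A \cap B \neq \emptyset$, if $\mu(A) \le 2\mu(B)$ then \eqref{ball} gives $A \subset B^*$; and likewise, since $G_k \cap B \neq \emptyset$ and (for large $k$) $\mu(B) \le \mu(G_k) \le 2\mu(G_k)$, property \eqref{list:B4} applied around $G_k$ shows $B \subset G_k^*$, whence $A \subset B^* \subset G_k^{**}$. If instead $\mu(A) > 2\mu(B)$, then $B \cap A \neq \emptyset$ with $\mu(B) \le 2\mu(A)$, so $B \subset A^*$, and one argues with $A$ in place of $B$ in the previous comparison: for $k$ large, $\mu(A) \le 2\mu(G_k)$ and $G_k \cap B \neq \emptyset$; since $B \subset A^*$, pick a point $z \in G_k \cap B \subset A^*$, but I actually want $A \cap G_k \neq \emptyset$ — here I would note that in fact $x \in A$ and it suffices to show $A \subset G_k^*$ or a bounded iterate thereof. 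The cleanest route is: choose $k$ large enough that $\mu(\max(A,B)) \le 2\mu(G_k)$ where I write $\max(A,B)$ for whichever of $A,B$ has larger measure; then since $G_k \cap B \neq \emptyset$ and $A\cap B\neq\emptyset$, one of $A \cap G_k$, $A \cap B$, $B \cap G_k$ chains them, and applying \eqref{ball} at most twice yields $A \subset G_k^{(2)}$, hence $x \in G_k^{**}$. Replacing $G_k^*$ by $G_k^{**}$ throughout is harmless provided the statement is read with the enlargement; but since the lemma as quoted from \cite{Kar} writes $G_k^*$, I would instead observe that by \eqref{BkBk} one may relabel and it is a fixed finite iterate, or simply cite the precise bookkeeping in \cite{Kar}. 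The "moreover" clause is immediate from the same chain: given any $A \in \B$, apply \eqref{list:B2} to a point of $A$ and a point of $B$ to get $A \cap B \neq \emptyset$ after enlarging, then for $k \ge k_0$ with $\mu(A) \le 2\mu(G_k)$ and $\mu(B)\le 2\mu(G_k)$, the two-step application of \eqref{ball} gives $A \subset G_k^{(2)}$; if one wants literally $A \subset G_k$, that fails in general, so I expect the intended reading is $A \subset G_k^*$ and the statement is slightly informal — I would match the formulation of \cite[proof]{Kar}.

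The main obstacle I anticipate is precisely this comparison of $A$ with $G_k$ when $A$ and $G_k$ are not known to intersect: property \eqref{list:B4} and its consequence \eqref{ball} only produce containments $A \subset B^*$ under a direct intersection hypothesis, so one must route the argument through the fixed ball $B$ and pay the price of one or two extra stars, and then reconcile the bookkeeping with the clean statement. A secondary subtlety is the case distinction according to which of $\mu(A), \mu(B)$ is larger, and ensuring that the threshold "$k$ large enough" can be chosen uniformly in $x$ — which it can, since it depends only on $\mu(A)$ through $\mu(A) \le r$ and the fixed $B$, so the same $k_0$ works for every $x$ once one notes $\mu(A)$ can always be taken $\le r$. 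Aside from that, everything is a routine application of the axioms \eqref{list:B2}, \eqref{list:B4} and the elementary consequence \eqref{ball} recorded at the start of Section \ref{sec:geomtry}.
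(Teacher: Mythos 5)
The paper itself gives no proof of this lemma (it is imported from \cite{Kar}), so your argument must stand on its own, and as written it does not close: you only establish $x \in G_k^{(2)}$ and $A \subset G_k^{(2)}$, and the concluding suggestion that replacing $G_k^*$ by $G_k^{**}$ is ``harmless'' or can be repaired by ``relabelling'' via \eqref{BkBk} is not a proof --- $G_k^{(2)}$ is a genuinely larger ball than $G_k^*$, and the statement asserts containment in $G_k^*$. Even the two-star containment is not fully justified: from $A \subset B^*$ and $B \subset G_k^*$ you infer $B^* \subset G_k^{**}$, but \eqref{ball} applied to the pair $(B^*, G_k^*)$ needs $\mu(B^*) \le 2\mu(G_k^*)$, whereas your stated bounds only give $\mu(B^*) \le \C_0\mu(B) \le 2\C_0\mu(G_k)$; and the later step ``one of $A\cap G_k$, $A\cap B$, $B\cap G_k$ chains them, and applying \eqref{ball} at most twice yields $A \subset G_k^{(2)}$'' simply restates, rather than resolves, the difficulty you correctly identified (that \eqref{ball} requires an actual intersection).

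The missing observation that yields exactly one star is: every $P \in \B$ has $\mu(P) \le r$ and $\mu(P)<\infty$, so $\mu(P) \le 2\mu(G_k)$ for all large $k$ (depending only on $P$); hence \emph{any fixed ball containing $B$} is eventually contained in $G_k^*$, since it meets $G_k$ directly. Thus, given $x$, take $A \ni x$ with $A \cap B \neq \emptyset$ by \eqref{list:B2} and split on which of $\mu(A), \mu(B)$ is larger. If $\mu(A) \le 2\mu(B)$, then $A \subset B^*$ by \eqref{ball}, and $B^* \supset B$ meets $G_k$ with $\mu(B^*) \le r \le 2\mu(G_k)$ for large $k$, so $A \subset B^* \subset G_k^*$. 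If $\mu(A) > 2\mu(B)$, then $\mu(B) \le 2\mu(A)$ gives $B \subset A^*$, so $A^*$ meets $G_k$ and $\mu(A^*) \le r$, whence $x \in A \subset A^* \subset G_k^*$ for $k \ge k_0(A)$. Running the same dichotomy first for the connecting ball $D \ni \{x,y\}$ ($x\in A$, $y\in B$) against $B$, and then for $A$ against $D$, gives the ``moreover'' clause in the form $A \subset G_k^*$ --- you are right that the literal ``$A \subset G_k$'' in the statement is false in general and must be read as $A \subset G_k^*$.
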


\begin{definition}\label{def:density}
Given a measurable set $E \subset \Sigma$, a point $x \in E$ is said to be {\tt a density point} if for any $\varepsilon>0$ there exists a ball $B$ containing $x$ such that $\mu(B \cap E) > (1-\varepsilon) \mu(B)$. We say that a measure space $(\Sigma, \mu)$ satisfies {\tt the density property} if for any measurable set $E$, almost all points $x \in E$ are density points. 
\end{definition}

\begin{lemma}\label{lem:BE}
Let $(\Sigma, \mu)$ be a measurable space with a basis $\B$ satisfying the property \eqref{list:B4}. Then the following hold: 
\begin{list}{\rm (\theenumi)}{\usecounter{enumi}\leftmargin=1.2cm \labelwidth=1cm \itemsep=0.2cm \topsep=.2cm \renewcommand{\theenumi}{\alph{enumi}}}

\item\label{list-1} If $E \subset \Sigma$ is bounded such that $E \subset \bigcup_{G \in \mathcal{G}} G$ for some $\mathcal{G} \subset \B$, then there exists a (finite or infinite) pairwise disjoint subcollection $\mathcal{G}' \subset \mathcal{G}$ such that $E \subset \bigcup_{G \in \mathcal{G}'} G^*$.

\item\label{list-2} If $A \in \B$ and $\mathcal{G} \subset \B$ is a pairwise disjoint family satisfying 
\[
A \cap G^* \neq \emptyset 
\quad\text{ and }\quad 
0<c_1 \le \mu(G) \le c_2 < \infty, 
\quad\forall G \in \mathcal{G}, 
\]
with some positive constants $c_1, c_2$, then $\#\mathcal{G} \le c_1^{-1} \min\{c_2 \C_0^3, \C_0 \mu(A)\}$. 

\item\label{list-3} The property \eqref{list:B3} is equivalent to the density condition.  

\item\label{list-4} If in addition $\B$ satisfies the property \eqref{list:B3}, then for any bounded measurable set $E$ with $\mu(E)>0$, and $\varepsilon>0$ there is a sequence $\{B_k\} \subset \B$ such that 
\begin{align*}
\mu \bigg(\bigcup_k B_k \setminus E \bigg) < \varepsilon
\quad\text{ and }\quad 
\mu \bigg(E \setminus \bigcup_k B_k\bigg) < \alpha \, \mu(E), 
\end{align*}
where $\alpha \in (0, 1)$ is an allowable constant. 

\item\label{list-5} If in addition $\B$ satisfies the property \eqref{list:B3}, then for any bounded measurable set $E$ there exists a sequence $\{B_k\} \subset \B$ such that 
\begin{align*}
E \subset \bigcup_k B_k \text{ a.s. } 
\quad\text{ and }\quad 
\sum_k \mu(B_k) \le 2\C_0 \, \mu(E). 
\end{align*}
\end{list} 
\end{lemma}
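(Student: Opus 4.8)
The five assertions all rest on one Vitali-type covering step, so the plan is to prove part (a) first and then feed it into the rest. For part (a): since $E$ is bounded, fix $B_0\in\B$ with $E\subset B_0$ and discard from $\mathcal{G}$ every ball missing $E$. If some remaining $G$ has $\mu(G)\ge 2\mu(B_0)$, then $B_0\cap G\neq\emptyset$ and $\mu(B_0)\le 2\mu(G)$, so $B_0\subset G^*$ by \eqref{ball} and $\mathcal{G}'=\{G\}$ works. Otherwise every remaining ball has measure $<2\mu(B_0)$, and I would run the greedy selection: having chosen pairwise disjoint $G_1,\dots,G_k$, let $M_{k+1}$ be the supremum of $\mu(G)$ over the balls of $\mathcal{G}$ disjoint from all of them, pick $G_{k+1}$ with $\mu(G_{k+1})>M_{k+1}/2$, and stop if no such ball exists. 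Each $G_k$ meets $B_0$ with $\mu(G_k)<2\mu(B_0)$, hence $G_k\subset B_0^*$ by \eqref{ball}; by disjointness and \eqref{BkBk}, $\sum_k\mu(G_k)\le\mu(B_0^*)<\infty$, so $\mu(G_k)\to0$ and $M_k\to0$. Consequently every $G\in\mathcal{G}$ meeting $E$ must meet some $G_j$ (otherwise $\mu(G)\le M_k\to0$, contradicting \eqref{list:B1}); taking the first such $j$ gives $\mu(G)\le M_j<2\mu(G_j)$ and $G\cap G_j\neq\emptyset$, whence $G\subset G_j^*$ by \eqref{ball}. Thus $E\subset\bigcup_{G\in\mathcal{G}}G\subset\bigcup_j G_j^*$.

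For part (b) I would argue by a size dichotomy between the balls of $\mathcal{G}$ and $A$. If $\mu(G^*)\le 2\mu(A)$ for every $G\in\mathcal{G}$, then $G^*\cap A\neq\emptyset$ forces $G\subset G^*\subset A^*$ by \eqref{ball}; the $G$'s being pairwise disjoint with $\mu(G)\ge c_1$, this gives $c_1\,\#\mathcal{G}\le\mu(A^*)\le\C_0\mu(A)$. If instead some $G^*$ has $\mu(G^*)>2\mu(A)$, pick $G_0\in\mathcal{G}$ with $\mu(G_0^*)$ (nearly) maximal; then $\mu(A)\le 2\mu(G_0^*)$ and $A\cap G_0^*\neq\emptyset$ give $A\subset G_0^{**}$ by \eqref{ball}. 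For any $G\in\mathcal{G}$ we then have $G^*\cap G_0^{**}\supset G^*\cap A\neq\emptyset$, while $\mu(G^*)\le 2\mu(G_0^{**})$ (from near-maximality of $\mu(G_0^*)$ and $\mu(G_0^*)\le\mu(G_0^{**})$), so a further application of \eqref{ball} yields $G\subset G^*\subset G_0^{***}$. Hence $c_1\,\#\mathcal{G}\le\mu(G_0^{***})\le\C_0^3\mu(G_0)\le\C_0^3 c_2$ by \eqref{BkBk}. Each case produces one of the two entries of the stated minimum; the delicate point is choosing to maximise $\mu(G_0^*)$ rather than $\mu(G_0)$, so that the comparison $\mu(G^*)\le\mu(G_0^{**})$ costs no extra power of $\C_0$.

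Part (d) is immediate: apply \eqref{list:B3} to $E$ with the parameter $\min\{\varepsilon,\alpha\mu(E)\}$ in place of $\varepsilon$. Part (c) is the substantive point, and I would prove both implications from part (a). For \eqref{list:B3}$\Rightarrow$density: were the non-density points of a measurable $E$ of positive measure, then after passing to a bounded subset and to the set $N$ where $\mu(B\setminus E)\ge m^{-1}\mu(B)$ for every ball $B\ni x$ one fixes $m$ with $\mu(N)>0$; approximating $E$ by $U=\bigcup_jA_j$ with $\mu(E\Delta U)<\eta$ via \eqref{list:B3}, each $A_j$ meeting $N$ satisfies $m^{-1}\mu(A_j)\le\mu(A_j\setminus E)\le\mu(U\setminus E)<\eta$, these $A_j$ cover $N$ up to measure $\eta$, and part (a) extracts a disjoint subfamily whose $\ast$-enlargements still cover most of $N$, forcing $\mu(N)\lesssim\C_0 m\sum\mu(A_j\setminus E)\le\C_0 m\,\eta$, a contradiction for small $\eta$. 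For density$\Rightarrow$\eqref{list:B3} with $E$ bounded: for small $\delta$ the balls with $\mu(B\cap E)>(1-\delta)\mu(B)$ cover $E$ a.s.; part (a) yields a disjoint subfamily $\mathcal{G}_1'$ with $E\subset\bigcup_{\mathcal{G}_1'}B^*$ a.s., whence $\mu(\bigcup_{\mathcal{G}_1'}B)\ge\C_0^{-1}\mu(E)$ and $\mu(\bigcup_{\mathcal{G}_1'}B\setminus E)\le\delta\mu(E)/(1-\delta)$; iterating on the residual $E\setminus\bigcup_{\mathcal{G}_1'}B$, whose measure contracts by the fixed factor $1-\tfrac1{2\C_0}$ at each round, produces $\{B_k\}$ with $E\subset\bigcup_kB_k$ a.s.\ and $\mu(\bigcup_kB_k\setminus E)$ bounded by a geometric series that is $<\varepsilon$ once $\delta$ is small (the unbounded case is handled by exhaustion). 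Finally, part (e): by part (c) a.e.\ $x\in E$ is a density point, so for each such $x$ choose a ball $B\ni x$ with $\mu(B\cap E)>\tfrac12\mu(B)$ and let $\mathcal{G}$ be the collection of all these balls; $\mathcal{G}$ covers $E$ a.s., so part (a) gives a disjoint $\mathcal{G}'\subset\mathcal{G}$ with $E\subset\bigcup_{G\in\mathcal{G}'}G^*$ a.s., and then $\tfrac12\sum_{\mathcal{G}'}\mu(G)\le\sum_{\mathcal{G}'}\mu(G\cap E)\le\mu(E)$ together with $\mu(G^*)\le\C_0\mu(G)$ gives $\sum_{\mathcal{G}'}\mu(G^*)\le2\C_0\mu(E)$; taking $\{B_k\}=\{G^*:G\in\mathcal{G}'\}$ finishes it.

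The main obstacle is part (c): it is a Lebesgue-density-theorem analogue in a space carrying no a priori geometry, and the harder direction, density$\Rightarrow$\eqref{list:B3}, requires the iteration above, where one has to keep the overflow $\bigcup_kB_k\setminus E$ summable while still exhausting $E$ up to a null set; everything else is a matter of bookkeeping with \eqref{ball} and \eqref{BkBk}, the only other subtlety being the exact constant $\C_0^3$ in part (b).
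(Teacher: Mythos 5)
The paper never proves this lemma: it is imported from Karagulyan \cite{Kar} without argument, so there is no in-paper proof to compare against and your write-up has to stand on its own. Parts (a), (c), (d) and (e) do: they are the standard Vitali-selection and Lebesgue-density arguments, they use only \eqref{list:B1}, \eqref{list:B4} (through \eqref{ball} and \eqref{BkBk}) and, where required, \eqref{list:B3}, and the quantitative steps check out — the greedy selection in (a) with the threshold $\mu(G)<2\mu(B_0)$ and $M_k\to 0$, the contraction factor $1-\tfrac{1}{2\C_0}$ in the iteration for density $\Rightarrow$ \eqref{list:B3} (justified by $\mu(\bigcup_{\mathcal{G}_1'}B)\ge\C_0^{-1}\mu(E)$ against the overflow $\tfrac{\delta}{1-\delta}\mu(E)$), and the $\tfrac12$-density choice in (e) that produces exactly the constant $2\C_0$. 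Two technicalities you pass over deserve a sentence each: the set of non-density points need not be measurable (run the covering argument with outer measure, which it tolerates), and the ``exhaustion'' reducing \eqref{list:B3} for unbounded $E$ to the bounded case presupposes that $E$ is a countable union of bounded pieces, which does not follow from \eqref{list:B4} alone.

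The genuine problem is part (b). Your dichotomy gives $\#\mathcal{G}\le c_1^{-1}\C_0\,\mu(A)$ when every $\mu(G^*)\le 2\mu(A)$, and $\#\mathcal{G}\le c_1^{-1}\C_0^3c_2$ otherwise; since you cannot tell from the hypotheses which case occurs, this proves $\#\mathcal{G}\le c_1^{-1}\max\{c_2\C_0^3,\C_0\mu(A)\}$, not the stated minimum. Writing ``each case produces one of the two entries of the stated minimum'' does not close the argument: a minimum requires both bounds unconditionally. In fact the inequality as literally stated is false. In $(\R,\L^1)$ with $\B$ the intervals and $B^*=5B$ (so $\C_0=5$), take $\mathcal{G}=\{(1/2,3/2)\}$ and $A=(-\epsilon,\epsilon)$: then $A\cap G^*\neq\emptyset$, $c_1=c_2=1$, $\#\mathcal{G}=1$, while $c_1^{-1}\min\{c_2\C_0^3,\C_0\mu(A)\}=10\epsilon<1$ for small $\epsilon$. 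So ``min'' must be read as ``max''; that is what the cited source establishes and all the paper ever uses (its one application of part (b), bounding $\#\mathcal{P}_B$ in the proof of Lemma \ref{lem:1/2-sparse}, has $c_1\simeq c_2\simeq\mu(A)$ up to powers of $\C_0$, where the two entries coincide). Your Case 2 argument — near-maximising $\mu(G_0^*)$, deducing $A\subset G_0^{**}$ and then $G\subset G_0^{(3)}$ — is correct and gives the right power $\C_0^3$; but you should have stated explicitly that your proof delivers the maximum and that the minimum as printed is unattainable, rather than leaving the discrepancy unremarked.
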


\subsection{Muckenhoupt weights}\label{sec:Mucken}
Throughout this section, we always assume that $(\Sigma, \mu)$ is a measure space with $\mu(\Sigma)>0$. Recall that a  collection $\B$ of measurable sets is called a basis of $(\Sigma, \mu)$, if $0<\mu(B)<\infty$ for every $B \in \B$. Given a basis $\B$, the Hardy-Littlewood maximal operator $M_{\B}$  on $(\Sigma, \mu)$ associated with $\B$ is defined for each measurable function $f$ on $\Sigma$  by 
\begin{equation*}
M_{\B}f(x) := \sup_{x\in B \in \B} \fint_B |f| \, d\mu, \quad\text{if } x \in \Sigma_\B:=\bigcup_{B \in \B} B, 
\end{equation*} 
and $M_{\B}f(x)=0$ otherwise.

A measurable function $w$ on $\Sigma$ is called a {\tt weight} on $(\Sigma, \mu)$ if $0<w(x)<\infty$ for $\mu$-a.e.~$x \in \Sigma_\B$. 
Given $p \in (1, \infty)$ and a basis $\B$ on $(\Sigma, \mu)$, we define the {\tt Muckenhoupt class} $A_{p, \B}$ as the collection of all weights $w$ on $(\Sigma, \mu)$ satisfying 
\begin{equation*}
[w]_{A_{p,\B}}:=\sup_{B \in \B} \bigg(\fint_{B} w\, d\mu \bigg) 
\bigg(\fint_{B}w^{1-p'}\, d\mu \bigg)^{p-1}<\infty,
\end{equation*} 
where $p'$ is the H\"older conjugate exponent of $p$, i.e., $\frac1p+\frac{1}{p'}=1$. As for the case $p=1$, we say that $w\in A_{1,\B}$ if  
\begin{equation*}
[w]_{A_{1,\B}} := \|(M_{\B}w)\,w^{-1}\,\mathbf{1}_{\Sigma_\B}\|_{L^{\infty}(\Sigma, \mu)} <\infty.
\end{equation*}
Finally, we define 
\begin{equation*}
A_{\infty,\B} := \bigcup_{p\geq 1}A_{p,\B} 
\quad\text{ with }\quad 
[w]_{A_{\infty,\B}} := \inf\big\{[w]_{A_{p,\B}}: w \in A_{p, \B}\big\}. 
\end{equation*} 

For every $p\in(1,\infty)$ and every weight $w$ on $(\Sigma, \mu)$, we define the associated weighted Lebesgue space 
$L^p(\Sigma, w) := L^p(\Sigma,\,w d\mu)$ as the collection of measurable functions $f$ with 
\[
\|f\|_{L^p(\Sigma, w)} :=\Big(\int_\Sigma |f|^p\,w\,d\mu\Big)^{\frac1p}
<\infty.
\]
The {\tt reverse H\"{o}lder classes} are defined in the following way: we say that $w\in RH_{s, \B}$ for $s\in(1,\infty)$ and a basis $\B$ on $(\Sigma, \mu)$, if 
\begin{equation*}
[w]_{RH_{s, \B}} 
:=\sup_{B \in \B} \bigg(\fint_B w^s\, d\mu \bigg)^{\frac1s} \bigg(\fint_B w\,d\mu \bigg)^{-1} < \infty. 
\end{equation*} 
Regarding the endpoint $s=\infty$, $w \in RH_{\infty, \B}$ means that
\begin{equation*}
[w]_{RH_{\infty, \B}} 
:= \sup_{B \in \B} \|w\mathbf{1}_{B}\|_{L^{\infty}(\Sigma, \mu)} \bigg(\fint_{B}w\,d\mu\bigg)^{-1}<\infty.  
\end{equation*}

\begin{remark}
Note that, we do not define $A_{p,\B}$ and $RH_{s,\B}$ in the set $\Sigma\setminus\Sigma_\B$. This may create some technical issues in the arguments below and to avoid them, we will assume from now on that $\mu(\Sigma \setminus \Sigma_\B)=0$.  With this assumption in place, $w$ is a weight on $(\Sigma, \mu)$ if $0<w(x)<\infty$ for $\mu$-a.e.~$x \in \Sigma$. In the general situation where $\mu(\Sigma \setminus \Sigma_\B)>0$ one can alternatively work with $\Sigma_\B$ in place of $\Sigma$ (or, what is the same, restrict all functions and weights to the set $\Sigma_\B$).
\end{remark}

The following properties follow much as in the Euclidean case (see, \cite{GR, ST}).

\begin{lemma}\label{lem:Appp}
Let  $(\Sigma, \mu)$ be a measure space with a basis $\B$. Then the following hold: 
\begin{list}{\textup{(\theenumi)}}{\usecounter{enumi}\leftmargin=1cm \labelwidth=1cm \itemsep=0.2cm 
			\topsep=.2cm \renewcommand{\theenumi}{\alph{enumi}}}
			
\item\label{Appp-1} $A_{1, \B} \subset A_{p, \B} \subset A_{q, \B}\subset A_{\infty, \B}$ for every $1 <p \leq q <\infty$.  

\item\label{Appp-2} $RH_{\infty, \B} \subset RH_{q, \B} \subset RH_{p, \B}$ for every $1 <p \leq q <\infty$. 

\item\label{Appp-3} For any $p\in(1,\infty)$, $w\in A_{p,\B}$ if and only if $w^{1-p'}\in A_{p', \B}$, and $[w^{1-p\,'}]_{A_{p', \B}}=[w]^{\frac{1}{p-1}}_{A_{p,\B}}$. 

\item\label{Appp-4} If $w_1, w_2\in A_{p,\B}$ and $0\le \theta\le 1$, then $w_1^{\theta}\,w_2^{1-\theta}\in A_{p,\B}$ with
\[
[w_1^\theta\,w_2^{1-\theta}\,]_{A_{p,\B}} \leq [w_1]_{A_{p, \B}}^\theta\,[w_2]^{1-\theta}_{A_{p, \B}}. 
\]

\item\label{Appp-5} For all $p\in[1,\infty)$ and $s\in(1,\infty)$,
\begin{align*}
w\in A_{p,\B} \cap RH_{s, \B} \quad 
\Longleftrightarrow\quad w^s\in A_{\tau, \B}, \quad \tau=s(p-1)+1.
\end{align*}

\end{list}
\end{lemma}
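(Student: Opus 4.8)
\textbf{Plan of proof for Lemma \ref{lem:Appp}.}
The plan is to follow the classical Euclidean arguments, using only those properties of the basis $\B$ that are genuinely needed, namely that $0<\mu(B)<\infty$ for every $B \in \B$ and H\"older's inequality on each fixed $B$ with respect to the probability measure $\frac{d\mu}{\mu(B)}$. None of the covering axioms \eqref{list:B2}--\eqref{list:B4} enter here: each statement is a pointwise-in-$B$ computation followed by taking a supremum over $B \in \B$. I would dispatch the items in the order \ref{Appp-3}, \ref{Appp-1}, \ref{Appp-2}, \ref{Appp-4}, \ref{Appp-5}, since \ref{Appp-3} is purely definitional and the later parts reuse it.

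First, for \ref{Appp-3}: fixing $p \in (1,\infty)$ and $B \in \B$, one observes that $(w^{1-p'})^{1-(p')'} = (w^{1-p'})^{1-p} = w$, and that the two averaged factors defining $[w^{1-p'}]_{A_{p',\B}}$ on $B$ are, after raising to the power $\frac{1}{p-1}$, exactly the two factors defining $[w]_{A_{p,\B}}$ on $B$; taking the supremum over $B$ gives both the equivalence and the identity $[w^{1-p'}]_{A_{p',\B}} = [w]_{A_{p,\B}}^{1/(p-1)}$. Next, for \ref{Appp-1} and \ref{Appp-2}: the inclusion $A_{1,\B} \subset A_{p,\B}$ follows since $[w]_{A_{1,\B}} \ge \big(\fint_B w\,d\mu\big)\,\|w^{-1}\mathbf 1_B\|_{L^\infty}$ and $\|w^{-1}\mathbf 1_B\|_{L^\infty} \ge \fint_B w^{1-p'}\,d\mu$ raised to the appropriate power; for $A_{p,\B} \subset A_{q,\B}$ with $p \le q$, H\"older's inequality on $B$ (with the probability measure $d\mu/\mu(B)$) applied to $w^{1-q'} = (w^{1-p'})^{(1-q')/(1-p')}$, using $\frac{q'-1}{p'-1} \le 1$, gives $\big(\fint_B w^{1-q'}\big)^{q-1} \le \big(\fint_B w^{1-p'}\big)^{p-1}$, hence $[w]_{A_{q,\B}} \le [w]_{A_{p,\B}}$. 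The reverse H\"older chain \ref{Appp-2} is the analogous monotonicity: for $1<p \le q$, H\"older on $B$ gives $\big(\fint_B w^p\big)^{1/p} \le \big(\fint_B w^q\big)^{1/q}$, so $[w]_{RH_{p,\B}} \le [w]_{RH_{q,\B}}$, and the $s=\infty$ endpoint is immediate from $\|w\mathbf 1_B\|_{L^\infty} \ge \big(\fint_B w^s\big)^{1/s}$.

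For \ref{Appp-4}: fix $B \in \B$ and write $\theta + (1-\theta)=1$. On the first factor, $\fint_B w_1^\theta w_2^{1-\theta}\,d\mu \le \big(\fint_B w_1\,d\mu\big)^\theta\big(\fint_B w_2\,d\mu\big)^{1-\theta}$ by H\"older with exponents $1/\theta$ and $1/(1-\theta)$ (with respect to $d\mu/\mu(B)$); on the dual factor, $(w_1^\theta w_2^{1-\theta})^{1-p'} = (w_1^{1-p'})^\theta (w_2^{1-p'})^{1-\theta}$ and the same H\"older inequality applies. Multiplying the two estimates and taking the supremum over $B$ yields $[w_1^\theta w_2^{1-\theta}]_{A_{p,\B}} \le [w_1]_{A_{p,\B}}^\theta [w_2]_{A_{p,\B}}^{1-\theta}$. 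Finally \ref{Appp-5}: with $\tau = s(p-1)+1$, so that $\tau' = \frac{\tau}{\tau-1} = 1 + \frac{1}{s(p-1)}$ and hence $(1-\tau')\,\tfrac{\cdot}{\cdot}$ matches up, one checks on each $B$ that the $A_{\tau,\B}$-quantity for $w^s$, namely $\big(\fint_B w^s\big)\big(\fint_B w^{s(1-\tau')}\big)^{\tau-1}$, factors (using $s(1-\tau') = -\frac{1}{p-1} = 1-p'$ when $p>1$, and the definitions) into a product of a power of $[w]_{RH_{s,\B}}$ and a power of $[w]_{A_{p,\B}}$; running the computation in reverse gives the converse implication. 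The case $p=1$ is handled separately, relating $w^s \in A_{s,\B}$ to $w \in A_{1,\B}\cap RH_{s,\B}$ via the $A_1$ characterization and the reverse H\"older bound. I do not expect any real obstacle here; the only mild care needed is bookkeeping of the exponent $\tau' $ and the identity $s(1-\tau') = 1-p'$ in part \ref{Appp-5}, and making sure every estimate is first proved on a fixed $B \in \B$ before passing to the supremum, since the basis carries no further structure.
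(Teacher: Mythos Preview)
Your proposal is correct and is precisely the classical argument the paper has in mind: the paper does not give its own proof of this lemma, but simply says ``The following properties follow much as in the Euclidean case (see \cite{GR, ST}).'' Your sketch fills in exactly those Euclidean computations, correctly noting that only H\"older's inequality on each fixed $B\in\B$ (with respect to the normalized measure $d\mu/\mu(B)$) is needed and that none of the ball-basis axioms enter; the exponent bookkeeping in part~\eqref{Appp-5}, in particular the identity $s(1-\tau')=1-p'$ and the separate treatment of $p=1$ via the $A_1$ characterization, is handled correctly.
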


\begin{lemma}\label{lem:A1}
Let $(\Sigma, \mu)$ be a measure space with a ball-basis $\B$. Then the following hold: 
\begin{list}{\textup{(\theenumi)}}{\usecounter{enumi}\leftmargin=1cm \labelwidth=1cm \itemsep=0.2cm 
			\topsep=.2cm \renewcommand{\theenumi}{\roman{enumi}}}
			
\item\label{list:A11} Given $p \in (1,\infty)$, $w \in A_{p, \B}$ if and only if there exist $w_1, w_2 \in A_{1, \B}$ such that $w=w_1 w_2^{1-p}$.

\item\label{list:ARH} If $w \in A_{1, \B}$, then $w^{-1} \in RH_{\infty, \B}$. 

\item\label{list:A13} If $u \in A_{\infty, \B}$ and $v \in A_{\infty, \B} \cap RH_{\infty, \B}$, then $uv \in A_{\infty, \B}$. 
 
\item\label{list:A12} For any $u \in A_{1, \B} \cap RH_{s, \B}$ for some $s \in (1, \infty)$, there exists $\varepsilon_0 \in (0, 1)$ such that $u v^{\varepsilon} \in A_{1, \B}$ for all $v \in A_{1, \B}$ and $\varepsilon \in(0, \varepsilon_0)$.

\item\label{list:A14} If $w \in RH_{\infty, \B}$, then for any $B \in \B$ and measurable set $E \subset B$, 
\begin{align*}
\frac{w(E)}{w(B)} 
\le [w]_{RH_{\infty, \B}} \frac{\mu(E)}{\mu(B)}. 
\end{align*}
\end{list}
\end{lemma}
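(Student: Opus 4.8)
\textbf{Proof plan for Lemma \ref{lem:A1}.}

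The strategy is to prove the five items in an order that lets later parts reuse earlier ones, and to transport the classical Euclidean arguments (Jones' factorization, etc.) to the ball-basis setting, being careful that the only geometric input available is the property \eqref{list:B4} (via its consequences \eqref{ball} and \eqref{BkBk}) together with the elementary structure of $A_{p,\B}$ and $RH_{s,\B}$ recorded in Lemma \ref{lem:Appp}. I would begin with \ref{list:A14}, since it is purely definitional: if $w \in RH_{\infty,\B}$ then $w \mathbf{1}_B \le [w]_{RH_{\infty,\B}} \big(\fint_B w\,d\mu\big) \mathbf{1}_B$ pointwise a.e., so integrating over a measurable set $E \subset B$ gives $w(E) \le [w]_{RH_{\infty,\B}} \big(\fint_B w\,d\mu\big)\mu(E) = [w]_{RH_{\infty,\B}} \frac{w(B)}{\mu(B)}\mu(E)$, which is exactly the claim. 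Next I would do \ref{list:ARH}: for $w \in A_{1,\B}$ and any $B \in \B$, the definition of $[w]_{A_{1,\B}}$ gives $M_\B w \le [w]_{A_{1,\B}} w$ a.e., hence $\fint_B w \, d\mu \le [w]_{A_{1,\B}}\, \essinf_B w$, i.e. $\esssup_B w^{-1} = (\essinf_B w)^{-1} \le [w]_{A_{1,\B}} \big(\fint_B w\,d\mu\big)^{-1}$; using the trivial bound $\big(\fint_B w\,d\mu\big)^{-1} \le \fint_B w^{-1}\,d\mu$ (Jensen) yields $w^{-1} \in RH_{\infty,\B}$ with $[w^{-1}]_{RH_{\infty,\B}} \le [w]_{A_{1,\B}}$.

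For \ref{list:A11}, one direction is immediate from Lemma \ref{lem:Appp}\ref{Appp-3} and \ref{Appp-4}: if $w_1, w_2 \in A_{1,\B} \subset A_{p,\B}$, then $w_2^{1-p} = (w_2^{1-p'})^{\,?}$... more directly, $w_2 \in A_1 \subset A_p$ forces $w_2^{1-p} \in A_{p'}$, hmm — cleaner is to check the $A_{p,\B}$ functional of $w_1 w_2^{1-p}$ directly: $\fint_B w_1 w_2^{1-p} \le (\essinf_B w_2)^{1-p}\fint_B w_1 \le [w_1]_{A_1}(\essinf_B w_2)^{1-p}\essinf_B w_1 \le [w_1]_{A_1}\,\essinf_B(w_1 w_2^{1-p})$ and similarly $\big(\fint_B (w_1 w_2^{1-p})^{1-p'}\big)^{p-1} = \big(\fint_B w_1^{1-p'}w_2\big)^{p-1} \le [w_2]_{A_1}^{p-1}(\essinf_B w_1^{1-p'})^{p-1}\cdots$, combining to give $[w_1 w_2^{1-p}]_{A_{p,\B}} \le [w_1]_{A_{1,\B}}[w_2]_{A_{1,\B}}^{p-1}$. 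The converse — Jones-type factorization — is the real work: given $w \in A_{p,\B}$ one must produce $w_1, w_2 \in A_{1,\B}$ with $w = w_1 w_2^{1-p}$. The standard route builds $w_2$ via an iteration of the Rubio de Francia algorithm for a suitable maximal-type operator bounded on $L^{p'}(w^{1-p'})$ and $w_1$ via the algorithm on $L^p(w)$; since Lemma \ref{lem:M} gives $M_\B$ bounded on $L^q(\Sigma,\mu)$ and, more to the point, the theory developed in the preliminaries (and in \cite{Kar, CMM}) furnishes the weighted boundedness $M_\B : L^p(w) \to L^p(w)$ for $w \in A_{p,\B}$, the Rubio de Francia iteration $Rh := \sum_{k\ge0} M_\B^k h / (2\|M_\B\|)^k$ produces $A_{1,\B}$ weights exactly as in $\Rn$. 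I would cite the measure-space Rubio de Francia machinery rather than reproving it.

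Item \ref{list:A12} follows from \ref{list:ARH} together with an openness/reverse-Hölder argument: write $u = u_1 u_0^{-1}$... actually the cleanest path is Lemma \ref{lem:Appp}\ref{Appp-5}, which says $u \in A_{1,\B}\cap RH_{s,\B}$ iff $u^s \in A_{\tau,\B}$ with $\tau = s(1-1)+1$... no, $\tau = s(p-1)+1$ with $p=1$ gives $\tau=1$, so $u^s \in A_{1,\B}$; then for $v \in A_{1,\B}$ and small $\varepsilon$ one checks $uv^\varepsilon \in A_{1,\B}$ by estimating $\fint_B uv^\varepsilon \le \big(\fint_B u^s\big)^{1/s}\big(\fint_B v^{\varepsilon s'}\big)^{1/s'}$ via Hölder, bounding the first factor by $[u^s]_{A_{1,\B}}^{1/s}\essinf_B u$ and, choosing $\varepsilon_0 \le 1/s'$ times the reverse-Hölder exponent of $v$ (available since $A_{1,\B} \subset A_{\infty,\B}$ has some self-improving reverse Hölder in this setting — here I will need the sharp reverse Hölder hypothesis or invoke \cite{CMM} for the measure-space John–Nirenberg substitute), bounding the second factor by a constant times $(\essinf_B v)^\varepsilon$. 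Finally \ref{list:A13}: for $u \in A_{\infty,\B}$ and $v \in A_{\infty,\B}\cap RH_{\infty,\B}$, pick $q$ with $u \in A_{q,\B}$; using \ref{list:A14} applied to $v$ one controls $\fint_B uv$ and $\big(\fint_B (uv)^{1-q'}\big)^{q-1}$ against the corresponding quantities for $u$ times powers of $[v]_{RH_{\infty,\B}}$, giving $uv \in A_{q,\B} \subset A_{\infty,\B}$; concretely $\fint_B uv \le [v]_{RH_{\infty,\B}}\big(\fint_B v\big)\fint_B u \cdot(\text{adjust})$ requires also that $v^{-1}$ is bounded below on $B$, which holds since $v \in A_{\infty,\B}$, so I would instead use $v \in RH_{\infty,\B} \Rightarrow v^{-1} \in A_{1,\B}$-type reasoning carefully.

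The main obstacle I anticipate is the factorization direction of \ref{list:A11}: it is the one place where the full Rubio de Francia iteration / weighted maximal bounds in the abstract measure-space framework are genuinely needed, and one must make sure the maximal operator used in the iteration is the right one (associated to $\B$) and that its weighted $L^p(w)$ bound for $w\in A_{p,\B}$ is actually available in this generality — I would handle this by quoting the self-contained treatment in \cite{Kar} and \cite{CMM} rather than attempting a from-scratch proof. The other mild subtlety, recurring in \ref{list:A12} and \ref{list:A13}, is that arguments which in $\Rn$ quietly use doubling or the reverse Hölder inequality must here be routed either through the hypothesis ``$A_{\infty,\B}$ satisfies the sharp reverse Hölder property'' when it is in force, or through the weaker structural facts in Lemma \ref{lem:Appp}; since items \ref{list:A12} and \ref{list:A13} as stated do not assume sharp reverse Hölder, I would keep the proofs of those two purely algebraic — using only Hölder's inequality, Lemma \ref{lem:Appp}\ref{Appp-5}, and \ref{list:A14}/\ref{list:ARH} already established — so that no extra hypothesis is smuggled in.
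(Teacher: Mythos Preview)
Your arguments for \ref{list:A14}, \ref{list:ARH}, and both directions of \ref{list:A11} are essentially identical to the paper's (the paper cites Theorem~\ref{Mapp} for the weighted bound on $M_\B$ needed in the Rubio de Francia iteration, then refers to \cite[Theorem~7.5.1]{Gra-1} for the factorization; your sketch is the same with the same caveats). The differences are in \ref{list:A12} and \ref{list:A13}, where your outline is on the right track but does not pin down the algebra, and in one place introduces an unnecessary worry.

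For \ref{list:A12} you fear you will need a reverse H\"older inequality for $v$ to control $\big(\fint_B v^{\varepsilon s'}\big)^{1/s'}$. You do not. The paper sets $\varepsilon_0 := 1/s'$ and, for a given $\varepsilon \in (0,\varepsilon_0)$, replaces the fixed exponent $s$ by $r := (1/\varepsilon)'$; then $r<s$ (so $u \in RH_{r,\B}$ by Lemma~\ref{lem:Appp}\ref{Appp-2}) and, crucially, $\varepsilon r' = 1$, so after H\"older one gets $\big(\fint_B v^{\varepsilon r'}\big)^{1/r'} = \big(\fint_B v\big)^{\varepsilon} \le [v]_{A_{1,\B}}^{\varepsilon}\, v(x)^{\varepsilon}$ directly from the $A_1$ condition. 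No self-improvement of $v$ is involved. (Your route with fixed exponent $s$ would also work, since $\varepsilon s' < 1$ allows Jensen's inequality $\fint_B v^{\varepsilon s'} \le (\fint_B v)^{\varepsilon s'}$; but you did not see this and instead reached for a hypothesis the lemma does not assume.)

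For \ref{list:A13} your sketch becomes vague exactly where the content is. The paper's argument is: take $p,s>1$ with $u \in A_{p,\B}$, $v \in A_{s,\B}$, and set $q := p+s-1$, $r := 1+(s-1)/(p-1)$. The $RH_{\infty,\B}$ hypothesis on $v$ handles the positive side via $\fint_B uv \le (\esssup_B v)\fint_B u \le [v]_{RH_{\infty,\B}}\big(\fint_B v\big)\big(\fint_B u\big)$. For the dual side, the specific choice of $q$ and $r$ makes $(1-q')r = 1-p'$ and $(1-q')r' = 1-s'$, so H\"older gives
\[
\Big(\fint_B (uv)^{1-q'}\Big)^{q-1} \le \Big(\fint_B u^{1-p'}\Big)^{p-1}\Big(\fint_B v^{1-s'}\Big)^{s-1},
\]
and multiplying the two estimates yields $[uv]_{A_{q,\B}} \le [u]_{A_{p,\B}}[v]_{A_{s,\B}}[v]_{RH_{\infty,\B}}$. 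Your detour through ``$v^{-1}$ bounded below'' or ``$v \in RH_\infty \Rightarrow v^{-1} \in A_1$'' is unnecessary and, as you noticed, does not obviously close; the whole point is the algebraic choice of $q$ that makes the H\"older splitting land exactly on the $A_p$ and $A_s$ functionals.
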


\begin{proof}
For part \eqref{list:A11}, in view of Theorem \ref{Mapp}, the forward implication can be shown as in \cite[Theorem 7.5.1]{Gra-1}, while the reverse direction is trivial.  

Let $w \in A_{1, \B}$. For any $B \in \B$, 
\begin{align*}
\esssup_B w^{-1} 
=(\essinf_B w)^{-1} 
&\le [w]_{A_{1, \B}} \bigg(\fint_B w \, d\mu \bigg)^{-1} 
\le [w]_{A_{1, \B}} \bigg(\fint_B w^{-1} \, d\mu \bigg), 
\end{align*}
provided that 
\begin{align*}
1= \bigg(\fint_B w^{\frac12} w^{-\frac12} \, d\mu \bigg)^2 
\le \bigg(\fint_B w \, d\mu \bigg)  \bigg(\fint_B w^{-1} \, d\mu \bigg). 
\end{align*}
This proves $[w^{-1}]_{RH_{\infty, \B}} \le [w]_{A_{1, \B}}$. 

To show part \eqref{list:A13}, let $u \in A_{\infty, \B}$ and $v \in A_{\infty, \B} \cap RH_{\infty, \B}$. Then $u \in A_{p, \B}$ and $v \in A_{s, \B}$ for some $1<p, s<\infty$. Pick $q=p+s-1$ and $r=1+\frac{s-1}{p-1}$. Then, for any $B \in \B$, 
\begin{align}\label{Buv-1}
\fint_B uv \, d\mu 
\le \big(\esssup_B v\big) \fint_B u \, d\mu 
\le [v]_{RH_{\infty, \B}} \bigg(\fint_B u \, d\mu \bigg) \bigg(\fint_B v \, d\mu \bigg), 
\end{align}
and by H\"{o}lder's inequality, 
\begin{align}\label{Buv-2}
\bigg(\fint_B (uv)^{1-q'} \, d\mu \bigg)^{q-1} 
&\le \bigg(\fint_B u^{(1-q')r} \, d\mu \bigg)^{\frac{q-1}{r}} 
\bigg(\fint_B v^{(1-q')r'} \, d\mu \bigg)^{\frac{q-1}{r'}} 
\nonumber \\ 
&=\bigg(\fint_B u^{1-p'} \, d\mu \bigg)^{p-1} 
\bigg(\fint_B v^{1-s'} \, d\mu \bigg)^{s-1}. 
\end{align}
Collecting \eqref{Buv-1} and \eqref{Buv-2}, we get $[uv]_{A_{q, \B}} \le [u]_{A_{p, \B}} [v]_{A_{s, \B}} [v]_{RH_{\infty, \B}}$.

To continue, we let $u \in A_{1, \B} \cap RH_{s, \B}$ for some $s \in (1, \infty)$ and $v \in A_{1, \B}$. Set $\varepsilon_0 = \frac{1}{s'} \in (0, 1)$ and fix $\varepsilon \in (0, \varepsilon_0)$. Write $r:=(\frac{1}{\varepsilon})' < s$. Then by Lemma \ref{lem:Appp} part \eqref{Appp-2}, $u \in A_{1, \B} \cap RH_{r, \B}$. Thus, for any $B \in \B$ and a.e. $x \in \B$, 
\begin{align*}
\fint_B u v^{\varepsilon} \, d\mu 
&\le \bigg(\fint_B u^r \, d\mu \bigg)^{\frac1r} \bigg(\fint_B v^{\varepsilon r'} \, d\mu \bigg)^{\frac{1}{r'}}  
\\
&\le [u]_{RH_{r, \B}} \bigg(\fint_B u \, d\mu \bigg) \bigg(\fint_B v \, d\mu \bigg)^{\varepsilon} 
\\
&\le [u]_{RH_{r, \B}} [u]_{A_{1, \B}} [v]_{A_{1, \B}}^{\varepsilon} u(x) v(x)^{\varepsilon}, 
\end{align*}
which implies $uv^{\varepsilon} \in A_{1, \B}$.

Now let $w \in RH_{\infty, \B}$. Then for any $B \in \B$ and measurable set $E \subset B$, 
\begin{align*}
\frac{w(E)}{w(B)} 
=\frac{1}{w(B)} \int_B \mathbf{1}_E \, w \, d\mu 
\le \big(\esssup_B w \big) \frac{\mu(E)}{w(B)} 
\le [w]_{RH_{\infty, \B}} \frac{\mu(E)}{\mu(B)}. 
\end{align*} 
This proves part \eqref{list:A14}.
\end{proof}

\begin{lemma}\label{lem:Ainfty}
Let $(\Sigma, \mu)$ be a measure space with a basis $\B$. Let $w$ be a weight on $(\Sigma, \mu)$. Consider the following conditions: 
\begin{list}{\rm (\theenumi)}{\usecounter{enumi}\leftmargin=1.2cm \labelwidth=1cm \itemsep=0.2cm \topsep=.2cm \renewcommand{\theenumi}{\alph{enumi}}}

\item\label{Ai-1} $w \in A_{\infty, \B}$. 

\item\label{Ai-2} There exist $0<\theta \le 1 \le C_0 <\infty$ such that for any $B \in \B$ and measurable set $E \subset B$, 
\[
\frac{\mu(E)}{\mu(B)} 
\le C_0 \bigg(\frac{w(E)}{w(B)}\bigg)^{\theta}. 
\]

\item\label{Ai-3} For any $\alpha \in (0, 1)$, there exists $\beta \in (0, 1)$ such that for each $B \in \B$ and measurable set $E \subset B$, 
\[
\mu(E) \ge \alpha \, \mu(B) \quad\Longrightarrow\quad 
w(E) \ge \beta \, w(B). 
\]

\item\label{Ai-4} For any $\alpha \in (0, 1)$, there exists $\beta \in (0, 1)$ such that for each $B \in \B$ and measurable set $E \subset B$, 
\[
\mu(E) \le \alpha \, \mu(B) \quad\Longrightarrow\quad 
w(E) \le \beta \, w(B). 
\]
\end{list}
Then, $\eqref{Ai-1} \Longrightarrow \eqref{Ai-2} \Longrightarrow \eqref{Ai-3} \iff \eqref{Ai-4}$. 
\end{lemma}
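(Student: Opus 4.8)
The chain to be established is $\eqref{Ai-1} \Longrightarrow \eqref{Ai-2} \Longrightarrow \eqref{Ai-3} \iff \eqref{Ai-4}$, so there are essentially three implications (the last being an equivalence) to verify, and I would treat each one in turn, moving from the quantitative $A_\infty$-condition down to the purely qualitative ``comparable measure'' statements.

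\emph{Step 1: $\eqref{Ai-1} \Longrightarrow \eqref{Ai-2}$.} Assume $w \in A_{\infty, \B}$, so $w \in A_{p, \B}$ for some $p \in [1, \infty)$. The idea is the classical argument: for $B \in \B$ and a measurable $E \subset B$, use H\"older's inequality with exponents $p$ and $p'$ applied to $\mathbf{1}_E = \mathbf{1}_E w^{1/p} w^{-1/p}$ to get
\[
\frac{\mu(E)}{\mu(B)} = \fint_B \mathbf{1}_E \, d\mu
\le \bigg(\fint_B \mathbf{1}_E w \, d\mu\bigg)^{\frac1p} \bigg(\fint_B w^{1-p'} d\mu\bigg)^{\frac{1}{p'}}
= \bigg(\frac{w(E)}{w(B)}\bigg)^{\frac1p} \bigg(\fint_B w\, d\mu\bigg)^{\frac1p} \bigg(\fint_B w^{1-p'} d\mu\bigg)^{\frac{1}{p'}}.
\]
The last two averages combine to $[w]_{A_{p,\B}}^{1/p}$ by definition, so \eqref{Ai-2} holds with $\theta = 1/p$ and $C_0 = [w]_{A_{p, \B}}^{1/p}$. (When $p = 1$ one argues even more directly from $[w]_{A_{1,\B}}$, or simply notes $A_{1,\B} \subset A_{2,\B}$.)

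\emph{Step 2: $\eqref{Ai-2} \Longrightarrow \eqref{Ai-3}$.} Given $\alpha \in (0,1)$, suppose $E \subset B$ with $\mu(E) \ge \alpha \mu(B)$. Apply \eqref{Ai-2} to the complement $B \setminus E$: since $\mu(B \setminus E) \le (1-\alpha)\mu(B)$,
\[
(1-\alpha) \le \frac{\mu(B \setminus E)}{\mu(B)} \le C_0 \bigg(\frac{w(B \setminus E)}{w(B)}\bigg)^{\theta}
= C_0 \bigg(1 - \frac{w(E)}{w(B)}\bigg)^{\theta},
\]
which rearranges to $w(E)/w(B) \le 1 - (C_0^{-1}(1-\alpha))^{1/\theta} =: \beta' < 1$; but this is the wrong direction. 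Instead I apply \eqref{Ai-2} directly to $E$ run backwards: from $\mu(E) \ge \alpha\mu(B)$ and $\frac{\mu(E)}{\mu(B)} \le C_0 (w(E)/w(B))^\theta$ we get $w(E)/w(B) \ge (\alpha/C_0)^{1/\theta} =: \beta \in (0,1)$, which is exactly \eqref{Ai-3}.

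\emph{Step 3: $\eqref{Ai-3} \iff \eqref{Ai-4}$.} This is a formal complementation. For $\Longrightarrow$: given $\alpha \in (0,1)$, let $\beta$ be the constant from \eqref{Ai-3} associated to the parameter $1 - \alpha$; if $\mu(E) \le \alpha\mu(B)$ then $\mu(B \setminus E) \ge (1-\alpha)\mu(B)$, hence $w(B \setminus E) \ge \beta w(B)$, i.e. $w(E) \le (1-\beta) w(B)$, giving \eqref{Ai-4} with constant $1 - \beta$. The reverse implication $\eqref{Ai-4} \Longrightarrow \eqref{Ai-3}$ is the mirror image, replacing $E$ by $B \setminus E$ and $\alpha$ by $1 - \alpha$.

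\emph{Main obstacle.} There is no deep obstacle here; the only point requiring care is the bookkeeping of which direction of the inequality in \eqref{Ai-2} is being used in Step 2 (the condition controls $\mu(E)/\mu(B)$ from above by a power of $w(E)/w(B)$, so one must extract a \emph{lower} bound on $w(E)/w(B)$, not pass to complements as one might reflexively do). Also worth stating explicitly is that $w(B) < \infty$ for $B \in \B$ would be needed to make sense of the ratios; since $\eqref{Ai-1}$ is not assumed in the equivalence $\eqref{Ai-3}\iff\eqref{Ai-4}$, these two conditions should be read as statements about a fixed weight $w$ for which all quantities $w(E)$, $w(B)$ are finite and positive, which is the standing convention for weights in this section. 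Everything else is a one-line computation.
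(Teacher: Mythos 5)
Your proof is correct and follows essentially the same route as the paper: the implication \eqref{Ai-1}$\Rightarrow$\eqref{Ai-2} via H\"older applied to $\mathbf{1}_E w^{1/p}w^{-1/p}$ with $\theta=1/p$, the one-line rearrangement for \eqref{Ai-2}$\Rightarrow$\eqref{Ai-3}, and complementation for \eqref{Ai-3}$\iff$\eqref{Ai-4} (which the paper simply declares trivial). The false start in your Step 2 is harmless since you correct it, and your constant $\beta=(\alpha/C_0)^{1/\theta}$ is in fact the accurate one.
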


\begin{proof}
To show $\eqref{Ai-1} \Longrightarrow \eqref{Ai-2}$, we let $w \in A_p$ for some $1<p<\infty$. Let $B \in \B$ and $E \subset B$ be a measurable set. Then H\"{o}lder's inequality implies 
\begin{multline*}
\frac{\mu(E)}{\mu(B)}
=\fint_B \mathbf{1}_E w^{\frac1p} \, w^{-\frac1p} \, d\mu
\le \bigg(\fint_B \mathbf{1}_E w \, d\mu \bigg)^{\frac1p} 
\bigg(\fint_B w^{-\frac{p'}{p}} d\mu\bigg)^{\frac{1}{p'}}
\\
=\bigg(\frac{w(E)}{w(B)}\bigg)^{\frac1p} 
\bigg[\bigg(\fint_B w \, d\mu\bigg) \bigg(\fint_B w^{1-p'} d\mu\bigg)^{p-1}\bigg]^{\frac1p}
\le [w]_{A_p}^{\frac1p} \bigg(\frac{w(E)}{w(B)}\bigg)^{\frac1p}. 
\end{multline*}
This shows \eqref{Ai-2}. The implication $\eqref{Ai-2} \Longrightarrow \eqref{Ai-3}$ follows from that whenever $\mu(E)>\alpha \mu(B)$, 
\begin{align*}
\frac{w(E)}{w(B)} 
\ge C_0^{-1} \bigg(\frac{\mu(E)}{\mu(B)}\bigg)^{\frac{1}{\theta}}
\ge C_0^{-1} \alpha^{\frac{1}{\theta}} 
=: \beta. 
\end{align*}
Note that $\alpha \in (0, 1)$ implies $\beta \in (0, 1)$. It is trivial that $\eqref{Ai-3} \iff \eqref{Ai-4}$. 
\end{proof}

Define 
\begin{align*}
[w]_{A'_{\infty, \B}} 
&:= \sup_{B \in \B} \bigg(\fint_B w \, d\mu \bigg) \exp\bigg(\fint_B \log w^{-1} \, d\mu \bigg), 
\end{align*}
and 
\begin{align*}
[w]_{A''_{\infty, \B}} 
&:= \sup_{B \in \B} \frac{1}{w(B^*)} \int_B M_{\B}(w \mathbf{1}_{B^*}) \, d\mu.   
\end{align*}
We present the following relationship between $A_{\infty, \B}$, $A'_{\infty, \B}$, and $A''_{\infty, \B}$. See \cite{DMO} for more properties about $A_{\infty, \B}$ weights. 

\begin{lemma}\label{lem:AiAi} 
Let $(\Sigma, \mu)$ be a measure space with a ball-basis $\B$. Then there holds $A_{\infty, \B} \subset A'_{\infty, \B} \subset A''_{\infty, \B}$. 
\end{lemma}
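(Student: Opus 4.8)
The statement to prove is the chain of inclusions $A_{\infty, \B} \subset A'_{\infty, \B} \subset A''_{\infty, \B}$ (Lemma \ref{lem:AiAi}). I will establish the two inclusions separately, and the real content lies in quantitative estimates: it is not enough to show membership, but to show $[w]_{A'_{\infty, \B}} \lesssim [w]_{A_{\infty, \B}}$ and $[w]_{A''_{\infty, \B}} \lesssim [w]_{A'_{\infty, \B}}$ (or at least finiteness of the relevant quantities, controlled by the previous one).

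\textbf{Step 1: $A_{\infty, \B} \subset A'_{\infty, \B}$.} Fix $w \in A_{p, \B}$ for some $p \in (1, \infty)$ and a ball $B \in \B$. The key is Jensen's inequality applied to the convex function $t \mapsto t^{-(p-1)}$ (equivalently, $-\log$ applied suitably): since $\exp\big(\fint_B \log w^{-1}\, d\mu\big) = \exp\big(-\fint_B \log w \, d\mu\big)$, and by Jensen $\exp\big(\fint_B \log g \, d\mu\big) \le \fint_B g\, d\mu$ applied to $g = w^{-\frac{1}{p-1}}$, we get
\begin{align*}
\exp\bigg(\fint_B \log w^{-1}\, d\mu\bigg)
= \bigg[\exp\bigg(\fint_B \log w^{-\frac{1}{p-1}}\, d\mu\bigg)\bigg]^{p-1}
\le \bigg(\fint_B w^{-\frac{1}{p-1}}\, d\mu\bigg)^{p-1}
= \bigg(\fint_B w^{1-p'}\, d\mu\bigg)^{p-1}.
\end{align*}
Multiplying by $\fint_B w\, d\mu$ and taking the supremum over $B \in \B$ gives $[w]_{A'_{\infty, \B}} \le [w]_{A_{p, \B}}$; optimizing over admissible $p$ yields $[w]_{A'_{\infty, \B}} \le [w]_{A_{\infty, \B}}$. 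This step is routine.

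\textbf{Step 2: $A'_{\infty, \B} \subset A''_{\infty, \B}$.} This is the main obstacle. Fix $w \in A'_{\infty, \B}$ and $B \in \B$; I must estimate $\frac{1}{w(B^*)} \int_B M_{\B}(w \mathbf{1}_{B^*})\, d\mu$. The strategy is a Calderón–Zygmund--type stopping argument adapted to the ball-basis structure, mirroring the Euclidean proof that the $A'_\infty$ (Fujii–Wilson) constant controls the maximal-function average. Concretely, I would decompose $\{x \in B : M_{\B}(w \mathbf{1}_{B^*})(x) > \lambda\}$ using balls from $\B$ maximal with respect to the property $\fint_{B'} w \mathbf{1}_{B^*}\, d\mu > \lambda$, invoke Lemma \ref{lem:BE}\eqref{list-1} to extract a subcollection whose dilates cover the level set with bounded overlap (here $\C_0$ enters), and then use the pointwise lower bound coming from the $A'_\infty$ condition: on each such ball $B'$, $\fint_{B'} w\, d\mu \lesssim \exp\big(\fint_{B'} \log w\, d\mu\big) \cdot [w]_{A'_{\infty,\B}}$, which lets one sum a geometric series in $\lambda$ dyadically. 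The subtlety, absent in $\Rn$, is that $\B$ carries no dyadic structure, so the covering/packing must be run purely through properties \eqref{list:B3}--\eqref{list:B4} and Lemma \ref{lem:BE}; I expect the bounded-overlap step (controlling how many selected balls a fixed ball can meet) to be where one must be careful, but Lemma \ref{lem:BE}\eqref{list-2} provides exactly the needed cardinality bound. Alternatively, since the excerpt cites \cite{DMO} for properties of $A_{\infty, \B}$, if a weak-$(1,1)$ bound for $M_\B$ restricted to $B^*$ together with the Fujii–Wilson characterization is already available there, the second inclusion reduces to a short integration-by-parts over the distribution function of $M_{\B}(w\mathbf{1}_{B^*})$, and I would present that route if it shortens the argument. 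Either way, the conclusion is $[w]_{A''_{\infty, \B}} \lesssim [w]_{A'_{\infty, \B}}$, and combining Steps 1 and 2 proves the lemma.
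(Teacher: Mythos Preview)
Your Step 1 matches the paper exactly.

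For Step 2 the paper takes a different and much shorter route, avoiding stopping-time or covering arguments altogether. It introduces the logarithmic maximal operator
\[
\mathbb{M}_{\B} f(x) := \sup_{B \ni x} \exp\Big(\fint_B \log |f|\, d\mu\Big),
\]
and observes that $\mathbb{M}_{\B} f = \big(\mathbb{M}_{\B}(|f|^{1/p})\big)^p \le \big(M_{\B}(|f|^{1/p})\big)^p$ by Jensen, so the $L^p$ boundedness of $M_{\B}$ (Lemma~\ref{lem:M}) for any $p>1$ yields $\|\mathbb{M}_{\B} f\|_{L^1(\mu)} \lesssim \|f\|_{L^1(\mu)}$. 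On the other hand, after a localization step showing that for $x \in B$ the supremum in $M_{\B}(w\mathbf{1}_{B^*})(x)$ can be taken over balls $B' \subset B^*$, the $A'_{\infty,\B}$ condition gives the pointwise bound $M_{\B}(w\mathbf{1}_{B^*})(x) \le [w]_{A'_{\infty,\B}}\, \mathbb{M}_{\B}(w\mathbf{1}_{B^*})(x)$ on $B$. Integrating over $B$ and applying the $L^1$ bound for $\mathbb{M}_{\B}$ gives $\int_B M_{\B}(w\mathbf{1}_{B^*})\, d\mu \lesssim [w]_{A'_{\infty,\B}}\, w(B^*)$, which is exactly $[w]_{A''_{\infty,\B}} \lesssim [w]_{A'_{\infty,\B}}$.

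Your covering-lemma plan is in the spirit of a Calder\'on--Zygmund argument, but as stated it has a gap: you claim the $A'_\infty$ comparison $\fint_{B'} w \le [w]_{A'_{\infty,\B}} \exp\big(\fint_{B'}\log w\big)$ on each selected ball ``lets one sum a geometric series in $\lambda$,'' yet this inequality only bounds the arithmetic mean of $w$ by its geometric mean and by itself gives no decay of the level-set measure $\mu(\{M_{\B}(w\mathbf{1}_{B^*})>\lambda\}\cap B)$ as $\lambda$ grows; some further self-improvement (reverse H\"older, exponential integrability of $\log w$) would be needed, and none is assumed in this abstract setting. The paper's $\mathbb{M}_{\B}$ trick bypasses this entirely by converting the $A'_\infty$ hypothesis into a pointwise comparison with an operator that is already $L^1$-bounded, and it is insensitive to the absence of dyadic structure.
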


\begin{proof}
By Jensen's inequality, we have 
\begin{align*}
\exp \bigg(\fint_B \log w^{-1} \bigg) 
\le \bigg(\fint_B w^{-\frac{1}{p-1}} d\mu \bigg)^{p-1}, \quad 1<p<\infty, 
\end{align*}
which immediately implies $[w]_{A'_{\infty, \B}}  \le [w]_{A_{p, \B}}$ for any $1<p<\infty$. Thus, $A_{\infty, \B} \subset A'_{\infty, \B}$. 

To proceed, we observe that for any $B \in \B$, 
\begin{align}\label{MBequiv}
M_{\B}(w \mathbf{1}_{B^*})(x) 
= \sup_{x \in B' \in \B \atop B' \subset B^*} \fint_{B'} w \, d\mu, \quad x \in B. 
\end{align}
Indeed, fix $x \in B$ and let $x \in B' \in \B$. If $\mu(B) \le \mu(B')$, then 
\begin{align*}
\fint_{B'} w \mathbf{1}_{B^*} \, d\mu 
\lesssim \fint_{B^*} w \, d\mu 
\le \sup_{x \in B' \in \B \atop B' \subset B^*} \fint_{B'} w \, d\mu. 
\end{align*}
If $\mu(B') < \mu(B)$, the property \eqref{list:B4} gives $B' \subset B^*$, which in turn implies  
\begin{align*}
\fint_{B'} w \mathbf{1}_{B^*} \, d\mu 
\le \fint_{B'} w \, d\mu 
\le \sup_{x \in B' \in \B \atop B' \subset B^*} \fint_{B'} w \, d\mu. 
\end{align*}

If we define 
\begin{align*}
\mathbb{M}_{\B} f(x) 
:= \sup_{B \in \B: B \ni x} \exp\bigg(\fint_B \log |f| \, d\mu \bigg), 
\end{align*}
then by Jensen's inequality, we obtain for any $1<p<\infty$, 
\begin{multline}\label{MMB-1}
\|\mathbb{M}_{\B}f\|_{L^1(\Sigma, \mu)}
=\|(\mathbb{M}_{\B}(|f|^{1/p}))^p\|_{L^1(\Sigma, \mu)}
=\|\mathbb{M}_{\B}(|f|^{1/p})\|_{L^p(\Sigma, \mu)}^p
\\ 
\le \|M_{\B}(|f|^{1/p})\|_{L^p(\Sigma, \mu)}^p
\lesssim \||f|^{1/p}\|_{L^p(\Sigma, \mu)}^p
=\|f\|_{L^1(\Sigma, \mu)}, 
\end{multline}
where we have used Lemma \ref{lem:M}. Fix $B \in \B$. Then it follows from \eqref{MBequiv} that for all $x \in B$, 
\begin{align}\label{MMB-2}
M_{\B}(w \mathbf{1}_{B^*})(x)
&\le [w]_{A'_{\infty, \B}} \sup_{x \in B' \in \B \atop B' \subset B^*} \exp\bigg(\fint_{B'} \log w \, d\mu \bigg)
\nonumber \\
&\le [w]_{A'_{\infty, \B}} \mathbb{M}_{\B} (w \mathbf{1}_{B^*})(x).
\end{align}
Thus, \eqref{MMB-1} and \eqref{MMB-2} imply 
\begin{align*}
\int_B M_{\B}(w \mathbf{1}_{B^*}) \, d\mu  
\le [w]_{A'_{\infty, \B}} \|\mathbb{M}_{\B}(w \mathbf{1}_{B^*})\|_{L^1(\Sigma, \mu)}
\lesssim [w]_{A'_{\infty, \B}} w(B^*), 
\end{align*}
which gives $[w]_{A''_{\infty, \B}} \le [w]_{A'_{\infty, \B}}$.
\end{proof}

In view of Lemma \ref{lem:M}, let us present a multilinear extension of Coifmann-Rochberg theorem. 

\begin{lemma}\label{lem:CR}
Let $(\Sigma, \mu)$ be a measure space with a ball-basis $\B$. Then 
\begin{align}\label{eq:CR}
[(\mathcal{M}_{\B}(\vec{f}))^{\delta}]_{A_{1, \B}} 
\leq \frac{c_m}{1-m\delta}, \quad \text{for any } 0<\delta<\frac{1}{m}.
\end{align}
\end{lemma}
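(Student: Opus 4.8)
The plan is to follow the classical Coifman--Rochberg argument, adapted to the abstract measure-space setting, using the weak-type bound for $M_{\B}$ recorded in Lemma \ref{lem:M}. The key mechanism is the good-$\lambda$ / Kolmogorov-type inequality: for a single weight $M_{\B} f$ and $0<\delta<1$, one has $\langle (M_{\B} f)^{\delta} \rangle_{B} \lesssim (\essinf_B M_{\B} f)^{\delta}$ up to a constant depending only on $\delta$ and $\C_0$, which is exactly the $A_{1,\B}$ condition for $(M_{\B} f)^{\delta}$. First I would reduce \eqref{eq:CR} to the linear case: since $\mathcal{M}_{\B}(\vec f)(x) = \sup_{B\ni x}\prod_{i=1}^m \langle f_i\rangle_B \le \prod_{i=1}^m M_{\B} f_i(x)$ pointwise, writing $\delta = \sum_{i=1}^m \delta_i$ with $\delta_i = \delta$ would give $(\mathcal{M}_{\B}(\vec f))^{\delta}\le \prod_{i=1}^m (M_{\B} f_i)^{m\delta}$; then Lemma \ref{lem:Appp}\eqref{Appp-4} (the product/convexity property of $A_{1,\B}$, or rather the elementary submultiplicativity $[\prod u_i]_{A_{1,\B}}\le \prod [u_i]_{A_{1,\B}}$ when the $u_i$ are $A_1$) reduces matters to showing $[(M_{\B} f)^{m\delta}]_{A_{1,\B}}\lesssim \frac{1}{1-m\delta}$ for a single $f$, with $0<m\delta<1$.

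For the single-function estimate, fix $B_0\in\B$ and set $f = f\mathbf 1_{B_0^{*}} + f\mathbf 1_{\Sigma\setminus B_0^{*}} =: f_1 + f_2$, so $M_{\B} f \le M_{\B} f_1 + M_{\B} f_2$ and hence $(M_{\B} f)^{\gamma}\lesssim (M_{\B} f_1)^{\gamma} + (M_{\B} f_2)^{\gamma}$ with $\gamma := m\delta$. For the local piece, I would use the weak-type $(1,1)$ bound \eqref{Mr-1} (with $r=1$) for $M_{\B}$ together with Kolmogorov's inequality to get
\begin{align*}
\fint_{B_0} (M_{\B} f_1)^{\gamma}\,d\mu
\le \frac{1}{1-\gamma}\,\C_0^{\gamma}\, \Big(\frac{1}{\mu(B_0)}\int_{B_0^{*}}|f|\,d\mu\Big)^{\gamma}
\lesssim \frac{\C_0^{1+\gamma}}{1-\gamma}\,\big(\langle f\rangle_{B_0^{*}}\big)^{\gamma},
\end{align*}
using \eqref{BkBk} to pass from $\mu(B_0^{*})$ to $\mu(B_0)$; and one checks $\langle f\rangle_{B_0^{*}}\le \C_0\,\essinf_{B_0} M_{\B} f$ up to a harmless constant, since every $x\in B_0\subset B_0^{*}$ sees the average of $f$ over $B_0^{*}$. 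For the far piece, I would show $M_{\B} f_2$ is essentially constant on $B_0$: if $x\in B_0$ and $B\ni x$ with $\int_B |f_2|\,d\mu>0$ then $B\cap(\Sigma\setminus B_0^{*})\neq\emptyset$, which by \eqref{ball} forces $\mu(B)>2\mu(B_0)$, hence $B_0\subset B^{*}$; comparing the averages over $B$ for two points $x,x'\in B_0$ via the ball-basis axiom \eqref{list:B4} yields $M_{\B} f_2(x)\simeq M_{\B} f_2(x')$ with constant depending only on $\C_0$. Therefore $\fint_{B_0}(M_{\B} f_2)^{\gamma}\lesssim (\essinf_{B_0} M_{\B} f_2)^{\gamma}\le(\essinf_{B_0} M_{\B} f)^{\gamma}$. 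Adding the two contributions and taking the supremum over $B_0\in\B$ gives $[(M_{\B} f)^{\gamma}]_{A_{1,\B}}\le \frac{c}{1-\gamma}$, and then unwinding the reduction yields \eqref{eq:CR} with $\gamma = m\delta$.

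The step I expect to be the main obstacle is the ``far piece is almost constant'' claim in the bare measure-space language: in $\Rn$ this is routine, but here one must run everything through the ball-basis axioms \eqref{list:B2}, \eqref{list:B4} and the consequence \eqref{ball}, checking carefully that when $B$ meets $\Sigma\setminus B_0^{*}$ one genuinely gets $B_0\subset B^{*}$ and that the doubling-type bounds \eqref{BkBk} convert $\mu(B^{*})$-averages into $\mu(B)$-averages with a constant independent of $B_0$ and $B$. A secondary technical point is tracking the $\frac{1}{1-m\delta}$ dependence cleanly through Kolmogorov's inequality (so that the constant does not secretly blow up elsewhere) and making sure the multilinear-to-linear reduction does not cost a factor that destroys the sharp blow-up rate; using the pointwise domination $\mathcal{M}_{\B}(\vec f)\le \prod M_{\B} f_i$ and splitting $\delta$ evenly keeps this under control, with $c_m$ absorbing the $m$-dependence.
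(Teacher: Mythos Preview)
Your reduction to the linear case does not work. The pointwise domination $\mathcal{M}_{\B}(\vec f)\le \M_{\B}^{\otimes}(\vec f):=\prod_{i=1}^m M_{\B}f_i$ together with the convexity property of $A_{1,\B}$ (Lemma~\ref{lem:Appp}\eqref{Appp-4}, applied with $\theta_i=1/m$ to $w_i=(M_{\B}f_i)^{m\delta}$) does yield $[(\M_{\B}^{\otimes}(\vec f))^{\delta}]_{A_{1,\B}}\le c/(1-m\delta)$, but this is a statement about the \emph{larger} weight $\M_{\B}^{\otimes}(\vec f)$, not about $\mathcal{M}_{\B}(\vec f)$. The $A_{1,\B}$ condition $\fint_B w\le C\,\essinf_B w$ simply does not transfer downward under pointwise domination: if $w\le v$ then $\fint_B w\le \fint_B v\le [v]_{A_{1,\B}}\essinf_B v$, but $\essinf_B v\ge \essinf_B w$, so the chain closes with $\essinf_B v$ on the right, not $\essinf_B w$. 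Concretely, your argument ends with $\fint_B(\mathcal{M}_{\B}(\vec f))^{\delta}\le \frac{c}{1-m\delta}\prod_i(M_{\B}f_i(x))^{\delta}$, and there is no universal bound $\prod_i M_{\B}f_i(x)\lesssim \mathcal{M}_{\B}(\vec f)(x)$ to finish. (Also, the line ``$\delta=\sum_i\delta_i$ with $\delta_i=\delta$'' and the claimed ``submultiplicativity $[\prod u_i]_{A_{1,\B}}\le\prod[u_i]_{A_{1,\B}}$'' are both incorrect as written; the latter is false in general for $A_1$.)

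The fix is to skip the reduction and run your Coifman--Rochberg argument \emph{directly} on $\mathcal{M}_{\B}(\vec f)$, using the multilinear weak-type bound $\mathcal{M}_{\B}:L^1\times\cdots\times L^1\to L^{1/m,\infty}$ from Lemma~\ref{lem:M} in place of the linear weak~$(1,1)$. This is exactly what the paper does: decompose each $f_i=f_i\mathbf{1}_{B^*}+f_i\mathbf{1}_{\Sigma\setminus B^*}$, expand $\mathcal{M}_{\B}(\vec f)^{\delta}$ into $2^m$ pieces, treat the all-local piece via the layer-cake/Kolmogorov computation (the integral $\int_{\lambda_0}^{\infty}\lambda^{\delta-1-1/m}d\lambda$ converges precisely when $\delta<1/m$, producing the factor $\frac{1}{1-m\delta}$), and treat every piece containing some $f_i\mathbf{1}_{\Sigma\setminus B^*}$ by your ``far piece is essentially constant on $B$'' argument via \eqref{ball} and \eqref{list:B4}. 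Your instincts for the local and far pieces are correct; they just need to be applied to the genuinely multilinear object.
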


\begin{proof}
Mimicking the proof of \cite[Theorem 3.4, p. 158]{GR} and \cite[Lemma~1]{OPR}, we present the details for the convenience of the reader. Fix $B \in \B$ and $x \in B$. Given $\vec{f}=(f_1, \ldots, f_m)$, let $f_i^0 := f_i \mathbf{1}_{B^*}$ and $f_i^{\infty} := f_i \mathbf{1}_{\Sigma \setminus B^*}$, $i=1, \ldots, m$. Then, writing $\vec{f}^0=(f_1^0, \ldots, f_m^0)$, we have 
\begin{align}\label{eq:CR-1}
\fint_B \mathcal{M}_{\B}(\vec{f})^{\delta} d\mu 
&\le \fint_B \mathcal{M}_{\B}(\vec{f}^0)^{\delta} d\mu 
\nonumber \\
&\quad+ \sum_{\alpha_1, \ldots, \alpha_m \in \{0, \infty\} \atop \exists \alpha_i = \infty} 
\fint_B \mathcal{M}_{\B}(f_1^{\alpha_1}, \ldots, f_m^{\alpha_m})^{\delta} d\mu. 
\end{align}
To control the first term, we set 
\begin{equation*}
\lambda_0 := \prod_{i=1}^m \fint_{B^*} |f_i| \, d\mu
\le \mathcal{M}_{\B}(\vec{f})(x), 
\end{equation*}
and invoke Lemma \ref{lem:M} to arrive at 
\begin{align}\label{eq:CR-2}
\fint_B \mathcal{M}_{\B}(\vec{f}^0)^{\delta} d\mu 
&=\frac{\delta}{\mu(B)} \int_0^{\infty} \lambda^{\delta} 
\mu(\{x \in B: \mathcal{M}_{\B}(\vec{f}^0)(x)^{\delta}>\lambda\}) \frac{d\lambda}{\lambda}
\nonumber\\ 
&\le \lambda_0^{\delta} + \frac{\delta}{\mu(B)} \int_{\lambda_0}^{\infty} \lambda^{\delta} 
\mu(\{x \in \Sigma: \mathcal{M}_{\B}(\vec{f}^0)(x)^{\delta}>\lambda\}) \frac{d\lambda}{\lambda}
\nonumber\\ 
&\le \lambda_0^{\delta} + \frac{c \, \delta}{\mu(B^*)} \int_{\lambda_0}^{\infty} \lambda^{\delta-\frac1m} 
\prod_{i=1}^m \|f_i^0\|_{L^1(\Sigma, \mu)}^{\frac1m} \frac{d\lambda}{\lambda}
\nonumber\\ 
&= \lambda_0^{\delta} \bigg[1 + \frac{c \, \delta^2}{1-m\delta} \Big(\lambda_0^{-1} 
\prod_{i=1}^m \fint_{B^*} |f_i| \, d\mu \Big)^{\frac1m} \bigg]
\nonumber\\ 
&\le \frac{c}{1-m\delta} \mathcal{M}_{\B}(\vec{f})(x)^{\delta}. 
\end{align}
To proceed, let $\alpha_1, \ldots, \alpha_m \in \{0, \infty\}$ with $\alpha_i = \infty$ for some $i$.  Let $A \in \B$ and $y \in A \cap B$. Observe that 
\begin{align}\label{AFB}
\int_A |f_i^{\infty}| \, d\mu \neq 0 
\quad\Longrightarrow\quad 
\mu(A) > 2\mu(B) 
\quad\Longrightarrow\quad 
x \in B \subset A^*.
\end{align}
Indeed, if $\mu(A) \le 2\mu(B)$, then the property \eqref{list:B4} gives $A \subset B^*$, hence, $\int_A |f_i^{\infty}| \, d\mu = \int_{A\setminus B^*} |f_i| \, d\mu =0$. The second implication in \eqref{AFB} is just a consequence of the property \eqref{list:B4} and that $A \cap B \neq \emptyset$. Hence, by \eqref{list:B4}, 
\begin{align*}
\prod_{i=1}^m \fint_A |f_i^{\alpha_i}| \, d\mu 
\le \prod_{i=1}^m \bigg(\frac{\mu(A^*)}{\mu(A)}\bigg) \fint_{A^*} |f_i^{\alpha_i}| \, d\mu 
\le \C_0^m \mathcal{M}_{\B}(f_1^{\alpha_1}, \ldots, f_m^{\alpha_m})(x), 
\end{align*}
which in turn yields   
\begin{align}\label{eq:CR-3}
\mathcal{M}_{\B}(f_1^{\alpha_1}, \ldots, f_m^{\alpha_m})(y)
\le \C_0^m \mathcal{M}_{\B}(\vec{f})(x), \quad x, y \in B. 
\end{align}
Consequently, collecting \eqref{eq:CR-1}, \eqref{eq:CR-2}, and \eqref{eq:CR-3}, we conlude 
\begin{align*}
\fint_B \mathcal{M}_{\B}(\vec{f})^{\delta} d\mu 
\le \frac{c}{1-m\delta} \mathcal{M}_{\B}(\vec{f})(x)^{\delta}, \quad x \in B, 
\end{align*}
which implies \eqref{eq:CR} as desired. 
\end{proof}

\begin{definition}
Let $(\Sigma, \mu)$ be a measure space with a ball-basis $\B$. Let $\vec{p}=(p_1, \ldots, p_m)$ with $1 \leq p_1, \ldots, p_m<\infty$, and let $\vec{w} = (w_1, \ldots, w_m)$ be a vector of weights. We say that $\vec{w} \in A_{\vec{p}, \B}$ if 
\begin{align*}
[\vec{w}]_{A_{\vec{p}, \B}} 
:= \sup_{B \in \B} \bigg(\fint_{B} w \, d\mu \bigg)^{\frac1p} 
\prod_{i=1}^m \bigg(\fint_{B} w_i^{1-p'_i} d\mu\bigg)^{\frac{1}{p'_i}} < \infty, 
\end{align*}
where $w=\prod_{i=1}^m w^{\frac{p}{p_i}}$ and $\frac1p=\sum_{i=1}^m \frac{1}{p_i}$. When $p_i=1$, $\big(\fint_B w_i^{1-p'_i}d\mu\big)^{\frac{1}{p'_i}}$ is understood as $(\essinf_B w_i)^{-1}$. 
\end{definition}

As shown in \cite[Theorem 3.6]{LOPTT}, one has the following characterization of $A_{\vec{p}, \B}$. 

\begin{lemma}\label{lem:ApAp}
Let $(\Sigma, \mu)$ be a measure space with a ball-basis $\B$. Let $\vec{w}=(w_1, \ldots, w_m)$ and $\vec{p}=(p_1, \ldots, p_m)$ with $1 \leq p_1, \ldots, p_m<\infty$. Then $\vec{w}\in A_{\vec{p}, \B}$ if and only if $w \in A_{mp, \B}$ and $\sigma_i := w_i^{1-p'_i} \in A_{mp'_i, \B}$, $i=1, \ldots, m$. When $p_i=1$, $w_i^{1-p'_i}\in A_{mp'_i, \B}$ is understood as $w_i^{\frac1m} \in A_{1, \B}$. Moreover, 
\begin{align}
\label{ww-1} &[\vec{w}]_{A_{\vec{p}, \B}} 
\le [w]_{A_{mp, \B}}^{\frac1p} \prod_{i=1}^m [\sigma_i]_{A_{mp'_i, \B}}^{\frac{1}{p'_i}}, 
\quad \text{ and } \quad
\\ 
\label{ww-2} &[w]_{A_{mp, \B}}^{\frac1p} \leq [\vec{w}]_{A_{\vec{p}, \B}}, \quad 
[\sigma_i]_{A_{mp'_i, \B}}^{\frac{1}{p'_i}}  \leq [\vec{w}]_{A_{\vec{p}, \B}},  \quad i=1,\ldots,m,
\end{align}
\end{lemma}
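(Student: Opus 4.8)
\textbf{Proof proposal for Lemma \ref{lem:ApAp}.}

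The plan is to follow the classical argument from \cite[Theorem 3.6]{LOPTT}, adapting it to the abstract ball-basis setting. The key observation is that the whole equivalence is, at bottom, a consequence of H\"older's inequality and the duality relation $[w^{1-q'}]_{A_{q', \B}} = [w]_{A_{q, \B}}^{1/(q-1)}$ from Lemma \ref{lem:Appp}\eqref{Appp-3}, and crucially these manipulations take place ball-by-ball, so no geometric structure beyond \eqref{list:B1} is actually used. First I would fix an arbitrary $B \in \B$ and set $\langle g \rangle_B := \fint_B g \, d\mu$. The engine of the proof is the generalized H\"older inequality: for exponents $\theta_0, \theta_1, \ldots, \theta_m$ with $\sum_{j=0}^m \theta_j = 1$ and nonnegative functions $h_0, \ldots, h_m$,
\begin{align*}
\langle h_0 h_1 \cdots h_m \rangle_B \le \prod_{j=0}^m \langle h_j^{1/\theta_j} \rangle_B^{\theta_j}.
\end{align*}

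For the inequality \eqref{ww-1}, I would start from the definition of $[\vec w]_{A_{\vec p, \B}}$, namely the supremum over $B$ of $\langle w \rangle_B^{1/p} \prod_i \langle \sigma_i \rangle_B^{1/p_i'}$ where $\sigma_i = w_i^{1-p_i'}$, and simply bound each of the two factors. For the first factor, since $w = \prod_i w_i^{p/p_i}$ and $w_i^{p/p_i} = \sigma_i^{-p/(p_i(p_i'-1))} = \sigma_i^{-p/p_i'\cdot\frac{1}{p_i-1}\cdot(p_i-1)}$... more cleanly: write $w^{1/(mp)}$ and its dual weight in terms of the $\sigma_i$, apply H\"older to expand $\langle w \rangle_B$, and recognize the resulting product of averages as exactly the quantities controlled by $[w]_{A_{mp,\B}}$ and $[\sigma_i]_{A_{mp_i',\B}}$. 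Each elementary average $\langle \sigma_i^{1-(mp_i')'}\rangle_B$ appearing is, by Lemma \ref{lem:Appp}\eqref{Appp-3}, dominated by a power of $[\sigma_i]_{A_{mp_i',\B}}$, and collecting exponents via $\frac1p = \sum_i \frac{1}{p_i}$ gives precisely $[w]_{A_{mp,\B}}^{1/p}\prod_i [\sigma_i]_{A_{mp_i',\B}}^{1/p_i'}$. The case $p_i = 1$ is handled by interpreting $\langle \sigma_i \rangle_B^{1/p_i'}$ as $(\essinf_B w_i)^{-1}$ throughout and using that $w_i^{1/m} \in A_{1,\B}$ means $\langle w_i^{1/m}\rangle_B \lesssim \essinf_B w_i^{1/m}$.

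For the reverse inequalities \eqref{ww-2} and hence the ``only if'' direction, I would test the $A_{\vec p,\B}$ condition against specific choices. To get $[w]_{A_{mp,\B}}^{1/p} \le [\vec w]_{A_{\vec p,\B}}$: fix $B$, and use H\"older on $\big(\fint_B w^{1-(mp)'}d\mu\big)^{mp-1}$, splitting $w^{1-(mp)'}$ as a product over $i$ of powers of $\sigma_i$ with exponents summing correctly, so that each factor becomes $\langle \sigma_i \rangle_B$ raised to the right power; combined with $\langle w \rangle_B$ this reassembles $[\vec w]_{A_{\vec p,\B}}^{mp}$ after taking $1/(mp)$-th powers, which rearranges to the claim. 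The estimate $[\sigma_i]_{A_{mp_i',\B}}^{1/p_i'} \le [\vec w]_{A_{\vec p,\B}}$ is symmetric: one expands $\big(\fint_B \sigma_i^{1-(mp_i')'}d\mu\big)^{mp_i'-1} = \big(\fint_B w_i^{p/p_i}\cdot(\text{stuff})\big)$ and bounds the ``stuff'' by H\"older using $\langle w \rangle_B$ and $\langle \sigma_j\rangle_B$ for $j \ne i$, again with exponents dictated by $\frac1p = \sum \frac{1}{p_j}$. Finally, the ``if'' direction: given $w \in A_{mp,\B}$ and all $\sigma_i \in A_{mp_i',\B}$, the bound \eqref{ww-1} shows $[\vec w]_{A_{\vec p,\B}} < \infty$ directly, completing the characterization.

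The main obstacle, such as it is, is purely bookkeeping: organizing the exponents in the iterated H\"older inequalities so that the powers of $p, p_i, p_i'$ cancel correctly and the averages regroup into the defining quantities of $A_{mp,\B}$ and $A_{mp_i',\B}$. There is no analytic difficulty — no covering lemma, no maximal function, nothing depending on \eqref{list:B2}--\eqref{list:B4} — and indeed the statement holds for any basis in the sense of \eqref{list:B1}. I would present the computation for general $m$ with the convention handling the $p_i = 1$ endpoints inline, mirroring \cite[Theorem 3.6]{LOPTT} and citing Lemma \ref{lem:Appp} for the scalar duality and openness facts used along the way.
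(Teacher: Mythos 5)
Your proposal is correct and matches the paper's approach exactly: the paper gives no independent argument but simply invokes \cite[Theorem 3.6]{LOPTT}, whose proof is precisely the ball-by-ball H\"older/duality bookkeeping you outline and transfers verbatim to any basis satisfying \eqref{list:B1}. The only detail worth writing out carefully is the H\"older step behind \eqref{ww-1}, where one uses the pointwise identity $w^{1/p}\prod_i \sigma_i^{1/p_i'}=1$ to choose exponents $\alpha=\frac{mp-1}{pm^2}$, $\beta_i=\frac{mp_i'-1}{m^2p_i'}$ summing to $1$ and deduce $\big(\fint_B w^{1-(mp)'}d\mu\big)^{\frac{mp-1}{p}}\prod_i\big(\fint_B \sigma_i^{1-(mp_i')'}d\mu\big)^{\frac{mp_i'-1}{p_i'}}\ge 1$, after which the two $A_q$ constants factor out as claimed.
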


\subsection{Young functions and Orlicz spaces}
Let $(\Sigma, \mu)$ be a measure space with a basis $\B$. A function $\Phi : [0, \infty) \rightarrow [0, \infty)$ is said to be a Young function, if $\Phi$ is continuous, convex, increasing function such that $\Phi(0)=0$ and $\Phi(t)/t \rightarrow \infty$ as $t \rightarrow \infty$.

Given a Young function $\Phi$ and $B \in \B$, we define the normalized Luxemburg norm of $f$ on $B$ by 
\begin{equation*}
\|f\|_{\Phi, B}
= \inf \bigg\{\lambda>0; \fint_B \Phi \Big(\frac{|f(x)|}{\lambda}\Big) d\mu \leq 1 \bigg\}.
\end{equation*}
Three useful examples are the following: 
\begin{itemize}

\item $\Phi(t)=t^p$, $p>1$, then $\|f\|_{L^p, B} := \|f\|_{\Phi, B}$.

\item $\Phi(t)=t \log^r(e+t)$, $r>0$, then $\|f\|_{L(\log L)^r, B} := \|f\|_{\Phi, B}$. 

\item $\Phi(t)=e^{t^r}-1$, $r>0$, then $\|f\|_{\exp L^r, B} := \|f\|_{\Phi, B}$. 
\end{itemize}

For a given Young function $\phi$, one can define a complementary function
\[
\bar{\Phi}(s)=\sup_{t>0} \{ st - \Phi(t)\}, \quad s \geq 0.
\]
Then the following H\"{o}lder's inequality holds
\begin{equation*}
\fint_B |fg| \, d\mu 
\leq 2 \|f\|_{\Phi, B} \|g\|_{\bar{\Phi}, B}.
\end{equation*}
Let us present more generalized H\"{o}lder's inequalities on Orlicz spaces. 

\begin{lemma}[\cite{PRiv}]\label{lem:PhiPhi}
If $\Phi_0, \Phi_1, \ldots, \Phi_k$ are Young functions satisfying 
\[
\Phi_1^{-1}(t) \cdots \Phi_k^{-1}(t) \lesssim \Phi_0^{-1}(t) \quad\text{ for all } t>0, 
\] 
then for any $B \in \B$, 
\begin{equation*}
\|f_1 \cdots f_k\|_{\Phi_0, B} 
\lesssim \|f_1\|_{\Phi_1, B} \cdots \|f_k\|_{\Phi_k, B} .
\end{equation*}
In particular, for any $B \in \B$ and $\frac1s = \sum_{i=1}^k \frac{1}{s_i}$ with $s_1, \ldots, s_k \geq 1$, 
\begin{align*}
\fint_B |f_1 \ldots f_k g| \, d\mu
& \lesssim \| f_1 \|_{\exp L^{s_1}, B} \cdots \| f_k \|_{\exp L^{s_k}, B} \|g\|_{L(\log L)^{\frac{1}{s}}, B}. 
\end{align*}
\end{lemma}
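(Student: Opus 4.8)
The statement to prove is Lemma \ref{lem:PhiPhi}, the generalized Hölder inequality on Orlicz spaces due to \cite{PRiv}. Since this is attributed to an external reference, the plan is to reconstruct the standard argument rather than invent something new.

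\textbf{The approach.} I would prove the general Young-function estimate first, then specialize. The plan is to use the defining property of the Luxemburg norm together with the generalized Young inequality: for Young functions whose inverses satisfy $\Phi_1^{-1}(t)\cdots\Phi_k^{-1}(t)\lesssim\Phi_0^{-1}(t)$, one has the pointwise bound $\Phi_0(u_1\cdots u_k)\lesssim \Phi_1(u_1)+\cdots+\Phi_k(u_k)$ (up to adjusting constants, valid for $u_i$ large; the small-$u_i$ range is harmless because of convexity and $\Phi_i(0)=0$). First I would normalize: by homogeneity of the Luxemburg norm in each slot, it suffices to treat the case $\|f_i\|_{\Phi_i,B}=1$ for each $i$, so that $\fint_B\Phi_i(|f_i|)\,d\mu\le 1$. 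Then I would apply the pointwise Young-type inequality with $u_i=|f_i(x)|$, integrate over $B$ against the normalized measure $\frac{d\mu}{\mu(B)}$, and conclude $\fint_B\Phi_0(c^{-1}|f_1\cdots f_k|)\,d\mu\le 1$ for a suitable constant $c$ depending only on the $\Phi_i$'s; this gives $\|f_1\cdots f_k\|_{\Phi_0,B}\le c$, which is the claim after undoing the normalization.

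\textbf{Key steps in order.} (1) State and verify the pointwise submultiplicativity-type inequality $\Phi_0(t_1\cdots t_k)\lesssim \sum_{i=1}^k\Phi_i(t_i)$ from the hypothesis on inverses: writing $t_i=\Phi_i^{-1}(s_i)$, the hypothesis gives $t_1\cdots t_k\lesssim\Phi_0^{-1}(s_1\cdots s_k)\le\Phi_0^{-1}\big((\max_i s_i)^k\big)$, and then one bounds $\Phi_0^{-1}((\max s_i)^k)$ in terms of $\max_i s_i$ using convexity/doubling-type properties, eventually controlling $\Phi_0(t_1\cdots t_k)$ by a multiple of $\max_i s_i\le\sum_i s_i=\sum_i\Phi_i(t_i)$. (2) Reduce to $\|f_i\|_{\Phi_i,B}=1$ by scaling each $f_i$. (3) Integrate the pointwise inequality over $B$ with normalized measure to get $\fint_B\Phi_0\big(|f_1\cdots f_k|/c\big)\,d\mu\le 1$. (4) Read off $\|f_1\cdots f_k\|_{\Phi_0,B}\lesssim 1$ and rescale. (5) For the ``in particular'' clause, take $\Phi_i(t)=e^{t^{s_i}}-1$ for $i=1,\ldots,k$ and $\Phi_{k+1}(t)=t\log^{1/s}(e+t)$ with $g=f_{k+1}$, and $\Phi_0(t)=t$; one checks $\Phi_1^{-1}(t)\cdots\Phi_{k+1}^{-1}(t)\lesssim t=\Phi_0^{-1}(t)$ using $\Phi_i^{-1}(t)\simeq(\log(e+t))^{1/s_i}$ and $\Phi_{k+1}^{-1}(t)\simeq t/(\log(e+t))^{1/s}$ together with $\sum_i 1/s_i=1/s$, and then the displayed inequality follows from the general case (noting $\|g\|_{L^1,B}=\fint_B|g|\,d\mu\le\|g\|_{L(\log L)^{1/s},B}$, or more directly applying the general estimate with $\Phi_0(t)=t$).

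\textbf{Main obstacle.} The only genuinely delicate point is step (1): passing from the hypothesis on the \emph{inverse} functions to a usable pointwise inequality for $\Phi_0$ of a product. The difficulty is that $\Phi_0^{-1}((\max s_i)^k)$ need not be comparable to $\Phi_0^{-1}(\max s_i)$ in general; one needs to exploit that $\Phi_0$ is a Young function (convexity plus $\Phi_0(t)/t\to\infty$) to absorb the factor $k$ in the exponent at the cost of a multiplicative constant, or to argue directly that $t_1\cdots t_k\le\Phi_0^{-1}(C\sum_i s_i)$ for an appropriate $C$. I would handle this by the standard device of splitting according to which $t_i$ is largest and using monotonicity, so that in effect only one factor controls the size while the others are bounded by the largest — this is exactly where the convexity of the $\Phi_i$ and the normalization $\Phi_i(0)=0$ enter. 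Everything else is routine manipulation of Luxemburg norms, so I would not belabor it.
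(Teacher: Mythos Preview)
The paper does not prove this lemma; it is simply quoted from \cite{PRiv}. Your overall strategy---normalize to $\|f_i\|_{\Phi_i,B}=1$, derive a pointwise inequality of the form $\Phi_0(c^{-1}u_1\cdots u_k)\le\sum_i\Phi_i(u_i)$, integrate---is exactly the standard one and is correct in outline.

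There is, however, a genuine slip in your step (1). You write that from $t_i=\Phi_i^{-1}(s_i)$ ``the hypothesis gives $t_1\cdots t_k\lesssim\Phi_0^{-1}(s_1\cdots s_k)$''. This does \emph{not} follow: the hypothesis $\Phi_1^{-1}(t)\cdots\Phi_k^{-1}(t)\lesssim\Phi_0^{-1}(t)$ only applies when all the arguments are the \emph{same} $t$, and there is no reason why $\Phi_1^{-1}(s_1)\cdots\Phi_k^{-1}(s_k)$ should be controlled by $\Phi_0^{-1}(s_1\cdots s_k)$ for unequal $s_i$ (take $s_1$ large and $s_2$ small so that $s_1s_2=1$). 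This is also the source of the artificial ``main obstacle'' you identify with $\Phi_0^{-1}((\max_i s_i)^k)$.

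The fix is simpler than your workaround. Set $t=\max_i s_i$. Since each $\Phi_i^{-1}$ is increasing, $t_i=\Phi_i^{-1}(s_i)\le\Phi_i^{-1}(t)$, and \emph{now} the hypothesis applies directly:
\[
t_1\cdots t_k\le \Phi_1^{-1}(t)\cdots\Phi_k^{-1}(t)\le C\,\Phi_0^{-1}(t).
\]
Applying $\Phi_0$ and using monotonicity gives $\Phi_0\big(C^{-1}t_1\cdots t_k\big)\le t=\max_i s_i\le\sum_i s_i=\sum_i\Phi_i(t_i)$, which is precisely the pointwise inequality you need, with no detour through $(\max_i s_i)^k$ and no need for any doubling or $\Delta_2$ assumption on $\Phi_0$. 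With this correction the rest of your plan (normalization, integration, and the verification of the inverse-function estimate for the $\exp L^{s_i}$ and $L(\log L)^{1/s}$ Young functions in the ``in particular'' clause) goes through as written.
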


Given $r>0$, define the space $\osc_{\exp L^r, \B}$ as the collection of all functions $f \in L^1_{\loc}(\Sigma, \mu)$ satisfying 
\begin{align*}
\|f\|_{\osc_{\exp L^r, \B}}:= \sup_{B \in \B} \|f-f_B\|_{\exp L^r, B} < \infty.  
\end{align*}
We say that a measurable function $f \in \BMO_{\B}$ if $\int_B |f| \, d\mu< \infty$ for every $B\in \B$ and 
\begin{align}\label{def:BMO}
\|f\|_{\BMO_{\B}} 
:= \sup_{B \in \B} \fint_B |f-f_B| \, d\mu < \infty. 
\end{align}
When $\Sigma = \Rn$, $d\mu=dx$, and $\B$ denotes the collection of all balls or cubes in $\Rn$, the John-Nirenberg's theorem (cf. \cite[Corollary 3.1.7]{Gra-2}) implies that 
\begin{align}\label{OSC}
\text{$\osc_{\exp L, \B}=\BMO_{\B}$ \, with comparable norms.}
\end{align}

A basis $\B$ is called {\tt uniform} if there exists a constant $c_0 \in (0, 1)$ such that for all $B \in \B$, there exist a disjoint collection $\{B_j\} \subset \B$ such that for all $j$, $B_j \subset B$, $\mu(B_j) \ge c_0 \mu(B)$, and $\mu(B \setminus \bigcup_j B_j) = 0$. Moreover, a basis $\B$ is said to be {\tt differentiable} if for each point $x \in \Sigma$, there exists a sequence $\{B_j\} \subset \B$ such that $x \in \bigcap_j B_j$ and $\lim_{j \to \infty} \mu(B_j) = 0$. 

Note that in general $\osc_{\exp L^r, \B} \subset \BMO_{\B}$ for any $r \ge 1$.  If a basis $\B$ is uniform and differentiable, then \cite[Theorem 3.1]{DLOPW} gives 
\begin{align}\label{OSCBMO} 
\text{$\osc_{\exp L, \B} = \BMO_{\B}$ \, with comparable norms.}
\end{align}

\begin{definition}
We say that $A_{\infty, \B}$ satisfies the {\tt sharp reverse H\"{o}lder property} if there exists a constant $c_0>0$ such that for every $w \in A_{\infty, \B}$ one has $w \in RH_{r_w, \B}$ with $r_w = 1 + c_0 [w]_{A_{\infty, \B}}^{-1}$.  
\end{definition}

If we take $\Sigma = \Rn$, $d\mu=dx$, and $\B$ as the collection of all cubes in $\Rn$, then \cite[Theorem 2.3]{HP} gives that 
\begin{align}\label{cube}
\text{$A_{\infty, \B}$ satisfies the sharp reverse H\"{o}lder property}.
\end{align}

\begin{lemma}\label{lem:LlogL}
Assume that $A_{\infty, \B}$ satisfies the sharp reverse H\"{o}lder property. Let $s>1$, $t>0$, and $w \in A_{\infty, \B}$. Then for any $B \in \B$, 
\begin{align}
\label{e:w} &\| w^{\frac1s} \|_{L^s(\log L)^{st}, B}
\lesssim  [w]_{A_{\infty, \B}}^t \langle w \rangle_B^{\frac1s},
\\
\label{e:fw} &\| f w \|_{L(\log L)^t, B}
\lesssim  [w]_{A_{\infty, \B}}^t \langle w \rangle_B \inf_{x \in B} M_{w}(|f|^s)(x)^{\frac1s}. 
\end{align}
\end{lemma}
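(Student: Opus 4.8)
\textbf{Proof plan for Lemma \ref{lem:LlogL}.}

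The plan is to reduce both estimates to the sharp reverse H\"older inequality for $A_{\infty, \B}$ together with the defining properties of the Luxemburg norms. For \eqref{e:w}, first I would fix $B \in \B$ and set $r_w := 1 + c_0 [w]_{A_{\infty, \B}}^{-1}$, so that by hypothesis $w \in RH_{r_w, \B}$, i.e. $\langle w^{r_w} \rangle_B^{1/r_w} \lesssim \langle w \rangle_B$ (with constants harmless in the relevant range of $[w]_{A_{\infty,\B}}$). The key elementary fact is the comparison between the $L^s(\log L)^{st}$-norm and an $L^{sq}$-norm for a suitable $q>1$: since $\Phi(\tau) = \tau^s \log(e+\tau)^{st}$ satisfies $\Phi(\tau) \lesssim_{q} \tau^{sq}$ for every $q>1$ (with constant blowing up like a power of $(q-1)^{-1}$), one gets $\|g\|_{L^s(\log L)^{st}, B} \lesssim (q-1)^{-t} \langle |g|^{sq} \rangle_B^{\frac{1}{sq}}$. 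Applying this with $g = w^{1/s}$ and $q = r_w$, so that $sq/s \cdot \frac1{q}$ bookkeeping gives $\langle w^{r_w} \rangle_B^{\frac{1}{s r_w}} \lesssim \langle w \rangle_B^{1/s}$ by the reverse H\"older inequality, and noting $(r_w - 1)^{-t} = (c_0^{-1}[w]_{A_{\infty,\B}})^{t} \lesssim [w]_{A_{\infty, \B}}^t$, yields \eqref{e:w}.

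For \eqref{e:fw}, I would use the generalized H\"older inequality in Orlicz spaces (Lemma \ref{lem:PhiPhi}) to split the product $fw$. Writing $fw = (f w^{1/s}) \cdot w^{1/s'}$ is not quite the right split; instead the natural move is to factor through the $w$-weighted maximal function: write $\fint_B |fw| \, d\mu$ using the pairing and bound $\|fw\|_{L(\log L)^t, B} \lesssim \|f w^{1/s}\|_{L^s, B_w\text{-type}} \cdots$. More precisely, I would use that $L(\log L)^t$ is the associate space of $\exp L^{1/t}$ up to normalization, and combine the factorization $\Phi_0^{-1} \gtrsim \Phi_1^{-1} \Phi_2^{-1}$ with $\Phi_1(\tau) = \tau^s$ and $\Phi_2$ chosen so that $\|w^{1-1/s}\|_{\Phi_2, B}$-type quantities are controlled by $[w]_{A_\infty,\B}^t \langle w \rangle_B^{1-1/s}$ via \eqref{e:w} applied with exponent $s' $ (or rather $s/(s-1)$). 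Then the remaining factor $\|f w^{1/s} / w^{1/s}\|$ must be recognized as $\inf_{x \in B} M_w(|f|^s)(x)^{1/s}$: indeed $\langle |f|^s w \rangle_B / \langle w \rangle_B \le \inf_{x \in B} M_w(|f|^s)(x)$ directly from the definition of the weighted maximal operator, since $B \ni x$ for every $x \in B$. Collecting the two factors gives the right-hand side of \eqref{e:fw}.

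The main obstacle I anticipate is getting the Orlicz bookkeeping exactly right in \eqref{e:fw}: one has to choose the complementary Young functions so that the $\log$ powers add up correctly (a single power $t$ on the $L(\log L)$ side must be produced from the $[w]_{A_\infty}^t$ factor, which forces using the sharp reverse H\"older exponent $r_w$ precisely once), while simultaneously peeling off the weighted maximal function. A clean way to organize this is to first prove \eqref{e:fw} in the normalized case $\langle w \rangle_B = 1$ and $\inf_B M_w(|f|^s) = 1$ by a direct computation with the Luxemburg norm — bounding $\fint_B \Phi\big(\frac{|f| w}{\lambda}\big) d\mu$ for $\lambda \simeq [w]_{A_\infty,\B}^t$ using the pointwise inequality $\Phi(ab) \le \Phi(a^{s'}) + b^{s}$-type splitting plus \eqref{e:w} — and then restore the general case by homogeneity. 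I would double-check the exponent matching ($s$ versus $s'$, and whether the $\log$ exponent on the $w$ side should be $st$ or $s't$) against the one place in Section \ref{sec:sharp} where \eqref{e:fw} is invoked, to make sure the statement as written is the one actually needed.
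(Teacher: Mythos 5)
Your approach is essentially the paper's. For \eqref{e:w}, comparing the Orlicz norm to an $L^{r_w}$ norm via the elementary bound $\tau\log^{st}(e+\tau)\lesssim (r_w-1)^{-st}\tau^{r_w}$ and then invoking the sharp reverse H\"older inequality is exactly the argument in the paper; your constant bookkeeping is correct. For \eqref{e:fw}, though, the split you dismiss at the outset — $fw=(fw^{1/s})\cdot w^{1/s'}$ — is precisely the one the paper uses: applying Lemma \ref{lem:PhiPhi} with $\Phi_1(\tau)=\tau^s$ and $\Phi_2(\tau)\simeq\tau^{s'}\log^{s't}(e+\tau)$ gives
\[
\|fw\|_{L(\log L)^t, B}\ \lesssim\ \|fw^{1/s}\|_{L^s,B}\ \|w^{1/s'}\|_{L^{s'}(\log L)^{s't},B},
\]
where $\|fw^{1/s}\|_{L^s,B}^s=\fint_B|f|^s w\,d\mu=\langle w\rangle_B\cdot\frac{1}{w(B)}\int_B |f|^s w\,d\mu\le \langle w\rangle_B\,\inf_{x\in B}M_{\B,w}(|f|^s)(x)$, and the second factor is controlled by $[w]_{A_{\infty,\B}}^t\langle w\rangle_B^{1/s'}$ by \eqref{e:w} with $s'$ in place of $s$. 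You end up reproducing this anyway, so your conclusion is right, but the opening "this is not quite the right split" is a misstep. Also note the inequality direction for Lemma \ref{lem:PhiPhi} is $\Phi_1^{-1}\Phi_2^{-1}\lesssim\Phi_0^{-1}$, not the reverse as you wrote; the two inverses are comparable here, so it is harmless, but worth getting straight. Your proposed normalization trick with a pointwise Young-type inequality is a viable detour but unnecessary once you accept the direct H\"older split.
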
 

\begin{proof}
Let $w \in A_{\infty, \B}$. By assumption, there exists $r_w \simeq 1 + [w]_{A_{\infty, \B}}^{-1}$ such that $w \in RH_{r_w, \B}$. This implies 
\begin{align*}
&\|w^{\frac1s} \|_{L^s(\log L)^{st}, B}^s
\lesssim \|w\|_{L(\log L)^{st}, B}
\\ 
&\quad \lesssim \bigg(1 + \frac{1}{(r_w - 1)^{st}} \bigg)
\bigg( \fint_B w^{r_w} dx  \bigg)^{\frac{1}{r_w}}
\lesssim  [w]_{A_{\infty}}^{st} \langle w \rangle_B.
\end{align*}
Now by Lemma \ref{lem:PhiPhi} and \eqref{e:w}, we get 
\begin{align*}
\|fw\|_{L(\log L)^{t}, B}
&\lesssim \bigg( \fint_B |f|^s w \, d\mu \bigg)^{\frac1s}
\|w^{\frac{1}{s'}} \|_{L^{s'}(\log L)^{s't}, B}
\\ 
&\lesssim  [w]_{A_{\infty}}^t
\bigg( \frac{1}{w(B)} \int_B |f|^s w \, d\mu \bigg)^{\frac1s} \langle w \rangle_B
\\
&\lesssim  [w]_{A_{\infty}}^t \inf_{x \in B}
M_{\B, w}(|f|^s)^{\frac1s} \langle w \rangle_B.
\end{align*}
This completes the proof. 
\end{proof}

\section{Properties of bounded oscillation operators}\label{sec:BO}

Given a $\bB$-valued operator $T$ satisfying $T(\vec{f})(x) = \|\T(\vec{f})(x)\|_{\bB}$, we define its maximal operator by  
\begin{align*}
T_*(\vec{f})(x)  
:= \sup_{x \in B \in \B} \big\|\T(\vec{f})(x) - \T(\vec{f} \mathbf{1}_{B^*})(x)\big\|_{\bB}. 
\end{align*}

We say that a ball-basis $\B$ in a measure space satisfies {\tt the doubling condition} if there is a constant $\gamma>1$ such that for any ball $B \in \B$ with $B^* \subsetneq \Sigma$, one can find $B' \in \B$ satisfying $B \subset B'$ and $\mu(B') \le \gamma \, \mu(B)$. 

\begin{theorem}\label{thm:doubling}
Let $(\Sigma, \mu)$ be a measure space with a ball-basis $\B$. Assume that $T$ is a $\bB$-valued multilinear operator satisfying the condition \eqref{list:T-reg} such that $T$ is bounded from $L^r(\Sigma, \mu) \times \cdots \times L^r(\Sigma, \mu)$ to $L^{\frac{r}{m}, \infty}(\Sigma, \mu)$ for some $r \in [1, \infty)$. Then the following hold: 
\begin{list}{\rm (\theenumi)}{\usecounter{enumi}\leftmargin=1.2cm \labelwidth=1cm \itemsep=0.2cm \topsep=.2cm \renewcommand{\theenumi}{\roman{enumi}}}

\item\label{dou-1} $T_*$ is bounded from $L^r(\Sigma, \mu) \times \cdots \times L^r(\Sigma, \mu)$ to $L^{\frac{r}{m}, \infty}(\Sigma, \mu)$ with bound $(\C_2(T) + \|T\|)$. 

\item\label{dou-2} If in addition $\B$ satisfies the doubling condition, then $T$ satisfies the condition \eqref{list:T-size}. In particular, $T$ is a $\bB$-valued multilinear bounded oscillation operator with respect to $\B$ and $r$.  
\end{list}
\end{theorem}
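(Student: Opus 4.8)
\textbf{Proof plan for Theorem \ref{thm:doubling}.}
The plan is to prove part \eqref{dou-1} directly from the weak-type bound for $T$ together with a covering argument in the measure space, and then to deduce part \eqref{dou-2} by combining the doubling condition with the regularity hypothesis \eqref{list:T-reg} to manufacture the size estimate \eqref{list:T-size}.

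For part \eqref{dou-1}, fix $\vec{f} \in L^r(\Sigma,\mu)^m$ and a threshold $\lambda>0$; I want to estimate $\mu(\{x : T_*(\vec f)(x) > \lambda\})$. By definition of $T_*$, for each such $x$ there is a ball $B \ni x$ with $\|\T(\vec f)(x) - \T(\vec f \mathbf{1}_{B^*})(x)\|_{\bB} > \lambda$. The first step is to linearize: for $x$ in the relevant superlevel set, I will split
\[
\|\T(\vec f)(x) - \T(\vec f\mathbf 1_{B^*})(x)\|_{\bB}
\le \big\|\big(\T(\vec f) - \T(\vec f\mathbf 1_{B^*})\big)(x) - \big(\T(\vec f) - \T(\vec f\mathbf 1_{B^*})\big)(x')\big\|_{\bB}
+ \|\T(\vec f)(x') - \T(\vec f\mathbf 1_{B^*})(x')\|_{\bB},
\]
for an auxiliary point $x' \in B$, and control the first term by $\C_2(T)\prod_i \lfloor f_i\rfloor_{B,r}$ using \eqref{list:T-reg} and the second by $\|\T(\vec f)(x')\|_{\bB} + \|\T(\vec f\mathbf 1_{B^*})(x')\|_{\bB} = T(\vec f)(x') + T(\vec f\mathbf 1_{B^*})(x')$, which are in turn handled (after averaging $x'$ over $B$ and applying Kolmogorov's inequality on $B$) by the $L^r\times\cdots\times L^r \to L^{r/m,\infty}$ bound for $T$ and by $\mathcal M_{\B,r}(\vec f)$. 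This reduces the estimate to a level-set bound for $\mathcal M_{\B,r}(\vec f)$ plus the weak-type bound for $T$; one then uses the standard argument that the collection of balls witnessing the superlevel set can be thinned via Lemma \ref{lem:BE}\eqref{list-1}, reduces to a bounded exhaustion of $\Sigma$, and assembles the pieces with the universal constant $\C_0$. The bookkeeping to get the clean bound $(\C_2(T) + \|T\|)$ is routine once the splitting above is in place.

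For part \eqref{dou-2}, I take $B_0 \in \B$ with $B_0^* \subsetneq \Sigma$ and must produce $B \supsetneq B_0$ with the size estimate of \eqref{list:T-size}. The doubling condition supplies $B' \in \B$ with $B_0 \subset B'$ and $\mu(B') \le \gamma\,\mu(B_0)$; iterating and using \eqref{list:B4}/\eqref{BkBk} I can arrange $B \in \B$ with $B_0 \subsetneq B$ and $\mu(B) \lesssim \mu(B_0)$, so that in particular $\langle f_i\rangle_{B_0^*,r} \lesssim \langle f_i\rangle_{B^*,r}$ and the averages over $B^{(k)}$ for bounded $k$ are all comparable. Then for $x \in B_0$ I write
\[
\T(\vec f\mathbf 1_{B^*})(x) - \T(\vec f\mathbf 1_{B_0^*})(x)
= \big[\T(\vec f) - \T(\vec f\mathbf 1_{B_0^*})\big](x) - \big[\T(\vec f) - \T(\vec f\mathbf 1_{B^*})\big](x),
\]
and apply \eqref{list:T-reg} twice, once with the ball $B_0$ and once with the ball $B$, after subtracting the value at a fixed reference point; the reference-point contributions cancel between the two applications, leaving a bound $\lesssim \C_2(T)\big(\prod_i \lfloor f_i\rfloor_{B_0,r} + \prod_i \lfloor f_i\rfloor_{B,r}\big)$, and since $\lfloor f_i\rfloor_{B_0,r} = \lfloor f_i\rfloor_{B,r}$ (the sup defining $\lfloor\cdot\rfloor$ ranges over all balls containing $B_0$, resp. $B$, and $B_0 \subset B$ forces these to agree up to the trivial inequality, which I will argue carefully) and $\lfloor f_i\rfloor_{B,r} \ge \langle f_i\rangle_{B^*,r}$ would go the wrong way, I instead bound $\lfloor f_i\rfloor_{B_0,r}$ and $\lfloor f_i\rfloor_{B,r}$ crudely and then re-localize: the cancellation of the reference term is exactly what lets me replace the $\lfloor\cdot\rfloor$ norms by $\langle\cdot\rangle_{B^*,r}$, because the difference $\T(\vec f\mathbf 1_{B^*}) - \T(\vec f\mathbf 1_{B_0^*})$ only depends on $\vec f$ restricted to $B^* \setminus B_0^*$, so \eqref{list:T-reg} applied on $B$ can be upgraded to the version with averages over $B^*$. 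This re-localization step — converting the $\lfloor\cdot\rfloor_{B,r}$ bound coming from \eqref{list:T-reg} into the $\langle\cdot\rangle_{B^*,r}$ bound required by \eqref{list:T-size}, using that the relevant truncated function is supported in $B^*$ — is the main obstacle; everything else is a direct combination of the hypotheses with the geometric lemmas of Section \ref{sec:geomtry}. Once \eqref{list:T-size} is verified, $T$ satisfies both \eqref{list:T-size} and \eqref{list:T-reg}, hence is a $\bB$-valued multilinear bounded oscillation operator with respect to $\B$ and $r$, which completes the proof.
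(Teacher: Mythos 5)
Part \eqref{dou-1} of your proposal is essentially the paper's argument: cover the superlevel set of $T_*$ by witnessing balls, thin the cover via Lemma \ref{lem:BE}\eqref{list-1}, use the weak-type bound for $T$ to select, inside each ball, a large "good" subset where $T(\vec f\mathbf 1_{B^*})$ is controlled, and use \eqref{list:T-reg} to transfer between the defining point and the good points. That part is fine.

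Part \eqref{dou-2} has a genuine gap. You claim that after subtracting the value at a reference point $x'$ in the two applications of \eqref{list:T-reg}, "the reference-point contributions cancel." They do not. Writing out your decomposition,
\begin{align*}
\T(\vec f\mathbf 1_{B^*})(x) - \T(\vec f\mathbf 1_{B_0^*})(x)
&= \big[(\T(\vec f)-\T(\vec f\mathbf 1_{B_0^*}))(x) - (\T(\vec f)-\T(\vec f\mathbf 1_{B_0^*}))(x')\big] \\
&\quad - \big[(\T(\vec f)-\T(\vec f\mathbf 1_{B^*}))(x) - (\T(\vec f)-\T(\vec f\mathbf 1_{B^*}))(x')\big] \\
&\quad + \big[\T(\vec f\mathbf 1_{B^*})(x') - \T(\vec f\mathbf 1_{B_0^*})(x')\big],
\end{align*}
so what survives is exactly the quantity you are trying to estimate, evaluated at $x'$. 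Condition \eqref{list:T-reg} only controls oscillations, never the value at a single point, so this term cannot be absorbed; it must be bounded by choosing $x'$ via a Chebyshev argument using the hypothesis that $T$ maps $L^r\times\cdots\times L^r$ into $L^{\frac rm,\infty}$ — the set of $x'\in B_0$ where $\|\T(\vec f\mathbf 1_{B^*})(x')\|_{\bB}$ or $\|\T(\vec f\mathbf 1_{B_0^*})(x')\|_{\bB}$ exceeds a multiple of $[\mu(B)/\mu(B_0)]^{m/r}\prod_i\langle f_i\rangle_{B^*,r}$ has measure less than $\mu(B_0)$. Your part \eqref{dou-2} never invokes the weak-type boundedness of $T$, and without it the argument cannot close. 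The paper packages precisely this two-step argument (Chebyshev selection of $x'$, then one application of \eqref{list:T-reg} to the truncated tuple $\vec f\mathbf 1_{B^*}$ on the ball $B_0$) as Lemma \ref{lem:ABC}\eqref{ABC-1}, which bounds $\Delta(B_0,B)\lesssim(\C_2(T)+\|T\|)[\mu(B)/\mu(B_0)]^{m/r}$; the doubling condition then makes the measure ratio harmless. Note also that applying \eqref{list:T-reg} to $\vec f\mathbf 1_{B^*}$ rather than to $\vec f$ is what makes your "re-localization" legitimate: the resulting quantity $\lfloor f_i\mathbf 1_{B^*}\rfloor_{B_0,r}$ is controlled by $[\mu(B)/\mu(B_0)]^{1/r}\langle f_i\rangle_{B^*,r}$ via \eqref{list:B4} (Lemma \ref{lem:ABF}\eqref{ABF-2}); your identity $\lfloor f_i\rfloor_{B_0,r}=\lfloor f_i\rfloor_{B,r}$ for the untruncated $f_i$ is both false in general (only $\lfloor f_i\rfloor_{B,r}\le\lfloor f_i\rfloor_{B_0,r}$ holds) and insufficient, since $\lfloor f_i\rfloor_{B_0,r}$ sees $f_i$ outside $B^*$ and cannot be dominated by $\langle f_i\rangle_{B^*,r}$.
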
 

Considering the condition \eqref{list:T-size}, given $A, B \in \B$ with $A \subset B$, we denote 
\begin{align*}
\Delta(A, B)
:= \sup_{f_i \in L^r(\mu) \atop i=1, \ldots, m}\sup_{x \in A} 
\big\|\T(\vec{f} {\bf 1}_{B^*})(x) - \T(\vec{f} {\bf 1}_{A^*})(x) \big\|_{\bB} 
\bigg/ \prod_{i=1}^m \langle f_i \rangle_{B^*, r}. 
\end{align*}

\begin{lemma}\label{lem:ABF}
Let $(\Sigma, \mu)$ be a measure space with a ball-basis $\B$. Then we have 
\begin{list}{\rm (\theenumi)}{\usecounter{enumi}\leftmargin=1.2cm \labelwidth=1cm \itemsep=0.2cm \topsep=.2cm \renewcommand{\theenumi}{\alph{enumi}}}

\item\label{ABF-1} For any $A, B, C \in \B$ with $A \subset B \subset C$, one has $\Delta(A, B) \le \Delta(A, C)$. 

\item\label{ABF-2} For any $A, B \in \B$ with $A \subset B$, 
\begin{align*}
\lfloor f \mathbf{1}_{B^*} \rfloor_{A, r}
\lesssim [\mu(B)/\mu(A)]^{\frac1r} \langle f \rangle_{B^*, r}. 
\end{align*}
\end{list}
\end{lemma}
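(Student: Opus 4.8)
\textbf{Proof plan for Lemma~\ref{lem:ABF}.}

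For part~\eqref{ABF-1}, I would argue directly from the definition of $\Delta(A,B)$. Fix $A \subset B \subset C$ in $\B$, and let $f_i \in L^r(\Sigma,\mu)$ and $x \in A$. The natural move is to insert the intermediate truncation at $B^*$ and telescope:
\begin{align*}
\T(\vec f \mathbf{1}_{B^*})(x) - \T(\vec f \mathbf{1}_{A^*})(x)
= \big(\T(\vec f \mathbf{1}_{C^*})(x) - \T(\vec f \mathbf{1}_{A^*})(x)\big)
- \big(\T(\vec f \mathbf{1}_{C^*})(x) - \T(\vec f \mathbf{1}_{B^*})(x)\big).
\end{align*}
Since $A \subset B \subset C$ and the $*$-operation is monotone (by \eqref{ball} and property \eqref{list:B4}, $A \subset B$ forces $A^* \subset B^*$ when $\mu(A)\le\mu(B)$; in general $A\subset C \Rightarrow A^*\subset C^*$ up to the defining inclusions), both differences on the right-hand side are of the form appearing in the definition of $\Delta(\cdot,C)$ with point $x \in A \subset B$, so each is bounded by $\Delta(A,C)\prod_i\langle f_i\rangle_{C^*,r}$ and $\Delta(B,C)\prod_i\langle f_i\rangle_{C^*,r}$ respectively. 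Actually the cleanest route is to note that $\T(\vec f\mathbf 1_{B^*}) = \T((\vec f\mathbf 1_{C^*})\mathbf 1_{B^*})$ whenever $B^*\subset C^*$, so $\Delta(A,B)$ applied to the functions $f_i\mathbf 1_{C^*}$ gives
$\|\T(\vec f\mathbf 1_{B^*})(x)-\T(\vec f\mathbf 1_{A^*})(x)\|_{\bB} \le \Delta(A,B)\prod_i\langle f_i\mathbf 1_{C^*}\rangle_{B^*,r} \le \Delta(A,B)\prod_i\langle f_i\rangle_{C^*,r}$; but this is circular. Instead I will bound $\|\T(\vec f\mathbf 1_{B^*})(x)-\T(\vec f\mathbf 1_{A^*})(x)\|_{\bB}$ by writing it as $\|(\T(\vec f\mathbf 1_{C^*})-\T(\vec f\mathbf 1_{A^*}))(x) - (\T(\vec f\mathbf 1_{C^*})-\T(\vec f\mathbf 1_{B^*}))(x)\|_{\bB}$ and using the triangle inequality together with the fact that the second term, by linearity, equals $\|\T(\vec f\mathbf 1_{C^*\setminus B^*})(x)\|_{\bB}$ — but this need not be controlled by $\Delta$ alone. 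The resolution: only the first difference is directly a $\Delta(A,C)$-term (take sup over $x\in A$), and I must show the supremum defining $\Delta(A,B)$ is dominated. The correct and simplest observation is that $\langle f_i\rangle_{A^*,r} \le$ (something) and the definition compares against $\langle f_i\rangle_{B^*,r}$, which is the weaker normalization; since $B^*\subset C^*$ we have $\langle f_i\rangle_{B^*,r}\cdot\mu(B^*)^{1/r} \le \langle f_i\rangle_{C^*,r}\cdot\mu(C^*)^{1/r}$ only with a volume factor, so monotonicity of $\Delta$ in the second argument is genuinely a statement that enlarging $C$ both enlarges the class of admissible configurations \emph{and} shrinks the normalizing denominator $\prod_i\langle f_i\rangle_{C^*,r}$ relative to $\prod_i\langle f_i\rangle_{B^*,r}$ — wait, it grows the denominator, making the ratio smaller. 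So I expect part~\eqref{ABF-1} to follow from: every pair $(\vec f, x)$ admissible for $\Delta(A,B)$ can be realized, via replacing $\vec f$ by $\vec f\mathbf 1_{C^*}$, as a configuration contributing to $\Delta(A,C)$, and $\prod_i\langle f_i\mathbf 1_{C^*}\rangle_{C^*,r} = \prod_i\langle f_i\rangle_{C^*,r}$ while $\prod_i\langle f_i\mathbf 1_{B^*}\rangle_{B^*,r}=\prod_i\langle f_i\rangle_{B^*,r}$; the numerators agree since $\T(\vec f\mathbf 1_{B^*})=\T((\vec f\mathbf 1_{C^*})\mathbf 1_{B^*})$ and $\T(\vec f\mathbf 1_{A^*})=\T((\vec f\mathbf 1_{C^*})\mathbf 1_{A^*})$. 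The only remaining point is $\langle f_i\rangle_{C^*,r}\ge c\,\langle f_i\mathbf 1_{B^*}\rangle$ is false in general; rather one compares the \emph{same} function $g_i := f_i\mathbf 1_{C^*}$ in both, so $\Delta(A,C) \ge \|\ldots\|/\prod_i\langle g_i\rangle_{C^*,r}$ and we need $\prod_i\langle g_i\rangle_{C^*,r} \le \prod_i\langle f_i\rangle_{B^*,r}$, which is \emph{false}. This shows the inequality $\Delta(A,B)\le\Delta(A,C)$ must instead use that $\langle f_i\mathbf 1_{B^*}\rangle_{B^*,r} \le \langle f_i\mathbf 1_{B^*}\rangle_{C^*,r}\cdot(\mu(C^*)/\mu(B^*))^{1/r}$, no — I will simply use $\langle f_i\rangle_{B^*,r} \ge \langle f_i\mathbf 1_{B^*}\rangle_{C^*,r}$ since averaging $|f_i\mathbf 1_{B^*}|^r$ over the larger set $C^*$ cannot exceed the average over $B^*$. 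Hence with $g_i=f_i\mathbf 1_{B^*}$, $\|\T(\vec g\mathbf 1_{B^*})(x)-\T(\vec g\mathbf 1_{A^*})(x)\|_{\bB} = \|\T(\vec f\mathbf 1_{B^*})(x)-\T(\vec f\mathbf 1_{A^*})(x)\|_{\bB} \le \Delta(A,C)\prod_i\langle g_i\rangle_{C^*,r} \le \Delta(A,C)\prod_i\langle f_i\rangle_{B^*,r}$, and taking sup over $\vec f$, $x\in A$ yields $\Delta(A,B)\le\Delta(A,C)$. This is the argument I will write.

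For part~\eqref{ABF-2}, fix $A \subset B$ in $\B$ and a function $f$. Unwinding the definition, $\lfloor f\mathbf 1_{B^*}\rfloor_{A,r} = \sup_{B'\in\B:\,B'\supset A}\langle f\mathbf 1_{B^*}\rangle_{B',r}$. I would split on the size of $B'$. If $\mu(B')\le\mu(B)$, since $A\cap B'\supset A\ne\emptyset$... actually $A\subset B'$ already, and $A\subset B$; here I want to compare $B'$ with $B$ — I would instead use that $B'\cap B\ne\emptyset$ (both contain $A$) together with $\mu(B')\le\mu(B)$ to get $B'\subset B^*$ from \eqref{ball}, hence $\langle f\mathbf 1_{B^*}\rangle_{B',r}^r = \fint_{B'}|f|^r\,d\mu \le \frac{\mu(B^*)}{\mu(B')}\fint_{B^*}|f|^r\,d\mu$; since $\mu(B')\ge$ ? — here I bound $\mu(B')$ from below by... nothing directly, so instead I bound $\frac{1}{\mu(B')}\int_{B^*}|f|^r \le \frac{\mu(B^*)}{\mu(B')}\langle f\rangle_{B^*,r}^r$ and use $\mu(B')\ge\mu(A)$ (since $A\subset B'$) to get $\le \frac{\mu(B^*)}{\mu(A)}\langle f\rangle_{B^*,r}^r \lesssim \frac{\mu(B)}{\mu(A)}\langle f\rangle_{B^*,r}^r$ by \eqref{BkBk}. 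If $\mu(B')>\mu(B)$, then again from $B'\cap B\ne\emptyset$ and \eqref{ball} (with roles swapped) $B\subset (B')^*$, but more usefully $\langle f\mathbf 1_{B^*}\rangle_{B',r}^r = \frac{1}{\mu(B')}\int_{B'\cap B^*}|f|^r\,d\mu \le \frac{1}{\mu(B')}\int_{B^*}|f|^r\,d\mu = \frac{\mu(B^*)}{\mu(B')}\langle f\rangle_{B^*,r}^r \le \frac{\mu(B^*)}{\mu(B)}\langle f\rangle_{B^*,r}^r \lesssim \langle f\rangle_{B^*,r}^r \le \frac{\mu(B)}{\mu(A)}\langle f\rangle_{B^*,r}^r$ using $\mu(A)\le\mu(B)$. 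Taking $r$-th roots and the supremum over $B'$ gives the claim with implicit constant $\C_0$.

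\textbf{Main obstacle.} The genuinely delicate point is part~\eqref{ABF-1}: making the monotonicity argument rigorous without circularity, i.e., correctly tracking that replacing $f_i$ by $f_i\mathbf 1_{B^*}$ keeps the numerator unchanged (because the truncations $\mathbf 1_{B^*}$ and $\mathbf 1_{A^*}$ already sit inside $B^*$) while the denominator can only decrease when the averaging region is enlarged from $B^*$ to $C^*$ on a function already supported in $B^*$. Once that bookkeeping is pinned down, both parts reduce to routine applications of property \eqref{list:B4}, \eqref{ball}, and the volume estimate \eqref{BkBk}; I do not anticipate further difficulty.
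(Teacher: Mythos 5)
Your proposal is correct. For part~\eqref{ABF-1}, the final argument you say you ``will write'' is exactly the paper's: set $g_i := f_i\mathbf 1_{B^*}$, observe that the numerator is unchanged (since $A^*\subset B^*\subset C^*$, so $\vec g\mathbf 1_{C^*}=\vec f\mathbf 1_{B^*}$ and $\vec g\mathbf 1_{A^*}=\vec f\mathbf 1_{A^*}$), apply $\Delta(A,C)$ to $\vec g$, and gain the factor $[\mu(B^*)/\mu(C^*)]^{1/r}\le 1$ because averaging a function already supported in $B^*$ over the larger set $C^*$ cannot increase the $L^r$-average. The long internal deliberation contains several false starts, but the resolution you converge on is precisely the paper's proof.

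For part~\eqref{ABF-2} you take a genuinely different and somewhat cleaner decomposition. The paper splits on $\mu(A_1)\le\mu(B^*)$ versus $\mu(A_1)>\mu(B^*)$ and invokes property~\eqref{list:B4} in both cases: in the first case to place $A_1\subset B^{(2)}$ and pass through $\langle f\mathbf 1_{B^*}\rangle_{B^{(2)},r}$, and in the second case to place $B^*\subset A_1^*$ and pass through $\langle f\mathbf 1_{B^*}\rangle_{A_1^*,r}$. You instead split on $\mu(B')\le\mu(B)$ versus $\mu(B')>\mu(B)$, and your second case is handled by a direct computation that never touches \eqref{list:B4}. In fact that direct computation already works in all cases: for every $B'\in\B$ with $B'\supset A$,
\begin{align*}
\langle f\mathbf 1_{B^*}\rangle_{B',r}^r
=\frac{1}{\mu(B')}\int_{B'\cap B^*}|f|^r\,d\mu
\le \frac{\mu(B^*)}{\mu(B')}\,\langle f\rangle_{B^*,r}^r
\le \frac{\C_0\,\mu(B)}{\mu(A)}\,\langle f\rangle_{B^*,r}^r,
\end{align*}
using only $\mu(B')\ge\mu(A)$ and $\mu(B^*)\le\C_0\mu(B)$. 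So your Case~1 detour through $B'\subset B^*$ via \eqref{ball}, and the case split itself, are dispensable; but this is a stylistic simplification and does not change the substance. Both your proof and the paper's obtain the stated bound with the correct dependence on $\mu(B)/\mu(A)$.
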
 

\begin{proof}
To show part \eqref{ABF-1}, we let $A, B, C \in \B$ with $A \subset B \subset C$, $f_i \in L^r(\mu)$, and set $g_i := f_i \mathbf{1}_{B^*}$, $i=1, \ldots, m$. Then for any $x \in A$, 
\begin{align*}
\|\T(\vec{f} {\bf 1}_{B^*})(x) - \T(\vec{f} {\bf 1}_{A^*})(x)\|_{\bB} 
=\|\T(\vec{g} {\bf 1}_{C^*})(x) - \T(\vec{g} {\bf 1}_{A^*})(x)\|_{\bB} 
\\ 
\le \Delta(A, C) \prod_{i=1}^m \langle g_i \rangle_{C^*, r}
= \Delta(A, C) \prod_{i=1}^m [\mu(B^*)/\mu(C^*)]^{\frac1r} \langle f_i \rangle_{B^*, r}
\\ 
\le \Delta(A, C) \prod_{i=1}^m \langle f_i \rangle_{B^*, r}. 
\end{align*}
This gives at once part \eqref{ABF-1}. 

Let $A_1, A, B \in \B$ with $A \subset A_1 \cap B$. If $\mu(A_1) \le \mu(B^*)$, then the property \eqref{list:B4} and $A_1 \cap B^* \neq \emptyset$ give that $A_1 \subset B^{(2)}$. As a consequence, 
\begin{align}\label{fb-1}
\langle f \mathbf{1}_{B^*} \rangle_{A_1, r}
\le [\mu(B^{(2)})/\mu(A_1)]^{\frac1r} \langle f \mathbf{1}_{B^*} \rangle_{B^{(2)}, r}
\lesssim [\mu(B)/\mu(A)]^{\frac1r} \langle f \rangle_{B^*, r}. 
\end{align}
If $\mu(A_1) > \mu(B^*)$, then it follows from the property \eqref{list:B4} and $A_1 \cap B^* \neq \emptyset$ that $B^* \subset A^*_1$, which yields  
\begin{align}\label{fb-2}
\langle f \mathbf{1}_{B^*} \rangle_{A_1, r}
\le [\mu(A^*_1)/\mu(A_1)]^{\frac1r} \langle f \mathbf{1}_{B^*} \rangle_{A^*_1, r}
\lesssim \langle f \rangle_{B^*, r}. 
\end{align}
Thus, \eqref{fb-1} and \eqref{fb-2} imply part \eqref{ABF-2} as desired. 
\end{proof}

\begin{lemma}\label{lem:ABC}
Let $(\Sigma, \mu)$ be a measure space with a ball-basis $\B$. Assume that $T$ is a $\bB$-valued multilinear operator satisfying the condition \eqref{list:T-reg} such that $T$ is bounded from $L^r(\Sigma, \mu) \times \cdots \times L^r(\Sigma, \mu)$ to $L^{\frac{r}{m}, \infty}(\Sigma, \mu)$ for some $r \in [1, \infty)$. Write $\|T\| := \|T\|_{L^r(\Sigma, \mu) \times \cdots \times L^r(\Sigma, \mu) \to L^{\frac{r}{m}, \infty}(\Sigma, \mu)}$. Then the following statements hold:  
\begin{list}{\rm (\theenumi)}{\usecounter{enumi}\leftmargin=1.2cm \labelwidth=1cm \itemsep=0.2cm \topsep=.2cm \renewcommand{\theenumi}{\arabic{enumi}}}

\item\label{ABC-1} For any $A, B \in \B$ with $A \subset B$, 
\begin{align*}
\Delta(A, B) 
\lesssim (\C_2(T) + \|T\|) [\mu(B)/\mu(A)]^{\frac{m}{r}}.
\end{align*}

\item\label{ABC-2} For any $A, B, C \in \B$ with $A \subset B \subset C$, 
\begin{align*}
\Delta(A, C) 
\lesssim \big(\C_2(T) + \|T\| + \Delta(A, B) \big) [\mu(C)/\mu(B)]^{\frac{m}{r}}.
\end{align*}

\item\label{ABC-3} For any $A, B \in \B$ with $A \subset B$, 
\begin{align*}
\Delta(A, B^{(k)}) 
\lesssim \C_0^{\frac{mk}{r}} \big(\C_2(T) + \|T\| + \Delta(A, B) \big), \quad k \ge 1.
\end{align*}
\end{list}
\end{lemma}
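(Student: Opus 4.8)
\textbf{Proof proposal for Lemma \ref{lem:ABC}.}
The plan is to prove the three estimates in order, using each one to bootstrap the next. For part \eqref{ABC-1}, fix $A\subset B$ in $\B$ and $f_i\in L^r(\mu)$, and normalize so that $\prod_{i=1}^m\langle f_i\rangle_{B^*,r}$ is finite. The quantity to bound is $\sup_{x\in A}\|\T(\vec f\mathbf1_{B^*})(x)-\T(\vec f\mathbf1_{A^*})(x)\|_{\bB}$. The idea is to insert and estimate the oscillation: for $x,x'\in A$ write
\[
\|\T(\vec f\mathbf1_{B^*})(x)-\T(\vec f\mathbf1_{A^*})(x)\|_{\bB}
\le \|\T(\vec f\mathbf1_{B^*})(x)-\T(\vec f\mathbf1_{A^*})(x) - \big(\T(\vec f\mathbf1_{B^*})(x')-\T(\vec f\mathbf1_{A^*})(x')\big)\|_{\bB}
+ \|\T(\vec f\mathbf1_{B^*})(x')-\T(\vec f\mathbf1_{A^*})(x')\|_{\bB}.
\]
For the first term one applies \eqref{list:T-reg} with the ball $A$ to the functions $\vec f\mathbf1_{B^*}$: since $\T(\vec f\mathbf1_{B^*})-\T((\vec f\mathbf1_{B^*})\mathbf1_{A^*}) = \T(\vec f\mathbf1_{B^*})-\T(\vec f\mathbf1_{A^*})$ (because $A^*\cap B^*$-truncation of $\vec f\mathbf1_{B^*}$ equals $\vec f\mathbf1_{A^*}$ up to the obvious containment $A^*\subset B^{(2)}$ — here I would be slightly careful and actually work with $A$ replaced by a suitable enlargement, or use $\lfloor f\mathbf 1_{B^*}\rfloor_{A,r}$ directly), \eqref{list:T-reg} bounds this oscillation by $\C_2(T)\prod_i\lfloor f_i\mathbf1_{B^*}\rfloor_{A,r}$, and Lemma \ref{lem:ABF}\eqref{ABF-2} converts this into $\C_2(T)[\mu(B)/\mu(A)]^{m/r}\prod_i\langle f_i\rangle_{B^*,r}$. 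For the second term, which does not involve the oscillation, one averages over $x'\in A$ and uses the weak-type boundedness of $T$: by Kolmogorov's inequality $\fint_A\|\T(\vec f\mathbf1_{B^*})(x')-\T(\vec f\mathbf1_{A^*})(x')\|_{\bB}^{r/m}\,d\mu(x') \lesssim \mu(A)^{-m/r}\big(\|T(\vec f\mathbf1_{B^*})\|_{L^{r/m,\infty}} + \|T(\vec f\mathbf1_{A^*})\|_{L^{r/m,\infty}}\big)^{m/r}\lesssim \|T\|\,[\mu(B)/\mu(A)]^{m/r}\prod_i\langle f_i\rangle_{B^*,r}$, after using $A^*\subset B^{(2)}$ and the containment-averaging $\langle f_i\mathbf1_{A^*}\rangle\le$ (dilation factor)$\langle f_i\rangle_{B^*,r}$. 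Combining gives \eqref{ABC-1}.

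For part \eqref{ABC-2}, given $A\subset B\subset C$, the strategy is to split the passage from $A^*$ to $C^*$ through $B^*$: for $x\in A$,
\[
\T(\vec f\mathbf1_{C^*})(x)-\T(\vec f\mathbf1_{A^*})(x)
= \big(\T(\vec f\mathbf1_{C^*})(x)-\T(\vec f\mathbf1_{B^*})(x)\big) + \big(\T(\vec f\mathbf1_{B^*})(x)-\T(\vec f\mathbf1_{A^*})(x)\big).
\]
The second bracket is, by definition, controlled by $\Delta(A,B)\prod_i\langle f_i\rangle_{B^*,r}\le \Delta(A,B)[\mu(C)/\mu(B)]^{m/r}\prod_i\langle f_i\rangle_{C^*,r}$ (again using $B^*\subset C^{(2)}$ to compare averages, absorbing the universal $\C_0$ powers into the implicit constant). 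For the first bracket we repeat the oscillation-plus-weak-type argument from part \eqref{ABC-1}, but now with the ball $B$ in place of $A$: insert an auxiliary point $x'\in B$, bound the oscillation over $B$ via \eqref{list:T-reg} applied to $\vec f\mathbf1_{C^*}$ and Lemma \ref{lem:ABF}\eqref{ABF-2}, and bound the averaged remainder over $B$ by Kolmogorov and $\|T\|$. Since $x\in A\subset B$ this is legitimate, and it produces $\big(\C_2(T)+\|T\|\big)[\mu(C)/\mu(B)]^{m/r}\prod_i\langle f_i\rangle_{C^*,r}$. Adding the two contributions and dividing by $\prod_i\langle f_i\rangle_{C^*,r}$ yields \eqref{ABC-2}.

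For part \eqref{ABC-3}, the plan is a finite induction on $k$ using \eqref{ABC-2} iteratively along the chain $A\subset B\subset B^*\subset B^{(2)}\subset\cdots\subset B^{(k)}$. Applying \eqref{ABC-2} with $(A,B,C)=(A,B^{(j)},B^{(j+1)})$ for $j=0,\dots,k-1$ (reading $B^{(0)}=B$) and using $\mu(B^{(j+1)})\le\C_0\mu(B^{(j)})$ from \eqref{BkBk}, one gets $\Delta(A,B^{(j+1)})\lesssim \C_0^{m/r}\big(\C_2(T)+\|T\|+\Delta(A,B^{(j)})\big)$; unwinding this recursion $k$ times and summing the geometric-type factors gives $\Delta(A,B^{(k)})\lesssim \C_0^{mk/r}\big(\C_2(T)+\|T\|+\Delta(A,B)\big)$, as claimed. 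The main obstacle I anticipate is bookkeeping the enlargement constants cleanly — the inclusions $A^*\subset B^{(2)}$, $B^*\subset C^{(2)}$, etc., and the corresponding ratios of measures, must be tracked so that all the stray powers of $\C_0$ are genuinely universal and get absorbed into the $\lesssim$; this is where the precise statement of property \eqref{list:B4} and \eqref{ball} are used repeatedly, and it is easy to be sloppy about whether one needs $B^*$ or $B^{(2)}$ in each averaging step.
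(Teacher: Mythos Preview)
Your overall strategy matches the paper's proof closely: for part \eqref{ABC-1}, split into an oscillation term controlled by \eqref{list:T-reg} together with Lemma~\ref{lem:ABF}\,\eqref{ABF-2}, plus a ``good point'' term controlled by the weak-type bound; for part \eqref{ABC-2}, telescope through $B^*$; for part \eqref{ABC-3}, feed part \eqref{ABC-2} back in.

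Two technical points deserve correction. First, in part \eqref{ABC-1} you invoke Kolmogorov's inequality at the endpoint exponent $r/m$, which is exactly where it fails (the constant $q/(q-p)$ blows up at $p=q$). The paper instead uses Chebyshev directly: the weak-type bound gives
\[
\mu\big(\{x'\in A:\|\T(\vec f\,\mathbf1_{B^*})(x')\|_{\bB}>K_0/2\}\big)\le\tfrac14\mu(A),
\]
and similarly with $A^*$ in place of $B^*$, for $K_0\simeq\|T\|\,[\mu(B)/\mu(A)]^{m/r}\prod_i\langle f_i\rangle_{B^*,r}$; hence a good $x'\in A$ exists with $\|\T(\vec f\,\mathbf1_{B^*})(x')-\T(\vec f\,\mathbf1_{A^*})(x')\|_{\bB}\le K_0$. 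Your idea is right; just use this formulation (or Kolmogorov with any exponent strictly below $r/m$ followed by picking a point at or below the average).

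Second, for part \eqref{ABC-3} your inductive scheme accumulates the implicit constant from part \eqref{ABC-2} to a $k$-th power, producing $\lesssim (C\,\C_0^{m/r})^k$ rather than $\lesssim \C_0^{mk/r}$ with a $k$-independent implicit constant. The paper avoids this by applying part \eqref{ABC-2} \emph{once} with $C=B^{(k)}$ and invoking $\mu(B^{(k)})\le\C_0^k\mu(B)$ from \eqref{BkBk}. (Relatedly, in part \eqref{ABC-2} the paper simply cites part \eqref{ABC-1} applied to the pair $(B,C)$, since $x\in A\subset B$, rather than re-running the oscillation-plus-weak-type argument; this is cosmetically cleaner but equivalent to what you wrote.)
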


\begin{proof}
Let us first show part \eqref{ABC-1}. Fix $A, B \in \B$ with $A \subset B$. Set 
\begin{align*}
K_0 := \|T\|_{L^r(\Sigma, \mu) \times \cdots \times L^r(\Sigma, \mu) \to L^{\frac{r}{m}, \infty}(\Sigma, \mu)} 
\prod_{i=1}^m \bigg(\frac{4^{r+1}}{\mu(A)} \int_{B^*} |f_i|^r \, d\mu \bigg)^{\frac1r}.
\end{align*}
Since $T$ is bounded from $L^r(\Sigma, \mu) \times \cdots \times L^r(\Sigma, \mu)$ to $L^{\frac{r}{m}, \infty}(\Sigma, \mu)$, we have 
\begin{align*}
&\mu(\{x \in A: \|\T(\vec{f} \mathbf{1}_{B^*})(x)\|_{\bB} > K_0/2\}) 
\\
&= \mu(\{x \in A: |T(\vec{f} \mathbf{1}_{B^*})(x)| > K_0/2\}) 
\\
&\le \bigg(\frac{\|T\|}{K_0/2} \prod_{i=1}^m \|f_i \mathbf{1}_{B^*}\|_{L^r(\Sigma, \mu)} \bigg)^{\frac{r}{m}}
\\
&=\bigg[2 \bigg(\frac{\mu(A)}{4^{r+1}}\bigg)^{\frac{m}{r}} \bigg]^{\frac{r}{m}}
\le \frac14 \mu(A). 
\end{align*}
Similarly,  
\begin{align*}
\mu(\{x \in A: \|\T(\vec{f} \mathbf{1}_{A^*})(x)\|_{\bB} > K_0/2\}) 
\le \mu(A)/4.
\end{align*}
Therefore, 
\begin{align*}
\mu(\{x \in A: \|\T(\vec{f} \mathbf{1}_{B^*})(x) - \T(\vec{f} \mathbf{1}_{A^*})(x)\|_{\bB} > K_0\})
\le \mu(A)/2. 
\end{align*}
This means that there exists $x' \in A$ such that 
\begin{align}\label{TK-1}
&\|\T(\vec{f} \mathbf{1}_{B^*})(x') - \T(\vec{f} \mathbf{1}_{A^*})(x')\|_{\bB} 
\nonumber \\
&\quad\le K_0 \lesssim \|T\| [\mu(B)/\mu(A)]^{\frac{m}{r}} \prod_{i=1}^m \langle f_i \rangle_{B^*, r}. 
\end{align}
On the other hand, for any $x \in A$,  applying the condition \eqref{list:T-reg} to $\vec{f} \mathbf{1}_{B^*}$ and $A^*$ in the place of $\vec{f}$ and $B^*$, we have  
\begin{align}\label{TK-2}
&\|(\T(\vec{f} \mathbf{1}_{B^*}) - \T(\vec{f} \mathbf{1}_{A^*}))(x)  
- (\T(\vec{f} \mathbf{1}_{B^*}) - \T(\vec{f} \mathbf{1}_{A^*}))(x')\|_{\bB} 
\nonumber \\ 
&\quad \le \C_2(T) \prod_{i=1}^m \lfloor f_i  \mathbf{1}_{B^*}\rfloor_{A, r}
\lesssim \C_2(T) [\mu(B)/\mu(A)]^{\frac{m}{r}} \prod_{i=1}^m \langle f_i \rangle_{B^*, r}, 
\end{align}
where we have used Lemma \ref{lem:ABF} part \eqref{ABF-2} in the last inequality. Then combining \eqref{TK-1} with \eqref{TK-2}, we obtain that for all $x \in A$, 
\begin{align*}
&\|\T(\vec{f} \mathbf{1}_{B^*})(x) - \T(\vec{f} \mathbf{1}_{A^*})(x)\|_{\bB} 
\\
&\qquad\lesssim (\C_2(T) +\|T\|) [\mu(B)/\mu(A)]^{\frac{m}{r}} \prod_{i=1}^m \langle f_i \rangle_{B^*, r}. 
\end{align*}
This shows part \eqref{ABC-1}. 

Now let $A, B, C \in \B$ with $A \subset B \subset C$, and let $x \in A$. Then by part \eqref{ABC-1}, 
\begin{align}\label{TCT-1}
&\|\T(\vec{f} \mathbf{1}_{C^*})(x) - \T(\vec{f} \mathbf{1}_{B^*})(x)\|_{\bB} 
\nonumber \\
&\qquad\lesssim (\C_2(T) +\|T\|) [\mu(C)/\mu(B)]^{\frac{m}{r}} \prod_{i=1}^m \langle f_i \rangle_{C^*, r}, 
\end{align}
and by definition,  
\begin{align}\label{TCT-2}
\|\T(\vec{f} \mathbf{1}_{B^*})(x) & - \T(\vec{f} \mathbf{1}_{A^*})(x)\|_{\bB} 
\le \Delta(A, B) \prod_{i=1}^m \langle f_i \rangle_{B^*, r} 
\nonumber \\ 
&\lesssim \Delta(A, B) [\mu(C^*)/\mu(B^*)]^{\frac{m}{r}} \prod_{i=1}^m \langle f_i \rangle_{C^*, r}
\nonumber \\ 
&\lesssim \Delta(A, B) [\mu(C)/\mu(B)]^{\frac{m}{r}} \prod_{i=1}^m \langle f_i \rangle_{C^*, r}. 
\end{align}
Then part \eqref{ABC-2} follows from \eqref{TCT-1} and \eqref{TCT-2}, while part \eqref{ABC-3} is a direct consequence of part \eqref{ABC-2}.
\end{proof}

\begin{lemma}\label{lem:DBB}
Let $(\Sigma, \mu)$ be a measure space with a ball-basis $\B$. Assume that $T$ is a $\bB$-valued multilinear bounded oscillation operator with respect to $\B$ and $r \in [1, \infty)$ such that $T$ is bounded from $L^r(\Sigma, \mu) \times \cdots \times L^r(\Sigma, \mu)$ to $L^{\frac{r}{m}, \infty}(\Sigma, \mu)$. Denote $\C(T) := \C_1(T) + \C_2(T) 
+ \|T\|_{L^r(\Sigma, \mu) \times \cdots \times L^r(\Sigma, \mu) \to L^{\frac{r}{m}, \infty}(\Sigma, \mu)}$. Then the following properties hold: 
\begin{list}{\rm (\theenumi)}{\usecounter{enumi}\leftmargin=1.2cm \labelwidth=1cm \itemsep=0.2cm \topsep=.2cm \renewcommand{\theenumi}{\alph{enumi}}}

\item\label{DBB-1} For any $B \in \B$ with $B^{(1)}=B$ there exists $\widetilde{B} \in \B$ such that 
\begin{align*}
B^{(2)} \subset \widetilde{B}, \quad 
\Delta(B^{(2)}, \widetilde{B}) \lesssim \C(T), \quad\text{and}\quad  
\mu(\widetilde{B}) \ge 2 \mu(B). 
\end{align*}

\item\label{DBB-2} For any $B \in \B$ there exists $\widetilde{B} \in \B$ such that $B^{(2)} \subset \widetilde{B}$, 
\begin{align*}
\Delta(B^{(2)}, \widetilde{B}) \lesssim \C(T), \quad 
\text{and either $\widetilde{B}^{(1)}=\widetilde{B}$ or $\mu(\widetilde{B}) \ge 2 \mu(B)$}. 
\end{align*}

\item\label{DBB-3} For any $B \in \B$ there exists a sequence $\{B_k\}_{k \ge 0} \subset \B$ such that $\Sigma=\bigcup_{k \ge 0} B_k$ with $B_0=B$, 
\begin{align*}
B^{(2)}_{k-1} \subset B_k, \quad \text{and}\quad
\Delta(B_{k-1}^{(2)}, B_k) \lesssim \C(T), \quad k \ge 1. 
\end{align*}
\end{list}
\end{lemma}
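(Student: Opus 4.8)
\emph{Plan.} The approach is to treat the three parts in order, each one reducing to a short manipulation of \eqref{list:T-size}, of property \eqref{ball} (a consequence of \eqref{list:B4}), and of the quantitative bound $\Delta(A,B)\lesssim \C(T)\,[\mu(B)/\mu(A)]^{m/r}$ for $A\subset B$ furnished by Lemma~\ref{lem:ABC}\eqref{ABC-1} (together with $\Delta(A,A)=0$, and the observation that \eqref{list:T-size} says precisely that any $B_0\in\B$ with $B_0^*\subsetneq\Sigma$ admits $B\supsetneq B_0$ with $\Delta(B_0,B)\le \C_1(T)$). For part \eqref{DBB-1}, given $B$ with $B^{(1)}=B$ (so $B^{(2)}=B$; the degenerate case $B=\Sigma$, where the assertion cannot hold, is tacitly excluded, so I assume $B=B^*\subsetneq\Sigma$), I would apply \eqref{list:T-size} to $B_0=B$ to get $\widetilde B\supsetneq B$ with $\Delta(B^{(2)},\widetilde B)=\Delta(B,\widetilde B)\le \C_1(T)\le \C(T)$. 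Since $\widetilde B$ meets $B$ but is not contained in $B=B^*$, property \eqref{ball} forbids $\mu(\widetilde B)\le 2\mu(B)$; hence $\mu(\widetilde B)>2\mu(B)$, and $B^{(2)}=B\subset\widetilde B$, which is all that is needed.

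For part \eqref{DBB-2}, I would run along the iterated hulls $B^{(2)}\subset B^{(3)}\subset\cdots$, with $\mu(B^{(k+1)})\le \C_0\mu(B^{(k)})$, and split into two cases. If some $B^{(k)}$ attains measure $\ge 2\mu(B)$, take $\widetilde B:=B^{(k_0)}$ for the least such $k_0$: then $\mu(\widetilde B)\ge 2\mu(B)$ and $\mu(\widetilde B)\le 2\C_0\mu(B^{(2)})$ (trivially when $k_0=2$, and because $\mu(B^{(k_0-1)})<2\mu(B)\le 2\mu(B^{(2)})$ when $k_0\ge3$), so Lemma~\ref{lem:ABC}\eqref{ABC-1} yields $\Delta(B^{(2)},\widetilde B)\lesssim\C(T)$. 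If instead $\mu(B^{(k)})<2\mu(B)$ for every $k\ge 2$, then for each such $k$ the ball $B^{(k+2)}$ meets $B^{(k)}$ and satisfies $\mu(B^{(k+2)})<2\mu(B)\le 2\mu(B^{(k)})$, so \eqref{ball} gives $B^{(k+2)}\subset (B^{(k)})^*=B^{(k+1)}$; combined with $B^{(k+1)}\subset B^{(k+2)}$ this forces $B^{(k+1)}=B^{(k+2)}$, and in particular $B^{(3)}=(B^{(3)})^*$. I would then take $\widetilde B:=B^{(3)}$, which is self-dual, contains $B^{(2)}$, and has $\mu(\widetilde B)\le \C_0\mu(B^{(2)})$, so again $\Delta(B^{(2)},\widetilde B)\lesssim\C(T)$.

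For part \eqref{DBB-3}, I would iterate parts \eqref{DBB-1}--\eqref{DBB-2}: set $B_0:=B$, and given $B_{k-1}$, stop with $B_j:=\Sigma$ for $j\ge k$ if $B_{k-1}=\Sigma$; let $B_k$ be the ball from \eqref{DBB-1} applied to $B_{k-1}$ if $B_{k-1}^{(1)}=B_{k-1}\subsetneq\Sigma$ (so $\mu(B_k)>2\mu(B_{k-1})$); and let $B_k$ be the ball from \eqref{DBB-2} applied to $B_{k-1}$ otherwise (so $\mu(B_k)\ge 2\mu(B_{k-1})$ or $B_k$ is self-dual). In every case $B_{k-1}^{(2)}\subset B_k$ and $\Delta(B_{k-1}^{(2)},B_k)\lesssim\C(T)$ hold by construction (for self-dual $B_{k-1}$ one has $B_{k-1}^{(2)}=B_{k-1}$, so the bound is just $\Delta(B_{k-1},B_k)\le \C_1(T)$). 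The key accounting point is that over any two consecutive steps the measure at least doubles until $\Sigma$ is reached, since a self-dual $B_k\ne\Sigma$ is followed by a \eqref{DBB-1}-step that doubles; hence $\mu(B_{2j})\ge 2^j\mu(B)$ and therefore $\mu(B_k)\to\sup_{A\in\B}\mu(A)$ (a finite supremum being attained in the sense that $B_k=\Sigma$ for large $k$, for otherwise the next step would overshoot it). Since $B_k\supset\cdots\supset B_0$, each $B_k$ meets $B$, so Lemma~\ref{lem:BG} gives $\Sigma\subset\bigcup_k B_k^*$; and $B_k^*=B_k^{(1)}\subset B_k^{(2)}\subset B_{k+1}$, so $\bigcup_k B_k^*\subset\bigcup_k B_k$ and thus $\bigcup_k B_k=\Sigma$, which completes the construction.

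I expect the only non-mechanical steps to be the dilation-chain stabilization inside part \eqref{DBB-2} — that all the hulls $B^{(k)}$ having measure below $2\mu(B)$ forces $B^{(3)}$ to be self-dual — which rests on \eqref{ball} together with the hull-doubling $\mu(B^{(k+1)})\le \C_0\mu(B^{(k)})$, and the measure accounting in part \eqref{DBB-3} that makes Lemma~\ref{lem:BG} applicable. Everything else is a bookkeeping assembly of \eqref{list:T-size}, \eqref{ball}, and Lemma~\ref{lem:ABC}.
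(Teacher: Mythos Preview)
Your proof is correct. Parts \eqref{DBB-1} and \eqref{DBB-3} essentially match the paper (your \eqref{DBB-3} is in fact more carefully argued: the paper simply asserts that one can arrange $\mu(B_k)\ge 2\mu(B_{k-1})$ at every step without explaining how a self-dual \eqref{DBB-2}-output is to be combined with a subsequent \eqref{DBB-1}-step into a single step with controlled $\Delta$; your two-step doubling accounting, together with the explicit passage $\bigcup_k B_k^*\subset\bigcup_k B_{k+1}$ after invoking Lemma~\ref{lem:BG}, fills this in cleanly).

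For \eqref{DBB-2} your argument is genuinely different from the paper's. The paper introduces the family $\B_B:=\{A\in\B:B\subset A^{(1)}\}$ and the gap quantities
\[
a:=\sup\{\mu(A):A\in\B_B,\ \mu(A)\le 2\mu(B)\},\qquad b:=\inf\{\mu(A):A\in\B_B,\ \mu(A)>2\mu(B)\},
\]
then splits on whether $b>\C_0^2 a$ (taking $\widetilde B:=B_1^{(1)}$ for some $B_1\in\B_B$ with $\mu(B_1)$ near $a$, and using the gap to force $\widetilde B^{(1)}=\widetilde B$) or $b\le\C_0^2 a$ (taking $\widetilde B:=B_2^{(3)}$ for some $B_2\in\B_B$ with $\mu(B_2)$ near $b$, which then satisfies $\mu(\widetilde B)\ge 2\mu(B)$). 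Your hull-chain argument is more economical: it never looks beyond the iterates $B^{(k)}$ of $B$ itself, and in the bounded-measure case the stabilization $B^{(3)}=B^{(4)}$ drops out in one line from \eqref{ball}. Both routes finish the same way, feeding a ratio bound $\mu(\widetilde B)\lesssim\mu(B^{(2)})$ into Lemma~\ref{lem:ABC}\eqref{ABC-1} to control $\Delta(B^{(2)},\widetilde B)$.
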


\begin{proof}
We begin with the proof of part \eqref{DBB-1}. Fix $B \in \B$ with $B^{(1)}=B$. By the condition \eqref{list:T-size}, there exists $\widetilde{B} \in \B$ so that $B \subsetneq \widetilde{B}$ and $\Delta(B, \widetilde{B}) \lesssim \C_1(T)$. Then it follows from $B^{(1)}=B$ that 
\begin{equation*}
B^{(2)}=B \subsetneq \widetilde{B}
\quad\text{ and }\quad 
\Delta(B^{(2)}, \widetilde{B}) = \Delta(B, \widetilde{B}) \lesssim \C_1(T). 
\end{equation*}
Also, the property \eqref{list:B4} implies $\mu(\widetilde{B}) \ge 2\mu(B)$, otherwise, $\widetilde{B} \subset B^{(1)}=B$.

To show part \eqref{DBB-2}, fixing $B \in \B$, we set $\B_B := \{A \in \B: B \subset A^{(1)}\}$, 
\begin{align}\label{def:ab}
a := \sup_{A \in \B_B: \mu(A) \le 2 \mu(B)} \mu(A), 
\quad\text{and}\quad 
b := \inf_{A \in \B_B: \mu(A) > 2 \mu(B)} \mu(A).
\end{align} 
Note that $a \le 2 \mu(B) \le b$. Then, there exist $B_1, B_2 \in \B_B$ so that 
\begin{equation}\label{aabb}
a/2 < \mu(B_1) \le a \le 2\mu(B) \le b \le \mu(B_2) < 2b. 
\end{equation}
We first treat the case $b > \C_0^2 a$. In this scenario, choosing $\widetilde{B} := B_1^{(1)} \supset B$, we invoke \eqref{aabb} to get  
\begin{align}\label{BBb}
\mu(\widetilde{B}^{(1)})
=\mu(B_1^{(2)}) 
\le \C_0^2 \mu(B_1) 
\le \C_0^2 a 
<b. 
\end{align}
Observe that by definition in \eqref{def:ab}, there is no $B' \in \B_B$ such that $a<\mu(B')<b$. This and \eqref{BBb} yield $\mu(\widetilde{B}^{(1)}) \le a \le 2\mu(B_1)$, which along with $\widetilde{B}^{(1)} \cap B_1 \neq \emptyset$ and property \eqref{list:B4} implies $\widetilde{B}^{(1)} \subset B_1^{(1)}=\widetilde{B} \subset \widetilde{B}^{(1)}$. That is, $\widetilde{B}^{(1)}=\widetilde{B}$. Hence, $B^{(2)} \subset \widetilde{B}^{(2)}=\widetilde{B}$, and by \eqref{aabb}, 
\begin{align}\label{aabb-1}
\mu(B^{(2)}) 
\le \mu(\widetilde{B})
=\mu(B_1^{(1)})
\le \C_0 \mu(B_1)
\le 2\C_0 \mu(B) 
\le 2\C_0 \mu(B^{(2)}). 
\end{align}
If $b \le \C_0^2 a$, picking $\widetilde{B} := B_2^{(3)} = (B_2^{(1)})^{(2)} \supset B^{(2)}$, we use \eqref{aabb} to see that $\mu(\widetilde{B}) \ge \mu(B_2) \ge b \ge 2\mu(B)$ and 
\begin{align}\label{aabb-2}
\mu(B^{(2)}) 
\le \mu(\widetilde{B})
&=\mu(B_2^{(3)})
\le \C_0^3 \mu(B_2)
\le 2\C_0^3 b 
\nonumber \\
&\le 2\C_0^5 a 
\le 4\C_0^5 \mu(B)
\le 4\C_0^5 \mu(B^{(2)}). 
\end{align}
Thus, in light of \eqref{aabb-1} and \eqref{aabb-2}, we obtain $\mu(\widetilde{B}) \simeq \mu(B^{(2)})$, which together with  Lemma \ref{lem:ABC} part \eqref{ABC-1} concludes that 
\begin{align*}
\Delta(B^{(2)}, \widetilde{B}) 
\lesssim \C(T). 
\end{align*}

Finally, having shown parts \eqref{DBB-1} and \eqref{DBB-2}, to achieve part \eqref{DBB-3}, we just need to set $B_0:=B$ and use induction to construct the desired sequence $\{B_k\}_{k \ge 0}$ in the following way: 
\begin{align*}
B^{(2)}_{k-1} \subset B_k, \quad 
\Delta(B_{k-1}^{(2)}, B_k) \lesssim \C(T), \quad\text{and}\quad 
\mu(B_k) \ge 2\mu(B_{k-1}), \quad k \ge 1. 
\end{align*} 
Additionally, it follows from Lemma \ref{lem:BG} that $\Sigma=\bigcup_{k \ge 0} B_k$. This completes the proof. 
\end{proof}

\begin{proof}[\bf Proof of Theorem \ref{thm:doubling}]
Fix $\lambda>0$ and set $E:=\{x \in \Sigma: T_*(\vec{f})(x) > \lambda\}$. We may assume that $E$ is bounded. For any $x \in E$, there exists $B_x \in \B$ containing $x$ so that $\|\T(\vec{f})(x) - \T(\vec{f} \mathbf{1}_{B_x^*})(x)\|_{\bB} \ge \lambda$. Then $E \subset \bigcup_{x \in E} B_x$. Since $E$ is bounded, by Lemma \ref{lem:BE} part \eqref{list-1}, one can find a disjoint subfamily $\{B_k\}$ such that $E \subset \bigcup_k B_k^*$. Denote 
\begin{align*}
B'_k := \Big\{x \in B_k: |T(\vec{f} \mathbf{1}_{B_k^*})(x)| 
< (2\C_0)^{\frac{m}{r}} \|T\| \prod_{i=1}^m \lfloor f_i \rfloor_{B_k, r} \Big\}.
\end{align*}
Since $T$ is bounded from $L^r(\Sigma, \mu) \times \cdots \times L^r(\Sigma, \mu)$ to $L^{\frac{r}{m}, \infty}(\Sigma, \mu)$, 
\begin{align*}
\mu(\Sigma \setminus B'_k)
\le \bigg(\frac{\prod_{i=1}^m \|f_i \mathbf{1}_{B_k^*}\|_{L^r(\Sigma, \mu)}}
{(2\C_0)^{m/r} \prod_{i=1}^m \lfloor f_i \rfloor_{B_k, r}}\bigg)^{\frac{r}{m}}
\le (2\C_0)^{-1} \mu(B_k^*) 
\le \frac12 \mu(B_k). 
\end{align*}
Hence, one has 
\begin{align}\label{BBk-1}
\mu(B'_k) \ge \mu(B_k) - \mu(\Sigma \setminus B'_k) \ge \mu(B_k)/2.
\end{align}
Fix $k$, let $x_k$ be the defining point of $B_k$ and $x \in B'_k \setminus\{\mathcal{M}_{\B, r}^{\otimes}(\vec{f}) > \delta \lambda\}$, where 
\begin{align}\label{dede}
\delta := \frac{1}{4(2\C_0)^{m/r}(\C_2(T) + \|T\|)}.
\end{align}
Then by definition and \eqref{dede}, 
\begin{multline}\label{TS-1}
\|\T(\vec{f} \mathbf{1}_{B_k^*})(x)\|_{\bB} 
= |T(\vec{f} \mathbf{1}_{B_k^*})(x)| 
< (2\C_0)^{\frac{m}{r}} \|T\| \prod_{i=1}^m \lfloor f_i \rfloor_{B_k, r}
\\ 
\le (2\C_0)^{\frac{m}{r}} \|T\| \mathcal{M}_{\B, r}^{\otimes}(\vec{f})(x) 
\le (2\C_0)^{\frac{m}{r}} \|T\| \delta \lambda
\le \lambda/4.  
\end{multline}
Moreover, it follows from the condition \eqref{list:T-reg} and \eqref{dede} that 
\begin{multline}\label{TS-2}
S(x) 
:= \|(\T(\vec{f}) - \T(\vec{f} \mathbf{1}_{B_k^*}))(x) - (\T(\vec{f}) - \T(\vec{f} \mathbf{1}_{B_k^*}))(x_k)\|_{\bB}
\\ 
\le \C_2(T) \prod_{i=1}^m \lfloor f_i \rfloor_{B_k, r}  
\le \C_2(T) \mathcal{M}_{\B, r}^{\otimes}(\vec{f})(x) 
\le \C_2(T) \delta \lambda
\le \lambda/4. 
\end{multline}
Then gathering \eqref{TS-1} and \eqref{TS-2}, we arrive at 
\begin{align*}
\lambda
&\le \|\T(\vec{f})(x_k) - \T(\vec{f} \mathbf{1}_{B_k^*})(x_k)\|_{\bB} 
\\
&\le \|\T(\vec{f})(x)\|_{\bB} + \|\T(\vec{f} \mathbf{1}_{B_k^*})(x)\|_{\bB} + S(x)
\\
&\le \|\T(\vec{f})(x)\|_{\bB} + \lambda/2. 
\end{align*}
That is, $|T(\vec{f})(x)| = \|\T(\vec{f})(x)\|_{\bB} \ge \frac12 \lambda$. Consequently, we have shown that 
\begin{align}\label{BBk-2}
\bigcup_{k} B'_k 
\subset \{x: \mathcal{M}_{\B, r}^{\otimes}(\vec{f})(x) > \delta \lambda\} 
\cup \{x: |T(\vec{f})(x)| \ge \lambda/2\}. 
\end{align}
Recall that both $T$ and $\mathcal{M}_{\B, r}^{\otimes}$ are bounded from $L^r(\Sigma, \mu) \times \cdots \times L^r(\Sigma, \mu)$ to $L^{\frac{r}{m}, \infty}(\Sigma, \mu)$. 
We then use \eqref{BBk-1} and \eqref{BBk-2} to deduce that 
\begin{align*}
\mu(E)
&\le \sum_k \mu(B^*_k) 
\lesssim \sum_k \mu(B_k) 
\le \sum_k \mu(B'_k) 
\\
&\lesssim (\delta^{-1} \|\mathcal{M}_{\B, r}^{\otimes}\| + \|T\|)^{\frac{r}{m}} \lambda^{-\frac{r}{m}}  
\prod_{i=1}^m \|f_i\|_{L^r(\Sigma, \mu)}^{\frac{r}{m}}
\\
&\lesssim (\C_2(T) + \|T\|)^{\frac{r}{m}} \lambda^{-\frac{r}{m}}  \prod_{i=1}^m \|f_i\|_{L^r(\Sigma, \mu)}^{\frac{r}{m}}, 
\end{align*}
where the implicit constants are independent of $\lambda$. This shows part \eqref{dou-1}. 

Next, let us turn to the proof of part \eqref{dou-2}. Fix $A \in \B$ with $A^* \subsetneq \Sigma$. Since $\B$ satisfies the doubling condition, there exist $\gamma>1$ and $B \in \B$ such that $A \subset B$ and $\mu(B) \le \gamma \, \mu(A)$. By Lemma \ref{lem:ABC} part \eqref{ABC-1}, this gives 
\begin{align*}
\Delta(A, B) 
\lesssim (\C_2(T) + \|T\|) [\mu(B)/\mu(A)]^{\frac{m}{r}} 
\le \gamma^{\frac{m}{r}} (\C_2(T) + \|T\|), 
\end{align*}
which means that $T$ satisfies the condition \eqref{list:T-size}. 
\end{proof}

\section{Sparse bounds for bounded oscillation operators}\label{sec:sparse} 
In what follows, we always let $(\Sigma, \mu)$ be a measure space with a ball-basis $\B$. Assume that $T$ is a multilinear bounded oscillation operator with respect to  $\B$ and $r \in [1, \infty)$ such that $T$ is bounded from $L^r(\Sigma, \mu) \times \cdots \times L^r(\Sigma, \mu)$ to $L^{\frac{r}{m}, \infty}(\Sigma, \mu)$. Denote 
\[
\C(T) := \C_1(T) + \C_2(T) 
+ \|T\|_{L^r(\Sigma, \mu) \times \cdots \times L^r(\Sigma, \mu) \to L^{\frac{r}{m}, \infty}(\Sigma, \mu)}.
\]

\begin{lemma}\label{lem:GA}
Let $\lambda \ge 3\C_0^4$. Let $F \subset \Sigma$ be a measurable set and $A \in \B$ with $F \cap A \neq \emptyset$ such that $\mu(F) \le \lambda^{-1} \mu(A)$. Then there exists a collection $\G \subset \B$ satisfying 
\begin{align}
&F \cap A^* \cap G \neq \emptyset, \, \forall G \in \G, \quad 
\\
&F \cap A^* \subset \bigcup_{G \in \G} G \text{ a.s.}, \quad
\\ 
\label{GA-1} &\mu \Big(\bigcup_{G \in \G} G^* \Big) \le 3\C_0^2 \lambda^{-1} \mu(A), 
\end{align}
and for every $G \in \G$ there exists $\widetilde{G} \in \B$ such that 
\begin{align}\label{GA-2}
\widetilde{G} \not\subset F, \quad 
G^{(2)} \subset \widetilde{G} \subset A^*, \quad\text{and}\quad 
\Delta(G^{(2)}, \widetilde{G}) \lesssim \C(T), 
\end{align}
where the implicit constants are independent of $F$, $A$, $\lambda$, and $\G$.
\end{lemma}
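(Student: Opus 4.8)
\textbf{Proof plan for Lemma \ref{lem:GA}.}
The plan is to realize $\G$ as a Calderón--Zygmund--type stopping family inside $A^*$ adapted to the set $F$, using the covering and disjointification tools of Lemma \ref{lem:BE} together with the comparison estimates of Lemma \ref{lem:ABC} and the dilation bookkeeping \eqref{ball}--\eqref{BkBk}. First I would apply the density property (via Lemma \ref{lem:BE}\eqref{list-3}--\eqref{list-5}) to the bounded measurable set $F \cap A^*$: for almost every $x \in F \cap A^*$ that is a density point of $F$, we may select a ball $B_x \in \B$ containing $x$ with $B_x \subset A^*$ (shrinking if necessary, using differentiability of the covering near density points) such that $\mu(F \cap B_x) > \tfrac12 \mu(B_x)$, hence in particular $B_x \not\subset F^c$ fails in the wrong direction — so instead I would run the selection the other way: pick $B_x$ \emph{maximal} (in measure, up to a factor $2$) among balls $B \ni x$, $B \subset A^*$, with $\mu(F \cap B) \ge \tfrac12\mu(B)$, which exists because for balls $B$ with $\mu(B)$ close to $\mu(A^*) \ge \mu(A) \ge \lambda\mu(F)$ one has $\mu(F\cap B)/\mu(B)$ small. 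Then $\G := \{B_x\}$ (after extracting a countable subfamily covering $F \cap A^*$ a.s. via Lemma \ref{lem:BE}\eqref{list-5}) satisfies $F \cap A^* \cap G \neq \emptyset$ and $F \cap A^* \subset \bigcup_{G} G$ a.s. by construction.

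For the measure bound in \eqref{GA-1}, I would use Lemma \ref{lem:BE}\eqref{list-5} to choose the covering family so that $\sum_{G \in \G}\mu(G) \le 2\C_0\,\mu(F \cap A^*) \le 2\C_0\,\mu(F) \le 2\C_0\lambda^{-1}\mu(A)$, and then pass to $G^*$: since each $G$ still satisfies $G \subset A^*$ and the density selection forces $\mu(F\cap G)\ge \tfrac12\mu(G)$, the family $\{G\}$ is "almost" $\tfrac12$-sparse with disjoint majority pieces $F\cap G$... actually for the $\bigcup G^*$ bound the cleanest route is: $\mu(\bigcup_G G^*) \le \sum_G \mu(G^*) \le \C_0 \sum_G \mu(G) \le 2\C_0^2\lambda^{-1}\mu(A)$, which is $\le 3\C_0^2\lambda^{-1}\mu(A)$, giving the third assertion of \eqref{GA-1}; the extra factor is harmless slack. (If the $F\cap A^*\subset\bigcup G$ a.s. covering from Lemma \ref{lem:BE}\eqref{list-5} does not automatically come with the density selection built in, I would first do the maximal selection to get the stopping balls, then note that by Vitali-type covering Lemma \ref{lem:BE}\eqref{list-1} a disjointified subfamily covers via $G^*$, and re-index.)

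The core of the argument is \eqref{GA-2}: for each stopping ball $G \in \G$ I must produce a "parent" $\widetilde{G} \in \B$ with $G^{(2)} \subset \widetilde{G} \subset A^*$, with $\widetilde{G} \not\subset F$, and with $\Delta(G^{(2)},\widetilde{G}) \lesssim \C(T)$. The maximality of $G$ in the stopping selection is exactly what gives this: by construction there is no ball $B \supset G$ with $B \subset A^*$ of measure substantially larger than $\mu(G)$ still satisfying $\mu(F \cap B) \ge \tfrac12\mu(B)$; taking $\widetilde{G}$ to be (a bounded dilate of) the smallest ball in $\B$ that contains $G^{(2)}$ and is contained in $A^*$, one gets $\mu(F \cap \widetilde{G}) < \tfrac12\mu(\widetilde{G})$, hence $\widetilde{G} \not\subset F$, while $\mu(\widetilde{G}) \lesssim \C_0^{c}\mu(G) \simeq \C_0^{c}\mu(G^{(2)})$ by \eqref{BkBk} and the maximality (the dilation is controlled because $G \subset A^*$ and we only enlarge until we first escape the stopping condition). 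The bound $\Delta(G^{(2)},\widetilde{G}) \lesssim \C(T)$ then follows immediately from Lemma \ref{lem:ABC}\eqref{ABC-1}, since $\mu(\widetilde{G})/\mu(G^{(2)})$ is bounded by an absolute constant. The main obstacle I anticipate is the bookkeeping that makes "smallest ball in $\B$ containing $G^{(2)}$ and contained in $A^*$" both well-defined and of comparable measure to $\mu(G)$: this requires combining the property \eqref{list:B4} successor construction with the constraint $\widetilde{G} \subset A^*$, and one must argue — as in the proof of Lemma \ref{lem:DBB}\eqref{DBB-2} — by comparing the infimum of measures of admissible balls above $G^{(2)}$ against $\mu(A)$, splitting into the case where this infimum is comparable to $\mu(G^{(2)})$ (take $\widetilde G$ a bounded number of successors of $G$) and the case where it jumps (then $\widetilde G$ has measure comparable to that jump, still $\lesssim \mu(A)$ by $G\subset A^*$ and $\lambda\ge 3\C_0^4$, and the ratio to $\mu(G^{(2)})$ is still bounded because of the failure of the stopping condition one step before). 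Once that dichotomy is set up exactly as in Lemma \ref{lem:DBB}, the remaining estimates are routine applications of Lemma \ref{lem:ABC}.
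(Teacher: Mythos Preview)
Your overall stopping-time intuition is right, but the proposal has a genuine gap in the construction of $\widetilde{G}$, and the paper's route is structurally different.

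\textbf{The gap.} Your plan for $\widetilde{G}$ hinges on $\mu(\widetilde{G})/\mu(G^{(2)})$ being bounded by an absolute constant, so that Lemma~\ref{lem:ABC}\eqref{ABC-1} yields $\Delta(G^{(2)},\widetilde{G})\lesssim\C(T)$. In a general ball-basis with no doubling assumption this ratio is \emph{not} controlled: there need not exist any ball strictly containing $G^{(2)}$ whose measure is comparable to $\mu(G^{(2)})$. Your fallback via the dichotomy of Lemma~\ref{lem:DBB}\eqref{DBB-2} does not close either. One application gives $\widetilde{B}\supset G^{(2)}$ with $\Delta(G^{(2)},\widetilde{B})\lesssim\C(T)$ and either $\widetilde{B}^{(1)}=\widetilde{B}$ or $\mu(\widetilde{B})\ge 2\mu(G)$; in the first alternative nothing forces $\widetilde{B}\not\subset F$, and if you iterate (via Lemma~\ref{lem:DBB}\eqref{DBB-1}) to reach some $\widetilde{\widetilde{B}}$ with $\mu(\widetilde{\widetilde{B}})\ge 2\mu(G)$, you now need to bound $\Delta(G^{(2)},\widetilde{\widetilde{B}})$ by composing two $\Delta$-estimates --- but the natural composition (Lemma~\ref{lem:ABC}\eqref{ABC-2}) introduces the factor $[\mu(\widetilde{\widetilde{B}})/\mu(\widetilde{B})]^{m/r}$, and Lemma~\ref{lem:DBB}\eqref{DBB-1} gives \emph{no upper bound} on $\mu(\widetilde{\widetilde{B}})$. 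The phrase ``the ratio to $\mu(G^{(2)})$ is still bounded because of the failure of the stopping condition one step before'' has no referent in your construction: your $G$ is chosen by a single maximal selection, not along a chain, so there is no ``one step before''. A secondary problem is the measure bound: you cannot simultaneously have the family come from Lemma~\ref{lem:BE}\eqref{list-5} (which produces its own balls) and carry the maximal-density property; your Vitali fallback only covers $F\cap A^*$ by $\bigcup G^*$, not by $\bigcup G$, contradicting the first part of \eqref{GA-1}.

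\textbf{What the paper does instead.} The paper makes the chain of Lemma~\ref{lem:DBB}\eqref{DBB-3} the primary object. First apply Lemma~\ref{lem:BE}\eqref{list-5} to $E=F\cap A^*$ to get an efficient cover $\B'$ with $\sum_{B\in\B'}\mu(B)\le 2\C_0\mu(E)$. For each $B\in\B'$ run the chain $B=B_0\subset B_1\subset\cdots$ with the built-in control $\Delta(B_{k-1}^{(2)},B_k)\lesssim\C(T)$, and stop at the least $k$ with $B_{k+1}^{(1)}\not\subset F$; set $G:=B_k$. Then $\widetilde{G}$ is either $B_{k+1}^*$ or $A^*$ (depending on whether $\mu(B_{k+1}^*)\le\mu(A)$), and the $\Delta$-bound $\Delta(G^{(2)},\widetilde{G})$ follows \emph{directly} from the single chain estimate $\Delta(B_k^{(2)},B_{k+1})\lesssim\C(T)$ plus one bounded dilation (Lemma~\ref{lem:ABF}\eqref{ABF-1} and Lemma~\ref{lem:ABC}\eqref{ABC-3}); no uncontrolled composition is ever needed. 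The measure bound in \eqref{GA-1} is obtained by splitting $\G$ into those $G$ with $G^*\subset F$ (total measure $\le\mu(F)$) and those with $G^*\not\subset F$ (for which necessarily $G=B_0\in\B'$, giving total measure $\le \C_0\sum_{B\in\B'}\mu(B)$). The point is that the chain from Lemma~\ref{lem:DBB}\eqref{DBB-3} packages the use of condition~\eqref{list:T-size} exactly so that each step has $\Delta$-control, and the stopping is done \emph{along} that chain rather than by an a~priori maximal selection.
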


\begin{proof}
By Lemma \ref{lem:BE} part \eqref{list-5} applied to $E:=F \cap A^*$, we get a subfamily $\B' \subset \B$ such that $E \cap B \neq \emptyset$ for all $B \in \B'$, 
\begin{align}\label{FAG-1}
E \subset \bigcup_{B \in \B'} B \text{ a.s.},  
\quad\text{ and }\quad 
\sum_{B \in \B'} \mu(B) \le 2\C_0 \, \mu(E). 
\end{align}
Given $B \in \B'$, we use Lemma \ref{lem:DBB} part \eqref{DBB-3} to find a sequence $\{B_k\}_{k \ge 0} \subset \B$ such that $\Sigma=\bigcup_{k \ge 0} B_k$ with $B_0=B$, 
\begin{align}\label{FAG-2}
B^{(2)}_{k-1} \subset B_k, \quad \text{and}\quad
\Delta(B_{k-1}^{(2)}, B_k) \lesssim \C(T), \quad k \ge 1. 
\end{align}
For each $B \in \B'$, we attach $G=B_k$, where $k \ge 0$ is the least index satisfying $B_{k+1}^{(1)} \not\subset F$. We then use $\G$ to denote the collection of all such $G$. 

For each $G \in \G$, by construction, there exists $B \in \B'$ such that $G=B$ or $G=B_k \supset B^{(2)}$ for some $k \ge 1$. This together with the first two estimates in \eqref{FAG-1} gives $F \cap A^* \cap G=E \cap G \neq \emptyset$ and $F \cap A^* =E \subset \bigcup_{G \in \G} G \text{ a.s.}$, respectively. Additionally, by definition of $\G$, since $G=B_k \in \G$ for some $k \ge 0$, $G^* \not\subset F$ implies $G=B_0$, otherwise $G=B_k$ with $k \ge 1$ and $G^*=B_k^* \subset F$. This yields  
\begin{align}\label{GGG}
\mu \Big(\bigcup_{G \in \G} G^* \Big) 
&=\mu \Big(\bigcup_{G \in \G: G^* \subset F} G^* \Big) + \mu \Big(\bigcup_{G \in \G: G^* \not\subset F} G^* \Big) 
\nonumber \\
&\le \mu(F) + \mu \Big(\bigcup_{B \in \B'} B^* \Big) 
\le \mu(F) + \sum_{B \in \B'} \mu(B^*) 
\nonumber \\
&\le \mu(F) + \C_0 \sum_{B \in \B'} \mu(B) 
\le (1+2\C_0^2) \mu(F) 
\nonumber \\ 
&\le (1+2\C_0^2) \lambda^{-1} \mu(A) 
\le 3\C_0^2 \lambda^{-1} \mu(A), 
\end{align}
where we have used \eqref{FAG-1} and that $\mu(F) \le \lambda^{-1} \mu(A)$. This shows \eqref{GA-1}.

To proceed, given $G=B_k \in \G$ for some $k \ge 0$, we define 
\begin{equation*}
\widetilde{G} := 
\begin{cases}
A^*, & \text{if } \mu(B_{k+1}^*) > \mu(A), 
\\
B_{k+1}^*, &\text{if } \mu(B_{k+1}^*) \le \mu(A). 
\end{cases}
\end{equation*}
Note that $\mu(A^*) \ge \mu(A) \ge \lambda \mu(F) > \mu(F)$, hence, $A^* \not\subset F$. This and the construction of $\G$ yield that $\widetilde{G} \not\subset F$. Moreover, it follows from \eqref{GGG} that $\mu(G^{(2)}) \le \C_0^2 \mu(G)
\le 3\C_0^4 \lambda^{-1} \mu(A) \le \mu(A)$, which along with $G \cap A^* \neq \emptyset$ and property \eqref{list:B4} gives $G^{(2)} \subset A^*$. If $\mu(B_{k+1}^*) > \mu(A)$, then $G^{(2)} \subset A^*=\widetilde{G} \subset B_{k+1}^{(2)}$, which along with Lemma \ref{lem:ABF} part \eqref{ABF-1} and Lemma \ref{lem:ABC} part \eqref{ABC-3} implies 
\begin{align*}
\Delta(G^{(2)}, \widetilde{G})
\le \Delta(B_k^{(2)}, B_{k+1}^{(2)})
\lesssim \C(T) + \Delta(B_k^{(2)}, B_{k+1})
\lesssim \C(T), 
\end{align*}
where the last estimate in \eqref{FAG-2} was used in the last step.  If $\mu(B_{k+1}^*) \le \mu(A)$, then by $G \cap A^* \neq \emptyset$ and property \eqref{list:B4}, we have $G^{(2)} = B_k^{(2)} \subset B_{k+1} \subset B_{k+1}^* = \widetilde{G} 
\subset A^*$ and as above, 
\begin{align*}
\Delta(G^{(2)}, \widetilde{G})
= \Delta(B_k^{(2)}, B_{k+1}^{(1)})
\lesssim \C(T) + \Delta(B_k^{(2)}, B_{k+1})
\lesssim \C(T).  
\end{align*}
This shows \eqref{GA-2}.
\end{proof}

Define 
\[
\Gamma(\vec{f})(x) := \max\big\{|T(\vec{f})(x)|, \, T_*(\vec{f})(x), \, 
\C(T) \M_{\B, r}^{\otimes}(\vec{f})(x) \big\}. 
\]
Then by Theorem \ref{thm:doubling} and Lemma \ref{lem:M}, we see that 
\begin{align}\label{eq:gam}
\|\Gamma\| := \|\Gamma\|_{L^r(\Sigma, \mu) \times \cdots \times L^r(\Sigma, \mu) \to L^{\frac{r}{m}, \infty}(\Sigma, \mu)} 
\lesssim \C(T). 
\end{align}
In this sequel, we fix $B_0 \in \B$ and $f_i \in L^r(\Sigma, \mu)$ with $\|f_i\|_{L^r(B_0, \mu)}^r \ge \frac12 \|f_i\|_{L^r(\Sigma, \mu)}^r$, $i=1, \ldots, m$. Without loss of generality, we assume that 
\begin{align}\label{support} 
\supp(f_i) \subset B_0^{(3)}, \quad i=1, \ldots, m, 
\end{align}
since 
\begin{equation*}
\|f_i \mathbf{1}_{B_0^{(3)}}\|_{L^r(B_0, \mu)}^r 
= \|f_i\|_{L^r(B_0, \mu)}^r 
\ge \frac12 \|f_i\|_{L^r(\Sigma, \mu)}^r
\ge \frac12 \|f_i \mathbf{1}_{B_0^{(3)}}\|_{L^r(\Sigma, \mu)}^r. 
\end{equation*}

\begin{lemma}\label{lem:tree}
Let $\lambda \ge 3\C_0^4$. Then there exists a family $\G=\G(B_0) \subset \B$ such that for each $A \in \G$ one can find a subfamily $\F(A) \subset \G$ satisfying the following properties: 
\begin{align}
\label{tree-1}
&A^* \cap B \neq \emptyset, \quad \forall B \in \F(A), 
\\
\label{tree-2}
&\mu \Big(\bigcup_{B \in \F(A)} B^* \Big) \le 3\C_0^2 \lambda^{-1} \mu(A), 
\\
\label{tree-3}
&\Gamma(\vec{f} \mathbf{1}_{A^{(3)}})(x) 
\lesssim \C(T) \,  \lambda^{\frac{m}{r}} \prod_{i=1}^m \langle f_i \rangle_{A^{(3)}, r}, 
\end{align}
for a.e. $x \in A^* \setminus \bigcup_{B \in \F(A)} B$, and for every $B \in \F(A)$ there are $\widetilde{B} \in \B$ and $\xi \in \widetilde{B}$ such that 
\begin{align}
\label{tree-4} & B^{(2)} \subset \widetilde{B} \subset A^*, 
\\
\label{tree-5} &\Gamma(\vec{f} \mathbf{1}_{A^{(3)}})(\xi) 
\lesssim \C(T) \,  \lambda^{\frac{m}{r}} \prod_{i=1}^m \langle f_i \rangle_{A^{(3)}, r}, 
\\
\label{tree-6} & \|\T(\vec{f} \mathbf{1}_{\widetilde{B}^*})(x) - \T(\vec{f} \mathbf{1}_{B^{(3)}})(x)\|_{\bB}
\lesssim \C(T) \,  \lambda^{\frac{m}{r}} \prod_{i=1}^m \langle f_i \rangle_{A^{(3)}, r},  
\end{align}
for all $x \in B^{(2)}$.
\end{lemma}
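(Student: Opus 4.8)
\textbf{Proof strategy for Lemma \ref{lem:tree}.}

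The plan is to build $\G$ as a stopping-time tree rooted at $B_0$, using the weak-type bound \eqref{eq:gam} to produce, at each node, a ``good'' sublevel set on which the pointwise estimate \eqref{tree-3} holds, and a family of ``bad'' children $\F(A)$ carved out by a Calder\'on--Zygmund stopping condition. Concretely, for a fixed $A \in \G$ I would normalize by setting
\begin{align*}
E_A := \Big\{x \in A^*: \Gamma(\vec{f} \mathbf{1}_{A^{(3)}})(x) > C \, \C(T) \, \lambda^{\frac{m}{r}} \prod_{i=1}^m \langle f_i \rangle_{A^{(3)}, r} \Big\},
\end{align*}
where $C$ is chosen large enough, depending only on the implicit constant in \eqref{eq:gam} and on $\C_0$, so that the weak-type estimate forces $\mu(E_A) \le \lambda^{-1} \mu(A)$. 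Indeed, Chebyshev applied to $\|\Gamma\|_{L^r \times \cdots \times L^r \to L^{r/m,\infty}}$ and $\supp(f_i \mathbf{1}_{A^{(3)}}) \subset A^{(3)}$ with $\langle f_i \rangle_{A^{(3)},r}^r = \mu(A^{(3)})^{-1}\|f_i \mathbf{1}_{A^{(3)}}\|_{L^r}^r \gtrsim \C_0^{-3}\mu(A)^{-1}\|f_i \mathbf{1}_{A^{(3)}}\|_{L^r}^r$ gives $\mu(E_A) \lesssim C^{-r/m}\C_0^{3}\lambda^{-1}\mu(A)$, and one absorbs the constants into $C$.

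Next I would apply Lemma \ref{lem:GA} with $F = E_A$ and the ball $A$ (noting $E_A \subset A^*$ and $\mu(E_A) \le \lambda^{-1}\mu(A)$, and $E_A \cap A \ne \emptyset$ may be assumed, or else $E_A \cap A^* = E_A$ is handled directly since Lemma \ref{lem:GA} only needs $F \cap A \ne \emptyset$ — this minor point can be patched by replacing $A$ with a slightly larger ball or by noting the set is empty otherwise). This yields a collection $\G_A \subset \B$ with $E_A \cap A^* \cap G \ne \emptyset$, $E_A \cap A^* \subset \bigcup_{G \in \G_A} G$ a.s., and $\mu(\bigcup_{G \in \G_A} G^*) \le 3\C_0^2\lambda^{-1}\mu(A)$, and for each $G \in \G_A$ a ball $\widetilde G \not\subset E_A$ with $G^{(2)} \subset \widetilde G \subset A^*$ and $\Delta(G^{(2)}, \widetilde G) \lesssim \C(T)$. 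I set $\F(A) := \G_A$, which immediately gives \eqref{tree-1}, \eqref{tree-2}, and \eqref{tree-4}. For \eqref{tree-3}: a.e.\ $x \in A^* \setminus \bigcup_{B \in \F(A)} B$ lies outside $E_A$ (since $E_A \cap A^* \subset \bigcup B$ up to null sets), so the desired bound on $\Gamma(\vec f \mathbf{1}_{A^{(3)}})(x)$ is exactly the complement of $E_A$. For \eqref{tree-5}: since $\widetilde B \not\subset E_A$, pick $\xi \in \widetilde B \setminus E_A \subset A^* \setminus E_A$, which gives the bound on $\Gamma(\vec f \mathbf{1}_{A^{(3)}})(\xi)$. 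For \eqref{tree-6}: by $B^{(2)} \subset \widetilde B \subset A^*$ and $\widetilde B^* \subset A^{(2)} \subset A^{(3)}$, one has $\vec f \mathbf{1}_{\widetilde B^*} = (\vec f \mathbf{1}_{A^{(3)}}) \mathbf{1}_{\widetilde B^*}$ and $\vec f \mathbf{1}_{B^{(3)}} = (\vec f \mathbf{1}_{A^{(3)}}) \mathbf{1}_{B^{(3)}}$ (using $B^{(3)} = (B^{(2)})^{(1)} \subset \widetilde B^{(1)} \subset A^{(2)} \subset A^{(3)}$), so applying the definition of $\Delta$ together with Lemma \ref{lem:ABF}\eqref{ABF-1} and Lemma \ref{lem:ABC}\eqref{ABC-3} to pass from $\widetilde B$ down to $B^{(2)}$ and then to $B^{(3)}$, and finally translating $\langle f_i \rangle_{\widetilde B^*, r} \lesssim \C_0^{?}\lambda^{?}\langle f_i\rangle_{A^{(3)},r}$ via $\mu(\widetilde B) \gtrsim \lambda^{-1}\mu(A)$... here one must be careful: $\widetilde B$ can be as small as $\sim \lambda^{-1}\mu(A)$, so $[\mu(A^{(3)})/\mu(\widetilde B^*)]^{m/r} \lesssim (\C_0^3\lambda)^{m/r}$, which is precisely the source of the $\lambda^{m/r}$ factor.

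Finally, the tree $\G$ itself is assembled by iteration: start with $\G^0 = \{B_0\}$ (using the normalization \eqref{support}), and given $\G^j$, let $\G^{j+1} = \bigcup_{A \in \G^j} \F(A)$; set $\G = \bigcup_{j \ge 0} \G^j$. I expect the main obstacle to be the bookkeeping around the enlargements $B^{(k)}$ and making sure every quantitative constant — particularly the $\lambda^{m/r}$ in \eqref{tree-3}, \eqref{tree-5}, \eqref{tree-6} — comes out uniform in $\lambda$, $A$, $\G$, and $F$; this requires threading Lemma \ref{lem:ABF} and Lemma \ref{lem:ABC} exactly, and verifying that the ratio $\mu(A^{(3)})/\mu(\widetilde B)$ is controlled by $\C_0^3 \lambda$ rather than something worse. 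A secondary subtlety is the a.s.\ qualifier: the containments $E_A \cap A^* \subset \bigcup_{B \in \F(A)} B$ hold only modulo null sets, so \eqref{tree-3} is necessarily an a.e.\ statement, which is consistent with the hypothesis; one should also check that the null sets accumulated over the countable tree $\G$ still have total measure zero, which is automatic. Everything else — the weak-type Chebyshev bound, the application of Lemma \ref{lem:GA}, the selection of $\xi$, and the $\Delta$-estimate for \eqref{tree-6} — is routine once the constant $C$ is fixed at the outset.
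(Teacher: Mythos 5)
Your architecture is the same as the paper's: define the exceptional set by Chebyshev against the weak-type bound for $\Gamma$, apply Lemma \ref{lem:GA} with $F=E_A$, read off \eqref{tree-1}--\eqref{tree-5} from its conclusions, and iterate to build the tree. That part, including the normalization that makes $\mu(E_A)\le\lambda^{-1}\mu(A)$ and the selection of $\xi\in\widetilde B\setminus E_A$, is correct and matches the paper.

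The gap is in your justification of \eqref{tree-6}. After invoking $\Delta(B^{(2)},\widetilde B)\lesssim\C(T)$ (which Lemma \ref{lem:GA} hands you directly, no further passage via Lemma \ref{lem:ABC} is needed), you must bound $\prod_i\langle f_i\mathbf 1_{A^{(3)}}\rangle_{\widetilde B^*,r}$ by $\lambda^{m/r}\prod_i\langle f_i\rangle_{A^{(3)},r}$, and you propose to do this via the measure ratio $\mu(A^{(3)})/\mu(\widetilde B^*)\lesssim\C_0^3\lambda$, resting on the claim $\mu(\widetilde B)\gtrsim\lambda^{-1}\mu(A)$. That claim is unjustified and false in general: the balls $B\in\F(A)$ arise from covering the exceptional set via Lemma \ref{lem:BE}, and carry no lower bound on their measure; \eqref{tree-2} only bounds $\mu(\bigcup B^*)$ from above, and $\widetilde B\not\subset E_A$ gives no size information either, so $\widetilde B$ (hence $\widetilde B^*$) can be arbitrarily small relative to $A$ and the ratio arbitrarily large. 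The correct mechanism is the point $\xi$ you already selected: since $\xi\in\widetilde B\subset\widetilde B^*$, one has
\begin{align*}
\prod_{i=1}^m\langle f_i\mathbf 1_{A^{(3)}}\rangle_{\widetilde B^*,r}
\le \M_{\B,r}^{\otimes}(\vec f\mathbf 1_{A^{(3)}})(\xi)
\le \C(T)^{-1}\,\Gamma(\vec f\mathbf 1_{A^{(3)}})(\xi)
\lesssim \lambda^{\frac{m}{r}}\prod_{i=1}^m\langle f_i\rangle_{A^{(3)},r},
\end{align*}
the last step being exactly \eqref{tree-5}. So the $\lambda^{m/r}$ in \eqref{tree-6} is inherited from the Chebyshev threshold through $\xi$, not from a geometric lower bound on $\widetilde B$; with this substitution the proof closes.
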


\begin{proof}
Set 
\begin{align}\label{FAA}
F(B_0) := \bigg\{x \in \Sigma: \Gamma(\vec{f} \mathbf{1}_{B_0^{(3)}})(x) 
> 2^{\frac{m}{r}} \C_0^{\frac{3m}{r}} \lambda^{\frac{m}{r}} 
\|\Gamma\| \prod_{i=1}^m \langle f_i \rangle_{B_0^{(3)}, r} \bigg\}. 
\end{align}
It follows from \eqref{eq:gam} that 
\begin{align*}
\mu(F(B_0)) 
\le \Bigg(\frac{\|\Gamma\| \prod_{i=1}^m \|f_i\|_{L^r(B_0^{(3)}, \mu)}}
{2^{\frac{m}{r}} \C_0^{\frac{3m}{r}} \lambda^{\frac{m}{r}} 
\|\Gamma\| \prod_{i=1}^m \langle f_i \rangle_{B_0^{(3)}, r}}\Bigg)^{\frac{r}{m}} 
\le \frac{\mu(B_0^{(3)})}{\C_0^3 \lambda} 
\le \frac{\mu(B_0)}{\lambda}. 
\end{align*}
Then by Lemma \ref{lem:GA} applied to $A=B_0$ and $F=F(B_0)$, one can find a collection $\F(B_0) \subset \B$ such that 
\begin{align}
\label{GAA-1} & F(B_0) \cap B_0^* \cap B \neq \emptyset, \quad \forall B \in \F(B_0), 
\\
\label{GAA-2} & F(B_0) \cap B_0^* \subset \bigcup_{B \in \F(B_0)} B \text{ a.s.}, 
\\
\label{GAA-3} & \mu \Big(\bigcup_{B \in \F(B_0)} B^* \Big) \le 3\C_0^2 \lambda^{-1} \mu(B_0), 
\end{align}
and for every $B \in \F(B_0)$ there exists $\widetilde{B} \in \B$ such that 
\begin{align}\label{GAA-4}
\widetilde{B} \not\subset F(B_0), \quad 
B^{(2)} \subset \widetilde{B} \subset B_0^*, \quad\text{and}\quad 
\Delta(B^{(2)}, \widetilde{B}) \lesssim \C(T). 
\end{align}
Then, \eqref{GAA-2} means that $\mu\big(F(B_0) \cap B_0^* \setminus \bigcup_{B \in \F(B_0)} B \big)=0$, which along with \eqref{FAA} and \eqref{eq:gam} gives 
\begin{align}\label{}
&\Gamma(\vec{f} \mathbf{1}_{B_0^{(3)}})(x) 
\lesssim \C(T) \lambda^{\frac{m}{r}} \, \prod_{i=1}^m \langle f_i \rangle_{B_0^{(3)}, r}, \quad 
\text{a.e. } x \in B_0^* \setminus \bigcup_{B \in \F(B_0)} B. 
\end{align}
Additionally, by the first estimate in \eqref{GAA-4}, definition of $F(B_0)$, and \eqref{eq:gam}, there exists $\xi \in \widetilde{B} \setminus F(B_0)$ so that 
\begin{align}\label{Gaf}
\Gamma(\vec{f} \mathbf{1}_{B_0^{(3)}})(\xi) 
\lesssim \C_T \,  \lambda^{\frac{m}{r}} \prod_{i=1}^m \langle f_i \rangle_{B_0^{(3)}, r}. 
\end{align}
Moreover, by the last two estimates in \eqref{GAA-4} and \eqref{Gaf}, we have for any $x \in B^{(2)}$, 
\begin{align*}
&\|\T(\vec{f} \mathbf{1}_{\widetilde{B}^*})(x) - \T(\vec{f} \mathbf{1}_{B^{(3)}})(x)\|_{\bB}
\\
&=\|\T(\vec{f} \mathbf{1}_{B_0^{(3)} \cap \widetilde{B}^*})(x) - \T(\vec{f} \mathbf{1}_{B_0^{(3)} \cap B^{(3)}})(x)\|_{\bB}
\\
&\le \Delta(B^{(2)}, \widetilde{B}) \prod_{i=1}^m \langle f_i \mathbf{1}_{B_0^{(3)}} \rangle_{\widetilde{B}^*, r}
\lesssim \C(T) \inf_{\widetilde{B}} \mathcal{M}_{\B, r}^{\otimes}(\vec{f} \mathbf{1}_{B_0^{(3)}})
\\
&\le \Gamma(\vec{f} \mathbf{1}_{B_0^{(3)}})(\xi) 
\lesssim \C(T) \,  \lambda^{\frac{m}{r}} \prod_{i=1}^m \langle f_i \rangle_{B_0^{(3)}, r}. 
\end{align*}
Therefore, we have verified \eqref{tree-1}--\eqref{tree-6} for $A=B_0$. 

Then for each $A \in \F(B_0)$, we repeat the proceeding procedure for $A$ instead of $B_0$ to obtain a family $\F(A)$ satisfying the properties \eqref{tree-1}--\eqref{tree-6}. Define recursively $\F_0(B_0) := \{B_0\}$, 
\begin{align}\label{FFF}
\F_1(B_0) := \F(B_0), \quad\text{and}\quad  
\F_{k+1}(B_0) := \bigcup_{B \in \F_k(B_0)} \F(B), \quad k \ge 1. 
\end{align}
Then we set 
\begin{equation}\label{generation}
\G = \G(B_0) := \bigcup_{k \ge 0} \F_k(B_0). 
\end{equation}
As argued above, the family $\G$ satisfies \eqref{tree-1}--\eqref{tree-6}. This completes the proof. 
\end{proof}

To proceed, we give some notation. The set $\F(B)$ above is the collection of stopping time balls related to $B$. Then $\F_k(B)$  is the family of the $k$-th generation stopping time balls related to $B$, while $\G$ is the collection of all generation balls. Given $B, B' \in \G$ and $k \ge 0$, we denote $\mathscr{F}^k(B')=B$ if $B' \in \F_k(B)$, for which we say that $B$ is the $k$-th generation ancestor of $B'$ in $\G$, or $B'$ is one of the $k$-th generation descendants of $B$ in $\G$. We also denote $\mathscr{F} :=\mathscr{F}^1$. By \eqref{tree-2}, one has 
\begin{align}\label{FkFk}
\mu \bigg(\bigcup_{B' \in \F_k(B)} B' \bigg) 
\le (3\C_0^2 \lambda^{-1})^k \mu(B), \quad \forall k \ge 1. 
\end{align}

\begin{proof}[\bf Proof of Theorem \ref{thm:sparse}]
We only focus on the proof of \eqref{Tbsparse} since the proof of \eqref{Tsparse} is similar.  Let $\lambda>3\C_0^6$ be a constant chosen later. Fix $B_0 \in \B$. By Lemma \ref{lem:tree}, one can find a family $\G \subset \B$ satisfying the properties \eqref{tree-1}--\eqref{tree-6}. 

Given $B \in \B$, we write $r(B) := [\frac12 \log_{\C_0} \mu(B)]$,  
\begin{equation*}
\G^{k_0}  := \{B_0\}, \quad\text{ and }\quad 
\G^k := \{B \in \G: r(B)=k\}, \quad k < k_0, 
\end{equation*}
where $k_0 := r(B_0)$. It follows from \eqref{tree-2} that $r(\mathscr{F}(B)) \ge r(B)+1$. We also define $\mathcal{B}$ as the collection of $B \in \G$ satisfying that for some $B' \in \G$, 
\begin{align}\label{FBFB}
B^{(2)}\cap B' \neq \emptyset 
\quad\text{ and }\quad 
r(\mathscr{F}(B)) + 2 \le r(B') \le r(\mathscr{F}(B)) -2. 
\end{align}
Furthermore, we set 
\begin{align*}
\H^{k_0} := \G^{k_0}=\{B_0\} 
\quad\text{ and }\quad
\H^k := \G^k \setminus \bigcup_{B \in \mathcal{B}}  \G(B), 
\quad k < k_0.
\end{align*}
Note that \eqref{tree-4} gives that $\bigcup_{B \in \H^k} B \subset B_0^*$ for each $k \le k_0$. This along with Lemma \ref{lem:BE} part \eqref{list-1} implies that  there exists a disjoint subfamily $\D^k \subset \H^k$ such that 
\begin{align}\label{BGBD}
\bigcup_{B \in \H^k} B \subset \bigcup_{B \in \D^k} B^{(1)}.
\end{align} 
Then we set 
\begin{align}\label{DHG}
\D := \bigcup_{k \le k_0} \D^k
\subset \bigcup_{k \le k_0} \H^k
\subset \bigcup_{k \le k_0} \G^k
\subset \G. 
\end{align}
Writing  
\[
\D_1 := \{B \in \D: r(B) \text{ is odd}\} 
\quad\text{ and }\quad
\D_2 := \{B \in \D: r(B) \text{ is even}\}, 
\]
we see that $\D=\D_1 \cup \D_2$. In view of Lemma \ref{lem:1/2-sparse} below,  both $\D_1$ and $\D_2$ are $\frac12$-sparse, which in turn implies that 
\begin{align}\label{SS}
\text{$\S_i :=\{B^{(3)}: B \in \D_i\}$ is $\frac{1}{2\C_0^3}$-sparse}, \quad i=1, 2. 
\end{align}

By Lemma \ref{lem:GAFA} below, there exists a set $F_0$ of zero measure such that for any $B \in \D$,  
\begin{align}\label{Gafb}
\Gamma(\vec{f} \mathbf{1}_{B^{(3)}}) (x) 
\lesssim \C(T) \prod_{i=1}^m \langle f_i \rangle_{B^{(3)}, r}, \quad 
x \in \bigg(B^* \backslash \bigcup_{G \in \D \atop r(G)<r(B)} G^* \bigg) \backslash F_0. 
\end{align}
Note that 
\[
\mu(F_1) := \mu \bigg(\bigcap_{k \le k_0} \bigcup_{G \in \D: r(G) \le k} G^* \bigg) =0.
\]
Next, fix $x \in B_0 \setminus (F_0 \cup F_1)$. Then, there exists $B \in \D$ such that $x \in B^* \setminus \bigcup_{G \in \D:r(G)<r(B)} G^*$. By the construction of $\G$ and that $B \in \D \subset \G$, one can find a sequence $\{B_j\}_{j=0}^k \subset \D$ such that $B_{j+1} \in \F(B_j)$, $j=0, 1, \ldots, k-1$, and $B_k=B$. By \eqref{tree-4}--\eqref{tree-6}, there exist $\widetilde{B}_j $ and $\xi_j \in \widetilde{B}_{j+1}$, $j=0,1,\ldots,k-1$, such that 
\begin{align}
\label{BBT-1} & B_{j+1}^{(2)} \subset \widetilde{B}_{j+1} \subset B_j^{(1)}, 
\\
\label{BBT-2} & \Gamma(\vec{f} \mathbf{1}_{B_j^{(3)}})(\xi_j) 
\lesssim \C(T) \prod_{i=1}^m \langle f_i \rangle_{B_j^{(3)}, r}, 
\\ 
\label{BBT-3} & \big\|\T(\vec{f} \mathbf{1}_{\widetilde{B}_{j+1}^{(1)}})(x) 
- \T(\vec{f} \mathbf{1}_{B_{j+1}^{(3)}})(x) \big\|_{\bB} 
\lesssim \C(T) \prod_{i=1}^m \langle f_i \rangle_{B_j^{(3)}, r}, 
\end{align}
since $x \in B^*=B_k^{(1)} \subset B_{j+1}^{(2)}$. 

Observe that 
\begin{align*}
\prod_{i \in \tau} (a_i + b_i)
=\sum_{\tau_1 \uplus \tau_2 = \tau(\alpha)} 
\prod_{i_1 \in \tau_1} a_{i_1} \times \prod_{i_2 \in \tau_2} b_{i_2}, 
\end{align*}
which gives 
\begin{align}\label{Tbg}
[\T, \b]_{\alpha}(\vec{f})(x)
=\sum_{\tau_1 \uplus \tau_2 = \tau(\alpha)} \prod_{i_1 \in \tau_1} (b_{i_1}(x) - c_{i_1}) \times \T(\vec{g})(x), 
\end{align}
where 
\begin{equation*}
g_i = 
\begin{cases}
f_i, & i \not\in \tau_2,
\\ 
(b_i-c_i) f_i, & i \in \tau_2. 
\end{cases}
\end{equation*}
Then it follows from \eqref{Gafb} and \eqref{Tbg} that 
\begin{align}\label{TBb}
\|[&\T, \b]_{\alpha}(\vec{f} \mathbf{1}_{B^{(3)}})(x)\|_{\bB} 
\nonumber \\
&\le \sum_{\tau_1 \uplus \tau_2 = \tau(\alpha)} \prod_{i_1 \in \tau_1} |b_{i_1}(x) - c_{i_1}| \times \|\T(\vec{g})(x)\|_{\bB}
\nonumber \\
&\lesssim \sum_{\tau_1 \uplus \tau_2 = \tau(\alpha)} \prod_{i_1 \in \tau_1} |b_{i_1}(x) - c_{i_1}| \times 
\Gamma(\vec{g} \mathbf{1}_{B^{(3)}})(x) 
\nonumber \\
&\lesssim \sum_{\tau_1 \uplus \tau_2 = \tau(\alpha)} \prod_{i_1 \in \tau_1} |b_{i_1}(x) - b_{i_1, B^{(3)}}| \langle f_{i_1} \rangle_{B^{(3)}, r} 
\nonumber \\
&\qquad\times \prod_{i_2 \in \tau_2} \langle (b_{i_2} - b_{i_2, B^{(3)}}) f_{i_2} \rangle_{B^{(3)}, r} 
\times \prod_{i_3 \not\in \tau_1 \cup \tau_2} \langle f_{i_3} \rangle_{B^{(3)}, r}, 
\end{align}
where we have chosen $c_i=b_{i, B^{(3)}}$ in \eqref{Tbg}, $i=1, \ldots, m$. Now gathering \eqref{TBb} and \eqref{TTBj} below, we conclude that 
\begin{align*}
|[&T, \b]_{\alpha}(\vec{f})(x)| 
= \|[\T, \b]_{\alpha}(\vec{f} \mathbf{1}_{B_0^{(3)}})(x)\|_{\bB} 
\\
&\le \sum_{j=0}^{k-1} \big\|[\T, \b]_{\alpha}(\vec{f} \mathbf{1}_{B_j^{(3)}})(x) 
- [\T, \b]_{\alpha}(\vec{f} \mathbf{1}_{B_{j+1}^{(3)}})(x) \big\|_{\bB} 
\nonumber \\
&\qquad\qquad + \|[\T, \b]_{\alpha}(\vec{f} \mathbf{1}_{B^{(3)}})(x)\|_{\bB} 
\\
&\lesssim \C(T) \sum_{j=0}^k \sum_{\tau_1 \uplus \tau_2 = \tau(\alpha)} 
\prod_{i_1 \in \tau_1} |b_{i_1}(x) - b_{i_1, B_j^{(3)}}| \langle f_{i_1} \rangle_{B_j^{(3)}, r} 
\nonumber \\
&\qquad \times \prod_{i_2 \in \tau_2} \langle (b_{i_2} - b_{i_2, B_j^{(3)}}) f_{i_2} \rangle_{B_j^{(3)}, r} 
\times \prod_{i_3 \not\in \tau_1 \cup \tau_2} \langle f_{i_3} \rangle_{B_j^{(3)}, r}
\\
&\le \C(T) \sum_{\tau_1 \uplus \tau_2 = \tau(\alpha)} \sum_{B \in \S_1 \cup \S_2} 
\prod_{i_1 \in \tau_1} |b_{i_1}(x) - b_{i_1, B^{(3)}}| \langle f_{i_1} \rangle_{B^{(3)}, r} 
\nonumber \\
&\qquad \times \prod_{i_2 \in \tau_2} \langle (b_{i_2} - b_{i_2, B^{(3)}}) f_{i_2} \rangle_{B^{(3)}, r} 
\prod_{i_3 \not\in \tau_1 \cup \tau_2} \langle f_{i_3} \rangle_{B^{(3)}, r} \, \mathbf{1}_B(x). 
\end{align*}
This completes the proof of Theorem \ref{thm:sparse}. 
\end{proof}

Finally, let us demonstrate the following three lemmas, which have been used above. 

\begin{lemma}\label{lem:1/2-sparse}
Both $\D_1$ and $\D_2$ are $\frac12$-sparse whenever $\lambda$ is large enough. 
\end{lemma}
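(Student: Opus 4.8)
The goal is to show that, for $\lambda$ sufficiently large, each of the two subfamilies $\D_1,\D_2\subset\D$ is $\tfrac12$-sparse; that is, for every $B\in\D_i$ one can select $E_B\subset B$ with $\mu(E_B)\ge\tfrac12\mu(B)$ and the $E_B$ pairwise disjoint. The natural candidate is
\[
E_B := B\setminus\!\!\bigcup_{\substack{B'\in\D_i\\ B'\subsetneq B,\ B'\ \text{maximal}}}\!\! B',
\]
i.e. remove from $B$ the union of all elements of $\D_i$ that are "one step down'' from $B$ in the generation tree. Disjointness of $\{E_B\}_{B\in\D_i}$ is then a formal consequence of the tree structure inherited from $\G$ together with the observation that distinct elements of $\D^k$ are pairwise disjoint (recall $\D^k\subset\H^k$ is a disjoint subfamily, by the construction via Lemma \ref{lem:BE} part \eqref{list-1} in the proof of Theorem \ref{thm:sparse}): two $E_B$'s can only overlap if one of the balls contains the other, and the subtraction removes exactly that overlap. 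So the crux is the measure estimate
\[
\mu\Big(B\cap\!\!\bigcup_{\substack{B'\in\D_i,\ B'\subsetneq B\\ B'\ \text{a child of }B\text{ in }\D_i}}\!\! B'\Big)\le\tfrac12\mu(B).
\]

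The key point is that consecutive elements of $\D_i$ along any branch differ in the parameter $r(\cdot)$ by at least $2$ (since $\D_1$ collects balls with odd $r$ and $\D_2$ those with even $r$, while $r$ is nondecreasing down generations and strictly increases, $r(\mathscr F(B))\ge r(B)+1$), so a "child'' of $B$ in $\D_i$ is at least two $\G$-generations below $B$ — except possibly for the jump caused by collapsing the sets $\G(B)$ for $B\in\mathcal B$, which must also be controlled. First I would make precise the combinatorial claim: if $B'\in\D_i$ is a maximal element of $\D_i$ strictly contained in $B\in\D_i$, then $B'$ lies inside some ball of $\F_j(B_*)$ for an ancestor $B_*\supset B'$ with $j\ge 1$, and in fact one can arrange that $\bigcup$ of all such maximal $B'$ is contained in $\bigcup_{B''\in\F_j(B'')} B''$ over appropriate $B''\in\D$ with $r$-gap $\ge 2$. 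Then I would invoke the decay estimate \eqref{FkFk},
\[
\mu\Big(\bigcup_{B'\in\F_k(B)}B'\Big)\le(3\C_0^2\lambda^{-1})^k\mu(B),\qquad k\ge1,
\]
with $k\ge 2$ (or $k\ge1$ after summing a geometric series in the number of intermediate ancestors), which bounds the relevant union by $C\,\lambda^{-1}\mu(B)$ for an absolute constant $C$ depending only on $\C_0$. Choosing $\lambda$ large enough that $C\lambda^{-1}\le\tfrac12$ finishes the measure estimate, hence $\mu(E_B)\ge\tfrac12\mu(B)$.

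The main obstacle I anticipate is bookkeeping the effect of the removed families $\G(B)$, $B\in\mathcal B$, and the passage $\H^k\rightsquigarrow\D^k$ via \eqref{BGBD}: an element $B'\in\D_i$ that is a "child'' of $B$ in $\D_i$ need not be a literal $\G$-descendant of $B$ after these prunings, so I must verify that $B'$ is still covered by some $B''^{(1)}$ with $B''\in\F_j(B)$, $j\ge1$, possibly with the $\C_0$-dilation absorbing one application of \eqref{list:B4}. Concretely, I would argue that if $B'\subsetneq B$ with $B',B\in\D$ and no element of $\D$ sits strictly between them in the $r$-ordering, then chasing the definition \eqref{FFF}--\eqref{generation} of $\G$ and the definitions of $\H^k,\D^k$ shows $B'\subset B''$ for some $B''\in\F(B)$, whence $\{B':B'\ \text{child of}\ B\ \text{in}\ \D_i\}$ is covered by $\bigcup_{B''\in\F_1(B)}B''$ (or $\bigcup_{B''\in\F_1(B)\cup\F_2(B)}B''$ to account for one skipped generation), and the geometric decay \eqref{FkFk} applies. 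All remaining steps — disjointness of $\{E_B\}$, the reduction of $\tfrac12$-sparsity of $\D_i$ to the displayed measure bound — are routine once this covering claim is pinned down.
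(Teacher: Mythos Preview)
Your proposal rests on the assumption that distinct elements of $\D_i$ are either disjoint or nested; this underlies both your definition of $E_B$ (removing only $B'\subsetneq B$) and your disjointness claim (``two $E_B$'s can only overlap if one of the balls contains the other''). That fails in a general ball-basis: for $B_1,B_2\in\D_i$ with $B_1\cap B_2\neq\emptyset$ and $r(B_1)\le r(B_2)-2$, property~\eqref{list:B4} yields only $B_1\subset B_2^*$, not $B_1\subset B_2$. So overlapping balls need not be nested, your $\{E_B\}$ need not be disjoint, and your covering claim (that such a $B'$ lies in some $B''\in\F(B)$) has no reason to hold, since $B'$ may intersect $B$ without being a $\G$-descendant of $B$ at all.

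The paper handles precisely this. It sets $E_B := B\setminus\bigcup\{B'\in\D_1: B'\cap B\neq\emptyset,\ r(B')\le r(B)-2\}$, subtracting \emph{all} smaller-scale balls that merely touch $B$; disjointness is then immediate from the parity gap and the pairwise disjointness of each $\D^k$. For the measure bound, the $\mathcal B$-pruning is not an obstacle but the mechanism: because $\widetilde B\in\D$ survived the pruning, whenever $\widetilde B\cap B\neq\emptyset$ the $\G$-parent $\mathscr F(\widetilde B)$ must satisfy $|r(\mathscr F(\widetilde B))-r(B)|\le 2$, so it belongs to $\mathcal P_B:=\{P:P^{(2)}\cap B\neq\emptyset,\ |r(P)-r(B)|\le 2\}$, whose cardinality is bounded by Lemma~\ref{lem:BE}\,\eqref{list-2}. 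Each such $\widetilde B$ is a $\G$-descendant of some $P\in\mathcal P_B$ (not of $B$), and applying \eqref{FkFk} at each of the finitely many anchors $P$, with $\mu(P)\simeq\mu(B)$, gives $\mu(B\setminus E_B)\lesssim(\lambda-1)^{-1}\mu(B)$.
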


\begin{proof}
We only focus on the proof for $\D_1$. Given $B \in \D_1$, we set 
\begin{align}\label{def:EB}
E_B := B \setminus \bigcup_{B' \in \D_1: B' \cap B \neq \tinyemptyset \atop r(B') \le r(B)-2} B'. 
\end{align}
Let $B_1, B_2 \in \D_1$. We may assume that $B_1 \cap B_2 \neq \emptyset$, since $B_1 \cap B_2 = \emptyset$ implies $E_{B_1} \cap E_{B_2} = \emptyset$. Note that $r(B_1) \neq r(B_2)$ because each $\D^k$ is a pairwise disjoint family. Then we may assume that $r(B_1) \le r(B_2)-2$, which along with \eqref{def:EB} gives that $E_{B_2} \cap B_1 = \emptyset$. Consequently, $E_{B_1} \cap E_{B_2} = \emptyset$. 

To continue, we fix $B, \widetilde{B} \in \D_1 \subset \D$ with $B \cap \widetilde{B} \neq \emptyset$ and $r(\widetilde{B}) \le r(B)-2$. Then $\widetilde{B}^{(2)} \cap B \neq \emptyset$, and it follows from \eqref{FBFB} and \eqref{DHG} that 
\begin{align}\label{FKB}
r(\mathscr{F}(\widetilde{B})) + 2 
\le r(B) \le r(\mathscr{F}(\widetilde{B})) -2. 
\end{align}
If we write 
\[
\mathcal{P}_B 
:= \{P \in \D: P^{(2)} \cap B \neq \emptyset, |r(P)-r(B)| \le 2\},
\] 
then \eqref{FKB} implies $\mathscr{F}(\widetilde{B}) \in \mathcal{P}_B$. Note that $\# \mathcal{P}_B \lesssim 1$ because of Lemma \ref{lem:BE} part \eqref{list-2} and  $-3 \le \log \frac{\mu(P)}{\mu(B)} \le 3$ for any $P \in \mathcal{P}_B$.
As a consequence, these and \eqref{FkFk} imply 
\begin{multline*}
\mu(B \setminus E(B)) 
=\mu \bigg(\bigcup_{B' \in \D_1: B' \cap B \neq \tinyemptyset \atop r(B') \le r(B)-2} B' \bigg)  
\le \mu \bigg(\bigcup_{P \in \mathcal{P}_B} \bigcup_{B'' \in \G(P)} B'' \bigg)  
\\
\le \sum_{P \in \mathcal{P}_B} \sum_{k=1}^{\infty} \mu \bigg(\bigcup_{B'' \in \F_k(P)} B'' \bigg)  
\lesssim \sum_{P \in \mathcal{P}_B} \sum_{k=1}^{\infty} \lambda^{-k} \mu(P)
\lesssim \frac{\mu(P)}{\lambda-1}
\le \frac12 \mu(P), 
\end{multline*}
provided $\lambda>1$ is large enough. 
\end{proof}

\begin{lemma}\label{lem:GAFA}
For any $B \in \D$, there holds 
\begin{align*}
\Gamma(\vec{f} \mathbf{1}_{B^{(3)}}) (x) 
\lesssim \C(T) \prod_{i=1}^m \langle f_i \rangle_{B^{(3)}, r}, \quad 
\text{a.e. } x \in B^* \setminus \bigcup_{G \in \D:r(G)<r(B)} G^*. 
\end{align*}
\end{lemma}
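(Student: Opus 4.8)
\textbf{Proof proposal for Lemma \ref{lem:GAFA}.}

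The plan is to exploit the tree structure on $\G$ constructed in Lemma \ref{lem:tree}, together with the pointwise control \eqref{tree-3} along the complement of the next generation of stopping balls. Fix $B \in \D$ and recall $\D \subset \G$, so $B$ sits inside the generation tree with a well-defined parent $\mathscr{F}(B)$, grandparent, etc. First I would chase the ancestral chain of $B$ in $\D$: let $\{B_j\}_{j=0}^{k}$ be the sequence in $\D$ with $B_0 = B_0$ the root (or more precisely the maximal ancestor appearing in $\D$), $B_{j+1} \in \F(B_j)$, and $B_k = B$; such a chain exists because each element of $\D$ was produced as some $\F_\ell(B_0)$ via \eqref{FFF}--\eqref{generation}. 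The key point is that if $x \in B^* \setminus \bigcup_{G \in \D: r(G) < r(B)} G^*$, then in particular $x$ lies outside $\bigcup_{G' \in \F(B)} G'$ up to sets of measure zero — because any such $G'$ either belongs to $\D$ with $r(G') \ge r(B) + 1 > r(B)$... wait, that is the wrong direction. So the cleaner route: apply \eqref{tree-3} directly with $A = B$. That inequality says $\Gamma(\vec f \mathbf 1_{B^{(3)}})(x) \lesssim \C(T)\,\lambda^{m/r}\prod_i \langle f_i\rangle_{B^{(3)},r}$ for a.e.\ $x \in B^* \setminus \bigcup_{G' \in \F(B)} G'$. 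So the real work is to show that, after removing a null set $F_0$, the set $B^* \setminus \bigcup_{G \in \D: r(G) < r(B)} G^*$ is (almost) contained in $B^* \setminus \bigcup_{G' \in \F(B)} G'$, i.e.\ that the stopping balls $G' \in \F(B)$ are — modulo the balls of $\D$ of strictly smaller $r$-value and a universal null set — covered.

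To handle that, I would invoke the definition of $\D$ as the sparsified version of $\H^k$ via \eqref{BGBD}: for each generation level, $\bigcup_{G \in \H^k} G \subset \bigcup_{G \in \D^k} G^{(1)}$, and $\H^k$ is obtained from $\G^k$ by discarding the descendants of the "bad" balls $\mathcal B$ defined through \eqref{FBFB}. So a ball $G' \in \F(B)$ with $r(G')$ small enough is either in some $\H^\ell$ (hence covered by a $\D^\ell$-ball, which has $r$-value $\ell < r(B)$ once $\lambda$ is large, using $r(\mathscr F(G')) \ge r(G') + 1$ and the gap estimates), or it is a descendant of a bad ball, in which case one iterates. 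Since $\mu(\bigcup_{G' \in \F_k(B)} G') \le (3\C_0^2 \lambda^{-1})^k \mu(B)$ by \eqref{FkFk}, the part of $\bigcup_{G' \in \F(B)} G'$ not captured by finitely-generated descendants has measure zero, and this is exactly the null set $F_0$ (the same one appearing in the proof of Theorem \ref{thm:sparse}). Putting these together: for a.e.\ $x$ in the stated set, $x \notin \bigcup_{G' \in \F(B)} G'$, so \eqref{tree-3} applies and gives the conclusion (absorbing the harmless factor $\lambda^{m/r}$, with $\lambda$ a fixed constant, into $\C(T)$).

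The main obstacle I anticipate is the bookkeeping in the second paragraph: precisely matching the "bad balls" construction of $\mathcal B$ and $\H^k$ against the generation chain $\F(B)$, and verifying that the only part of $\bigcup_{G' \in \F(B)} G'$ escaping the $\D$-cover $\{G^* : G \in \D,\ r(G) < r(B)\}$ is a null set. This requires being careful that the $r$-value strictly decreases along the relevant inclusions (so that the condition $r(G) < r(B)$ is genuinely satisfied), which is where the choice of $\lambda$ large and the estimates $r(\mathscr F(B)) \ge r(B) + 1$, together with \eqref{FBFB}, get used; the geometric-series bound \eqref{FkFk} then kills the residual set. The rest — extracting the ancestral chain, applying \eqref{tree-3}, absorbing constants — is routine. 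I would also note that $\Gamma$ dominates $|T(\vec f)|$, $T_*(\vec f)$, and $\C(T)\M_{\B,r}^{\otimes}(\vec f)$ simultaneously, so the single estimate for $\Gamma$ yields all three controls at once, which is what the subsequent telescoping argument in the proof of Theorem \ref{thm:sparse} needs.
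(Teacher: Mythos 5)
Your starting point is correct and matches the paper's: apply \eqref{tree-3} with $A=B$, so the whole lemma reduces to verifying an inclusion of the form $\bigcup_{G'\in\F(B)}G'\subset\bigcup_{G\in\D:\,r(G)<r(B)}G^*=:H$, which is precisely the claim \eqref{GGsub} in the paper's proof. The dichotomy you set up — $G'\in\F(B)$ either lies in some $\H^\ell$ or descends from a bad ball — is also the right one, and your handling of the first case via \eqref{BGBD} (the sparsification $\bigcup_{G\in\H^k}G\subset\bigcup_{G\in\D^k}G^{(1)}$, with $r(G')<r(B)$ automatic since $\mathscr F(G')=B$ forces $r(G')\le r(B)-1$) is essentially the paper's argument.

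The gap is in the second case. You propose to ``iterate'' and then invoke the geometric decay \eqref{FkFk} to conclude that the leftover has measure zero. This does not close the argument. First, \eqref{FkFk} bounds $\mu\bigl(\bigcup_{G''\in\F_k(B)}G''\bigr)$, the total measure of the $k$-th generation descendants of $B$, which is not the object you need to control: you need to cover a \emph{fixed} ball $G'\in\F(B)$ by elements of $H$, and iterating into further generations of $\G$ never produces such a covering without a separate argument. Second, the paper's inclusion \eqref{GGsub} is \emph{exact}, not up to a null set; the ``a.e.'' in the statement comes entirely from \eqref{tree-3}, and the set $F_0$ in the proof of Theorem \ref{thm:sparse} is the union of those \eqref{tree-3} null sets — it is not produced by a limiting measure argument, so attributing it to the geometric series is backwards. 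What the paper actually does in the bad-ball case is direct and deterministic: by \eqref{FBFB} there exists $B'\in\D$ with $G'^{(2)}\cap B'\neq\emptyset$ and $r(G')+2\le r(B')\le r(B)-2$; from the definition $r(\cdot)\approx\tfrac12\log_{\C_0}\mu(\cdot)$ this forces $\mu(G'^{(2)})\le\C_0^2\mu(G')\le\mu(B')$, and then property \eqref{list:B4} gives $G'\subset G'^{(2)}\subset(B')^*\subset H$. This pointwise set-inclusion step is the missing piece of your proof; without it, the argument does not establish that $x\notin\bigcup_{G'\in\F(B)}G'$, and \eqref{tree-3} cannot be invoked.
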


\begin{proof}
Fix $B \in \D$. We first claim that 
\begin{align}\label{GGsub}
\bigcup_{G \in \F(B)} G \subset \bigcup_{G \in \D:r(G)<r(B)} G^* := H. 
\end{align}
It suffices to show that $G \subset H$ for all $G \in \F(B) \setminus \D$.  Fix $G \in \F(B) \setminus \D$, then necessarily $r(G)<r(B)$. In view of \eqref{DHG}, there are only two cases: 
\[
{\rm (i)} \quad G \in \bigcup_{k \le k_0} (\H^k \setminus \D^k), \qquad 
{\rm (ii)} \quad G \in \bigcup_{k \le k_0} (\G^k \setminus \H^k). 
\]
If $G \in \bigcup_{k \le k_0} (\H^k \setminus \D^k)$, then there exist a unique $k \le k_0$ and some $B'_k \in \H^k$ such that 
\begin{align}\label{GBBB}
G=B'_k \subset \bigcup_{B'_k \in \H^k} B'_k 
\subset \bigcup_{B_k \in \D^k} B_k^{(1)} 
\subset H, 
\end{align}
provided \eqref{BGBD} and that $r(B_k)=k=r(G)<r(B)$ for each $B_k \in \D^k$. 

If $G \in \bigcup_{k \le k_0} (\G^k \setminus \H^k) = \big(\bigcup_{k \le k_0} \G^k \big) \cap \big(\bigcup_{B \in \mathcal{B}} \G(B) \big)$, then it follows from \eqref{FBFB} that there exists $B' \in \G \subset \D$ so that 
\begin{align*}
G^{(2)}\cap B' \neq \emptyset 
\quad\text{ and }\quad 
r(G) + 2 \le r(B') \le r(\mathscr{F}(G)) -2 =  r(B)-2, 
\end{align*}

By definition, 
\begin{align*}
\frac12 \log_{\C_0} \mu(G) - 1+2 
\le r(G) + 2 
\le r(B') 
\le \frac12 \log_{\C_0} \mu(B'), 
\end{align*}
and hence, $\mu(G^{(2)}) 
\le \C_0^2 \mu(G) 
\le \mu(B')$, 
which together with the property \eqref{list:B4} gives that $G \subset G^{(2)} \subset (B')^* \subset H$. 

Consequently, the desired estimate follows at once from \eqref{tree-3} and \eqref{GGsub}.
\end{proof}

\begin{lemma}\label{lem:TFA}
For all $x \in B_k$ and $j=0, 1, \ldots, k-1$, 
\begin{align}\label{TTBj}
&\big\|[\T, \b]_{\alpha}(\vec{f} \mathbf{1}_{B_j^{(3)}})(x) 
- [\T, \b]_{\alpha}(\vec{f} \mathbf{1}_{B_{j+1}^{(3)}})(x) \big\|_{\bB} 
\nonumber \\
&\quad\lesssim \C(T) \sum_{\tau_1 \uplus \tau_2 = \tau(\alpha)} 
\prod_{i_1 \in \tau_1} |b_{i_1}(x) - b_{i_1, B_j^{(3)}}| \langle f_{i_1} \rangle_{B_j^{(3)}, r} 
\nonumber \\
&\qquad\quad \times \prod_{i_2 \in \tau_2} \langle (b_{i_2} - b_{i_2, B_j^{(3)}}) f_{i_2} \rangle_{B_j^{(3)}, r} 
\times \prod_{i_3 \not\in \tau_1 \cup \tau_2} \langle f_{i_3} \rangle_{B_j^{(3)}, r}. 
\end{align}
\end{lemma}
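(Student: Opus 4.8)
The goal is to control the difference of commutators applied to $\vec{f}\mathbf{1}_{B_j^{(3)}}$ and $\vec{f}\mathbf{1}_{B_{j+1}^{(3)}}$. The first step is to use the binomial-type expansion \eqref{Tbg}, with the choice of constants $c_i = b_{i,B_j^{(3)}}$, to write
\begin{align*}
[\T, \b]_{\alpha}(\vec{f}\mathbf{1}_{B_j^{(3)}})(x) - [\T, \b]_{\alpha}(\vec{f}\mathbf{1}_{B_{j+1}^{(3)}})(x)
= \sum_{\tau_1 \uplus \tau_2 = \tau(\alpha)} \prod_{i_1 \in \tau_1} (b_{i_1}(x) - b_{i_1, B_j^{(3)}})
\times \big[\T(\vec{g} \mathbf{1}_{B_j^{(3)}})(x) - \T(\vec{g}' \mathbf{1}_{B_{j+1}^{(3)}})(x)\big],
\end{align*}
where $g_i = f_i$ for $i \notin \tau_2$ and $g_i = (b_i - b_{i,B_j^{(3)}}) f_i$ for $i \in \tau_2$, and $\vec{g}'$ uses the constants $b_{i,B_{j+1}^{(3)}}$; note however that since the outer factors carry $b_{i_1}(x)-b_{i_1,B_j^{(3)}}$ one must be slightly careful about which cube's averages are used in the expansion, and it is cleanest to expand both commutators around $b_{i,B_j^{(3)}}$ and then absorb the cross term $b_{i,B_j^{(3)}}-b_{i,B_{j+1}^{(3)}}$ using $|b_{i,B_j^{(3)}}-b_{i,B_{j+1}^{(3)}}| \lesssim \langle |b_i - b_{i,B_j^{(3)}}|\rangle_{B_{j+1}^{(3)}} \le \langle |b_i - b_{i,B_j^{(3)}}| \rangle_{B_j^{(3)},r}$ after enlarging, using $B_{j+1}^{(3)} \subset B_j^{(3)}$ up to the controlled dilation constants $\C_0$ and \eqref{tree-4}. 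After this reduction, it suffices to bound $\|\T(\vec{g}\mathbf{1}_{B_j^{(3)}})(x) - \T(\vec{g}\mathbf{1}_{B_{j+1}^{(3)}})(x)\|_{\bB}$ by $\C(T)\prod_i \langle g_i \rangle_{B_j^{(3)},r}$ for each tuple $\vec{g}$, which is exactly the type of bound appearing in \eqref{tree-3}, \eqref{tree-5}, \eqref{tree-6}.

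The second step is to insert the intermediate ball $\widetilde{B}_{j+1}$ and split
\begin{align*}
\T(\vec{g}\mathbf{1}_{B_j^{(3)}})(x) - \T(\vec{g}\mathbf{1}_{B_{j+1}^{(3)}})(x)
= \big[\T(\vec{g}\mathbf{1}_{B_j^{(3)}})(x) - \T(\vec{g}\mathbf{1}_{\widetilde{B}_{j+1}^{(1)}})(x)\big]
+ \big[\T(\vec{g}\mathbf{1}_{\widetilde{B}_{j+1}^{(1)}})(x) - \T(\vec{g}\mathbf{1}_{B_{j+1}^{(3)}})(x)\big].
\end{align*}
The second bracket is controlled by \eqref{tree-6} (applied to $\vec{g}$ in place of $\vec{f}$; recall $x \in B_k^{(1)} \subset B_{j+1}^{(2)}$), giving $\lesssim \C(T)\prod_i \langle g_i\rangle_{\widetilde{B}_{j+1}^{(1)},r} \lesssim \C(T) \prod_i \langle g_i \rangle_{B_j^{(3)},r}$ after using $\widetilde{B}_{j+1} \subset B_j^{(1)} \subset B_j^{(3)}$ and the doubling bounds \eqref{BkBk}. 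For the first bracket I would use the definition of $\Delta(\cdot,\cdot)$ together with Lemma \ref{lem:ABF}\eqref{ABF-1} and Lemma \ref{lem:ABC}\eqref{ABC-3}: since $\widetilde{B}_{j+1} \subset B_j^{(1)}$ and $B_j^{(3)}$ is a controlled dilation of $\widetilde{B}_{j+1}$, we have $\Delta(\widetilde{B}_{j+1}, \text{ambient}) \lesssim \C(T)$, hence $\|\T(\vec{g}\mathbf{1}_{B_j^{(3)}})(x) - \T(\vec{g}\mathbf{1}_{\widetilde{B}_{j+1}^{(1)}})(x)\|_{\bB} \lesssim \C(T)\prod_i \langle g_i\rangle_{B_j^{(3)},r}$ for $x \in \widetilde{B}_{j+1}$, which applies since $x \in B_{j+1}^{(2)} \subset \widetilde{B}_{j+1}$; alternatively, since $x$ lies in $\widetilde{B}_{j+1}$ one can combine \eqref{tree-5} evaluated at the point $\xi_j \in \widetilde{B}_{j+1}$ (giving $\Gamma(\vec{f}\mathbf{1}_{B_j^{(3)}})(\xi_j) \lesssim \C(T)\prod\langle f_i\rangle_{B_j^{(3)},r}$) with the regularity assumption \eqref{list:T-reg} transported between $x$ and $\xi_j$, exactly as in the chain of estimates used in the proof of Lemma \ref{lem:tree}.

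The third step is to translate the averages $\langle g_i \rangle_{B_j^{(3)},r}$ back into the quantities appearing in the statement: for $i \notin \tau_2$, $\langle g_i\rangle_{B_j^{(3)},r} = \langle f_i\rangle_{B_j^{(3)},r}$; for $i \in \tau_2$, $\langle g_i\rangle_{B_j^{(3)},r} = \langle (b_i - b_{i,B_j^{(3)}}) f_i\rangle_{B_j^{(3)},r}$. Collecting the outer factors $\prod_{i_1 \in \tau_1}|b_{i_1}(x) - b_{i_1,B_j^{(3)}}|$ with these, we recover precisely the right-hand side of \eqref{TTBj}, summed over $\tau_1 \uplus \tau_2 = \tau(\alpha)$. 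I expect the main obstacle to be a bookkeeping issue rather than a conceptual one: keeping track of the various dilations ($B_j^{(1)}$, $B_j^{(3)}$, $\widetilde{B}_{j+1}^{(1)}$, $B_{j+1}^{(2)}$, $B_{j+1}^{(3)}$) and verifying that all the relevant containments hold with doubling constants depending only on $\C_0$, so that every average over an auxiliary dilation can be dominated by a constant times the average over $B_j^{(3)}$; and handling the cross term $b_{i,B_j^{(3)}} - b_{i,B_{j+1}^{(3)}}$ arising from expanding the two commutators around different constants, which requires the (standard, but necessary) observation that differences of averages of $b_i$ over nested balls of comparable measure are controlled by the local oscillation of $b_i$ over the larger ball and hence absorbed into the $\tau_2$-type factors. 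The Banach-valued nature of $\T$ poses no extra difficulty here since every estimate is applied inside $\|\cdot\|_{\bB}$ via the triangle inequality.
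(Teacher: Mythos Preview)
Your plan is essentially the paper's own argument, and your alternative (b) for the first bracket is exactly what the paper does: it adds and subtracts the value at $\xi_j$, bounds the oscillation piece by condition \eqref{list:T-reg} (giving $\prod_i \lfloor g_i\mathbf{1}_{B_j^{(3)}}\rfloor_{\widetilde{B}_{j+1},r}\le \mathcal{M}_{\B,r}^{\otimes}(\vec{g}\,\mathbf{1}_{B_j^{(3)}})(\xi_j)$), bounds the value at $\xi_j$ via $T_*$ (since $\xi_j\in\widetilde{B}_{j+1}$), and then absorbs both into $\Gamma(\vec{g}\,\mathbf{1}_{B_j^{(3)}})(\xi_j)$, which is controlled by \eqref{tree-5}/\eqref{BBT-2}. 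Two points deserve correction.

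First, the ``cross term'' you worry about does not exist. The identity \eqref{Tbg} holds for \emph{any} choice of constants $c_i$; in particular you may expand both $[\T,\b]_{\alpha}(\vec{f}\mathbf{1}_{B_j^{(3)}})$ and $[\T,\b]_{\alpha}(\vec{f}\mathbf{1}_{B_{j+1}^{(3)}})$ around the \emph{same} constants $c_i=b_{i,B_j^{(3)}}$, obtaining the same $\vec{g}$ in both, and the difference is exactly $\sum_{\tau_1\uplus\tau_2}\prod_{i_1\in\tau_1}(b_{i_1}(x)-c_{i_1})\big[\T(\vec{g}\mathbf{1}_{B_j^{(3)}})(x)-\T(\vec{g}\mathbf{1}_{B_{j+1}^{(3)}})(x)\big]$. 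There is no $\vec{g}'$ and no term $b_{i,B_j^{(3)}}-b_{i,B_{j+1}^{(3)}}$ to absorb.

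Second, your approach (a) to the first bracket via $\Delta(\widetilde{B}_{j+1},\cdot)$ and Lemma~\ref{lem:ABC} does \emph{not} work as stated: the assertion that ``$B_j^{(3)}$ is a controlled dilation of $\widetilde{B}_{j+1}$'' is false. From \eqref{tree-4} one only knows $B_{j+1}^{(2)}\subset\widetilde{B}_{j+1}\subset B_j^{(1)}$, and by \eqref{tree-2} the ratio $\mu(B_j)/\mu(B_{j+1})$ is of order $\lambda\gg 1$, so $\mu(B_j^{(2)})/\mu(\widetilde{B}_{j+1})$ is not bounded by an allowable constant. Lemma~\ref{lem:ABC}\eqref{ABC-1} then only gives $\Delta(\widetilde{B}_{j+1},B_j^{(2)})\lesssim \C(T)\,[\mu(B_j)/\mu(\widetilde{B}_{j+1})]^{m/r}$, which is useless. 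This is precisely why the paper routes the estimate through the good point $\xi_j$ (your alternative (b)): the tree construction has already recorded, via $\xi_j\notin F(B_j)$, the information that the relevant maximal quantities are small there, which substitutes for the missing measure comparability. Stick with (b); the three-term split $\mathscr{J}_1+\mathscr{J}_2+\mathscr{J}_3$ in the paper is the clean way to write it out.
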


\begin{proof}
It follows from \eqref{Tbg} that 
\begin{multline}\label{JJ}
[\T, \b]_{\alpha}(\vec{f} \mathbf{1}_{B_j^{(3)}})(x) 
- [\T, \b]_{\alpha}(\vec{f} \mathbf{1}_{B_{j+1}^{(3)}})(x) 
\\
=\sum_{\tau_1 \uplus \tau_2 = \tau(\alpha)} \prod_{i_1 \in \tau_1} (b_{i_1}(x) - c_{i_1})  
\big[\T(\vec{g} \mathbf{1}_{B_j^{(3)}})(x) - \T(\vec{g} \mathbf{1}_{B_{j+1}^{(3)}})(x) \big], 
\end{multline}
where $c_i := b_{i, B_j^{(3)}}$, $i=1, \ldots, m$. Thus, \eqref{TTBj} is a consequence of \eqref{JJ} and the following 
\begin{align}\label{Tgg}
\big\|\T(\vec{g} \mathbf{1}_{B_j^{(3)}})(x) - \T(\vec{g} \mathbf{1}_{B_{j+1}^{(3)}})(x)\big\|_{\bB}
\lesssim \C(T) \prod_{i=1}^m \langle g_i \rangle_{B_j^{(3)}, r}, \quad x \in B_k. 
\end{align}

It remains to show \eqref{Tgg}. We split 
\begin{align}\label{JJJ}
\big\|\T(\vec{g} \mathbf{1}_{B_j^{(3)}})(x) - \T(\vec{g} \mathbf{1}_{B_{j+1}^{(3)}})(x)\big\|_{\bB}
\le \mathscr{J}_1 + \mathscr{J}_2 + \mathscr{J}_3, 
\end{align}
where 
\begin{align*}
\mathscr{J}_1 
&:= \big\| \big(\T(\vec{g} \mathbf{1}_{B_j^{(3)}}) 
- \T(\vec{g} \mathbf{1}_{\widetilde{B}_{j+1}^{(1)}}) \big)(x) 
- \big(\T (\vec{g} \mathbf{1}_{B_j^{(3)}}) 
- \T(\vec{g} \mathbf{1}_{\widetilde{B}_{j+1}^{(1)}})\big)(\xi_j)\big\|_{\bB}, 
\\ \nonumber
\mathscr{J}_2 
&:= \big\|\T (\vec{g} \mathbf{1}_{B_j^{(3)}})(\xi_j) 
- \T(\vec{g} \mathbf{1}_{\widetilde{B}_{j+1}^{(1)}})(\xi_j) \big\|_{\bB}, 
\\
\mathscr{J}_3 
&:= \big\|\T(\vec{g} \mathbf{1}_{\widetilde{B}_{j+1}^{(1)}})(x) 
- \T (\vec{g} \mathbf{1}_{B_{j+1}^{(3)}})(x) \big\|_{\bB}. 
\end{align*}
Using the condition \eqref{list:T-reg} and \eqref{BBT-1}, we have $\xi_j \in \widetilde{B}_{j+1}^{(1)} \subset B_j^{(3)}$ and 
\begin{align}\label{JJ1}
\mathscr{J}_1 
\le \C_1(T) \prod_{i=1}^m \lfloor g_i \mathbf{1}_{B_j^{(3)}} \rfloor_{\widetilde{B}_{j+1}^{(1)}, r} 
\le \C(T) \mathcal{M}_{\B, r}^{\otimes}(\vec{g} \mathbf{1}_{B_j^{(3)}})(\xi_j)
\nonumber \\ 
\le \Gamma(\vec{g} \mathbf{1}_{B_j^{(3)}})(\xi_j)
\lesssim \C(T) \prod_{i=1}^m \langle g_i \rangle_{B_j^{(3)}, r}, 
\end{align}
where we have used \eqref{BBT-2} in the last inequality. To control $\mathcal{J}_2$, by definition of $T_*$ and \eqref{BBT-1}, we see that $\xi_j \in \widetilde{B}_{j+1} \subset B_j^{(1)}$ and 
\begin{align}\label{JJ2}
\mathscr{J}_2 
&= \big\|\T (\vec{g} \mathbf{1}_{B_j^{(3)}})(\xi_j) 
- \T(\vec{g} \mathbf{1}_{B_j^{(3)} \cap \widetilde{B}_{j+1}^{(1)}})(\xi_j) \big\|_{\bB}
\nonumber \\ 
&\le T_*(\vec{g} \mathbf{1}_{B_j^{(3)}})(\xi_j) 
\le \Gamma(\vec{g} \mathbf{1}_{B_j^{(3)}})(\xi_j) 
\lesssim \C(T) \prod_{i=1}^m \langle g_i \rangle_{B_j^{(3)}, r}. 
\end{align}
Besides, \eqref{BBT-3} gives 
\begin{align}\label{JJ3}
\mathscr{J}_3 
\lesssim \C(T) \prod_{i=1}^m \langle g_i \rangle_{B_j^{(3)}, r}. 
\end{align}
Therefore, \eqref{Tgg} immediately follows from \eqref{JJJ}--\eqref{JJ3}.
\end{proof}

\section{Proof of Theorems \ref{thm:local}--\ref{thm:T-Besi}}
This section is devoted to showing Theorems \ref{thm:local}--\ref{thm:T-Besi}.

\subsection{Local exponential decay estimates}\label{sec:local}
In order to prove Theorem \ref{thm:local}, we begin with a local Coifman-Fefferman inequality. Define 
\begin{align*}
\widetilde{\mathcal{M}}_{\B, L(\log L)^r} (\vec{f}) 
:= \sup_{B \in \B: B \ni x} \prod_{i=1}^m \||f_i|^r\|_{L(\log L)^r, B}^{\frac1r}. 
\end{align*}

\begin{lemma}\label{lem:Ub}
Let $(\Sigma, \mu)$ be a measure space with a ball-basis $\B$. Given $B_0 \in \B$ and functions $\vec{f}$ with $\supp(f_i) \subset B_0$, $i=1, \ldots, m$, we have for all $p \in (1, \infty)$ and $w \in A_{p, \B}$, 
\begin{align}\label{TbM-1} 
\| T(\vec{f}) \|_{L^1(B_0, w)}
\lesssim [w]_{A_{p, \B}} \| \mathcal{M}_{\B, r}(\vec{f}) \|_{L^1(B_0^{(3)}, w)}. 
\end{align}
Moreover, if $A_{\infty, \B}$ satisfies the sharp reverse H\"{o}lder inequality, then for the same $\vec{f}$, $p$, and $w$ as above, and for each $\alpha \in \{0, 1\}^m$, 
\begin{align}\label{TbM-2} 
\|[T, \b]_{\alpha}(\vec{f}) \|_{L^1(B_0, w)}
\lesssim \|\b\|_{\tau} [w]_{A_{p, \B}}^{|\tau|+1} 
\|\widetilde{\mathcal{M}}_{\B, L(\log L)^r}(\vec{f}) \|_{L^1(B_0^{(3)}, w)}, 
\end{align}
where the implicit constant is independent of $B_0$, $[w]_{A_{p, \B}}$, and $\vec{f}$.
\end{lemma}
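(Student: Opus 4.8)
\textbf{Proof plan for Lemma \ref{lem:Ub}.}
The plan is to deduce both local Coifman--Fefferman inequalities from the pointwise sparse domination in Theorem \ref{thm:sparse}, combined with the standard technique of controlling sparse operators by maximal operators and the weighted boundedness of the latter. First I would set up the reduction: given $B_0 \in \B$ and $\vec{f}$ supported in $B_0$, the hypothesis $\|f_i\|_{L^r(B_0,\mu)}^r \ge \frac12\|f_i\|_{L^r(\Sigma,\mu)}^r$ holds trivially (indeed with equality), so Theorem \ref{thm:sparse} applies and gives, for a.e.\ $x \in B_0$, two $\frac{1}{2\C_0^3}$-sparse families $\S_1, \S_2 \subset \B$ with
\begin{align*}
|T(\vec{f})(x)| &\lesssim \C(T)\big[\A_{\S_1, r}(\vec{f})(x) + \A_{\S_2, r}(\vec{f})(x)\big], \\
|[T,\b]_{\alpha}(\vec{f})(x)| &\lesssim \C(T)\sum_{\tau_1 \uplus \tau_2 = \tau}\big[\A_{\S_1, r}^{\b,\tau_1,\tau_2}(\vec{f})(x) + \A_{\S_2, r}^{\b,\tau_1,\tau_2}(\vec{f})(x)\big].
\end{align*}
Moreover, from the construction of the sparse families in Section \ref{sec:sparse} (recall $\S_i = \{B^{(3)}: B \in \D_i\}$ with $B^{(3)} \subset B_0^{(3)}$ for $B \in \D \subset \G(B_0)$), every ball appearing in $\S_1 \cup \S_2$ is contained in $B_0^{(3)}$; hence integration against $w\,d\mu$ over $B_0$ can be replaced by integration over $B_0^{(3)}$, which is what produces the enlarged ball on the right-hand side.

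Next I would handle \eqref{TbM-1}. It suffices to bound $\|\A_{\S, r}(\vec{f})\|_{L^1(B_0, w)}$ for an arbitrary sparse family $\S$ of balls contained in $B_0^{(3)}$. Testing against $g \in L^\infty$ with $\|g\|_{L^\infty(w\,d\mu)} \le 1$ (or simply $g = \mathrm{sgn}$-type and using duality $L^1$--$L^\infty$ of the weighted pairing, more cleanly: pair $\A_{\S,r}(\vec{f})$ with $w$ and $1$), write
\begin{align*}
\int_{B_0}\A_{\S, r}(\vec{f})\, w\,d\mu
= \sum_{B \in \S}\Big(\prod_{i=1}^m \langle f_i\rangle_{B, r}\Big) w(B)
\lesssim \sum_{B \in \S}\Big(\prod_{i=1}^m \langle f_i\rangle_{B, r}\Big)\frac{w(B)}{\mu(B)}\,\mu(E_B),
\end{align*}
using $\mu(B) \le 2\C_0^3\,\mu(E_B)$ for the sparse sets $E_B$. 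On each $E_B$ one has $\prod_i \langle f_i\rangle_{B,r} \le \M_{\B,r}(\vec{f})(x)$ and $\frac{w(B)}{\mu(B)} \le M_{\B}w(x) \le [w]_{A_{p,\B}}^{1/(p-1)}\cdot$ (a pointwise $A_p$-type bound), but the cleanest route is: $\frac{w(B)}{\mu(B)}\mathbf{1}_{E_B} \le \inf_{E_B} M_\B w \cdot \mathbf{1}_{E_B}$ is false in general, so instead I would bound $\frac{w(B)}{\mu(B)}\mu(E_B) = \langle w\rangle_B \mu(E_B) \le \int_{E_B} M_\B(w\mathbf{1}_{B_0^{(3)}})\,d\mu$ when $x \in E_B$; then by disjointness of $\{E_B\}$ and the $A_\infty$ property (Lemma \ref{lem:AiAi}, i.e.\ $w \in A_{p,\B} \subset A''_{\infty,\B}$), $\int_{B_0^{(3)}} M_\B(w\mathbf{1}_{B_0^{(3)}})\langle\cdot\rangle \lesssim [w]_{A_{p,\B}} w(B_0^{(3)})$ — more precisely one pulls $\prod_i\langle f_i\rangle_{B,r} = \M_{\B,r}(\vec f)$ out pointwise on $E_B$ and is left with $\sum_B \langle w\rangle_B \int_{E_B} \M_{\B,r}(\vec f)\,d\mu$; then one uses that for $x \in E_B \subset B$ with $w \in A_\infty$, $\langle w\rangle_B \lesssim [w]_{A_{p,\B}}\inf_B M_\B w$ is still not pointwise, so the genuinely correct standard argument is to dominate $\A_{\S,r}(\vec f)$ weighted-tested against $w$ by the sparse-operator-to-maximal-operator estimate $\sum_B \langle f\rangle_B^{\text{(stuff)}} w(B) \lesssim [w]_{A_\infty} \int \M(\vec f)\, w\,d\mu$, which is exactly \cite[Theorem 3.34]{CMM}-type $A_\infty$ extrapolation / the multilinear Coifman--Fefferman inequality already available in the paper. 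I would invoke that machinery directly to get $\|\A_{\S,r}(\vec f)\|_{L^1(B_0,w)} \lesssim [w]_{A_{p,\B}}\|\M_{\B,r}(\vec f)\|_{L^1(B_0^{(3)},w)}$ and sum over $\S_1, \S_2$.

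For \eqref{TbM-2}, I would run the analogous computation with the commutator sparse forms $\A_{\S,r}^{\b,\tau_1,\tau_2}$. The new feature is the factors $|b_i(x) - b_{i,B}|$ (for $i \in \tau_1$) and $\langle(b_j - b_{j,B})f_j\rangle_{B,r}$ (for $j \in \tau_2$). The first type is absorbed by Lemma \ref{lem:LlogL}/\eqref{e:fw} and the John--Nirenberg-type bound $\|b_i - b_{i,B}\|_{\exp L, B} \lesssim \|b_i\|_{\osc_{\exp L}}$, which requires the sharp reverse H\"older property of $A_{\infty,\B}$ (this is precisely where that hypothesis enters, via Lemma \ref{lem:LlogL}); the generalized H\"older inequality on Orlicz spaces (Lemma \ref{lem:PhiPhi}) turns the product of $|\tau|$ factors $\|b_i - b_{i,B}\|_{\exp L, B}$ times $w$ into an $L(\log L)^{|\tau|}$ average of $w$, costing $[w]_{A_\infty}^{|\tau|}$ by \eqref{e:w}, while the second type is handled by $\langle(b_j - b_{j,B})f_j\rangle_{B,r} \lesssim \|b_j\|_{\osc_{\exp L}} \||f_j|^r\|_{L(\log L)^r, B}^{1/r}$ (again Orlicz H\"older), which is exactly why $\widetilde{\mathcal{M}}_{\B, L(\log L)^r}$ rather than $\mathcal{M}_{\B,r}$ appears on the right. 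Collecting the powers of $[w]_{A_{p,\B}}$: one factor from the $A_\infty$/Coifman--Fefferman step as in \eqref{TbM-1} and $|\tau|$ factors from the $L(\log L)$-estimates of $w$, giving the claimed $[w]_{A_{p,\B}}^{|\tau|+1}$, and one factor $\|\b\|_\tau = \prod_{i\in\tau}\|b_i\|_{\osc_{\exp L}}$ from the BMO-type estimates. Finally I would sum over the (finitely many) decompositions $\tau_1 \uplus \tau_2 = \tau$ and over the two sparse families $\S_1, \S_2$, which only changes the implicit constant.

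\textbf{Main obstacle.} The delicate point is the $A_\infty$-quantitative sparse-to-maximal estimate with the correct power of $[w]_{A_{p,\B}}$ on a general measure space with a ball-basis; in the Euclidean setting this is classical, but here one must be careful to use only the tools developed in Sections \ref{sec:pre}--\ref{sec:sparse}, namely the characterizations in Lemma \ref{lem:Ainfty}, the relation $A_{p,\B} \subset A''_{\infty,\B}$ (Lemma \ref{lem:AiAi}), Lemma \ref{lem:M} for the weak $(r,r)$ bound of $M_{\B,r}$ on the weighted side, and the Orlicz-space estimates of Lemma \ref{lem:LlogL} which themselves presuppose the sharp reverse H\"older property. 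Keeping track of the enlargement $B_0 \rightsquigarrow B_0^{(3)}$ throughout — and confirming that \emph{all} balls in $\S_1 \cup \S_2$ sit inside $B_0^{(3)}$, which follows from \eqref{tree-4}, \eqref{DHG}, and \eqref{support} in the construction — is a bookkeeping matter but essential for the statement as written.
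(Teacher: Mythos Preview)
Your overall architecture matches the paper's: apply Theorem \ref{thm:sparse} to get pointwise sparse domination on $B_0$ with all balls in $\S_1\cup\S_2$ contained in $B_0^{(3)}$, then bound each sparse form by the maximal operator in weighted $L^1$. Your treatment of \eqref{TbM-2} via Orlicz--H\"older (Lemma \ref{lem:PhiPhi}) and Lemma \ref{lem:LlogL} is also exactly what the paper does; one minor correction is that the $L(\log L)$-estimate on $w$ contributes $[w]_{A_{\infty,\B}}^{|\tau_1|}$ (not $[w]^{|\tau|}$) for each fixed decomposition --- the final exponent $|\tau|+1$ arises because the worst case among the decompositions $\tau_1\uplus\tau_2=\tau$ is $\tau_1=\tau$.

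The genuine gap is your handling of the step that produces the explicit factor $[w]_{A_{p,\B}}$ in \eqref{TbM-1}. You cycle through several attempts, reject them (incorrectly, in one case: the pointwise bound $\langle w\rangle_B \le \inf_{E_B} M_\B w$ \emph{is} true since $E_B\subset B$, it is just not useful here), and finally appeal to unspecified ``machinery'' --- but neither Lemma \ref{lem:TM} nor the $A_\infty$ extrapolation theorem, as stated, tracks the constant $[w]_{A_{p,\B}}$. The paper's key step is entirely elementary and you have missed it: from H\"older and the $A_p$ condition one gets for any measurable $E\subset B$
\[
\frac{\mu(E)}{\mu(B)}\le [w]_{A_{p,\B}}^{1/p}\Big(\frac{w(E)}{w(B)}\Big)^{1/p},
\]
so the sparseness $\mu(E_B)\ge\eta\,\mu(B)$ gives $w(B)\le\eta^{-p}[w]_{A_{p,\B}}\,w(E_B)$ (this is \eqref{EBAp} in the proof). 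Then
\[
\sum_{B\in\S}\Big(\prod_i\langle f_i\rangle_{B,r}\Big)w(B)
\lesssim [w]_{A_{p,\B}}\sum_{B\in\S}\int_{E_B}\M_{\B,r}(\vec f)\,w\,d\mu
\le [w]_{A_{p,\B}}\int_{B_0^{(3)}}\M_{\B,r}(\vec f)\,w\,d\mu
\]
by disjointness of $\{E_B\}$ and $E_B\subset B\subset B_0^{(3)}$, which is \eqref{TbM-1}. The same substitution $w(B)\lesssim[w]_{A_{p,\B}}w(E_B)$ supplies the ``$+1$'' in the exponent for \eqref{TbM-2}. Once you insert this one-line $A_p$ estimate, the rest of your plan goes through exactly as in the paper.
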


\begin{proof}
It suffices to show \eqref{TbM-2} since the proof of \eqref{TbM-1} is much easier. Fix $B_0 \in \B$ and functions $\vec{f}$ with $\supp(f_i) \subset B_0$, $i=1, \ldots, m$. We claim that there are two sparse families $\S_1=\S_1(B_0)$ and $\S_2=\S_2(B_0)$ such that for a.e. $x \in B_0$, 
\begin{align}
&\label{Tf-sparse}  |[T, \b]_{\alpha}(\vec{f})(x)| 
\lesssim \C(T) \sum_{\tau_1 \uplus \tau_2=\tau} \big[\A_{\S_1, r}^{\b, \tau_1, \tau_2}(\vec{f})(x) 
+ \A_{\S_2, r}^{\b, \tau_1, \tau_2}(\vec{f})(x) \big], 
\\ 
\label{BBSS} & \text{and} \quad B \subset B_0^{(3)} \, \text{ for all } \, B \in \S_1 \cup \S_2. 
\end{align}
Indeed, considering \eqref{support} and modifying the proof of Theorem \ref{thm:sparse}, we obtain \eqref{Tf-sparse}. To get \eqref{BBSS}, we observe that 
\begin{align}\label{B2B}
B^{(2)} \subset B^*_0 \quad\text{ for all } B \in \G(B_0) \setminus \{B_0\}.
\end{align}
In fact, by \eqref{GAA-4}, 
\begin{align}\label{B2BB}
B^{(2)} \subset B^*_0 \quad \text{ for any } B \in \F(B_0).
\end{align}
In view of \eqref{FFF}, given $B_2 \in \F_2(B_0)$, there exists some $B_1 \in \F_1(B_0)=\F(B_0)$ such that $B_2 \in \F(B_1)$, which together with \eqref{B2BB} yields $B_2^{(2)} \subset B_1^* \subset B_1^{(2)} \subset B_0^*$. Recursively, one can show $B_k^{(2)} \subset B_0^*$ for all $B_k \in \F_k(B_0)$ and $k \ge 1$. This and \eqref{generation} imply \eqref{B2B}. 
Recall the definition of $\S_1$ and $\S_2$ in \eqref{SS} and that $\S_1 \cup \S_2 \subset \D \subset \G(B_0)$ (see \eqref{DHG}). Hence, this and \eqref{B2B} conclude \eqref{BBSS}.

Let $p \in (1, \infty)$ and $w \in A_{p, \B}$. Checking the proof of Lemma \ref{lem:Ainfty}, we conclude that for any $\alpha \in (0, 1)$, for any $B \in \B$, and for any measurable subset $E \subset B$, 
\begin{align}\label{EBAp}
\mu(E) \ge \alpha \mu(B) \quad\Longrightarrow \quad 
w(E) \ge \alpha^p \, [w]_{A_{p, \B}}^{-1} w(B).
\end{align} 
Let $\S$ be an $\eta$-sparse family with $\eta \in (0, 1)$ such that $B \subset B_0^{(3)}$ for all $B \in \S$. Then, there exists a pairwise disjoint family $\{E_B\}_{B \in \S}$ such that $E_B \subset B$ and $\mu(E_B) \ge \eta \mu(B)$ for all $B \in \S$. This and \eqref{EBAp} imply that 
\begin{equation*}
w(B) \le \eta^{-p} [w]_{A_{p, \B}} w(E_B) \quad\text{ for all } B \in \S, 
\end{equation*}
which together with Lemma \ref{lem:PhiPhi} and \eqref{e:fw} immediately gives 
\begin{align}\label{eq:ASB}
\|&\A_{\S, r}^{\b, \tau_1, \tau_2}(\vec{f})\|_{L^1(B_0, w)}
\nonumber \\
&\le \sum_{B \in \S} \mu(B) \bigg(\fint_B \prod_{i \in \tau_1} 
\langle f_i \rangle_{B, r} |b_i - b_{i,B}| w \, d\mu \bigg) 
\nonumber \\
&\qquad\times \prod_{j \in \tau_2} \langle (b_j - b_{j, B}) f_j \rangle_{B, r} 
\prod_{k \not\in \tau_1 \cup \tau_2} \langle f_k \rangle_{B, r} 
\nonumber \\
&\lesssim \sum_{B \in \S} \mu(B) \|w\|_{L(\log L)^{|\tau_1|}, B} 
\prod_{i \in \tau_1} \langle f_i \rangle_{B, r} \|b_i - b_{i, B}\|_{\exp L, B} 
\nonumber \\
&\qquad\times \prod_{j \in \tau_2} \| |b_j - b_{j, B}|^r\|_{\exp L^{\frac1r}}^{\frac1r} 
\||f_j|^r\|_{L(\log L)^r, B}^{\frac1r}
\prod_{k \not\in \tau_1 \cup \tau_2} \langle f_k \rangle_{B, r} 
\nonumber \\
&\lesssim \|\b\|_{\tau_1 \cup \tau_2} [w]_{A_{\infty}}^{|\tau_1|} 
\sum_{B \in \S} \prod_{i \not\in \tau_2} \langle f_i \rangle_{B, r} 
\prod_{j \in \tau_2} \||f_j|^r\|_{L(\log L)^r, B}^{\frac1r} \, w(B)
\nonumber \\
&\lesssim \|\b\|_{\tau_1 \cup \tau_2} [w]_{A_{p, \B}}^{|\tau_1|+1} 
\sum_{B \in \S} \inf_{x \in B} \widetilde{\mathcal{M}}_{\B, L(\log L)^r}(\vec{f})(x)  \, w(E_B)
\nonumber \\
&\le \|\b\|_{\tau_1 \cup \tau_2} [w]_{A_{p, \B}}^{|\tau_1|+1} 
\sum_{B \in \S} \int_{E_B} \widetilde{\mathcal{M}}_{\B, L(\log L)^r}(\vec{f})  \, w \, d\mu
\nonumber \\
&\le \|\b\|_{\tau_1 \cup \tau_2} [w]_{A_{p, \B}}^{|\tau_1|+1} 
\|\widetilde{\mathcal{M}}_{\B, L(\log L)^r}(\vec{f})\|_{L^1(B_0^{(3)}, w)}, 
\end{align} 
where we have used the disjointness of $\{E_B\}_{B \in \S}$ and that $B \subset B_0^{(3)}$ for all $B \in \S$. We here mention that the implicit constants in \eqref{eq:ASB} are independent of $\S$. Therefore, \eqref{TbM-2} is a consequence of \eqref{Tf-sparse}, \eqref{BBSS}, and \eqref{eq:ASB}. 
\end{proof}

\begin{proof}[\bf Proof of Theorem \ref{thm:local}] 
Given $s>1$, for any nonnegative function $h \in L^s(\Sigma, \mu)$, we define the Rubio de Francia algorithm as 
\begin{align*}
\mathcal{R}h 
:= \sum_{k=0}^{\infty} \frac{1}{2^k} \frac{M_{\B}^{k}h}{\|M_{\B}\|^k_{L^s(\mu)\to L^s(\mu)}}. 
\end{align*}
Then one can check that 
\begin{equation}\label{eq:RdF}
\begin{aligned}
&h \leq \mathcal{R}h, \quad 
\|\mathcal{R}h\|_{L^s(\mu)} \leq 2 \|h\|_{L^s(\mu)}, 
\\ 
&\text{and}\quad  
[\mathcal{R}h]_{A_{1, \B}} \leq 2 \|M_{\B}\|_{L^s(\mu) \to L^s(\mu)} \leq 2\C_0^{\frac1s}, 
\end{aligned}
\end{equation}
where the last estimates follows from \eqref{Mr-2}. Let $p>1$ and $q>1$ be chosen later. By Riesz theorem, there exists a nonnegative function $h \in L^{q'}(B, \mu)$ with $\|h\|_{L^{q'}(B, \mu)}=1$ such that 
\begin{align}\label{Iqq}
\mathcal{I}(t)^{\frac1q} 
&:= \mu\big(\big\{x \in B:  [T, \b]_{\alpha}(\vec{f})(x) > t \, 
\mathcal{M}_{\B, r}(\vec{f^*})(x) \big\}\big)^{\frac1q}
\nonumber\\
& \leq \frac{1}{t} \bigg\| \frac{[T, \b]_{\alpha}(\vec{f})}{\mathcal{M}_{\B, r}(\vec{f^*})} \bigg\|_{L^q(B, \mu)}
\leq \frac{1}{t} \int_B |[T, \b]_{\alpha}(\vec{f})|
\frac{h}{\mathcal{M}_{\B, r}(\vec{f^*})} \, d\mu
\nonumber\\
& \leq \frac{1}{t} \int_B |[T, \b]_{\alpha}(\vec{f})|
\frac{\mathcal{R}h}{\mathcal{M}_{\B, r}(\vec{f^*})} \, d\mu
= t^{-1} \|[T, \b]_{\alpha}(\vec{f})\|_{L^1(B, w)},
\end{align}
where we have used the first estimate in \eqref{eq:RdF}, 
\[
w := w_1 w_2^{1-p}, \quad 
w_1 := \mathcal{R}h, 
\quad\text{and}\quad 
w_2 := \mathcal{M}_{\B, r}(\vec{f^*})^{p'-1}.
\] 
In view of Lemma \ref{lem:CR}, we pick $p>1+m/r$ (equivalently, $\frac{p'-1}{r}<\frac1m$) so that 
\begin{align}\label{eq:MMf}
[w_2]_{A_{1, \B}}
=\big[\big(\mathcal{M}_{\B}(|f_1^*|^r, \ldots, |f_m^*|^r) \big)^{\frac{p'-1}{r}}\big]_{A_{1, \B}} 
\leq C_m.
\end{align}
Then \eqref{eq:RdF} and \eqref{eq:MMf} imply that $w=w_1 w_2^{1-p} \in A_{p, \B}$ and 
\begin{align}\label{wapcc}
[w]_{A_{p, \B}} 
\leq [w_1]_{A_{1, \B}} [w_2]_{A_{1, \B}}^{p-1}
\leq 2 \C_0^{\frac{1}{q'}} C_m^{p-1}
\leq 2 \C_0 C_m^{p-1}. 
\end{align}

Note that for any $B' \in \B$ and locally integrable function $f$, there holds that
\begin{align}\label{fLL}
\|f\|_{L(\log L)^r, B'} 
\le \|f\|_{L(\log L)^{\lfloor r \rfloor}, B'} 
\lesssim \fint_{B'} M^{\lfloor r \rfloor}(f \mathbf{1}_{B'}) \, d\mu.
\end{align}
Thus, \eqref{Iqq}, \eqref{wapcc}, \eqref{fLL}, and Lemma \ref{lem:Ub} give 
\begin{align*}
\mathcal{I}(t)^{\frac1q} 
& \le c_0 \, t^{-1} \|\b\|_{\tau} [w]_{A_{p, \B}}^{|\tau|+1}  
\|\widetilde{\mathcal{M}}_{\B, L(\log L)^r}(\vec{f})\|_{L^1(B^{(3)}, w)} 
\\
& \le c_0 \, t^{-1} \|\b\|_{\tau} [w]_{A_{p, \B}}^{|\tau|+1} 
\|\mathcal{M}_{\B, r}(\vec{f^*})\|_{L^1(B^{(3)}, w)} 
\\ 
& = c_0 \, t^{-1} \|\b\|_{\tau} [w]_{A_{p, \B}}^{|\tau|+1} \|\mathcal{R}h\|_{L^1(B^{(3)}, \mu)}
\\ 
&\leq c_0 \, t^{-1} \|\b\|_{\tau} [w]_{A_{p, \B}}^{|\tau|+1}  
\|\mathcal{R}h\|_{L^{q'}(B^{(3)}, \mu)}  \mu(B^{(3)})^{\frac1q}
\\ 
& \le c_0 \, t^{-1} \|\b\|_{\tau} \mu(B)^{\frac1q}
\le c_0 \, t^{-1} \|\b\|_{\tau} q^{|\tau|+1}  \mu(B)^{\frac1q}, 
\end{align*}
where $c_0>1$ varies from line to line and is independent of $q$. As a consequence, for any $t> t_0 := c_0 e \|\b\|_{\tau}$, we choose 
\[
q = \bigg(\frac{t}{c_0 e \|\b\|_{\tau}} \bigg)^{\frac{1}{|\tau| +1}} > 1
\] 
to deduce that 
\begin{align}\label{It-1}
\mathcal{I}(t) 
&\le \big(c_0  t^{-1} \|\b\|_{\tau} q^{|\tau|+1} \big)^q \mu(B)
 = e^{-q} \mu(B)
 \nonumber \\
&= e^{- (\frac{t}{c_0 e \|\b\|_{\tau}} )^{\frac{1}{|\tau| +1}}} \mu(B)
=: e^{- (\frac{\gamma t}{\|\b\|_{\tau}})^{\frac{1}{|\tau|+1}}} \mu(B),
\end{align}
where $\gamma := \frac{1}{c_0 e}$ depends only on $m$, $p$, and $\C_0$. Besides, for all $0<t \le t_0$, 
\begin{align}\label{It-2}
\mathcal{I}(t) 
\le \mu(B)
=e \cdot e^{-(\frac{\gamma t_0}{\|\b\|_{\tau}})^{\frac{1}{|\tau|+1}}} \mu(B)
\le e \cdot e^{-(\frac{\gamma t}{\|\b\|_{\tau}})^{\frac{1}{|\tau|+1}}} \mu(B). 
\end{align}
Now gathering \eqref{It-1} and \eqref{It-2}, we eventually obtain that for all $t>0$, 
\begin{equation*}
\mu \big(\big\{x \in B:  [T, \b]_{\alpha}(\vec{f})(x) > t \, \mathcal{M}_{\B, r}(\vec{f^*})(x)  \big\}\big)
\le e \cdot e^{-(\frac{\gamma t}{\|\b\|_{\tau}})^{\frac{1}{|\tau|+1}}} \mu(B). 
\end{equation*}
This completes the proof.
\end{proof}

\subsection{Mixed weak type estimates}\label{sec:weak} 
The goal of this subsection is to demonstrate Theorem \ref{thm:weak}. For this purpose, we first present a Coifman-Fefferman  inequality.

\begin{lemma}\label{lem:TM} 
Let $(\Sigma, \mu)$ be a measure space with a ball-basis $\B$. For any $p \in (0, \infty)$ and $w \in A_{\infty, \B}$, 
\begin{align}\label{eq:C-F}
\|T(\vec{f})\|_{L^p(\Sigma, w)} 
\lesssim \|\mathcal{M}_{\B, r}(\vec{f})\|_{L^p(\Sigma, w)}.
\end{align}
\end{lemma}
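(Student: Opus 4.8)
\textbf{Proof proposal for Lemma \ref{lem:TM}.} The plan is to reduce the Coifman--Fefferman inequality \eqref{eq:C-F} to the local exponential decay estimate already obtained in Theorem \ref{thm:local}, together with a localization and limiting argument. First I would note that, by monotone convergence, it suffices to prove \eqref{eq:C-F} for $\vec{f}$ with each $f_i \in L^\infty_c(\Sigma,\mu)$, and moreover we may reduce to the case where all $f_i$ are supported in a common ball. Indeed, since $T$ is bounded from $L^r \times \cdots \times L^r$ to $L^{r/m,\infty}$, Theorem \ref{thm:doubling} applies, and the key inputs available are: (i) the pointwise bound $|T(\vec f)(x)| \le \Gamma(\vec f)(x)$ away from a set controlled by sparse families, and (ii) the local decay $\mu(\{x\in B: |T(\vec f)(x)| > t\,\mathcal{M}_{\B,r}(\vec f)(x)\}) \lesssim e^{-\gamma t}\mu(B)$ from Theorem \ref{thm:local}, valid for $f_i \in L^\infty_c$ supported in $B$.

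Next, fixing a ball $B_0 \in \B$ and $f_i$ supported in $B_0$, I would use the distributional identity and the local decay estimate to estimate $\int_{B_0} |T(\vec f)|^p\,w\,d\mu$. Write
\begin{align*}
\int_{B_0} |T(\vec f)|^p w\,d\mu
&= p \int_0^\infty t^{p-1} w\big(\{x \in B_0: |T(\vec f)(x)| > t\}\big)\,dt.
\end{align*}
Splitting at a multiple of $\mathcal{M}_{\B,r}(\vec f)$ and using that on $\{|T(\vec f)| > t,\ \mathcal{M}_{\B,r}(\vec f) \le \lambda t\}$ the measure set is small, combined with the $A_\infty$ property of $w$ (Lemma \ref{lem:Ainfty}, implication $(a)\Rightarrow(b)$: $w(E)/w(B) \lesssim (\mu(E)/\mu(B))^\theta$), the exponential factor $e^{-\gamma t/\lambda}$ gets raised to the power $\theta$, remains summable/integrable in $t$, and one obtains
\begin{align*}
\int_{B_0} |T(\vec f)|^p w\,d\mu
\lesssim \int_{B_0^{(3)}} |\mathcal{M}_{\B,r}(\vec f)|^p w\,d\mu.
\end{align*}
Here one must be slightly careful: the decay estimate in Theorem \ref{thm:local} localizes to $B$ on the left but the maximal function on the right is $\mathcal{M}_{\B,r}(\vec f)$ globally, so the passage from $B_0$ to $B_0^{(3)}$ on the right is harmless by \eqref{BkBk}. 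A standard good-$\lambda$ / Calder\'on--Zygmund-type iteration over a covering, or alternatively a direct use of the sparse domination $|T(\vec f)| \lesssim \C(T)[\A_{\S_1,r}(\vec f) + \A_{\S_2,r}(\vec f)]$ from Theorem \ref{thm:sparse} localized as in Lemma \ref{lem:Ub}, converts this local bound into the global one. In fact the cleanest route is probably: apply Theorem \ref{thm:sparse} (localized version, as in the proof of Lemma \ref{lem:Ub}) to get $|T(\vec f)| \lesssim \A_{\S_1,r}(\vec f) + \A_{\S_2,r}(\vec f)$ on $B_0$, then bound $\|\A_{\S,r}(\vec f)\|_{L^p(w)} \lesssim \|\mathcal{M}_{\B,r}(\vec f)\|_{L^p(w)}$ using the $\eta$-sparseness together with $w \in A_{\infty,\B}$ exactly as in \eqref{eq:ASB} but without the commutator weights, and finally let $B_0$ exhaust $\Sigma$ via Lemma \ref{lem:BG} and apply monotone convergence.

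The main obstacle I anticipate is the bookkeeping in passing from the \emph{local} statements (Theorem \ref{thm:sparse} and Theorem \ref{thm:local} both produce estimates only on a fixed ball $B_0$, with hypotheses like $\|f_i\|_{L^r(B_0)}^r \ge \tfrac12\|f_i\|_{L^r(\Sigma)}^r$ or $\supp f_i \subset B_0$) to a \emph{global} $L^p(\Sigma,w)$ inequality. One needs to choose an exhausting sequence of balls $B_0^{(k)}$ with $\Sigma = \bigcup_k B_0^{(k)}$, verify that the implicit constants (depending only on $\C(T)$, $m$, $p$, $\C_0$, and $[w]_{A_{\infty,\B}}$) are uniform in $k$, handle the enlargement $B_0 \mapsto B_0^{(3)}$ on the right-hand side so that it does not accumulate, and invoke monotone convergence. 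The weighted estimate $\|\A_{\S,r}(\vec f)\|_{L^p(\Sigma,w)} \lesssim \|\mathcal{M}_{\B,r}(\vec f)\|_{L^p(\Sigma,w)}$ for sparse $\S$ and $w\in A_{\infty,\B}$ is routine given the pairwise-disjoint major subsets $E_B$ and the $A_\infty$ comparison $\mu(E_B)\ge\eta\mu(B) \Rightarrow w(E_B) \gtrsim w(B)$ from Lemma \ref{lem:Ainfty}, together with $\langle f_i\rangle_{B,r} \le \inf_{x\in B}\mathcal{M}_{\B,r}^{\otimes}(\vec f)^{1/m}\cdots$ — but one must take some care when $p \le 1$, where one cannot use duality and instead bounds $\sum_B (\cdots)^p \le \sum_B \int_{E_B}(\cdots)$ directly. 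I expect no genuine difficulty beyond this, since all the hard analytic content is already encoded in Theorems \ref{thm:sparse} and \ref{thm:local}.
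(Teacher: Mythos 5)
Your second (``cleanest'') route is, for $p\le 1$, essentially the paper's argument: sparse domination from Theorem \ref{thm:sparse} plus the disjointness of the major subsets $E_B$ and the $A_{\infty,\B}$ comparison $w(B)\lesssim w(E_B)$. But there is a genuine gap in how you propose to handle $p>1$. You plan to use duality there, and the duality argument for $\|\A_{\S,r}(\vec f)\|_{L^p(\Sigma,w)}\lesssim\|\mathcal M_{\B,r}(\vec f)\|_{L^p(\Sigma,w)}$ inevitably produces the term $\sum_B\langle gw\rangle_B\,\mu(B)\cdot(\cdots)$, whose control requires the weighted maximal operator $M_{\B,w}$ to be bounded on $L^{p'}(\Sigma,w)$. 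On a general measure space with only a ball-basis this is \emph{not} available --- that is precisely why Theorem \ref{thm:T} carries the factor $\mathcal N_1(r,\vec p,\vec w)$ containing $\|M_{\B,w}\|_{L^{p'}(\Sigma,w)}$ and why Theorem \ref{thm:T-Besi} assumes the Besicovitch condition. The paper avoids this entirely: it proves only the $p=1$ case directly (estimate \eqref{eq:ASp=1}, where no duality is needed), and then invokes the $A_\infty$ extrapolation theorem on general measure spaces (\cite[Theorem 3.34]{CMM}, applied to the pairs $(T(\vec f),\mathcal M_{\B,r}(\vec f))$) to reach all $p\in(0,\infty)$. This extrapolation step is the essential missing ingredient in your proposal.

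Your first route (deducing \eqref{eq:C-F} from the local exponential decay of Theorem \ref{thm:local}) also does not close as sketched. The decay estimate bounds $\mu(\{x\in B:|T(\vec f)(x)|>t\,\mathcal M_{\B,r}(\vec f)(x)\})$ by $e^{-\gamma t}\mu(B)$ with $B$ the \emph{support ball}, not a Whitney-type piece of the level set $\{|T(\vec f)|>t\}$; plugging this into the layer-cake formula as you describe yields $w(\{|T(\vec f)|>t\})\le w(\{\mathcal M_{\B,r}(\vec f)>t/s\})+Ce^{-c s}w(B_0)$, and the second term, being independent of $t$, makes $\int_0^\infty t^{p-1}(\cdots)\,dt$ diverge. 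What this argument actually gives is $\int_B(|T(\vec f)|/\mathcal M_{\B,r}(\vec f))^p\,w\,d\mu\lesssim w(B)$, which is weaker than \eqref{eq:C-F}. Turning a decay estimate of this type into Coifman--Fefferman requires the genuine recursive good-$\lambda$ inequality (with absorption and an a priori finiteness step), which would amount to redoing the covering machinery that sparse domination already encapsulates. Your localization concerns (exhausting $\Sigma$ by balls, uniformity of constants in $\S$ and $B_0$) are legitimate but routine; the substantive issue is the passage from $p=1$ to $p>1$.
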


\begin{proof}
Let $w \in A_{\infty, \B}$. By Lemma \ref{lem:Ainfty}, for any $\alpha \in (0, 1)$ there exists $\beta \in (0, 1)$ such that for any $B \in \B$ and any measurable subset $E \subset B$, 
\begin{align}\label{EBB}
\mu(E) \ge \alpha \mu(B) \quad\Longrightarrow \quad 
w(E) \ge \beta w(B).
\end{align} 
Let $\S$ be an $\eta$-sparse family, $\eta \in (0, 1)$. By definition, there exists a pairwise disjoint family $\{E_B\}_{B \in \S}$ such that $E_B \subset B$ and $\mu(E_B) \ge \eta \mu(B)$ for all $B \in \S$. In light of \eqref{EBB}, there is $\beta=\beta(\eta) \in (0, 1)$ such that for $B \in \S$, we have $w(E_B) \ge \beta w(B)$, which in turn yields that
\begin{align}\label{eq:ASp=1}
\|\mathcal{A}_{\S, r}(\vec{f})\|_{L^1(\Sigma, w)}
& \leq \sum_{B \in \S} \prod_{i=1}^m \langle f_i \rangle_{B, r} w(B)
\lesssim \sum_{B \in \S} \Big(\inf_{B} \mathcal{M}_{\B, r}(\vec{f}) \Big) w(E_B)
\nonumber \\ 
& \leq \sum_{B \in \S} \int_{E_B} \mathcal{M}_{\B, r}(\vec{f}) \, w \, d\mu
\leq \|\mathcal{M}_{\B, r}(\vec{f})\|_{L^1(\Sigma, w)}, 
\end{align} 
where the implicit constants are independent of $\S$. As a consequence, Theorem \ref{thm:sparse} and \eqref{eq:ASp=1} imply 
\begin{align}\label{eq:TMp=1}
\|T(\vec{f})\|_{L^1(\Sigma, w)}
\lesssim \|\mathcal{M}_{\B, r}(\vec{f})\|_{L^1(\Sigma, w)}. 
\end{align}
This corresponds to \eqref{eq:C-F} holds for the case $p=1$. 

To show the general case $p \in (0, \infty)$, we apply the $A_{\infty}$ extrapolation theorem: given a family of pairs of functions $\mathcal{F}$, if for some $p_0 \in (0, \infty)$ and for every weight $w_0 \in A_{\infty, \B}$, 
\begin{align}\label{eq:fg-some}
\|f\|_{L^{p_0}(\Sigma, w_0)} 
\leq C_1 \|g\|_{L^{p_0}(\Sigma, w_0)}, \quad \forall (f, g) \in \mathcal{F}, 
\end{align}
then for all $p \in (0, \infty)$ and all $w \in A_{\infty, \B}$, 
\begin{align}\label{eq:fg-every}
\|f\|_{L^p(\Sigma, w)} 
\leq C_2 \|g\|_{L^p(\Sigma, w)}, \quad \forall (f, g) \in \mathcal{F}.  
\end{align}
This is contained in \cite[Theorem 3.34]{CMM}, which established an $A_{\infty, \B}$ extrapolation on general Banach function spaces. Additionally, the estimate \eqref{Mr-2} is needed to verify the hypothesis there. 

Observe that \eqref{eq:TMp=1} verifies \eqref{eq:fg-some} for $p_0=1$ and the pair $(f, g)=(T(\vec{f}), \mathcal{M}_{\B, r}(\vec{f}))$.  Therefore, \eqref{eq:C-F} follows from \eqref{eq:fg-every}. 
\end{proof}

\noindent\textbf{Proof of Theorem \ref{thm:weak}.}
We use a hybrid of the arguments in \cite{CMP} and \cite{LOP}.
Define
\[
\mathcal{R}h := \sum_{j=0}^{\infty} \frac{S^jh}{(2K)^j},
\]
where $K>0$ will be chosen later and $Sf := M_{\B}(f w)/w$. It immediately yields that
\begin{align}\label{e:R-3}
h(x) \leq \mathcal{R}h(x) \quad\text{ and }\quad 
S(\mathcal{R}h)(x) \leq 2K \, \mathcal{R}h(x).
\end{align}
We claim that there exists some $s>1$ such that 
\begin{align}\label{RhAi}
\mathcal{R}h \cdot w v^{\frac{r}{ms'}} \in A_{\infty, \B}
\end{align}  
and 
\begin{align}\label{e:R-2}
\|\mathcal{R}h\|_{L^{s',1}(\Sigma, w v^{\frac{r}{m}})}
\leq 2 \|h\|_{L^{s',1}(\Sigma, w v^{\frac{r}{m}})}.
\end{align}
The proof of \eqref{RhAi} and \eqref{e:R-2} will be given at the end of this section.

Observing that for any weight $\sigma$ on $(\Sigma, \mu)$, 
\begin{equation}\label{eq:Lpq}
\|f^q\|_{L^{p,\infty}(\Sigma, \sigma)}
= \|f\|^q_{L^{pq,\infty}(\Sigma, \sigma)}, \quad 0<p,q<\infty, 
\end{equation}
we have 
\begin{align}\label{Tff-1}
\bigg\| &\frac{T(\vec{f})}{v}
\bigg\|_{L^{\frac{r}{m},\infty}(\Sigma, w v^{\frac{r}{m}})}^{\frac{r}{ms}}
= \bigg\| \bigg|\frac{T(\vec{f})}{v}\bigg|^{\frac{r}{ms}} \bigg\|_{L^{s,\infty}(\Sigma, w v^{\frac{r}{m}})}
\nonumber \\ 
&=\sup_{0 \le h \in L^{s',1}(\Sigma, w v^{\frac{r}{m}}) \atop \|h\|_{L^{s',1}(\Sigma, w v^{\frac{r}{m}})}=1}
\bigg|\int_{\Sigma} |T(\vec{f})|^{\frac{r}{ms}} h \, w \, v^{\frac{r}{ms'}} d\mu  \bigg|
\nonumber \\ 
&\leq \sup_{0 \le h \in h \in L^{s',1}(w v^{\frac{r}{m}}) \atop \|h\|_{L^{s',1}(\Sigma, w v^{\frac{r}{m}})}=1} 
\int_{\Sigma} |T(\vec{f})|^{\frac{r}{ms}} \mathcal{R}h \, w \, v^{\frac{r}{ms'}} d\mu.
\end{align}
Fix a nonnegative function $h \in L^{s',1}(\Sigma, w v^{\frac{r}{m}})$ with $\|h\|_{L^{s',1}(\Sigma, w v^{\frac{r}{m}})}=1$. Then by \eqref{RhAi}, Lemma \ref{lem:TM}, and H\"{o}lder's inequality, we obtain
\begin{align}\label{Tff-2}
\int_{\Sigma} & |T(\vec{f})|^{\frac{r}{ms}} \mathcal{R}h \, w v^{\frac{r}{ms'}} d\mu
\nonumber \\ 
& \lesssim \int_{\Sigma} \mathcal{M}_{\B, r}(\vec{f})^{\frac{r}{ms}} \mathcal{R}h \, w v^{\frac{r}{ms'}} d\mu
\nonumber \\ 
& = \int_{\Sigma} \bigg(\frac{\mathcal{M}_{\B, r}(\vec{f})}{v}\bigg)^{\frac{r}{ms}}
\mathcal{R}h \, w v^{\frac{r}{m}} d\mu
\nonumber \\ 
& \leq \bigg\|\bigg(\frac{\mathcal{M}_{\B, r}(\vec{f})}{v}\bigg)^{\frac{r}{ms}} \bigg\|_{L^{s,\infty}(\Sigma, w v^{\frac{r}{m}})}
\|\mathcal{R}h\|_{L^{s',1}(\Sigma, w v^{\frac{r}{m}})}
\nonumber \\ 
& \lesssim \bigg\|\frac{\mathcal{M}_{\B, r}(\vec{f})}{v}\bigg\|_{L^{\frac{r}{m},\infty} (\Sigma, w v^{\frac{r}{m}})}^{\frac{r}{ms}}
\|h\|_{L^{s',1}(\Sigma, w v^{\frac{r}{m}})},
\end{align}
where \eqref{e:R-2} was used in the last inequality. Consequently, it follows from \eqref{Tff-1} and \eqref{Tff-2} that 
\begin{equation*}
\bigg\|\frac{T(\vec{f})}{v}\bigg\|_{L^{\frac{r}{m},\infty}(\Sigma, \, wv^{\frac{r}{m}})}
\lesssim \bigg\| \frac{\mathcal{M}_{\B, r}(\vec{f})}{v}\bigg\|_{L^{\frac{r}{m},\infty}(\Sigma, \, wv^{\frac{r}{m}})}.
\end{equation*}

It remains to show \eqref{RhAi} and \eqref{e:R-2}. The proof follows the strategy in \cite{CMP}. For the sake of completeness we present the details. 
Since $w \in A_{1, \B}$ and $v^{\frac{r}{m}} \in A_{\infty, \B}$, we have 
\begin{equation}\label{e:S-1}
\begin{aligned}
&\|Sf\|_{L^{\infty}(\Sigma, wv^{\frac{r}{m}})}
\leq [w]_{A_{1, \B}} \|f\|_{L^{\infty}(\Sigma, w v^{\frac{r}{m}})} 
\\
&\text{and}\quad v^{\frac{r}{m}} \in A_{q_0, \B} \text{ for some } q_0>1. 
\end{aligned}
\end{equation}
It follows from Lemma \ref{lem:A1} part \eqref{list:A11} that there exist $v_1, v_2 \in A_{1, \B}$ such that 
\begin{align}\label{vvv}
v^{\frac{r}{m}}=v_1 v_2^{1-q_0}.
\end{align} 

Observe that the second inequality in \eqref{e:R-3} indicates that $\mathcal{R}h \cdot w \in A_{1, \B}$. Recall that $w \in A_{1, \B}$. Then by Lemma \ref{lem:A1} part \eqref{list:A12}, there exists $\varepsilon_0 \in (0, 1)$ such that 
\begin{align}\label{Rww}
\text{$(\mathcal{R}h \cdot w) v_1^{\varepsilon} \in A_{1, \B}$ \, and \, 
$w v_2^{\varepsilon} \in A_{1, \B}$ \, for any $\varepsilon \in (0, \varepsilon_0)$.}
\end{align} 
Choosing $p_0>1+(q_0-1)/{\varepsilon_0}$, $0< \varepsilon < \min\{\varepsilon_0, \frac{1}{2p_0}\}$, and $s :=(\frac{1}{\varepsilon})'>1$, we use \eqref{Rww} and Lemma \ref{lem:A1} part \eqref{list:A11} to see that $w v_2^{\frac{q_0-1}{p_0-1}} \in A_{1, \B}$ and 
\begin{align}\label{wv22}
\mathcal{R}h \cdot w v^{\frac{r}{ms'}}
=[(\mathcal{R}h \cdot w) v_1^{\varepsilon}] \cdot v_2^{1-[(q_0-1)\varepsilon+1]}
\in A_{(q_0-1)\varepsilon+1, \B}.
\end{align}
This shows \eqref{RhAi}. Additionally, in light of \eqref{vvv} and  \eqref{wv22}, Lemma \ref{lem:A1} part \eqref{list:A11} gives 
\begin{align}\label{wpv}
w^{1-p_0} v^{\frac{r}{m}}
=v_1 \Big(w v_2^{\frac{q_0-1}{p_0-1}}\Big)^{1-p_0} \in A_{p_0, \B}.
\end{align}
Thus, invoking \eqref{wpv} and Theorem \ref{Mapp}, we immediately obtain 
\begin{align}\label{e:S-2}
\|Sf\|_{L^{p_0}(\Sigma, \, w v^{\frac{r}{m}})}
= \|M_{\B}(f w)\|_{L^{p_0}(\Sigma, \, w^{1-p_0} v^{\frac{r}{m}})}
\leq c_1 \|f\|_{L^{p_0}(\Sigma, \, w v^{\frac{r}{m}})}.
\end{align}
To proceed, let us recall the Marcinkiewicz interpolation in \cite[~Proposition A.1]{CMP}, which holds in general measure spaces. Then, this, \eqref{e:S-1}, and \eqref{e:S-2} imply that $S$ is bounded on $L^{p,1}(\Sigma, w v^{\frac{r}{m}})$ for all $p \ge p_0$ with the bound 
\begin{align*}
K(p)=2^{\frac1p} \bigg[c_1 \bigg(\frac{1}{p_0} - \frac{1}{p}\bigg)^{-1} + c_2\bigg],
\quad\text{ where } c_2 := [w]_{A_{1, \B}}.
\end{align*}
Since $K(p)$ is decreasing with respect to $p$, there holds 
\begin{equation}\label{eq:Lp1}
\|Sf\|_{L^{p,1}(\Sigma, \, w v^{\frac{r}{m}})}
\leq K \|f\|_{L^{p,1}(\Sigma, \, w v^{\frac{r}{m}})}, \quad p \geq 2p_0
\end{equation}
where $K := 4p_0(c_1+c_2) > K(2p_0) \geq K(p)$. Note that $s'>2p_0$ and apply \eqref{eq:Lp1} to conclude \eqref{e:R-2}. This completes the proof.
\qed

\subsection{Quantitative weighted estimates}\label{sec:sharp}
To show Theorems \ref{thm:T} and \ref{thm:T-Besi}, we shall borrow the approach from \cite{LN} to deal with the cases $p>1$ and $p \le 1$ uniformly. We begin with a general result as follows. 

\begin{lemma}\label{lem:vvq}
Let $(\Sigma, \mu)$ be a measure space with a ball-basis $\B$. Let $1<q_i \leq \infty$ and $0<s_i<\infty$, $i=1, \ldots, m$, such that $\sum_{i=1}^{m}\frac{s_i}{q_i}=1$. Write $\theta_i := s_i (1-\frac{1}{q_i})$, $i=1, \ldots, m$. If $\vec{\sigma} := (\sigma_1, \ldots, \sigma_m)$ are weights such that $\prod_{i=1}^m \sigma_i^{\theta_i}=1$ and
\[
[[\vec{\sigma}]]_{\vec{\theta}} 
:= \sup_{B \in \B} \prod_{i=1}^m \langle \sigma_i \rangle_B^{\theta_i} < \infty,
\]
then we have 
\begin{align*}
&\sup_{\S \subset \B: \, \mathrm{sparse}} \sum_{B \in \S} 
\bigg(\prod_{i=1}^m \langle f_i \sigma_i \rangle_B^{s_i} \bigg) \mu(B) 
\\ 
&\qquad\lesssim [[\vec{\sigma}]]_{\vec{\theta}}^{\max\limits_{1 \le i \le m} \{\frac{q_i}{q_i-1}\}} 
\prod_{i=1}^m \|M_{\B, \sigma_i}\|_{L^{q_i}(\Sigma, \sigma_i)}^{s_i} \|f\|_{L^{q_i}(\Sigma, \sigma_i)}^{s_i}.
\end{align*}
\end{lemma}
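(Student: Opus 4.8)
The plan is to run a standard Carleson-embedding / stopping-time argument for sparse operators, but keeping careful track of the dependence on $[[\vec{\sigma}]]_{\vec{\theta}}$. Fix a sparse family $\S \subset \B$ with disjoint major subsets $\{E_B\}_{B \in \S}$, $\mu(E_B) \ge \eta \mu(B)$. The key observation is that the constraint $\prod_{i=1}^m \sigma_i^{\theta_i}=1$ together with $\sum_i \frac{s_i}{q_i}=1$ is precisely the ``multilinear $A_{\vec{q}}$-type'' balance that makes the weighted maximal operators $M_{\B,\sigma_i}$ the right tools: by H\"older's inequality with exponents $q_i/s_i$ (whose reciprocals sum to $1$), one has for each $B \in \S$
\begin{align*}
\prod_{i=1}^m \langle f_i \sigma_i \rangle_B^{s_i}
&= \prod_{i=1}^m \bigg(\frac{1}{\mu(B)} \int_B f_i \sigma_i \, d\mu \bigg)^{s_i}
= \prod_{i=1}^m \bigg(\frac{\sigma_i(B)}{\mu(B)} \fint_{B}^{\sigma_i} f_i \, d\sigma_i \bigg)^{s_i}
\le \Big(\prod_{i=1}^m \langle \sigma_i \rangle_B^{s_i} \Big) \prod_{i=1}^m \Big(\inf_{x \in B} M_{\B, \sigma_i} f_i(x)\Big)^{s_i},
\end{align*}
where $\fint_B^{\sigma_i}$ denotes the $\sigma_i$-average and $\sigma_i(B) = \int_B \sigma_i \, d\mu$. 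I would then bound $\prod_{i=1}^m \langle \sigma_i \rangle_B^{s_i}$ in terms of $[[\vec\sigma]]_{\vec\theta}$: writing $s_i = \theta_i + \frac{s_i}{q_i}$ and using $\prod_i \langle\sigma_i\rangle_B^{\theta_i} \le [[\vec\sigma]]_{\vec\theta}$, what remains is $\prod_i \langle\sigma_i\rangle_B^{s_i/q_i}$, which I will absorb via the disjointness of the $E_B$ and Carleson embedding.

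The main step is the Carleson-embedding estimate. Summing over $B \in \S$,
\begin{align*}
\sum_{B \in \S} \Big(\prod_{i=1}^m \langle f_i\sigma_i\rangle_B^{s_i}\Big)\mu(B)
\le [[\vec\sigma]]_{\vec\theta} \sum_{B \in \S} \Big(\prod_{i=1}^m \langle\sigma_i\rangle_B^{s_i/q_i}\Big) \prod_{i=1}^m \Big(\inf_{x\in B} M_{\B,\sigma_i}f_i(x)\Big)^{s_i} \mu(B).
\end{align*}
Now I would apply H\"older's inequality \emph{in the sum} over $B \in \S$ with the same exponents $q_i/s_i$, distributing the factor $\mu(B) = \prod_i \mu(B)^{s_i/q_i}$ accordingly, to get
\begin{align*}
\sum_{B\in\S}\Big(\prod_{i=1}^m\langle\sigma_i\rangle_B^{s_i/q_i}\Big)\prod_{i=1}^m\Big(\inf_{B}M_{\B,\sigma_i}f_i\Big)^{s_i}\mu(B)
\le \prod_{i=1}^m\bigg(\sum_{B\in\S}\langle\sigma_i\rangle_B\Big(\inf_B M_{\B,\sigma_i}f_i\Big)^{q_i}\mu(B)\bigg)^{s_i/q_i}.
\end{align*}
Since $\langle\sigma_i\rangle_B\,\mu(B)=\sigma_i(B)\le \C_0\,\sigma_i(B^{*})$ and $\{E_B\}$ is disjoint with $\sigma_i(E_B)\gtrsim \sigma_i(B)$ — here I need the $A_\infty$-type comparability $\sigma_i(E_B)\gtrsim\sigma_i(B)$, which follows from Lemma~\ref{lem:Ainfty} applied to $\sigma_i$; note each $\sigma_i\in A_\infty$ because $\sigma_i = \prod_{j\ne i}\sigma_j^{-\theta_j/\theta_i}$ has a reverse-H\"older/$A_{q_i}$ structure forced by $[[\vec\sigma]]_{\vec\theta}<\infty$ — each inner sum is controlled by
\begin{align*}
\sum_{B\in\S}\Big(\inf_B M_{\B,\sigma_i}f_i\Big)^{q_i}\sigma_i(E_B)
\le \int_\Sigma \big(M_{\B,\sigma_i}f_i\big)^{q_i}\,d\sigma_i
\le \|M_{\B,\sigma_i}\|_{L^{q_i}(\Sigma,\sigma_i)}^{q_i}\|f_i\|_{L^{q_i}(\Sigma,\sigma_i)}^{q_i}.
\end{align*}
Collecting everything gives the claimed bound, with the power of $[[\vec\sigma]]_{\vec\theta}$ equal to $1$ from this direct argument; to reach the sharp exponent $\max_i\{q_i/(q_i-1)\}$ stated in the lemma I would instead split $s_i=\theta_i+\frac{s_i}{q_i}$ more carefully and apply the weighted embedding only to the ``$s_i/q_i$'' part while estimating the ``$\theta_i$'' part pointwise, or alternatively invoke the sharp weighted bound for $M_{\B,\sigma_i}$ as in Lemma~\ref{lem:M}; the bookkeeping of which factor carries the $A_\infty$ constant is where the precise exponent enters.

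\textbf{Main obstacle.} The routine sparse/Carleson machinery is straightforward; the delicate point is twofold: (i) verifying that each individual weight $\sigma_i$ lies in an appropriate Muckenhoupt class with constant controlled by $[[\vec\sigma]]_{\vec\theta}$, so that Lemma~\ref{lem:Ainfty} yields $\sigma_i(E_B)\gtrsim\sigma_i(B)$ with a uniform comparability constant (this uses that $\prod_i\sigma_i^{\theta_i}=1$ forces a self-improving structure on each $\sigma_i$, analogous to Lemma~\ref{lem:ApAp}); and (ii) distributing the powers of $[[\vec\sigma]]_{\vec\theta}$ and $[\sigma_i]_{A_\infty}$ so that the final exponent is exactly $\max_{1\le i\le m}\{q_i/(q_i-1)\}$ rather than a cruder bound — this is the quantitatively sharp part and requires choosing the splitting $s_i=\theta_i+s_i/q_i$ and the H\"older exponents in lockstep, as in \cite{LN}. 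I expect step (ii) to be the main technical effort; everything else reduces to the disjointness of $\{E_B\}$ and the $L^{q_i}(\sigma_i)$-boundedness of $M_{\B,\sigma_i}$.
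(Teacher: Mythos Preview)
Your Carleson-embedding skeleton is correct and close to the paper's argument, but the obstacle you flag as ``(i)'' --- proving each $\sigma_i\in A_\infty$ so that $\sigma_i(E_B)\gtrsim\sigma_i(B)$ --- is a detour the paper never takes, and avoiding it is precisely the point of the lemma's hypotheses.

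The trick you are missing is to apply the pointwise constraint $\prod_{i=1}^m \sigma_i^{\theta_i}=1$ \emph{on the disjoint set $E_B$}, not on $B$. Writing $\theta:=\sum_i\theta_i$ and using H\"older with exponents $\theta/\theta_i$,
\[
\mu(B)^{\theta}\ \lesssim\ \mu(E_B)^{\theta}
= \bigg(\int_{E_B}\prod_{i=1}^m\sigma_i^{\theta_i/\theta}\,d\mu\bigg)^{\theta}
\le\ \prod_{i=1}^m\sigma_i(E_B)^{\theta_i},
\]
the first step being sparseness. This single line replaces your appeal to $A_\infty$: it compares $\mu(B)$ directly with the $\sigma_i$-measures of the \emph{disjoint} pieces $E_B$, so $\sigma_i(B)$ never has to be controlled. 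The paper then writes the summand as
\[
\Big(\prod_{i=1}^m\langle f_i\sigma_i\rangle_B^{s_i}\Big)\mu(B)
=\prod_{i=1}^m\Big(\fint_B f_i\,d\sigma_i\Big)^{s_i}\sigma_i(E_B)^{s_i/q_i}
\ \times\ \Big(\mu(B)\prod_{i=1}^m\frac{\langle\sigma_i\rangle_B^{s_i}}{\sigma_i(E_B)^{s_i/q_i}}\Big),
\]
and bounds the second factor by $[[\vec\sigma]]_{\vec\theta}^{\max_i\{q_i'\}}$ via the identity
\[
\mu(B)\prod_{i=1}^m\frac{\langle\sigma_i\rangle_B^{s_i}}{\sigma_i(E_B)^{s_i/q_i}}
=\prod_{i=1}^m\Big(\frac{\sigma_i(B)}{\sigma_i(E_B)}\Big)^{s_i/q_i}\langle\sigma_i\rangle_B^{\theta_i}
\le\bigg[\Big(\prod_i\langle\sigma_i\rangle_B^{\theta_i}\Big)\frac{\mu(B)^{\theta}}{\prod_i\sigma_i(E_B)^{\theta_i}}\bigg]^{\max_i\{\frac{1}{q_i-1}\}}\prod_i\langle\sigma_i\rangle_B^{\theta_i},
\]
using $\sigma_i(B)/\sigma_i(E_B)\ge 1$, $s_i/(\theta_iq_i)=1/(q_i-1)$, and the displayed H\"older estimate above. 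After this the first factor is summed exactly by your H\"older-in-the-sum step, with $\sigma_i(E_B)$ already in place and disjoint. The sharp exponent $\max_i\{q_i/(q_i-1)\}=1+\max_i\{1/(q_i-1)\}$ falls out of this algebra automatically; it is not a separate refinement as you anticipate in ``(ii)''.

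In short: your route would require establishing and quantifying $\sigma_i\in A_\infty$ from $[[\vec\sigma]]_{\vec\theta}<\infty$ (a nontrivial analogue of Lemma~\ref{lem:ApAp}), and even then the constant-tracking for the sharp power would be indirect. The paper's H\"older-on-$E_B$ maneuver sidesteps both issues in one stroke.
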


\begin{proof}
Let $\S \subset \B$ be a sparse family. Fix $B \in \S$ and write $\theta := \sum_{i=1}^m \theta_i$. By H\"{o}lder's inequality and that $\prod_{i=1}^m \sigma_i^{\theta_i}=1$, 
\begin{align}\label{EBQ}
\mu(B)^q 
\lesssim \mu(E_B)^{\theta} 
= \bigg(\int_{E_B} \prod_{i=1}^m \sigma_i^{\frac{\theta_i}{\theta}} d\mu \bigg)^{\theta}
\leq \prod_{i=1}^m \sigma_i(E_B)^{\theta_i}. 
\end{align}
Since $\theta_i = s_i (1-\frac{1}{q_i})$ and $\sum_{i=1}^m \frac{s_i}{q_i}=1$, we invoke \eqref{EBQ} to get 
\begin{align*}
&\mu(B)\prod_{i=1}^m \frac{\langle \sigma_i \rangle_B^{s_i}}{\sigma_i(E_B)^{s_i/q_i}} 
= \prod_{i=1}^m \bigg(\frac{\sigma_i(B)}{\sigma_i(E_B)} \bigg)^{\frac{s_i}{q_i}} \langle \sigma_i \rangle_B^{\theta_i} 
\\
& \leq \bigg[\prod_{i=1}^m \bigg(\frac{\sigma_i(B)}{\sigma_i(E_B)} \bigg)^{\theta_i} 
\bigg]^{\max\limits_{1 \le i \le m} \{\frac{s_i}{\theta_i q_i}\}} \prod_{i=1}^m \langle \sigma_i \rangle_B^{\theta_i} 
\\
& = \bigg[\bigg(\prod_{i=1}^m \langle \sigma_i \rangle_B^{\theta_i} \bigg) 
\bigg(\frac{\mu(B)^{\theta}}{\prod_{i=1}^m \sigma_i(E_B)^{\theta_i}}\bigg) \bigg]^{\max\limits_{1 \le i \le m} \{\frac{s_i}{\theta_i q_i}\} } \prod_{i=1}^m \langle \sigma_i \rangle_B^{\theta_i} 
\\
& \lesssim [[\vec{\sigma}]]_{\vec{\theta}}^{1 + \max\limits_{1 \le i \le m} \{\frac{s_i}{\theta_i q_i}\}} 
=[[\vec{\sigma}]]_{\vec{\theta}}^{\max\limits_{1 \le i \le m} \{\frac{q_i}{q_i-1}\}}.
\end{align*}
Hence, this in turn gives 
\begin{align*}
&\sum_{B \in \S} \bigg(\prod_{i=1}^m \langle f_i \sigma_i \rangle_B^{s_i} \bigg) \mu(B) 
\\ 
& = \sum_{B \in \S} \prod_{i=1}^m \bigg(\fint_B f_i \, d\sigma_i \bigg)^{s_i} \sigma_i(E_B)^{\frac{s_i}{q_i}} 
\bigg(\mu(B) \prod_{i=1}^m \frac{\langle \sigma_i \rangle_B^{s_i}}{\sigma_i(E_B)^{s_i/q_i}}  \bigg)
\\
& \lesssim [[\vec{\sigma}]]_{\vec{\theta}}^{\max\limits_{1 \le i \le m}\{\frac{q_i}{q_i-1}\}} 
\sum_{B \in \S} \prod_{i=1}^m \bigg(\fint_B f_i \, d\sigma_i \bigg)^{s_i} \sigma_i(E_B)^{\frac{s_i}{q_i}} 
\\
& \le [[\vec{\sigma}]]_{\vec{\theta}}^{\max\limits_{1 \le i \le m}\{\frac{q_i}{q_i-1}\}} 
\prod_{i=1}^m \bigg[\sum_{B \in \S} \bigg(\fint_B f_i \, dv_i \bigg)^{q_i} \sigma_i(E_B) \bigg]^{\frac{s_i}{q_i}} 
\\
& \le [[\vec{\sigma}]]_{\vec{\theta}}^{\max\limits_{1 \le i \le m}\{\frac{q_i}{q_i-1}\}} 
\prod_{i=1}^m \bigg[\sum_{B \in \S} \big(\inf_B M_{\B, \sigma_i} f_i \big)^{q_i} \sigma_i(E_B) \bigg]^{\frac{s_i}{q_i}} 
\\
& \le [[\vec{\sigma}]]_{\vec{\theta}}^{\max\limits_{1 \le i \le m}\{\frac{q_i}{q_i-1}\}} 
\prod_{i=1}^m \bigg[\sum_{B \in \S} \int_{E_B} (M_{\B, \sigma_i} f_i)^{q_i} \, d\sigma_i \bigg]^{\frac{s_i}{q_i}} 
\\
& \le [[\vec{\sigma}]]_{\vec{\theta}}^{\max\limits_{1 \le i \le m}\{\frac{q_i}{q_i-1}\}} 
\prod_{i=1}^m \|M_{\B, \sigma_i}f\|_{L^{q_i}(\Sigma, \sigma_i)}^{s_i}
\\
& \leq [[\vec{\sigma}]]_{\vec{\theta}}^{\max\limits_{1 \le i \le m}\{\frac{q_i}{q_i-1}\}} 
\prod_{i=1}^m \|M_{\B, \sigma_i}\|_{L^{q_i}(\Sigma, \sigma_i)}^{s_i} \|f\|_{L^{q_i}(\Sigma, \sigma_i)}^{s_i}. 
\end{align*}
The proof is complete. 
\end{proof}

Next, we present weighted estimates for multilinear sparse operators $\A_{\S, \vec{r}}$ and $\A_{\S, \vec{r}}^{\b, \tau_1, \tau_2}$. 

\begin{lemma}\label{lem:Sparse}
Let $(\Sigma, \mu)$ be a measure space with a ball-basis $\B$. For all $\vec{r}=(r_1, \ldots, r_m)$ with $1 \le r_1, \ldots, r_m<\infty$, for all $\vec{p}=(p_1, \ldots, p_m)$ with $r_i<p_i<\infty$, $i=1, \ldots, m$, and for all $\vec{w}=(w_1, \ldots, w_m) \in A_{\vec{p}/\vec{r}}$, we have 
\begin{align*}
\sup_{\S \subset \B: \text{sparse}}
\|\A_{\S, \vec{r}}\|_{L^{p_1}(\Sigma, w_1) \times \ldots \times L^{p_m}(\Sigma, w_m) \to L^p(\Sigma, w)} 
\lesssim \mathcal{N}_1(\vec{r}, \vec{p}, \vec{w}) 
[\vec{w}]_{A_{\vec{p}/\vec{r}, \B}}^{\max\limits_{1\le i \le m}\{p, (\frac{p_i}{r_i})'\}}. 
\end{align*}
Moreover, if $A_{\infty, \B}$ satisfies the sharp reverse H\"{o}lder property, then for the same exponents $\vec{p}$ and weights  $\vec{w}$, 
\begin{align*}
& \sup_{\S \subset \B: \text{sparse}} 
\|\A_{\S, \vec{r}}^{\b, \tau_1, \tau_2} \|_{L^{p_1}(\Sigma, w_1) \times \cdots \times L^{p_m}(\Sigma, w_m) \rightarrow L^p(\Sigma, w)} 
\\ 
&\lesssim \mathcal{N}_1(\vec{r}, \vec{p}, \vec{w}) 
\mathcal{N}_2^{\tau_2}(\vec{r}, \vec{p}, \vec{w}) 
[w]_{A_{\infty, \B}}^{ |\tau_1|}
\\
&\qquad\qquad\times \prod_{j \in \tau_2} [\sigma_j]_{A_{\infty, \B}}
[\vec{w}]_{A_{\vec{p}/\vec{r}}}^{\max\limits_{1 \leq i \leq m} \{p, (\frac{p_i}{r_i})' \}}
\prod_{i \in \tau_1 \uplus \tau_2} \|b_i\|_{\osc_{\exp L}}
\\ 
&\lesssim \mathcal{N}_1(\vec{r}, \vec{p}, \vec{w}) 
\mathcal{N}_2^{\tau_2}(\vec{r}, \vec{p}, \vec{w}) 
[\vec{w}]_{A_{\vec{p}/\vec{r}}}^{\beta  
\max\limits_{1 \leq i \leq m} \{p, (\frac{p_i}{r_i})' \}}
\prod_{i \in \tau_1 \uplus \tau_2} \|b_i\|_{\osc_{\exp L}},   
\end{align*}
where $\beta := |\tau_1| + |\tau_2| +1$.
\end{lemma}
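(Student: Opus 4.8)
\textbf{Proof proposal for Lemma \ref{lem:Sparse}.}
The plan is to handle both parts of the lemma by reducing the weighted $L^{p_1} \times \cdots \times L^{p_m} \to L^p$ bound to the estimates of the dual form of the sparse operators against a test function, and then invoking Lemma \ref{lem:vvq} with a suitable choice of exponents. I will follow the device from \cite{LN} that treats $p \le 1$ and $p > 1$ uniformly: fix a sparse family $\S \subset \B$, write $\sigma_i = w_i^{1-(p_i/r_i)'}$ for the dual weights, and recall that $\vec{w} \in A_{\vec{p}/\vec{r}, \B}$ is equivalent (by Lemma \ref{lem:ApAp}) to $w = \prod_i w_i^{p/p_i} \in A_{m p/r, \B}$ together with $\sigma_i \in A_{m (p_i/r_i)', \B}$, with the quantitative relations \eqref{ww-1}--\eqref{ww-2} between the characteristic constants. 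For the first part, after replacing $f_i$ by $f_i \sigma_i^{-1/r_i}$ it suffices to control $\sum_{B \in \S} \prod_{i=1}^m \langle f_i \rangle_{B, r_i} \mu(B)$ weighted against a nonnegative $g \in L^{p'}(\Sigma, w)$ when $p > 1$ (adding $g$ as an $(m+1)$-st factor with the weight $w$ and exponent $p'$), or directly when $p \le 1$. In either case one lands exactly on the hypothesis of Lemma \ref{lem:vvq}: the scaling identity $\prod \sigma_i^{\theta_i} = 1$ (resp.\ $w^{\theta_0} \prod \sigma_i^{\theta_i} = 1$) holds precisely because $w = \prod w_i^{p/p_i}$, and the quantity $[[\vec\sigma]]_{\vec\theta}$ is comparable to $[\vec{w}]_{A_{\vec{p}/\vec{r}, \B}}$ by the product-form $A_{\vec{p}/\vec{r}}$ condition. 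The exponent $\max_i \{p, (p_i/r_i)'\}$ that appears is exactly $\max_i \{ q_i/(q_i-1) \}$ in the notation of Lemma \ref{lem:vvq} once $q_i$ is identified with $p_i/(r_i s_i)$-type quantities; this yields the first displayed inequality, while the weighted maximal norms are absorbed into $\mathcal{N}_1(\vec{r}, \vec{p}, \vec{w})$.

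For the commutator sparse operator $\A_{\S, \vec{r}}^{\b, \tau_1, \tau_2}$, the new feature is the presence of the factors $|b_i(x) - b_{i,B}|$ (for $i \in \tau_1$, attached pointwise) and the Orlicz averages $\langle (b_j - b_{j,B}) f_j \rangle_{B, r_j}$ (for $j \in \tau_2$). I would first linearize by testing against $g \in L^{p'}(\Sigma, w)$ (again, only when $p > 1$), and then split each $|b_i(x) - b_{i,B}|$-weighted sum using the generalized H\"{o}lder inequality on Orlicz spaces from Lemma \ref{lem:PhiPhi}: for $i \in \tau_1$ one writes $\fint_B |b_i - b_{i,B}| \, g \, w \, d\mu \lesssim \|b_i - b_{i,B}\|_{\exp L, B} \, \|g w\|_{L(\log L), B}$ and then iterates over $|\tau_1|$ such factors to produce $\|g w\|_{L(\log L)^{|\tau_1|}, B}$, which by \eqref{e:fw} is controlled by $[w]_{A_{\infty, \B}}^{|\tau_1|} \langle w \rangle_B \inf_B M_w(\cdots)$. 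For $j \in \tau_2$ one similarly uses $\langle (b_j - b_{j,B}) f_j \rangle_{B, r_j} = \| (b_j - b_{j,B}) |f_j|^{r_j} \cdots \|$-type bounds and Lemma \ref{lem:PhiPhi} to extract $\|b_j\|_{\osc_{\exp L^{r_j}}} \lesssim \|b_j\|_{\osc_{\exp L}}$ at the cost of replacing the plain average $\langle f_j \rangle_{B, r_j}$ by an $L(\log L)^{r_j}$ (Orlicz) average, which is then dominated by $\inf_B M_{\B, \sigma_j}(|f_j|^{r_j s_j})^{1/(r_j s_j)}$ after one more application of \eqref{e:w}/\eqref{e:fw} relative to the weight $\sigma_j$ — this is where the auxiliary exponents $s, s_j$ (with $1 < s < p'$, $1 < s_j < p_j/r_j$) and the factors $[\sigma_j]_{A_{\infty, \B}}$ enter, and where $\mathcal{N}_2^{\tau_2}(\vec{r}, \vec{p}, \vec{w})$ is exactly designed to absorb the resulting weighted maximal-function norms. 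After these reductions the remaining sum $\sum_{B \in \S} \prod_i (\text{plain average})^{s_i} \mu(B)$ is again of the form handled by Lemma \ref{lem:vvq}, producing the factor $[\vec{w}]_{A_{\vec{p}/\vec{r}}}^{\max_i \{p, (p_i/r_i)'\}}$; combining everything and collecting the $\osc_{\exp L}$ norms gives the first of the two displayed bounds for the commutator.

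The second (more symmetric) displayed bound for the commutator is obtained from the first simply by crude majorizations of the $A_\infty$-type constants in terms of the full $A_{\vec{p}/\vec{r}}$ characteristic: $[w]_{A_{\infty, \B}} \lesssim [w]_{A_{mp/r, \B}} \lesssim [\vec{w}]_{A_{\vec{p}/\vec{r}, \B}}^p$ and $[\sigma_j]_{A_{\infty, \B}} \lesssim [\sigma_j]_{A_{m(p_j/r_j)', \B}} \lesssim [\vec{w}]_{A_{\vec{p}/\vec{r}, \B}}^{(p_j/r_j)'}$ by \eqref{ww-1}--\eqref{ww-2} and Lemma \ref{lem:Appp}, so that each of the $|\tau_1|$ factors $[w]_{A_\infty}$ and the $|\tau_2|$ factors $[\sigma_j]_{A_\infty}$ contributes at most one power of $[\vec{w}]_{A_{\vec{p}/\vec{r}}}^{\max_i\{p,(p_i/r_i)'\}}$; adding these $|\tau_1| + |\tau_2|$ extra powers to the base power yields the exponent $(|\tau_1| + |\tau_2| + 1)\max_i\{p, (p_i/r_i)'\}$. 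I expect the main obstacle to be the careful bookkeeping in the commutator case: one must verify that the iterated Orlicz-H\"{o}lder splitting produces exactly the Orlicz classes $L(\log L)^{|\tau_1|}$ and $\exp L^{r_j}$ for which Lemmas \ref{lem:PhiPhi} and \ref{lem:LlogL} apply with the stated quantitative dependence, and that the auxiliary parameters $s, s_j$ can indeed be chosen in the required open ranges so that all the weighted maximal operators $M_{\B, \sigma_j}$ and $M_{\B, w}$ appearing are bounded on the appropriate Lebesgue spaces (this is exactly the constraint encoded in the definitions of $\mathcal{N}_1$ and $\mathcal{N}_2^{\tau}$). The application of Lemma \ref{lem:vvq} itself is then essentially mechanical, as is the reduction from the $p > 1$ to the $p \le 1$ case via the presence or absence of the test function $g$.
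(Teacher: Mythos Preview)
Your proposal is correct and follows essentially the same route as the paper's proof: reduction to Lemma~\ref{lem:vvq} via duality (test function $g$ when $p>1$, direct estimate with $w$ as an extra factor when $p\le 1$) for $\A_{\S,\vec{r}}$, and for $\A_{\S,\vec{r}}^{\b,\tau_1,\tau_2}$ the Orlicz--H\"older splitting of Lemma~\ref{lem:PhiPhi} together with \eqref{e:fw} to extract the $[w]_{A_{\infty}}^{|\tau_1|}$ and $[\sigma_j]_{A_{\infty}}$ factors and reduce again to the form handled by Lemma~\ref{lem:vvq}, followed by \eqref{ww-2} for the second displayed bound. One small correction in your description of the $\tau_2$ step: the oscillation quantity that arises from Lemma~\ref{lem:PhiPhi} is $\|(b_j-b_{j,B})^{r_j}\|_{\exp L^{1/r_j},B}^{1/r_j}$, which equals $\|b_j-b_{j,B}\|_{\exp L,B}\le \|b_j\|_{\osc_{\exp L}}$ exactly---you do not need (and should not write) the inequality $\|b_j\|_{\osc_{\exp L^{r_j}}}\lesssim \|b_j\|_{\osc_{\exp L}}$, which would go the wrong way for $r_j>1$.
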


\begin{proof}
Let $\vec{r}=(r_1, \ldots, r_m)$ with $1 \le r_1, \ldots, r_m<\infty$, $\vec{p}=(p_1, \ldots, p_m)$ with $r_i<p_i<\infty$, $i=1, \ldots, m$, and $\vec{w}=(w_1, \ldots, w_m) \in A_{\vec{p}/\vec{r}, \B}$. We begin with the estimate for $\A_{\S, \vec{r}}$. It suffices to show 
\begin{align}\label{fsigma}
\mathcal{J} 
&:= \|\A_{\S, \vec{r}}(f_1 \sigma_1^{\frac{1}{r_1}}, \ldots, f_m \sigma_m^{\frac{1}{r_m}})\|_{L^p(\Sigma, w)} 
\lesssim \mathcal{N}(\vec{r}, \vec{p}, \vec{w}) \prod_{i=1}^m \|f_i\|_{L^{p_i}(\Sigma, \sigma_i)}, 
\end{align}
where $\sigma_i := w_i^{\frac{r_i}{r_i-p_i}}$, $i=1, \ldots, m$. Assume first that $p>1$. By duality, \eqref{fsigma} is reduced to the following inequality 
\begin{align}\label{Npw}
\sum_{B \in \S} \Big(\prod_{i=1}^m \langle f_i \sigma_i^{\frac{1}{r_i}} \rangle_{B, r_i} \Big) \langle g w \rangle_B \, \mu(B) 
\lesssim \mathcal{N}(\vec{r}, \vec{p}, \vec{w}) \prod_{i=1}^m \|f_i\|_{L^{p_i}(\Sigma, \sigma_i)}, 
\end{align}
for all $g \in L^{p'}(\Sigma, w)$ with $\|g\|_{L^{p'}(\Sigma, w)} = 1$. Choosing 
\begin{align*}
& \sigma_{m+1} = w, \quad 
p_{m+1}=p', \quad 
r_{m+1}=1, \quad 
f_{m+1}=g, \quad 
\\ 
&s_i = \frac{1}{r_i}, \quad 
q_i = \frac{p_i}{r_i}, \quad 
\text{and}\quad  
\theta_i := \frac{1}{r_i}-\frac{1}{p_i},\quad  i=1, \ldots, m+1, 
\end{align*} 
we see that
\begin{align*}
&\sum_{i=1}^{m+1} \frac{s_i}{q_i} 
=\sum_{i=1}^{m+1} \frac{1}{p_i} = 1, \quad 
\theta_i = s_i \Big(1-\frac{1}{q_i}\Big), \quad i=1, \ldots, m+1, 
\\ 
&\text{and} \quad \prod_{i=1}^{m+1} \sigma_i^{\theta_i} 
=\prod_{i=1}^m w_i^{\frac{r_i \theta_i}{r_i - p_i}} \times w^{\frac1p} 
= \prod_{i=1}^m w_i^{-\frac{1}{p_i}} \times w^{\frac{1}{p}}=1. 
\end{align*}
By the choices above, we have 
\begin{align}\label{sigsig}
[[\vec{v}]]_{\vec{\theta}} 
= [\vec{w}]_{A_{\vec{p}/\vec{r}, \B}}  
\quad\text{and}\quad 
\max_{1 \le i \le m+1} \Big\{\frac{q_i}{q_i-1} \Big\}  
= \max\limits_{1 \le i \le m} \Big\{\Big(\frac{p_i}{r_i} \Big)', p \Big\}. 
\end{align}
Then in view of  \eqref{sigsig}, Lemma \ref{lem:vvq} applied to $|f_i|^{r_i}$ in place of $f_i$ yields 
\begin{align}\label{JP-1}
&\sum_{B \in \S} \Big(\prod_{i=1}^m \langle f_i \sigma_i^{\frac{1}{r_i}} \rangle_{B, r_i} \Big) 
\langle g w \rangle_B \, \mu(B) 
\nonumber \\ 
&=\sum_{B \in \S} \Big(\prod_{i=1}^m \langle |f_i|^{r_i} \sigma_i \rangle_B^{\frac{1}{r_i}} \Big) 
\langle g w \rangle_B \, \mu(B) 
\nonumber \\ 
&= \sum_{B \in \S} \Big(\prod_{i=1}^{m+1} \langle |f_i|^{r_i} \sigma_i \rangle_B^{s_i} \Big) \, \mu(B) 
\nonumber \\ 
&\lesssim [[\vec{\sigma}]]_{\vec{\theta}}^{\max\limits_{1 \le i \le m+1} \{\frac{q_i}{q_i-1} \}}  
\prod_{i=1}^{m+1} \|M_{\B, \sigma_i}\|_{L^{q_i}(\Sigma, \sigma_i)}^{s_i} \| |f_i|^{r_i}\|_{L^{q_i}(\Sigma, \sigma_i)}^{s_i} 
\nonumber \\ 
&= [\vec{w}]_{A_{\vec{p}/\vec{r}, \B}}^{\max\limits_{1 \le i \le m}\{(\frac{p_i}{r_i})', p\}} 
\|M_{\B, w}\|_{L^{p'}(\Sigma, w)}
\nonumber \\ 
&\qquad\qquad\times \prod_{i=1}^m \|M_{\B, \sigma_i}\|_{L^{p_i/r_i}(\Sigma, \sigma_i)}^{\frac{1}{r_i}} 
\|f_i\|_{L^{p_i}(\Sigma, \sigma_i)}, 
\end{align}
which shows \eqref{Npw}. 

To deal with the case $0 < p \le 1$, we observe that
\begin{align}\label{jpjp}
\mathcal{J}^p 
\le \widetilde{\mathcal{J}}^p 
&:= \sum_{B \in \S} \bigg(\prod_{i=1}^m \langle f_i \sigma_i^{\frac{1}{r_i}} \rangle_{B, r_i}^p \bigg) w(B) 
= \sum_{B \in \S} \bigg(\prod_{i=1}^{m+1} \langle |f_i|^{r_i} \sigma_i \rangle_B^{s_i} \bigg) \mu(B), 
\end{align}
where $\sigma_{m+1} = w$, $f_{m+1}=1$, $s_1=\cdots=s_m=p/r_i$, and $s_{m+1}=1$. Choosing 
\[
q_i = \frac{p_i}{r_i} \quad\text{ and }\quad 
\theta_i=s_i \bigg(1-\frac{1}{q_i}\bigg), \quad i=1, \ldots, m+1, 
\]
where $r_{m+1}=1$ and $p_{m+1}=\infty$, we see that 
\begin{align*}
\sum_{i=1}^{m+1}\frac{s_i}{q_i}
=\sum_{i=1}^m \frac{p}{p_i}= 1
\quad\text{ and}\quad 
\prod_{i=1}^{m+1} \sigma_i^{\theta_i} 
=w\prod_{i=1}^m w_i^{\frac{r_i \theta_i}{r_i-p_i}} 
=w\prod_{i=1}^m w_i^{-\frac{p}{p_i}} =1. 
\end{align*}
Note that
\begin{align*}
[[\vec{\sigma}]]_{\vec{\theta}} = [\vec{w}]_{A_{\vec{p}/\vec{r}, \B}}^p
\quad\text{ and }\quad 
\max\limits_{1 \le i \le m+1} \Big\{\frac{q_i}{q_i-1}\Big\}
=\max\limits_{1 \le i \le m}\Big\{\Big(\frac{p_i}{r_i}\Big)' \Big\}, 
\end{align*}
which along with Lemma \ref{lem:vvq} applied to $|f_i|^{r_i}$ instead of $f_i$ yields 
\begin{align}\label{JP-2}
\widetilde{\mathcal{J}}^p 
&\lesssim [[\vec{\sigma}]]_{\vec{\theta}}^{\max\limits_{1 \le i \le m+1} \{\frac{q_i}{q_i-1} \}}  
\prod_{i=1}^{m+1} \|M_{\B, \sigma_i}\|_{L^{q_i}(\Sigma, \sigma_i)}^{s_i} \||f_i|^{r_i}\|_{L^{q_i}(\Sigma, \sigma_i)}^{s_i} 
\nonumber \\ 
&\le [\vec{w}]_{A_{\vec{p}/\vec{r}, \B}}^{p \max\limits_{1 \le i \le m}\{(\frac{p_i}{r_i})'\}} 
\prod_{i=1}^m \|M_{\B, \sigma_i}\|_{L^{p_i/r_i}(\Sigma, \sigma_i)}^{\frac{p}{r_i}} \|f_i\|_{L^{p_i}(\Sigma, \sigma_i)}^p, 
\end{align}
where we have used that $\|M_{\B, w}\|_{L^{\infty}(\Sigma, w)} \le 1$ and $\|f_{m+1}\|_{L^{\infty}(\Sigma, w)}=1$. Hence, \eqref{fsigma} follows from \eqref{Npw}, \eqref{jpjp}, and \eqref{JP-2}.

Next, let us estimate $\A_{\S, \vec{r}}^{\b, \tau_1, \tau_2}$. Assuming that $\|b_i\|_{\osc_{\exp L}}=1$, $i \in \tau_1 \cup \tau_2$, it is enough to show 
\begin{align}\label{ASNP}
&\|\A_{\S,\vec{r}}^{\b, \tau_1, \tau_2}(f_1 \sigma_1^{\frac{1}{r_1}}, \ldots, f_m \sigma_m^{\frac{1}{r_m}})\|_{L^p(\Sigma, w)} 
\nonumber \\
&\qquad\lesssim \mathcal{N}_1(\vec{r}, \vec{p}, \vec{w}) 
\mathcal{N}_2^{\tau_2}(\vec{r}, \vec{p}, \vec{w}) 
[w]_{A_{\infty, \B}}^{ |\tau_1|}
\nonumber \\
&\qquad\qquad\times \prod_{j \in \tau_2} [\sigma_j]_{A_{\infty, \B}}
[\vec{w}]_{A_{\vec{p}/\vec{r}}}^{\max\limits_{1 \leq i \leq m} \{p, (\frac{p_i}{r_i})' \}}
\prod_{i=1}^m \|f_i\|_{L^{p_i}(\Sigma, \sigma_i)}, 
\end{align}
since \eqref{ww-2} applied to $\vec{p}/\vec{r}$ in place of $\vec{p}$ gives 
\begin{align}\label{www}
[w]_{A_{\infty, \B}}^{ |\tau_1|}
\prod_{j \in \tau_2} [\sigma_j]_{A_{\infty, \B}} 
&\le [\vec{w}]_{A_{\vec{p}, \B}}^{|\tau_1| p}
\prod_{j \in \tau_2} [\vec{w}]_{A_{\vec{p}, \B}}^{(p_j/r_j)'} 
\lesssim [\vec{w}]_{A_{\vec{p}, \B}}^{(|\tau_1| + |\tau_2|) \max\limits_{1 \le i \le m}\{p, (\frac{p_i}{r_i})'\}}.
\end{align}
To treat the case $p > 1$, let $g \in L^{p'}(\Sigma, w)$ be a nonnegative function satisfying $\|g\|_{L^{p'}(\Sigma, w)} = 1$. It follows from Lemma \ref{lem:PhiPhi} that 
\begin{align*}
&\int_{\Sigma} \A_{\S,\vec{r}}^{\b, \tau_1, \tau_2}
(f_1 \sigma_1^{\frac{1}{r_1}}, \ldots, f_m \sigma_m^{\frac{1}{r_m}}) \, gw\, d\mu 
\\ 
& \lesssim \sum_{B \in \S} \mu(B) \bigg(\fint_B \prod_{i \in \tau_1}
\langle f_i \sigma_i^{\frac{1}{r_i}} \rangle_{B, r_i} |b_i - b_{i, B}|  g w\, d\mu \bigg)
\\ 
&\quad\quad \times \prod_{j \in \tau_2} \langle (b_j - b_{j, B}) f_j \sigma_j^{\frac{1}{r_j}}\rangle_{B, r_j} 
\times \prod_{k \not\in \tau_1 \cup \tau_2} \langle f_k \sigma_k^{\frac{1}{r_k}}\rangle_{B, r_k} 
\\ 
& \lesssim \sum_{B \in \S} \mu(B) \|gw\|_{L(\log L)^{|\tau_1|}, B}
\prod_{i \in \tau_1} \langle f_i \sigma^{\frac{1}{r_i}} \rangle_{B, r_i}
\|b_i - b_{i, B} \|_{\exp L, B}
\\ 
&\quad \times \prod_{j \in \tau_2}
\|(b_j - b_{j, B})^{r_j} \|_{\exp L^{\frac{1}{r_j}}, B}^{\frac{1}{r_j}} 
\| |f_j|^{r_j} \sigma_j  \|_{L(\log L)^{r_j}, B}^{\frac{1}{r_j}}
\prod_{k \not\in \tau_1 \cup \tau_2} \langle f_k \sigma_k^{\frac{1}{r_k}}\rangle_{B, r_k} 
\\ 
& \lesssim 
\sum_{B \in \S} \prod_{i \not\in \tau_2} \langle f_i \sigma_i^{\frac{1}{r_i}} \rangle_{B, r_i}
\prod_{j \in \tau_2}  \| |f_j|^{r_j} \sigma_j  \|_{L(\log L)^{r_j}, B}^{\frac{1}{r_j}} 
\|gw\|_{L(\log L)^{|\tau_1|}, B} \mu(B). 
\end{align*}
To control the inner terms, we use \eqref{e:fw} to obtain 
\begin{align}
\label{tau-1} \| |f_j|^{r_j} \sigma_j  \|_{L(\log L)^{r_j}, B}^{\frac{1}{r_j}}
&\lesssim [\sigma_j]_{A_{\infty, \B}} \big\langle M_{\B, \sigma_j}(|f_j|^{r_j s_j})^{\frac{1}{s_j}} \sigma_j \big\rangle_B^{\frac{1}{r_j}} ,
\\ 
\label{tau-2} \|gw\|_{L(\log L)^{|\tau_1|}, B}
&\lesssim [w]_{A_{\infty, \B}}^{|\tau_1|} \big\langle M_{\B, w}(|g|^s)(x)^{\frac1s} w \big\rangle_B, 
\end{align}
where $1 < s_j < p_j/r_j$, $j \in \tau_2$, and $1 < s < p'$. Collecting \eqref{tau-1} and \eqref{tau-2}, we obtain
\begin{align}\label{ASNP-1}
&\int_{\Sigma} \A_{\S,\vec{r}}^{\b, \tau_1, \tau_2}
(f_1 \sigma_1^{\frac{1}{r_1}}, \ldots, f_m \sigma_m^{\frac{1}{r_m}}) \, gw\, d\mu 
\nonumber \\ 
& \lesssim [w]_{A_{\infty, \B}}^{|\tau_1|} \prod_{j \in \tau_2} [\sigma_j]_{A_{\infty, \B}}
\sum_{B \in \S} \prod_{i \not\in \tau_2} \langle f_i \sigma_i^{\frac{1}{r_i}} \rangle_{B, r_i} 
\nonumber \\ 
&\quad \times \prod_{j \in \tau_2} 
\big\langle M_{\B, \sigma_j}(|f_j|^{r_j s_j})^{\frac{1}{r_j s_j}} \cdot \sigma_j^{\frac{1}{r_j}} \big\rangle_{B, r_j}
\big\langle M_{\B, w}(|g|^s)^{\frac1s} w \big\rangle_B \, \mu(B)
\nonumber \\ 
& \lesssim [w]_{A_{\infty, \B}}^{|\tau_1|} \prod_{j \in \tau_2} [\sigma_j]_{A_{\infty, \B}} 
[\vec{w}]_{A_{\vec{p}/\vec{r}, \B}}^{\max\limits_{1 \le i \le m}\{(\frac{p_i}{r_i})', p\}} 
\|M_{\B, w}\|_{L^{p'}(\Sigma, w)}
\nonumber \\ 
&\quad\times\prod_{i=1}^m \|M_{\B, \sigma_i}\|_{L^{p_i/r_i}(\Sigma, \sigma_i)}^{\frac{1}{r_i}}
\prod_{i \not\in \tau_2} \|f_i\|_{L^{p_i}(\Sigma, \sigma_i)}
\nonumber \\ 
&\quad\times
\prod_{j \in \tau_2} \|M_{\B, \sigma_j}(|f_j|^{r_j s_j})^{\frac{1}{r_j s_j}}\|_{L^{p_j}(\Sigma, \sigma_j)}
\|M_{\B, w}(|g|^s)^{\frac1s}\|_{L^{p'}(\Sigma, w)}
\nonumber \\ 
& \lesssim [w]_{A_{\infty, \B}}^{|\tau_1|} \prod_{j \in \tau_2} [\sigma_j]_{A_{\infty, \B}} 
[\vec{w}]_{A_{\vec{p}/\vec{r}, \B}}^{\max\limits_{1 \le i \le m}\{(\frac{p_i}{r_i})', p\}} 
\|M_{\B, w}\|_{L^{p'}(\Sigma, w)}
\nonumber \\ 
&\quad\times \prod_{i=1}^m \|M_{\B, \sigma_i}\|_{L^{p_i/r_i}(\Sigma, \sigma_i)}^{\frac{1}{r_i}}
\prod_{j \in \tau_2} \|M_{\B, \sigma_j}\|_{L^{\frac{p_j}{r_j s_j}}(\Sigma, \sigma_j)}^{\frac{1}{r_j s_j}} 
\nonumber \\ 
&\quad\times \|M_{\B, w}\|_{L^{p'/s}(\Sigma, w)}^{\frac1s} \prod_{i=1}^m \|f_i\|_{L^{p_i}(\Sigma, \sigma_i)}.  
\end{align}
Next, let us turn our attention to the case $0 < p \leq 1$. Using Lemma \ref{lem:PhiPhi} and \eqref{tau-1}, we have
\begin{align*}
\|&\A_{\S,\vec{r}}^{\b, \tau_1, \tau_2}(f_1 \sigma_1^{\frac{1}{r_1}}, \ldots, f_m \sigma_m^{\frac{1}{r_m}})\|_{L^p(w)}^p 
\\
& \leq \sum_{B \in \S} \bigg(\fint_B \prod_{i \in \tau_1} 
\langle f_i \sigma_i^{\frac{1}{r_i}} \rangle_{B, r_i}^p |b_i - b_{i,B}|^p w \, d\mu \bigg)
\\
&\qquad \times \prod_{j \in \tau_2} \langle (b_j - b_{j,B}) f_j \sigma_j^{\frac{1}{r_j}} \rangle_{B, r_j}^p 
\prod_{k \not\in \tau_1 \cup \tau_2} \langle f_k \sigma_k^{\frac{1}{r_k}} \rangle_{B, r_k} \, \mu(B) 
\\ 
& \lesssim \sum_{B \in \S} \|w\|_{L(\log L)^{p |\tau_1|}, B} 
\prod_{i \in \tau_1} \langle f_i \sigma_i^{\frac{1}{r_i}} \rangle_{B, r_i}^p
\|(b_i - b_{i, B})^p\|_{\exp L^{\frac1p}, B}
\\ 
&\quad\quad \times \prod_{j \in \tau_2} \| (b_j - b_{j, B})^{r_j} \|_{\exp L^{\frac{1}{r_j}}, B}^{\frac{p}{r_j}} 
\| |f_j|^{r_j} \sigma_j \|_{L(\log L)^{r_j}, B}^{\frac{p}{r_j}} 
\\
&\qquad\times \prod_{k \not\in \tau_1 \cup \tau_2} \langle f_k \sigma_k^{\frac{1}{r_k}} \rangle_{B, r_k}\, \mu(B)
\\ 
& \lesssim [w]_{A_{\infty}}^{p |\tau_1|} \prod_{j \in \tau_2} [\sigma_j]_{A_{\infty}}^p 
\sum_{B \in \S} \prod_{i \in \tau_1 \cup \tau_3} \langle f_i \sigma_i^{\frac{1}{r_i}} \rangle_{B, r_i}^p 
\\
&\qquad\times \prod_{j \in \tau_2}  \big\langle M_{\B, \sigma_j}(|f_j|^{r_j s_j})^{\frac{1}{r_j s_j}} \cdot \sigma_j^{\frac{1}{r_j}} \big\rangle_{B, r_j}^p w(B). 
\end{align*}
Furthermore, it can be controlled by 
\begin{align}\label{ASNP-2}
\|&\A_{\S,\vec{r}}^{\b, \tau_1, \tau_2}(f_1 \sigma_1^{\frac{1}{r_1}}, \ldots, f_m \sigma_m^{\frac{1}{r_m}})\|_{L^p(w)}^p 
\nonumber \\ 
&\quad\lesssim [w]_{A_{\infty}}^{p |\tau_1|} \prod_{j \in \tau_2} [\sigma_j]_{A_{\infty}}^p
[\vec{w}]_{A_{\vec{p}/\vec{r}, \B}}^{p \max\limits_{1 \le i \le m}\{(\frac{p_i}{r_i})'\}} 
\prod_{i=1}^m \|M_{\B, \sigma_i}\|_{L^{p_i/r_i}(\Sigma, \sigma_i)}^{\frac{p}{r_i}} 
\nonumber \\ 
&\qquad\quad\times \prod_{i \in \tau_1 \cup \tau_3} \|f_i\|_{L^{p_i}(\Sigma, \sigma_i)}^p 
\prod_{j \in \tau_2} \|M_{\B, \sigma_j}(|f_j|^{r_j s_j})^{\frac{1}{r_j s_j}}\|_{L^{p_j}(\Sigma, \sigma_j)}^p
\nonumber \\  
&\quad\lesssim [w]_{A_{\infty}}^{p |\tau_1|} \prod_{j \in \tau_2} [\sigma_j]_{A_{\infty}}^p
[\vec{w}]_{A_{\vec{p}/\vec{r}, \B}}^{p \max\limits_{1 \le i \le m}\{(\frac{p_i}{r_i})'\}} 
\prod_{i=1}^m \|M_{\B, \sigma_i}\|_{L^{p_i/r_i}(\Sigma, \sigma_i)}^{\frac{p}{r_i}} 
\nonumber \\ 
&\qquad\quad\times \prod_{j \in \tau_2} \|M_{\B, \sigma_j}\|_{L^{\frac{p_j}{r_j s_j}}(\Sigma, \sigma_j)}^{\frac{p}{r_j s_j}} 
\prod_{i=1}^m \|f_i\|_{L^{p_i}(\sigma_i)}^p. 
\end{align}
Thus, \eqref{ASNP} is a consequence of \eqref{ASNP-1} and \eqref{ASNP-2}. This completes the proof. 
\end{proof}

Now, it is clear that Theorem \ref{thm:T} is a direct consequence of Theorem \ref{thm:sparse} and Lemma \ref{lem:Sparse}. 
Additionally, by Lemma \ref{lem:Sparse}, to prove Theorem \ref{thm:T-Besi}, one has to control $\|M_{\B, w}\|_{L^s(\Sigma, w)}$ whenever $\B$ satisfies the Besicovitch condition. This was done in \cite[Theorem 6.3]{Kar}.

\begin{lemma}\label{lem:M-Besi}
Let $(\Sigma, \mu)$ be a measure space with a ball-basis $\B$ satisfying the Besicovitch condition with the constant $N_0$. Then for any weight $w$,  
\begin{align}
&\|M_{\B, w}\|_{L^1(\Sigma, w) \to L^{1, \infty}(\Sigma, w)} \le N_0, 
\\
&\|M_{\B, w}\|_{L^s(\Sigma, w) \to L^s(\Sigma, w)} \le C_p N_0^{\frac1s}, \quad 1<s<\infty. 
\end{align}
\end{lemma}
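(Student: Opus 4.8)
The plan is to establish the weak-type $(1,1)$ estimate directly from the Besicovitch condition by a covering argument, and then to upgrade it to the strong-type $(s,s)$ bounds for $1<s<\infty$ by interpolating against the trivial $L^{\infty}(\Sigma,w)$ estimate. This follows the same scheme as the unweighted bound \eqref{Mr-1} (i.e. \cite[Theorem~4.1]{Kar}), the only substantive change being that a bounded-overlap subfamily furnished by the Besicovitch property is used in place of the disjointness extraction of Lemma \ref{lem:BE}.

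First I would prove the weak-$(1,1)$ bound. Fix $\lambda>0$ and a nonnegative $f\in L^1(\Sigma,w)$, and set $E_\lambda:=\{x\in\Sigma:M_{\B,w}f(x)>\lambda\}$. For each $x\in E_\lambda$ pick $B_x\in\B$ with $x\in B_x$ and $\tfrac{1}{w(B_x)}\int_{B_x}f\,w\,d\mu>\lambda$, so that $w(B_x)<\lambda^{-1}\|f\|_{L^1(\Sigma,w)}<\infty$. Applying the Besicovitch condition to $\mathcal{G}:=\{B_x:x\in E_\lambda\}\subset\B$ produces $\mathcal{G}'\subset\mathcal{G}$ with $\bigcup_{B\in\mathcal{G}'}B=\bigcup_{B\in\mathcal{G}}B\supset E_\lambda$ and $\sum_{B\in\mathcal{G}'}\mathbf{1}_B(x)\le N_0$ for every $x$. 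After reducing $\mathcal{G}'$ to a countable (indeed bounded) subcollection — which I would carry out using the ball-basis axioms and the exhaustion of $\Sigma$ by balls from Lemmas \ref{lem:BG}--\ref{lem:BE} — the computation is
\begin{align*}
w(E_\lambda)
\le \sum_{B\in\mathcal{G}'}w(B)
< \frac{1}{\lambda}\sum_{B\in\mathcal{G}'}\int_B f\,w\,d\mu
= \frac{1}{\lambda}\int_\Sigma\Big(\sum_{B\in\mathcal{G}'}\mathbf{1}_B\Big)f\,w\,d\mu
\le \frac{N_0}{\lambda}\,\|f\|_{L^1(\Sigma,w)},
\end{align*}
which gives the first inequality with constant $N_0$.

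Next I would treat the strong-type bounds. The operator $M_{\B,w}$ is sublinear, and $\|M_{\B,w}f\|_{L^\infty(\Sigma,w)}\le\|f\|_{L^\infty(\Sigma,w)}$ since every weighted average of $f$ is at most $\|f\|_{L^\infty(\Sigma,w)}$. Interpolating the weak-$(1,1)$ estimate (constant $N_0$) with this strong-$(\infty,\infty)$ estimate (constant $1$) by the Marcinkiewicz interpolation theorem applied to the measure $w\,d\mu$ — the general-measure-space version already invoked in the proof of Theorem \ref{thm:weak} — yields, for each $1<s<\infty$,
\begin{align*}
\|M_{\B,w}\|_{L^s(\Sigma,w)\to L^s(\Sigma,w)}\le C_s\,N_0^{\frac1s}\cdot 1^{1-\frac1s}=C_s\,N_0^{\frac1s}.
\end{align*}

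The soft ingredients — sublinearity, the $L^\infty$ estimate, Marcinkiewicz interpolation — are immediate. The main obstacle will be the measure-theoretic bookkeeping in the covering step: passing from the (a priori possibly uncountable) Besicovitch subfamily $\mathcal{G}'$ to a countable one so that the pointwise overlap bound $\sum_B\mathbf{1}_B\le N_0$ can be integrated, while controlling the contribution of arbitrarily large selected balls meeting a fixed ball. This is precisely the technical core of \cite[Theorem~6.3]{Kar}, handled there via property \eqref{list:B4} together with the covering and exhaustion lemmas of Section~\ref{sec:geomtry}.
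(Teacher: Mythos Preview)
Your proposal is correct and follows the standard route: the weak-$(1,1)$ bound via the bounded-overlap covering from the Besicovitch condition, then Marcinkiewicz interpolation against the trivial $L^\infty$ estimate. The paper does not supply its own argument for this lemma; it simply records the statement and cites \cite[Theorem~6.3]{Kar}, which is exactly the result whose proof you have sketched (and you yourself identify this at the end of your proposal).
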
 

Then Lemmas \ref{lem:Sparse} and \ref{lem:M-Besi} imply the following result.  

\begin{lemma}\label{lem:Sparse-Besi}
Let $(\Sigma, \mu)$ be a measure space with a ball-basis $\B$ satisfying the Besicovitch condition. Then for all $\vec{r}=(r_1, \ldots, r_m)$ with $1 \le r_1, \ldots, r_m<\infty$, for all $\vec{p}=(p_1, \ldots, p_m)$ with $r_i<p_i<\infty$, $i=1, \ldots, m$, and for all $\vec{w}=(w_1, \ldots, w_m) \in A_{\vec{p}/\vec{r}}$,  we have 
\begin{align*}
\sup_{\S \subset \B: \text{sparse}} 
\|\mathcal{A}_{\S, \vec{r}}\|_{L^{p_1}(w_1) \times \ldots \times L^{p_m}(w_m) \to L^p(w)} 
\lesssim [\vec{w}]_{\vec{p}/\vec{r}, \B}^{\max\limits_{1 \le i \le m}\{p, (\frac{p_i}{r_i})'\}}. 
\end{align*}
Moreover, if $A_{\infty, \B}$ satisfies the sharp reverse H\"{o}lder property, then for the same exponents $\vec{p}$ and weights  $\vec{w}$, 
\begin{align*}
&\sup_{\S \subset \B: \text{sparse}} 
\|\A_{\S, \vec{r}}^{\b, \tau_1, \tau_2} \|_{L^{p_1}(\Sigma, w_1) \times \cdots \times L^{p_m}(\Sigma, w_m) \rightarrow L^p(\Sigma, w)} 
\\ 
&\quad \lesssim [w]_{A_{\infty, \B}}^{ |\tau_1|}
\prod_{j \in \tau_2} [\sigma_j]_{A_{\infty, \B}}
[\vec{w}]_{A_{\vec{p}/\vec{r}}}^{\max\limits_{1 \leq i \leq m} \{p, (\frac{p_i}{r_i})' \}}
\prod_{i \in \tau_1 \uplus \tau_2} \|b_i\|_{\osc_{\exp L}}
\\ 
&\quad \lesssim 
[\vec{w}]_{A_{\vec{p}/\vec{r}}}^{(|\tau_1| + |\tau_2|+1) 
\max\limits_{1 \leq i \leq m} \{p, (\frac{p_i}{r_i})' \}}
\prod_{i \in \tau_1 \uplus \tau_2} \|b_i\|_{\osc_{\exp L}}. 
\end{align*}
\end{lemma}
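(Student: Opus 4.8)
\textbf{Proof plan for Lemma \ref{lem:Sparse-Besi}.}
The statement is simply the specialization of Lemma \ref{lem:Sparse} to a ball-basis satisfying the Besicovitch condition, so the only thing that needs to be done is to remove the quantities $\mathcal{N}_1(\vec{r}, \vec{p}, \vec{w})$ and $\mathcal{N}_2^{\tau_2}(\vec{r}, \vec{p}, \vec{w})$ from the right-hand sides by showing that, under the Besicovitch condition, these are bounded by an absolute constant times a suitable power of $[\vec{w}]_{A_{\vec{p}/\vec{r}, \B}}$. The plan is to invoke Lemma \ref{lem:M-Besi} to control every weighted maximal operator appearing in $\mathcal{N}_1$ and $\mathcal{N}_2^{\tau_2}$ by an absolute constant (each of the relevant Lebesgue exponents $p_i/r_i$, $p'$, $p_j/(r_j s_j)$, $p'/s$ is strictly bigger than $1$, so the strong-type bound in Lemma \ref{lem:M-Besi} applies and gives $\|M_{\B, \sigma}\|_{L^q(\Sigma, \sigma) \to L^q(\Sigma, \sigma)} \le C_q N_0^{1/q}$ uniformly in the weight $\sigma$). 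Consequently $\mathcal{N}_1(\vec{r}, \vec{p}, \vec{w}) \lesssim 1$ and $\mathcal{N}_2^{\tau_2}(\vec{r}, \vec{p}, \vec{w}) \lesssim 1$, with implicit constants depending only on $\vec{p}$, $\vec{r}$, $m$, and the Besicovitch constant $N_0$.

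Carrying this out, first I would recall from Lemma \ref{lem:Sparse} that
\begin{align*}
\sup_{\S \subset \B: \text{sparse}}
\|\mathcal{A}_{\S, \vec{r}}\|_{L^{p_1}(w_1) \times \ldots \times L^{p_m}(w_m) \to L^p(w)}
\lesssim \mathcal{N}_1(\vec{r}, \vec{p}, \vec{w}) [\vec{w}]_{A_{\vec{p}/\vec{r}, \B}}^{\max\limits_{1\le i \le m}\{p, (\frac{p_i}{r_i})'\}}.
\end{align*}
Then, reading off the definition of $\mathcal{N}_1(\vec{r}, \vec{p}, \vec{w})$, I would bound $\|M_{\B, \sigma_i}\|_{L^{p_i/r_i}(\Sigma, \sigma_i)} \le C_{p_i/r_i} N_0^{r_i/p_i}$ for each $i$ using Lemma \ref{lem:M-Besi} (here $p_i/r_i > 1$), and likewise $\|M_{\B, w}\|_{L^{p'}(\Sigma, w)} \le C_{p'} N_0^{1/p'}$ in the case $p > 1$ (here $p' > 1$ automatically). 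Multiplying these out gives $\mathcal{N}_1(\vec{r}, \vec{p}, \vec{w}) \le C(\vec{p}, \vec{r}, m, N_0)$, which yields the first claimed inequality. For the commutator estimate I would do the same for $\mathcal{N}_2^{\tau_2}(\vec{r}, \vec{p}, \vec{w})$: its defining factors $\|M_{\B, \sigma_j}\|_{L^{p_j/(r_j s_j)}(\Sigma, \sigma_j)}$ (with $1 < s_j < p_j/r_j$, so $p_j/(r_j s_j) > 1$) and $\|M_{\B, w}\|_{L^{p'/s}(\Sigma, w)}$ (with $1 < s < p'$, so $p'/s > 1$) are all controlled by Lemma \ref{lem:M-Besi}, hence $\mathcal{N}_2^{\tau_2}(\vec{r}, \vec{p}, \vec{w}) \lesssim 1$. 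Substituting $\mathcal{N}_1 \lesssim 1$ and $\mathcal{N}_2^{\tau_2} \lesssim 1$ into the two commutator bounds of Lemma \ref{lem:Sparse} delivers exactly the two displayed inequalities of the present lemma (the second following from the first via the estimate $[w]_{A_{\infty, \B}}^{|\tau_1|}\prod_{j\in\tau_2}[\sigma_j]_{A_{\infty, \B}} \lesssim [\vec{w}]_{A_{\vec{p}/\vec{r}}}^{(|\tau_1|+|\tau_2|)\max_i\{p, (p_i/r_i)'\}}$ already recorded inside the proof of Lemma \ref{lem:Sparse}).

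There is essentially no obstacle here; the lemma is a bookkeeping corollary. The only point requiring a line of care is to verify that every Lebesgue exponent on which a weighted maximal operator is tested in $\mathcal{N}_1$ and $\mathcal{N}_2^{\tau_2}$ is strictly greater than $1$ — which holds because $r_i < p_i$, $1 < s_j < p_j/r_j$, $1 < s < p'$, and $p > 1$ in the case where $\|M_{\B, w}\|$ appears — so that the strong $(q,q)$ bound of Lemma \ref{lem:M-Besi}, rather than the weak $(1,1)$ bound, is the one that gets used, and it gives a constant independent of the weight. Once that is noted, the argument is complete.
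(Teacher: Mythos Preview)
Your proposal is correct and matches the paper's approach exactly: the paper simply states that Lemma \ref{lem:Sparse-Besi} follows from Lemmas \ref{lem:Sparse} and \ref{lem:M-Besi}, which is precisely what you do. Your verification that all the Lebesgue exponents appearing in $\mathcal{N}_1$ and $\mathcal{N}_2^{\tau_2}$ lie in $(1,\infty)$ (so that the strong-type bound of Lemma \ref{lem:M-Besi} applies with a weight-independent constant) is the only point requiring care, and you handle it correctly.
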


Therefore, Theorem \ref{thm:T-Besi} follows from Theorem \ref{thm:sparse} and Lemma \ref{lem:Sparse-Besi}.

\section{Compactness of commutators}\label{sec:compact}
Our goal of this section is to show Theorems \ref{thm:Tb}--\ref{thm:FKhs-2}.

\subsection{Characterizations of compactness}  
We begin with showing weighted Fr\'{e}chet-Kolmogorov theorems, which characterize the relative compactness of a set in $L^p(\Sigma, w)$ on spaces of homogeneous type $(\Sigma, \rho, \mu)$. In Lebesgue spaces, it was proved by Yosida \cite[p.~275]{Yosida} in the case $1\le p<\infty$, which was extended to the quasi-Banach case $0<p<1$ in \cite{Tsuji} and the weighted Lebesgue spaces in \cite{COY, XYY}.

\begin{proof}[\bf Proof of Theorem \ref{thm:FKhs-1}]
We begin with proving the necessity. Since $\K$ is relatively compact, it is totally bounded. Then, given $\varepsilon>0$, one can find a finite number of functions $\{f_j\}_{j=1}^N \subset \K$ such that $\K \subseteq \bigcup_{k=1}^N B(f_k,\varepsilon)$. This means that given an arbitrary function $f \in \K$, there exists some $k \in \{1, \ldots, N\}$ such that 
\begin{align}\label{eq:fk-f}
\|f_k-f\|_{L^{p}(\Sigma, w)} < \varepsilon, 
\end{align}
which in turn gives 
\begin{align*}
\|f\|_{L^{p}(\Sigma, w)} 
\leq \|f-f_k\|_{L^{p}(\Sigma, w)} + \|f_k\|_{L^{p}(\Sigma, w)}
< \varepsilon + \max_{1 \leq k \leq N} \|f_k\|_{L^{p}(\Sigma, w)}. 
\end{align*}
This justifies the condition \eqref{list:FK1} holds. Since $f_k \in L^p(\Sigma, w)$, there exists $A_k>0$ such that 
\begin{equation}\label{eq:fkAk}
\|f_k \mathbf{1}_{\Sigma \setminus B(x_0, A_k)}\|_{L^p(\Sigma, w)} < \varepsilon, 
\quad k=1,\ldots,N. 
\end{equation}
Set $A:=\max\{A_k: k=1,\ldots,N\}$. Then by \eqref{eq:fk-f} and \eqref{eq:fkAk},  
\begin{align*}
\|f \mathbf{1}_{\Sigma \setminus B(x_0, A)}\|_{L^p(\Sigma, w)} 
\leq \|f-f_k\|_{L^p(\Sigma, w)} + \|f_k \mathbf{1}_{\Sigma \setminus B(x_0, A_k)}\|_{L^p(\Sigma, w)}
< 2\varepsilon. 
\end{align*}
This shows the condition \eqref{list:FK2} holds. To continue, we split 
\begin{align}\label{ffk-1}
\|f-f_{B(\cdot,r)}\|_{L^p(\Sigma, w)} 
&\leq \|f-f_k\|_{L^p(\Sigma, w)} 
+ \|f_k-(f_k)_{B(\cdot,r)}\|_{L^p(\Sigma, w)}
\nonumber \\ 
&\qquad+ \|(f_k)_{B(\cdot,r)}-f_{B(\cdot,r)}\|_{L^p(\Sigma, w)}. 
\end{align}
Note that 
\begin{align*}
|f_k(x)-(f_k)_{B(x,r)}| 
\lesssim |f_k(x)| + M_{\B_{\rho}}f_k(x) \in L^p(\Sigma, w) 
\end{align*}
and $(f_k)_{B(x,r)} \to f_k(x)$ a.e. $x \in \Sigma$ by the Lebesgue differentiation theorem (cf. \cite[p. 12]{GLMY}). Thus, the Lebesgue domination convergence theorem gives that for some $\delta>0$, 
\begin{align}\label{ffk-2}
\|f_k-(f_k)_{B(\cdot,r)}\|_{L^p(\Sigma, w)} 
< \varepsilon,\quad \forall r \in (0, \delta). 
\end{align}
Since 
\begin{align*}
|(f_k)_{B(x,r)}-f_{B(x,r)}| 
\leq \fint_{B(x,r)} |f_k - f| \, d\mu 
\leq M_{\B_{\rho}}(f_k-f)(x), 
\end{align*}
we use \cite[Proposition 7.13]{HK} to arrive at  
\begin{align}\label{ffk-3}
\|(f_k)_{B(\cdot,r)}-f_{B(\cdot,r)}\|_{L^p(\Sigma, w)} 
&\leq \|M_{\B_{\rho}}(f_k-f)\|_{L^p(\Sigma, w)}  
\nonumber \\
&\lesssim \|f_k-f\|_{L^p(\Sigma, w)}  
\lesssim \varepsilon. 
\end{align}
Gathering \eqref{eq:fk-f} and \eqref{ffk-1}--\eqref{ffk-3}, for any $0<r<\delta$, we obtain that 
\begin{align*}
\|f-f_{B(\cdot,r)}\|_{L^p(\Sigma, w)} 
\lesssim \varepsilon,\quad\text{uniformly in } f \in \K.  
\end{align*}
This proves the condition \eqref{list:FK3} holds.

Let us next show the sufficiency. Assume that \eqref{list:FK1}, \eqref{list:FK2}, and \eqref{list:FK3} hold. Then by \eqref{list:FK2} and \eqref{list:FK3}, for any 
fixed $\varepsilon>0$, there exist $A>0$ and $\delta>0$ such that for 
$0<r<\delta$, 
\begin{align}\label{eq:compactset-1} 
\|f \mathbf{1}_{\Sigma \setminus B(x_0, A)}\|_{L^p(\Sigma, w)} < \varepsilon  \, \text{ for all } f \in \K, 
\\ 
\label{ffb} \|f-f_{B(\cdot, r)}\|_{L^p(\Sigma, w)}<\varepsilon \, \text{ for all } f \in \K.
\end{align}
Fix such an $r>0$. By H\"{o}lder's inequality, we have for all $x, y \in \Sigma$, 
\begin{equation}\label{eq:bdd}
|f_{B(x, r)}| 
\le \frac{1}{\mu(B(x, r))} \bigg(\int_{B(x, r)} |f|^p w\, d\mu\bigg)^{\frac1p}
\bigg(\int_{B(x, r)} w^{1-p'} d\mu \bigg)^{\frac{1}{p'}}, 
\end{equation}
and 
\begin{align}\label{eq:continue}
|f_{B(x, r)} - f_{B(y, r)}| 
&\le \biggl|\frac{1}{\mu(B(x,r))} - \frac{1}{\mu(B(y, r))}\biggr| \int_{B(x, r)} |f| \, d\mu
\nonumber \\ 
&\qquad + \frac{1}{\mu(B(y, r)} \int_{\Sigma} |{\bf{1}}_{B(x,r)}-{\bf{1}}_{B(y,r)}| |f| \, d\mu
\nonumber \\ 
&=: \mathscr{I}_1 + \mathscr{I}_2, 
\end{align}
where 
\begin{align*}
\mathscr{I}_1 & := \biggl|\frac{1}{\mu(B(x,r))} - \frac{1}{\mu(B(y,r))}\biggr|
\|f\|_{L^p(\Sigma, w)}
\bigg(\int_{B(x, r)} w^{1-p'} d\mu \bigg)^{\frac1{p'}}, 
\\ 
\mathscr{I}_2 & := \frac{1}{\mu(B(y,r))} \|f\|_{L^p(\Sigma, w)} 
\bigg(\int_{\Sigma} |{\bf{1}}_{B(x,r)} - {\bf{1}}_{B(y,r)}|^{p'} w^{1-p'} d\mu \bigg)^{\frac1{p'}}. 
\end{align*}
Since $w,\,w^{1-p'} \in L^1_{\loc}(\Sigma, \mu)$, there exists $C_0>0$ such that 
 \begin{align}\label{eq:BA}
\int_{B(x, r)} w^{1-p'} d\mu \le C_0 \quad\text{for all } x\in B(x_0, A). 
\end{align}
By \cite[(3.1) and (3.4)]{CW}, every ball $B(x', r')$ in $(\Sigma, \rho, \mu)$ is totally bounded. Since $\overline{B(x_0,\,A)} \subset B(x_0, A+r)$, $\overline{B(x_0, A)}$ is also totally bounded, which along with the completeness of $(\Sigma, \rho, \mu)$  implies that $\overline{B(x_0, A)}$ is compact. Since $\mu$ is metrically continuous, it follows that $\mu(B(\cdot, r))$ is uniformly continuous on $\overline{B(x_0, A)}$. Then in light of \eqref{eq:bdd}--\eqref{eq:BA}, $\{f_{B(x, r)}\}_{f \in \K}$ is equi-bounded and equi-continuous on the closure of $B(x_0, A)$. Then, by Ascoli-Arzel\'a theorem, it is relatively compact and so totally bounded in $\mathscr{C}(B(x_0, A))$. Consequently, there exists a finite number of functions $\{f_j\}_{j=1}^N \subset \K$ such that 
\begin{equation*}
\inf_{j}\sup_{d(x_0,x)\le A}|f_{B(x, r)} - (f_j)_{B(x, r)}| 
< \varepsilon \, w(B(x_0, A))^{-\frac1p} \ \text{ for all }f\in \K, 
\end{equation*}
which implies that for each $f\in \K$ there exists $j \in \{1,\ldots, N\}$ such that 
\begin{equation}\label{eq:compactset-4}
\sup_{d(x_0,x)\le A}|f_{B(x, r)} - (f_j)_{B(x, r)}| 
< \varepsilon \, w(B(x_0,A))^{-\frac1p}.
\end{equation}
To proceed, we split 
\begin{align}\label{ffj-1}
\|f-f_j\|_{L^p(\Sigma, w)}
&\le \|(f-f_j)\mathbf{1}_{B(x_0, A)}\|_{L^p(\Sigma, w)} 
\nonumber \\
&\qquad+ \|(f-f_j) \mathbf{1}_{\Sigma \setminus B(x_0, A)} \|_{L^p(\Sigma, w)}
\nonumber \\
&=: \mathrm{I} + \mathrm{II}. 
\end{align}
By \eqref{ffb} and \eqref{eq:compactset-4}, one has 
\begin{align}\label{ffj-2}
\mathrm{I} 
&\le \|f-f_{B(x, r)}\|_{L^p(\Sigma, w)}
+ \|(f_{B(x, r)} - (f_j)_{B(x, r)}) \mathbf{1}_{B(x_0, A)}\|_{L^p(\Sigma, w)}
\nonumber \\ 
&\qquad + \|(f_j)_{B(x, r)} - f_j\|_{L^p(\Sigma, w)}
\nonumber \\ 
&\le \varepsilon + \varepsilon\, w(B(x_0, A))^{-\frac1p} \|\mathbf{1}_{B(x_0, A)}\|_{L^p(w)} + \varepsilon
\le 3 \varepsilon. 
\end{align}
In view of \eqref{eq:compactset-1}, there holds 
\begin{align}\label{ffj-3}
\mathrm{II}
\le \|f \mathbf{1}_{\Sigma \setminus B(x_0, A)}\|_{L^p(w)} 
+ \|f_j \mathbf{1}_{\Sigma \setminus B(x_0, A)}\|_{L^p(w)} 
\le 2 \varepsilon. 
\end{align}
Collecting \eqref{ffj-1}--\eqref{ffj-3}, we conclude that $\K$ is totally bounded, hence, relatively compact in $L^p(\Sigma, w)$. 
\end{proof}

\begin{proof}[\bf Proof of Theorem \ref{thm:FKhs-2}]
Assume that $\K$ is relatively compact in $L^p(\Sigma, w)$. Then following the proof of Theorem \ref{thm:FKhs-1}, one can check that both \eqref{list:FKhs-1} and \eqref{list:FKhs-2} hold. To justify \eqref{list:FKhs-3}, we need a bit more work. Let $\varepsilon>0$. Since $\K$ is relatively compact, there exists a finite number of functions $\{f_j\}_{j=1}^N \subset \K$ such that for any $g \in \K$, one can find $j \in \{1, \ldots, N\}$ satisfying $\|g-f_j\|_{L^p(\Sigma, w)}<\varepsilon$. Fix $f \in \K$. Then there is some $f_j \in \K$ such that 
\begin{align}\label{eq:ffe}
\|f-f_j\|_{L^p(\Sigma, w)} <\varepsilon. 
\end{align}
Note that 
\begin{align}\label{eq:Ifr}
\mathcal{I}
&:= \int_{\Sigma} \bigg(\fint_{B(x, r)} |f(x)-f(y)|^{\frac{p}{p_0}} d\mu(y)\bigg)^{p_0}w(x) \, d\mu(x)
\nonumber \\ 
&\lesssim \int_{\Sigma} \bigg(\fint_{B(x, r)} |f(x) - f_j(x)|^{\frac{p}{p_0}} d\mu(y)\bigg)^{p_0} w(x) \, d\mu(x)
\nonumber \\ 
&\quad+ \int_{\Sigma} \bigg(\fint_{B(x, r)} |f_j(x) - f_j(y)|^{\frac{p}{p_0}} d\mu(y)\bigg)^{p_0} w(x) \, d\mu(x)
\nonumber \\ 
&\quad+ \int_{\Sigma} \bigg(\fint_{B(x, r)} |f_j(y) - f(y)|^{\frac{p}{p_0}} d\mu(y)\bigg)^{p_0} w(x) \, d\mu(x)
\nonumber \\ 
&=: \mathcal{I}_1 + \mathcal{I}_2 + \mathcal{I}_3. 
\end{align}
By \eqref{eq:ffe}, it is easy to control the first term: 
\begin{align}\label{eq:Ifr-1} 
\mathcal{I}_1 
= \int_{\Sigma} |f(x)-f_j(x)|^p w(x) \, d\mu(x) 
< \varepsilon^p. 
\end{align}
For $\mathcal{I}_3$, we use $w \in A_{p_0, \B_{\rho}}$ and the $L^{p_0}(\Sigma, \mu)$ boundedness of $M_{\B_{\rho}}$ (cf. \cite[Proposition 7.13]{HK}) to obtain  
\begin{align}\label{eq:Ifr-3} 
\mathcal{I}_3 
\le \int_{\Sigma} M_{\B_{\rho}}(|f-f_j|^{\frac{p}{p_0}})^{p_0} w \, d\mu
\lesssim \int_{\Sigma} |f-f_j|^p w \, d\mu
< \varepsilon^p, 
\end{align}
where \eqref{eq:ffe} was used in the last step. To deal with $\mathcal{I}_2$, we see that $w \in L^1_{\loc}(\Sigma, \mu)$, and hence, $\mathscr{C}_b^{\alpha}(\Sigma, \mu)$ is dense in $L^p(\Sigma, w)$ for any $p \in (0, \infty)$. So, we can find $g_j \in \mathscr{C}_b^{\alpha}(\Sigma, \mu)$ such that 
\begin{equation}\label{eq:fge}
\|f_j-g_j\|_{L^p(\Sigma, w)} < \varepsilon. 
\end{equation}
Assume that there exist $r_0, A_0>0$ such that $\supp(g_j) \subset B(x_0, A_0)$ and 
\begin{align}\label{eq:gjgj}
|g_j(x) - g_j(y)|< \varepsilon, \quad\text{ whenever}\quad \rho(x, y) < r_0. 
\end{align}
We then use \eqref{eq:fge} and \eqref{eq:gjgj} to deduce that for any $0<r<r_0$, 
\begin{align}\label{eq:Ifr-2}
\mathcal{I}_2  &\le \int_{\Sigma} \bigg(\fint_{B(x, r)} |f_j(x)-g_j(x)|^{\frac{p}{p_0}} d\mu(y)\bigg)^{p_0}w(x) \, d\mu(x)
\nonumber \\ 
&\qquad+ \int_{\Sigma} \bigg(\fint_{B(x, r)} |g_j(x)-g_j(y)|^{\frac{p}{p_0}} d\mu(y)\bigg)^{p_0} w(x) \, d\mu(x)
\nonumber \\ 
&\qquad+ \int_{\Sigma} \bigg(\fint_{B(x, r)} |g_j(y)-f_j(y)|^{\frac{p}{p_0}} d\mu(y)\bigg)^{p_0} w(x) \, d\mu(x)
\nonumber \\ 
&\le  \int_{\Sigma} |f_j-g_j|^p w \, d\mu
+ \varepsilon^p w(B(x_0, A_0+r_0))  
\nonumber \\ 
&\qquad+ \int_{\Sigma} M_{\B_{\rho}}(|g_j-f_j|^{\frac{p}{p_0}})^{p_0} w \, d\mu
\nonumber \\ 
&\lesssim \varepsilon^p + \varepsilon^p w(B(x_0, A_0+r_0) + \|f_j-g_j\|_{L^p(\Sigma, w)}^p 
\lesssim \varepsilon^p. 
\end{align}
Now gathering \eqref{eq:Ifr}--\eqref{eq:Ifr-3} and \eqref{eq:Ifr-2}, we conclude that $\mathcal{I} \lesssim \varepsilon^p$ for all $0<r<r_0$. This proves \eqref{list:FKhs-3}.

To show the sufficiency, we assume that \eqref{list:FKhs-1}--\eqref{list:FKhs-3} hold. Consider the case $p \ge p_0$. Observing that 
\begin{align*}
|f(x)-f_{B(x, r)}| 
&\le \fint_{B(x, r)} |f(x)-f(y)| d\mu(y) 
\\ 
&\le \bigg(\fint_{B(x, r)} |f(x)-f(y)|^{\frac{p}{p_0}}d\mu(y)\bigg)^{\frac{p_0}{p}}, 
\end{align*}
we use the condition \eqref{list:FKhs-3} to get 
\begin{equation}\label{eq:f-fB}
\lim\limits_{r \to 0} \sup\limits_{f \in \K}\|f-f_{B(\cdot,r)}\|_{L^p(\Sigma, w)}=0.
\end{equation} 
Since $p\ge p_0$ and $w,\, w^{1-p'_0}\in L^1_{\loc}(\Sigma, \mu)$, there holds $w,\, w^{1-p'}\in L^1_{\loc}(\Sigma, \mu)$. Then invoking \eqref{list:FKhs-1}, \eqref{list:FKhs-2}, and \eqref{eq:f-fB}, and Theorem \ref{thm:FKhs-1}, we conclude that $\K$ is relatively compact in $L^p(\Sigma, w)$. 

To deal with the case $p<p_0$, we suppose that $\K$ is a family of non-negative functions. Writing $a:=p/p_0<1$ , we have 
\begin{align}
\label{eq:fafa} &\big|f^a(x) - (f^a)_{B(x, r)} \big| 
\le \fint_{B(x, r)} |f(x)-f(y)|^{\frac{p}{p_0}} d\mu(y), 
\end{align}
which together with \eqref{list:FKhs-3} yields 
\begin{equation}\label{eq:fa-fB}
\lim\limits_{r \to 0} 
\sup\limits_{f \in \K}\| f^a - (f^a)_{B(\cdot, r)}\|_{L^{p_0}(\Sigma, w)}=0.
\end{equation} 
Besides, \eqref{list:FKhs-1} and \eqref{list:FKhs-2} can be rewritten as 
\begin{align}
\label{eq:fGfG}
&\sup\limits_{f \in \K} \| f^a\|_{L^{p_0}(\Sigma, w)} < \infty 
\quad\text{and}\quad 
\lim\limits_{A \to \infty} 
\sup\limits_{f \in \K}\| f^a \mathbf{1}_{\Sigma \setminus B(x_0, A)}\|_{L^{p_0}(\Sigma, w)}=0. 
\end{align} 
Consequently, it follows from \eqref{eq:fa-fB}, \eqref{eq:fGfG}, and Theorem \ref{thm:FKhs-1} that 
\begin{align}\label{Kacom}
\text{$\K^a:=\{f^a: f \in \K\}$ is relatively compact in $L^{p_0}(\Sigma, w)$.} 
\end{align}
Now let $\{f_j\}$ be a sequence of functions in $\K$. By \eqref{Kacom}, there exists a Cauchy subsequence of $\{f_j^a\}$, which we denote again by $\{f_j^a\}$ for simplicity. Then for any $\varepsilon>0$, there exists an integer $N$ such that for all $i, j\ge N$, 
\begin{equation}\label{eq:faij}
\int_{\Sigma} \big|f_i^a - f_j^a \big|^{p_0}w \, d\mu 
< \varepsilon^{p_0}.
\end{equation}
For fixed $i, j \in \N$, we set 
\begin{equation*}
E_\varepsilon :=\bigg\{x \in \Sigma: \frac{f_i(x)+f_j(x)}{|f_i(x)-f_j(x)|}
\le \frac{1}{\varepsilon} \bigg\}, \quad\forall\varepsilon>0.
\end{equation*}
By elementary calculation (see \cite[p. 33]{Tsuji}), for any $a \in (0,1)$, there holds 
\begin{equation}\label{eq:sata}
|s^a-t^a|\le |s-t|^a \le \frac{1}{a}\bigg(\frac{s+t}{|s-t|}\bigg)^{1-a}|s^a-t^a|,\quad \forall s,t>0.
\end{equation}
Then, by $p_0 a=p$, \eqref{eq:faij}, and \eqref{eq:sata}, we have
\begin{align*}
\int_{E_\varepsilon}|f_i - f_j|^p w \, d\mu
&\le a^{-p_0}\varepsilon^{(a-1)p_0}
\int_{E_\varepsilon}|f_i^a - f_j^a|^{p_0}w \, d\mu
\le a^{-p_0}\varepsilon^{p}.
\end{align*}
On the other hand, \eqref{eq:sata} and \eqref{list:FKhs-1} give 
\begin{align*}
\int_{E_\varepsilon^c} |f_i - f_j|^p w \, d\mu
&\le \int_{E_\varepsilon^c}|\varepsilon(f_i + f_j)|^p w \, d\mu
\\
&\le \varepsilon^p \bigg(\int_{E_\varepsilon^c}|f_i|^p w \, d\mu
+\int_{E_\varepsilon^c}|f_j|^{p}w \, d\mu \bigg)
\le 2K_0^p\varepsilon^p, 
\end{align*}
where $K_0:=\sup\limits_{f \in \K} \|f\|_{L^p(\Sigma, w)}<\infty$. The two estimates above show that $\{f_j\}$ is a Cauchy sequence in $\K \subset L^p(\Sigma, w)$. Thus $\K$ is relatively compact in $L^p(\Sigma, w)$. 

To handle the general case, we define  
\[
\K^+ := \{f^+: f \in \K\} \quad\text{and}\quad \K^- := \{f^-: f \in \K\}, 
\]
where 
\[
f^+(x) :=\max\{f(x), 0\} \quad\text{and}\quad 
f^-(x) := \max\{-f(x), 0 \}, \quad\forall f \in \K. 
\]
Then, for all $f \in \K$ and $x, y \in \Sigma$, 
\begin{align*}
0 \le f^+(x) \le |f(x)|, \qquad |f^+(x) - f^+(y)| \le |f(x) - f(y)|, 
\\
0 \le f^-(x) \le |f(x)|, \qquad |f^-(x) - f^-(y)| \le |f(x) - f(y)|. 
\end{align*}
This means that 
\begin{align}\label{eq:GG}
\text{both $\K^+$ and $\K^-$ satisfy the conditions \eqref{list:FKhs-1}--\eqref{list:FKhs-3}}.
\end{align} 
Let $\{f_j\}$ be an arbitrary sequence of functions in $\K$. By \eqref{eq:GG} 
and the conclusion in the preceding case, we conclude that for any 
$\varepsilon>0$ there exists $N_0 \in \N$ such that for all $i, j \ge N_0$, 
\begin{equation*}
\|f_i^+ - f_j^+\|_{L^p(\Sigma, w)} < \varepsilon \quad\text{and}\quad 
\|f_i^- - f_j^-\|_{L^p(\Sigma, w)} < \varepsilon, 
\end{equation*}
which implies 
\begin{align*}
\|f_i - f_j\|_{L^p(\Sigma, w)} 
\le \|f_i^+ - f_j^+\|_{L^p(\Sigma, w)} + \|f_i^- - f_j^-\|_{L^p(\Sigma, w)} 
< 2 \varepsilon. 
\end{align*}
Accordingly, $\{f_j\}$ is a Cauchy sequence in $\K$, so $\K$ is relatively compact in $L^p(\Sigma, w)$. 
\end{proof}

\subsection{Extrapolation for compact operators} 
We will see that Theorem \ref{thm:Tb} is a consequence of Theorems \ref{thm:Ap} and \ref{thm:TTb} below.

\begin{theorem}\label{thm:TTb}
Let $(\Sigma, \rho, \mu)$ be a space of homogeneous type. Let $T$ be an $m$-linear or $m$-linearizable operator. Assume that there exists $\vec{q}=(q_1, \ldots, q_m)$ with $1<q_1, \ldots, q_m<\infty$ such that for all $\vec{v}=(v_1, \ldots, v_m) \in A_{\vec{q}, \B_{\rho}}$, 
\begin{align}\label{ttb-1}
\|T(\vec{f})\|_{L^q(\Sigma, v)}
\lesssim \prod_{i=1}^m  \|f_i\|_{L^{q_i}(\Sigma, v_i)}, 
\end{align}
where $\frac1q = \sum_{i=1}^m \frac{1}{q_i}$ and $v=\prod_{i=1}^m v_i^{\frac{q}{q_i}}$. Then, for all $\vec{p}=(p_1, \ldots, p_m)$ with $1<p_1, \ldots, p_m<\infty$, for all $\vec{w}=(w_1, \ldots, w_m) \in A_{\vec{p}, \B_{\rho}}$, for all $\b = (b_1, \ldots, b_m) \in \BMO_{\B_{\rho}}^m$, and for each multi-index $\alpha \in \N^m$, 
\begin{align}\label{ttb-2}
\|[T, \b]_{\alpha}(\vec{f})\|_{L^p(\Sigma, w)}
\lesssim \prod_{i=1}^m \|b_i\|_{\BMO_{\B_{\rho}}}^{\alpha_i} \|f_i\|_{L^{p_i}(\Sigma, w_i)}, 
\end{align}
where $\frac1p = \sum_{i=1}^m \frac{1}{p_i}$ and $w=\prod_{i=1}^m w_i^{\frac{p}{p_i}}$. 
\end{theorem}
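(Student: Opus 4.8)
The plan is to combine Rubio de Francia extrapolation for multilinear weights with the Coifman--Rochberg--Weiss conjugation argument, realized via a multivariable Cauchy integral. First, applying multilinear extrapolation (cf.\ Theorem~\ref{thm:extraAp}) to hypothesis \eqref{ttb-1}, we may assume that $T$ is bounded from $L^{r_1}(\Sigma,u_1)\times\cdots\times L^{r_m}(\Sigma,u_m)$ to $L^r(\Sigma,u)$ at \emph{every} exponent vector $\vec r=(r_1,\dots,r_m)$ with $1<r_i<\infty$ and \emph{every} $\vec u\in A_{\vec r,\B_\rho}$, with operator norm depending only on $\vec r$, $[\vec u]_{A_{\vec r,\B_\rho}}$ and the space. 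Next, if $T$ is $m$-linearizable, $T(\vec f)(x)=\|\T(\vec f)(x)\|_{\bB}$, we run the argument for the $\bB$-valued $m$-linear operator $\T$ and take $\bB$-norms at the end, using $[T,\b]_\alpha(\vec f)(x)=\|[\T,\b]_\alpha(\vec f)(x)\|_{\bB}$ and the validity of Minkowski's integral inequality in $L^p(\Sigma,w;\bB)$; so it suffices to treat genuinely $m$-linear $T$. Finally, truncating each $b_i$ to $b_i^{(N)}:=\max\{-N,\min\{N,b_i\}\}$ (so $\|b_i^{(N)}\|_{\BMO_{\B_\rho}}\lesssim\|b_i\|_{\BMO_{\B_\rho}}$) and passing to the limit by Fatou, we may assume each $b_i\in L^\infty(\Sigma,\mu)\cap\BMO_{\B_\rho}$, which legitimizes everything below.

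For $\vec z=(z_1,\dots,z_m)\in\mathbb C^m$ set $T_{\vec z}(\vec f)(x):=e^{z_1 b_1(x)+\cdots+z_m b_m(x)}\,T\!\big(e^{-z_1 b_1}f_1,\dots,e^{-z_m b_m}f_m\big)(x)$. Since $T$ is $m$-linear, $\partial_{z_1}^{\alpha_1}\cdots\partial_{z_m}^{\alpha_m}T_{\vec z}(\vec f)\big|_{\vec z=0}=[T,\b]_\alpha(\vec f)$, and $\vec z\mapsto T_{\vec z}(\vec f)(x)$ is, for a.e.\ $x$ and $\vec f$ in a dense class, analytic on a polydisc (Morera, using the local integrability of $e^{-z_i b_i}$ provided by John--Nirenberg for the bounded $b_i$). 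Hence, for any radii $\epsilon_1,\dots,\epsilon_m>0$, the multivariable Cauchy integral formula gives
\begin{equation*}
[T,\b]_\alpha(\vec f)(x)=\frac{\alpha_1!\cdots\alpha_m!}{(2\pi i)^m}\int_{|z_1|=\epsilon_1}\!\!\cdots\!\!\int_{|z_m|=\epsilon_m}\frac{T_{\vec z}(\vec f)(x)}{z_1^{\alpha_1+1}\cdots z_m^{\alpha_m+1}}\,dz_m\cdots dz_1,
\end{equation*}
so that iterated Minkowski yields $\big\|[T,\b]_\alpha(\vec f)\big\|_{L^p(\Sigma,w)}\le \big(\prod_i\alpha_i!\big)\big(\prod_i 2\pi\epsilon_i^{\alpha_i}\big)^{-1}\sup_{|z_i|=\epsilon_i}\big\|T_{\vec z}(\vec f)\big\|_{L^p(\Sigma,w)}$. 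Writing $t_i:=\Re z_i$ and $W_i:=w_i\,e^{p_i t_i b_i}$, one checks $\prod_{i=1}^m W_i^{p/p_i}=w\,e^{p\sum_i t_i b_i}$ and $\|e^{-z_i b_i}f_i\|_{L^{p_i}(\Sigma,W_i)}=\|f_i\|_{L^{p_i}(\Sigma,w_i)}$, whence, by $m$-linearity, $\|T_{\vec z}(\vec f)\|_{L^p(\Sigma,w)}=\|T(e^{-z_1b_1}f_1,\dots,e^{-z_mb_m}f_m)\|_{L^p(\Sigma,\prod_i W_i^{p/p_i})}$. Thus, once $\vec W=(W_1,\dots,W_m)\in A_{\vec p,\B_\rho}$ with a controlled constant, the first step gives $\|T_{\vec z}(\vec f)\|_{L^p(\Sigma,w)}\lesssim\prod_i\|f_i\|_{L^{p_i}(\Sigma,w_i)}$.

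The crux is the quantitative weight-perturbation claim on $(\Sigma,\rho,\mu)$: if $\vec w\in A_{\vec p,\B_\rho}$ and $\b\in\BMO_{\B_\rho}^m$, there exist $c_i=c_i(\vec p,[\vec w]_{A_{\vec p,\B_\rho}},\mu)>0$ such that for all real $t_i$ with $|t_i|\le c_i/\|b_i\|_{\BMO_{\B_\rho}}$ one has $\vec W_{\vec t}:=(w_i e^{p_i t_i b_i})_{i=1}^m\in A_{\vec p,\B_\rho}$ with $[\vec W_{\vec t}]_{A_{\vec p,\B_\rho}}\lesssim[\vec w]_{A_{\vec p,\B_\rho}}^{\kappa}$ for some $\kappa=\kappa(\vec p)$. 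By Lemma~\ref{lem:ApAp} this reduces to the scalar statements that $w\,e^{p\sum_i t_i b_i}\in A_{mp,\B_\rho}$ and $w_i^{1-p_i'}e^{-p_i' t_i b_i}\in A_{mp_i',\B_\rho}$, each of which follows from the quantitative openness of $A_{q,\B_\rho}$ under multiplication by $e^{s\beta}$ with $\beta\in\BMO_{\B_\rho}$ and $|s|\,\|\beta\|_{\BMO_{\B_\rho}}$ small — a consequence of the sharp reverse H\"older inequality (Lemma~\ref{lem:RH}) and the John--Nirenberg inequality on spaces of homogeneous type. Choosing $\epsilon_i:=c_i/\|b_i\|_{\BMO_{\B_\rho}}$ then gives $\big\|[T,\b]_\alpha(\vec f)\big\|_{L^p(\Sigma,w)}\lesssim\prod_i\alpha_i!\,\epsilon_i^{-\alpha_i}\prod_i\|f_i\|_{L^{p_i}(\Sigma,w_i)}\lesssim\prod_i\|b_i\|_{\BMO_{\B_\rho}}^{\alpha_i}\|f_i\|_{L^{p_i}(\Sigma,w_i)}$, which is \eqref{ttb-2}. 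The main obstacle is exactly this perturbation step: one needs not just that $\vec W_{\vec t}\in A_{\vec p,\B_\rho}$ for small $|t_i|$, but that the admissible range of $t_i$ is comparable to $1/\|b_i\|_{\BMO_{\B_\rho}}$ (the proportionality constant allowed to depend on $[\vec w]_{A_{\vec p,\B_\rho}}$), since it is this range that converts the Cauchy factor $\epsilon_i^{-\alpha_i}$ into the clean dependence $\|b_i\|_{\BMO_{\B_\rho}}^{\alpha_i}$; it relies on the sharp reverse H\"older/John--Nirenberg machinery available on $(\Sigma,\rho,\mu)$. The remaining ingredients — analyticity of $\vec z\mapsto T_{\vec z}(\vec f)$, the reduction to bounded $b_i$, and the passage from $m$-linearizable to $m$-linear — are routine.
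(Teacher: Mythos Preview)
Your overall strategy---combine multilinear extrapolation with the Coifman--Rochberg--Weiss conjugation argument, reducing the key step to a quantitative $A_{\vec p}$-stability statement under multiplication by $e^{t_i b_i}$ via the sharp reverse H\"older inequality (Lemma~\ref{lem:RH}) and John--Nirenberg (Lemma~\ref{lem:JN})---is exactly what the paper does. The weight-perturbation claim you state is correct and proved just as you outline.

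The gap is in the step ``iterated Minkowski yields $\|[T,\b]_\alpha(\vec f)\|_{L^p(\Sigma,w)}\le\cdots\sup_{|z_i|=\epsilon_i}\|T_{\vec z}(\vec f)\|_{L^p(\Sigma,w)}$.'' Minkowski's integral inequality requires $p\ge 1$, but for $m\ge 2$ the hypothesis $1<p_1,\dots,p_m<\infty$ allows $p=(\sum_i 1/p_i)^{-1}<1$ (e.g.\ $m=2$, $p_1=p_2=3/2$, $p=3/4$). In that quasi-Banach range the Cauchy-integral representation does \emph{not} pass to an $L^p$-norm bound in the way you claim, so as written your argument only establishes \eqref{ttb-2} when $p\ge 1$.

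The paper circumvents this by reversing the order of operations. Instead of extrapolating $T$ to the target $\vec p$ and then running the Cauchy trick there, it first extrapolates $T$ to one fixed Banach exponent vector $\vec r=(m+1,\dots,m+1)$ (so $r=(m+1)/m>1$), then runs the conjugation argument at $\vec r$ (this is Theorem~\ref{thm:TTbb}, whose hypothesis $\sum_i 1/r_i\le 1$ is exactly what makes Minkowski available), obtaining \eqref{ttb-2} at $\vec r$ for all $\vec v\in A_{\vec r,\B_\rho}$. Finally it applies Theorem~\ref{thm:extraAp} a second time---now to the $(m+1)$-tuples $([T,\b]_\alpha(\vec f),f_1,\dots,f_m)$---to extrapolate the commutator estimate from $\vec r$ to the full range of $\vec p$, including the quasi-Banach case. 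Your argument becomes complete if you insert this detour: run the Cauchy step at some Banach $\vec r$, then extrapolate the commutator.
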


Now let us see how to deduce Theorem \ref{thm:Tb} from Theorems \ref{thm:Ap}, \ref{thm:WMIP-4}, and \ref{thm:TTb}. 

\begin{proof}[\textbf{Proof of Theorem $\ref{thm:Tb}$.}] 
Let $T$ be an $m$-linear or $m$-linearizable operator. Fix $\alpha \in \N^m$ and $\b=(b_1,\ldots, b_m) \in \BMO_{\B_{\rho}}^m$. Let $\vec{r}=(r_1, \ldots, r_m)$ with $1<r_1, \ldots, r_m<\infty$ be the same as in  \eqref{eq:bT-1}. By Theorem \ref{thm:TTb}, the assumption \eqref{eq:bT-1} implies that for all $\vec{r}=(r_1, \ldots, r_m)$ with $1<r_1, \ldots, r_m<\infty$ and for all $\vec{u}=(u_1, \ldots, u_m) \in A_{\vec{r}, \B_{\rho}}$, 
\begin{align}\label{eq:bT-3}
[T, \b]_{\alpha} \text{ is bounded from $L^{r_1}(\Sigma, u_1) \times \cdots \times L^{r_m}(\Sigma, u_m)$ to $L^r(\Sigma, u)$}, 
\end{align}
where $\frac1r=\sum_{i=1}^m \frac{1}{r_i}$ and $u=\prod_{i=1}^m u_i^{\frac{r}{r_i}}$. By Theorem \ref{thm:WMIP-4}, interpolating between \eqref{eq:bT-3} with $\vec{u}=(1, \ldots, 1)$ and \eqref{eq:bT-2} gives that for all $\vec{s}=(s_1, \ldots, s_m)$ with $1<s_1, \ldots, s_m<\infty$, 
\begin{align}\label{eq:bT-4}
[T, \b]_{\alpha} \text{ is compact from $L^{s_1}(\Sigma, \mu) \times \cdots \times L^{s_m}(\Sigma, \mu)$ to $L^s(\Sigma, \mu)$}, 
\end{align} 
where $\frac1s=\sum_{i=1}^m \frac{1}{s_i}$. Thus, \eqref{eq:bT-3} and \eqref{eq:bT-4} respectively verifies \eqref{eq:Ap-1} and \eqref{eq:Ap-2} with $\vec{v}=(1,\ldots,1)$ for $[T, \b]_{\alpha}$ in place of $T$. Therefore, Theorem \ref{thm:Ap} implies Theorem  \ref{thm:Tb}.  
\end{proof} 

\subsection{Proof of Theorem \ref{thm:Ap}} 
Since compactness is stronger than boundedness, to show Theorem \ref{thm:Ap}, we first establish an extrapolation for multilinear Muckenhoupt classes $A_{\vec{p}}$ on spaces of homogeneous type.  

\begin{theorem}\label{thm:extraAp}
Let $(\Sigma, \rho, \mu)$ be a space of homogeneous type. Let $\F$ be a collection of $(m+1)$-tuples of nonnegative measurable functions. Assume that there exists $\vec{q}=(q_1, \ldots, q_m)$ with $1 \le q_1, \ldots, q_m <\infty$ such that for all $\vec{v}=(v_1, \ldots, v_m) \in A_{\vec{q}, \B_{\rho}}$, 
\begin{align}\label{ttb-3}
\|f\|_{L^q(\Sigma, v)} 
\lesssim \prod_{i=1}^m \|f_i\|_{L^{q_i}(\Sigma, v_i)}, \quad (f, f_1, \dots, f_m) \in \F, 
\end{align}
where $\frac1q=\sum_{i=1}^m \frac{1}{q_i}$ and $v=\prod_{i=1}^m v_i^{\frac{q}{q_i}}$. Then, for all $\vec{p}=(p_1, \dots, p_m)$ with $1 < p_1, \ldots, p_m <\infty$ and for all $\vec{w}=(w_1, \ldots, w_m) \in A_{\vec{p}, \B_{\rho}}$,  
\begin{align}\label{ttb-4}
\|f\|_{L^p(\Sigma, w)} 
\lesssim \prod_{i=1}^m \|f_i\|_{L^{p_i}(\Sigma, w_i)}, \quad (f, f_1, \dots, f_m) \in \F, 
\end{align}
where $\frac1p=\sum_{i=1}^m \frac{1}{p_i}$ and $w=\prod_{i=1}^m w_i^{\frac{p}{p_i}}$.  
\end{theorem}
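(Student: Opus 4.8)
The plan is to prove Theorem \ref{thm:extraAp} by the classical Rubio de Francia extrapolation machinery, adapted to the multilinear setting on a space of homogeneous type. The statement is a ``boundedness-to-boundedness'' extrapolation for families of tuples, which is exactly the tool needed to upgrade a single vector exponent $\vec{q}$ to the whole range $\vec{p}$. The starting point is that all the ingredients one needs on $(\Sigma,\rho,\mu)$ are already available: the maximal operator $M_{\B_\rho}$ is bounded on $L^p(\Sigma,w)$ for $w\in A_{p,\B_\rho}$ (this is the content of Lemma \ref{lem:M} together with Theorem \ref{Mapp} in the linear case $m=1$, and more directly the classical Hardy--Littlewood theory in spaces of homogeneous type invoked in the applications section, e.g. \cite[Proposition~7.13]{HK}), the duality/characterization of $A_{\vec p}$ weights in Lemma \ref{lem:ApAp}, and the basic $A_p$ properties in Lemma \ref{lem:Appp} and Lemma \ref{lem:A1}.

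First I would reduce to the diagonal case. Fix $\vec{p}=(p_1,\dots,p_m)$ with $1<p_i<\infty$ and $\vec w=(w_1,\dots,w_m)\in A_{\vec p,\B_\rho}$, and set $\frac1p=\sum_i\frac1{p_i}$, $w=\prod_i w_i^{p/p_i}$, $\sigma_i=w_i^{1-p_i'}$. By Lemma \ref{lem:ApAp} we know $w\in A_{mp,\B_\rho}$ and $\sigma_i\in A_{mp_i',\B_\rho}$, with quantitative bounds \eqref{ww-1}--\eqref{ww-2}. The goal is to estimate $\|f\|_{L^p(\Sigma,w)}$ by duality: writing $p/q=\theta$ when $p\ge q$ one tests against $h\in L^{(p/q)'}(\Sigma,w)$, and when $p<q$ one argues directly. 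The heart of the argument is to build, via the Rubio de Francia iteration algorithm
\[
\mathcal R h=\sum_{k=0}^\infty \frac{M_{\B_\rho}^k h}{2^k\|M_{\B_\rho}\|_{L^{s}(\Sigma,w)\to L^{s}(\Sigma,w)}^k},
\]
a weight $H=\mathcal R h$ (for an appropriate auxiliary exponent $s$) satisfying $h\le H$, $\|H\|_{L^{s}(\Sigma,w)}\lesssim\|h\|_{L^{s}(\Sigma,w)}$, and $H w\in A_{1,\B_\rho}$ with controlled constant; and similarly dual algorithms applied to each $f_i$ producing weights $G_i$ with $f_i\le G_i$, $\|G_i\|_{L^{p_i}(\Sigma,\sigma_i)}\lesssim\|f_i\|_{L^{p_i}(\Sigma,\sigma_i)}$, and $G_i^{\,?}\in A_{1,\B_\rho}$. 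One then combines $H$ and the $G_i$ into a single tuple $\vec v=(v_1,\dots,v_m)$ of the form forced by Lemma \ref{lem:A1}(i), namely $v_i = (Hw)^{\alpha}G_i^{\beta}\cdots$ arranged so that $\vec v\in A_{\vec q,\B_\rho}$ with a bound depending only on $[\vec w]_{A_{\vec p,\B_\rho}}$; here is where the precise exponent bookkeeping matters, and the standard choice (going back to the multilinear extrapolation of \cite{LOPTT} and refined in \cite{COY}) is $v_i$ comparable to $w_i\cdot(\text{powers of }Hw)\cdot(\text{powers of }G_j)$. Applying the hypothesis \eqref{ttb-3} to this tuple, Hölder's inequality in the weighted spaces, and the norm bounds for $H$ and the $G_i$, produces \eqref{ttb-4}.

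The key steps, in order, are: (1) fix notation and invoke Lemma \ref{lem:ApAp} to get $w,\sigma_i$ into linear Muckenhoupt classes with quantitative constants; (2) record the $L^s(\Sigma,w)$-boundedness of $M_{\B_\rho}$ for the relevant range of $s$ (using the $A_\infty$/$A_p$ theory on $(\Sigma,\rho,\mu)$ already cited in the paper) so that the Rubio de Francia algorithms converge and have the three required properties; (3) carry out the duality reduction, splitting into the cases $p\ge q$ and $p<q$ (the latter handled as in \cite{LN}, raising to a power and using that $w\in A_{\infty}$, or alternatively by the symmetric dual argument); (4) assemble the iterated weights into a tuple $\vec v\in A_{\vec q,\B_\rho}$, verifying membership via Lemma \ref{lem:A1}(i) and the self-improving $A_1$ product lemmas; (5) apply \eqref{ttb-3} and collapse the estimate with Hölder. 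I expect step (4) — producing a tuple $\vec v\in A_{\vec q,\B_\rho}$ from the Rubio de Francia outputs with the correct exponents so that everything is simultaneously controlled by $[\vec w]_{A_{\vec p,\B_\rho}}$ — to be the main obstacle, since the combinatorics of which powers of which algorithm-generated weight go into each $v_i$ must match the structural characterization of $A_{\vec q}$ exactly; this is routine in $\R^n$ but requires care to transcribe to $(\Sigma,\rho,\mu)$, though no genuinely new idea is needed because every weight-theoretic fact invoked (Lemmas \ref{lem:Appp}, \ref{lem:A1}, \ref{lem:ApAp}, and boundedness of $M_{\B_\rho}$) holds verbatim on spaces of homogeneous type. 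In fact, one can shortcut most of this by observing that the off-diagonal multilinear extrapolation theorem of \cite[Theorem~3.1]{COY} (cited elsewhere in this paper) is stated on spaces of homogeneous type and contains Theorem \ref{thm:extraAp} as the special case where all exponents are equal on both sides and the target is a single function; so an acceptable proof is simply to verify the hypotheses of that theorem and quote it, with the self-contained Rubio de Francia argument sketched above serving as the alternative.
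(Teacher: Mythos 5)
The paper's own proof of Theorem \ref{thm:extraAp} is a one-line citation: since $\mu$ is doubling, the proof of \cite[Corollary 1.5]{LMO} (the multilinear Rubio de Francia extrapolation for the classes $A_{\vec p}$) transfers verbatim to $(\Sigma,\rho,\mu)$, with an explicit remark that the only delicate point is the endpoint $q_i=1$, where the definition of $A_{1,\B_\rho}$ and the doubling property matter. Your proposal instead attempts a self-contained Rubio de Francia argument, which is a legitimately different route, but as written it has two genuine problems.

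First, your proposed shortcut is based on a misidentification: \cite[Theorem~3.1]{COY} is a weighted \emph{interpolation} theorem (Stein--Weiss type, with change of weights); it is exactly what this paper uses in the proof of Theorem \ref{thm:WMIP-4}, and it is not an extrapolation theorem, so it cannot be quoted to deduce \eqref{ttb-4} from \eqref{ttb-3}. Second, in the self-contained sketch the step you yourself flag as the main obstacle --- step (4), assembling the Rubio de Francia outputs into a tuple $\vec v\in A_{\vec q,\B_\rho}$ with constant controlled by $[\vec w]_{A_{\vec p,\B_\rho}}$ --- is precisely the nontrivial content of the theorem and is not carried out. For the genuine multilinear classes $A_{\vec p}$ (as opposed to the product classes $A_{p_1}\times\cdots\times A_{p_m}$) this construction does not follow from a single application of the algorithm plus Lemma \ref{lem:A1}; the known proofs (\cite{LMO}, \cite{Nie}) proceed by an iterative, one-exponent-at-a-time scheme, and attributing the construction to \cite{LOPTT} is incorrect (that paper introduces $A_{\vec p}$ and $\mathcal M$ but contains no extrapolation). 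Finally, your duality reduction and the algorithms applied to the $f_i$ do not address the case $q_i=1$ allowed by the hypothesis, which is exactly where the paper warns that the argument is delicate. The correct fix is either to cite \cite[Corollary 1.5]{LMO} and verify (as the paper does) that every ingredient of that proof holds on a space of homogeneous type, or to reproduce the LMO iteration in full; the sketch as it stands does not constitute a proof.
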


\begin{proof}
Since $\mu$ is a doubling measure, one can follow the proof of \cite[Corollary 1.5]{LMO} to show Theorem \ref{thm:extraAp}. Note that without the doubling property of $\mu$, this result will be problematic, which only happens in the case $q_i=1$. In the linear case ($m=1$), different definition of $A_1$ class would cause issues, see \cite[p. 2016]{OP}. 
\end{proof}

\begin{claim}\label{clm:Banach}
The following statements hold: 
\begin{list}{\rm (\theenumi)}{\usecounter{enumi}\leftmargin=1cm \labelwidth=1cm \itemsep=0.1cm \topsep=.2cm \renewcommand{\theenumi}{\alph{enumi}}}

\item\label{clm-1} Let $\bB$ be a Banach space and $D$ be a domain in the plane. Then for every $\bB$-valued holomorphic function $f(z)$ in $D$, $\|f(z)\|_{\bB}^p$ is sub-harmonic for any $0<p<\infty$.

\item\label{clm-2} Theorems $3.1$ and $3.5$ in \cite{COY} hold for $m$-linear and $m$-linearizable operators. 

\end{list}
\end{claim}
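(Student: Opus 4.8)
\textbf{Proof proposal for Claim~\ref{clm:Banach}.}

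The plan is to treat the two parts separately, since part~\eqref{clm-1} is a classical fact that only requires a short self-contained argument, while part~\eqref{clm-2} is an inspection-type statement asserting that the interpolation machinery of \cite{COY} survives the passage from single operators to multilinear/multilinearizable ones. For part~\eqref{clm-1}, I would first recall that a $\bB$-valued holomorphic function $f$ on a planar domain $D$ is, by definition, one for which $z \mapsto \langle \Lambda, f(z) \rangle$ is holomorphic for every $\Lambda$ in the dual $\bB^{*}$; equivalently $f$ admits local norm-convergent power series expansions. The key step is the subharmonicity of $\|f(z)\|_{\bB}^{p}$. For $p \ge 1$ this follows by combining the Hahn–Banach theorem with the scalar case: writing $\|f(z)\|_{\bB} = \sup_{\|\Lambda\|_{\bB^{*}} \le 1} |\langle \Lambda, f(z)\rangle|$ exhibits $\|f(z)\|_{\bB}$ as a supremum of moduli of scalar holomorphic functions, hence as a subharmonic function, and $t \mapsto t^{p}$ being convex and increasing on $[0,\infty)$ preserves subharmonicity; more precisely one verifies the sub-mean-value inequality
\begin{equation*}
\|f(z_{0})\|_{\bB}^{p} \le \frac{1}{2\pi} \int_{0}^{2\pi} \|f(z_{0} + r e^{i\theta})\|_{\bB}^{p} \, d\theta
\end{equation*}
directly for any disk $\overline{B(z_{0}, r)} \subset D$ by first picking $\Lambda$ with $\|\Lambda\|_{\bB^{*}} = 1$ and $\langle \Lambda, f(z_{0})\rangle = \|f(z_{0})\|_{\bB}$, using the mean value property of the scalar holomorphic function $\langle \Lambda, f(\cdot)\rangle$, Jensen's inequality with $t \mapsto t^{p}$, and then $|\langle \Lambda, f(z)\rangle| \le \|f(z)\|_{\bB}$. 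For $0 < p < 1$ the same computation applies verbatim since $t \mapsto t^{p}$ is still increasing (though no longer convex, one applies Jensen only at the convex step $t\mapsto t$ and then monotonicity of $t\mapsto t^p$ combined with the fact that any nonnegative subharmonic function raised to a power $p\in(0,1)$ is still subharmonic when it arises as $|g|^{p}$ for $g$ holomorphic — this last point is the standard fact that $|g|^{p}$ is subharmonic for all $p>0$, and one localizes to finitely many zeros of $g$). I expect to invoke here the classical lemma that $|g|^{p}$ is subharmonic for every holomorphic scalar $g$ and every $p>0$, and then lift it to $\bB$ via the dual-pairing supremum representation; this is the only genuinely delicate point and it is entirely standard.

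For part~\eqref{clm-2}, the plan is to go through the statements of \cite[Theorem~3.1]{COY} and \cite[Theorem~3.5]{COY} and observe that their proofs never use linearity of the operator in an essential way beyond two features: (i) the ability to write the operator acting on a complex-interpolation family $\{f_{j,z}\}_{j}$ of inputs as a $\bB$-valued (or scalar) holomorphic function of $z$, which for an $m$-linear operator $T$ means that $z \mapsto T(f_{1,z}, \ldots, f_{m,z})$ is holomorphic whenever each slot varies holomorphically — this is exactly where part~\eqref{clm-1} is used, applied to the analytic family $T(f_{1,z},\dots,f_{m,z})$ obtained by freezing all but one variable and using multilinearity to expand; and (ii) for an $m$-linearizable operator $T(\vec f)(x) = \|\T(\vec f)(x)\|_{\bB}$, one runs the same argument for the $\bB$-valued $m$-linear operator $\T$ and then takes norms at the end, using that $\|\cdot\|_{\bB}$ is continuous and that $\|\T(\vec f)(x)\|_{\bB}^{p}$ is subharmonic by part~\eqref{clm-1}. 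Concretely I would: state the $m$-linear versions of the two theorems with the exponent and weight relations as in \eqref{eq:exp}; reduce the boundedness/compactness conclusion on the interpolated space to a Stein-type estimate for the analytic family $z \mapsto \T(\prod \text{(holomorphic slot)}_{z})$ evaluated against a dual analytic family; and verify that the three-lines / Phragmén–Lindelöf step in \cite{COY} applies because the relevant function of $z$ is $\bB$-valued holomorphic (for the numerator) and scalar holomorphic (for the dual pairing), so part~\eqref{clm-1} guarantees the required log-subharmonicity / boundary-control estimates.

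The main obstacle I anticipate is purely bookkeeping rather than conceptual: in the multilinear setting the complex-interpolation family is built slot-by-slot (each $f_{i}$ is replaced by $f_{i,z} = |f_{i}|^{p_{i}(\frac{1-z}{p_{i,0}} + \frac{z}{p_{i,1}})} \operatorname{sgn} f_{i}$ with the corresponding weight factors), and one must check that $z \mapsto \T(f_{1,z}, \ldots, f_{m,z})$ is genuinely holomorphic as a $\bB$-valued map on the strip — this requires that $\T$ be continuous on the relevant product of Lebesgue spaces (so that the norm-convergent expansions in each slot can be multiplied out) which is exactly guaranteed by the hypotheses of the theorems, plus a dominated-convergence argument to differentiate under the operator. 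I would handle this by noting that, since $\T$ is $m$-linear, the difference quotient $\frac{1}{h}\big(\T(f_{1,z+h},\dots,f_{m,z+h}) - \T(f_{1,z},\dots,f_{m,z})\big)$ telescopes into $m$ terms, each of which is $\T$ applied to tuples in which one slot is a difference quotient converging in the appropriate $L^{q_{i}}$ and the others converge in their $L^{q_{j}}$, so continuity of $\T$ forces convergence of the whole expression; this reduces everything to the scalar/$\bB$-valued holomorphy of each individual slot, which is standard. Once holomorphy is in hand, the rest of the proofs of \cite[Theorems~3.1 and 3.5]{COY} transfer without change, with part~\eqref{clm-1} supplying the subharmonicity input wherever \cite{COY} invoked it in the linear case. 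I would therefore present part~\eqref{clm-2} as a short remark: "Inspecting the proofs of \cite[Theorems~3.1 and 3.5]{COY}, the only place where linearity enters is through the holomorphy in $z$ of the operator applied to an analytic family and through the subharmonicity of the $p$-th power of its norm; by multilinearity the former reduces slot-by-slot to the linear case and the latter is part~\eqref{clm-1}, whence the same conclusions hold for $m$-linear and $m$-linearizable operators."
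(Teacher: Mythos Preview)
Your approach is correct and essentially matches the paper's: for part~\eqref{clm-1} the paper proceeds directly from the subharmonicity of $|\langle f(\cdot), \mathfrak{b}\rangle|^p$ for each fixed $\mathfrak{b}\in\bB^*$ with $\|\mathfrak{b}\|_{\bB^*}=1$ (valid for all $p>0$, so your Jensen detour for $p\ge 1$ is unnecessary), bounding $|\langle f(z),\mathfrak{b}\rangle|\le\|f(z)\|_{\bB}$ inside the circle mean and then taking the supremum over $\mathfrak{b}$. Part~\eqref{clm-2} is likewise treated as an inspection argument, the paper noting specifically that part~\eqref{clm-1} is applied to the power $\|U_\ell(z)\|_{\bB}^{1/k}$ of the modified analytic family, which is precisely why the range $0<p<1$ matters.
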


\begin{proof}
Let $z \in D$ and $B(z, r) \subset D$. Since $f(z) \in \bB$, there holds 
\begin{align}\label{fzb-1}
\|f(z)\|_{\bB} 
=\sup_{\mathfrak{b} \in \bB^*: \|\mathfrak{b}\|_{\bB^*}=1} 
|\langle f(z), \mathfrak{b} \rangle|. 
\end{align}
Let $\mathfrak{b} \in \bB^*$ with $\|\mathfrak{b}\|_{\bB^*}=1$. Since $f(z)$ is a $\bB$-valued holomorphic function in $D$, $\langle f(z), \mathfrak{b} \rangle$ is a complex valued holomorphic function in $D$, and for any $0<p<\infty$, $|\langle f(z), \mathfrak{b} \rangle|^p$ is sub-harmonic in $D$. Hence, we have 
\begin{align}\label{fzb-2}
|\langle f(z), \mathfrak{b} \rangle|^p
&\le \frac{1}{2\pi} \int_0^{2\pi} |\langle f(r e^{it}+z), \mathfrak{b} \rangle|^p \, dt
\nonumber \\
&\le \frac{1}{2\pi} \int_0^{2\pi} \|f(r e^{it}+z)\|_{\bB}^p \|\mathfrak{b}\|_{\bB^*}^p \, dt. 
\end{align}
Then it follows from \eqref{fzb-1} and \eqref{fzb-2} that 
\begin{align*}
\|f(z)\|_{\bB}^p  
\le \frac{1}{2\pi} \int_0^{2\pi} \|f(r e^{it}+z)\|_{\bB}^p \, dt, 
\end{align*}
which implies that $\|f(z)\|_{\bB}^p$ is sub-harmonic in $D$. This shows part \eqref{clm-1}. 

With part \eqref{clm-1} in hand, we can conclude part \eqref{clm-2} following the proof of \cite[Theorems 3.1, 3.5]{COY} with a slight modification. We give some tips. Let $T$ be an $m$-linearizable operator with $T(\vec{f})(x)=\|\mathcal{T}(\vec{f})(x)\|_{\bB}$. In the proof, $U_{\ell}(z)$ is replaced by 
\[
U_{\ell}(z) := e^{k(z^2-1)/\ell}
(A_1M_1)^{z-1}(A_2M_2)^{-z} \T(\vec{F_z})w^{1-z}_0v_0^{z} G_z^k. 
\]
By the $m$-linearity of $\T$, it is not hard to check that $U_{\ell}(z)$ is $\bB$-valued subharmonic in the strip $S:= \{z \in \mathbb{C}:0 < \Re (z) < 1\}$. Then by part \eqref{clm-1}, 
\begin{equation}\label{Uz}
\text{$\|U_{\ell}(z)\|_{\bB}^{\frac1k}$ is subharmonic in the strip $S$.} 
\end{equation}
As did in the proof of \cite[Theorem 3.1]{COY}, \eqref{Uz} implies that $\Phi_{\ell}(z)$ is subharmonic in $S$. More details are left to the interested reader. 
\end{proof}

\begin{proof}[\bf Proof of Theorem \ref{thm:WMIP-4}]
The condition $w_0, v_0 \in A_{\infty, \B_{\rho}}$ implies that $w_0, v_0 \in A_{\kappa, \B_{\rho}}$ for some $\kappa \in (1, \infty)$. For any $r>0$, we use the $L^{\kappa}(\Sigma, w_0)$ boundedness of $M$ and \eqref{eq:WMIP-41} to get 
\begin{align}\label{eq:Iff}
\bigg[&\int_{\Sigma} \bigg( \fint_{B(x, r)} |T(\vec{f})(x) - T(\vec{f})(y)|^{\frac{p_0}{\kappa}} \, d\mu(y)\bigg)^{\kappa} 
w_0(x)\, d\mu(x) \bigg]^{\frac{1}{p_0}}
\nonumber \\ 
&\lesssim \bigg(\int_{\Sigma} |T(\vec{f})|^{p_0} w_0 \, d\mu \bigg)^{\frac{1}{p_0}} 
+ \bigg(\int_{\Sigma} M(|T\vec{f}|^{\frac{p_0}{\kappa}})^{\kappa} w_0\, dx \bigg)^{\frac{1}{p_0}} 
\nonumber \\ 
&\lesssim \|T(\vec{f})\|_{L^{p_0}(\Sigma, w_0)}
\le M_1 \prod_{i=1}^m \|f_i\|_{L^{p_i}(\Sigma, w_i)}. 
\end{align}
On the other hand, it follows from Theorem \ref{thm:FKhs-2} and \eqref{eq:WMIP-42} that  
\begin{align}
\label{eq:WMIP-44} &\|T(\vec{f})\|_{L^{q_0}(v_0)} \leq  M_2 \prod_{i=1}^m \|f_i\|_{L^{q_i}(\Sigma, v_i)}, 
\\
\label{eq:WMIP-45} \lim_{A \to \infty} & \|T(\vec{f}) \mathbf{1}_{\Sigma \setminus B(x_0, A)}\|_{L^{q_0}(\Sigma, v_0)} 
\prod_{i=1}^m \|f_i\|_{L^{q_i}(\Sigma, v_i)}^{-1} =0, 
\end{align} 
and 
\begin{align}\label{eq:WMIP-46} 
\lim_{r \to \infty}  \bigg[\int_{\Sigma} \bigg(&\fint_{B(x, r)}  
|T(\vec{f})(x) - T(\vec{f})(y)|^{\frac{q_0}{\kappa}} d\mu(y)\bigg)^{\kappa} v_0(x) d\mu(x)\bigg]^{\frac{1}{q_0}} 
\nonumber \\
&\times \prod_{i=1}^m \|f_i\|_{L^{q_i}(\Sigma, v_i)}^{-1} =0.
\end{align} 
Then in view of \eqref{eq:exp}, Claim \ref{clm:Banach} part \eqref{clm-2}, and \cite[Theorem 3.1]{COY}, we interpolate between \eqref{eq:WMIP-41} with the bound $\widetilde{M}_1$ and \eqref{eq:WMIP-44} to arrive at 
\begin{equation}\label{eq:WMIP-47}
\sup_{\|f_i\|_{L^{s_i}(\Sigma, u_i)} \le 1 \atop i=1, \ldots, m}
\|T(\vec{f})\|_{L^{s_0}(\Sigma, u_0)} 
\leq \widetilde{M}_1^{1-\theta} M_2^{\theta}. 
\end{equation}
Note that \eqref{eq:WMIP-45} implies that for any $\varepsilon>0$ there exists $A_\varepsilon$ such that for all $A >A_\varepsilon$, 
\begin{equation}\label{eq:WMIP-48}
\|T(\vec{f})\mathbf{1}_{\Sigma \setminus B(x_0, A)}\|_{L^{q_0}(\Sigma, v_0)}  
< \varepsilon \prod_{i=1}^m \|f_i\|_{L^{q_i}(\Sigma, v_i)}. 
\end{equation}
Thus, by \eqref{eq:WMIP-41} with bound $\widetilde{M}_1$ and \eqref{eq:WMIP-48}, Claim \ref{clm:Banach} part \eqref{clm-2} and \cite[Theorem 3.1]{COY} applied to $T(\vec{f}) \mathbf{1}_{\Sigma \setminus B(x_0, A)}$ yield 
\[
\|T(\vec{f})\mathbf{1}_{\Sigma \setminus B(x_0, A)}\|_{L^{s_0}(\Sigma, u_0)} 
\leq \widetilde{M}_1^{1-\theta} \varepsilon^{\theta} \prod_{i=1}^m \|f_i\|_{L^{s_i}(\Sigma, u_i)}, 
\]
which is equivalent to   
\begin{equation}\label{eq:WMIP-49}
\lim_{A \to \infty} \sup_{\|f_i\|_{L^{s_i}(\Sigma, u_i)} \le 1 \atop i=1, \ldots, m}
\|T(\vec{f}) \mathbf{1}_{\Sigma \setminus B(x_0, A)}\|_{L^{s_0}(\Sigma, u_0)} 
=0.
\end{equation}
Additionally, \eqref{eq:WMIP-46} implies that for any $\varepsilon>0$ there exists $r_0=r_0(\varepsilon)>0$ such that for all $0<r<r_0$,
\begin{align*}
\bigg[\int_{\Sigma} \bigg(\fint_{B(x, r)} |T(\vec{f})(x) - T(\vec{f})(y)|^{\frac{q_0}{\kappa}} \, 
&d\mu(y)\bigg)^{\kappa} v_0(x)  \, d\mu(x) \bigg]^{\frac{1}{q_0}} 
\\
&\leq  \varepsilon \prod_{i=1}^m \|f_i\|_{L^{q_i}(\Sigma, v_i)}, 
\end{align*}
which along with \eqref{eq:Iff}, Claim \ref{clm:Banach} part \eqref{clm-2}, and \cite[Theorem 3.5]{COY} leads 
\begin{align*}
\bigg[\int_{\Sigma} \bigg(\fint_{B(x, r)} 
|T(\vec{f})(x) -T(\vec{f})(y)|^{\frac{s_0}{\kappa}} \, &d\mu(y)\bigg)^{\kappa} u(x)  d\mu(x)\bigg]^{\frac{1}{s_0}} 
\\ 
&\leq M_1^{1-\theta} \varepsilon^{\theta} \prod_{i=1}^m \|f_i\|_{L^{s_i}(\Sigma, u_i)}. 
\end{align*}
That is, 
\begin{equation}\label{eq:T_rho}
\begin{aligned}
\lim_{r \to 0} \sup_{\|f_i\|_{L^{s_i}(\Sigma, u_i)} \le 1 \atop i=1, \ldots, m}
\bigg[\int_{\Sigma} \bigg(\fint_{B(x, r)} 
&|T(\vec{f})(x) -T(\vec{f})(y)|^{\frac{s_0}{\kappa}} \, 
\nonumber \\ 
& \times d\mu(y)\bigg)^{\kappa} u_0(x)  \, d\mu(x)\bigg]^{\frac{1}{s_0}} 
= 0. 
\end{aligned}
 \end{equation}
Having shown \eqref{eq:WMIP-47}, \eqref{eq:WMIP-49}, and \eqref{eq:T_rho}, we use Theorem  \ref{thm:FKhs-2} to conclude the proof.
\end{proof}

\begin{lemma}\label{lem:int-Lp} 
Let $(\Sigma, \rho, \mu)$ be a space of homogeneous type. Let $\vec{p}=(p_1, \ldots, p_m)$ with $1<p_1, \ldots, p_m<\infty$ and $\vec{s}=(s_1, \ldots, s_m)$ with with $1<s_1, \ldots, s_m<\infty$. If $\vec{w} \in A_{\vec{p}, \B_{\rho}}$ and $\vec{v} \in A_{\vec{s}, \B_{\rho}}$, then there exist $\theta \in (0, 1)$,  $\vec{r}=(r_1, \ldots, r_m)$ with $1<r_1, \ldots, r_m<\infty$, and $\vec{u} \in A_{\vec{r}, \B_{\rho}}$ such that 
\begin{align*}
L^p(\Sigma, w) &= [L^r(\Sigma, u), L^s(\Sigma, v)]_{\theta}, 
\\
L^{p_i}(\Sigma, w_i) &= [L^{r_i}(\Sigma, u_i), L^{s_i}(\Sigma, v_i)]_{\theta}, 
\end{align*}
for each $i=1,\ldots,m$, where $\frac1p=\sum_{i=1}^m \frac{1}{p_i}$, $\frac1s=\sum_{i=1}^m \frac{1}{s_i}$, $\frac1r = \sum_{i=1}^m \frac{1}{r_i}$, $w=\prod_{i=1}^m w_i^{\frac{p}{p_i}}$, $v=\prod_{i=1}^m v_i^{\frac{s}{s_i}}$, and $u=\prod_{i=1}^m u_i^{\frac{r}{r_i}}$. 
\end{lemma}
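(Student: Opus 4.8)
The plan is to realize every target space $L^{p_i}(\Sigma,w_i)$ as a complex interpolation space between a fixed space $L^{s_i}(\Sigma,v_i)$ (the one appearing in the hypothesis) and an auxiliary space $L^{r_i}(\Sigma,u_i)$ which we must construct, and with a single common interpolation parameter $\theta$ for all indices $i=0,1,\ldots,m$. First I would recall the Stein--Weiss formula for complex interpolation of weighted Lebesgue spaces: for $0<\theta<1$, $\tfrac1{p_i}=\tfrac{1-\theta}{r_i}+\tfrac{\theta}{s_i}$ and $w_i^{1/p_i}=u_i^{(1-\theta)/r_i}v_i^{\theta/s_i}$ one has $[L^{r_i}(\Sigma,u_i),L^{s_i}(\Sigma,v_i)]_\theta=L^{p_i}(\Sigma,w_i)$ with equal norms (this is standard and valid on any $\sigma$-finite measure space, so $(\Sigma,\rho,\mu)$ being a space of homogeneous type is more than enough). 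Thus the whole problem reduces to choosing $\theta$, $r_i$, and $u_i$ so that these two relations hold simultaneously for each $i$, that the compatibility condition $\tfrac1r=\sum_i\tfrac1{r_i}$ and $u=\prod_i u_i^{r/r_i}$ propagate to the product weight, and — crucially — so that the resulting tuple $\vec u=(u_1,\ldots,u_m)$ lies in $A_{\vec r,\B_\rho}$.

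The construction I would carry out is as follows. Pick $\theta\in(0,1)$ small enough (to be specified at the end). For each $i$, define $r_i$ by $\tfrac1{r_i}=\tfrac1{1-\theta}\big(\tfrac1{p_i}-\tfrac\theta{s_i}\big)$; for $\theta$ sufficiently small this gives $r_i\in(1,\infty)$ and in fact $r_i$ close to $p_i$, since as $\theta\to0^+$ one has $\tfrac1{r_i}\to\tfrac1{p_i}$. Then define $u_i$ by $u_i^{1/r_i}=\big(w_i^{1/p_i}v_i^{-\theta/s_i}\big)^{1/(1-\theta)}$, i.e. $u_i=\big(w_i^{r_i/p_i}v_i^{-\theta r_i/s_i}\big)^{1/(1-\theta)}$, which is exactly the weight forced by the Stein--Weiss identity. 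With $\tfrac1r:=\sum_i\tfrac1{r_i}$ and $u:=\prod_i u_i^{r/r_i}$, a direct computation using $\tfrac1p=\sum_i\tfrac1{p_i}$, $\tfrac1s=\sum_i\tfrac1{s_i}$, $w=\prod_i w_i^{p/p_i}$, $v=\prod_i v_i^{s/s_i}$ shows that $\tfrac1p=\tfrac{1-\theta}{r}+\tfrac\theta s$ and $w^{1/p}=u^{(1-\theta)/r}v^{\theta/s}$, so the product-level Stein--Weiss identity $L^p(\Sigma,w)=[L^r(\Sigma,u),L^s(\Sigma,v)]_\theta$ holds as well. All the identities at the level of individual indices and at the product level are then just the Stein--Weiss theorem applied index by index.

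The main obstacle — and the only place where $\theta$ must actually be chosen, not merely assumed small — is verifying $\vec u\in A_{\vec r,\B_\rho}$. Here I would argue as follows. By Lemma \ref{lem:ApAp}, $\vec w\in A_{\vec p,\B_\rho}$ is equivalent to $w\in A_{mp,\B_\rho}$ and $\sigma_i^{(w)}:=w_i^{1-p_i'}\in A_{mp_i',\B_\rho}$, and similarly $\vec v\in A_{\vec s,\B_\rho}$ gives $v\in A_{ms,\B_\rho}$, $v_i^{1-s_i'}\in A_{ms_i',\B_\rho}$. Since $A_{q,\B_\rho}=A_q\cap RH_1$-type classes are open in the scale (each Muckenhoupt weight satisfies a reverse Hölder inequality, hence lies in $A_{q-\varepsilon}$ for some $\varepsilon>0$; this is available in the current setting through the sharp reverse Hölder inequality Lemma \ref{lem:RH} valid on spaces of homogeneous type, giving $A_{q,\B_\rho}=\bigcup_{q'<q}A_{q',\B_\rho}$ locally), I would fix a small $\delta>0$ so that $w\in A_{mp-\delta,\B_\rho}$ and each $w_i^{1-p_i'}\in A_{mp_i'-\delta,\B_\rho}$. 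Then, writing $u_i$ as the explicit product power of $w_i$ and $v_i$ above and using the stability of the $A_{\vec r}$ characterization under the operations in Lemma \ref{lem:Appp}\eqref{Appp-3}--\eqref{Appp-4} (namely $w\in A_p\iff w^{1-p'}\in A_{p'}$ and logarithmic convexity $[w_1^\eta w_2^{1-\eta}]_{A_p}\le[w_1]_{A_p}^\eta[w_2]_{A_p}^{1-\eta}$), I would check that for $\theta$ small enough $u_i^{1-r_i'}\in A_{mr_i'-\delta',\B_\rho}$ and $u\in A_{mr-\delta',\B_\rho}$ for some $\delta'>0$ — the point being that $\vec u$ is a small perturbation of $\vec w$ (it equals $\vec w$ when $\theta=0$), the $A_{\vec p}$ condition is an open condition, and a perturbation of size $O(\theta)$ stays inside the class. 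Applying Lemma \ref{lem:ApAp} in the reverse direction then yields $\vec u\in A_{\vec r,\B_\rho}$, completing the proof. The delicate bookkeeping is making the various openness thresholds ($\delta$ for $\vec w$, the range of admissible $r_i$, the size of the perturbation) compatible with one single choice of $\theta$; I expect this to be routine but tedious, and I would present it as "choose $\theta>0$ sufficiently small depending only on $\vec p$, $\vec s$, $[\vec w]_{A_{\vec p,\B_\rho}}$, $[\vec v]_{A_{\vec s,\B_\rho}}$ and the structural constants of $(\Sigma,\rho,\mu)$."
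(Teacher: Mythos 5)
Your proposal is correct and follows essentially the same route as the paper, which simply cites \cite[Lemma 4.1]{COY} and notes that the only non-Euclidean ingredient is the reverse H\"{o}lder inequality of Lemma \ref{lem:RH}; your Stein--Weiss construction of $(\theta, \vec{r}, \vec{u})$ with $u_i = w_i^{r_i/((1-\theta)p_i)} v_i^{-\theta r_i/((1-\theta)s_i)}$, followed by the verification of $\vec{u} \in A_{\vec{r}, \B_{\rho}}$ via openness and logarithmic convexity of the Muckenhoupt classes through reverse H\"{o}lder, is exactly that argument. The one point to treat with care is the product-level identity when $p<1$, where the classical Stein--Weiss theorem must be replaced by its quasi-Banach (Calder\'{o}n product) version, as is done in \cite{COY}.
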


\begin{proof}
The proof is similar to that of \cite[Lemma 4.1]{COY}, which heavily depends on the reverse H\"{o}lder inequality. In the current setting, we use the weak reverse H\"{o}lder inequality in Lemma \ref{lem:RH}.  
\end{proof}

Now let us turn to the proof of Theorem \ref{thm:Ap}. Let $\vec{p}=(p_1, \dots, p_m)$ with $1 < p_1, \ldots, p_m<\infty$, $\vec{w}=(w_1, \ldots, w_m) \in A_{\vec{p}, \B_{\rho}}$, and $w=\prod_{i=1}^m w_i^{\frac{p}{p_i}}$. Since $\vec{v}=(v_1, \ldots, v_m) \in A_{\vec{s}, \B_{\rho}}$, Lemma \ref{lem:int-Lp} gives that for every $i=1, \ldots, m$,  
\begin{equation}\label{eq:Lpsq}
\begin{aligned}
L^p(\Sigma, w) &= [L^r(\Sigma, u), L^s(\Sigma, v)]_{\theta}, 
\\  
L^{p_i}(\Sigma, w_i) &= [L^{s_i}(\Sigma, u_i), L^{q_i}(\Sigma, v_i)]_{\theta},
\end{aligned}
\end{equation}
for some $\theta \in (0, 1)$, 
\begin{align}\label{rru}
\text{$\vec{r}=(r_1, \ldots, r_m)$ with $1<r_1, \ldots, r_m < \infty$, and $\vec{u} \in A_{\vec{r}, \B_{\rho}}$,} 
\end{align} 
where $u=\prod_{i=1}^m u_i^{\frac{r}{r_i}}$.

On the other hand, by Theorem \ref{thm:extraAp}, the assumption \eqref{eq:Ap-1} implies that 
\begin{align}\label{eq:Apmu}
T \text{ is bounded from $L^{t_1}(\Sigma, \sigma_1) \times \cdots \times L^{t_m}(\Sigma, \sigma_m)$ to $L^t(\Sigma, \sigma)$}, 
\end{align}
for all $\vec{t}=(t_1, \dots, t_m)$ with $1<t_1, \ldots, t_m<\infty$ and for all $\vec{\sigma} \in A_{\vec{t}, \B_{\rho}}$, where $\frac1t=\sum_{i=1}^m \frac{1}{t_i}$ and $\sigma=\prod_{i=1}^m \sigma_i^{\frac{t}{t_i}}$. Hence, \eqref{eq:Apmu} applied to the exponents $\vec{r}$ and weights $\vec{u} \in A_{\vec{r}, \B_{\rho}}$ in \eqref{rru} yields 
\begin{align}\label{eq:Lsu}
T \text{ is bounded from $L^{r_1}(\Sigma, u_1) \times \cdots \times L^{r_m}(\Sigma, u_m)$ to $L^r(\Sigma, u)$}. 
\end{align}
By \eqref{eq:Ap-2}, we see that  
\begin{align}\label{eq:Lqv}
T \text{ is compact from $L^{s_1}(\Sigma, v_1) \times \cdots \times L^{s_m}(\Sigma, v_m)$ to $L^s(\Sigma, v)$}.  
\end{align} 
Additionally, it follows from Lemma \ref{lem:ApAp} that 
\begin{align}\label{uvAi}
u, v \in A_{\infty, \B_{\rho}}. 
\end{align}
Therefore, invoking \eqref{eq:Lpsq}, \eqref{eq:Lsu}--\eqref{uvAi}, and Theorem \ref{thm:WMIP-4}, we deduce that $T$ is compact from $L^{p_1}(\Sigma, w_1) \times \cdots \times L^{p_m}(\Sigma, w_m)$ to $L^p(\Sigma, w)$. The proof is complete. 
\qed

\subsection{Proof of Theorem \ref{thm:TTb}}
To show Theorem \ref{thm:TTb}, we establish an extrapolation from multilinear operators to the corresponding commutators but with Banach exponents. We will use Theorem \ref{thm:extraAp} to get the full range of exponents. 

\begin{theorem}\label{thm:TTbb}
Let $(\Sigma, \rho, \mu)$ be a space of homogeneous type. Let $\frac1r=\sum_{i=1}^m \frac{1}{r_i} \le 1$ with $1 \le r_1, \ldots, r_m<\infty$. Assume that $T$ is an $m$-linear or $m$-linearizable operator such that for all $\vec{v}=(v_1, \ldots, v_m) \in A_{\vec{r}, \B_{\rho}}$, 
\begin{align}\label{ttb-5}
\|T(\vec{f})\|_{L^r(\Sigma, v)}
\lesssim \prod_{i=1}^m  \|f_i\|_{L^{r_i}(\Sigma, v_i)}, 
\end{align}
where $v=\prod_{i=1}^m v_i^{\frac{r}{r_i}}$. Then, for all $\vec{w}=(w_1, \ldots, w_m) \in A_{\vec{r}, \B_{\rho}}$, for all $\b = (b_1, \ldots, b_m) \in \BMO_{\B_{\rho}}^m$, and for each multi-index $\alpha \in \N^m$, 
\begin{align}\label{ttb-6}
\|[T, \b]_{\alpha}(\vec{f})\|_{L^r(\Sigma, w)}
\lesssim \prod_{i=1}^m \|b_i\|_{\BMO_{\B_{\rho}}}^{\alpha_i} \|f_i\|_{L^{r_i}(\Sigma, w_i)}, 
\end{align}
where $w=\prod_{i=1}^m w_i^{\frac{r}{r_i}}$. 
\end{theorem}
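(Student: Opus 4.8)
The plan is to adapt the Coifman--Rochberg--Weiss conjugation method to the multilinear, vector-valued setting, exploiting that on a space of homogeneous type the John--Nirenberg inequality and the weak reverse H\"older inequality of Lemma~\ref{lem:RH} are available. Write $\tau := \tau(\alpha) = \{i : \alpha_i \neq 0\}$; if some $b_i$ with $i \in \tau$ is constant the statement is trivial, so assume $\|b_i\|_{\BMO_{\B_\rho}} \neq 0$ for $i \in \tau$, and (splitting into real and imaginary parts) that each $b_i$ is real-valued. Let $\mathcal T$ be the $\bB$-valued $m$-linear operator associated to $T$, so $T(\vec f) = \|\mathcal T(\vec f)\|_\bB$ (in the genuinely scalar case take $\mathcal T = T$, $\bB = \R$), and introduce the analytic family
\[
\mathcal T_{\vec z}(\vec f)(x) := e^{\sum_{i \in \tau} z_i b_i(x)}\, \mathcal T\big(g_1^{z}, \ldots, g_m^{z}\big)(x), \qquad g_i^{z} := e^{-z_i b_i} f_i \ (i \in \tau), \quad g_i^z := f_i\ (i \notin \tau),
\]
for $\vec z = (z_i)_{i \in \tau} \in \C^{|\tau|}$. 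On a dense class of tuples $\vec f$ (bounded, with bounded supports), the $m$-linearity of $\mathcal T$ together with John--Nirenberg (which puts $e^{-z_i b_i}$ into $L^s_{\loc}$ for $|z_i|$ small) makes $\vec z \mapsto \mathcal T_{\vec z}(\vec f)(x)$ a $\bB$-valued holomorphic function near $\vec 0$, and differentiating under the integral in the kernel representation gives
\[
[\mathcal T, \b]_\alpha(\vec f)(x) = \partial_{\vec z}^{\alpha}\big|_{\vec z = 0}\, \mathcal T_{\vec z}(\vec f)(x), \qquad [T,\b]_\alpha(\vec f)(x) = \big\|[\mathcal T,\b]_\alpha(\vec f)(x)\big\|_\bB .
\]
By the Cauchy integral formula on polydiscs of radii $\epsilon_i$ ($i\in\tau$), together with Minkowski's integral inequality in the $\bB$-valued space $L^r(\Sigma, w; \bB)$---legitimate precisely because $\tfrac1r = \sum_i 1/r_i \le 1$, so $r \ge 1$---one obtains
\[
\big\|[T,\b]_\alpha(\vec f)\big\|_{L^r(\Sigma,w)} \le \Big(\prod_{i\in\tau} \frac{\alpha_i!}{\epsilon_i^{\alpha_i}}\Big) \sup_{|z_i| = \epsilon_i,\ i\in\tau} \big\|\mathcal T_{\vec z}(\vec f)\big\|_{L^r(\Sigma,w;\bB)} .
\]

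Next I would reduce the supremum on the right to the hypothesis \eqref{ttb-5}. Since $|e^{z_i b_i(x)}| = e^{t_i b_i(x)}$ with $t_i := \Re z_i$, one has
\[
\big\|\mathcal T_{\vec z}(\vec f)(x)\big\|_\bB = e^{\sum_{i\in\tau} t_i b_i(x)}\, T(g_1^z,\ldots,g_m^z)(x), \quad\text{hence}\quad \big\|\mathcal T_{\vec z}(\vec f)\big\|_{L^r(w;\bB)} = \big\|T(g_1^z,\ldots,g_m^z)\big\|_{L^r(\Sigma, v)},
\]
where $v := e^{r\sum_{i\in\tau} t_i b_i}\, w$. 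Setting $v_i := e^{r_i t_i b_i} w_i$ for $i\in\tau$ and $v_i := w_i$ otherwise, one checks $\prod_i v_i^{r/r_i} = v$ and $\|g_i^z\|_{L^{r_i}(v_i)} = \|f_i\|_{L^{r_i}(w_i)}$ for every $i$. Thus, provided $\vec v = (v_1,\ldots,v_m) \in A_{\vec r, \B_\rho}$, the assumption \eqref{ttb-5} yields $\|\mathcal T_{\vec z}(\vec f)\|_{L^r(w;\bB)} \lesssim \prod_i \|f_i\|_{L^{r_i}(w_i)}$ with an implicit constant governed only by $[\vec v]_{A_{\vec r, \B_\rho}}$. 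Choosing $\epsilon_i = c/\|b_i\|_{\BMO_{\B_\rho}}$ then produces the factor $\prod_{i} \|b_i\|_{\BMO_{\B_\rho}}^{\alpha_i}$, and a density argument (the a priori bound on the dense class extends $[T,\b]_\alpha$ continuously and passes the estimate to the closure) removes the restriction on $\vec f$.

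The crux, and the main obstacle, is therefore the uniform perturbation estimate: there exists $\epsilon_0 > 0$, depending only on $\vec r$, on $[\vec w]_{A_{\vec r, \B_\rho}}$, on $\max_{i\in\tau}\|b_i\|_{\BMO_{\B_\rho}}$ and on the structural constants of $(\Sigma,\rho,\mu)$, such that $|t_i| \le \epsilon_0 \|b_i\|_{\BMO_{\B_\rho}}^{-1}$ for all $i\in\tau$ forces $\vec v \in A_{\vec r, \B_\rho}$ with $[\vec v]_{A_{\vec r,\B_\rho}} \lesssim [\vec w]_{A_{\vec r,\B_\rho}}$. I would prove this by first invoking Lemma~\ref{lem:ApAp} to reduce membership of $\vec v$ in $A_{\vec r}$ to the scalar conditions $v = \prod_i v_i^{r/r_i} = e^{g}w \in A_{mr,\B_\rho}$ with $g = r\sum_{i\in\tau} t_i b_i \in \BMO_{\B_\rho}$, and $v_i^{1-r_i'} = e^{g_i}\sigma_i \in A_{mr_i',\B_\rho}$ with $\sigma_i = w_i^{1-r_i'}$ and $g_i \in \BMO_{\B_\rho}$, each of $g$ and the $g_i$ having $\BMO$ norm $\lesssim \epsilon_0$. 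It then suffices to establish the one-variable statement that $A_{p,\B_\rho}$ is stable under multiplication by $e^{g}$ when $\|g\|_{\BMO_{\B_\rho}}$ is small: splitting $e^{g} = e^{g - g_B} e^{g_B}$ on each ball $B$ so that the constant part cancels in the $A_p$ quotient, one bounds $\langle e^{g-g_B} u\rangle_B$ and $\langle e^{-(p'-1)(g-g_B)} u^{1-p'}\rangle_B$ via H\"older's inequality, using the weak reverse H\"older inequality of Lemma~\ref{lem:RH} for $u \in A_{p,\B_\rho}$ (recall $A_{\infty,\B_\rho} \subset A''_{\infty,\B_\rho}$ by Lemma~\ref{lem:AiAi}) together with the John--Nirenberg exponential integrability of $e^{\pm(g-g_B)}$; the smallness of $\|g\|_{\BMO_{\B_\rho}}$ keeps the relevant exponents in the admissible range, and the overshoot from the dilated ball $2B$ in Lemma~\ref{lem:RH} is absorbed by the doubling property of $\mu$. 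Once this scalar lemma is in hand the proof assembles as above. Finally, with Theorem~\ref{thm:TTbb} in place, Theorem~\ref{thm:TTb} follows by first applying the weight extrapolation Theorem~\ref{thm:extraAp} to \eqref{ttb-1} to pass to a Banach tuple $\vec r$ with $\sum_i 1/r_i \le 1$, then applying Theorem~\ref{thm:TTbb} at that exponent, and then extrapolating once more with $\F$ the family of commutator pairs.
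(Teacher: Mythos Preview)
Your proposal is correct and follows essentially the same route as the paper: the Cauchy integral (conjugation) trick from \cite{BMMST}, with the key step being that $\vec v = (e^{r_i t_i b_i} w_i)_i \in A_{\vec r, \B_\rho}$ for $|t_i|$ small, established via the John--Nirenberg inequality (Lemma~\ref{lem:JN}) and the weak reverse H\"older inequality (Lemma~\ref{lem:RH}). Your reduction of the multilinear perturbation to the scalar statement ``$A_{p,\B_\rho}$ is stable under multiplication by $e^g$ with small $\|g\|_{\BMO}$'' via Lemma~\ref{lem:ApAp} is a clean way to organize what the paper leaves to the reader.
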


We need the following John-Nirenberg inequality on spaces of homogeneous type.
 
\begin{lemma}\label{lem:JN} 
Let $(\Sigma, \rho, \mu)$ be a spaces of homogeneous type. There exist constants $c_0, c_1 \in (1, \infty)$ such that for any $f \in \BMO_{\B_{\rho}}$, 
\begin{align}\label{JN-1}
\mu(\{x \in B: |f(x)-f_B| > \lambda\})
\le c_0 \, e^{-\frac{c_1 \lambda}{\|f\|_{\BMO_{\B_{\rho}}}}} \mu(B),  
\end{align}
for all $B \in \B_{\rho}$ and $\lambda>0$. In particular, 
\[
\frac{c_1}{c_0+1} \|f\|_{\osc_{\exp L, \B_{\rho}}} \le \|f\|_{\BMO_{\B_{\rho}}} \le \|f\|_{\osc_{\exp L, \B_{\rho}}}.
\]  
\end{lemma}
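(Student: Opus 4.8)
The plan is to reduce the ball version to a dyadic one through the adjacent dyadic systems of Hyt\"onen--Kairema used in \eqref{BQB-1}, prove the dyadic John--Nirenberg inequality by the classical iterated Calder\'on--Zygmund decomposition, transfer the distributional estimate back to metric balls, and finally deduce the comparison between $\|\cdot\|_{\BMO_{\B_\rho}}$ and $\|\cdot\|_{\osc_{\exp L,\B_\rho}}$ by integrating \eqref{JN-1}.

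First I would record the easy half, $\|f\|_{\BMO_{\B_\rho}}\le\|f\|_{\osc_{\exp L,\B_\rho}}$: since $\Phi(t)=e^{t}-1\ge t$, the definition of the Luxemburg norm forces $\fint_B|f-f_B|\,d\mu\le\|f-f_B\|_{\exp L,B}$ for every $B\in\B_\rho$. I would also note that dyadic averages are controlled by ball averages: each dyadic cube $Q\in\B_k$ ($k=1,\dots,K_0$) is sandwiched between two metric balls of comparable radius, hence, by the doubling estimate \eqref{Du}, of comparable measure, so $\fint_Q|f-f_Q|\,d\mu\lesssim\|f\|_{\BMO_{\B_\rho}}=:N$ uniformly in $Q$ and $k$ (using $\fint_Q|f-f_Q|\le 2\fint_Q|f-c|$ for any constant $c$).

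The core step is the dyadic John--Nirenberg inequality. Fix $k$, a cube $Q_0\in\B_k$, and a threshold $a>1$, and run the stopping-time selection of the maximal dyadic subcubes $Q\subset Q_0$ with $\fint_Q|f-f_{Q_0}|\,d\mu>aN$. Two facts drive the iteration. First, the dyadic parent $\widehat Q$ of a selected cube has $\fint_{\widehat Q}|f-f_{Q_0}|\,d\mu\le aN$, while $\mu$ is doubling along the lattice (every dyadic child contains a ball of radius comparable to that of its parent, so $\mu(\widehat Q)\lesssim\mu(Q)$ by \eqref{Du}), hence $\fint_Q|f-f_{Q_0}|\,d\mu\le C\,aN$ and $|f_Q-f_{Q_0}|\le\widetilde C N$ with $\widetilde C$ depending only on $D_\mu$ and $a$. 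Second, $\sum_Q\mu(Q)\le(aN)^{-1}\int_{Q_0}|f-f_{Q_0}|\,d\mu\le a^{-1}\mu(Q_0)$, and off $\bigcup_Q Q$ one has $|f-f_{Q_0}|\le aN$ a.e.\ by the Lebesgue differentiation theorem on $(\Sigma,\rho,\mu)$. Iterating on each selected cube in place of $Q_0$ produces, after $j$ steps, an exceptional set of measure $\le a^{-j}\mu(Q_0)$ off which $|f-f_{Q_0}|\le(a+\widetilde C)\,jN$; optimizing $j$ against $\lambda$ yields $\mu(\{x\in Q_0:|f-f_{Q_0}|>\lambda\})\le c_0\,e^{-c_1\lambda/N}\mu(Q_0)$ with $c_0,c_1$ depending only on $D_\mu$.

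Finally I would transfer to an arbitrary $B\in\B_\rho$: by \eqref{BQB-1} pick $k$ and $Q_B\in\B_k$ with $B\subset Q_B\subset\lambda_0 B$, so $\mu(Q_B)\le(2\lambda_0)^{D_\mu}\mu(B)$ and $|f_B-f_{Q_B}|\le\frac{\mu(Q_B)}{\mu(B)}\fint_{Q_B}|f-f_{Q_B}|\,d\mu\lesssim N$; then $\{x\in B:|f-f_B|>\lambda\}\subset\{x\in Q_B:|f-f_{Q_B}|>\lambda-cN\}$ has measure $\le c_0e^{-c_1(\lambda-cN)/N}\mu(Q_B)$, which is $\le c_0'e^{-c_1'\lambda/N}\mu(B)$ after relabelling (trivially so for $\lambda\le cN$ by enlarging $c_0'$). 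This gives \eqref{JN-1}. For the remaining inequality in the ``in particular'', with $t:=c_1/((c_0+1)N)<c_1/N$ a direct integration of \eqref{JN-1} gives $\fint_B\bigl(e^{t|f-f_B|}-1\bigr)\,d\mu=t\int_0^\infty e^{ts}\,\mu(\{x\in B:|f-f_B|>s\})\,\mu(B)^{-1}\,ds\le c_0t\int_0^\infty e^{(t-c_1/N)s}\,ds=c_0t\,(c_1/N-t)^{-1}=1$, so $\|f-f_B\|_{\exp L,B}\le 1/t=(c_0+1)N/c_1$, i.e.\ $\tfrac{c_1}{c_0+1}\|f\|_{\osc_{\exp L,\B_\rho}}\le\|f\|_{\BMO_{\B_\rho}}$. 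The main obstacle is the core step: making the Calder\'on--Zygmund stopping argument robust on the Hyt\"onen--Kairema lattice, where cubes do not bisect---the key being precisely that doubling of $\mu$ forces each dyadic child to carry a fixed proportion of its parent's mass, which restores the usual control $\fint_Q|f-f_{Q_0}|\lesssim aN$ at stopped cubes and keeps all constants dependent only on $D_\mu$.
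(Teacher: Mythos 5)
Your proposal is correct, and the second half is essentially identical to the paper's argument: the paper also deduces the norm comparison by writing $\fint_B\bigl(e^{|f-f_B|/(\gamma\|f\|_{\BMO_{\B_\rho}})}-1\bigr)\,d\mu$ via the layer-cake formula, inserting \eqref{JN-1}, and choosing $\gamma=\tfrac{c_0+1}{c_1}$ to make the resulting integral equal to $1$ — exactly your computation with $t=1/(\gamma N)$. The difference is in how \eqref{JN-1} itself is obtained: the paper simply cites the John--Nirenberg inequality on spaces of homogeneous type from MacManus--P\'erez \cite{MP}, whereas you reprove it from scratch by passing to the Hyt\"onen--Kairema adjacent dyadic systems, running the iterated Calder\'on--Zygmund stopping-time argument there (with the parent-to-child measure ratio controlled by doubling, since each cube is sandwiched between balls of comparable radius), and transferring back to metric balls via $B\subset Q_B\subset\lambda_0B$. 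Your route is longer but self-contained and makes the dependence of $c_0,c_1$ on the doubling order and lattice constants explicit; the paper's route is a one-line reduction to a known theorem. One minor bookkeeping point in your core step: the stopping threshold should be taken relative to the \emph{dyadic} $\BMO$ norm, which you correctly bound by $C'N$ rather than $N$, so the constant $C'$ must be carried through the iteration — this only affects the final values of $c_0,c_1$, not the validity of the argument.
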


\begin{proof}
The inequality \eqref{JN-1} is a consequence of \cite[Theorem 1.4]{MP}. Then by \cite[Proposition 1.1.4 ]{Gra-1}, we have for any $B \in \B$ and $\gamma \ge \frac{c_0+1}{c_1}$, 
\begin{align*}
&\fint_B \Big(e^{\frac{|f-f_B|}{\gamma \|f\|_{\BMO_{\B_{\rho}}}}} -1 \Big) d\mu 
\\ 
&=\frac{1}{\mu(B)} \int_0^{\infty} e^\lambda \mu(\{x \in B: |f(x)-f_B|>\gamma \lambda \|f\|_{\BMO_{\B_{\rho}}}\}) \, dt
\\
&\le  c_0 \int_0^{\infty} e^{\lambda} e^{-c_1 \gamma \lambda} \, d\lambda
=\frac{c_0}{c_1\gamma-1}
\le 1, 
\end{align*}
which means that $\|f\|_{\osc_{\exp L, \B_{\rho}}} \le \frac{c_0+1}{c_1} \|f\|_{\BMO_{\B_{\rho}}}$. 
\end{proof}

With Lemmas \ref{lem:RH} and \ref{lem:JN} in hand, mimicking the proof of {\cite[Theorem 4.13]{BMMST}}, we can use the Cauchy integral trick to prove Theorem \ref{thm:TTbb}. We mention that Lemmas \ref{lem:RH} and \ref{lem:JN} are used to show that given $\vec{w}=(w_1, \ldots, w_m) \in A_{\vec{p}, \B_{\rho}}$, 
\begin{align*}
\vec{v} = (v_1, \ldots, v_m) 
:=(w_1 e^{-\mathrm{Re}(z_1) p_1 b_1}, \ldots, w_m e^{-\mathrm{Re}(z_m) p_m b_m}) 
\in A_{\vec{p}, \B_{\rho}}, 
\end{align*}
whenever $|z_i| \le \delta_i$ for some appropriate $\delta_i>0$, $i=1, \ldots, m$. Details are left to the reader. 

Finally, let us present the proof of Theorem \ref{thm:TTb}. Fix $\b = (b_1, \ldots, b_m) \in \BMO_{\B_{\rho}}^m$ and $\alpha \in \N^m$. Set 
\begin{align*}
r_1=\cdots=r_m=m+1 \quad\text{ and }\quad 
\frac1r := \sum_{i=1}^m \frac{1}{s_i} = \frac{m}{m+1}<1.
\end{align*}
Observe that \eqref{ttb-1} coincides with \eqref{ttb-3}. Then Theorem \ref{thm:extraAp} applied to the exponents $\vec{r}=(r_1, \ldots, r_m)$ gives that for all $\vec{v}=(v_1, \ldots, v_m) \in A_{\vec{r}, \B_{\rho}}$, 
\begin{align}\label{ttb-7}
\|T(\vec{f})\|_{L^r(\Sigma, v)} 
\lesssim \prod_{i=1}^m \|f_i\|_{L^{r_i}(\Sigma, v_i)}, 
\end{align}
where $v=\prod_{i=1}^m v_i^{\frac{r}{r_i}}$. Note that \eqref{ttb-7} agrees with \eqref{ttb-5}. Hence, it follows from Theorem \ref{thm:TTbb} that for all $\vec{v}=(v_1, \ldots, v_m) \in A_{\vec{r}, \B_{\rho}}$, 
\begin{align}\label{ttb-8}
\|[T, \b]_{\alpha}(\vec{f})\|_{L^r(\Sigma, v)}
\lesssim \prod_{i=1}^m \|b_i\|_{\BMO_{\B_{\rho}}}^{\alpha_i} \|f_i\|_{L^{r_i}(\Sigma, v_i)}, 
\end{align}
where $v=\prod_{i=1}^m v_i^{\frac{r}{r_i}}$. Now we see that \eqref{ttb-8} verifies \eqref{ttb-3} for the exponents $\vec{r}$ and the $(m+1)$-tuples $\big([T, \b]_{\alpha}(\vec{f}), f_1, \ldots, f_m \big)$ instead of $\vec{q}$ and $(f, f_1, \ldots, f_m)$. Consequently, by Theorem \ref{thm:extraAp}, we conclude that for all $\vec{p}=(p_1, \ldots, p_m)$ with $1<p_1, \ldots, p_m<\infty$ and for all $\vec{w}=(w_1, \ldots, w_m) \in A_{\vec{p}, \B_{\rho}}$, 
\begin{align*}
\|[T, \b]_{\alpha}(\vec{f})\|_{L^p(\Sigma, w)}
\lesssim \prod_{i=1}^m \|b_i\|_{\BMO_{\B_{\rho}}}^{\alpha_i} \|f_i\|_{L^{p_i}(\Sigma, w_i)}, 
\end{align*}
where $\frac1p=\sum_{i=1}^m \frac{1}{p_i}$ and $w=\prod_{i=1}^m w_i^{\frac{p}{p_i}}$. This shows \eqref{ttb-2} and completes the proof. \qed

\vspace{0.4cm}

\noindent{\bf Acknowledgements } 
The first author acknowledges financial support from Spanish Ministry of Science and Innovation through the Ram\'{o}n y Cajal 2021 (RYC2021-032600-I), through the ``Severo Ochoa Programme for Centres of Excellence in R\&D'' (CEX2019-000904-S), and through PID2019-107914GB-I00, and from the Spanish National Research Council through the ``Ayuda extraordinaria a Centros de Excelencia Severo Ochoa'' (20205CEX001). The second author was partially supported by CONICET and SECYT-UNC. The third author was partially supported by FONCyT PICT 2018-02501 and PICT 2019-00018 and by Junta de Andaluc\'{i}a UMA18FEDERJA002 and FQM 354. The fourth author was partly supported by the National Key R\&D Program of China (No. 2020YFA0712900) and NNSF of China (No. 12271041). Finally, the authors would like to thank the referee for careful reading and valuable comments, which lead to the improvement of this paper. 

\medskip 

\noindent{\bf Data Availability} Our manuscript has no associated data.

\medskip 
\noindent{\bf\Large Declarations}
\medskip 

\noindent{\bf Conflict of interest} The authors state that there is no conflict of interest.


\end{document}